\documentclass[11pt,english]{amsart}

\usepackage[T1]{fontenc}
\usepackage{float}

\usepackage[english]{babel}
\usepackage{amssymb,url,xspace}
\usepackage{graphicx}
\usepackage{subfig}
\usepackage{xcolor}
\usepackage{textcmds}
\usepackage[section]{placeins}
\usepackage{hyperref}
\usepackage[skip=0.333\baselineskip]{caption}
\usepackage{subcaption}

\usepackage{amsmath}
\usepackage{amssymb}
\usepackage{amsfonts}
\usepackage{amsthm}
\usepackage[inkscapelatex=false]{svg}
\usepackage{calc}
\usepackage{xcolor}
\usepackage{epigraph}
\usepackage{dsfont}
\usepackage{bm}
\usepackage{enumitem}
\usepackage{float}
\usepackage{relsize}
\usepackage{tikz-cd}
\usepackage{physics}
\usepackage[new]{old-arrows}
\usepackage{etoolbox}

\makeatletter
\pretocmd{\section}{\addtocontents{toc}{\protect\addvspace{15\p@}}}{}{}
\pretocmd{\subsection}{\addtocontents{toc}{\protect\addvspace{5\p@}}}{}{}
\makeatother

\makeatletter
\def\th@remark{%
  \thm@headfont{\bfseries}%
  \normalfont 
  \thm@preskip\topsep \divide\thm@preskip\tw@
  \thm@postskip\thm@preskip
}
\makeatother

\theoremstyle{plain}
\newtheorem{theo}{Theorem}
\newtheorem{coro}[theo]{Corollary}

\newtheorem{prop}[theo]{Proposition}
\newtheorem{lemm}[theo]{Lemma}

\theoremstyle{definition}
\newtheorem{defi}[theo]{Definition}

\theoremstyle{remark}
\newtheorem{rema}[theo]{Remark}
\newtheorem{exem}{Example}

\numberwithin{equation}{section}
\numberwithin{theo}{section}

\tolerance 400

\pretolerance 200

\newcommand{\lk}{\text{lk}}
\newcommand{\interior}[1]{%
  {\kern0pt#1}^{\mathrm{o}}%
}
\def\HF{\widehat{\text{HF}}}
\def\hfk{\text{hfk}}
\def\deg{\text{deg}}
\def\min{\text{min}}
\def\max{\text{max}}
\def\spin{\text{Spin}}
\newcommand{\bu}{\bullet}
\newcommand{\ci}{\circ}
\def\HFKh{\widehat{\text{HFK}}}
\def\CFKh{\widehat{\text{CFK}}}
\def\CFh{\widehat{\text{CF}}}

\def\dim{\text{dim}}
\def\wind{\text{wind}}
\def\wrap{\text{wrap}}
\def\lk{\text{lk}}
\def\A{\mathcal{A}}
\def\I{\mathcal{I}}
\def\id{\mathbb{I}}
\def\len{\text{len}}
\def\diam{\text{diam}}
\def\dist{\text{dist}}
\def\th{\text{th}}
\newcommand{\chapquote}[3]{\begin{quotation} \textit{#1} \end{quotation} \begin{flushright} - #2 \end{flushright} }

\begin{document}

\title[]{Twisting, Stabilization and Bordered Floer homology}
\date{\today}

\author{Soheil Azarpendar}
\address{Mathematical Institute, University of Oxford, Andrew Wiles Building,
		Radcliffe Observatory Quarter, Woodstock Road, Oxford, OX2 6GG, UK}
\email{azarpendar@maths.ox.ac.uk}

\keywords{}

\begin{abstract}
Consider an unknot $c$ in $S^3$ and a knot $K$ in ${S^3-N(c)}$. Twisting the knot $K$ along $c$, or equivalently applying $\frac{1}{m}$-surgery on $c$, produces a family of knots $\{K_m\}_{m \in \mathbb{Z}}$. We use bordered Floer homology and the theory of immersed curve invariants to show that for $|m|\gg0$, total dimension of $\HFKh(K_m)$, $\tau(K_{m})$ and thickness of $K_{m}$ are linear functions of $m$. Furthermore, we prove that the extremal coefficients of the Alexander polynomial and extremal knot Floer homologies of $K_m$ stabilize as $m$ goes to infinity. This generalizes results of Chen, Lambert-Cole, Roberts, Van Cott and the author on coherent twist families.  
\end{abstract}

\maketitle
\tableofcontents
\pagebreak

\section{Introduction}\label{Section:Introduction}
\chapquote{Any time you have a framing, take it to the limit}
{Matthew Hedden}{}

\subsection{Introducing the Twisting problem}\hfill\\

Consider an unknot $c$ in $S^3$ and a knot $K$ in ${S^3-N(c)}$. We can also think of $K$ as a knot within the standard solid torus $S^1 \times D^2 \subset S^3$, with $c$ representing (a small push out of) the meridian $\{*\} \times \partial D^2$. Let $\wind_{c}(K)$ and $\wrap_{c}(K)$ denote the winding number and wrapping number of $K$ within the solid torus, respectively. We assume that the geometric intersection of $K$ and $\{*\} \times D^2$ is equal to $\wrap_{c}(K)$. Note that $0 \leq \wind_{c}(K) \leq \wrap_{c}(K).$ We say that $c$ links $K$ \emph{coherently} when $\lk(K,c)=\wind_{c}(K)=\wrap_{c}(K).$\\

In this paper, we examine the family of knots $\{K_m\}_{m \in \mathbb{Z}}$, where $K_0=K$ and $K_m \subset S^3$ is obtained by performing $(-\frac{1}{m})$ surgery on $c$. This is equivalent to applying the $m$-th power of the meridional Dehn twist $$\tau_{\partial D^2} : S^1 \times D^2 \longrightarrow S^1 \times D^2$$ 
to $K$, viewed as a knot in $S^1 \times D^2$. We refer to  $\{K_m\}_{m \in \mathbb{Z}}$ as a \emph{twist family} of knots. If $c$ links $K$ coherently, we call $\{K_m\}$ a \emph{coherent twist family}.\\

Numerous authors have investigated how the invariants of $K_m$ behave as $m \rightarrow \pm \infty$. These studies include results on hyperbolic geometry \cite{Thurston1979TheGA}, Seifert genus \cite{Baker2016DehnFA}, Jones polynomial \cite{Champanerkar2004OnTM} and Khovanov homology \cite{Lee2023StableKH}.\\

Lambert-Cole \cite{LambertCole2016TwistingMA} proved that for a coherent twist family of knots $\{K_m\}$ with winding number $2$, both the Alexander polynomial and knot Floer homology stabilize as $m \rightarrow \pm \infty$. Chen \cite{Chen2022TwistingsAT} further proved that the extremal coefficients of the Alexander polynomial stabilize for any coherent twist family of knots. Boninger \cite{Boninger2024TwistedKA} extended Chen's result while analyzing the behaviour of the perturbed Alexander invariant in coherent twist families. The first author \cite{Azarpendar2023TwistingAS} showed that Lambert-Cole's argument generalizes to coherent twist families with higher winding numbers, proving a stabilization result for extremal knot Floer homolgies. This stabilization result, unlike that of Lanmbert-Cole, included a potential unknown shift in the absolute Maslov grading.\\

\subsection{Summary of the main results}\hfill\\

In this paper, we study the stabilization of knot Floer homology using the framework of bordered Floer homology. This perspective enables us to access and analyze features of knot Floer homology that were previously out of reach. Our first main result concerns the growth of the total dimension of knot Floer homology, as stated in Theorem \ref{Theorem:totaldim}.

\begin{theo}\label{Theorem:totaldim}
    Let $\{K_m\}$ be a twist family of knots. Assume $\wind_{c}(K) \neq 0$. Then, there exist fixed integers $D,d$ such that for sufficiently large $m$, the dimension of $\HFKh(K_m)$ is given by:
    $$\dim(\HFKh(K_m)) = D \cdot |m| - d.$$
\end{theo}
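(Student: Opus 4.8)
The plan is to use the bordered Floer package. Write the knot complement $S^3 - N(c)$ as a bordered three-manifold $X$ with torus boundary, with $K$ sitting inside as a doubly-pointed knot; then $K_m \subset S^3$ is obtained by gluing $X$ to a solid torus filling slope $-1/m$. The knot Floer homology $\HFKh(K_m)$ is then computed (up to taking homology) by the box tensor product of a type-$D$ structure $\widehat{\mathit{CFD}}$ associated to the pair $(X, K)$ with the type-$A$ module $\widehat{\mathit{CFA}}$ of the $-1/m$-framed solid torus — equivalently, via the immersed-curve reformulation, by counting intersection points between an immersed multicurve $\gamma(X,K)$ decorated with an Alexander-graded local system in the punctured torus $T^2 \setminus \{z,w\}$ and the line of slope $-1/m$. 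The first step is to fix, once and for all, this immersed-curve (or bordered) invariant of $(X,K)$; it is a finite diagram independent of $m$.

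Second, I would analyze how the relevant intersection number (respectively, the rank of the box tensor product) grows with $m$. Lift everything to the cylinder $\overline{T^2} = \mathbb{R} \times S^1$ (universal abelian cover in the longitudinal direction) so that the filling slope $-1/m$ lifts to a line that wraps $m$ times. Each immersed-curve component of $\gamma(X,K)$ that is homologically essential — and there is at least one such component precisely because $\wind_c(K) \neq 0$, which forces a nonzero longitudinal winding — meets this line in a number of points that is eventually an exact linear function of $|m|$: the leading coefficient $D$ is the total ``vertical span'' of the essential part of the curve (a sum of local-system ranks weighted by how many times each essential component winds longitudinally), and the constant correction $-d$ comes from the bounded contribution of the non-essential (homologically trivial, ``figure-eight-type'') components together with the finitely many boundary corrections near the punctures. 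Concretely, for $|m|$ large the slope-$-1/m$ line is close to vertical, so it is transverse to all the horizontal tangencies of $\gamma$, no new bigons can form between $\gamma$ and the line that were not already present for large $m$, and the minimal-position intersection count stabilizes to an affine function of $m$; after that I pass from intersection number to $\dim \HFKh$ using the standard pairing theorem, noting that the differential on the box tensor product does not cancel anything once we are counting minimal intersection points of immersed curves.

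Third, I would pin down that the formula holds with a single pair $(D,d)$ valid for all $m \gg 0$ and separately for all $m \ll 0$ (the statement as written uses $|m|$, and the two half-lines may a priori have different slopes/constants, so I would either treat $m \to +\infty$ and $m \to -\infty$ in turn or remark that reflecting $c$ interchanges them); in either regime the eventual linearity is the content. I expect the main obstacle to be the bookkeeping at the two punctures $z, w$: the immersed curve lives in the once-punctured torus for the three-manifold invariant, but to read off $\HFKh$ rather than $\widehat{\mathit{HF}}$ of surgeries one must track the Alexander grading, i.e. work in the twice-punctured torus / with the local system recording $z$–$w$ height, and verify that the intersection points of $\gamma$ with the near-vertical line are in bijection with generators of $\CFKh(K_m)$ whose differential vanishes on homology — equivalently, that the pairing theorem identifying $\dim\HFKh$ with a geometric intersection number survives in this decorated setting. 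Establishing that the ``extra'' intersection points created as $m$ grows are organized into $|m|$-many translated copies of one fixed block, so that both the count and its grading pattern are eventually periodic in $m$, is the crux; everything else is linear algebra over $\mathbb{F}_2$ and elementary curve isotopy.
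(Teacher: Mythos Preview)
Your approach is essentially the same as the paper's: both reduce $\dim \HFKh(K_m)$ to a Lagrangian intersection count via an immersed-curve pairing theorem and argue that intersection with the slope-$m$ line is eventually affine in $m$. The paper makes this precise by invoking Rasmussen's pairing theorem for the knot-in-solid-torus invariant $\hfk(K)$ (which directly outputs $\HFKh$, sidestepping your twice-punctured-torus worry) and by passing to singular pegboard diagrams, where each geodesic segment $L^j_i$ of slope $p^j_i/q^j_i$ meets the line $l_m$ in exactly $|p^j_i - m\,q^j_i|$ points, so linearity for $m\gg 0$ is immediate from the determinant formula rather than from a transversality/stabilization argument.
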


As previously discussed, prior work has addressed the stabilization of extremal knot Floer homologies under twisting. In this work, we establish stronger results. Specifically, we extend the known stabilization theorems from coherent twist families to all twist families with nonzero winding numbers. Furthermore, we precisely compute the shift in absolute Maslov grading, demonstrating that this shift vanishes for coherent twist families. These results are encapsulated in Theorem \ref{Theorem:extremalhfk}.\\ 

Prior to stating Theorem \ref{Theorem:extremalhfk}, we introduce some additional notations. Let $E :=S^3 - N(K \cup c)$ repreresnt the complement of $K$ in the solid torus. Restricting the meridian disk of solid torus to $E$, we obtain a planar surface denoted by $\hat{D}$. Let $x$ denote the Thurston norm on $H_2(E,\partial E)$. We define an invariant $F_K$ of $K$ (as a knot inside the solid torus) as follows:
$$F_K := \frac{1}{4} (\lk(K,c) - x([\hat{D}])+1 )(\lk(K,c) - x([\hat{D}])-1).$$
Note that when $c$ links $K$ coherently $x([\hat{D}]) = \lk(K,c)-1$, and $F_K=0$. 

\begin{theo}\label{Theorem:extremalhfk}
    Let $\{K_m\}$ be a twist family of knots, and assume $\wind_{c}(K) \neq 0$. For any $k \in \mathbb{N}$, there exists $N \in \mathbb{N}$ such that for any $m \geq N$ and $0\leq j \leq k-1$, we have
    $$\widehat{HFK}(K_m,-g(K_m)+j)\cong \widehat{HFK}(K_{m+1},-g(K_{m+1})+j) \ [F_K],$$
    where $[F_K]$ denotes decreasing the Maslov grading by $F_K$. 
\end{theo}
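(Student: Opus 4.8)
The plan is to translate the problem into the language of immersed-curve invariants in the punctured torus, following the Hanselman--Rasmussen--Watson dictionary. The knot complement $E = S^3 - N(K\cup c)$ is a manifold with two torus boundary components: one coming from $\partial N(c)$, which we will do Dehn filling on, and one coming from $\partial N(K)$, which we retain to record the knot. First I would fix the parametrization so that $(-1/m)$-surgery on $c$ corresponds to the slope $\gamma_m$ on $\partial N(c)$, and recall that $\widehat{\mathit{HFK}}(K_m)$ is computed from the pairing of the immersed curve invariant $\widehat{\mathit{HF}}(E)$ (an immersed multicurve in $\partial N(c)$ lifted to $\mathbb{R}^2$, decorated with the extra $\partial N(K)$ data) against the curve of the solid torus of slope $\gamma_m$. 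As $m \to \infty$, the slopes $\gamma_m$ converge to the meridian of $c$, so the intersection points of $\widehat{\mathit{HF}}(E)$ with $\gamma_m$ break into two groups: those lying in a fixed compact region, whose count and bigradings stabilize, and those that run along the nearly-vertical strands, each of which contributes a fixed packet that simply gets translated in Alexander grading. This is exactly the mechanism behind Theorem \ref{Theorem:totaldim}, and I would reuse the geometric set-up established there.

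Next I would zoom in on the \emph{extremal} Alexander gradings. The bottom Alexander grading $-g(K_m)$ corresponds to the intersection points of $\gamma_m$ with the outermost strand of the lifted immersed curve $\widehat{\mathit{HF}}(E)$. The key point is that for $m$ large this outermost strand is (a translate of) a fixed arc independent of $m$: increasing $m$ by one shifts the whole picture by one fundamental domain in the direction dual to the meridian of $c$, so the local picture near the extremal strand — hence $\widehat{\mathit{HFK}}(K_m, -g(K_m)+j)$ for $j$ in any fixed range $0 \le j \le k-1$ — is literally the same from one $m$ to the next, once $m \ge N(k)$. This gives the isomorphism of groups. The only subtlety is the absolute Maslov grading: the pairing theorem computes Maslov gradings only after one fixes a height function / grading normalization on the immersed curve, and under the shift $m \mapsto m+1$ the relevant strand is relocated, so its Maslov grading is offset by a correction term. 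I would compute this correction explicitly in terms of how the genus $g(K_m)$ (equivalently, the Alexander grading span, equivalently the Thurston norm $x([\hat D])$ after filling) grows with $m$; a linear algebra / Euler-characteristic bookkeeping argument identifies it with $F_K$, and the vanishing in the coherent case is then immediate from the observation already recorded in the text that $x([\hat D]) = \lk(K,c)-1$ forces $F_K = 0$.

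Concretely, the steps in order are: (1) set up $E$ as a bordered manifold with the $\partial N(c)$ boundary parametrized so that $\gamma_m$ is the filling slope, and recall the immersed-curve pairing formula for $\widehat{\mathit{HFK}}(K_m)$; (2) identify the extremal Alexander strands of the lift of $\widehat{\mathit{HF}}(E)$ and show that, for $m$ large, the portion of the curve relevant to the bottom $k$ Alexander gradings is carried in a compact region that is translated rigidly as $m$ increases; (3) deduce the ungraded (and $\delta$-graded) isomorphism $\widehat{\mathit{HFK}}(K_m, -g(K_m)+j) \cong \widehat{\mathit{HFK}}(K_{m+1}, -g(K_{m+1})+j)$; (4) track the absolute Maslov grading through the rigid translation, showing the discrepancy is a constant depending only on $K$, and evaluate that constant as $F_K$ using the relation between $g(K_m)$, the Thurston norm of $\hat D$, and $\lk(K,c)$. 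The main obstacle I anticipate is step (4): keeping careful track of the Maslov grading normalization under Dehn filling — the pairing theorem fixes Maslov gradings only up to an overall shift that depends on the filling slope, and pinning down that this shift changes by exactly the quadratic expression $\tfrac14(\ell - x + 1)(\ell - x - 1)$ (with $\ell = \lk(K,c)$, $x = x([\hat D])$) rather than some other quantity with the same leading behaviour will require a genuinely careful computation, likely via a model calculation on a suitably chosen simple example or via a direct analysis of the grading on the tensor product of the type-$D$ and type-$A$ modules.
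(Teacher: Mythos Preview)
Your approach is genuinely different from the paper's. The paper does not use immersed curves for this theorem; it works directly with the box tensor product $\widehat{\mathrm{CFA}}(\mathcal{H}_K,z,w)\boxtimes\widehat{\mathrm{CFD}}(\mathcal{H}'_{1/m},z')$, organizing its generators into a ``black box'' $C^\bullet$ and white boxes $C^\circ_1,\dots,C^\circ_m$, and then proving combinatorially (Lemma~\ref{Lemma:extremalnearblackbox}) that any minimal cycle representing a class in the bottom $k$ Alexander gradings must lie in a fixed metric ball $B^\bullet_{L_S+k-1}$ whose data (elements, differentials, minimal relations) is independent of $m$ once $m$ is large. Your immersed-curve picture is morally a geometric avatar of the same stabilization, and steps (1)--(3) are plausible in outline; but note that the graded pairing theorem for $\mathrm{hfk}(P)$ of a knot in a solid torus (as opposed to $\widehat{HF}(M)$ of a manifold with a single torus boundary) is explicitly flagged in the paper as unpublished work of Rasmussen, so you would either need to supply that theory with gradings or reroute through the type-$A$/type-$D$ modules anyway.

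The real gap is in step (4). You correctly anticipate that pinning down the Maslov shift as exactly $F_K=\tfrac14(\ell-x+1)(\ell-x-1)$ is the hard part, but your proposed tools --- a model calculation, or a direct grading chase on the tensor product --- are not enough on their own. The paper's computation runs as follows: the shift is first shown to equal $f(\beta_{\min})=(\beta_0-\beta_{\min})(\beta_0-1-\beta_{\min})$ for an unknown half-integer $\beta_{\min}$ (the second $\mathrm{Spin}^c$ component of the grading representative of the bottom class), with $\beta_0$ fixed by the reference generator of $\widehat{HF}(S^3)$ via Lemma~\ref{Lemma:homotopyequivgenerator}. To solve for $\beta_{\min}$, the paper then invokes the \emph{symmetry} $\widehat{HFK}(K_m,g(K_m))[2g(K_m)]\cong\widehat{HFK}(K_m,-g(K_m))$, runs the analogous stabilization at the \emph{top} Alexander grading to produce a second unknown $\beta_{\max}$, and assembles the resulting commutative square together with Baker--Taylor's formula $2g(K_m)=x([\hat D])\,\lk(K,c)\cdot m+\mathrm{const}$ into the pair of equations $\beta_{\max}+\beta_{\min}=M-\omega-\tfrac12$ and $\beta_{\min}-\beta_{\max}=-x([\hat D])$. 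This symmetry trick is the idea missing from your outline: without it you have one unknown and no equation, and neither a model example nor a direct grading analysis will single out the specific quadratic in $x([\hat D])$ and $\lk(K,c)$.
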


Regarding the stabilization of the extremal coefficients of the Alexander polynomial, Daren Chen pointed out to us that the main results in this area follow readily from the Torres formula. With this in mind, we offer alternative proofs arising from the perspective of knot Floer homology. While this section does not contain new results, some of the discussions may shed light on later developments (particularly Theorem \ref{Theorem:thickness}). They may also be of interest for future work on the topic.\\

Nonetheless, we prove the stabilization of the extremal coefficients of the Alexander polynomial. In addition, we show that the total number of jumps in the sequence of its coefficients stabilizes. This implies that the sequence of coefficients becomes periodic, except at a fixed number of positions. These results are presented in Theorems \ref{Theorem:extremalAlexander} and \ref{Theorem:jumpsAlexander}.

\begin{theo}\label{Theorem:extremalAlexander}
       Let $\{K_m\}$ be a twist family of knots and assume ${\wind_{c}(K) \neq 0}$. There exists a Laurent series $h_{L}(t)$ with finitely many terms of negative degree in $t$, and an integer $ l \in \{-x([\hat{D}]), \cdots, 0\}$, such that the following holds:\\
       
       For any $k \in \mathbb{N}$, there exists $N \in \mathbb{N}$ such that for any $m \geq N$, the first $k$ terms (in the increasing order of degree) of $\Delta_{K_m}$ agree with the first $k$ terms of 
    $t^{m \delta} \ h_{L}(t)$ where 
    $$\delta = \frac{l}{2}\wind_{c}(K).$$
\end{theo}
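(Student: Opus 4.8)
The plan is to extract the Alexander polynomial of $K_m$ from the immersed-curve / bordered-Floer package already in play, and read off the low-degree behavior from the geometry of the immersed curve associated to $E = S^3 - N(K \cup c)$. Concretely, $K_m$ arises by gluing the solid torus (filled along $c$ with slope $-1/m$) to $E$; bordered Floer homology computes $\HFKh(K_m)$, and hence $\Delta_{K_m}(t)$ up to sign and overall power of $t$, as a pairing $\mathrm{Mor}(\widehat{\mathit{CFD}}(\text{solid torus}_m), \widehat{\mathit{CFA}}(E, \hat D))$, where the $\hat D$-decoration records the $\mathbb{Z}$-grading that becomes the Alexander grading of $K_m$. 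The graded Euler characteristic of this pairing is a bilinear form in the (graded) Euler characteristics of the two bordered invariants, and the Euler characteristic of $\widehat{\mathit{CFD}}$ of the $-1/m$-filled solid torus is an explicit rational function whose "slope $m$" part contributes the factor $t^{m\delta}$.

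First I would set up the grading bookkeeping: the relevant $\mathbb{Z}$-valued grading on $\widehat{\mathit{CFA}}(E)$ (coming from intersection with $\hat D$) is the one whose Thurston-norm-minimizing representative has width $x([\hat D])$; this is exactly why $l$ is constrained to lie in $\{-x([\hat D]), \dots, 0\}$, and why $\delta = \tfrac{l}{2}\wind_c(K)$ — the $\wind_c(K)$ factor is the number of strands of $K$ through the disk counted with sign, i.e. $\lk(K,c)$ replaced by winding number after the algebra is done, and the $l/2$ is a choice of extremal corner of the support rectangle of the curve invariant. Next I would write $\Delta_{K_m}(t) \doteq \chi\big(\widehat{\mathit{CFA}}(E,\hat D)\big) \cdot \chi\big(\widehat{\mathit{CFD}}(\text{solid torus}_m)\big)$ as a pairing over the torus algebra, expand $\chi(\widehat{\mathit{CFD}})$ for the $-1/m$-framed solid torus — it is a geometric-series type expression $\sum_{i} t^{(\text{affine-linear in } m)\cdot i}$ truncated by the length of the curve — and observe that for $m \to +\infty$ the terms of lowest degree come from a single, stable portion of this sum, giving a well-defined Laurent series $h_L(t)$ (finitely many negative-degree terms, since $E$ is a knot complement in a solid torus and the curve has bounded "downward reach") multiplied by the monomial $t^{m\delta}$. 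The stabilization of the first $k$ terms then follows because increasing $m$ by $1$ only rescales the whole expression by $t^{\delta}$ on the stable part and perturbs it beyond degree $k$ once $m \geq N(k)$.

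The main obstacle I expect is pinning down $l$ (equivalently $\delta$) precisely and proving that $h_L$ has only finitely many negative-degree terms uniformly — i.e. controlling the \emph{bottom} of the degree range of the pairing as $m$ grows. This requires knowing that the immersed curve for $E$, in the cylinder $T^2 \setminus \{*\}$, has its homological winding controlled by $\wind_c(K)$ and its vertical extent controlled by $x([\hat D])$, so that pairing with the near-vertical line of slope $-1/m$ produces intersection points whose Alexander gradings, after the overall shift, accumulate upward from a fixed floor. I would handle this by the same immersed-curve estimates used in the proofs of Theorems \ref{Theorem:totaldim} and \ref{Theorem:extremalhfk} (bounding the number and the grading spread of intersection points of a fixed curve with a line of large slope), and by matching the leading monomial against the Torres formula as a consistency check, since Chen's argument via Torres already identifies the exponential factor $t^{m\delta}$ and the constraint on $l$.
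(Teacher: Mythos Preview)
Your proposal takes a genuinely different route from the paper. The paper does \emph{not} work at the level of Euler characteristics or immersed curves for this theorem. Instead, it analyzes the chain complex $\widehat{\text{CFA}}(\mathcal{H}_K,z,w)\boxtimes\widehat{\text{CFD}}(\mathcal{H}'_{1/m},z')$ directly: it introduces a ``far decomposition'' $\HFKh(K_m)=\mathcal{H}_{D'_m}\oplus\overline{\mathcal{H}_{D'_m}}$, proves that the shift maps $R_\pm$ induce isomorphisms $\mathcal{H}^{i+\omega}_{D'_m}\cong\mathcal{H}^{i}_{D''_m}$ preserving Maslov grading mod $2$, and shows the complement subspaces stabilize inside a fixed ball $B^{\bu}_{L_\Delta}$. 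From this it deduces that the jump sequence $d_{m,i}=\alpha_{m,i}-\alpha_{m,i+\omega}$ stabilizes (Proposition~\ref{Proposition:Alexanderjumpsequence}), and then Theorem~\ref{Theorem:extremalAlexander} follows. The constraint on $l$ is obtained at the very end by comparing $\deg(\Delta_{K_m})$ to $g(K_m)$ via the Baker--Taylor genus formula (Theorem~\ref{Theroem:BakerTaylor}) and the grading formulas of Lemma~\ref{Lemma:Gradingchangebym}.

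Your approach---decategorify first and pair Euler characteristics---is in principle viable and is essentially the Torres-formula argument the paper acknowledges as an alternative. But as written there are genuine gaps. First, the sentence ``$\Delta_{K_m}(t)\doteq\chi(\widehat{\text{CFA}})\cdot\chi(\widehat{\text{CFD}})$ as a pairing over the torus algebra'' is not a definition: you need the decategorified bordered pairing (Petkova), or equivalently the Torres formula for the two-component link $K\cup c$, and you should say which. Second, your derivation of the constraint $l\in\{-x([\hat D]),\dots,0\}$ is circular: you assert it ``is exactly why'' the Thurston-norm width is $x([\hat D])$, but you never connect the vertical extent of the immersed curve to the degree of the stabilized polynomial. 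In the paper this constraint comes from the inequality $0\le\deg(\Delta_{K_m})\le g(K_m)$ together with Baker--Taylor; in a Torres-formula approach it would come from the breadth of the two-variable Alexander polynomial of $K\cup c$, which you do not invoke. Third, the proofs of Theorems~\ref{Theorem:totaldim} and~\ref{Theorem:extremalhfk} that you cite for the ``same immersed-curve estimates'' do not, in this paper, provide Alexander-graded information in the form you need: Theorem~\ref{Theorem:totaldim} is ungraded, and Theorem~\ref{Theorem:extremalhfk} is proved via the box tensor product, not immersed curves.
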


\begin{theo}\label{Theorem:jumpsAlexander}
     Let $\{K_m\}$ be a twist family of knots, and assume $\wind_{c}(K) \neq 0$. Let $\{a_{m,i}\}_{i \in \mathbb{Z}}$ represent the series of coefficients of the Alexander polynomial of $K_m$, i.e.,
     $$\Delta_{K_m}(t)= \sum_{i \in \mathbb{Z}} \alpha_{m,i} \cdot t^i.$$
     Define $j_{m}(K)$ as follows: 
     $$j_{m}(K) : =|\{ i \ | \ \alpha_{m,i+\wind_c(K)} - \alpha_{m,i} \neq 0 \}|.$$ Then, there exists $j_{\infty}(K) \in \mathbb{N}$, such that for sufficiently large $m$ we have  
     $$j_{m}=j_{\infty}(K).$$ 
\end{theo}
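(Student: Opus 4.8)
The plan is to reduce the statement to an elementary combinatorial fact about one fixed two‑variable Laurent polynomial that does not depend on $m$. Write $E = S^3 - N(K\cup c)$ and let $\Delta_E(t,s)\in\mathbb{Z}[t^{\pm1},s^{\pm1}]$ denote the multivariable Alexander polynomial of the two‑component link $K\cup c$, with $t$ the meridian of $K$ and $s$ the meridian of $c$; set $w:=\wind_c(K)=\lk(K,c)$, nonzero by hypothesis. Since $S^3-N(K_m)$ is obtained from $E$ by Dehn filling $\partial N(c)$ along the slope determined by $-\tfrac1m$, and the $0$‑framed longitude $\lambda_c$ satisfies $[\lambda_c]=w[\mu_K]$ in $H_1(E)$, the meridian $\mu_c$ maps to $wm\cdot\mu_K$ in $H_1(S^3-N(K_m))$. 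The Torres‑type surgery formula then gives
\begin{equation*}
(t^{w}-1)\,\Delta_{K_m}(t)\;\doteq\;(t-1)\,\Delta_E\!\bigl(t,\,t^{wm}\bigr),
\end{equation*}
where $\doteq$ means equality up to a unit $\pm t^{a}$. One can see this via multiplicativity of Turaev--Milnor torsion under gluing along a torus (using $\tau(E)\doteq\Delta_E$, $\tau(S^3-N(K_m))\doteq\Delta_{K_m}(t)/(t-1)$, and $\tau$ of the filling solid torus $\doteq 1/(t^{w}-1)$), or read it off from the bordered Floer pairing computing $\HFKh(K_m)$ by passing to graded Euler characteristics; as a consistency check, setting $m=0$ recovers the classical Torres condition $\Delta_E(t,1)\doteq\tfrac{t^{w}-1}{t-1}\Delta_K(t)$.

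Next I would reinterpret $j_m(K)$. The coefficient of $t^{j}$ in $(t^{w}-1)\Delta_{K_m}(t)$ equals $\alpha_{m,j-w}-\alpha_{m,j}$, so $j_m(K)$ is precisely the number of nonzero coefficients of the Laurent polynomial $(t^{w}-1)\Delta_{K_m}(t)$. Since multiplying by a unit $\pm t^{a}$ does not change how many nonzero coefficients a Laurent polynomial has, the displayed formula shows that $j_m(K)$ equals the number of nonzero coefficients of $(t-1)\,\Delta_E(t,t^{wm})$.

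Then I would run the ``spreading out'' argument. Write $\Delta_E(t,s)=\sum_{b}P_b(t)\,s^{b}$ as a finite sum with $P_b(t)\in\mathbb{Z}[t^{\pm1}]$, and let $W$ be the $t$‑width of $\Delta_E$. If $|w|\,m>W+1$, the translated supports of the pieces $P_b(t)\,t^{wmb}$ appearing in $\Delta_E(t,t^{wm})$ are pairwise disjoint and in fact separated by gaps of length greater than $1$; hence no cancellation occurs between distinct ``blocks'' after multiplication by $(t-1)$, and the count of nonzero coefficients splits block by block:
\begin{equation*}
j_m(K)\;=\;\sum_{b}\ \#\bigl\{\text{nonzero coefficients of }(t-1)P_b(t)\bigr\}.
\end{equation*}
The right‑hand side is finite and independent of $m$; denoting it by $j_\infty(K)$ gives the claim for all $m>(W+1)/|w|$. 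The hypothesis $\wind_c(K)\neq0$ is essential here: if $w=0$ the substitution $s\mapsto t^{wm}=1$ collapses $\Delta_E$ to a single polynomial in $t$ and the argument says nothing.

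The step I expect to be the main obstacle is the surgery formula in the first paragraph: pinning it down exactly requires care with orientation and framing conventions (which only affect whether the exponent is $t^{wm}$ or $t^{-wm}$, harmless for the coefficient count) and, if one takes the bordered‑Floer route emphasized in this paper, an explicit identification of the graded Euler characteristic of the bordered invariant of $E$ along $\partial N(c)$ with an appropriate normalization of $\Delta_E(t,s)$. Once that input is secured, the remaining two steps are routine.
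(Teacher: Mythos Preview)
Your argument is correct and takes a genuinely different route from the paper. The paper deduces Theorem~\ref{Theorem:jumpsAlexander} as an immediate consequence of Proposition~\ref{Proposition:Alexanderjumpsequence}, whose proof runs through the bordered Floer machinery of Section~\ref{Section:Boxtensor}: the far decomposition $\HFKh(K_m)=\mathcal{H}_{D'_m}\oplus\overline{\mathcal{H}_{D'_m}}$, the shift isomorphism $R_{-}:\mathcal{H}^{i+\omega}_{D'_m}\to\mathcal{H}^{i}_{D''_m}$ which kills the far part of the Euler characteristic difference, and the stabilization of the complement subspaces inside a fixed ball $B^{\bu}_{L_\Delta}$. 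Your proof bypasses all of this by invoking the torsion surgery formula $(t^{w}-1)\Delta_{K_m}(t)\doteq(t-1)\Delta_E(t,t^{wm})$ and then the elementary spreading-out argument; the paper itself acknowledges (just before stating Theorem~\ref{Theorem:extremalAlexander}) that ``the main results in this area follow readily from the Torres formula,'' so you have found exactly the shortcut the authors allude to. Your approach is far more economical for this particular statement, and your identification of $j_m(K)$ with the number of nonzero coefficients of $(t^{w}-1)\Delta_{K_m}(t)$ is cleaner than the paper's route through $\chi(\overline{\mathcal{H}^{i}_{D''_m}})-\chi(\overline{\mathcal{H}^{i+\omega}_{D'_m}})$. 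What the paper's approach buys is uniformity: the same stabilization machinery (closed balls $B^\bullet_t$, shift maps, bounds on minimal cycles and relations) drives the proofs of Theorems~\ref{Theorem:extremalhfk}, \ref{Theorem:tau}, and~\ref{Theorem:thickness}, which concern knot Floer homology and cannot be reached via the Alexander polynomial alone; the Section~\ref{Section:Alexanderpolynomial} arguments also set up the far decomposition reused in Section~\ref{Section:thickness}. The caveat you flag about the surgery formula is real but minor: the torsion derivation you sketch is standard, and as you note the only ambiguity is a unit $\pm t^a$ and possibly $t^{wm}$ versus $t^{-wm}$, neither of which affects the coefficient count.
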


We can also analyze the behavior of Heegaard Floer invariants in a twist family. These results appear in Theorems \ref{Theorem:tau}, \ref{Theorem:thickness}. Theorem \ref{Theorem:tau} generalizes results of Van Cott \cite{Cott2008OzsvthSzabAR} and Roberts \cite{Roberts2011EXTENDINGVC}.

\begin{theo}\label{Theorem:tau}
    Let $\{K_m\}$ be a twist family of knots. Assume $\wind_{c}(K) \neq 0$. Then, there exist fixed integers $T,t$ such that for sufficiently large $m$, the tau invariant of $K_m$ is given by:
    $$\tau(K_m) = T \cdot m + t.$$
\end{theo}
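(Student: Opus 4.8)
The plan is to package the knot $K \subset S^1 \times D^2$ as a bordered invariant and understand how $\tau$ reads off the resulting immersed curve. Concretely, let $E = S^3 - N(K \cup c)$; viewing $E$ as a bordered three-manifold with two torus boundary components (one from $K$, one from $c$), the knot Floer complex of $K_m$ is computed by pairing the type-$D$ (or type-$A$) module of $E$ along the $c$-boundary with the solid torus glued in by $(-\tfrac1m)$-surgery, and then reading $\CFKm$ off the $K$-boundary. Equivalently, via the immersed-curve reformulation (Hanselman--Rasmussen--Watson and its knot-complement refinement à la Hanselman--Watson / Chen), $\CFKh(K_m)$ is represented by the intersection of a fixed immersed multicurve $\gamma(E)$ in the punctured torus $T_c$ with a line of slope $-\tfrac1m$; as $m \to \infty$ this line converges to the slope-$0$ (longitudinal) line, and the intersection pattern near the puncture stabilizes while the ``bulk'' of each intersection point contributes a block that repeats with period one in $m$. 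This is exactly the mechanism already used for Theorems~\ref{Theorem:totaldim} and \ref{Theorem:extremalhfk}, so I would set it up once and reuse it.

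The key steps, in order: (1) Fix the immersed-curve invariant of $E$ in $T_c$, decompose it into components, and record for each component its homology class, which is determined by $\wind_c(K)$ and the linking data; the hypothesis $\wind_c(K) \neq 0$ guarantees at least one component has nonzero winding around $T_c$, so its intersection with the surgery line grows linearly in $|m|$. (2) Recall (or derive) how $\tau$ is extracted from the immersed curve: $\tau(K_m)$ is, up to a universal normalization, the height at which the curve $\gamma(E) \cap \ell_{-1/m}$ ``crosses'' the relevant vertical level, i.e.\ it is computed from the Alexander (height) coordinate of a distinguished generator surviving in the appropriate spectral sequence / the $\nu^+$-type data encoded by the curve. (3) Track how this distinguished height changes when the surgery slope moves from $-\tfrac1m$ to $-\tfrac1{m+1}$: because passing from $\ell_{-1/m}$ to $\ell_{-1/(m+1)}$ shears the picture by one unit of the longitude, the Alexander coordinate of every surviving generator shifts by a fixed amount $T$ (an integer determined by the slope of the distinguished component of $\gamma(E)$, hence by $\wind_c(K)$ and $\lk(K,c)$), for all $m$ past the threshold $N$ where the near-puncture configuration has stabilized. (4) Conclude $\tau(K_{m+1}) - \tau(K_m) = T$ for $m \geq N$, and absorb the finitely many initial discrepancies into the constant $t := \tau(K_N) - T\cdot N$, giving $\tau(K_m) = T\cdot m + t$ for large $m$; cross-check against Van Cott's and Roberts' coherent-twist formulas, where $T = \tfrac12\,\mathrm{wind}_c(K)$ or a similar explicit value, as a consistency test.

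The main obstacle I anticipate is step (2)--(3): making precise and rigorous the claim that $\tau$ is a ``local'' quantity read off the immersed curve near the puncture in a way that behaves affinely under the one-unit shear. The subtlety is that $\tau$ is defined via the concordance-type structure of the full $\CFKm$ (e.g.\ via $\nu^+$, or via the $U$-tower), not merely via $\HFKh$, so I need the \emph{bimodule} refinement that remembers the $U$-action, not just the hat-flavoured immersed curve; one must verify that the $U$-equivariant structure on $\CFKm(K_m)$ also stabilizes in the appropriate sense and that the distinguished generator computing $\tau$ is the one whose height shears linearly rather than one of the finitely many ``exceptional'' generators near the puncture. A secondary technical point is controlling the threshold $N$ uniformly and ruling out sign/orientation ambiguities in which direction the Alexander coordinate shears (i.e.\ pinning down the sign of $T$), which I expect to handle by comparing with the genus growth computed in the companion results and with the known coherent case.
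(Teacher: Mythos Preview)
Your approach via immersed curves is genuinely different from the paper's, and you have correctly identified its own main gap. The paper does not use immersed curves for this theorem; instead it works directly with the box-tensor chain complex $\widehat{\text{CFA}}(\mathcal{H}_K,z) \boxtimes \widehat{\text{CFD}}(\mathcal{H}'_{1/m},z')$ and its basis $\bm{C^m}$ arranged in the circular ``black box / white boxes'' pattern. The filtered interpretation of $\tau$ is rewritten as a min--max over homogeneous cycles $S_I$ representing the generator of $\widehat{HF}(S^3)$ (Proposition~\ref{Proposition:taudef} and Equation~\ref{Equation:taudef}), and the heart of the argument is Lemma~\ref{Lemma:taupotentials}: the set $\mathcal{P}_m$ of Alexander-grading maximizers over all such representatives stabilizes inside a fixed ball $B^{\bu}_{L_\tau}$. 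This in turn requires a detailed analysis (Subsection~\ref{Subsection:Maslovgradingzeroelements}) of the set $GZ_m$ of Maslov-grading-zero elements---splitting it into pieces $GZ^0_m, GZ^1_m, GZ^2_m$ according to the second component of the grading representative---and then separate stabilization arguments on the positive and negative half-balls (Lemmas~\ref{Lemma:generatorinpositivehalfballstablizes} and~\ref{Lemma:generatorinnegativehalfballstablizes}) using the $\mathcal{A}_\infty$-homotopy-equivalence description of all generators (Proposition~\ref{Proposition:generatorsofHFS^3}) together with the alternating-path combinatorics of Subsection~\ref{Subsection:HomologyofboxtensorS^3:Combinatorics}. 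Once $\mathcal{P}_m$ is pinned to a fixed ball, linearity of $\tau(K_m)$ follows from Lemma~\ref{Lemma:Gradingchangebym} and Theorem~\ref{Theroem:BakerTaylor}.

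Your sketch has a real gap at exactly the point you flag: steps (2)--(3). The hat-level immersed curve $\hfk(K)$ used in the paper (Theorem~\ref{Theorem:immeresedpairingknots}) computes $\HFKh(K_m)$ as a vector space but does \emph{not} carry the Alexander filtration on $\widehat{CF}(S^3)$ needed to read off $\tau$. What you would need is a minus-flavored or $U$-equivariant immersed-curve invariant for a knot inside a solid torus, together with a pairing theorem recovering the filtered complex; the paper explicitly names this as a possible \emph{future} simplification (citing Hanselman's and Zemke's work in the ``Future work'' subsection) rather than as available machinery. Without that, ``the distinguished generator computing $\tau$ shears linearly'' is an assertion, not an argument---and the paper's proof shows this is where essentially all the work lives: one must control the entire family of representatives of the $\widehat{HF}(S^3)$ generator (not a single distinguished one), show their Alexander maximizers are trapped near the black box, and then argue stabilization. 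Your proposal does not supply a substitute for that step.
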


\begin{theo}\label{Theorem:thickness}
    Let $\{K_m\}$ be a twist family of knots. Assume $\wind_{c}(K) \neq 0$. Then, there exist fixed integers $W,w$ such that for sufficiently large $m$, the knot Floer thickness of $K_m$ is given by:
    $$\text{th}(K_m) = W \cdot m + w.$$
\end{theo}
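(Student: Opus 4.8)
The plan is to read the knot Floer thickness of $K_m$ off the immersed-curve invariant and to exploit the same periodic structure that underlies Theorem \ref{Theorem:totaldim}. Recall that (in our convention) the thickness is the range of the $\delta$-grading $\delta = M - A$ on the bigraded support of $\HFKh$, i.e. $\text{th}(K_m) = \max\delta - \min\delta$. In the immersed-curve model, $\HFKh(K_m)$ is computed by intersecting the immersed multicurve with local system $\gamma(K_m)$, lifted to the relevant $\mathbb{Z}$-cover $\widetilde{T}$ of the marked torus $\partial N(K_m)$ with coordinates recording the Alexander grading $A$ and the Maslov grading $M$ (the latter an affine function along the curve), with the vertical reference line; local systems only multiply generators at a fixed bidegree and so are irrelevant here. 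The basic observation is that $\delta$ extends to a piecewise-linear function $\bar\delta$ on the lifted curve that is Lipschitz in arclength and agrees with the $\delta$-grading at each intersection point; since consecutive intersection points lie a bounded arclength apart, $\text{th}(K_m)$ equals the total oscillation $\max_{\gamma(K_m)}\bar\delta - \min_{\gamma(K_m)}\bar\delta$ up to a bounded additive error.

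I would then import the structure of $\gamma(K_m)$ established in the proof of Theorem \ref{Theorem:totaldim}. Writing $S^3 - N(K_m) = E \cup_{T_c} \mathrm{ST}_m$, where $\mathrm{ST}_m$ is the solid torus glued to the $c$-boundary with slope $-1/m$, the curve $\gamma(K_m)$ is the pairing of the fixed bordered bimodule $\widehat{\mathit{CFDA}}(E)$ — carrying the Alexander grading coming from the $K$-basepoints — with the type-$D$ invariant of $\mathrm{ST}_m$. The latter is a periodic zig-zag whose length is $\Theta(m)$: once $m$ is large, replacing $\mathrm{ST}_m$ by $\mathrm{ST}_{m+1}$ inserts a fixed number $P \ge 1$ of identical new links into the chain. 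Hence, for $m \gg 0$, the lifted curve $\gamma(K_m)$ is a concatenation of two bounded end pieces together with $Pm + O(1)$ period pieces $Q_1, Q_2, \dots$, where $Q_{i+1}$ is obtained from $Q_i$ by a fixed shift that adds $v_A$ to the Alexander grading and $v_M$ to the Maslov grading. Here $v_A = \pm\wind_c(K)$ is nonzero precisely because of the hypothesis $\wind_c(K) \neq 0$ (traversing the $c$-region once shifts the $K_m$-Alexander grading by $\wind_c(K)$), and $v_M$ is a fixed integer determined by the group-valued gradings on the bordered invariants, independent of $i$ and of $m$.

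Granting this, the computation is immediate. Translating a period piece by the fixed shift translates its $\delta$-values by the fixed amount $\delta_v := v_M - v_A$, so $\bar\delta|_{Q_i} = \bar\delta|_{Q_1} + (i-1)\delta_v$. Therefore $\bar\delta(\gamma(K_m))$ is the union of the two bounded intervals coming from the end pieces with the intervals $\bar\delta(Q_1) + (i-1)\delta_v$ for $1 \le i \le Pm + O(1)$, whence its oscillation is $Pm\,|\delta_v| + O(1)$; by the previous paragraph $\text{th}(K_m) = Pm\,|\delta_v| + O(1)$. To upgrade this to an exact linear formula, note that the periodic structure is exact for $m \gg 0$: each further $-1/m$ twist appends exactly $P$ fully translated period pieces to the same end, so $\max_{\gamma}\bar\delta$ and $\min_{\gamma}\bar\delta$ — and hence $\text{th}(K_m)$ — change by exactly the constant $P\,|\delta_v|$ with each increment of $m$. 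Thus $\text{th}(K_m) = Wm + w$ for all $m \gg 0$, with $W = P\,|\delta_v| \in \mathbb{Z}_{\ge 0}$ and $w \in \mathbb{Z}$ fixed. (In the degenerate case $\delta_v = 0$ this says the thickness is eventually constant; one expects this to happen precisely when $F_K = 0$, i.e. for coherent twist families, consistent with the stabilization results quoted in the introduction.) Reversing the sign of the twisting gives the analogous statement as $m \to -\infty$.

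The hard part is not this computation but the structural input quoted above: proving that for $m \gg 0$ the reduced immersed curve $\gamma(K_m)$ really has the form ``two bounded end pieces plus translate-periodic interior'', with one shift vector valid simultaneously for all interior periods and all large $m$. This requires (i) reducing the box tensor product $\widehat{\mathit{CFDA}}(E) \boxtimes \widehat{\mathit{CFD}}(\mathrm{ST}_m)$ compatibly across all large $m$ while controlling the local systems, and (ii) tracking the $m$-dependent Alexander grading through the pairing: the genus of $K_m$, and hence the normalization of the Alexander grading, varies with $m$, and one must show that the per-period Alexander shift is exactly $\pm\wind_c(K)$ with all remaining $m$-dependence absorbed into a single global translation and the two bounded end pieces. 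This is precisely the analysis carried out for Theorem \ref{Theorem:totaldim} — and it is where $\wind_c(K) \neq 0$ is essential — so I would arrange the proof of Theorem \ref{Theorem:thickness} to reuse it, supplying only the $\delta$-oscillation argument above.
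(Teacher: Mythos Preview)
Your overall strategy---periodic structure implies linear $\delta$-oscillation---is morally the same as the paper's, but you have mis-located the hard input and your setup is not the paper's.

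The decisive gap is your claim that the periodic decomposition of $\gamma(K_m)$ ``is precisely the analysis carried out for Theorem \ref{Theorem:totaldim}''. It is not. The proof of Theorem \ref{Theorem:totaldim} (Section \ref{Section:Immeresedcurve}) never studies $\gamma(K_m)$ on $\partial N(K_m)$; it pairs the fixed curve $\hfk(K)$ on the $c$-torus with the slope-$m$ line $l_m$ and reads off \emph{ungraded} intersection numbers via singular pegboard diagrams. No bimodule $\widehat{\mathit{CFDA}}(E)$, no reduction of $\gamma(K_m)$, and no grading control appears there. So the two items you flag as ``the hard part''---(i) reducing the box tensor compatibly in $m$ while keeping a single shift vector for both $A$ and $M$, and (ii) showing the per-period Alexander shift is exactly $\pm\wind_c(K)$ with remaining $m$-dependence absorbed into bounded ends---are genuinely unproved in your write-up, and nothing in Section \ref{Section:Immeresedcurve} supplies them.

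What the paper actually does (Section \ref{Section:thickness}) is the chain-level analogue of your picture, built on the machinery of Sections \ref{Section:Boxtensor}--\ref{Section:Alexanderpolynomial}: it uses the far decomposition $\HFKh(K_m)=\mathcal{H}_{D'_m}\oplus\overline{\mathcal{H}_{D'_m}}$, constructs a shift-equivariant basis $\mathcal{B}_m$ for the unstable piece $\mathcal{H}_{D'_m}$ (Proposition \ref{Proposition:shiftequivarientbasis}), observes via Corollary \ref{Corollary:deltachangebyshift} that $\delta_m$ is linear along each shift orbit so the extremes sit at orbit endpoints inside a fixed ball $B^{\bu}_{L_{\th}}$, and combines this with the stabilization of $\overline{\mathcal{H}_{D'_m}}$ (Lemma \ref{Lemma:Alexanderjumpsdefects-stabilization}) to get fixed extremizing classes (Lemma \ref{Lemma:deltagradingmaximizer}); linearity of $\text{th}(K_m)$ then drops out of Lemma \ref{Lemma:Gradingchangebym}. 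Your ``period pieces'' correspond to the shift orbits, your ``end pieces'' to the stabilized ball, and your $\delta_v$ to the constant in Corollary \ref{Corollary:deltachangebyshift}---but the work establishing that these objects exist and behave as claimed is Sections \ref{Section:Boxtensor}--\ref{Section:Alexanderpolynomial}, not the intersection-count argument you cite.
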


\subsection{Organization of the paper}\hfill\\

This paper is organized as follows. In Section \ref{Section:background}, we briefly review background material from bordered Floer theory and the theory of immersed curve invariants. Section \ref{Section:Immeresedcurve} applies the immersed curve invariant to prove Theorem \ref{Theorem:totaldim}. In Section \ref{Section:Boxtensor}, we use the pairing theorem in bordered Floer homology to analyze the properties of the box tensor product, which yields the knot Floer homology of $K_{m}$. The remaining sections are devoted to proving the results stated in the introduction, building on the setup and constructions developed in Section \ref{Section:Boxtensor}.\\

In the following, we provide a more detailed overview of the paper. This aims to give readers familiar with bordered Floer theory and knot Floer homology a clearer sense of the main ideas and techniques, while also serving as a road map for those less familiar with the technical details. That said, some parts of the overview may be difficult to fully grasp without reference to the more thorough explanations provided later in the paper.\\ 

We set $\omega$ to be $-\lk(K,c)$. In this paper, without the loss of generality, we assume that $\lk(K,c) > 0$.\\  

In Section \ref{Section:background}, we briefly recall key tools from bordered Floer theory and the theory of immersed curve invariants. This section may also serve as a concise survey of both theories for interested readers.\\

The main strategy is to reinterpret the Dehn surgery description of the twisting family ${K_m}$ as a gluing of the following bordered three-manifolds:
$$(H , \mu_{c} , \lambda_{c}) \ \text{and} \ (H'_{\frac{1}{m}}, \mu_{c}, \lambda_{c})$$
where:
\begin{itemize}
    \item $H := S^3 \setminus N(c)$ is the solid torus containing the knot $K$;
    \item $\lambda_{c} \subset \partial H$ is a Seifert longitude of $c$, and in this setting it serves as a meridian of $H$;
    \item $\mu_c \subset \partial H$ is a meridian of $c$, which acts as a longitude on $\partial H$;
    \item $H'_{\frac{1}{m}}$ is a solid torus whose meridian is homologous to $[\mu_c] - m \cdot [\lambda_c]$.\\
\end{itemize}
  
In Section \ref{Section:Immeresedcurve}, we apply the surgery-to-gluing translation described above, together with the computation of the immersed curve invariant of $(H'_{\frac{1}{m}}, \mu_c, \lambda_c)$ from Example \ref{Example:twistedmeridian}, and the pairing theorem for immersed curves (Theorem \ref{Theorem:immeresedpairing}), to establish Theorem \ref{Theorem:totaldim}.\\

From the bordered translation of the twisting problem, we extract certain type $D$ and type $A$ invariants. We assume that the pair $(\mu_c, \lambda_c)$ parametrizes $\partial H$ compatibly with its orientation. This parametrization induces the reversed orientation on $\partial H'_{\frac{1}{m}}$. Accordingly, we consider:
\begin{itemize}
    \item A doubly-pointed bordered Heegaard diagram $(\mathcal{H}_{K},z,w)$ representing the pair $(H,K)$, which determines
    $$\widehat{\text{CFA}}(\mathcal{H}_{K},z)\simeq \widehat{\text{CFA}}(H) \ \ \text{and}  \ \  \widehat{\text{CFA}}(\mathcal{H}_{K},z,w) \simeq \widehat{\text{CFA}}(H,K)$$
    \item A bordered Heegaard diagram $(\mathcal{H}'_{\frac{1}{m}},z')$ for $H'_{\frac{1}{m}}$, which yields:
    $$\widehat{\text{CFD}}(\mathcal{H}'_{\frac{1}{m}},z')$$
    This computation is done in Example \ref{Example:twistedmeridian}.
\end{itemize}

By the pairing theorems for bordered invariants (Theorems \ref{Theorem:pairing} and \ref{Theorem:pairingknots}), we have chain homotopy equivalences: 
$$\CFKh(S^3,K_m) \simeq \widehat{\text{CFA}}(\mathcal{H}_{K}, z, w) \boxtimes \widehat{\text{CFD}}(\mathcal{H}'_{\frac{1}{m}} , z'),$$
$$\CFh(S^3) \simeq \widehat{\text{CFA}}(\mathcal{H}_{K}, z) \boxtimes \widehat{\text{CFD}}(\mathcal{H}'_{\frac{1}{m}} , z').$$
We denote the absolute Maslov and Alexander gradings on these complexes by $h_m$ and $a_m$, respectively.\\

The remainder of this paper is devoted to analyzing these two box tensor products. Throughout, we assume that both $\widehat{\text{CFA}}(\mathcal{H}_K, z)$ and $\widehat{\text{CFA}}(\mathcal{H}_K, z, w)$ are bounded. This assumption is not restrictive, as it can be arranged up to homotopy equivalence.\\

In Subsection \ref{Subsection:GenstructureofBoxtensor}, we construct a basis $\bm{C^{m}}$ for the two box tensor products. This basis is arranged in a specific circular pattern, illustrated in Figure \ref{Figure:Boxtensorscheme}, and decomposes into a central black box $C^{\bu}$ and a collection of white boxes $C^{\circ}_{1}, \dots, C^{\circ}_{m}$. We define two distinct inclusion maps,
$$\Phi_{m}, \Phi'_{m}: \bm{C^{m}} \rightarrow \bm{C^{m+1}}$$ 
which endow the family $\{\bm{C^{m}}\}_{m \in \mathbb{Z}_{\geq 0}}$ with two nested sequence structures, as depicted in Figures \ref{Figure:Inclusion1} and \ref{Figure:Inclusion2}. This framework greatly simplifies many of our subsequent discussions on stabilization..\\

We also describe the differentials in the box tensor product, as detailed in Proposition \ref{Proposition:boxtensordifferential} and illustrated in Figure \ref{Figure:Edgedecomposition}. Throughout, we adopt a graph-theoretic presentation for all our chain complexes, including the bordered invariants.\\

In Subsection \ref{Subsection:Gradingsinboxtensor}, we analyze the relative gradings of elements in the basis $\bm{C^{m}}$ and their behavior as $m$ varies. Moreover, we compute the grading differences between elements in $\bm{C^{m}} \setminus C^{\bu}$ and their natural shifts.\\

The following three subsections primarily focus on the chain complex
$$\widehat{\text{CFA}}(\mathcal{H}_{K}, z, w) \boxtimes \widehat{\text{CFD}}(\mathcal{H}'_{\frac{1}{m}} , z')$$
since several results depend on the Alexander grading.\\

In Subsection \ref{Subsection:Minimalandhomogenous}, we begin analyzing elements of the chain complexes by viewing them as subsets of the basis $\bm{C^{m}}$. Working over the field $\mathbb{F}_{2}$, there is a natural one-to-one correspondence:
$$I \subseteq \bm{C^{m}} \longleftrightarrow S_I=\sum_{\nu \in I} \nu.$$
This perspective allows us to define a \emph{minimality} condition, which serves as a refined version of homogeneity and plays a key role in our later structural arguments.\\

In Subsection \ref{Subsection:lengthsizecycles}, we use the circular arrangement of the basis $\bm{C^{m}}$ to define a natural metric on the set of basis elements. This allows us to introduce the notion of \emph{length} (a refinement of diameter) for subsets of $\bm{C^{m}}$, which quantifies how spread out the elements of a subset can be along the circular ordering. We then establish a uniform upper bound on the length of subsets corresponding to minimal cycles, providing a key constraint for our later stability arguments.\\ 

In Subsection \ref{Subsection:lengthsizerelations}, we study \emph{relations} between cycles i.e. equalities of the form 
$$\partial S_{R} = S_{I_1} + \cdots + S_{I_r}.$$
where each $S_{I_j}$ is a cycle. We define a minimality condition for such relations and establish upper bounds on the length of minimal relations, analogous to the bounds on minimal cycles. While the homology is determined by the full spaces of cycles and relations, we show that the minimal cycles and minimal relations suffice to generate these spaces. This allows us to restrict attention to a well-controlled subset of data in our subsequent analysis.\\

The next two subsections focus on the analysis of the complex 
$$\widehat{CFK}(S^3) \simeq \widehat{\text{CFA}}(\mathcal{H}_{K}, z) \boxtimes \widehat{\text{CFD}}(\mathcal{H}'_{\frac{1}{m}} , z').$$
Since the homology of this complex is isomorphic to $\HF(S^3)$, it is one-dimensional. Our goal is to examine the representatives of its unique generator, expressed in terms of the basis $\bm{C^{m}}$.\\

Note that $\widehat{\text{CFA}}(\mathcal{H}_{K}, z)$ is simply the type A invariant of a solid torus with meridian $\alpha_2$. We introduce an alternative bordered Heegaard diagram $(\mathcal{H}_{\infty}, z)$ for this manifold, as shown in Figure \ref{Figure:Mazurisotoped}, and consider the corresponding type A module $\widehat{\text{CFA}}(\mathcal{H}_{\infty}, z)$, illustrated in Figure \ref{Figure:HinfinitytypeA}.\\

In Subsections \ref{Subsection:HomologyofboxtensorS^3:Algebra}, we use the $\A_{\infty}$ homotopy equivalences 
$$f : \widehat{\text{CFA}}(\mathcal{H}_{\infty},z) \rightarrow \widehat{\text{CFA}}(\mathcal{H}_{K},z),$$
to study representatives of the non-trivial homology class. We examine the grading double cosets of the basis elements that appear in such representatives. This analysis allows us to identify certain fixed (i.e., stabilized) elements with Maslov grading zero. These elements serve as reference points for determining absolute Maslov gradings in later arguments, including the proof of Theorem \ref{Theorem:extremalhfk}.\\

We also show that any such representative
$$S_{I_{m_{0}}} \in \widehat{\text{CFA}}(\mathcal{H}_{K}, z) \boxtimes \widehat{\text{CFD}}(\mathcal{H}'_{\frac{1}{m_{0}}} , z')$$
can be extended to a family of representatives $\{S_{I_{m}}\}$ for all $m \in \mathbb{Z}_{\geq 0}$.  While the sequence ${I_m}$ may not be nested, we are able to track how it evolves with $m$, as described in Equation \ref{Equation:Twostabilizationformula}. This structural control is crucial to the arguments in Section \ref{Section:tau}.\\

In Subsection \ref{Subsection:HomologyofboxtensorS^3:Combinatorics}, we introduce notations that facilitate comparison of gradings among basis elements appearing in a representative of the non-trivial homology class, by analyzing the differentials (i.e., edges) connecting them. While these concepts are not particularly deep, they simplify several proofs in Section \ref{Section:tau}.\\

Section \ref{Section:ExtermalknotFloerstabilization} is primarily devoted to the proof of Theorem \ref{Theorem:extremalhfk} in Subsection \ref{Subsection:ProofofTheoremextremalhfk}. In Subsection \ref{Subsection:ballsandshiftmaps}, we introduce the closed metric balls around the black box, denoted by $B^{\bu}_{t}$, and the shift maps $R_{\pm}$ (see Figure \ref{Figure:Blackboxball}). These closed balls capture the stable portion of the chain complex (illustrated in Figure \ref{Figure:Nested}) and form the foundation for nearly all stabilization arguments.\\

By analyzing the action of the shift maps on differentials and null-homologous classes, Subsection \ref{Subsection:extremahhfkfallinball} establishes that the extremal knot Floer homologies are supported within a fixed closed ball. Then, in Subsection \ref{Subsection:ProofofTheoremextremalhfk}, we complete the proof of Theorem \ref{Theorem:extremalhfk} by showing that these extremal homologies can be fully determined from the \emph{data} of that fixed closed ball, namely, the differentials and grading double coset representatives associated of its elements.\\

Section \ref{Section:Alexanderpolynomial} is devoted to proving Theorems \ref{Theorem:extremalAlexander} and \ref{Theorem:jumpsAlexander}. Both follow from a stabilization result (Proposition \ref{Proposition:Alexanderjumpsequence}) concerning the sequence of \emph{differences} (or \emph{jumps}) in the coefficients of the Alexander polynomial:
$$d_{m,i} := \alpha_{m,i} - \alpha_{m,i+\omega} = \chi(\HFKh(K_{m},i)) - \chi(\HFKh(K_{m},i+\omega)).$$
To compare $\widehat{HFK}(K_{m}, i)$ and $\widehat{HFK}(K_{m}, i+\omega)$, we first decompose the knot Floer homology into the subspace classes represented by cycles located \emph{far} from the black box and its complement subspace. The notion of \emph{far} can mean outside a fixed closed ball, but more generally refers to a broader class, see Figure \ref{Figure:Alexanderjumpsdefects}). This is the \emph{far decomposition} introduced in Subsection \ref{Subsection:Fardecomposition}. In Subsection \ref{Subsection:Shiftsonfar}, we prove that the shift maps induce isomorphisms on the subspaces of far homology classes, shifting the Alexander grading by $\omega$ while preserving the Maslov grading modulo two.\\

This reduces the analysis to the complementary subspaces, consisting of non-far classes. In Subsection \ref{Subsection:Stabilizationofcomplement}, we show that these classes are supported within a fixed closed ball and stabilize as an ungraded vector space. The argument closely parallels that used in the proof of Theorem \ref{Theorem:extremalhfk}. Building on this, in Subsection \ref{Subsection:stabilizationofAlexanderjump}, we analyze the behavior of gradings as $m \to \infty$ to establish Proposition \ref{Proposition:Alexanderjumpsequence}. Finally, in Subsection \ref{Subsection:proofsofalexanderstabilization}, we deduce Theorems \ref{Theorem:extremalAlexander} and \ref{Theorem:jumpsAlexander} as consequences of this proposition.\\

In Section \ref{Section:tau}, we focus on proving Theorem \ref{Theorem:tau}. We begin by recalling the definition of the $\tau$ invariant and rewriting it as
$$\tau(K_m) = \underset{S_{I}\in \mathcal{HZ}_{m}}{\min} \Big( \underset{\nu \in I}{\max} \ a_{m}(\nu) \Big),$$
where 
$$\mathcal{HZ}_{m} \ \subset \ \widehat{\text{CFA}}(\mathcal{H}_{K}, z) \boxtimes \widehat{\text{CFD}}(\mathcal{H}'_{\frac{1}{m}} , z'),$$
denotes the set of homogeneous representatives of the nontrivial homology class.
To compute $\tau(K_m)$, we focus on the set $\mathcal{P}_{m}$ as the set of elements that realize the maximal Alexander grading in some cycle in $\mathcal{HZ}_{m}$. The main strategy is to show that this set stabilizes within a fixed closed ball, as stated in Lemma \ref{Lemma:taupotentials}. In Subsection \ref{Subsection:Proofoftau}, we then conclude the proof of Theorem \ref{Theorem:tau} from this stabilization.\\

In Subsection \ref{Subsection:Maslovgradingzeroelements}, we use the grading machinery developed in Subsection \ref{Subsection:Gradingsinboxtensor} to determine the subset of elements in $\bm{C^{m}}$ with Maslov grading zero, and their properties. Although this material naturally fits within Section \ref{Section:Boxtensor}, we chose to include it in Section \ref{Section:tau} instead, as the proof techniques align more closely with the stabilization arguments used in Sections \ref{Section:ExtermalknotFloerstabilization} and \ref{Section:Alexanderpolynomial}.\\

Subsection \ref{Subsection:Stabilizationofmaximizers} is devoted to the proof of Lemma \ref{Lemma:taupotentials}. We begin by applying the results from Subsection \ref{Subsection:Maslovgradingzeroelements} to show that any element that could potentially realize the maximal Alexander grading in a subset $I$ for $S_{I} \in \mathcal{HZ}_{m}$ lies within a fixed closed ball $B^{\bu}_{L_{\tau}}$ ,as established in Proposition \ref{Proposition:taupotentials-bounded} and Corollary \ref{Corollary:placeofmaximizer}.\\

Next, we consider the collection of subsets given by $I \cap B^{\bu}_{L_{\tau}}$, where $I$ comes from cycles $S_{I} \in \mathcal{HZ}_{m}$. By Lemma \ref{Lemma:homotopyequivgenerator2} in Subsection \ref{Subsection:HomologyofboxtensorS^3:Algebra}, we have a distinguished element $\nu_{0} \in I \cap C^{\bu} $. Comparing the Alexander gradings of other elements in $I \cap B^{\bu}_{L_{\tau}}$ relative to $\nu_0$, we can further restrict this collection to those elements that may potentially realize the maximal Alexander grading. This restricted family is precisely what matters for the computation of $\mathcal{P}_{m}$.\\

Finally, we prove that this collection of subsets of potential maximizers stabilizes as $m$ grows. This is done in Lemmas \ref{Lemma:generatorinpositivehalfballstablizes} and \ref{Lemma:generatorinnegativehalfballstablizes}. To this end, we split the closed ball $B^{\bu}_{L_{\tau}}$ to a positive and negative half-balls (as defined in Subsection \ref{Subsection:ballsandshiftmaps}). The stabilization over the positive half-ball follows readily from the results in Subsection \ref{Subsection:HomologyofboxtensorS^3:Algebra}, especially Equation \ref{Equation:Twostabilizationformula}. In contrast, stabilization over the negative half-ball requires the combinatorial tools developed in Subsection \ref{Subsection:HomologyofboxtensorS^3:Combinatorics}.\\

Finally, in Section \ref{Section:thickness}, we prove Theorem \ref{Theorem:thickness}. This proof further analyzes the far decomposition of the knot Floer homology. In Subsection \ref{Subsection:Shiftequivariantbasis}, we construct a \emph{shift-equivariant} basis for the subspace of far homology classes. This is expected both from the results of Subsection \ref{Subsection:Shiftsonfar} and the theme of Proposition \ref{Proposition:boxtensordifferential} (also illustrated in Figure \ref{Figure:Edgedecomposition}), which shows that the chain complex (its elements and differentials) behaves shift-equivariantly away from the black box.\\

In Subsection \ref{Subsection:deltagradingstab}, as a straightforward corollary of the results in Subsection \ref{Subsection:Gradingsinboxtensor}, we observe that the $\delta$-grading changes linearly along any shift orbit. Consequently, the maximizers and minimizers of the $\delta$-grading within the far homology subspace lie at the two extremes of a shift orbit and remain close to the black box. Combining this with the stabilization of the complement subspaces from Subsection \ref{Subsection:Stabilizationofcomplement}, we conclude that the overall maximizer and minimizer of the $\delta$-grading also stabilize, as stated in Lemma \ref{Lemma:deltagradingmaximizer}. This leads to an immediate proof of Theorem \ref{Theorem:thickness} in Subsection \ref{Subsection:proofofthickness}.\\

\subsection{Future work}\hfill\\

There are several promising directions for future work. We briefly highlight a few below. As previously mentioned, Baker and Taylor \cite{Baker2016DehnFA} proved a stabilization result for the Seifert genus in a twisting family. This result, stated in Theorem \ref{Theroem:BakerTaylor}, arises from their broader study of how the Thurston norm behaves under Dehn filling.

\begin{theo}\label{Theroem:BakerTaylor}\cite{Baker2016DehnFA}
    There is a constant $G=G(K,c)$ such that 
    $$2g(K_{m}) = (x([\hat{D}])\cdot \lk(K,c))\cdot m+ 2G $$
    for sufficiently large $m$.
\end{theo}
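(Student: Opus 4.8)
The plan is to follow the strategy of Baker--Taylor \cite{Baker2016DehnFA}: translate the statement into one about the Thurston norm of the Seifert class under the Dehn fillings of $E$, and then invoke the eventual linearity of the Thurston norm along a ray of filling slopes. Write $\ell := \lk(K,c)$ and $M_m := S^3 \setminus N(K_m)$. For $m \gg 0$ we may assume $g(K_m) \ge 1$, so that $2g(K_m) = x_{M_m}([\Sigma_m]) + 1$, where $[\Sigma_m] \in H_2(M_m,\partial M_m)$ is the Seifert class and we use that the Thurston norm of a Seifert class equals $2g-1$. As recalled in the introduction, $M_m$ is obtained from $E$ by Dehn filling the component $\partial N(c)$ along the slope $\gamma_m$ with $[\mu_c] - m[\lambda_c]$ bounding a disk. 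The first step is the homological identity
$$[\Sigma_m] \;=\; [\Sigma_0] \;+\; m\,\ell\,[\hat{D}] \qquad \text{in } H_2(E,\partial E)\otimes\mathbb{Q}$$
(for a suitable orientation of $\hat{D}$): on $\partial N(K)$ the $m$-fold twist changes the Seifert framing of $K$ by $m\ell^2$ meridians, which is $m\ell$ times the $\ell$-fold meridional boundary of $[\hat{D}]$ there, while on $\partial N(c)$ the boundary of $m\ell\,[\hat{D}]$ is $m\ell\,[\lambda_c]$, accounting for the change of slope from $\ell\gamma_0$ to $\ell\gamma_m$.

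For the upper bound, the triangle inequality for the Thurston norm on $E$ gives
$$x_E([\Sigma_m]) \;\le\; x_E([\Sigma_0]) + m\,\ell\cdot x_E([\hat{D}]) \;=\; x_E([\Sigma_0]) + m\,\ell\cdot x([\hat{D}]).$$
A routine capping argument — replacing the $\ell\gamma_m$-boundary on $\partial N(c)$ of a norm-minimizing representative by meridian disks of the filling solid torus — converts a surface attaining this bound into a Seifert surface for $K_m$ whose complexity differs by at most a constant, yielding $2g(K_m) \le \bigl(x([\hat{D}])\cdot\lk(K,c)\bigr)m + C$ for some $C = C(K,c)$. This is the linear upper bound with the claimed leading coefficient.

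The crux is the matching lower bound, i.e.\ that the norm cannot eventually drop below this linear prediction. Here one invokes the principle — which is precisely the content of \cite{Baker2016DehnFA}, ultimately resting on Gabai's sutured-manifold technology — that along the ray of filling slopes $\{\gamma_m\}$ the function $m \mapsto x_{E(\gamma_m)}([\Sigma_m])$ is affine-linear in $m$ for $m \gg 0$. The mechanism is that a minimal-complexity Seifert surface for $K_m$, made efficient with respect to the twisting disk $\hat{D}$ and then decomposed along parallel copies of a norm-minimizing representative $P$ of $[\hat{D}]$, is forced to contain on the order of $m\ell$ nearly-parallel copies of $P$; any sublinear deficit in $-\chi$ relative to the upper bound would, via an exchange/innermost-disk argument among these parallel copies, produce a representative of $[\hat{D}]$ of complexity strictly below $x([\hat{D}])$, a contradiction. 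Establishing this eventual affine-linearity rigorously, and in particular ruling out a smaller leading coefficient, is the main obstacle, and it is exactly what Baker--Taylor carry out.

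Combining the upper bound with the eventual affine-linearity supplied by the lower bound, the difference $2g(K_m) - \bigl(x([\hat{D}])\cdot\lk(K,c)\bigr)m$ is eventually constant; call it $2G$. This gives $2g(K_m) = \bigl(x([\hat{D}])\cdot\lk(K,c)\bigr)m + 2G$ for $m \gg 0$. (Comparing parities of the two sides shows that $x([\hat{D}])\cdot\lk(K,c)$ is necessarily even, consistent with the equality $x([\hat{D}]) = \lk(K,c) - 1$ that holds in the coherent case.)
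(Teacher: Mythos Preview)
The paper does not provide its own proof of this theorem. Theorem~\ref{Theroem:BakerTaylor} is stated in the introduction as a result of Baker--Taylor \cite{Baker2016DehnFA} and is used as a black box throughout (e.g.\ in the proofs of Theorems~\ref{Theorem:extremalhfk}, \ref{Theorem:extremalAlexander}, and \ref{Theorem:tau}); the paper even lists ``an alternative proof of this stabilization result from a bordered Floer perspective'' as a direction for future work. So there is no in-paper argument to compare your proposal against.

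Your sketch is a reasonable outline of the Baker--Taylor mechanism (Thurston norm under Dehn filling, upper bound via the triangle inequality and capping, lower bound via sutured/taut-surface technology), and you are honest that the hard step---eventual affine-linearity with the predicted slope---is exactly what is imported from \cite{Baker2016DehnFA}. That is consistent with how the present paper treats the result. One small caution: your final parenthetical asserting that $x([\hat{D}])\cdot\lk(K,c)$ must be even is not something you have actually argued in the general (non-coherent) case; the parity heuristic you give only shows that the constant $2G$ absorbs any parity discrepancy, not that the slope itself is even. I would drop or qualify that remark.
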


We have used their result multiple times throughout this work. It is natural to expect that a careful investigation of the relationship between the bordered invariant $\widehat{\text{CFA}}(S^{3} \setminus N(c), K)$ and the Thurston norm on $S^{3} \setminus N(K \cup c)$ could yield an alternative proof of this stabilization result from a bordered Floer perspective.\\

Another promising direction for future work is to study the twisting problem in the case where $\wind_{c}(K) = 0$. As is apparent throughout this paper, both the statements and the techniques rely crucially on the assumption that $\wind_{c}(K) \neq 0$.\\

In addition, it is possible that the arguments presented here can be simplified using more modern tools. As Chen Zhang pointed out to us, work of Ian Zemke on a bordered perspective of the link surgery formula \cite{zemke2021bordered} could potentially be used to study the twisting problem. Similarly, advances in the theory of immersed curves, such as Hanselman's work \cite{hanselman2023knotfloerhomologyimmersed}, might help avoid the combinatorial intricacies of working directly with chain complexes.\\

This project began as the author’s first encounter with both bordered Floer theory and the theory of immersed curves. As a result, the methods used here were designed to involve only the minimal necessary machinery from the two theories, focusing instead on a direct combinatorial analysis of the relevant complexes.\\

Finally, although we have made efforts to refine the exposition, through improvements in structure, notation, and presentation, we are aware that, given the length and scope of the paper, there remains room for improvement. We welcome comments and suggestions from readers and plan to revise the paper further in future versions.

\section*{Acknowledgement}

It is a pleasure to thank my advisor, András Juhász, whose patience, support, and guidance made this project possible. I am especially grateful to Claudius Zibrowius, who served as a mentor throughout the project. His insights were invaluable not only in helping me learn the relevant theory, but also in sustaining my motivation through his interest in the work. I also wish to thank Matthew Hedden, Jacob Rasmussen, Sunkyung Kang, and Chen Zhang for their valuable feedback, insightful suggestions, and interest in the project.

\section{Background}\label{Section:background}
\subsection{Type D and Type A invariants}\hfill\\

We focus on \emph{bordered manifolds with torus boundary}. Let $M$ be an orientable 3-manifold with torus boundary, and select two oriented, essential, simple closed curves $\alpha_1$ and $\alpha_2$ on $\partial M$ with $|\alpha_1 \cap \alpha_2| = 1$. As we will discuss, to define a bordered invariant of type $A$, we require the oriented intersection number $\alpha_1 \cdot \alpha_2$ to be $+1$. Conversely, to define a type $D$ invariant, we require this intersection to be negative. The triple $(M, \alpha_1, \alpha_2)$ is called a bordered three-manifold.\\

Lipshitz, Oszváth and Thurston \cite{Lipshitz2008BorderedHF} associated a \emph{type D structure} $\widehat{\text{CFD}}(M,\alpha_1,\alpha_2)$, and a \emph{type A structure} $\widehat{\text{CFA}}(M,\alpha_1,\alpha_2)$, over the \emph{torus algebra} $\A$ to bordered three-manifolds. Both structures are invariant up to certain notion of homotopy equivalence. We define these terms in the following paragraphs. \\

The torus algebra $\A$ is given by the path algebra of the quiver shown in Figure \ref{Figure:quiver}. As a vector space over $\mathbb{F}_2$, the torus algebra has a basis consisting of two idempotent elements $\iota_0$ and $\iota_1$, and six elements: $\rho_1,\rho_2,\rho_3,\rho_{12} =\rho_{1} \rho_{2},\rho_{23}=\rho_{2} \rho_{3},\rho_{123}=\rho_{1}\rho_{2}\rho_{3}$. We occasionally use the notation $\rho_{\varnothing}$ to denote the unit element $\mathds{1} = \iota_0 + \iota_1$. \\

Let $\I$ represent the ring of idempotents of $\A$. Unless stated otherwise, all of the tensor products in this section are over $\I$. We use $\mu : \A \otimes \A \rightarrow \A$ to denote the multiplication of the torus algebra. Note that torus algebra is associative. \\

A type D structure over $\A$ is a unital left $\I$-module $N$ equipped with an $\I$-linear (differential) map $\delta : N \rightarrow \A \otimes N$. The map $\delta$ must  satisfy a compatibility condition analogous to $\partial^2 =0$. This condition is vanishing of the map $(\mu \otimes \id) \circ (\id \otimes \delta) \circ \delta$ also described in the following sequence: 
$$N \xrightarrow[]{\delta} \A \otimes N \xrightarrow[]{\id_{\A} \otimes \delta} \A \otimes \A \otimes N \xrightarrow[]{\mu \otimes \id_{N}} \A \otimes N.$$
For a type D structure $N$, we can define a family of maps $\delta_{k} : N \rightarrow \A^{\otimes k} \otimes N$. Let $\delta_0 = \id_{N}$, $\delta_1 := \delta$, and then for $k \geq 2$ inductively define :
$$\delta_{k} := (\id^{\otimes (k-1)} \otimes \delta_1) \circ \delta_{k-1}.$$
We call a type D structure \emph{bounded} if $\delta_k =0$ for sufficiently large $k$. We call a type D structure \emph{reduced} if there are no elements $x$ with a summand $\rho_{\varnothing} \otimes y$ in $\delta(x)$. In this paper, we work exclusively with bounded type D structures. This condition is not very restrictive as any type D structure is homotopic to a bounded one (see \cite{Hanselman2016BorderedFH}).\\

There is a convenient graph-theoretic shorthand for describing type D structures. We work with decorated graphs, where each vertex is labelled with exactly one element from $\{\bu,\ci\}$, and every edge is directed and labeled with exactly one element from $\{\varnothing, 1,2,3,12,23,123\}$. The edge and vertex labels must be compatible with the quiver in Figure \ref{Figure:quiver}, leading to the following possible arrangements: 
$$\bu \xrightarrow[]{\varnothing} \bu, \ \ci \xrightarrow[]{\varnothing} \ci, \ \bu \xrightarrow[]{1} \ci, \ \bu \xrightarrow[]{3} \ci, \ \ci \xrightarrow[]{2} \bu, \ \bu \xrightarrow[]{12} \bu, \ \ci \xrightarrow[]{23} \ci, \
 \bu \xrightarrow[]{123} \ci$$

We associate a decorated graph $\Gamma_{N}$ to the type D structure $N$. Since $N$ is a left $\I$-module, it splits as $N = \iota_0 N \oplus \iota_1 N$. Let $B_N$ be a basis for $N$, formed from the disjoint union of a basis $B_{N_0}$ for $\iota_0 N $, and a basis $B_{N_1}$ for $\iota_1 N $. For each element of $B_N$, assign a corresponding vertex in $\Gamma_N$, where elements of $B_{N_0}$ are labeled with $\bu$ and elements of $B_{N_1}$ are labelled with $\ci$. We use the same notation for both an element in the basis $B_N$ and its associated vertex.\\

Next, we describe the edges that originate from any $x \in B_N$. The map $\delta(x)$ can be uniquely expanded in terms of the basis $B_N$; that is, there exist $x_1,\cdots,x_k \in B_N$ and $I_1,\cdots,I_k \in \{\varnothing, 1,2,3,12,23,123\}$ such that: 
$$\delta(x) = \sum_{1\leq j \leq k} \rho_{I_j} \otimes x_{j}.$$
For each $1 \leq j \leq k$, we consider an edge in $\Gamma_N$, extending from $x$ to $x_j$ with the label $\rho_{I_j}$. An example of such a decorated graph is depicted in Figure \ref{Figure:typeDgraph}.\\

\begin{figure}[h]
\centering
\begin{center}
\includegraphics[scale=1]{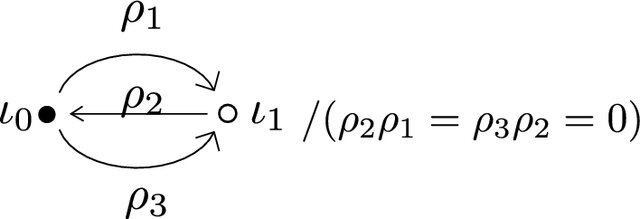}
\end{center}
\caption{Quiver used to define torus algebra $\A$ \cite{Chen2019KnotFH}}\label{Figure:quiver}
\end{figure}

\begin{figure}[h]
\centering
\begin{center}
\includegraphics[scale=0.15]{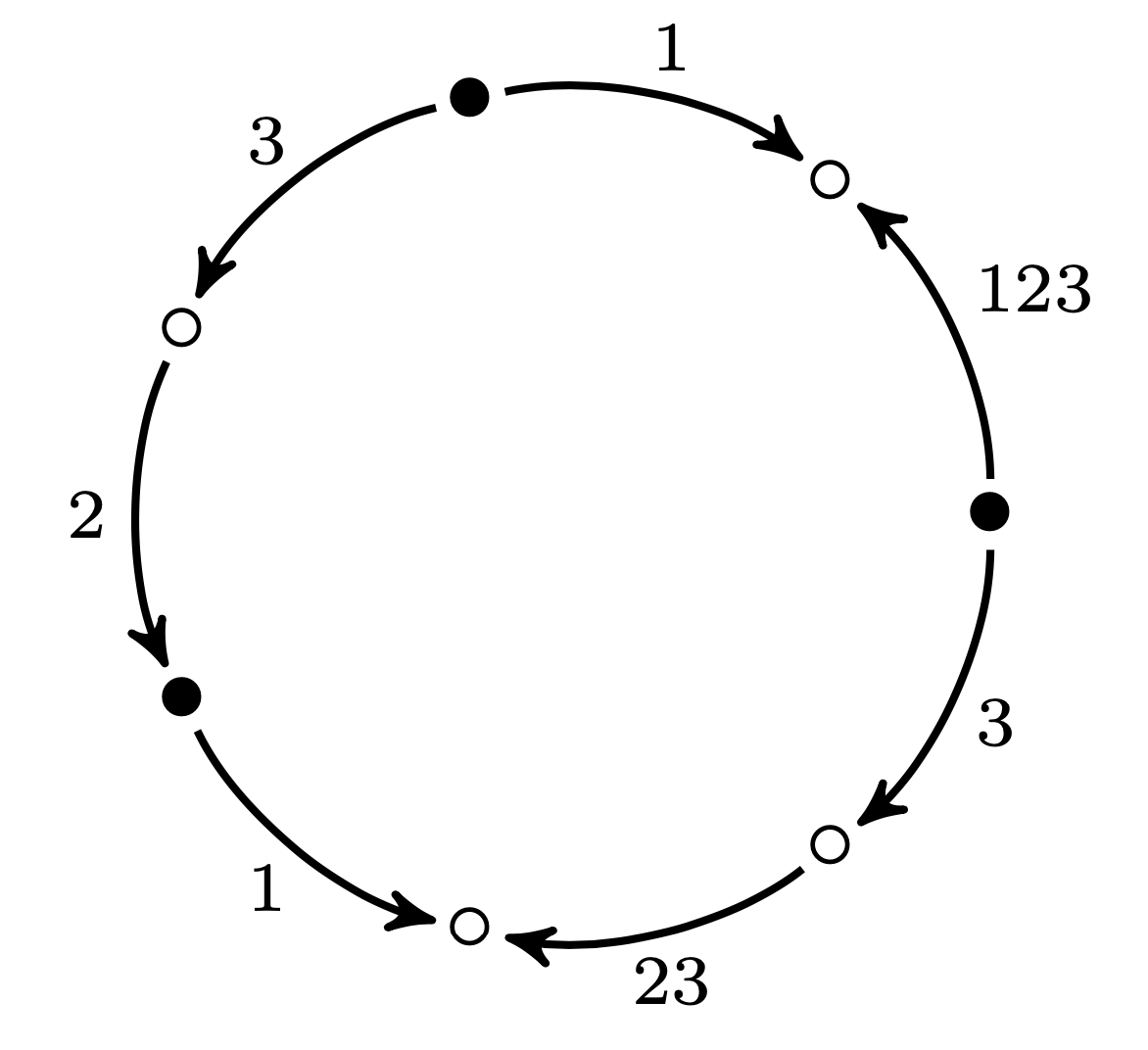}
\end{center}
\caption{Decorated graph of a type D structure \cite{Hanselman2016BorderedFH}}\label{Figure:typeDgraph}
\end{figure}

Higher-order $\delta_{k}$ are determined by directed paths in $\Gamma_N$. The type D structure $N$ is bounded if and only if $\Gamma_N$ contains no directed cycles. Similarly, $N$ is reduced if and only if $\Gamma_N$ has no edge with label $\varnothing$.\\

A type A structure is defined as a right $A_{\infty}$-module over $\A$. This means that a type A structure is a right unital $\I$-module $P$, with a family of maps $m_{i+1} : P \otimes \A^{\otimes i} \rightarrow P$, such that for any $x \in P$ and $a_1 \otimes \cdots \otimes a_{n-1} \in \A^{\otimes (n-1)}$, the following holds:
$$\sum_{i=1}^{n} m_{n-i}(m_{i}(x \otimes a_1 \otimes \cdots \otimes a_{i-1})\otimes \cdots \otimes a_{n-1})+\sum_{i=1}^{n-2} m_{n-1}(a_1 \otimes \cdots \otimes a_ia_{i+1} \otimes \cdots \otimes a_{n-1}) = 0$$
and 
$$m_2(x,\mathds{1} )=x,$$
$$m_i(x, \cdots , \mathds{1}, \cdots) =0 \ \text{for} \ i \geq 3.$$
We call a type A structure \emph{bounded} if $m_k =0$ for sufficiently large $k$. A type A structure is called \emph{reduced} if $m_1=0$.\\

Similarly, it is possible to give a graph-theoretic description of type A structures, but the graphs and the edge labels will be more complicated. In the special case of the bordered invariant $\widehat{\text{CFA}}(M,\alpha_1,\alpha_2)$, Hedden and Levine \cite{Hedden2012SplicingKC} used the duality of the type A and type D structures to offer a simplified graphical representation of the type A invariants. Their method is described briefly below.\\

Assume that $\widehat{\text{CFD}}(M, \alpha_1, \alpha_2)$ is reduced. Starting with a decorated graph $\Gamma_{D}$ representing $\widehat{\text{CFD}}(M, \alpha_1, \alpha_2)$, we rewrite the edge labels according to the following bijection: $$1 \leftrightarrow 3, 2 \leftrightarrow 2, 3 \leftrightarrow 1$$ which gives us the decorated graph $\Gamma_{A}$. The vertices of $\Gamma_{A}$ are labelled with $\{\bu,\ci\}$, and the edges are labelled with $\{1,2,3,32,21,321\}$. We construct $\widehat{\text{CFA}}(M, \alpha_1, \alpha_2)$ from $\Gamma_{A}$ as follows.\\

The vertices and their labels provide a basis for $\widehat{\text{CFA}}(M, \alpha_1, \alpha_2)$ as a right $\I$-module. To compute the maps $m_{n+1}$, we consider all directed paths that begin at a vertex $x$. For a directed path from $x$ to $y$, we add a $y$ summand to $m_{n+1}(x \otimes \rho_{I_1} \otimes \cdots \otimes \rho_{I_n})$, where $I_1, \cdots, I_n$ are defined as follows. Let $(J_1, \cdots, J_l)$ be the sequence of edge labels in the path. Concatenate these labels to form a word $J_1 \cdots J_l$, then regroup the word into $I_1 \cdots I_n$, such that $n$ is the minimum integer where each $I_t$ is an element of $\{1,2,3,32,21,321\}$. Following this algorithm for all the directed paths provides the complete $\A_{\infty}$-module structure of $\widehat{\text{CFA}}(M, \alpha_1, \alpha_2)$.\\

\begin{figure}[h]
\centering
\begin{center}
\includegraphics[scale=0.25]{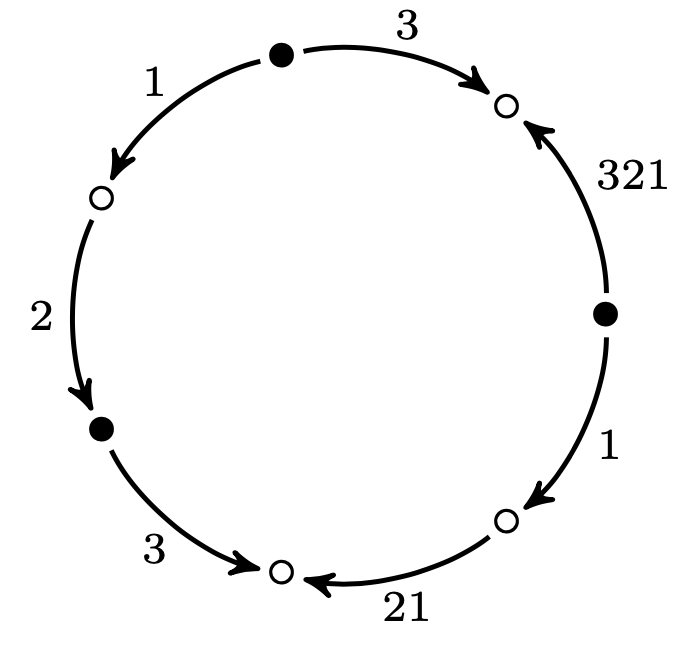}
\end{center}
\caption{Decorated graph of a type A structure \cite{Hanselman2016BorderedFH}}\label{Figure:typeAgraph}
\end{figure}

A type A structure $P$ and a type $D$ structure $N$ can be paired to form the \emph{box tensor product} $P \boxtimes N$. As a $\mathbb{F}_2$ vector space, $P \boxtimes N$ is isomorphic to $P \otimes_{\I} N$. The differential which turns $P \boxtimes N$ to a chain complex is defined as follows:
\begin{equation}\label{Equation:Boxtensordifferential}
\partial^{\boxtimes}(x \otimes y)= \sum_{i=0}^{\infty} (m_{i+1} \otimes \id_{N})(x \otimes \delta_{i}(y)).
\end{equation}
Note that boundedness of $N$ ensures that the sum in the above equation is finite, and therefore the differential is well-defined.\\

As previously discussed, to bordered manifolds $(M, \alpha_1, \alpha_2)$ and $(M', \alpha'_1, \alpha'_2)$, we can associate bordered invariants $\widehat{\text{CFA}}(M,\alpha_1,\alpha_2)$ and $\widehat{\text{CFD}}(M', \alpha'_1, \alpha'_2)$. We can glue two bordered manifolds using an orientation reversing homeomorphism ${h:\partial M' \rightarrow \partial M}$ which is specified by $h(\alpha'_1)=\alpha_1$ and $h(\alpha'_2)=\alpha_2$. This gives us a closed 3-manifold $Y := M \cup_{h} M'$. Lipshitz, Oszváth and Thurston \cite{Lipshitz2008BorderedHF} proved a pairing theorem which relates the Heegaard Floer homology of $M$ to bordered invariants. 

\begin{theo}\label{Theorem:pairing}
If $\widehat{\text{CFD}}(M', \alpha'_1, \alpha'_2)$ is bounded, then we have 
$$\widehat{CF}(Y)\simeq \widehat{\text{CFA}}(M,\alpha_1,\alpha_2) \boxtimes \widehat{\text{CFD}}(M', \alpha'_1, \alpha'_2),$$
where $\widehat{CF}(Y)$ denotes a Heegaard Floer chain complex for $Y$. 
\end{theo}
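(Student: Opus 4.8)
The plan is to reduce the statement to a comparison of two Heegaard diagrams for $Y = M \cup_h M'$: one obtained by gluing bordered Heegaard diagrams for $(M,\alpha_1,\alpha_2)$ and $(M',\alpha_1',\alpha_2')$ along their boundary, and one that computes $\widehat{\text{CF}}(Y)$ directly. First I would fix a bordered Heegaard diagram $\mathcal{H}_A$ for $(M,\alpha_1,\alpha_2)$ and a bordered Heegaard diagram $\mathcal{H}_D$ for $(M',\alpha_1',\alpha_2')$, each carrying the pointed matched circle data on its boundary, and form the glued diagram $\mathcal{H} = \mathcal{H}_A \cup_\partial \mathcal{H}_D$. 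The key observation is that $\mathcal{H}$ is a (provincially admissible, and after winding, admissible) Heegaard diagram for the closed manifold $Y$, so $\widehat{\text{CF}}(\mathcal{H})$ computes $\widehat{\text{CF}}(Y)$; the point is then to identify this complex with the box tensor product $\widehat{\text{CFA}}(\mathcal{H}_A) \boxtimes \widehat{\text{CFD}}(\mathcal{H}_D)$.

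The key steps, in order, are as follows. (1) Establish a bijection of generators: intersection points of $\mathcal{H}$ correspond to pairs $(x,y)$ where $x$ is a generator of $\widehat{\text{CFA}}(\mathcal{H}_A)$ and $y$ of $\widehat{\text{CFD}}(\mathcal{H}_D)$ with matching idempotents, which is exactly the $\I$-tensor product underlying $P \boxtimes N$. (2) Analyze holomorphic curves in $\mathcal{H}$: by a neck-stretching / Gromov-compactness argument along the gluing torus, every rigid holomorphic disk in $\Sigma \times [0,1] \times \mathbb{R}$ counted in $\partial_{\widehat{\text{CF}}(\mathcal{H})}$ degenerates into a curve in the $M$-part with boundary punctures asymptotic to a sequence of Reeb chords $\rho_{I_1}, \dots, \rho_{I_k}$ and a matching curve in the $M'$-part with the same (reversed) asymptotics. (3) Match the counts: the $M$-part contributions assemble into the $\A_\infty$ operations $m_{i+1}$ of $\widehat{\text{CFA}}(\mathcal{H}_A)$, the $M'$-part contributions into the iterated coproduct $\delta_i$ of $\widehat{\text{CFD}}(\mathcal{H}_D)$, and the sum over all ways of distributing Reeb chords reproduces exactly $\partial^{\boxtimes}(x \otimes y) = \sum_i (m_{i+1} \otimes \id)(x \otimes \delta_i(y))$ from Equation \ref{Equation:Boxtensordifferential}. (4) Invoke boundedness of $\widehat{\text{CFD}}(M',\alpha_1',\alpha_2')$ to guarantee the sum is finite so $\partial^{\boxtimes}$ is well-defined and the degeneration argument terminates, and invoke provincial admissibility to rule out boundary degenerations and ensure finiteness of the differential on $\widehat{\text{CF}}(\mathcal{H})$. (5) Finally, pass from the specific diagrams to the homotopy equivalence class: since $\widehat{\text{CFA}}$ and $\widehat{\text{CFD}}$ are well-defined up to $\A_\infty$/type-$D$ homotopy equivalence and $\boxtimes$ respects such equivalences, and since $\widehat{\text{CF}}(\mathcal{H}) \simeq \widehat{\text{CF}}(Y)$ is independent of $\mathcal{H}$, the chain homotopy equivalence descends to the invariants.

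The main obstacle is step (2), the holomorphic curve degeneration along the gluing hypersurface. One must show that as the neck length goes to infinity, no curve energy escapes into the neck region in unwanted ways, that the limiting configurations are exactly broken curves of the expected matched type (no extra components, no sphere or disk bubbling off the gluing circle), and that the matched moduli spaces are transversally cut out so that the signed (here, mod $2$) counts multiply correctly. This is the technical heart of \cite{Lipshitz2008BorderedHF}, requiring the full machinery of holomorphic curves with Reeb chord asymptotics and a careful gluing theorem in the reverse direction to show the correspondence is a bijection on rigid curves; everything else — the generator bijection, the algebraic bookkeeping identifying the glued differential with $\partial^{\boxtimes}$, and the invariance argument — is comparatively formal once the curve count is in hand.
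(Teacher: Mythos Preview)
The paper does not actually prove Theorem \ref{Theorem:pairing}; it is stated in the background section and attributed to Lipshitz, Ozsv\'ath and Thurston \cite{Lipshitz2008BorderedHF}, with no proof given. Your outline is a reasonable high-level sketch of the original LOT argument (gluing bordered diagrams, neck-stretching to match holomorphic curves against pairs contributing to $m_{i+1}$ and $\delta_i$, and invoking boundedness for finiteness), so in that sense it aligns with the source the paper cites rather than with anything the paper itself does.

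One caveat: the actual proof in \cite{Lipshitz2008BorderedHF} is substantially more delicate than your sketch suggests, and in fact the original argument does not proceed by a direct neck-stretching identification of differentials but rather through a time-dilation argument and an intermediate ``nice diagram'' or bimodule comparison; your step (2)--(3) compresses a large amount of analytic work (transversality for curves with Reeb asymptotics, gluing, compactness ruling out unexpected limits) into a single paragraph. This is fine as a proposal, but be aware that ``degenerate and match counts'' is where essentially all the content lives, and the precise mechanism LOT use is not literally the Gromov-compactness picture you describe.
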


The definition of bordered invariants of $
(M, \alpha_1, \alpha_2)$ is based on a \emph{pointed bordered Heegaard diagram} $(\mathcal{H},z) = ((\Sigma, \bm{\alpha}, \bm{\beta}), z)$ and involves counting $J$-holomorphic curves for construction of $\delta$ and $m_{i}$ maps. The general definitions are not required for this paper, and we refer the reader to the works of Lipshitz, Oszváth and Thurston \cite{Lipshitz2008BorderedHF}. Later in this section, we provide these definitions for the special case where $\Sigma$ has genus one.\\

Similar to the Heegaard Floer homology, we can also define invariants for knots inside bordered 3-manifolds as well. For a knot $K$ within a bordered 3-manifold $(M, \alpha_1, \alpha_2)$, we define bordered invariants $\widehat{\text{CFA}}(M, K)$ and $\widehat{\text{CFD}}(M, K)$. These invariants are constructed using a \emph{doubly-pointed bordered Heegaard diagram} $(\mathcal{H},z, w) = ((\Sigma, \bm{\alpha}, \bm{\beta}), z, w)$ for the pair $(M,K)$.  The construction of the maps $\delta$ and $m_i$ follows the same method as for $(\mathcal{H},z)$, except that some of the curves are excluded based on their interaction with the additional basepoint $w$. In the graph-theoretic description, $\widehat{\text{CFD}}(M, K)$ (resp.~$\widehat{\text{CFA}}(M, K)$) is obtained by deleting some of the edges of a certain graph representing $\widehat{\text{CFD}}(M, \alpha_1, \alpha_2)$ (resp.~$\widehat{\text{CFA}}(M, \alpha_1, \alpha_2)$). \\

Lipshitz, Oszváth and Thurston \cite{Lipshitz2008BorderedHF} generalized the pairing theorem to bordered invariants of knots. 

\begin{theo}\label{Theorem:pairingknots}
Let $(M, \alpha_1, \alpha_2)$ and $(M', \alpha'_1, \alpha'_2)$ be bordered 3-manifolds, and let $K$ be a knot in $M$. Let $Y$ to denote $M \cup_{h} M'$. Then the following holds: 
\begin{enumerate}
\item If $\widehat{\text{CFD}}(M', \alpha'_1, \alpha'_2)$ is bounded,
$$\widehat{CFK}(Y,K) \simeq \widehat{\text{CFA}}(M,K) \boxtimes \widehat{\text{CFD}}(M', \alpha'_1, \alpha'_2).$$

\item If $\widehat{\text{CFD}}(M,K)$ is bounded,
$$\widehat{CFK}(Y,K) \simeq \widehat{\text{CFA}}(M', \alpha'_1, \alpha'_2) \boxtimes \widehat{\text{CFD}}(M,K).$$
\end{enumerate}
\end{theo}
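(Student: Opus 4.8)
\textbf{Proof proposal for Theorem~\ref{Theorem:pairingknots}.}

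The plan is to reduce Theorem~\ref{Theorem:pairingknots} to the unknotted pairing theorem, Theorem~\ref{Theorem:pairing}, by re-running its proof while bookkeeping the extra basepoint $w$. Recall that Theorem~\ref{Theorem:pairing} is established by Lipshitz--Ozsv\'ath--Thurston through a neck-stretching (time-dilation) degeneration argument: one fixes a provincially admissible bordered Heegaard diagram $\mathcal{H}_1$ for $(M,\alpha_1,\alpha_2)$ and an admissible bordered Heegaard diagram $\mathcal{H}_2$ for $(M',\alpha'_1,\alpha'_2)$, glues them along their common boundary parametrization to obtain a Heegaard diagram $\mathcal{H}=\mathcal{H}_1\cup_\partial\mathcal{H}_2$ for $Y$, and shows that as the neck along the gluing circle is stretched, the moduli spaces of embedded holomorphic curves counted by $\partial_{\widehat{CF}(\mathcal{H})}$ limit to matched pairs $(u_1,u_2)$ of curves in the two pieces, the matching being recorded by the idempotents and Reeb chords of the torus algebra; this identifies $\partial_{\widehat{CF}(\mathcal{H})}$ with the box tensor differential $\partial^{\boxtimes}$ of Equation~\eqref{Equation:Boxtensordifferential}. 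Boundedness of $\widehat{\text{CFD}}(M',\alpha'_1,\alpha'_2)$ guarantees that the box tensor product is defined, and admissibility guarantees the counts are finite.

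For the knot version I would take $\mathcal{H}_1$ to be a \emph{doubly}-pointed bordered Heegaard diagram $(\mathcal{H}_1,z,w)$ for the pair $(M,K)$, chosen so that $w$ lies in the interior of $\mathcal{H}_1$, far from $\partial\mathcal{H}_1$; this is possible since $K\subset M$ is disjoint from the gluing region. Gluing gives a doubly-pointed Heegaard diagram $(\mathcal{H},z,w)$ for $(Y,K)$, and $\widehat{CFK}(Y,K)$ is computed from it in the usual way, counting holomorphic curves with $n_z(\phi)=n_w(\phi)=0$. The one new observation is that, because $w$ is contained entirely in $\mathcal{H}_1$, for every domain $\phi$ on $\mathcal{H}$ one has $n_w(\phi)=n_w(\phi_1)$ where $\phi_1$ is its $\mathcal{H}_1$-component; hence, in the degeneration, the constraint $n_w=0$ cuts down only the $\mathcal{H}_1$-side moduli spaces, and the surviving matched pairs are exactly those whose first component has trivial $w$-multiplicity. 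By definition, discarding the $w$-crossing contributions from $\widehat{\text{CFA}}(M,\alpha_1,\alpha_2)$ — equivalently, deleting the corresponding edges of the decorated graph — is precisely $\widehat{\text{CFA}}(M,K)$. Therefore the stretched-neck model identifies $\widehat{CFK}(Y,K)\simeq\widehat{\text{CFA}}(M,K)\boxtimes\widehat{\text{CFD}}(M',\alpha'_1,\alpha'_2)$, which is part~(1). For part~(2) I would instead arrange the parametrizations so that the type $D$ side sits on $M$: take a doubly-pointed bordered diagram for $(M,K)$ computing $\widehat{\text{CFD}}(M,K)$ and a singly-pointed one for $M'$ computing $\widehat{\text{CFA}}(M',\alpha'_1,\alpha'_2)$, assume $\widehat{\text{CFD}}(M,K)$ is bounded so the box tensor product is defined, and run the identical degeneration argument.

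The genuinely hard analytic input — transversality, compactness, and the gluing/degeneration dictionary between curves in $\mathcal{H}$ and matched pairs in $(\mathcal{H}_1,\mathcal{H}_2)$ — is entirely inherited from the proof of Theorem~\ref{Theorem:pairing}; I expect no new difficulty there, since the knot basepoint is localized away from the gluing region and plays no role in the neck-stretching. The only points requiring care are (i) checking that the glued doubly-pointed diagram can be made admissible in the sense needed for $\widehat{CFK}$, again as in Lipshitz--Ozsv\'ath--Thurston, using provincial admissibility of $\mathcal{H}_1$ together with the relevant boundedness hypothesis; and (ii) matching conventions so that ``delete the $w$-crossing edges'' on the combinatorial side corresponds exactly to ``impose $n_w=0$'' on the holomorphic side — a straightforward but bookkeeping-heavy verification. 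One could alternatively try to deduce Theorem~\ref{Theorem:pairingknots} formally from Theorem~\ref{Theorem:pairing} via an Alexander-filtration argument on the box tensor product, but the regrouping of Reeb chords in the type $A$ module makes it cleaner to argue geometrically from the start.
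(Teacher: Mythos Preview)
The paper does not give its own proof of Theorem~\ref{Theorem:pairingknots}; it is stated in the background section as a result of Lipshitz--Ozsv\'ath--Thurston \cite{Lipshitz2008BorderedHF} and simply cited. Your sketch is a faithful outline of the original LOT argument (neck-stretching for Theorem~\ref{Theorem:pairing} with the extra basepoint $w$ confined to one side), so there is nothing to compare against in this paper.
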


We can assign a grading to type D and type A structures associated with bordered $3$-manifolds with torus boundary. This grading takes values in certain coset spaces of a non-commutative group $G$, defined as follows: 
$$G = \{(m ; i,j) \ | \ m,i,j \in \frac{1}{2}\mathbb{Z}, i+j \in \mathbb{Z}\}$$
with group law given by: 
$$(m_1;i_1,j_1) \cdot (m_2;i_2,j_2) = (m_1+m_2+\begin{vmatrix}
    i_1       & j_1 \\
    i_2       & j_2
\end{vmatrix}; i_1+i_2 , j_1+j_2).$$
In this context, $m$ is known as the \emph{Maslov component}, and $(i,j)$ is referred to as the $\text{\emph{Spin}}^{\mathbb{C}}$ \emph{component}. When dealing with bordered invariants of knots inside manifolds with torus boundary, we must extend this group to an enhanced version, $\widetilde{G}=G \times \mathbb{Z}$, where the additional $\mathbb{Z}$ factor in called the \emph{Alexander factor}.\\

The following two central elements of $\widetilde{G}$ will be relevant to us: 
$$\lambda = (1;0,0;0) \ , \ \mu=(0;0,0;1).$$

The torus algebra is graded by $\widetilde{G}$ (and $G$) as follows :
$$gr(\iota_0) = gr(\iota_1) = (0;0,0;0),$$
$$gr(\rho_1)=(-\frac{1}{2};\frac{1}{2}, -\frac{1}{2};0),$$
$$gr(\rho_2)=(-\frac{1}{2};\frac{1}{2}, \frac{1}{2};0),$$
$$gr(\rho_3)=(-\frac{1}{2};-\frac{1}{2}, \frac{1}{2};0),$$
$$gr(\rho_{12})=gr(\rho_1)gr(\rho_2), gr(\rho_{23})=gr(\rho_2)gr(\rho_3), gr(\rho_{123})=gr(\rho_1)gr(\rho_2)gr(\rho_3).$$

To determine the grading of a type D invariant $\widehat{\text{CFD}}(M, \alpha_1, \alpha_2)$, we first select a reference generator, $\bm{z_0}$, and assign it the grading $(0; 0,0)$. The grading is then determined by the following rules:
$$gr(\delta(x)) = \lambda^{-1} gr(x), $$ 
$$gr(a \otimes x) := gr(a) gr(x).$$
In terms of the graph-theoretic description of the type D invariant, these rules can be summarized in the following table.
\begin{figure}[h]
\begin{center}
\includegraphics[width=\textwidth]{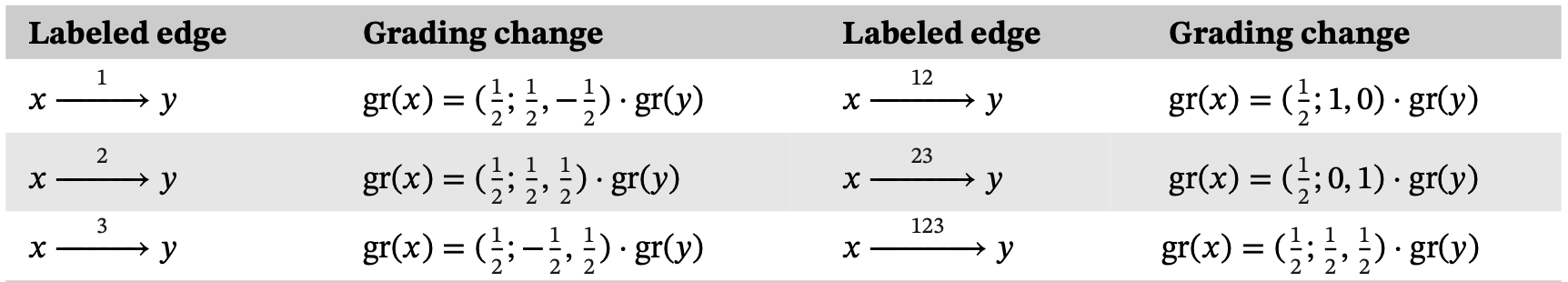}
\end{center}
\caption{Rules of grading a type D structure \cite{Hanselman2018HeegaardFH}}\label{Figure:typeDgrading}
\end{figure}

Defining the grading of a type A invariant $\widehat{\text{CFD}}(M, \alpha_1, \alpha_2)$ follows a similar procedure to that of type D. First, a reference generator must be chosen. Then, for any $x,y \in \widehat{\text{CFA}}(M, \alpha_1, \alpha_2)$ and $a_1 \otimes \cdots \otimes a_n \in \A^{\otimes n}$, if $y$ appears in $m_{n+1}(x \otimes a_1 \otimes \cdots \otimes a_n)$, the grading satisfies:
\begin{equation}\label{Equation:typeAgrading}
gr(y) = \lambda^{-1+n}gr(x)\cdot gr(a_1) \cdots gr(a_n).  
\end{equation}

The graph-theoretic interpretation of grading rules of the type A invariant is depicted in Figure \ref{Figure:typeAgrading}. \\

\begin{figure}[h]
\begin{center}
\includegraphics[width=\textwidth]{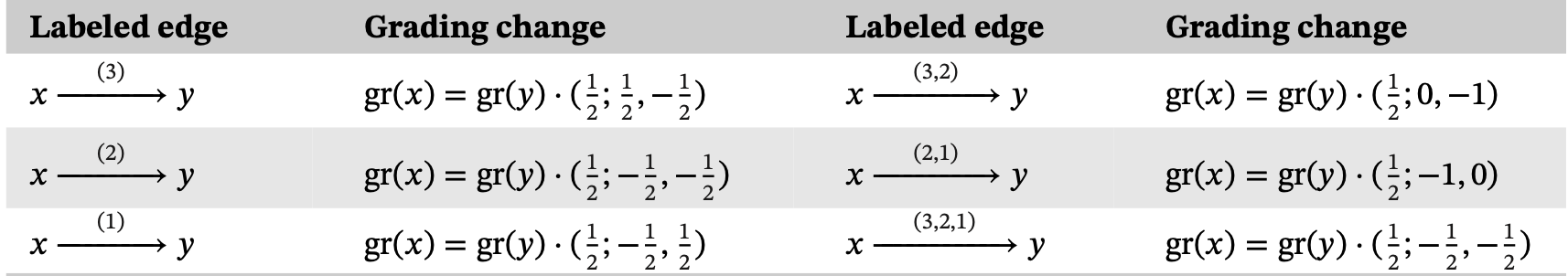}
\end{center}
\caption{Rules of grading a type A structure \cite{Hanselman2018HeegaardFH}}\label{Figure:typeAgrading}
\end{figure}

For this paper, we work with knots in solid torus, and hence, the grading rules described are sufficient for determining the grading of all elements in the associated bordered invariants.\\

To compute the Alexander factor of gradings, we need the definition of domains in bordered Heegaard diagrams. See \cite{Chen2019KnotFH} for more details.\\

It turns out that a grading which takes values in $G$ is not always well-defined. Instead, a subgroup $P(\bm{z_0}) \subset G$ exists such that the gradings on $\widehat{\text{CFD}}$ (resp.~$\widehat{\text{CFA}}$) are defined modulo the right (resp.~left) action of $P(\bm{z_0})$. Specifically, the gradings maps are as follows: 
$$gr : \widehat{\text{CFA}}(M, \alpha_1, \alpha_2) \rightarrow P(\bm{z_0}) \backslash G, $$
$$gr : \widehat{\text{CFA}}(M,K) \rightarrow P(\bm{z_0}) \backslash \widetilde{G}, $$
$$gr : \widehat{\text{CFD}}(M, \alpha_1, \alpha_2) \rightarrow G/P(\bm{z_0}), $$
$$gr : \widehat{\text{CFD}}(M,K) \rightarrow \widetilde{G}/P(\bm{z_0}). $$
Here, $P(\bm{z_0})$ records the gradings of the periodic domains in the bordered Heegaard diagram. When $M$ is a rational homology solid torus, $P(\bm{z_0})$ is a cyclic group, and it can be identified using the rules described earlier.\\

The gradings on type $D$ and type $A$ structures, induce a grading on the box tensor product as follows: 
$$gr:\widehat{\text{CFA}}(M, \alpha_1, \alpha_2) \boxtimes \widehat{\text{CFD}}(M', \alpha'_1, \alpha'_2) \rightarrow P(\bm{z_0}) \backslash G / P(\bm{z'_0})$$
$$gr : x\otimes y \rightarrow  gr(x) \cdot gr(y)$$
$$gr:\widehat{\text{CFA}}(M, K) \boxtimes \widehat{\text{CFD}}(M', \alpha'_1, \alpha'_2) \rightarrow P(\bm{z_0}) \backslash \widetilde{G} / (P(\bm{z'_0});0)$$
$$gr : x\otimes y \rightarrow gr(x) \cdot (gr(y);0)$$
We can relate these gradings to the Maslov and Alexander gradings in Heegaard Floer homology as follows. Two elements $$x_1 \otimes y_1 , x_2 \otimes y_2 \in \widehat{\text{CFA}}(M, K) \boxtimes \widehat{\text{CFD}}(M', \alpha'_1, \alpha'_2)$$ belong to the same $\text{\emph{Spin}}^{\mathbb{C}}$ structure if and only if there exist integers $h$ and $a$ such that: 
$$gr(x_1 \otimes y_1) = gr(x_2 \otimes y_2) \lambda^{h} \mu^{a}$$
where the equality holds in the double coset space. 
In this case, $h$ is equal to the relative Maslov grading $M(x_1 \otimes y_1) - M(x_2 \otimes y_2)$, and $a$ is equal to the relative Alexander grading $A(x_1 \otimes y_1) - A(x_2 \otimes y_2)$.\\

We get a similar result for the relative Masolv grading  on $$\widehat{\text{CFA}}(M, \alpha_1, \alpha_2) \boxtimes \widehat{\text{CFD}}(M', \alpha'_1, \alpha'_2)$$ by dropping the Alexander factor and the $\mu^{A}$.\\

As mentioned earlier, both bordered invariants are defined up to a specific notion of homotopy equivalence. We will now recall the definition of homotopy equivalence for type A invariants, which will be useful in later sections. See \cite{Lipshitz2008BorderedHF} for information about type D invariants.\\

Let $P$ and $P'$ be right $\A_{\infty}$-modules over $\A$. A homomorphism $f$ of $\A_{\infty}$-modules is a collection of maps
$$f_i : P \otimes \A^{\otimes(i-1)} \rightarrow P'$$
indexed by $i \geq 1$, satisfying the following compatibility conditions for all $n\geq 1$ , $x \in P$ and $a_1 \otimes \cdots \otimes a_{n-1} \in \A^{\otimes(n-1)}$: 
$$\sum_{i+j=n+1} m'_{i}(f_{j}(x \otimes a_1 \otimes \cdots \otimes a_{j-1})\otimes \cdots \otimes a_{n-1}) + $$
$$\sum_{i+j=n+1} f_{i}(m_{j}(x \otimes a_1 \otimes \cdots \otimes a_{j-1})\otimes \cdots \otimes a_{n-1})+$$
$$\sum_{l=1}^{n-2}f_{n-2}(x \otimes a_1 \otimes \cdots \otimes a_{l}a_{l+1} \otimes \cdots \otimes a_{n-1}) = 0,$$
and the unital condition for all $i>1$:
$$f_{i}(x \otimes a_1 \otimes \cdots \otimes \mathds{1} \otimes \cdots a_{i-1})=0.$$
We also refer to such collection of maps as an $\A_{\infty}$ homomorphism.\\

We can also define a composition operation for $\A_{\infty}$ homomorphisms. If $f$ is an $\A_{\infty}$ homomorphism from $P$ to $P'$, and $g$ is an $\A_{\infty}$ homomorphism from $P'$ to $P''$, we can form the composite $g \circ f$, defined by
$$(g \circ f)_{n}(x \otimes a_1 \otimes \cdots \otimes a_{n-1}) := \sum_{i+j=n+1} g_{j}(f_{i}(x \otimes a_1 \otimes \cdots \otimes a_{i-1}) \otimes \cdots \otimes a_{n-1}),$$
for all $n\geq 1$ , $x \in P$ and $a_1 \otimes \cdots \otimes a_{n-1} \in \A^{\otimes(n-1)}$.\\

Next, we define $\A_{\infty}$ homotopy equivalence. To do so, we first introduce the concept of a \emph{null homotopic} $\A_{\infty}$ homomorphism as follows.\\

An $\A_{\infty}$ homomorphism $f : P \rightarrow P'$ is \emph{null homotopic} if there exists a collection of maps 
$$H_i: P \otimes \A^{\otimes(i-1)} \rightarrow P'$$
which satisfies the following two conditions:
\begin{enumerate}[leftmargin=*]
\item For all $n\geq 1$ , $x \in P$ and $a_1 \otimes \cdots \otimes a_{n-1} \in \A^{\otimes(n-1)}$, 
$$f_{n}(x \otimes a_1 \otimes \cdots \otimes a_{n-1}) = \sum_{i+j=n+1} m'_{i}(H_{j}(x \otimes a_1 \otimes \cdots \otimes a_{j-1})\otimes \cdots \otimes a_{n-1}) + $$
$$\sum_{i+j=n+1} H_{i}(m_{j}(x \otimes a_1 \otimes \cdots \otimes a_{j-1})\otimes \cdots \otimes a_{n-1})+$$
$$\sum_{l=1}^{n-2}H_{n-2}(x \otimes a_1 \otimes \cdots \otimes a_{l}a_{l+1} \otimes \cdots \otimes a_{n-1}).$$
\item For all $i>1$,
$$H_i(x \otimes a_1 \otimes \cdots \otimes \mathds{1}\otimes \cdots \otimes a_{i-1})=0$$
\end{enumerate}
Now two $\A_{\infty}$ homomorphisms $f,g : P \rightarrow P'$ are \emph{homotopic} if $f-g$ is null homotopic. An $A_{\infty}$ homomorphism $h : P \rightarrow P'$ is an $\A_{\infty}$ homotopy equivalence if there exists an $A_{\infty}$ homomorphism $h' : P' \rightarrow P$ such that $h \circ h'$ and $h' \circ h$ are homotopic to identity.\\ 

Now let $f : P \rightarrow P'$ be an $\A_{\infty}$ homomorphism, and let $N$ be a bounded type $D$ structure. Then $f$ induces a chain map $f \boxtimes \id_{N} : P \boxtimes N \rightarrow P' \boxtimes N$ defined as follows for any $x \in P, y \in N$:
$$(f \boxtimes \id_{N})(x \otimes y) := \sum_{k=0}^{\infty} (f_{k+1} \otimes \id_{N}) \circ (x \otimes \delta_{k}(y)).$$

Lipshitz, Oszváth and Thurston \cite{Lipshitz2008BorderedHF} proved that if $f$ is an $\A_{\infty}$ homotopy equivalence then $f \boxtimes \id_{N}$ is also a homotopy equivalence.\\

Now that we introduced the notion of $\A_{\infty}$ homotopy equivalence, we can also state the invariance theorem of type A invariants. As we mentioned, definition of type A invariant relies on a choice of a bordered Heegaard diagram. For a bordered Heegaard diagram $\mathcal{H}$, let $\widehat{\text{CFA}}(\mathcal{H})$ denote the associated $\A_{\infty}$-module.

\begin{theo}\cite{Lipshitz2008BorderedHF}\label{Theorem:typeAinvariance}
For any two choices of provincially admissible Heegaard diagrams $\mathcal{H}_1$ and $\mathcal{H}_2$ for a bordered manifold $(M,\alpha_1,\alpha_2)$, and any two choices of reference generators $\bm{z^{1}_0}$ and $\bm{z^{2}_0}$, there is an isomorphism of $G$-sets 
$$\phi : P(\bm{z^{1}_0})\backslash G \rightarrow P(\bm{z^{2}_0}) \backslash G,$$
and an $\A_{\infty}$ homotopy equivalence of $\A_{\infty}$ modules 
$$\widehat{f} : \widehat{\text{CFA}}(\mathcal{H}_1) \rightarrow \widehat{\text{CFA}}(\mathcal{H}_2)$$
$$\widehat{f} = \{\widehat{f}_{i+1}: \widehat{\text{CFA}}(\mathcal{H}_1) \otimes \A^{\otimes i} \rightarrow \widehat{\text{CFA}}(\mathcal{H}_2)\}$$
such that, for $x \in \widehat{\text{CFA}}$ and $a_1\otimes \cdots \otimes a_l \in \A^{\otimes l}$,
$$gr(\widehat{f}_{l+1}(x \otimes a_1 \otimes \cdots \otimes a_l)=\phi(gr(x))gr(a_1)\cdots gr(a_l) \lambda^{l}.$$
\end{theo}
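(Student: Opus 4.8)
\textbf{Proof proposal for Theorem \ref{Theorem:typeAinvariance} (invariance of type A invariants with gradings).}

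The plan is to deduce the statement from the already-established invariance of the type $D$ side together with the duality dictionary between type $A$ and type $D$ structures, or alternatively to reconstruct it directly via the homological perturbation lemma applied to the chain homotopy equivalence of Heegaard diagrams. In either approach the first step is the \emph{ungraded} statement: for two provincially admissible diagrams $\mathcal{H}_1,\mathcal{H}_2$ of $(M,\alpha_1,\alpha_2)$, the modules $\widehat{\text{CFA}}(\mathcal{H}_1)$ and $\widehat{\text{CFA}}(\mathcal{H}_2)$ are $\A_\infty$ homotopy equivalent. This is the content of the original invariance theorem of Lipshitz--Ozsv\'ath--Thurston, proved by factoring any change of diagram into isotopies, handleslides, and (de)stabilizations, producing in each case an explicit $\A_\infty$ homomorphism $\widehat f$ by counting holomorphic triangles, and checking it is a homotopy equivalence using the composition and null-homotopy formalism recalled just above. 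I would cite this for the underlying homotopy equivalence and concentrate on bookkeeping the gradings.

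The second step is to produce the isomorphism of $G$-sets $\phi : P(\bm{z^1_0})\backslash G \to P(\bm{z^2_0})\backslash G$. Changing the reference generator from $\bm{z^1_0}$ to $\bm{z^2_0}$ within a fixed diagram multiplies all gradings on the left by the fixed group element $g_0 := gr(\bm{z^1_0})\, gr(\bm{z^2_0})^{-1}$ (computed along any domain connecting the two generators), and conjugates the periodic-domain subgroup accordingly, so $\phi$ is left translation by $g_0$ composed with the identification of the two periodic subgroups; when the diagrams differ one composes with the analogous translation coming from the holomorphic-triangle domain used to build $\widehat f$. The key point is that $P(\bm{z_0})$ is generated by the gradings of periodic domains, and periodic domains are preserved (up to the triangle domain) under the elementary Heegaard moves, so $\phi$ is well defined on the coset spaces and is a $G$-set isomorphism.

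The third and main step is the grading-compatibility identity
$$gr\bigl(\widehat f_{l+1}(x \otimes a_1 \otimes \cdots \otimes a_l)\bigr) = \phi\bigl(gr(x)\bigr)\, gr(a_1)\cdots gr(a_l)\, \lambda^l.$$
This follows from the fact that each structure map $\widehat f_{l+1}$ counts embedded holomorphic curves (triangles, in the elementary-move case) with a prescribed asymptotic data given by $a_1,\dots,a_l$, and the Maslov/$\text{Spin}^{\mathbb C}$ components of the grading of such a curve are dictated by its index and its homology class exactly as in Equation \ref{Equation:typeAgrading}; the exponent $l$ on $\lambda$ records the $l$-fold use of the $\A_\infty$ inputs, mirroring the $-1+n$ shift in the type A grading rule. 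Concretely I would show it by induction on the length of a sequence of elementary moves: for a single move one verifies the formula on the nose from the index formula for the relevant triangle moduli spaces, and for a composite one uses the composition formula for $\A_\infty$ homomorphisms recalled above, noting that the $\lambda$-exponents and the $G$-translations compose additively/multiplicatively in the required way. The hard part will be precisely this last verification: one must be careful that the domain contributing to $\widehat f$ genuinely realizes the claimed group element in $G$ (not merely its coset), that the periodic-subgroup ambiguities on the two sides match under $\phi$, and that the Alexander factor (when one is present, as in the knot case) is tracked consistently; these are exactly the points where the non-commutativity of $G$ and the quotient by $P(\bm{z_0})$ make naive bookkeeping fail, and they are handled by always working with a fixed choice of connecting domain and only passing to cosets at the end.
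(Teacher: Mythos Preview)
The paper does not prove this theorem; it is stated with a citation to \cite{Lipshitz2008BorderedHF} and used as a black box throughout (see its applications in Subsection~\ref{Subsection:HomologyofboxtensorS^3:Algebra}). There is therefore no proof in the paper to compare your proposal against. Your outline is a reasonable high-level sketch of the original Lipshitz--Ozsv\'ath--Thurston argument (elementary Heegaard moves, triangle counts for the $\A_\infty$ maps, and grading bookkeeping via the index formula), but since the present paper makes no attempt to reproduce that argument, you should simply cite the reference rather than supply a proof.
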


\subsection{Immersed Curve Invariants}\hfill\\

We now turn our attention to the immersed curve invariants and briefly recall some of the basic facts about them.\\ 

Hanselman, Rasmussen and Watson \cite{Hanselman2016BorderedFH} introduced a geometric interpretation of bordered invariants of 3-manifolds with torus boundary in terms of the \emph{immersed curve invariant}. We are going to briefly recall the definition and basic properties of these invariants.\\ 

For a compact oriented 3-manifold with torus boundary $M$ with a basepoint $z \in \partial M$, the \emph{immersed curve invariant} $\widehat{HF}(M)$ is a collection of closed immersed curves $\{\gamma_1, \cdots, \gamma_n\}$ in $\partial M \setminus z$ each decorated with a local system $(k_i, A_i)$ consisting of a finite dimensional vector space over $\mathbb{F}_2$ of dimension $k_i$, and an automorphism represented by a $k\times k$ matrix $A_i$. $\widehat{HF}(M)$ is invariant of $M$ up to regular homotopy of the curves and the isomorphism of the local systems.\\

To construct $\widehat{HF}(M)$, imagine the $\partial M \setminus z$ as the surface $T=\mathbb{R}^2 / \mathbb{Z}^2$ punctured at $z=(1-\epsilon,1-\epsilon)$. The images of the $y$ and $x$-axes in $T$ will be referred to as $\alpha_1$ and $\alpha_2$ respectively. Consider the decorated graph $\Gamma_{D}$ representing the $\widehat{\text{CFD}}(M, \alpha_1, \alpha_2)$, and for now assume that it is reduced. We embed the vertices of $\Gamma_D$ into $T$ so that the $\bu$ vertices are distinct points on $\alpha_1$ and the $\ci$ vertices are distinct points on $\alpha_2$. We then embed each edge in $T$ according to its label, as shown in Figure \ref{Figure:typeDimmeresed}.  In some cases this process gives a collection of immersed curves in $T$ which will be the immersed curve invariant $\widehat{HF}(M)$. In general, the process gives us an immersed train track as seen in example of Figure \ref{Figure:traintrackexample}. Hanselman, Rasmussen and Watson \cite{Hanselman2016BorderedFH} showed that one can pick a particularly nice basis of the type D invariant so that this process ends up in a collection of immersed curves.\\ 

\begin{figure}[h]
\centering
\begin{center}
\includegraphics[scale=0.4]{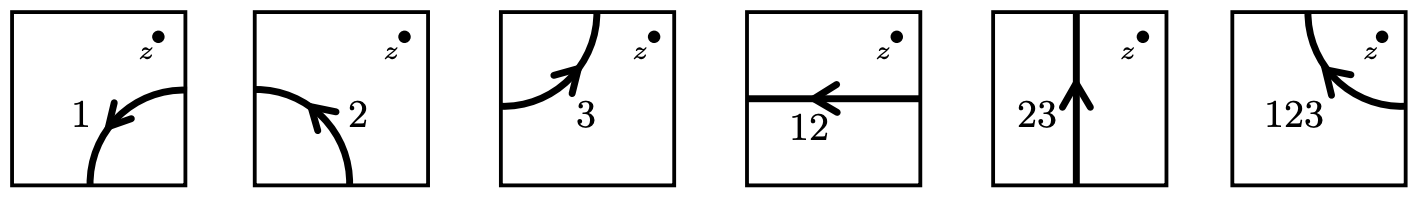}
\end{center}
\caption{Embedding the decorated graph in $T$ \cite{Hanselman2016BorderedFH}}\label{Figure:typeDimmeresed}
\end{figure}

\begin{figure}[h]
\centering
\begin{center}
\includegraphics[scale=0.3]{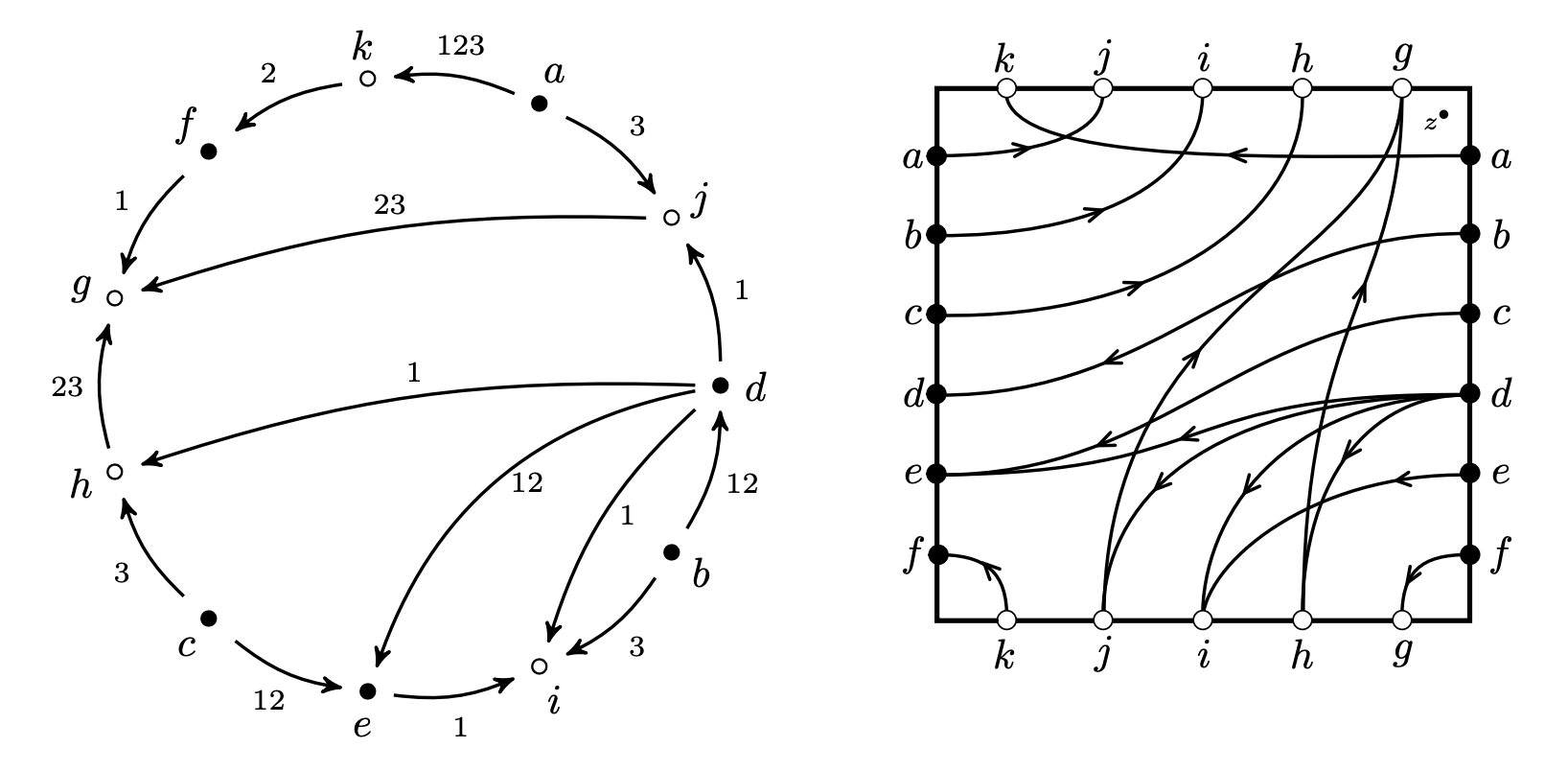}
\end{center}
\caption{Example of embedding a decorated graph as a train track \cite{Hanselman2016BorderedFH}}\label{Figure:traintrackexample}
\end{figure}

Hanselman, Rasmussen and Watson \cite{Hanselman2016BorderedFH} also proved a pairing theorem for the immersed curve invariants which we will recall in Theorem \ref{Theorem:immeresedpairing}. Similar to before, assume that $M$ and $M'$ are 3-manifolds with torus boundary and $h:\partial M' \rightarrow \partial M$ is an orientation reversing homeomorphism. Let $Y := M \cup_{h} M'$. Let $\bar{h}$ denote the composition of $h$ with the elliptic involution of $\partial M$.

\begin{theo}\label{Theorem:immeresedpairing}
Let $\bm{\gamma}, \bm{\gamma'}$ denote $\widehat{HF}(M),\widehat{HF}(M')$ respectively. Then we have, 
$$\widehat{HF}(Y) \cong HF(\bm{\gamma} , \bar{h}(\bm{\gamma'}))$$
where $HF(\cdot,\cdot)$ is an appropriately defined version of the Lagrangian intersection Floer homology in $T$. 
\end{theo}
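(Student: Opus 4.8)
The plan is to deduce the statement from the ordinary bordered pairing theorem (Theorem \ref{Theorem:pairing}) together with a geometric model for the box tensor product, following Hanselman--Rasmussen--Watson \cite{Hanselman2016BorderedFH}. First I would record two normalizations. The structure theorem for type $D$ structures over the torus algebra says that, after replacing $\widehat{\text{CFD}}(M',\alpha'_1,\alpha'_2)$ by a homotopy equivalent reduced model, its decorated graph $\Gamma_{D}$ splits as a direct sum of indecomposable \emph{loop} summands, each of which embeds in the punctured torus $T'=\partial M'\setminus z'$ as a single immersed closed curve carrying a local system $(k_i,A_i)$ via the edge-by-edge recipe of Figure \ref{Figure:typeDimmeresed}; this collection of decorated curves is $\bm{\gamma'}=\widehat{HF}(M')$, and likewise $\bm{\gamma}$ is assembled from $\widehat{\text{CFD}}(M,\alpha_1,\alpha_2)$. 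I would also record how the type $A$ side sits geometrically: $\widehat{\text{CFA}}(M,\alpha_1,\alpha_2)$ is the $\A_{\infty}$-dual of $\widehat{\text{CFD}}(M,\alpha_1,\alpha_2)$ (the label swap $1\leftrightarrow 3$, $2\leftrightarrow 2$ of Hedden--Levine recalled above), so its geometric realization is the curve $\bm{\gamma}$ reflected through the elliptic involution of $\partial M$. This reflection is exactly the source of the $\bar h$ in the statement, since gluing along $h$ while dualising one side is the same as pairing after applying $\bar h=(\text{elliptic involution})\circ h$.

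The core of the proof is to identify, as chain complexes up to homotopy,
$$\widehat{\text{CFA}}(M,\alpha_1,\alpha_2)\boxtimes\widehat{\text{CFD}}(M',\alpha'_1,\alpha'_2)\ \simeq\ CF\big(\bm{\gamma},\,\bar h(\bm{\gamma'})\big),$$
where the right-hand side is the Floer-type complex generated by the transverse intersection points of the two multicurves, put in taut minimal position with the puncture $z$ fixed, and with differential counting immersed bigons. Since the box tensor product is additive over direct sums of type $D$ structures and the Floer complex is additive over disjoint unions of multicurves, it suffices to treat the case where $\bm{\gamma}$ and $\bm{\gamma'}$ are single loops. For a pair of loops one computes both sides by hand: on the algebraic side the box tensor product of the two loop modules is a small periodic complex whose generators are the pairs $x\otimes y$ with matching idempotents and whose differential, coming from $(m_{i+1}\otimes\id_{N})(x\otimes\delta_i(y))$ with $\delta_i(y)=\rho_{I_1}\otimes\cdots\otimes\rho_{I_i}\otimes y'$, records precisely which words in $\rho_1,\rho_2,\rho_3$, regrouped into $\rho_{12},\rho_{23},\rho_{123}$ by the torus-algebra relations, can occur; on the geometric side these words are exactly the combinatorial types of bigons joining the corresponding intersection points, classified by how the bigon boundary winds around $z$ relative to the axes $\alpha_1,\alpha_2$ (Figure \ref{Figure:typeDimmeresed}). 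Matching the two, and checking that the $\A_{\infty}$ operations together with boundedness of $\widehat{\text{CFD}}(M')$ (no directed cycles in $\Gamma_{D}$) keep the bigon count finite and free of spurious terms, yields the loop-level identification; the local systems are carried through by attaching $\mathbb{F}_2^{k_i}$ to each intersection point lying on $\gamma_i$ and weighting the once-around bigon by the monodromy $A_i$.

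It then remains to address invariance and to pin down the meaning of the right-hand side (the ``appropriately defined version'' in the statement). I would define $CF(\cdot,\cdot)$ by fixing taut representatives, counting bigons, and proving independence of that choice; for immersed curves this reduces to invariance under regular homotopy and under the figure-eight move, a statement purely about curves on surfaces. Combining this with the $\A_{\infty}$ homotopy invariance of $\widehat{\text{CFD}}$, $\widehat{\text{CFA}}$ and of the box tensor product, with Theorem \ref{Theorem:pairing}, and with the fact that $\widehat{HF}(M)$ is well defined up to regular homotopy of the curves and isomorphism of local systems, one concludes that both sides depend only on $Y$ (respectively on the pair $(\bm{\gamma},\bar h(\bm{\gamma'}))$), so the loop-level dictionary upgrades to the asserted isomorphism $\widehat{HF}(Y)\cong HF(\bm{\gamma},\bar h(\bm{\gamma'}))$.

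The hard part will be the loop-level identification in the second step: establishing the structure theorem that converts $\Gamma_{D}$ from a train track into an honest collection of immersed curves (so that ``bigon'' has a literal meaning), and then the case analysis matching the operations $m_{i+1}$ paired against the higher maps $\delta_i$ with the combinatorics of bigons winding around $z$, with the correct multiplicities once nontrivial local systems are present. The figure-eight-move invariance of $CF(\cdot,\cdot)$ in $T$ and the taut/minimal-position bookkeeping are less deep but still need to be handled carefully.
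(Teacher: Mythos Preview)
The paper does not prove this theorem at all: Theorem~\ref{Theorem:immeresedpairing} is stated in Section~\ref{Section:background} as a background result, attributed to Hanselman--Rasmussen--Watson \cite{Hanselman2016BorderedFH}, and is simply quoted without proof. So there is no ``paper's own proof'' to compare against; your proposal is a sketch of a proof for a result the paper merely cites.

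That said, your outline is a reasonable summary of the strategy in \cite{Hanselman2016BorderedFH}: reduce to loop-type modules via the structure theorem, identify generators of the box tensor with intersection points and differentials with bigons, and handle local systems and invariance. You correctly flag the hard part as the structure theorem (turning train tracks into honest immersed curves with local systems) and the bigon--to--$\A_\infty$ dictionary. But be aware that these are substantial pieces of work in the original paper, not routine case analyses, and your sketch does not engage with the actual difficulties (e.g.\ the extendability arguments, the arrow-sliding algorithm that produces the curve form, or the precise treatment of local systems beyond the $k=1$ case). For the purposes of this paper, the correct move is simply to cite \cite{Hanselman2016BorderedFH}, as the author does.
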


For the percise definition of the Lagrangian intersection Floer homology for collections of curves with local systems, See \cite{Hanselman2016BorderedFH}. We review some of the propeties of this homology which are going to be useful for our application.\\ 

First, note that the Lagrangian Floer homology splits over the connected components i.e. if $\bm{\gamma} = \bm{\gamma_1} \cup \cdots \cup \bm{\gamma_n}$ and $\bar{h}(\bm{\gamma'})=\bm{\gamma'_1} \cup \cdots \cup \bm{\gamma'_{n'}}$ then 
$$HF(\bm{\gamma} , \bar{h}(\bm{\gamma'})) =  \bigoplus_{\substack{1\leq i \leq n \\ 1\leq j\leq n'}}  HF(\bm{\gamma_i}, \bm{\gamma'_j}).$$

Hence we can focus on connected immersed curves decorated with local systems. We can go one step further and assume that these curves are primitive due to the following lemma. 
\begin{lemm}\label{Lemma:primitivereduce}\cite{Hanselman2016BorderedFH}
Let $\bm{\gamma_1}=(\gamma_1,k_1,A_1)$ where $\gamma_1$ is the $n$-fold cover of $\gamma$. Let $\bm{\gamma}=(\gamma, nk, A)$ where $A$ is a block matrix of the form
\begin{equation*}
\label{eq:monodromy}
\left[\begin{array}{c|c|c|c}
0 & \cdots & 0 & A \\  \hline
I_k & \cdots & 0 & 0 \\  \hline
0 & \ddots & 0 & 0 \\  \hline
0 & \cdots & I_k & 0
\end{array} \right] 
\end{equation*}
Then for any decorated curve $\bm{\gamma'}$ we have
$$CF(\bm{\gamma_1},\bm{\gamma'}) = CF(\bm{\gamma},\bm{\gamma'}).$$ 
\end{lemm}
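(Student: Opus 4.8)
The statement asserts that pairing a decorated curve $\bm{\gamma'}$ against the $n$-fold cover $\gamma_1$ of $\gamma$ equipped with the local system $(k, A_1)$ produces the same Floer chain complex as pairing against $\gamma$ itself equipped with the enlarged local system $(nk, A)$, where $A$ is the companion-type block matrix displayed in the statement. The plan is to compare the two Lagrangian Floer chain complexes \emph{generator by generator} and \emph{differential by differential}, exhibiting a canonical bijection between generators that intertwines the two differentials. I would begin by recalling how $CF(\cdot,\cdot)$ is built for decorated immersed curves with local systems: generators are intersection points of the underlying curves, each tensored with the underlying vector space of the local system (so an intersection point $p$ of $\gamma$ with $\gamma'$ contributes a copy of $\mathbb{F}_2^{k_p} \otimes \mathbb{F}_2^{k'_p}$), and the differential counts immersed bigons weighted by the appropriate composition of the local-system monodromies encountered as one traverses the boundary of the bigon. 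The key elementary observation is that an $n$-fold cover $\gamma_1$ of $\gamma$, when pushed into the punctured torus in a small neighborhood of $\gamma$, intersects $\gamma'$ in exactly $n$ parallel copies of each intersection point of $\gamma$ with $\gamma'$; labelling these copies $p^{(1)}, \dots, p^{(n)}$, a generator of $CF(\bm{\gamma_1},\bm{\gamma'})$ supported near $p$ is an element of $\bigoplus_{i=1}^n \mathbb{F}_2^{k} \otimes \mathbb{F}_2^{k'_p} \cong \mathbb{F}_2^{nk}\otimes \mathbb{F}_2^{k'_p}$, which is exactly the local data of the single intersection point of $\gamma$ with $\gamma'$ decorated by the rank-$nk$ system.

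Next I would analyze the differential. There are two types of bigons to track: those whose boundary along the $\gamma_1$-side (resp. $\gamma$-side) stays within a single "sheet" of the cover, and those that wind around $\gamma$ and hence pass from sheet $i$ to sheet $i+1$ (or sheet $n$ back to sheet $1$, picking up the monodromy $A_1$). The first type of bigon contributes the "off-diagonal identity blocks" $I_k$ of the matrix $A$ — more precisely, on the $\gamma$-side after passing to the enlarged local system, a short bigon that would have connected sheet-$i$ data to sheet-$(i+1)$ data is reinterpreted as a differential contribution mediated by one of the $I_k$ blocks. The second type, the bigon that wraps all the way around $\gamma$, contributes the single $A$ block in the upper-right corner. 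I would make this precise by fixing a fundamental domain for $\gamma_1 \to \gamma$, choosing compatible basepoints/cuts on the two curves so that the monodromy of $\gamma_1$'s local system at the single cut equals $A_1$, and then checking that the "holonomy" picked up along any bigon for $(\gamma, nk, A)$ equals the holonomy picked up along the corresponding bigon for $(\gamma_1, k, A_1)$ under the block decomposition. This is essentially the standard fact that a rank-$k$ local system on an $n$-fold cover of a circle is the same as a rank-$nk$ local system on the base whose monodromy is the $n$-th "cyclic companion block" of the original monodromy; here it just needs to be promoted from circles to the bigon-counting setting.

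The main obstacle I anticipate is bookkeeping rather than conceptual: one must verify that the bijection on generators is compatible with \emph{every} holomorphic bigon, including bigons that wrap multiple times around $\gamma$ and bigons on the $\gamma'$ side that interact with several lifts simultaneously, and that no bigon for one complex is "lost" or "doubled" in the other. Concretely, the care is needed in (i) choosing the perturbation of $\gamma_1$ into an $n$-fold parallel copy of $\gamma$ so that the intersection points and bigons of $(\gamma_1,\gamma')$ are genuinely in bijection with the "sheeted" bigons of $(\gamma,\gamma')$, and (ii) matching, for each bigon, the ordered product of monodromy matrices read off along its boundary with the corresponding matrix-coefficient of $A$ versus $A_1$ — the block structure of $A$ is precisely engineered so that reading off $r$ of the $I_k$ blocks followed (if one completes the loop) by one $A$ block reproduces $A_1$. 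Once these matchings are in place, the identification $CF(\bm{\gamma_1},\bm{\gamma'}) = CF(\bm{\gamma},\bm{\gamma'})$ is immediate, and one notes that the argument is symmetric in which curve carries the cover, and is independent of the choice of $\bm{\gamma'}$, as required.
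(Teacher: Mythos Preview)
The paper does not actually prove this lemma; it is stated with the citation \cite{Hanselman2016BorderedFH} and no proof is given, so there is nothing in this paper to compare your proposal against. Your sketch is the standard argument (a rank-$k$ local system on an $n$-fold cover is the pushforward of a rank-$nk$ local system on the base with the indicated companion monodromy, and this identification is compatible with bigon counts), and it is essentially what one finds in the cited source.
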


For the primitive immersed curves, we can relate the dimension of the Lagrangian Floer homology to the intersection numbers. Recall that for essential immersed curves $\gamma$ and $\gamma'$, the \emph{intersection number} $i(\gamma, \gamma')$ is the minimal number of geometric intersection between transverse curves $\widehat{\gamma}, \widehat{\gamma}'$ that are homotopic to $\gamma, \gamma'$.\\

We need to define two technical properties before stating Theorem \ref{Theorem:Lagrangiandimension} about the dimension of Lagrangian Floer homology. For a closed immersed curve $\gamma : S^1 \rightarrow T$, we can consider the map $\gamma \circ p : \mathbb{R} \rightarrow T$ where $p : \mathbb{R} \rightarrow S^1$ is the standard covering map. This map lifts to $\widetilde{\gamma}: \mathbb{R} \rightarrow \widetilde{T}$ where $\widetilde{T} = \mathbb{R}^2 \setminus \mathbb{Z}^2$ is the universal cover of $T$. If $\widetilde{\gamma}$ is an embedding, we say that $\gamma$ is \emph{unobstructed}. This definition is equivalent to is equivalent to $\gamma$ having no immersed fishtails (See \cite{Abouzaid2006OnTF}).\\

Loops $\gamma , \gamma' \subset T$ are \emph{commensurable} if there is an immersed loop $\delta \subset T$ and integers $n,n'$ such that $\gamma$ is freely homotopic to $\delta^n$ and $\gamma'$ is freely homotopic to $\delta^{n'}$. 

\begin{theo}\label{Theorem:Lagrangiandimension}\cite{Hanselman2016BorderedFH}
Suppose $\bm{\gamma}=(\gamma,k,A)$ and $\bm{\gamma'}=(\gamma',k',A')$ are primitive unobstructed immersed curves equipped with local systems. If $\gamma$ and $\gamma'$ are incommensurable, then 
$$\dim \  HF(\bm{\gamma}, \bm{\gamma'})= k k' \cdot i(\gamma, \gamma').$$
\end{theo}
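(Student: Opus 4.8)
The plan is to reduce the computation of $\dim HF(\bm{\gamma},\bm{\gamma'})$ to a count of geometric intersection points after putting $\gamma$ and $\gamma'$ in \emph{minimal position}, and then to handle the local systems by a linear-algebra argument. First I would recall that, for unobstructed curves, the Lagrangian Floer complex $CF(\bm{\gamma},\bm{\gamma'})$ can be computed from the (perturbed) geometric intersection points of $\widehat\gamma$ and $\widehat\gamma'$, with the $\mathbb{F}_2$-vector space at each intersection point being $\mathrm{Hom}(\mathbb{F}_2^{k},\mathbb{F}_2^{k'})$ (tensoring the two local system fibers), so as an ungraded vector space $CF(\bm\gamma,\bm\gamma')$ has dimension $kk'\cdot|\widehat\gamma\cap\widehat\gamma'|$. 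The crucial geometric input is that when $\gamma$ and $\gamma'$ are unobstructed (equivalently, admit embedded lifts to $\widetilde T$) and incommensurable, one can isotope them to a position realizing the intersection number $i(\gamma,\gamma')$ in which there are \emph{no bigons at all} between the two curves — incommensurability rules out the annular/parallel configurations that could otherwise produce intersection points not removable by bigons. I would make this precise using the characterization of unobstructedness via embedded lifts to $\widetilde T=\mathbb{R}^2\setminus\mathbb{Z}^2$ and a standard bigon-reduction argument in the universal cover.

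Next I would argue that, with the curves in this bigon-free minimal position, the Floer differential on $CF(\bm\gamma,\bm\gamma')$ vanishes. The differential counts immersed holomorphic bigons (strips) with boundary on the two curves, weighted by the monodromy maps $A,A'$ of the local systems along the boundary arcs; since no bigons exist at the geometric level (minimal position, no immersed fishtails because the curves are unobstructed), every such count is empty and $\partial=0$. Therefore $HF(\bm\gamma,\bm\gamma')=CF(\bm\gamma,\bm\gamma')$ and
$$\dim HF(\bm\gamma,\bm\gamma') = \dim CF(\bm\gamma,\bm\gamma') = kk'\cdot i(\gamma,\gamma'),$$
which is exactly the claimed formula. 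The primitivity hypothesis enters to ensure that $\gamma,\gamma'$ are not themselves proper powers, so that $i(\gamma,\gamma')$ genuinely records the minimal geometric intersection of the underlying primitive curves rather than a multiple of it; by Lemma \ref{Lemma:primitivereduce} the non-primitive case is anyway subsumed into a local-system statement, so there is no loss in assuming primitivity here.

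The main obstacle I anticipate is the geometric claim that incommensurable unobstructed curves admit a common bigon-free representative — i.e., that minimal position can be achieved \emph{simultaneously} with the unobstructedness (embedded-lift) property, and that in this position the holomorphic strip count is genuinely zero including all the higher multiply-covered or boundary-degenerate contributions. Handling this carefully requires either (i) invoking the curves-on-surfaces technology (bigon criterion of Hass–Scott / Freedman–Hass–Scott) in the punctured torus, lifted to $\widetilde T$, together with the observation that incommensurability prevents the two lifts from sharing a common axis or nesting in a way that forces an essential bigon, or (ii) citing the relevant structural result from Hanselman–Rasmussen–Watson \cite{Hanselman2016BorderedFH} directly. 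A secondary technical point is verifying that the local system monodromies do not create cancellation or extra generators: since $\partial=0$ the monodromies are irrelevant to the dimension count, but one must still confirm that the generators really are indexed by $\mathrm{Hom}$-spaces of the full $k$- and $k'$-dimensional fibers at each intersection, i.e.\ that no intersection point contributes a proper subspace — this follows from transversality of $\widehat\gamma$ and $\widehat\gamma'$ at each point. I would organize the write-up so that the bigon-free minimal position lemma is isolated and proved (or cited) first, after which the Floer-theoretic conclusion is essentially immediate.
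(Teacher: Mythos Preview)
The paper does not actually prove this statement: Theorem~\ref{Theorem:Lagrangiandimension} is quoted verbatim from Hanselman--Rasmussen--Watson \cite{Hanselman2016BorderedFH} as a background result, with no proof given here. So there is no ``paper's own proof'' to compare against.

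That said, your outline is broadly in the spirit of the argument in \cite{Hanselman2016BorderedFH}: pass to representatives in minimal position (there via $\epsilon$-geodesics/pegboard diagrams rather than a bare bigon-criterion argument), observe that in minimal position there are no bigons contributing to the differential, and read off the dimension as $kk'$ times the geometric intersection number. The place where your sketch is thinnest is exactly the point you flag: establishing that unobstructed, incommensurable curves can be put in a position with no immersed bigons, and that this suffices to kill the Floer differential in the presence of local systems. In \cite{Hanselman2016BorderedFH} this is handled carefully using the geometric model $T_\epsilon$ and properties of geodesics, not just the Hass--Scott bigon criterion, so if you were to write this up you should either follow that route or cite it directly rather than relying on a cover-and-bigon argument alone.
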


Now to compute $i(\gamma, \gamma')$, we rely on choosing nice representatives $\widehat{\gamma}$ and $\widehat{\gamma}'$ for the homotopy classes of $\gamma$ and $\gamma'$ which realize the minimal geometric intersection. Freedman, Hass and Scott \cite{Freedman1982CLOSEDGO} proved that the shortest length geodesic in the free homotopy classes has this property. Based on this Hanselman, Rasmussen and Watson \cite{Hanselman2016BorderedFH} introduced the \emph{pegboard diagrams} and $\epsilon$-\emph{geodesics}.\\

First, note that it is generally easier to represent a curve $\gamma$ in the punctured torus $T$, by drawing an arc in the cover $\widetilde{T} = \mathbb{R}^2 \setminus \mathbb{Z}^2$ which maps to $\gamma$ with the covering map $\widetilde{T} \rightarrow T$.\\

Intuitively, the pegboard diagrams come from imagining $T$ as a flat Euclidean torus with the puncture imagined as a peg. We place a rubber band in the homotopy class determined by $\gamma$, and see what position it settles into. The cover $\widetilde{T}$ can be imagined as Euclidean $\mathbb{R}^2$ with a lattice of pegs i.e. a peg-board.\\

To give a precise definition of pegboard diagrams, we first start with a geometric model for the punctured torus $T$. Let $\widehat{T}$ be $\partial M$ with a flat metric $g$. Consider a manifold $T_{\epsilon}$ with a complete Riemannian metric $g_{\epsilon}$ defined as follows. $T_{\epsilon}$ is the union of two parts. The first part, denoted by $\widehat{T}_{2\epsilon}$, is the complement of the ball of radius $2 \epsilon$ centered at $z \in \widehat{T}$ equipped with the flat metric $g$. The second part is modeled on a surface of revolution in $\mathbb{R}^{3}$, as illustrated in Figure \ref{Figure:geometricmodel}. Note that $T_{\epsilon}$ is homeomorphic to $T$. Furthermore, $T_{\epsilon}$ embeds in $\widehat{T} \times \mathbb{R}$ and hence we have a projection $p : T_{\epsilon} \rightarrow \widehat{T}_{\epsilon} \subset T$. Note that this projection is equal to identity on $\widehat{T}_{2\epsilon}$. A curve $\gamma \subset T_{\epsilon}$ is called an $\epsilon$-geodesic if it is geodesic for the metric $g_{\epsilon}$. Hanselman, Rasmussen and Watson \cite{Hanselman2016BorderedFH} proved that any nontrivial free homotopy class of loops in $T$ can be represented by and $\epsilon$-geodesic. Furthermore, this representative is either unique or entirely contained in the flat part of $T_{\epsilon}$ i.e. is either boundary parallel or a Euclidean geodesic. As a result, any two distinct $\epsilon$-geodesics intersect minimally and transversally.\\

\begin{figure}[h]
\centering
\begin{center}
\includegraphics[scale=0.3]{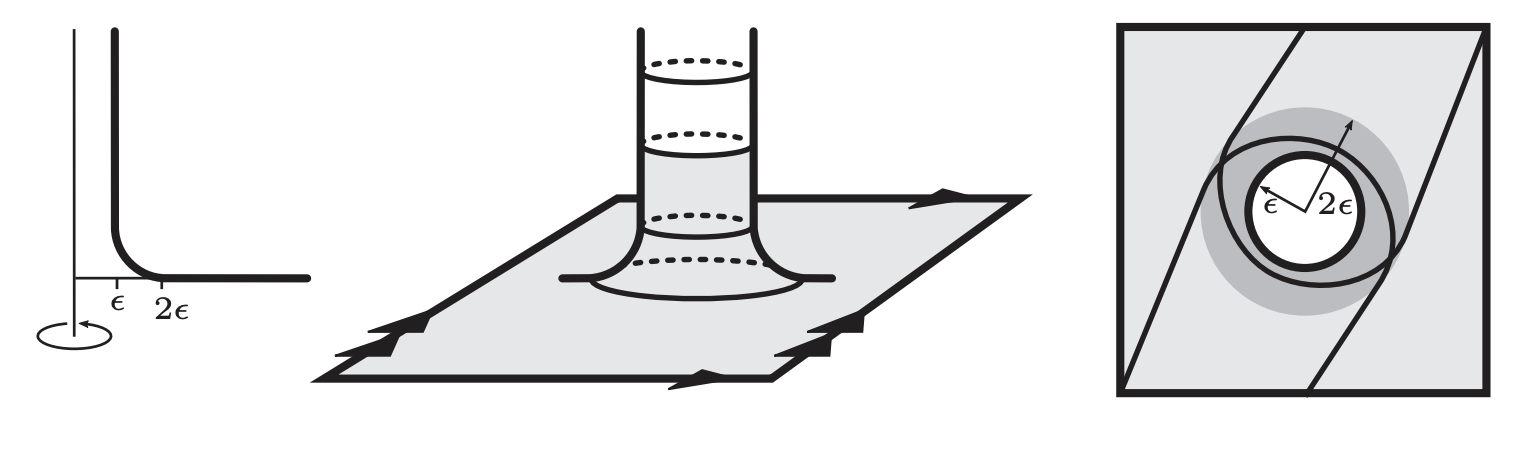}
\end{center}
\caption{The manifold $T_{\epsilon}$ a geometric model for $T$. The figure om the right shows the projection $p:T_{\epsilon} \rightarrow \widehat{T}_{\epsilon}$ \cite{Hanselman2016BorderedFH}}\label{Figure:geometricmodel}
\end{figure}

We can use the $\epsilon$-geodesics to directly compute the intersection numbers, but we can also use them to construct the $\epsilon$-pegboard diagrams. Given an $\epsilon$-geodesic $\gamma \subset T_{\epsilon}$, consider the projection $p(\gamma) \subset T$. This projection consists of two parts. The first part is $p(\gamma) \cap \widehat{T}_{2\epsilon}$ which consists from a family of Euclidean geodesics. The second part is called the \emph{geodesic corners} which describes the behaviour of $p(\gamma)$ around the puncture (i.e. in the Euclidean ball with radius $2\epsilon$ around $z \in \widehat{T}$). We can modify these corners by a homotopy to obtain a curve $\gamma'$ which wraps around a Euclidean circle centered around the puncture, as seen in Figure \ref{Figure:geodesiccorner}. We refer to curve $\gamma'$ as an $\epsilon$-pegboard diagram. For more details see \cite{Hanselman2016BorderedFH}.\\

\begin{figure}[h]
\centering
\begin{center}
\includegraphics[scale=0.25]{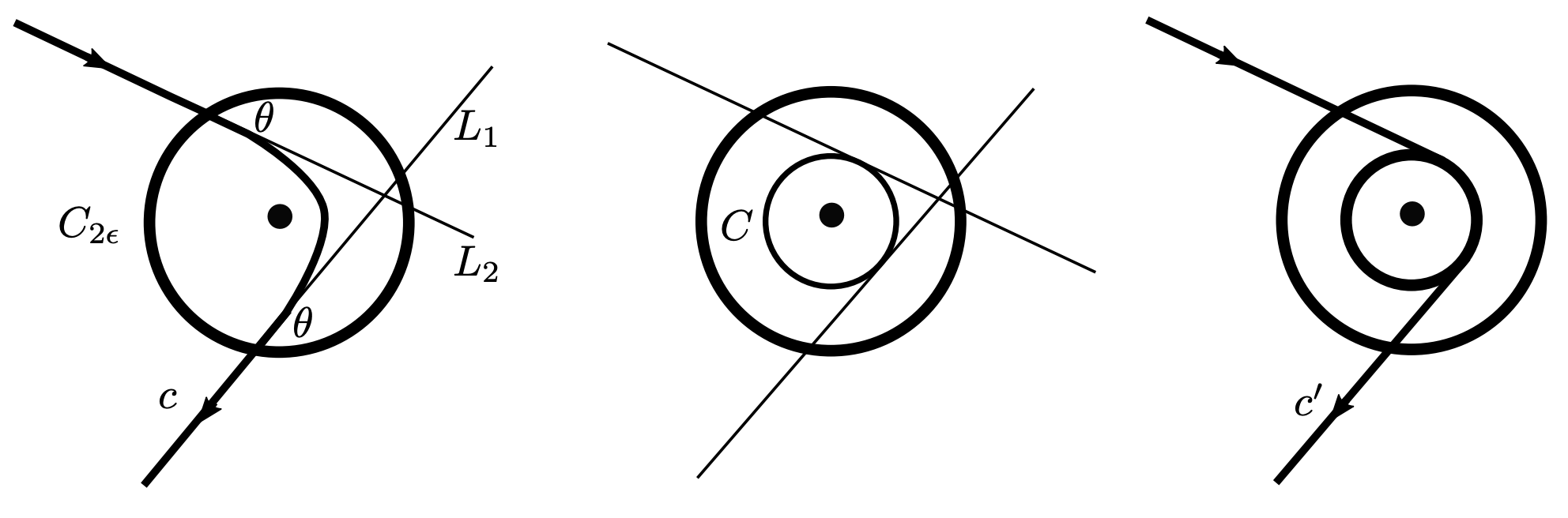}
\end{center}
\caption{Turning a geodesic corner to a pegboard corner \cite{Hanselman2016BorderedFH}}\label{Figure:geodesiccorner}
\end{figure}
 
Hanselman, Rasmussen and Watson \cite{Hanselman2016BorderedFH} also proved that if $\gamma'_1$ and $\gamma'_2$ are $\epsilon$-pegboard diagrams induced by distinct $\epsilon$-geodesics $\gamma_1$ and $\gamma_2$, then they are in minimal position.\\

We can also go one step further and define a \emph{singular pegboard diagram}. The main reference \cite{Hanselman2016BorderedFH} contains a more general definition, but for our purposes, we only need a specific case. First of all, let $\gamma_{\epsilon}$ be the $\epsilon$-pegboard diagram in the homotopy class of $\gamma$. Assume that $\gamma_{\epsilon}$ has at least one corner. We call such curves \emph{tight} and any curve without any corners \emph{loose}. Now we can define $$\gamma_0 := \lim_{\epsilon \rightarrow 0} \gamma_{\epsilon} $$
where the limit is in Hausdorff metric. One can prove that this limit exists. Furthermore, we know that $\gamma_{\epsilon}$ consists of Euclidean geodesics (i.e. straight segments) outside the Euclidean ball $B_{2\epsilon}(z)$. As a result, in the limit $\gamma_0$ will be a collection of Euclidean geodesics in $\widehat{T}$ all passing through $z$. These collection can be recorded using their slopes. Note that $\gamma_0$ loses the information of corners. We can record the homotopy class of the corners e.g. by using the total angle that each corner covers. Let $\bm{c}=(c_1,\cdots,c_n)$ denote the sequence of homotopy classes of the corners. We call $\bar{\gamma} = (\gamma_0 , \bm{c})$ the singular pegboard diagram representing the homotopy class of $\gamma$.\\

Finally we can state a theorem about the computation of the intersection number using the singular pegboard diagrams. 

\begin{theo}\label{Theorem:singularpegboarddiagram}\cite{Hanselman2016BorderedFH}
Let $\bar{\gamma} = (\gamma_0 , \bm{c})$ and $\bar{\gamma}' = (\gamma'_0 , \bm{c'})$ be singular pegboard diagrams representing $\gamma$ and $\gamma'$. Assume that no slope of $\gamma_0$ is also a slope of $\gamma'_0$. Then 
$$i(\gamma, \gamma') = |\gamma_0 \cap \gamma'_0| + \sum_{(c_i,c'_j)} i(c_i,c_j),$$
where $i(c_i,c'_j)$ is a local intersection number determined by the homotopy data at the corners.
\end{theo}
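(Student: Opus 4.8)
The plan is to reduce the statement to a direct count of transverse intersection points of $\epsilon$-pegboard diagrams and then let $\epsilon\to 0$. Fix a common small $\epsilon$ and let $\gamma_\epsilon,\gamma'_\epsilon\subset T$ be the $\epsilon$-pegboard diagrams in the homotopy classes of $\gamma$ and $\gamma'$; since the hypothesis that $\gamma_0$ and $\gamma'_0$ share no slope forces the underlying $\epsilon$-geodesics to be distinct, the result quoted above (that $\epsilon$-pegboard diagrams of distinct $\epsilon$-geodesics are in minimal position) gives $i(\gamma,\gamma')=|\gamma_\epsilon\cap\gamma'_\epsilon|$ for all sufficiently small $\epsilon$. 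I would then split this finite set according to the decomposition $T=\widehat{T}_{2\epsilon}\sqcup B_{2\epsilon}(z)$ built into the geometric model $T_\epsilon$: on $\widehat{T}_{2\epsilon}$ each of $\gamma_\epsilon,\gamma'_\epsilon$ is a union of Euclidean segments, while inside $B_{2\epsilon}(z)$ each is a disjoint union of corner arcs (so every intersection inside $B_{2\epsilon}(z)$ is corner--corner). The theorem amounts to identifying the flat count with $|\gamma_0\cap\gamma'_0|$ and the corner count with $\sum_{i,j} i(c_i,c'_j)$, uniformly in $\epsilon$.

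\textbf{Flat region.} As $\epsilon\to 0$ the segments of $\gamma_\epsilon$ (resp.\ $\gamma'_\epsilon$) converge in the flat metric to the segments of $\gamma_0$ (resp.\ $\gamma'_0$), which by hypothesis are pairwise non-parallel, hence meet transversally and only away from $z$. Each such intersection of $\gamma_0$ with $\gamma'_0$ lies a fixed positive distance from $z$, so for $\epsilon$ small it survives into $\widehat{T}_{2\epsilon}$ and, by transversality, is matched by a unique nearby intersection of $\gamma_\epsilon$ with $\gamma'_\epsilon$; conversely a compactness argument (using non-parallelism to prevent intersections from accumulating on $\partial B_{2\epsilon}(z)$) shows every intersection in $\widehat{T}_{2\epsilon}$ arises this way. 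Hence $|\gamma_\epsilon\cap\gamma'_\epsilon\cap\widehat{T}_{2\epsilon}|=|\gamma_0\cap\gamma'_0|$ once $\epsilon$ is small. (If $\gamma$ or $\gamma'$ is loose the corresponding corner set is empty and the proof already concludes here.)

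\textbf{Corner region.} Rescale $B_{2\epsilon}(z)$ by $1/\epsilon$ to the fixed disk $B_2(0)$. In these coordinates a corner arc $c_i^\epsilon$ of $\gamma_\epsilon$ becomes a radial stub in the direction of the incoming slope, an arc wrapping the unit circle by the angle recorded in the homotopy datum $c_i$, and a radial stub in the direction of the outgoing slope; as $\epsilon\to 0$ this rescaled picture converges to a \emph{model corner configuration} depending only on those slopes and wrapping data, and similarly for each $c'_j$. I would \emph{define} $i(c_i,c'_j)$ to be the number of transverse intersections in this model between the pieces coming from $c_i$ and from $c'_j$; the non-shared-slope hypothesis makes every stub of a $c_i$ non-parallel to every stub of a $c'_j$, so all these intersections are transverse and their number is finite and locally constant. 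A stability/isotopy argument then shows no intersection is created or destroyed in the transition annulus $B_{2\epsilon}(z)\setminus B_{\epsilon}(z)$ as $\epsilon$ shrinks, whence $|\gamma_\epsilon\cap\gamma'_\epsilon\cap B_{2\epsilon}(z)|=\sum_{i,j} i(c_i,c'_j)$ for all small $\epsilon$. Adding the two counts gives the formula.

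\textbf{Main obstacle.} The substantive step is the locality and stabilization near the puncture: proving that the number of intersections inside $B_{2\epsilon}(z)$ depends only on the recorded corner data and is independent of $\epsilon$. This needs (i) that the rescaled corner configurations converge to the model and that the model has finitely many, transverse intersections --- where the assumption that $\gamma_0,\gamma'_0$ share no slope is precisely what rules out parallel stubs and the unstable tangencies they would create --- and (ii) an index argument showing that the intersection count in the transition annulus is $\epsilon$-independent, so that the split is well-defined. The flat-region analysis and the reduction to $\epsilon$-pegboard diagrams are then routine transversality-and-compactness arguments.
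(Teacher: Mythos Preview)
The paper does not prove this theorem: it is stated with the citation \cite{Hanselman2016BorderedFH} and used as a black box, so there is no ``paper's own proof'' to compare against. Your sketch follows what is the natural line of argument (and presumably what Hanselman--Rasmussen--Watson do): realize both curves by $\epsilon$-pegboard diagrams, invoke the quoted fact that such diagrams are in minimal position, split the intersection count into the flat part $\widehat T_{2\epsilon}$ and the disk $B_{2\epsilon}(z)$, and pass to the limit $\epsilon\to 0$. The flat-region transversality/compactness argument and the rescaling definition of $i(c_i,c'_j)$ are exactly the expected ingredients, and your identification of the stabilization near the puncture as the substantive step is accurate.
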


We talked about the immersed curve invariant of a 3-manifold with torus boundary $M$. Similar to the bordered invariants, we also need an immersed curve invariant associated to a knot $K \subset M$. This was first discussed by Chen \cite{Chen2019KnotFH} for $(1,1)$ patterns inside solid torus. Rasmussen introduced the immersed curve invariant $\hfk(P)$ associated to a knot $P \subset S^1 \times D^2$ in his talk in the low-dimensional Workshop in Erdős Center \cite{Rasmusssentalk}. Unfortunately Rasmussen's work on this invariant hasn't been published yet. \\

The construction of $\hfk(P)$ as a collection of arcs and closed curves in ${T = \partial(S^1 \times D^2) \setminus z}$ is an easy extension of the construction of $\widehat{HF}$. As we mentioned, the construction of $\widehat{HF}(M)$ was based on embedding the decorated graph representing $\widehat{\text{CFD}}(M, \alpha_1, \alpha_2)$ in the punctured torus. The type D structures of 3-manifolds has a property called \emph{extendability}. This property is the main reason that the invariant $\widehat{HF}(M)$ consists of a collection of closed curves. $\widehat{\text{CFD}}(M, K)$ is not necessarily extendable, and as a result, after the embedding of the decorated graph we might have components that are immersed segments $I : [0,1] \looparrowright T$. To get to the immersed curve invariant $\hfk(P)$, we turn the immersed segments to immersed arcs $\widetilde{I}: \mathbb{R} \looparrowright Y$ by connecting the endpoints to the puncture in $T$. This process gives us the non-compact components of $\hfk(P)$.\\

Note that similar to the case of $\widehat{HF}$, embedding of a decorated graph generally doesn't give us an immersed $1$-manifold (with boundary), and one needs to pick a specially nice basis.\\

We complete this section with giving a couple of examples of the type D, type A and immersed curve invariants. In these examples, we focus on the genus-one bordered Heegaard diagrams, same as \cite{Chen2019KnotFH}. In this case the count of $J$-holomorphic curves simplifies to a combinatorial count of bigons.\\

We need to first define a genus-one bordered Heegaard diagram. We start with the notations used to define the type A invariant. Cosider a $4$-tuple $(\Sigma, \bm{\alpha}, \beta, z)$ such that:
\begin{itemize}
    \item $\Sigma$ is a compact, oriented surface of genus one with a single boundary component,
    \item  $\bm{\alpha}$ consists of a pair of arcs $(\alpha_1,\alpha_2)$ properly embedded in $\Sigma$, such that $\alpha_1 \cap \alpha_2 = \varnothing$ and the ends of $\alpha_1$ and $\alpha_2$ appears alternatively on $\partial \Sigma$,
    \item $\beta$ is an embedded closed curve in the interior of $\Sigma$ such that $\Sigma \setminus \beta$ is connected, and $\beta$ intersects $\bm{\alpha}$ transversely,
    \item A basepoint $z$ on $\partial \Sigma \setminus \partial \bm{\alpha}$,
    \item Label the arcs on $\partial \Sigma$ so that $(\partial \Sigma, \bm{\alpha},z)$ is as shown in Figure \ref{Figure:typeAtorus}. We use the symbol $I \in \{12,23,123\}$ to denote the arc obtained by concatenation of the arcs labeled by $1,2,3$ accordingly.
\end{itemize}
\begin{figure}[h]
\centering
\begin{center}
\includegraphics[scale=0.25]{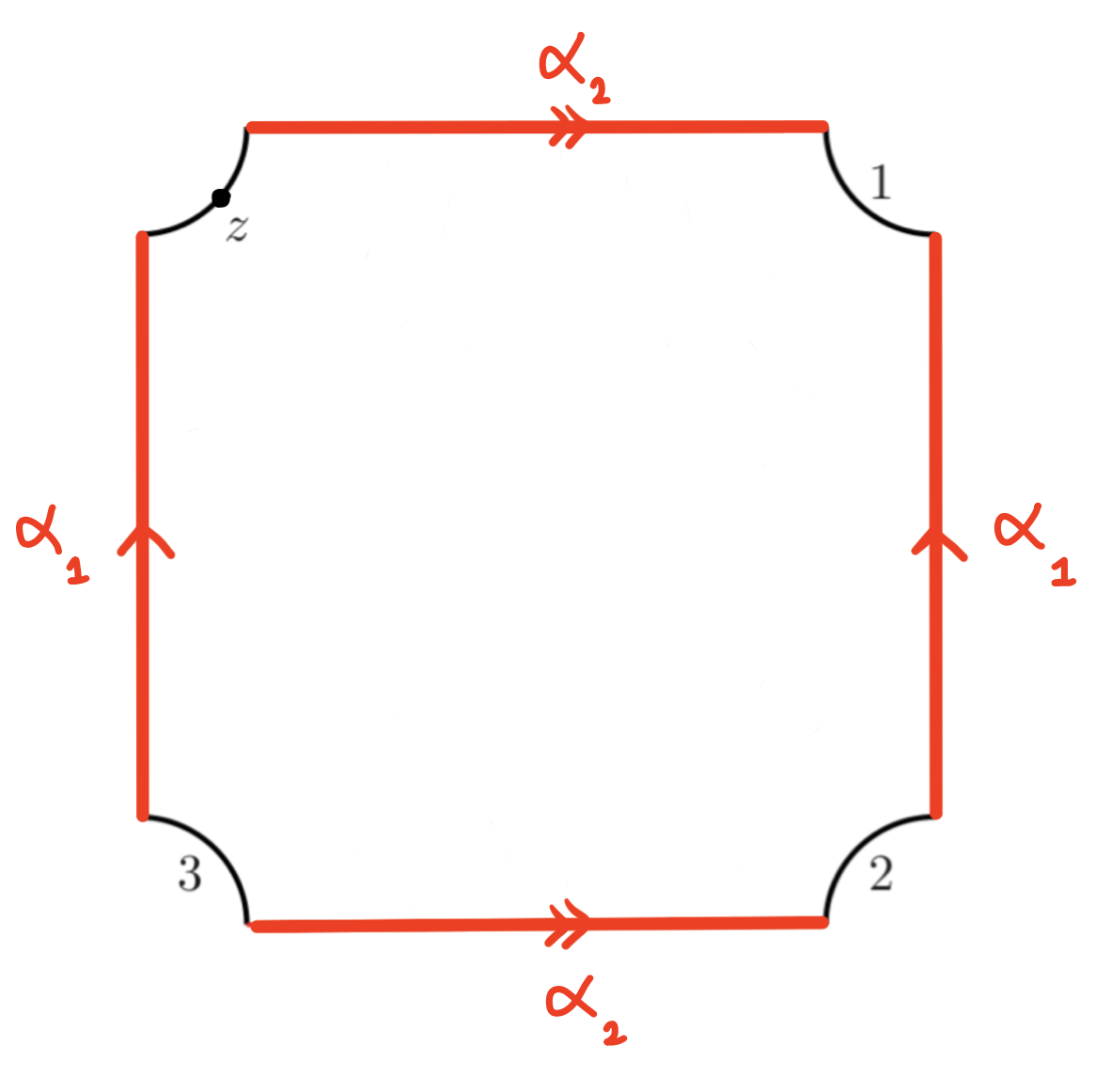}
\end{center}
\caption{Labeling arcs on the boundary of the genus-one bordered Heedaard diagram used to define type A invariants}\label{Figure:typeAtorus}
\end{figure}

Such a $4$-tuple $(\Sigma, \bm{\alpha}, \beta, z)$ specifies a bordered $3$-manifold $(M, \alpha_1, \alpha_2)$. The $3$-manifold $M$ is constructed by attaching a $2$-handle to $\Sigma \times [0,1]$ along $\beta \times \{1\}$. To be more precise, one also needs to collapse the disk bounded by $\partial \Sigma \times \{0\}$ to get the basepoint on the boundary of $M$ and to turn $\alpha_1$ and $\alpha_2$ to closed curves in $\partial M$.\\

We call a $4$-tuple $(\Sigma, \bm{\alpha}, \beta, z)$ with the aforementioned properties a \emph{pointed genus-one bordered Heegaard diagram}. We can also define a \emph{doubly-pointed genus-one bordered Heegaard diagram} by adding another basepoint $w \in \interior{\Sigma} \setminus (\bm{\alpha} \cup \beta)$. The doubly-pointed diagram also specifies a knot $K \subset M$. The knot $K$ can be seen as a union of two arcs. Connect the basepoints $z$ and $w$ by an arc $\gamma \subset \Sigma \setminus \bm{\alpha}$ and an arc $\eta \subset \Sigma \setminus \beta$. Viewing $\Sigma$ as $\Sigma \times \{\frac{1}{2}\} \subset \Sigma \times [0,1]$, let $\gamma'$ be the result of pushing the interior of $\gamma$ slightly into $\Sigma \times [0,\frac{1}{2})$ and let $\eta'$ be the result of pushing the interior of $\eta$ slightly into $\Sigma \times (\frac{1}{2},1]$. We define $K$ as $\gamma' \cup \eta'$, oriented in a way that it intersects $\Sigma \times \{\frac{1}{2}\}$ positively in $z$ and negatively in $w$.\\

\noindent
Now we can express the type A invariants $\widehat{\text{CFA}}(M, \alpha_1, \alpha_2)$ and $\widehat{\text{CFA}}(M, K)$:\\

\begin{enumerate}[leftmargin=*]
\item $\widehat{\text{CFA}}(M, \alpha_1, \alpha_2)$ is generated by the set $\mathcal{G}=\{x| \ x \in \beta \cap \bm{\alpha}\}$ as a $\mathbb{F}_2$ vector space.
\item The $\I$-module structure is as follows: 
$$\widehat{\text{CFA}}(M, \alpha_1, \alpha_2) \cdot \iota_0 = \langle x| \ x \in \alpha_1 \cap \beta \rangle$$
$$\widehat{\text{CFA}}(M, \alpha_1, \alpha_2) \cdot \iota_1 = \langle x| \ x \in \alpha_2 \cap \beta \rangle$$
\item View $\Sigma$ as $(S^1 \times S^1) \setminus \interior{B}$, where $B$ is a disk. Let $\widetilde{\Sigma}$ be the covering space of $\Sigma$ obtained from taking the universal covering $\mathbb{R}^2 \rightarrow (S^1 \times S^1)$ and removing the preimage of $B$ (i.e. $\mathbb{R}^2$ with a lattice of disks removed). The maps $m_{n+1}$ are as follows:
$$m_{n+1}: \widehat{\text{CFA}}(M, \alpha_1, \alpha_2) \otimes \A^{\otimes n} \rightarrow \widehat{\text{CFA}}(M, \alpha_1, \alpha_2)$$
$$m_{n+1}(x, \rho_{i_1}, \cdots, \rho_{i_n}) = \sum_{y \in \mathcal{G}} \# \mathcal{M}(x,y; \rho_{i_1}, \cdots, \rho_{i_n}) \ y,$$
where $i_j \in \{1,2,3,12,23,123\}$ for $j=1, \cdots, n$, and $\# \mathcal{M}(x,y; \rho_{i_1}, \cdots, \rho_{i_n})$ is the (modulo $2$) count of index $1$ embedded disks in $\widetilde{\Sigma}$, such that when we traverse the boundary of such a disk with the induced orientation, we start from a lift of $x$, denoted by $\widetilde{x}$; then we alternate between arcs on (some lift of) $\bm{\alpha}$ and arcs $i_j$ on (some lift of) $\partial B$ for $j=1, \cdots, n$ which end up in $\widetilde{y}$ which is a lift of $y$, and finally we have an arc on some lift of $\beta$ joining $\widetilde{y}$ to the starting point $\widetilde{x}$.
\end{enumerate}
Note that index here is the expected dimension of moduli space $ \mathcal{M}(x,y; \rho_{i_1}, \cdots, \rho_{i_n})$ plus one. $ \mathcal{M}(x,y; \rho_{i_1}, \cdots, \rho_{i_n})$ is the space of holomorphic maps from $D^2$ to $\widetilde{\Sigma}$ with the boundary conditions described above. There is a combinatorial formula in the index described in \cite[Section 5.7]{Lipshitz2008BorderedHF}. We can use the Riemann mapping theorem in certain cases to determine the index as well.\\

Note that by definition the disks contained in the moduli space $ \mathcal{M}(x,y; \rho_{i_1}, \cdots, \rho_{i_n})$ doesn't contain any of the lifts of the base point $z$.\\

To define $\widehat{\text{CFA}}(M, K)$ we only need to change the moduli space $\mathcal{M}(x,y; \rho_{i_1}, \cdots, \rho_{i_n})$ to its subspace $\mathcal{M}_{w}(x,y; \rho_{i_1}, \cdots, \rho_{i_n})$ which only contains disks which doesn't contain any of the lifts of the base point $w$.\\

We defined the type A invariant of the genus-one bordered Heegaard diagrams. Definition of the type D invariant is very similar. First of all, we need to change the labelling of the arcs as done in Figure \ref{Figure:typeDtorus}.\\

The type D invariant $\widehat{\text{CFD}}(M, \alpha_1, \alpha_2)$ is also generated by $\mathcal{G}$ as a $\mathbb{F}_2$ vector space. The $\I$-module structure is defined as follows: 
$$\widehat{\text{CFD}}(M, \alpha_1, \alpha_2) \cdot \iota_0 = \langle x| \ x \in \alpha_2 \cap \beta \rangle,$$
$$\widehat{\text{CFD}}(M, \alpha_1, \alpha_2) \cdot \iota_1 = \langle x| \ x \in \alpha_1 \cap \beta \rangle.$$
The map $\delta$ is as follows:
$$\delta : \widehat{\text{CFD}}(M, \alpha_1, \alpha_2) \rightarrow \A \otimes \widehat{\text{CFD}}(M, \alpha_1, \alpha_2),$$
$$\delta(x) = \sum_{y \in \mathcal{G}} \# \mathcal{M}(x,y; \rho_{i_1}, \cdots, \rho_{i_n})(\rho_{i_1}\cdots\rho_{i_n} \otimes y).$$
\begin{figure}[h]
\centering
\begin{center}
\includegraphics[scale=0.3]{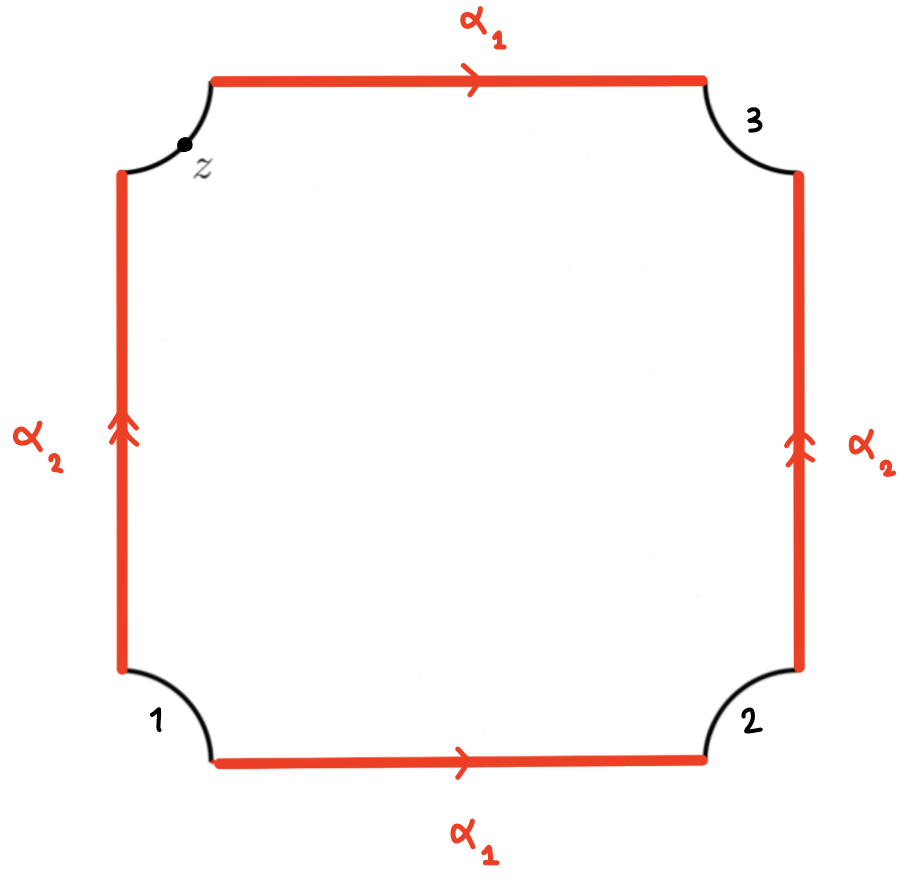}
\end{center}
\caption{Labeling arcs on the boundary of the genus-one bordered Heedaard diagram used to define type D invariants}\label{Figure:typeDtorus}
\end{figure}

Same as the case of type A invariant, we only need to replace $\mathcal{M}(x,y; \rho_{i_1}, \cdots, \rho_{i_n})$ with $\mathcal{M}_{w}(x,y; \rho_{i_1}, \cdots, \rho_{i_n})$ to get the definition of $\widehat{\text{CFD}}(M,K).$\\

Now we are ready to compute some examples.\\

\begin{exem}\label{Example:Mazur}
We start by computing the type A invariant $\widehat{\text{CFA}}(S^1 \times D^2 , Q)$ where $Q$ is the Mazur pattern, as depicted in Figure \ref{Figure:Mazurpattern}. This example was also discussed by Petkova and Wong \cite{Petkova2020TwistedMP}, and Levine \cite{Levine2014NONSURJECTIVESO}. A doubly-pointed genus-one bordered Heegaard diagram $(\mathcal{H}_{Q}, z, w)$ for $(S^1 \times D^2 , Q)$ can be seen in Figure \ref{Figure:MazurHeegaard}.\\

All of the index one disks counted in the definition of the type A invariant can be seen in Figure \ref{Figure:Mazurbigons}. This means that $\widehat{\text{CFA}}(\mathcal{H}_{Q}, z, w)$ can be described as follows : 
$$\widehat{\text{CFA}}(\mathcal{H}_{Q}, z, w) = \langle x_0, \cdots, x_6,y_1, \cdots,y_6 \rangle_{\mathbb{F}_2},$$
$$\widehat{\text{CFA}}(\mathcal{H}_{Q}, z, w) \cdot \iota_{0} = \langle x_0, y_2, y_4, x_4, x_2 \rangle,$$
$$\widehat{\text{CFA}}(\mathcal{H}_{Q}, z, w) \cdot \iota_{1} = \langle x_5, x_6, y_6, y_5, y_1, y_3, x_3, x_1 \rangle.$$
And all the nontrivial values of $m_{n+1}$ are the following: 
$$m_2(x_2 \otimes \rho_1)= x_1 \ , \ m_2(x_4\otimes \rho_1)=x_3 \ , \ m_2(y_4, \otimes \rho_1)=y_3$$
$$m_2(x_1 \otimes \rho_2) = x_0 \ , \ m_2(x_3 \otimes \rho_2)=y_2 \ , \ m_2(x_2 \otimes \rho_{12})=x_0 \ , \ m_2(x_4 \otimes \rho_{12})=y_2$$
$$m_3(y_3 \otimes \rho_2 \otimes \rho_1) = y_1 \ , \ m_4(y_4 \otimes \rho_1 \otimes \rho_2 \otimes \rho_1)=y_1.$$
A graphical representation of $\widehat{\text{CFA}}(\mathcal{H}_{Q}, z, w)$ (with conventions of Hedden and Levine \cite{Hedden2012SplicingKC}) can be seen in Figure \ref{Figure:MazurAgraph}.\\

Now we can turn our attention to the gradings. To determine the gradings, we first need to look at the type $A$ invariant $\widehat{\text{CFA}}(\mathcal{H}_Q,z)$. To do this we need to consider the index one embedded disks which include the lifts of $w$ as well. These disks can be seen in Figure \ref{Figure:MazurUbigons}. By adding these disks to the count, we get the type A invariant described (as a decorated graph) in Figure \ref{Figure:MazurUAgraph}.\\

Note that although the convention of Hedden and Levine \cite{Hedden2012SplicingKC} is only described for reduced decorated graphs, it can be easily extended to non-reduced graphs. For unreduced graphs, the maps $m_{n+1}$ for $n \geq 1$ are still defined by directed paths in the graph which doesn't contain an edge with label $\varnothing$. The map $m_1$ is defined using the edges with label $\varnothing$.\\

\begin{figure}[h]
\centering
\begin{center}
\includegraphics[scale=0.3]{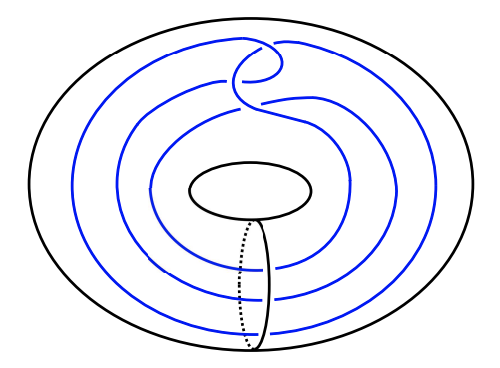}
\end{center}
\caption{Mazur pattern in solid torus \cite{Petkova2020TwistedMP}}\label{Figure:Mazurpattern}
\end{figure}

\begin{figure}[h]
\centering
\begin{center}
\includegraphics[scale=0.25]{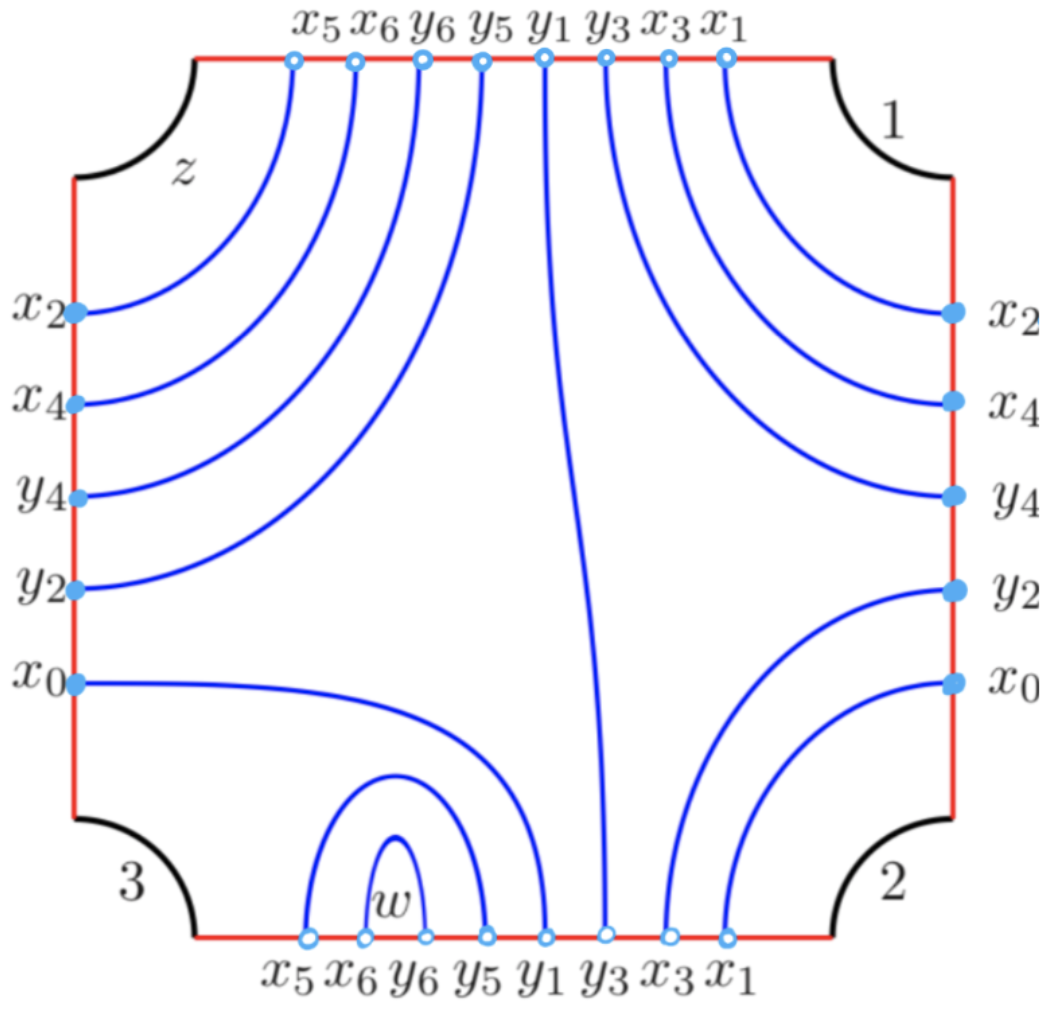}
\end{center}
\caption{Doubly-pointed genus-one bordered Heegaard diagram associated to the Mazur pattern}\label{Figure:MazurHeegaard}
\end{figure}

\begin{figure}[h]
\centering
\begin{center}
\includegraphics[scale=0.33]{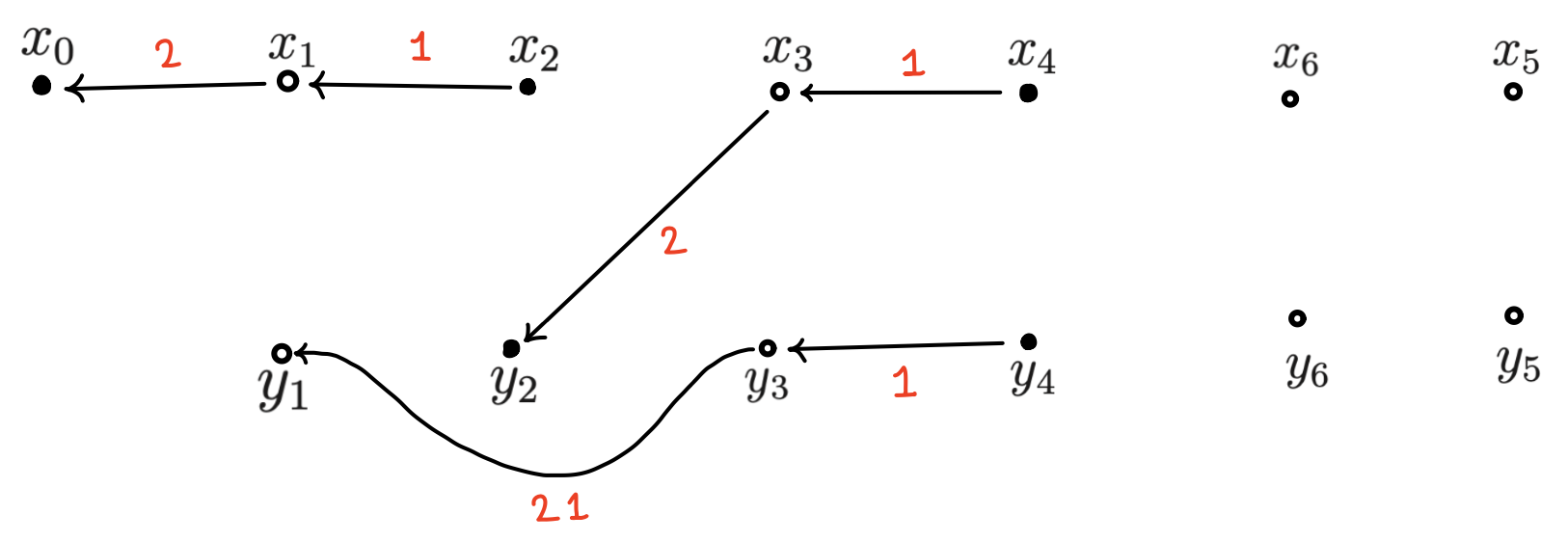}
\end{center}
\caption{Graphical representation of $\widehat{\text{CFA}}(\mathcal{H}_{Q}, z, w)$}\label{Figure:MazurAgraph}
\end{figure}

\begin{figure}[h]
\centering
\begin{center}
\includegraphics[scale=0.33]{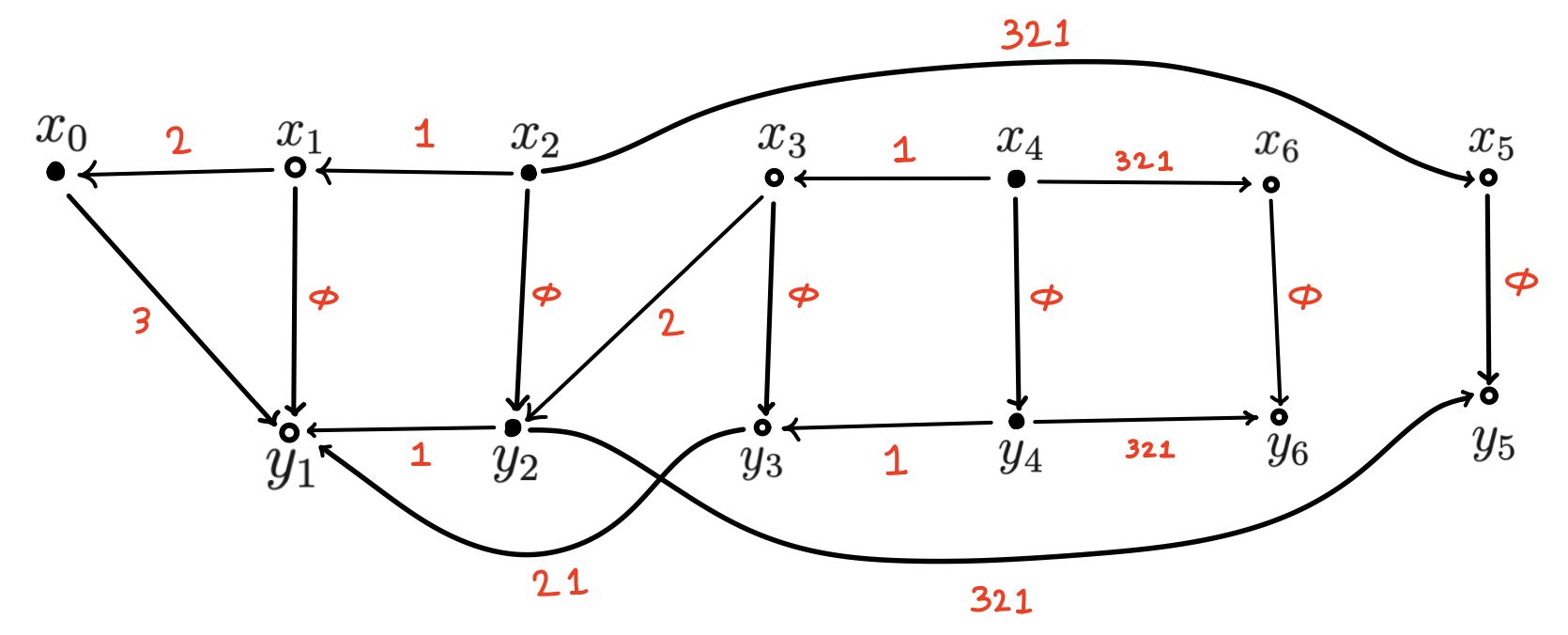}
\end{center}
\caption{Graphical representation of $\widehat{\text{CFA}}(\mathcal{H}_{Q}, z)$}\label{Figure:MazurUAgraph}
\end{figure}

\begin{figure}[h]
\centering
\begin{center}
\includegraphics[width=\textwidth]{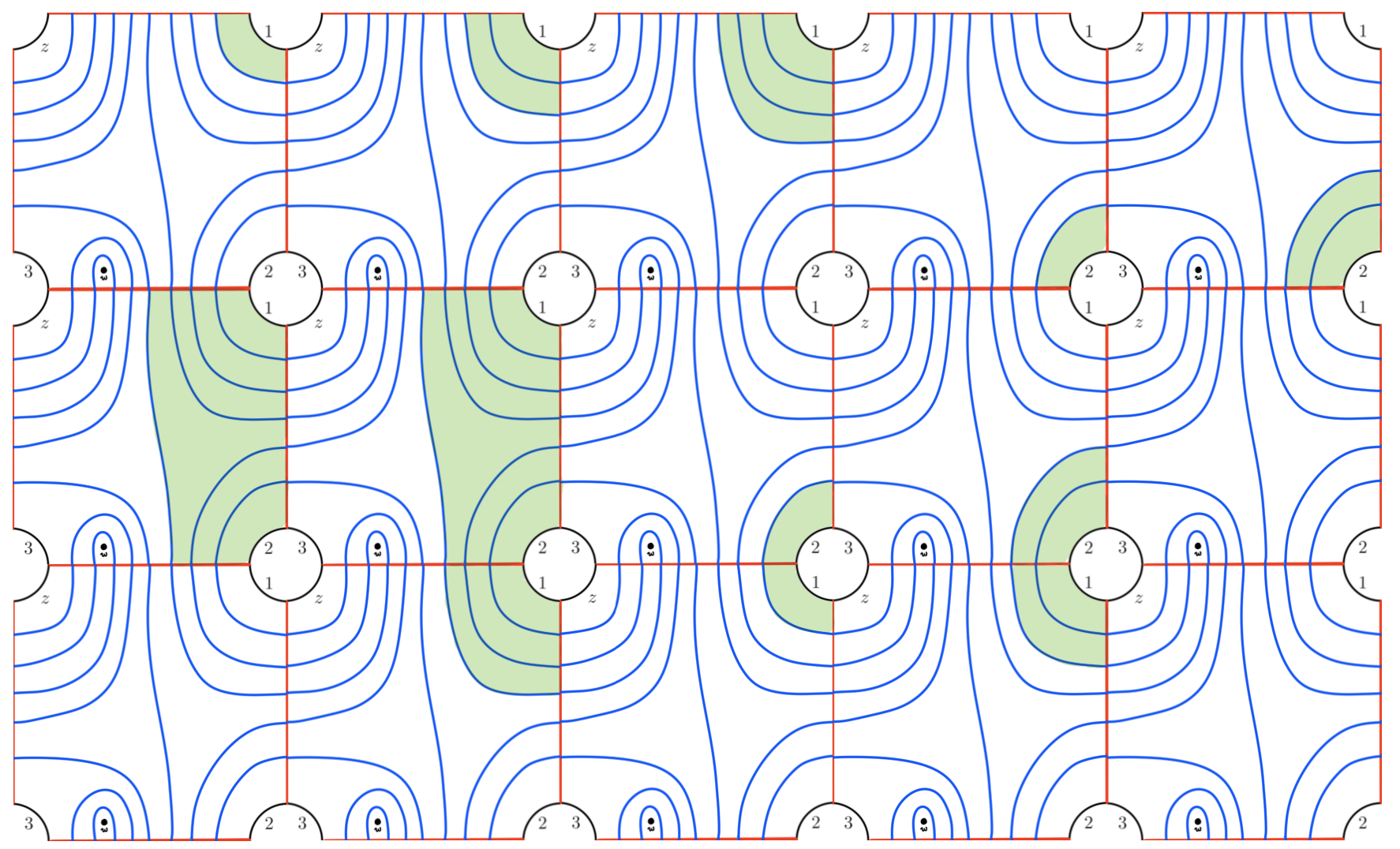}
\end{center}
\caption{Index one embedded disks in $\widetilde{\Sigma}$ which doesn't include lifts of $z$ and $w$}\label{Figure:Mazurbigons}
\end{figure}

Now in order to find the grading structure of $\widehat{\text{CFA}}(\mathcal{H}_{Q}, z, w)$, we first need to pick an arbitrary reference generator. Set $gr(y_4)$ to be $(0;0,0;0)$. The grading map will be : 
$$gr : \widehat{\text{CFA}}(\mathcal{H}_{Q}, z, w) \rightarrow P(y_4) \backslash \widetilde{G}$$
Using the rules described in Figure \ref{Figure:typeAgrading} we have: 
$$gr(y_4) = gr(y_3) \cdot (\frac{1}{2};-\frac{1}{2},\frac{1}{2};0) \Rightarrow gr(y_3)= (-\frac{1}{2};\frac{1}{2},-\frac{1}{2};0),$$
$$gr(y_3)=gr(y_1) \cdot (\frac{1}{2};-1,0;0) \Rightarrow gr(y_1)= (-\frac{1}{2}; \frac{3}{2},-\frac{1}{2};0),$$
$$gr(x_0)=gr(y_1) \cdot (\frac{1}{2};\frac{1}{2},-\frac{1}{2};1) \Rightarrow gr(x_0) =(- \frac{1}{2}; 2,-1;1),$$
$$gr(x_1) = gr(x_0) \cdot (\frac{1}{2}; -\frac{1}{2}, \frac{1}{2}; 0) \Rightarrow gr(x_1)=(-\frac{3}{2}; \frac{3}{2}, -\frac{3}{2}; 1),$$
$$gr(x_2) = gr(x_1) \cdot (\frac{1}{2}; -\frac{1}{2}, \frac{1}{2};0) \Rightarrow gr(x_2) = (-1;1,-1;1),$$
$$gr(x_2)=gr(x_5) \cdot (\frac{1}{2}; -\frac{1}{2}, -\frac{1}{2}; 1) \Rightarrow gr(x_5)=(-\frac{1}{2}; \frac{3}{2}, -\frac{1}{2}; 0),$$
$$gr(y_5)=\lambda^{-1} \mu^{-1} gr(x_5) \Rightarrow gr(y_5)=(-\frac{3}{2};\frac{3}{2},-\frac{1}{2}; -1),$$
$$gr(y_2)=gr(y_5) \cdot (\frac{1}{2}; -\frac{1}{2}, -\frac{1}{2}; 1) \Rightarrow gr(y_2) = (-2; 1,-1;0),$$
$$gr(x_3)=gr(y_2) \cdot (\frac{1}{2}; - \frac{1}{2}, - \frac{1}{2};0) \Rightarrow gr(x_3) = (- \frac{5}{2}; \frac{1}{2}; - \frac{3}{2}; 0),$$
$$gr(x_4)= gr(x_3) \cdot (\frac{1}{2},-\frac{1}{2}, \frac{1}{2};0) \Rightarrow gr(x_4) = (-\frac{5}{2}; 0,-1;0),$$
$$gr(x_4)=gr(x_6) \cdot (\frac{1}{2}; -\frac{1}{2}, -\frac{1}{2};1) \Rightarrow gr(x_6)=(-\frac{5}{2}; \frac{1}{2}, -\frac{1}{2}; -1),$$
$$gr(y_6)=\lambda^{-1} \mu^{-1} gr(x_6) \Rightarrow gr(y_6) = (-\frac{7}{2}; \frac{1}{2}; -\frac{1}{2}; -2).$$

\begin{figure}[h]
\centering
\begin{center}
\includegraphics[width=\textwidth]{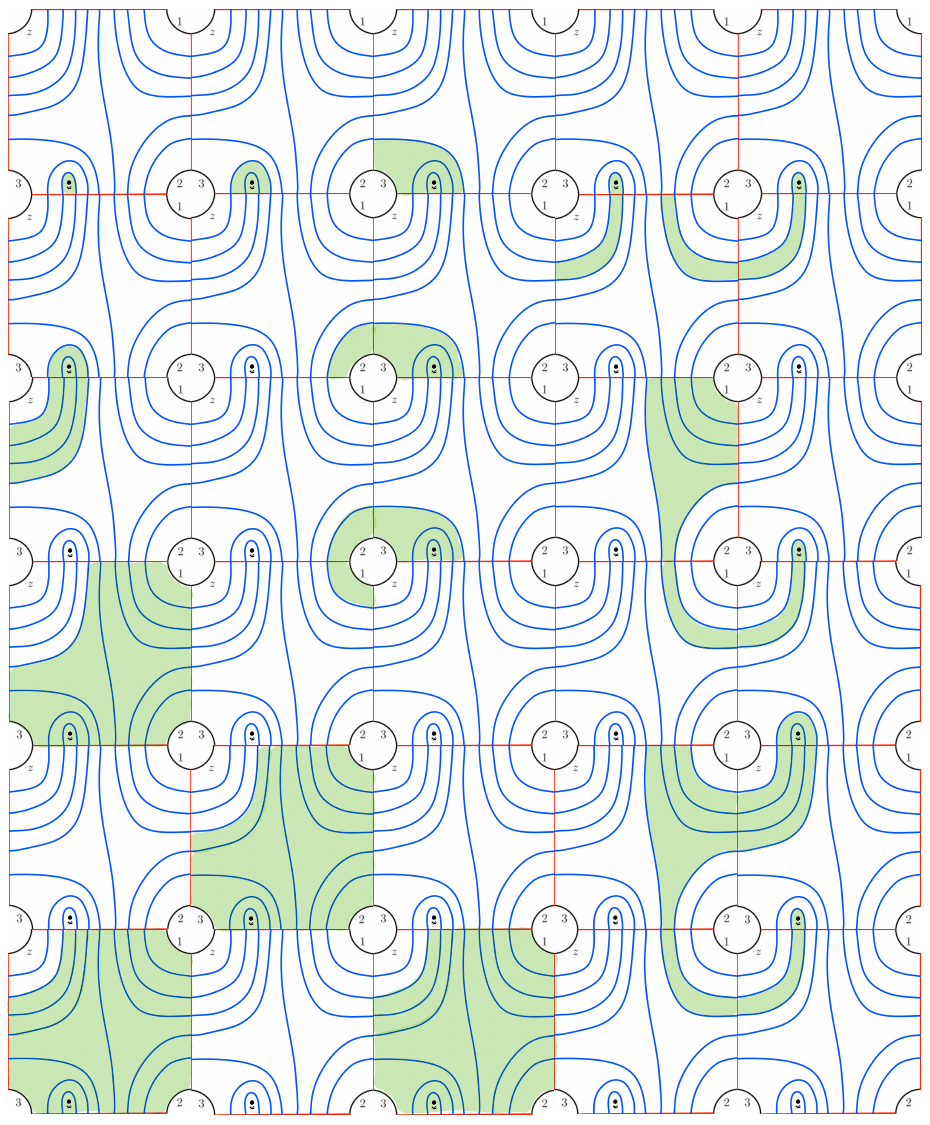}
\end{center}
\caption{Index one embedded disks in $\widetilde{\Sigma}$ which doesn't include lifts of $z$, but include a lift of $w$}\label{Figure:MazurUbigons}
\end{figure}

As we mentioned $P(y_4)$ comes from the periodic domains in the diagram $\mathcal{H}_{Q}$. In our case, since the bordered 3-manifold described by the diagram is a solid torus, the subgroup $P(y_4)$ is cyclic. Now again using the rules of Figure \ref{Figure:typeAgrading}, we have: 
$$gr(y_4)=\lambda^{-1}\mu^{-1}gr(x_4) \Rightarrow gr(y_4)=(-\frac{7}{2}; 0, -1; -1).$$
As a result, $P(y_4)$ is generated by $(-\frac{7}{2}; 0, -1; -1)$. This completes the computation of the grading map.\\

Now we finally discuss the computation of the immersed curve invariant $\hfk(Q)$. Note that a graphical representation of $\widehat{\text{CFD}}(\mathcal{H}_{Q},z,w)$ can be constructed by exchanging $1$ and $3$ labels in Figure \ref{Figure:MazurAgraph}. We need to embed this decorated graph in punctured torus $T$ based on rules described in Figure \ref{Figure:typeDimmeresed}.
The result of this embedding can be seen in Figure \ref{Figure:Mazurimmeresedgraph}. This gives us a family of segments $I_1, \cdots, I_7 \subset T$. Note that $I_4, \cdots,I_7$ are constant segments with image $x_6, x_6, y_5, y_6$. To construct $\hfk(Q)$, we then need to connect the endpoints of $I_1, \cdots, I_7$ to the puncture to turn them to immersed arcs. The result which is the $\hfk(Q)$ can be seen in Figure \ref{Figure:Mazurimmeresedcurve}. We can also consider the lift of $\hfk(Q)$ to $\widetilde{T}$. However, one must keep in mind that in this lift, the relative positions of the lifted components carry essential grading data. A peg-board diagram illustrating the components of $\hfk(Q)$ in $\widetilde{T}$ shown in Figure \ref{Figure:Mazurpegboard}. This diagram should be interpreted with care, as it omits the relative positioning of components and presents them only up to translation.\\
\end{exem}

\begin{rema}
    Note that the pointed Heegaard diagram $(\mathcal{H}_{Q},z)$ descibes a solid torus. This can be seen by using an isotopy on the $\beta$ curve which turns the diagram to the one described in Figure \ref{Figure:Mazurisotoped}, denoted by $(\mathcal{H}_{\infty},z)$. One can then easily compute $\widehat{\text{CFA}}(\mathcal{H}_{\infty},z)$ as follows: 
    $$\widehat{\text{CFA}}(\mathcal{H}_{\infty},z) = \widehat{\text{CFA}}(\mathcal{H}_{\infty},z) \cdot \iota_{0} = \langle x_0 \rangle$$
    $$\forall n \geq 0 \  \ m_{n+3}(x_0 \otimes\rho_{3} \otimes\underbrace{\rho_{23} \otimes \cdots \otimes \rho_{23}}_{n} \otimes \rho_{2})=x_0.$$
    The grading structure is also as follows: 
    $$gr : \widehat{\text{CFA}}(\mathcal{H}_{\infty},z) \rightarrow \langle (-\frac{1}{2};0,1) \rangle \backslash G$$
    $$gr(x_0)=(0; 0,0).$$
    Based on Theorem \ref{Theorem:typeAinvariance}, there exists a $\A_{\infty}$ homotopy equivalence $$f: \widehat{\text{CFA}}(\mathcal{H}_{\infty},z) \rightarrow \widehat{\text{CFA}}(\mathcal{H}_{Q},z).$$
\end{rema}

\begin{figure}[h]
\centering
\begin{center}
\includegraphics[scale=0.37 ]{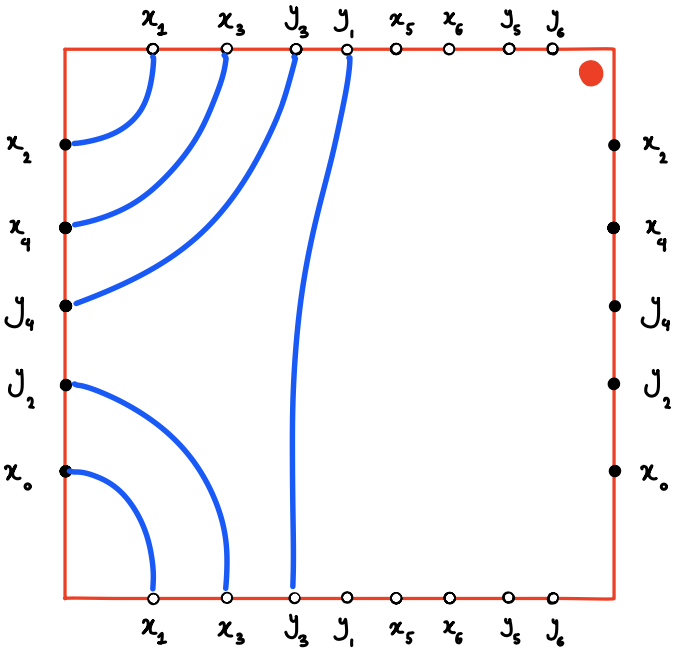}
\end{center}
\caption{The embedding of a decorated graph representing $\widehat{\text{CFD}}(\mathcal{H}_{Q}, z,w)$ in puntured torus $T$}\label{Figure:Mazurimmeresedgraph}
\end{figure}

\begin{figure}[h]
\centering
\begin{center}
\includegraphics[scale=0.3]{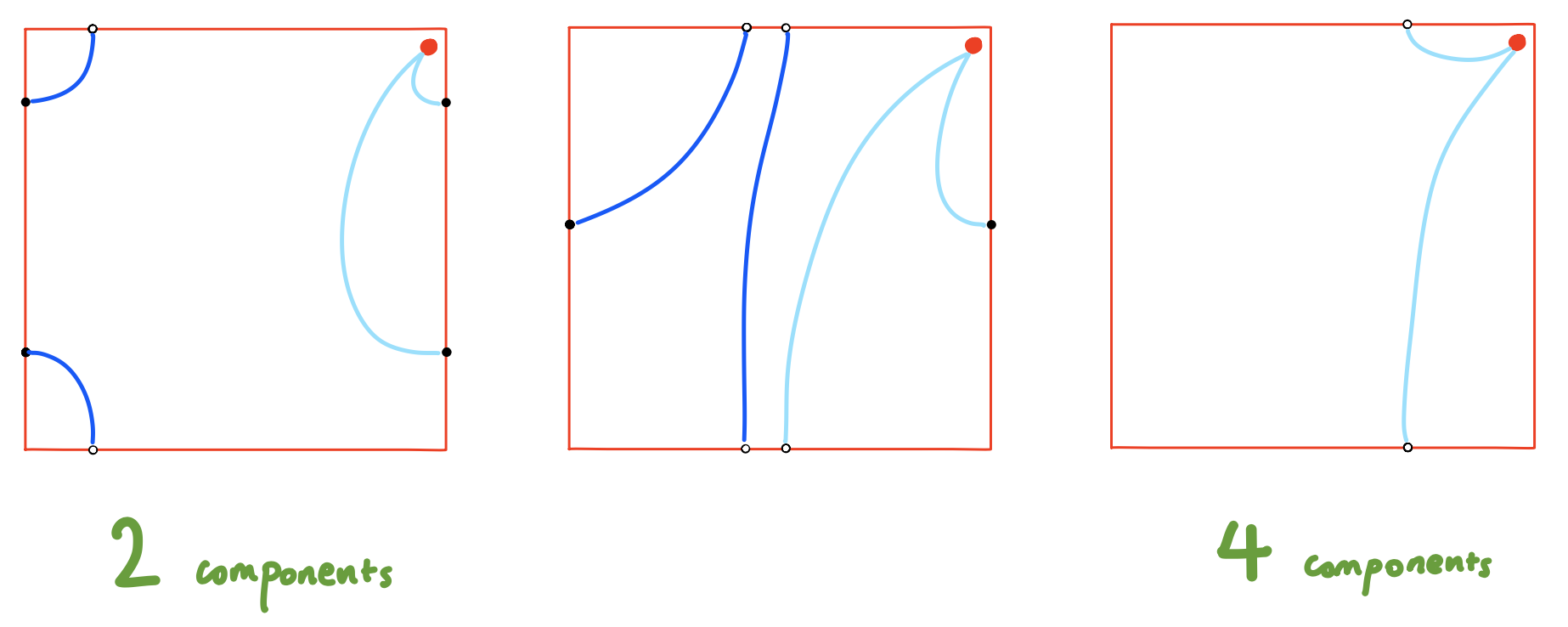}
\end{center}
\caption{Immersed curve invariant of Mazur pattern}\label{Figure:Mazurimmeresedcurve}
\end{figure}

\begin{figure}[h]
\centering
\begin{center}
\includegraphics[scale=0.35]{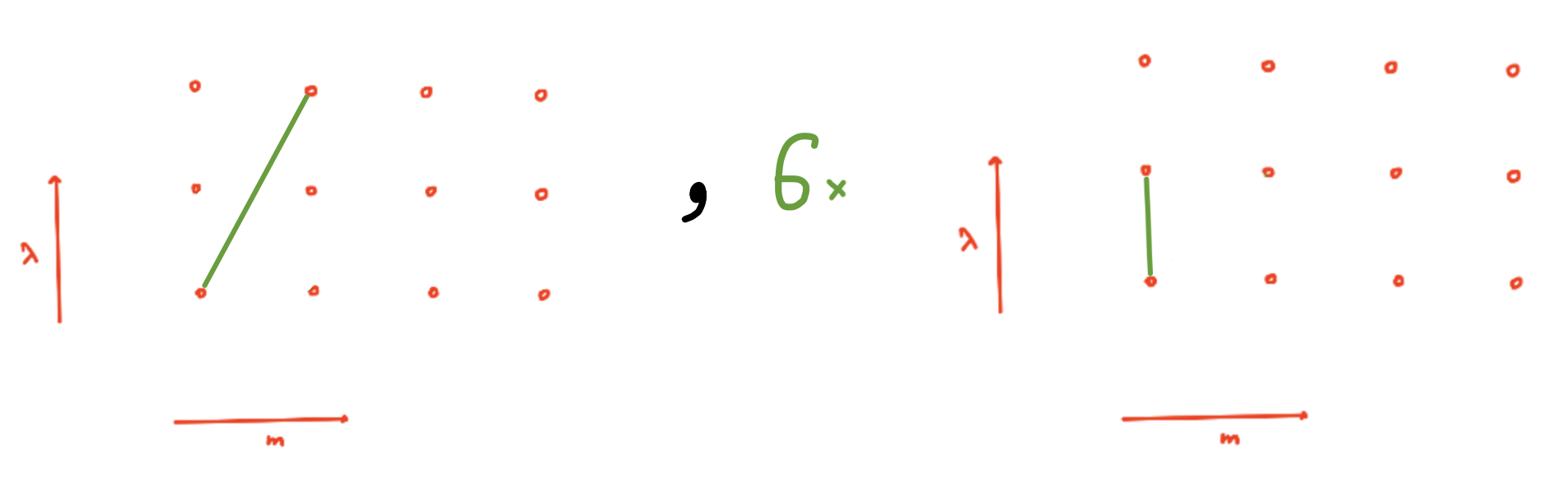}
\end{center}
\caption{A peg-board diagram of components of $\hfk(Q)$} (up to translation)\label{Figure:Mazurpegboard}
\end{figure}

\begin{figure}[h]
\centering
\begin{center}
\includegraphics[scale=0.25]{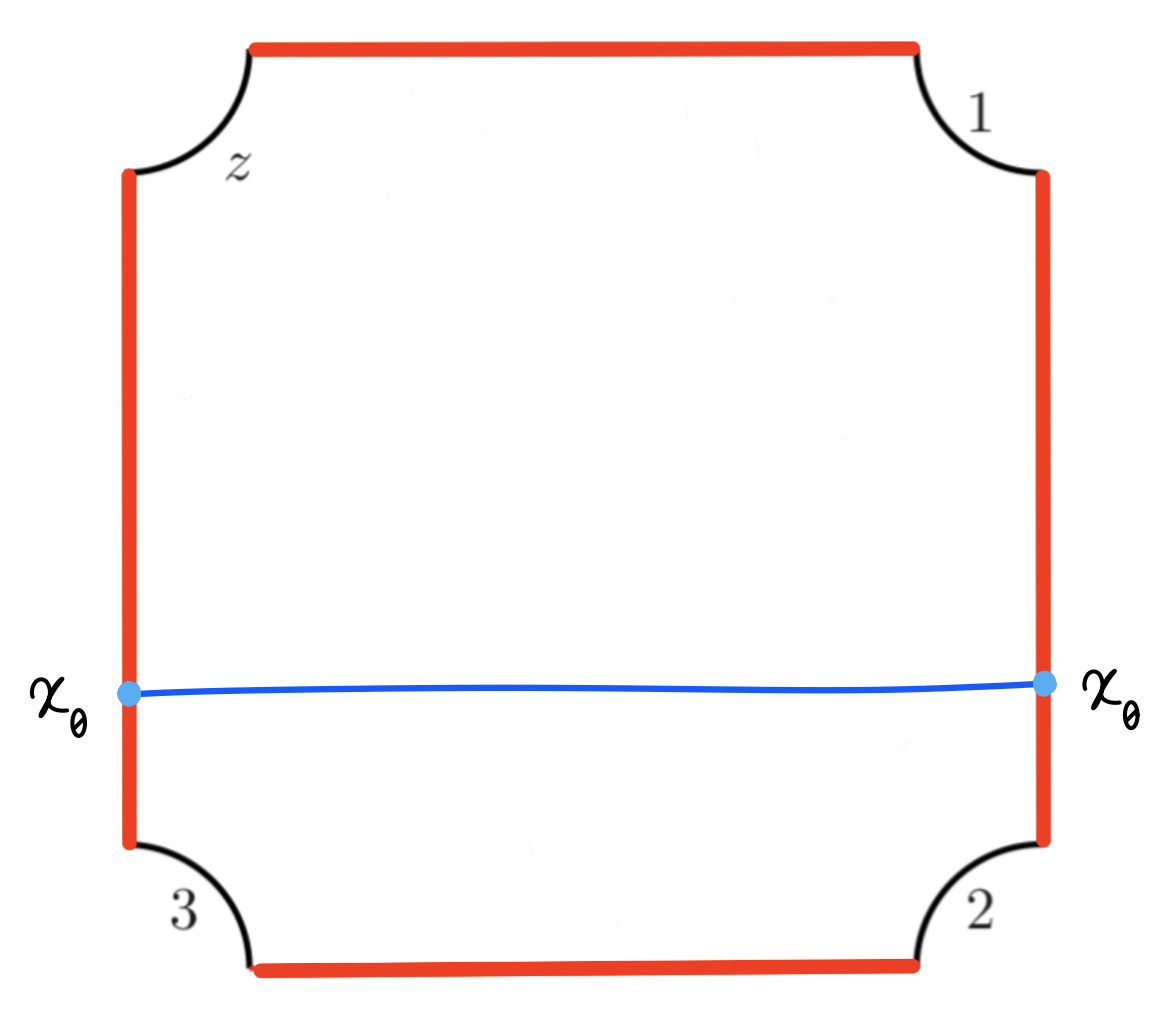}
\end{center}
\caption{$(\mathcal{H}_{\infty},z)$ which is the result of an isotopy on the $\beta$ curve in $(\mathcal{H}_Q,z)$ }\label{Figure:Mazurisotoped}
\end{figure}

\begin{exem}\label{Example:twistedmeridian}
We compute the type $D$ invariant of a solid torus 
$${\widehat{\text{CFD}}(H'_{\frac{1}{ m}}, \alpha_1, \alpha_2)}$$ which is a solid torus with the meridian homologous to $[\alpha_1] - m [\alpha_2]$ in the first homology of the boundary.\\ 

Let's denote this type $D$ module as $\widehat{\text{CFD}}(\mathcal{H}'_{\frac{1}{m}} ,z')$ for $m \in \mathbb{Z}^{+}$. We start by computing $\widehat{\text{CFD}}(\mathcal{H}'_{\frac{1}{3}} , z')$. A bordered Heegaard diagram for this bordered manifold can be seen in Figure \ref{Figure:1mHeeggard}. All the index one disks counted in the definition of $\widehat{\text{CFD}}(\mathcal{H}'_{\frac{1}{3}}, z')$ can be seen in Figure \ref{Figure:1mbigons}. As a result one can see that: 
$$\widehat{\text{CFD}}(\mathcal{H}'_{\frac{1}{3}}, z') = \langle \eta, \xi_1, \xi_2, \xi_3 \rangle_{\mathbb{F}_2},$$
$$\widehat{\text{CFD}}(\mathcal{H}'_{\frac{1}{3}}, z') \cdot \iota_{0} = \langle  \eta \rangle,$$
$$\widehat{\text{CFD}}(\mathcal{H}'_{\frac{1}{3}}, z') \cdot \iota_{1} = \langle  \xi_1, \xi_2, \xi_3 \rangle.$$
And the $\delta$ map is as follows: 
$$\delta(\eta)= \rho_3 \otimes \xi_1 + \rho_1 \otimes \xi_3,$$
$$\delta(\xi_i)= \rho_{23} \otimes \xi_{i+1} \ \ i=1,2.$$
As a result $\widehat{\text{CFD}}(\mathcal{H}'_{\frac{1}{3}}, z')$ can be described as the decorated graph seen in Figure \ref{Figure:1:3Dgraph}.\\

\begin{figure}[h]
\centering
\begin{center}
\includegraphics[scale=0.3]{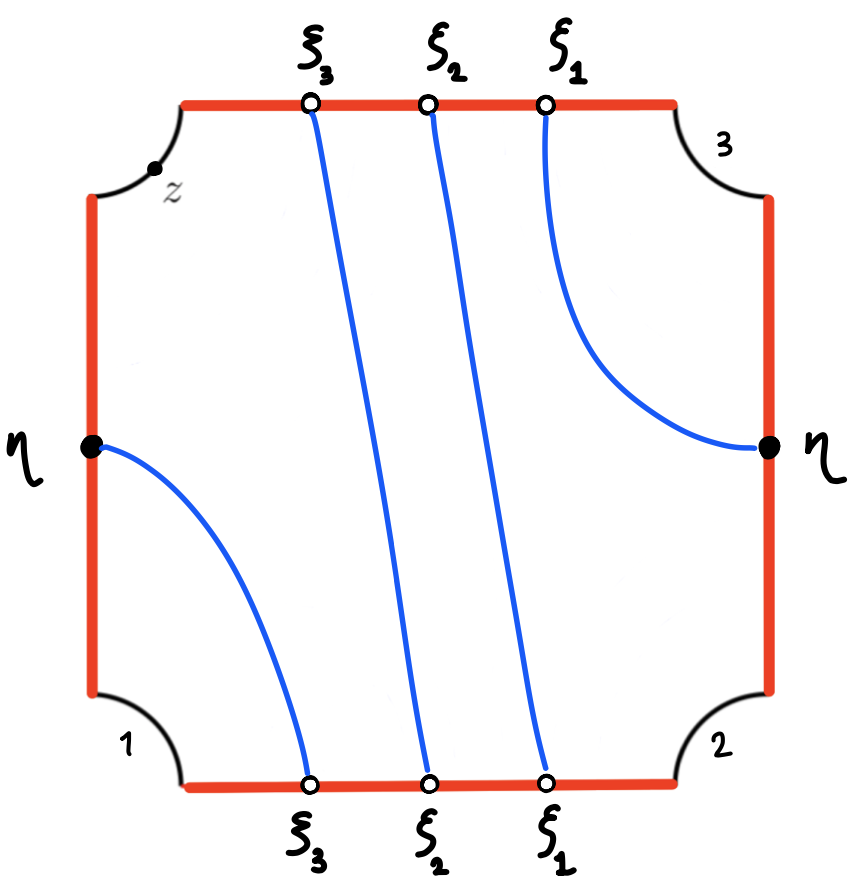}
\end{center}
\caption{Genus-one bordered Heegaard diagram $(\mathcal{H}'_{\frac{1}{3}},z')$ for $H'_{\frac{1}{3}}$}\label{Figure:1mHeeggard}
\end{figure}

\begin{figure}[h]
\centering
\begin{center}
\includegraphics[scale=0.33]{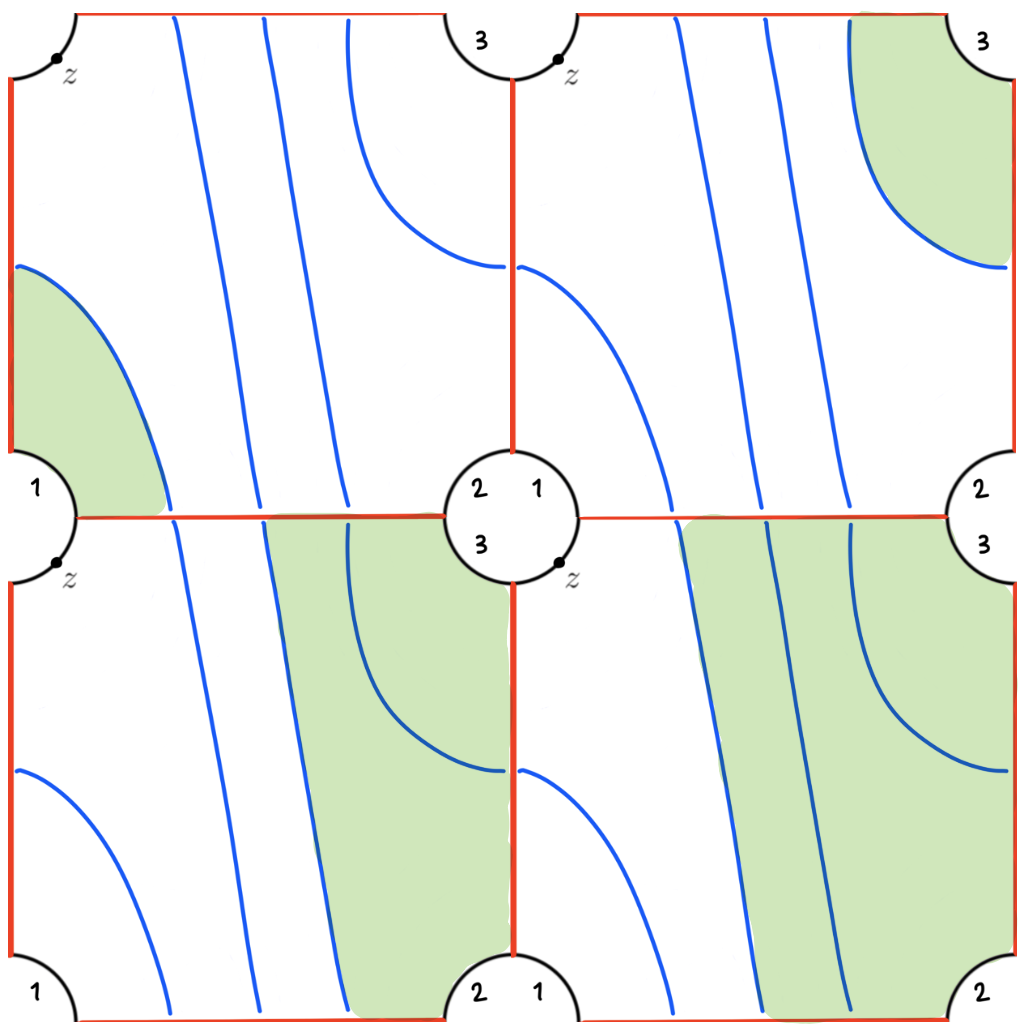}
\end{center}
\caption{Index one embedded disks in $\widetilde{\Sigma}$ counted in definition of $\widehat{\text{CFD}}(\mathcal{H}'_{\frac{1}{3}} ,z')$ }\label{Figure:1mbigons}
\end{figure}

\begin{figure}[h]
\centering
\begin{center}
\includegraphics[scale=0.3]{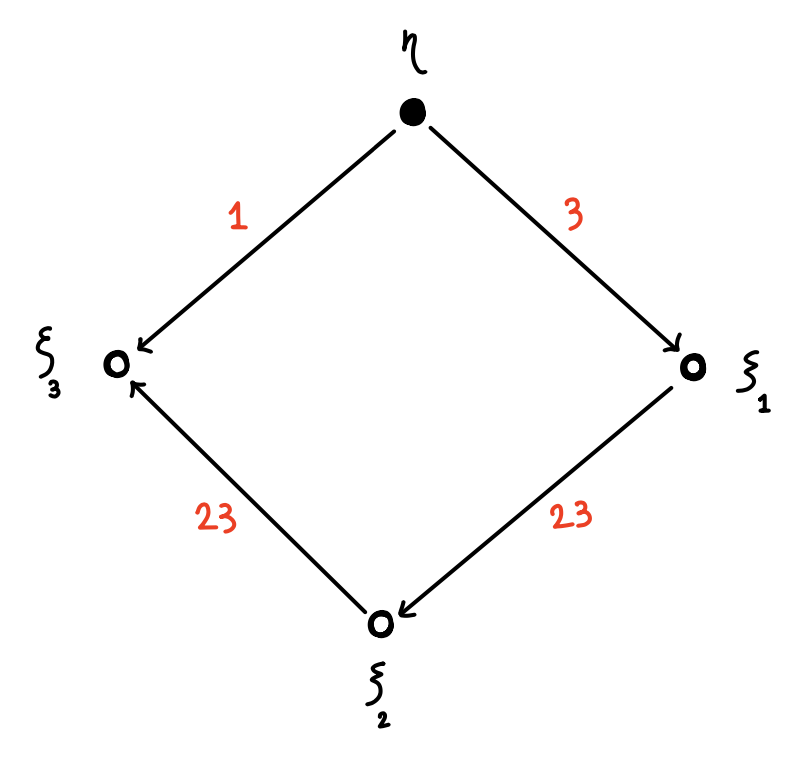}
\end{center}
\caption{Decorated graph representing $\widehat{\text{CFD}}(\mathcal{H}'_{\frac{1}{3}}, z')$}\label{Figure:1:3Dgraph}
\end{figure}

We can now easily generalize this computation to $\widehat{\text{CFD}}(\mathcal{H}'_{\frac{1}{m}} ,z')$. The $\beta$ curve in $\mathcal{H}'_{\frac{1}{m}}$ intersects $\alpha_2$ in one point and $\alpha_1$ in $m$ points and hence: 
$$\widehat{\text{CFD}}(\mathcal{H}'_{\frac{1}{m}}, z') = \langle \eta, \xi_1, \cdots , \xi_m \rangle_{\mathbb{F}_2},$$
$$\widehat{\text{CFD}}(\mathcal{H}'_{\frac{1}{m}}, z') \cdot \iota_{0} = \langle  \eta \rangle,$$
$$\widehat{\text{CFD}}(\mathcal{H}'_{\frac{1}{m}}, z') \cdot \iota_{1} = \langle  \xi_1, \cdots , \xi_m \rangle.$$
The count of index one disks used in definition of map $\delta$ is also very similar 
$$\delta(\eta)= \rho_3 \otimes \xi_1 + \rho_1 \otimes \xi_m,$$
$$\delta(\xi_i)= \rho_{23} \otimes \xi_{i+1} \ \ i=1,\cdots,m-1.$$
A decorated graph representing $\widehat{\text{CFD}}(\mathcal{H}'_{\frac{1}{3}}, z')$ can be seen in Figure \ref{Figure:1:mDgraph}.\\

\begin{figure}[h]
\centering
\begin{center}
\includegraphics[scale=0.3]{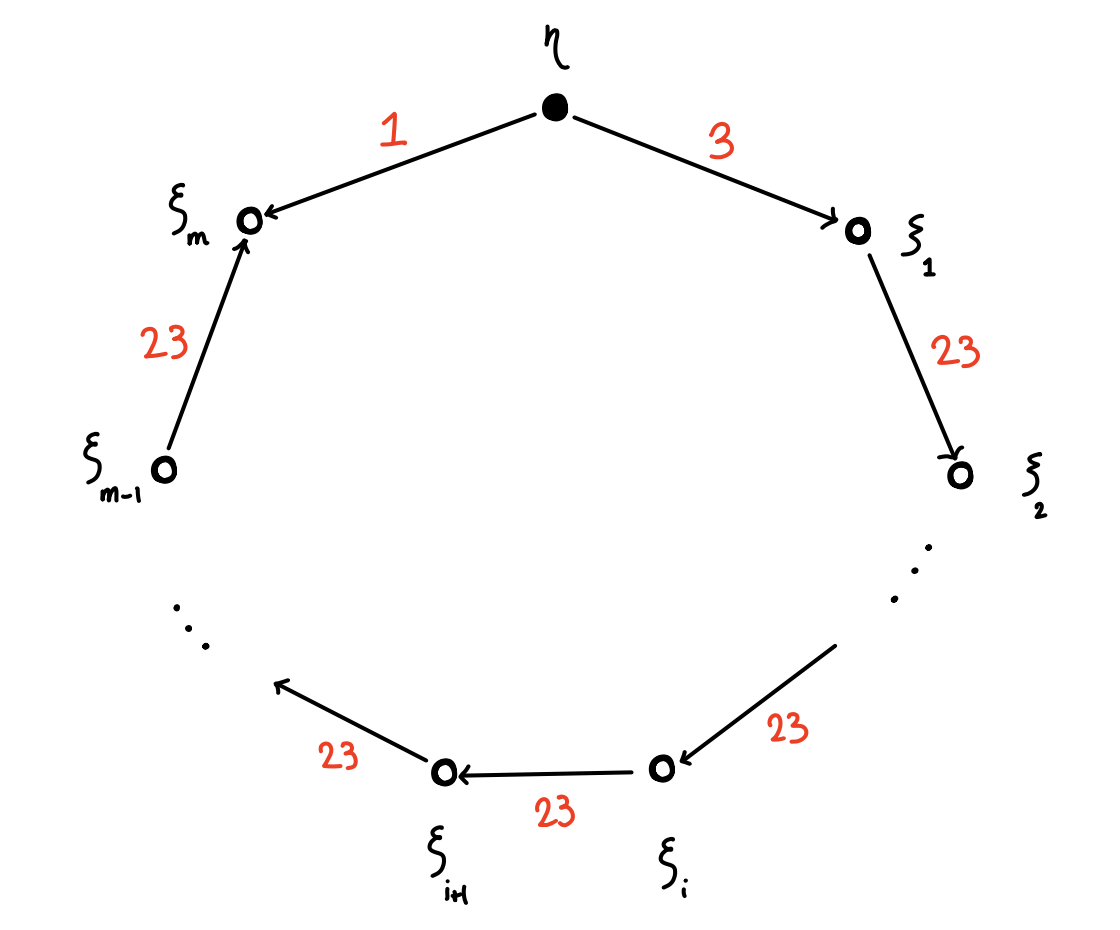}
\end{center}
\caption{Decorated graph representing $\widehat{\text{CFD}}(\mathcal{H}'_{\frac{1}{m}}, z')$}\label{Figure:1:mDgraph}
\end{figure}

We can also compute the grading structure on $\widehat{\text{CFD}}(\mathcal{H}'_{\frac{1}{m}}, z')$ as follows. We denote the grading function by $gr_{m}$. We pick $\eta$ as the reference generator. Then using the rules of Figure \ref{Figure:typeDgrading} we have: 
$$gr_{m}(\eta) = (\frac{1}{2};-\frac{1}{2}, \frac{1}{2}) \cdot gr_{m}(\xi_1) \Rightarrow gr_{m}(\xi_1) = (-\frac{1}{2};\frac{1}{2}, -\frac{1}{2}),$$
and for $i = 1, \cdots, m$: 
$$gr_{m}(\xi_{i}) = (\frac{1}{2};0, 1) \cdot gr_{m}(\xi_{i+1}) \Rightarrow gr_{m}(\xi_{i}) = (-\frac{1}{2}; \frac{1}{2}, - \frac{2i-1}{2}).$$
Also we can compute $P(\eta)$ as follows: 
$$gr_{m}(\eta) = (\frac{1}{2}; \frac{1}{2}, -\frac{1}{2}) \cdot gr_{m}(\xi_m) \Rightarrow gr_{m}(\eta)=(- \frac{m-1}{2}; 1, -m).$$
This means that $P(\eta) = \langle (- \frac{m-1}{2}; 1, -m) \rangle$ and 
$$gr_{m}: \widehat{\text{CFD}}(\mathcal{H}'_{\frac{1}{m}}, z') \rightarrow G / \langle (- \frac{m-1}{2}; 1, -m) \rangle.$$
Note that our computation of $gr_m$ comes from moving clockwise on the decorated graph represented in Figure \ref{Figure:1:mDgraph} starting from $\eta$. We can also compute $gr_m$ by moving in the counterclockwise direction, which will give us the following: 

$$gr(\eta) = (\frac{1}{2}; \frac{1}{2}, -\frac{1}{2}) \cdot gr_{m}(\xi_m) \Rightarrow gr_{m}(\xi_m)  = (-\frac{1}{2}; -\frac{1}{2}, \frac{1}{2}),$$
and for $i=1, \cdots m-1$: 
$$gr_{m}(\xi_{m-i}) = (\frac{1}{2};0, 1) \cdot gr_{m}(\xi_{m-i+1}) \Rightarrow gr_{m}(\xi_{m-i}) = (\frac{2i-1}{2};-\frac{1}{2},\frac{2i+1}{2}).$$
Note that the first and second computations are just different choices of representatives of the same cosets in $G / \langle (- \frac{m-1}{2}; 1, -m) \rangle$. Having these two different choices will be useful, see Corollary \ref{Corollary:doublesidedgrading}.\\

Now we can also look at the immersed curve invariant $\widehat{HF}$.  Figure \ref{Figure:1:3immeresedgraph} shows the result of embedding the decorated graph of $\widehat{\text{CFD}}(\mathcal{H}'_{\frac{1}{3}}, z')$ in $T$. One can see that the immersed curve invariant has a loose representative in the form of a Eulidean geodesic whilch lifts to a line with slope $3$ in $\widetilde{T}$. In the general case of $\widehat{\text{CFD}}(\mathcal{H}'_{\frac{1}{m}}, z')$, the immersed curve invariant will be an Euclidean geodesic which lifts to a line with slope $m$ in $\widetilde{T}$.\\\\

\begin{figure}[h]
\centering
\begin{center}
\includegraphics[scale=0.3]{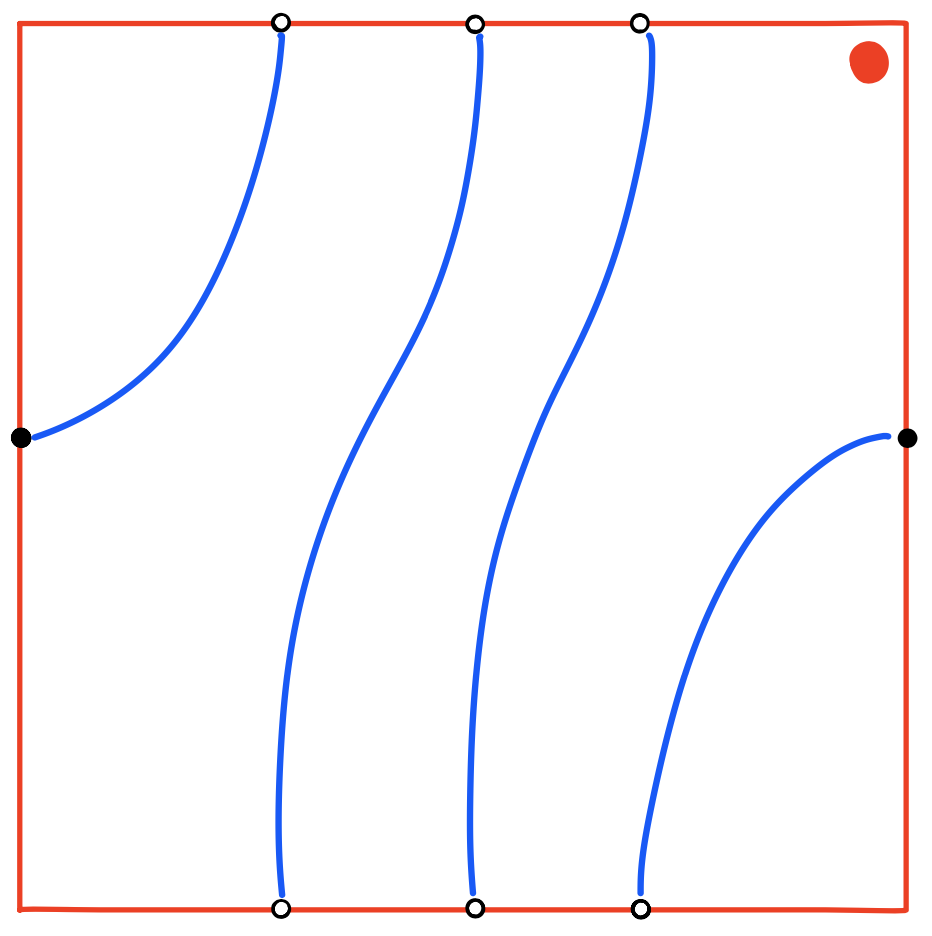}
\end{center}
\caption{The embedding of a decorated graph representing $\widehat{\text{CFD}}(\mathcal{H}'_{\frac{1}{3}}, z')$ in puntured torus $T$}\label{Figure:1:3immeresedgraph}
\end{figure}
\end{exem}

\begin{rema}
In Section \ref{Section:Boxtensor}, we examine $\widehat{\text{CFD}}(\mathcal{H}'_{\frac{1}{m}}, z')$ when $m \rightarrow \infty$. More specifically we will compare the complexes $\widehat{\text{CFD}}(\mathcal{H}'_{\frac{1}{m}}, z')$ and $\widehat{\text{CFD}}(\mathcal{H}'_{\frac{1}{m+1}}, z')$. Going on, abusing the notation, we use the same notations to refer to the generators of these two different type $D$ structures, i.e. 
$$\widehat{\text{CFD}}(\mathcal{H}'_{\frac{1}{m}}, z') = \langle \eta, \xi_1, \cdots , \xi_{m} \rangle, \ \text{and} \ $$
$$\widehat{\text{CFD}}(\mathcal{H}'_{\frac{1}{m+1}}, z') = \langle \eta, \xi_1, \cdots , \xi_{m+1} \rangle.$$
Note that we used the notations $gr_{m}$ and $gr_{m+1}$ to differentiate between the gradings of the mentioned type $D$ modules.\\ 
\end{rema}

\begin{rema}\label{Remark:Twogradings}
We described two distinct methods for selecting representatives of the grading cosets of the generators of $$\widehat{\text{CFD}}(\mathcal{H}'_{\frac{1}{m}}, z'),$$ based on the direction of traversal in the decorated graph shown in Figure \ref{Figure:1:mDgraph}. We refer to these as the \emph{clockwise} and \emph{counterclockwise} representatives of $gr_{m}$, respectively.\\    
\end{rema}

By combining the clockwise and counterclockwise representatives, we can have Corollary \ref{Corollary:doublesidedgrading}. 

\begin{coro}\label{Corollary:doublesidedgrading}
For any $k \in \mathbb{N}$ and any integer $m > 2k$, we can choose the following representatives for the coset values of $gr_{m}$: 
$$gr_{m}(\xi_{i+1}) = (-\frac{1}{2}; \frac{1}{2}, - \frac{2i+1}{2})$$
$$gr_{m}(\xi_{m-i}) = (\frac{2i-1}{2};-\frac{1}{2},\frac{2i+1}{2})$$
for $i=0, \cdots, k$. This means that we can pick the fixed representatives for the gradings of generators $\xi_{m-k}, \cdots , \xi_{m}, \eta, \xi_{1}, \cdots, \xi_{k+1}$ as $m \rightarrow \infty$.
\end{coro}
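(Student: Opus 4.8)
The plan is to read the statement off the two grading computations already carried out in Example \ref{Example:twistedmeridian}, using only the fact that $gr_m$ on $\widehat{\text{CFD}}(\mathcal{H}'_{\frac{1}{m}}, z')$ is well-defined merely as a map to the coset space $G/P(\eta)$, so that any legitimate choice of representative may be made coset by coset. Concretely, the clockwise traversal of the decorated graph in Figure \ref{Figure:1:mDgraph} produced the representatives $gr_m(\eta) = (0;0,0)$ and $gr_m(\xi_i) = (-\tfrac12;\tfrac12,-\tfrac{2i-1}{2})$ for $i=1,\dots,m$; reindexing $i\mapsto i+1$ gives exactly the first displayed formula, and this value depends only on $i$, not on $m$. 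Symmetrically, the counterclockwise traversal produced $gr_m(\xi_{m-i}) = (\tfrac{2i-1}{2};-\tfrac12,\tfrac{2i+1}{2})$ for $i=1,\dots,m-1$, with the case $i=0$ recovering the base value $gr_m(\xi_m) = (-\tfrac12;-\tfrac12,\tfrac12)$; here the value depends only on the "distance from the top" $i$, again not on $m$. So both families of values are $m$-independent, which is the entire point of the corollary.

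First I would record that both families are genuine representatives of the coset values of $gr_m$: this is precisely the observation in Example \ref{Example:twistedmeridian} that the clockwise and counterclockwise computations differ only by the right action of $P(\eta) = \langle(-\tfrac{m-1}{2};1,-m)\rangle$. Next I would check that, under the hypothesis $m>2k$, the two prescriptions are mutually compatible, i.e.\ assign a single well-defined representative to each generator appearing in the list $\xi_{m-k},\dots,\xi_m,\eta,\xi_1,\dots,\xi_{k+1}$. Since $m>2k$ forces $m-k\geq k+1$, the "bottom" index set $\{1,\dots,k+1\}$ and the "top" index set $\{m-k,\dots,m\}$ are disjoint unless $m=2k+1$, in which case they meet only in the single index $k+1$. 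On the disjoint part there is nothing to verify: I assign the clockwise representatives to $\eta,\xi_1,\dots,\xi_{k+1}$ and the counterclockwise representatives to $\xi_{m-k},\dots,\xi_m$.

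The one place requiring an actual computation — and the only friction I anticipate — is the boundary case $m=2k+1$, where $\xi_{k+1}=\xi_{m-k}$ receives both the clockwise candidate $(-\tfrac12;\tfrac12,-\tfrac{2k+1}{2})$ and the counterclockwise candidate $(\tfrac{2k-1}{2};-\tfrac12,\tfrac{2k+1}{2})$. I would check these lie in the same right coset of $P(\eta) = \langle(-k;1,-(2k+1))\rangle$ by computing, with the group law of $G$, that $(-\tfrac12;\tfrac12,-\tfrac{2k+1}{2})^{-1}\cdot(\tfrac{2k-1}{2};-\tfrac12,\tfrac{2k+1}{2}) = (k;-1,2k+1)$, which is the inverse of the generator of $P(\eta)$ and hence lies in $P(\eta)$; so the ambiguity at $\xi_{k+1}$ is harmless. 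Finally, since none of the chosen representatives involves $m$ and the bound $m>2k$ is uniform, the same fixed representatives serve for all sufficiently large $m$, which is the "as $m\to\infty$" claim. Beyond keeping the indices and the non-commutative law in $G$ straight, there is no substantive obstacle: the corollary is a direct packaging of the two computations in Example \ref{Example:twistedmeridian}.
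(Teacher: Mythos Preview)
Your proposal is correct and takes essentially the same approach as the paper: the corollary is stated there without proof, introduced simply by ``By combining the clockwise and counterclockwise representatives, we can have Corollary \ref{Corollary:doublesidedgrading},'' so the intended argument is exactly to read off the two computations from Example~\ref{Example:twistedmeridian}. Your explicit verification of the boundary case $m=2k+1$ is more than the paper provides (and in fact already implicit in the observation in Example~\ref{Example:twistedmeridian} that the two computations give representatives of the same cosets), but it is a correct and reassuring check.
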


\section{Using Immersed curve invariants}\label{Section:Immeresedcurve}

In this section, we are going to use the immersed curve invarinats to prove Theorem \ref{Theorem:totaldim}. First, we need a pairing theorem for the immersed curve invariant of a knot in solid torus (similar to Theorem \ref{Theorem:immeresedpairing}). We explain the setting of this theorem in the following.\\

Consider a knot $Q \subset S^1 \times D^2$. Let $M$ be a 3-manifolds with torus boundary. Let $h: \partial(S^1 \times D^2) \rightarrow \partial M$ be an orientation reversing homeomorphism, and let $\bar{h}$ denote the composition of $h$ with the elliptic involution of $\partial M$. Let $Y := M \cup_{h} (S^1 \times D^2)$.\\

\begin{theo}\label{Theorem:immeresedpairingknots}
Let $\bm{\gamma}$ and $\bm{\gamma'}$ denote $\widehat{HF}(M)$ and $\hfk(Q)$ respectively. Then we have 
$$\widehat{HFK}(Y , Q) \cong HF(\bm{\gamma} , \bar{h}(\bm{\gamma'})).$$    
\end{theo}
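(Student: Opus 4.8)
The plan is to deduce Theorem \ref{Theorem:immeresedpairingknots} by combining the pairing theorem for bordered invariants of knots (Theorem \ref{Theorem:pairingknots}) with the dictionary between the type $D$ invariant and the immersed curve invariant used to prove Theorem \ref{Theorem:immeresedpairing}. The key observation is that passing from $\widehat{\text{CFK}}$ to $\widehat{\text{HFK}}$ only affects which $J$-holomorphic curves are counted in the differential, via the extra basepoint $w$, and this is exactly the modification that turns the immersed segments of $\widehat{\text{CFD}}(M,K)$ into the non-compact arcs of $\hfk(Q)$.

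First I would fix a doubly-pointed bordered Heegaard diagram for $(S^1\times D^2, Q)$ compatible with the parametrization $(\alpha_1,\alpha_2)$ of $\partial(S^1\times D^2)$, so that it computes $\widehat{\text{CFA}}(S^1\times D^2, Q)$, and observe that $\widehat{\text{CFD}}(S^1\times D^2,Q)$ is obtained from it by the usual type $A$/type $D$ duality. Applying Theorem \ref{Theorem:pairingknots}(2) (after arranging boundedness up to homotopy) gives
$$\widehat{\text{CFK}}(Y,Q) \simeq \widehat{\text{CFA}}(M,\alpha_1,\alpha_2)\boxtimes \widehat{\text{CFD}}(S^1\times D^2,Q),$$
so $\widehat{\text{HFK}}(Y,Q)$ is the homology of the right-hand side. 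Next I would recall the geometric reinterpretation of the box tensor product underlying Theorem \ref{Theorem:immeresedpairing}: the type $A$ side of $M$ is encoded by the immersed multicurve $\bm{\gamma}=\widehat{HF}(M)$, the (reduced) type $D$ graph of a module over $\mathcal{A}$ is encoded by its train-track/immersed-curve picture in $T=\partial M\setminus z$ (Figure \ref{Figure:typeDimmeresed}), and the box tensor differential corresponds to counting bigons between the two pictures, i.e. computing Lagrangian Floer homology $HF(\bm{\gamma},\bar h(\bm{\gamma'}))$; the elliptic involution accounts for the orientation-reversing gluing. The only new ingredient is that $\widehat{\text{CFD}}(S^1\times D^2,Q)$ need not be extendable, so its embedded picture has immersed-segment components, which by definition of $\hfk(Q)$ are completed to non-compact immersed arcs by connecting endpoints to the puncture $z$. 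I would check that completing a segment to an arc through $z$ does not change the count of bigons with any component of $\bm{\gamma}$ — bigons are forbidden from containing a lift of $z$, so no new bigon can appear near the puncture — so the chain complex computing $HF(\bm{\gamma},\bar h(\bm{\gamma'}))$ agrees with $\widehat{\text{CFA}}(M,\alpha_1,\alpha_2)\boxtimes\widehat{\text{CFD}}(S^1\times D^2,Q)$ on the nose. Combining the two identifications yields the claimed isomorphism.

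The main obstacle is the last point: making precise that the extension of immersed segments to non-compact arcs (and the passage from a train track to a genuine immersed multicurve via a change of basis, as in Hanselman--Rasmussen--Watson) is a chain homotopy equivalence compatible with the Floer-theoretic pairing, and that the resulting Lagrangian Floer homology $HF(\bm{\gamma},\bar h(\bm{\gamma'}))$ with a non-compact Lagrangian factor is well-defined and computes the box tensor homology. This is essentially the content of Rasmussen's (as yet unpublished) extension of \cite{Hanselman2016BorderedFH} to knots in the solid torus; in the write-up I would either cite it directly or, since we work throughout with explicit genus-one diagrams and bounded modules, verify the bigon-counting correspondence by hand in the cases needed, reducing the statement to Theorem \ref{Theorem:pairingknots} together with the purely combinatorial translation between $\boxtimes$ and bigon counts.
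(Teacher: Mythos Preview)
The paper does not actually prove Theorem~\ref{Theorem:immeresedpairingknots}. It is stated without proof as part of the background, attributed implicitly to Rasmussen's unpublished work (see the discussion of $\hfk(P)$ and the reference \cite{Rasmusssentalk}, where the author notes ``Unfortunately Rasmussen's work on this invariant hasn't been published yet''). The theorem is then used as a black box in the proof of Theorem~\ref{Theorem:totaldim}.

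Your proposal is therefore not comparable to a proof in the paper, because there is none. That said, your outline is a reasonable sketch of what such a proof would entail, and you correctly identify the crux: the passage from the possibly non-extendable type $D$ structure $\widehat{\text{CFD}}(S^1\times D^2,Q)$ to a genuine immersed multicurve with arc components, together with the verification that Lagrangian Floer homology with non-compact arcs is well-defined and computes the box tensor product. You are also right that this is precisely the content of Rasmussen's unpublished extension of \cite{Hanselman2016BorderedFH}. Your suggested fallback---verifying the bigon correspondence by hand in the genus-one, bounded setting---is plausible for the specific applications in the paper (where one side is the very simple curve $\bm{l_m}$), but would not constitute a proof of the theorem in the generality stated.
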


As we explain in the following, this setting is compatible with the construction of twist families. Recall from Section \ref{Section:Introduction}, that a knot $K_{m}$ in a twist family is the result of performing $(-\frac{1}{m})$ surgery on an unknot $c \subset S^3 \setminus N(K)$. We can think of $K$ as a knot in the solid torus $ H := S^3 \setminus N(c)$. Let $\lambda_{c} \subset \partial H$ be a Seifert longitude of $c$ which is a meridian of solid torus $H$. Also let $\mu_{c} \subset \partial H$ be a meridian of $c$, which can be seen as a longitude of the torus $\partial H$. We pick $z = \lambda_{c} \cap \mu_{c}$ as the basepoint. In this setting $(H, \mu_{c}, \lambda_{c})$ is a bordered 3-manifold.\\

Similar to before, let $T$ be a punctured torus as in Figure \ref{Figure:typeDimmeresed}. As we discussed, we have an immersed curve invariant $\hfk(K)$ which is a collection of curves and arcs in $T$ equipped with local systems.\\

We now can also consider another solid torus $H' := N(c)$. Note that $\mu_{c}$ is a meridian of $H'$ and $-\lambda_{c}$ can be seen as the longitude of $\partial H'$. Note that the negative sign is due to the fact that $H$ and $H'$ induce different orientations on the boundary torus.\\ 

Performing $(-\frac{1}{m})$ surgery on $c$ is equivalent to replacing this solid torus with $H'_{\frac{1}{m}}$ which is a solid torus with meridian homologous to $[\mu_{c}] - m \cdot [\lambda_{c}]$. We computed the immersed curve invariant of $H'_{\frac{1}{m}}$ in Example $\ref{Example:twistedmeridian}$, and the result was an Euclidean geodesic which lifts to a line with slope $m$ in $\widetilde{T}$.\\

Now we are ready to present the first proof of Theorem \ref{Theorem:totaldim}.

\begin{proof}
 We are going to use Theorem \ref{Theorem:immeresedpairingknots}. We are gluing $(H,K)$ and $H'_{\frac{1}{m}}$ to get $(S^3 , K_m)$. Let $\bm{l_m} \subset T$ be the immersed curve invariant of $H'_{\frac{1}{m}}$, and let $\bm{\gamma}$ be $\hfk(K)$. We know that 
 $$\widehat{HFK}(S^3 , K_m) \cong HF(\bm{l_m} , \bar{h}(\bm{\gamma}))$$
 Now let $\bm{\gamma_1}, \cdots, \bm{\gamma_{n}}$ be the connected components of $\bar{h}(\bm{\gamma})$ then we have
$$HF(\bm{l_m} , \bar{h}(\bm{\gamma'})) =  \bigoplus_{1\leq j\leq n'}  HF(\bm{l_m}, \bm{\gamma_j})$$ 
Recall that $\bm{\gamma_j}$ is an immersed curve decorated with a local system. Let 
$$\bm{\gamma_j}=(\gamma_j , k_j, A_j) \ \ \text{for} \ \ j=1,\cdots,n.$$
Using Lemma \ref{Lemma:primitivereduce}, we can assume that all $\gamma_j$ are primitive. Note that $\bm{l_m}$ consists of a single primitive curve $l_m$ with trivial local systems. Now we can use Theorem \ref{Theorem:Lagrangiandimension} to get the following
$$\dim (HF(\bm{l_m}, \bm{\gamma_j})) = k_j \cdot i(l_m, \gamma_j) \ \ \text{for} \ \ j=1,\cdots,n'.$$
As a result, we only need to see how $i(l_m, \gamma_j)$ changes as $m \rightarrow \infty$.\\

We use  $(\widehat{\gamma}_j, \bm{c^j})$ to denote the singular pegboard diagram of the immersed curve $\gamma_j$. Now we are going to use Theorem \ref{Theorem:singularpegboarddiagram}. Note that $\widehat{\gamma}_j$ has finitely many slopes and as a result for $m \gg0$, none of the slopes of $\widehat{\gamma}_j$ will be equal to slope of $l_m$. Furthermore, the immersed curve $l_m$ is loose (i.e. doesn't have any corners) hence
$$i(l_m, \gamma_j) = | l_m \cap \widehat{\gamma}_j |.$$
Recall that the punctured torus $T$ is defined as $\widehat{T} \setminus z$. As we mentioned before $\widehat{\gamma}_j$ can be seen as a collection of Euclidean geodesics in $\widehat{T}$ passing through $z$. We denote these collection of Euclidean geodesics as $\{L^{j}_1, \cdots,L^{j}_{t_j}\}$. Clearly 
$$| l_m \cap \widehat{\gamma}_j | = \sum_{1 \leq j \leq n'} \sum_{1 \leq i \leq t_j} | l_m \cap L^{j}_{i} |.$$
Assume that $L^{j}_{i}$ lifts to a line with slope $\frac{p^{j}_{i}}{q^{j}_{i}}$ in the universal cover of $\widehat{T}$ (which is $\mathbb{R}^2$). Recall that $\gamma_m $ lifts to a line with slope $\frac{m}{1}$. As a result 
$$| l_m \cap L^{j}_{i} | = \Biggr| \text{det} \begin{vmatrix}
    1       & m \\
    q^{j}_{i}       & p^{j}_{i}
\end{vmatrix}\Biggr| = |p^{j}_{i} - m \cdot q^{j}_{i}|.$$
As a result, for $m \gg0$, we will have: 
$$ | l_m \cap L^{j}_{i} | = m \cdot q^{j}_{i} - p^{j}_{i}.$$
Now we can put everything together as follows. For $m \gg0$ we have 
$$\dim (\widehat{HFK}(S^3 , K_m)) = \dim (HF(\bm{l_m} , \bar{h}(\bm{\gamma})) = \sum_{1 \leq j \leq n'} \dim(HF(\bm{l_m} , \bm{\gamma_j}))$$
$$= \sum_{1 \leq j \leq n'} k_j \cdot i(l_m, \gamma_j) = \sum_{1 \leq j \leq n'} k_{j} \cdot (\sum_{1 \leq i \leq t_{j}} m \cdot q^{j}_{i} - p^{j}_{i})$$
$$= m \cdot \underbrace{(\sum_{1\leq j \leq n'} \sum_{1\leq i\leq t_j} k_{j} q^{j}_{i})}_{D} - \underbrace{(\sum_{1\leq j \leq n'} \sum_{1\leq i\leq t_j} k_{j} p^{j}_{i})}_{d}.$$
This finishes the proof of Theorem \ref{Theorem:totaldim}.
\end{proof}
In the rest of this section we investigate the effect of twisting on the immersed curve invariant. As mentioned before if we imagine $K = K_0$ as a knot in solid torus $S^1 \times D^2$, we can construct the twist family $\{K_m\}$ by repeatedly applying the meridional Dehn twist $\tau_{\partial D^2}$ to $K_0$. Rasmussen \cite{Rasmusssentalk} introduced a Dehn twist formula for $\hfk$ as follows.

\begin{theo}\cite{Rasmusssentalk}\label{Dehntwistformula} For a knot $P \subset S^1 \times D^2$ we have 
 $$\hfk(\tau_{\partial D^{2}}(P)) = \tau_{\partial D^{2}} (\hfk(P)),$$ 
 where we use $\tau_{\partial D^{2}}$ to denote the meridional Dehn twist both as a homemorphism of the boundary torus and its extension to a homemorphism of the solid torus. 
\end{theo}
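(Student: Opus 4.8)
The plan is to derive the formula from the naturality of the bordered invariant $\widehat{\text{CFD}}(M,K)$ under homeomorphisms of pairs, combined with the change-of-parametrization behaviour of the immersed curve invariant. Equivalently — and perhaps more transparently — one can argue via the pairing theorem (Theorem \ref{Theorem:immeresedpairingknots}) together with a faithfulness (\emph{Yoneda}-type) principle for decorated immersed multicurves in the punctured torus. I would present the first route as the main argument and note the second as an alternative.

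\textbf{Main steps.} First, $\tau_{\partial D^2}$ extends to a self-homeomorphism $\Phi$ of the solid torus $M = S^1\times D^2$ with $\Phi(P) = \tau_{\partial D^2}(P)$ and $\Phi|_{\partial M}$ equal to the meridional Dehn twist on the boundary torus. Fix a doubly-pointed genus-one bordered Heegaard diagram $(\mathcal{H},z,w)$ for $(M,P)$ with boundary parametrization $(\alpha_1,\alpha_2)$. Pushing it forward by $\Phi$ gives a diagram for $(M,\Phi(P)) = (M,\tau_{\partial D^2}(P))$ whose boundary parametrization is $(\Phi(\alpha_1),\Phi(\alpha_2))$; by invariance of bordered invariants under homeomorphism \cite{Lipshitz2008BorderedHF}, this diagram computes a type D structure isomorphic to $\widehat{\text{CFD}}(\mathcal{H},z,w)$. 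Thus $\widehat{\text{CFD}}(M,\tau_{\partial D^2}(P))$, computed in the \emph{standard} parametrization $(\alpha_1,\alpha_2)$, is obtained from $\widehat{\text{CFD}}(M,P)$ by the change-of-parametrization associated with $\tau_{\partial D^2}^{-1}$. Next, recall that $\hfk(K)$ is built by embedding a nice-basis representative of the decorated graph of $\widehat{\text{CFD}}(M,K)$ into $T = \partial M\setminus z$ via the rule of Figure \ref{Figure:typeDimmeresed} — which is dictated by the parametrization — and completing the immersed segments to arcs through the puncture. The key point, observed for $\widehat{HF}$ in \cite{Hanselman2016BorderedFH} and extending essentially verbatim to the knot setting, is that this embedding-and-completion procedure intertwines the change-of-parametrization action on type D structures with the action of the corresponding mapping class on curves and arcs in $T$. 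Applying this with the mapping class $\tau_{\partial D^2}$ and feeding in the identification above yields
$$\hfk(\tau_{\partial D^2}(P)) = \tau_{\partial D^2}\big(\hfk(P)\big)$$
as decorated immersed multicurves, up to regular homotopy and isomorphism of local systems. (Alternative route: for every bordered $M'$ with torus boundary and every admissible gluing $h$, $\Phi$ identifies $(M'\cup_h M,\tau_{\partial D^2}(P))$ with $(M'\cup_{h\circ\tau_{\partial D^2}^{-1}} M,P)$; applying Theorem \ref{Theorem:immeresedpairingknots} to both sides and matching the outputs shows that $\hfk(\tau_{\partial D^2}(P))$ and $\tau_{\partial D^2}(\hfk(P))$ have the same Floer pairing with every $\widehat{HF}(M')$, hence agree.)

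\textbf{Expected main obstacle.} The delicate point in the first route is the compatibility claim for the non-compact arc components: one must verify that completing immersed segments to arcs through $z$ is equivariant under the mapping-class action fixing $z$, and that the bookkeeping of arc endpoints near the puncture (and of the local systems attached to closed components) is preserved — this is where the knot case adds genuine content beyond the closed three-manifold case of \cite{Hanselman2016BorderedFH}. In the pairing route the analogous obstacle is the faithfulness principle — that a decorated immersed multicurve with arcs in $T$ is determined by its Floer-homology pairing with all bordered three-manifolds with torus boundary — which for $\widehat{HF}$ follows from \cite{Hanselman2016BorderedFH} but for $\hfk$ rests on Rasmussen's as-yet-unpublished structural results \cite{Rasmusssentalk}. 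Finally, tracking the elliptic involution $\bar h$ and the direction of the twist ($\tau_{\partial D^2}$ versus $\tau_{\partial D^2}^{-1}$) through the gluing is a routine but error-prone bookkeeping step that should be carried out carefully.
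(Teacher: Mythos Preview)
The paper does not contain a proof of this theorem: it is stated with the citation \cite{Rasmusssentalk} and attributed to Rasmussen's unpublished work, and the author immediately proceeds to use it (``Based on this formula we can compute $\hfk(K_m)$ from $\hfk(K)$\ldots''). So there is no proof in the paper for your proposal to be compared against.

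That said, your outline is the natural one and is essentially the argument one would expect: the change-of-parametrization behaviour of the immersed curve invariant under the mapping class group action is exactly what is established for $\widehat{HF}$ in \cite{Hanselman2016BorderedFH}, and the Dehn twist formula for $\hfk$ is the knot-level analogue. You have correctly isolated the genuine content beyond the closed case --- equivariance of the arc-completion step at the puncture --- and correctly flagged that the faithfulness principle in the alternative route rests on the same unpublished structural results. Since the paper itself defers to \cite{Rasmusssentalk} rather than supplying these details, your sketch is as complete as anything the paper offers, and your identification of the main obstacle matches the author's own caveat that ``Rasmussen's work on this invariant hasn't been published yet.''
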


Based on this formula we can compute $\hfk(K_m)$ from $\hfk(K)$ for a twist family $\{K_m\}$ as follows 
$$\hfk(K_{m}) = \tau^{m}_{\partial D^{2}}(\hfk(K)).$$

We start by analyzing the affect of $\tau_{\partial D^{2}}$ on a pegboard diagram. We consider the pegboard diagrams in $\widetilde{T}$ which is seen as $\mathbb{R}^2$ with a lattice of pegs. We denote the horizontal and vertical coordinates of $\widetilde{T}$ by $m$ and $\lambda$ respectively, as previously seen in Figure \ref{Figure:Mazurpegboard}. We also use $\overrightarrow{m}$ and $\overrightarrow{\lambda}$ to denote unit vectors in horizontal and vertical directions. Let the horizontal coordinate i.e. $m$, be the lift of the meridian of solid torus i.e. $\partial D^{2}$.\\

Let $\gamma$ be a pegboeard diagram in $\widetilde{T}$. The curve $\gamma$ consists of oriented linear segments $L_1, \cdots, L_t$ and corners $c_1, \cdots, c_{t-1}$, where $c_i$ is the corner sitting between $L_{i}$ and $L_{i+1}$. Let $L_i$ be a linear segment with slope $\frac{p_i}{q_i}$ i.e. 
$$L_i = q_i \cdot \overrightarrow{m} + p_i \cdot \overrightarrow{\lambda},$$
as a vector.\\

Now we describe the pegboard diagram of $\tau_{\partial D^2}(\gamma)$ in Proposition \ref{Proposition:pegboardDehntwist}.

\begin{prop}\label{Proposition:pegboardDehntwist}
    The pegboard diagram of $\tau_{\partial D^{2}}(\gamma)$ consists of linear segments $L'_1,\cdots,L'_t$ and corners $c'_t,\cdots,c'_{t-1}$ such that
    $$L'_i=(q_i+p_i) \cdot \overrightarrow{m} + p_i \cdot \overrightarrow{\lambda}$$
    This means that the linear parts of $\tau_{\partial D^{2}}(\gamma)$ comes from applying Dehn twist on the linear parts of $\tau_{\partial D^{2}}(\gamma)$, while none of its corners get unwrapped and no extra corners will be formed.  
\end{prop}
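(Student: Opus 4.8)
\textbf{Proof proposal for Proposition \ref{Proposition:pegboardDehntwist}.}

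The plan is to realize the meridional Dehn twist $\tau_{\partial D^{2}}$ by a \emph{linear} automorphism of the peg lattice and then transport the whole pegboard diagram through it. Writing points of $\widetilde{T}=\mathbb{R}^{2}\setminus\mathbb{Z}^{2}$ in the coordinates $(\overrightarrow{m},\overrightarrow{\lambda})$, where $\overrightarrow{m}$ is the lift of the meridian $\partial D^{2}$, I would use the shear
$$A:\ q\,\overrightarrow{m}+p\,\overrightarrow{\lambda}\ \longmapsto\ (q+p)\,\overrightarrow{m}+p\,\overrightarrow{\lambda},$$
which fixes $\overrightarrow{m}$ and sends $\overrightarrow{\lambda}$ to $\overrightarrow{\lambda}+\overrightarrow{m}$. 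Since $A\in\mathrm{SL}_{2}(\mathbb{Z})$ it preserves $\mathbb{Z}^{2}$, hence descends to a self-homeomorphism of the once-punctured torus $T$ (carrying the puncture to itself) and lifts to a self-homeomorphism of $\widetilde{T}$ sending the peg at $v$ to the peg at $Av$. Using the identification of the mapping class group of the once-punctured torus with $\mathrm{SL}_{2}(\mathbb{Z})$ and the fact that $\tau_{\partial D^{2}}|_{\partial(S^{1}\times D^{2})}$ acts on $H_{1}$ by exactly this shear, $A$ is isotopic as a homeomorphism of $T$ to $\tau_{\partial D^{2}}$, so it takes the pegboard diagram $\gamma$ to a curve in the correct free homotopy class. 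Linearity of $A$ immediately gives that the linear parts of $A(\gamma)$ are the segments $L'_{i}:=A L_{i}=(q_{i}+p_{i})\,\overrightarrow{m}+p_{i}\,\overrightarrow{\lambda}$, in the same cyclic order, separated by the $A$-images of the corners $c_{i}$; this is the asserted slope formula.

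The main point is that $A(\gamma)$ already has the combinatorial shape of a pegboard diagram: no corner is lost, and no corner is created. Here I would argue that being in minimal position --- tautness of the linear segments and nontrivial wrapping of each corner around its peg --- is a property of the homotopy class of an immersed curve in the punctured surface, hence preserved by the homeomorphism $A$. Concretely, each corner $c_{i}$ of $\gamma$ lies in a small punctured disk around a peg $v_{i}$, and the datum recorded by the corner (the total wrapping, cf. Figure \ref{Figure:geodesiccorner}) is the homotopy class of that arc rel endpoints in the punctured disk; since $A$ is an orientation-preserving homeomorphism, $A(c_{i})$ winds around $Av_{i}$ the same way, so the corner genuinely survives and cannot be pulled off the peg --- no unwrapping. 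Conversely, a linear segment of $\gamma$ that is taut (not homotopic into any peg within $\gamma$) is carried by the homeomorphism $A$ to a straight segment that is still taut, so no new corner is forced; in particular $A(\gamma)$ has exactly $t-1$ corners, inherited one-for-one from $\gamma$.

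Finally I would pass from $A(\gamma)$ to the genuine pegboard diagram of $\tau_{\partial D^{2}}(\gamma)$. The segments $L'_{i}$ are already straight with the stated slopes, and (since $A$ is bi-Lipschitz and preserves $\mathbb{Z}^{2}$) they stay uniformly away from the pegs, so one may choose disjoint small round neighborhoods of the pegs $Av_{i}$ meeting $A(\gamma)$ only in the corner arcs. The only discrepancy is that each corner of $A(\gamma)$ is the $A$-image of a circular corner, hence elliptical rather than hugging a Euclidean circle centered at the peg; a homotopy supported in these disjoint neighborhoods reshapes each elliptical corner into the standard pegboard corner realizing the same wrapping class, leaving the linear parts and their slopes untouched and changing neither the number nor the cyclic position of corners. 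By uniqueness of pegboard representatives within a homotopy class (the same input underlying Theorem \ref{Theorem:singularpegboarddiagram}), the resulting curve is the pegboard diagram of $\tau_{\partial D^{2}}(\gamma)$, proving Proposition \ref{Proposition:pegboardDehntwist}.

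The step I expect to be the genuine obstacle is the middle one: simultaneously ruling out the unwrapping of an existing corner and the creation of a new one. The slope computation is a one-line matrix identity and the final reshaping is routine; everything rests on phrasing ``pegboard diagram'' as the minimal-position representative and invoking invariance of minimal position under the homeomorphism $A$. An equivalent route, if one prefers to stay metric, is to work with $\epsilon$-geodesics and use that $A$ is bi-Lipschitz, so that a shortest $g_{\epsilon}$-geodesic in the class of $\gamma$ maps to within bounded distance of the shortest $g_{\epsilon'}$-geodesic in the image class for a comparable $\epsilon'$; the corner combinatorics, being what the pegboard diagram records, is stable under this comparison. Either way, the quantitative statement ``the linear parts of $\tau_{\partial D^{2}}(\gamma)$ come from Dehn-twisting the linear parts of $\gamma$, with no corner unwrapped and no extra corner formed'' follows.
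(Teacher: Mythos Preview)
Your approach is correct and genuinely different from the paper's. The paper realizes $\tau_{\partial D^{2}}$ not by the linear shear $A\in\mathrm{SL}_2(\mathbb{Z})$ but by an explicit ``meridian shifting'' construction: one intersects $\gamma$ with the family of horizontal lines $M_n=\{\lambda=n+2\epsilon\}$, and at each intersection inserts a unit horizontal segment (shifting everything downstream by $\pm\overrightarrow{m}$ according to the sign of the crossing). This produces a piecewise-linear curve in the class of $\tau_{\partial D^{2}}(\gamma)$ which is then tightened; the paper verifies by direct inspection that the bigons created in the tightening contain no pegs (so each $L_i$ becomes exactly $L'_i$), and rules out corner unwrapping by a case analysis on the relative positions of $L_i$ and $L_{i+1}$ with respect to the horizontal line through the corner peg, comparing the angles $\theta_i,\theta_{i+1}$ via cotangents. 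Your argument trades this explicit geometry for the observation that the entire pegboard combinatorics is a minimal-position invariant and hence transported wholesale by any homeomorphism of the punctured torus; the slope formula then drops out of the matrix of $A$, and the only local work is reshaping elliptical corners into circular ones. Your route is shorter and more conceptual, and generalizes immediately to any element of the mapping class group; the paper's route is more self-contained (it does not invoke the $\mathrm{SL}_2(\mathbb{Z})$ identification or the uniqueness of pegboard representatives) and makes the ``no unwrapping, no new corner'' claim visible at the level of individual corners, which may be preferable for readers unfamiliar with minimal-position arguments.
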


\begin{proof}
    We consider all possible lifts of a loose representative of the meridian. We can take these lifts to be $M_n$ for all $n \in \mathbf{Z}$, where $M_n$ is the line $\lambda = n+2\epsilon$. This can be seen in Figure \ref{Figure:hfkex2}.\\

    Now we take all the intersections of $\gamma$ with $M_n$. Moving on the curve $\gamma$, as we reach an intersection, we shift the rest of the curve one unit (i.e. $\pm \overrightarrow{m}$) in the horizontal direction and replace the intersection with a lift a horizontal segment. The sign of the shift comes from the sign of the intersection as follows. The intersections of $\gamma$ and $M_n$ split as follows
    $$M_n \cap \gamma = \bigsqcup_{1 \leq i \leq t} M_n \cap L_i$$
    On an intersection in $M_n \cap L_i$ the sign of the shift is equal to the sign of $p_i$ i.e. if the linear segment is moving upward the shift is equal to $+\overrightarrow{m}$ and if the linear segment is moving downward the shift is equal to $-\overrightarrow{m}$.\\

    An example of this can be seen in the following. We call this construction the \emph{meridian shifting}. This construction gives us (a lift of) a curve in the homotopy class of $\tau_{\partial D^2}(\gamma)$ as seen in Figure \ref{Figure:hfkex3}. We are going to explain how we can homotope this to a pegboard diagram of $\tau_{\partial D^2}(\gamma)$ (i.e. pull it to be tight) as seen in Figure \ref{Figure:hfkex4}.\\

    \begin{figure}[h]
\centering
\begin{minipage}{0.5\textwidth}
  \centering
  \includegraphics[width=.9\linewidth]{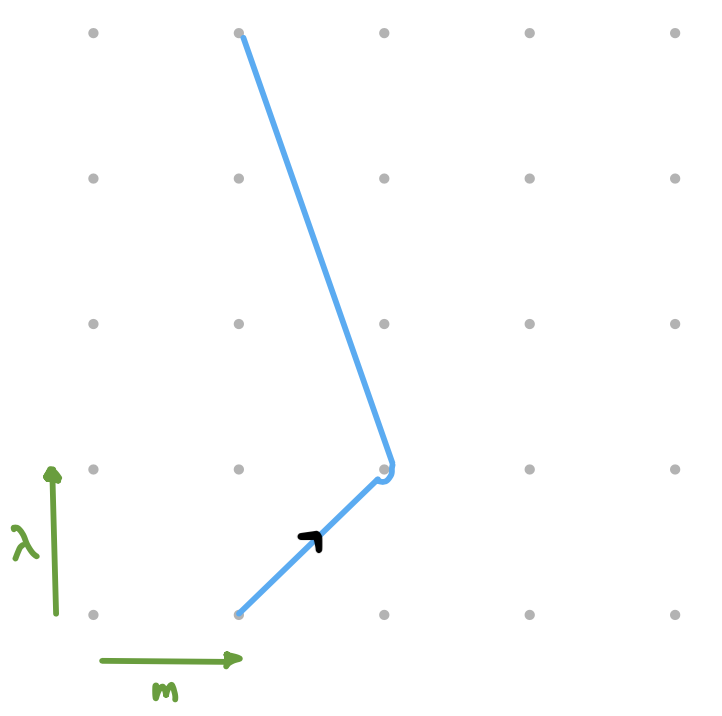}
  \captionof{figure}{Pegboard diagram $\gamma$}
  \label{Figure:hfkex}
\end{minipage}%
\begin{minipage}{.5\textwidth}
  \centering
  \includegraphics[width=.9\linewidth]{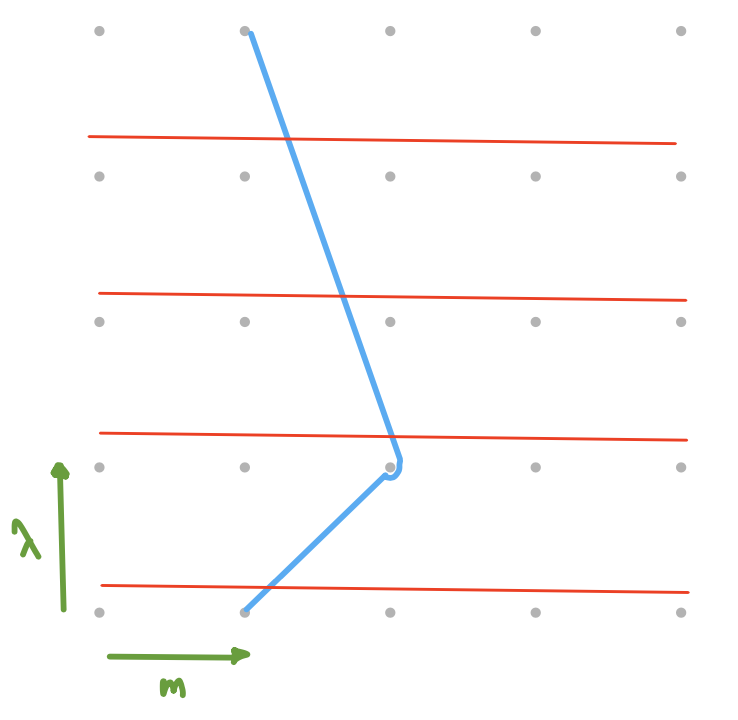}
  \captionof{figure}{$\gamma$ and meridional lifts $M_n$}
  \label{Figure:hfkex2}
\end{minipage}
\end{figure}

\begin{figure}[h]
\centering
\begin{minipage}{0.5\textwidth}
  \centering
  \includegraphics[width=.9\linewidth]{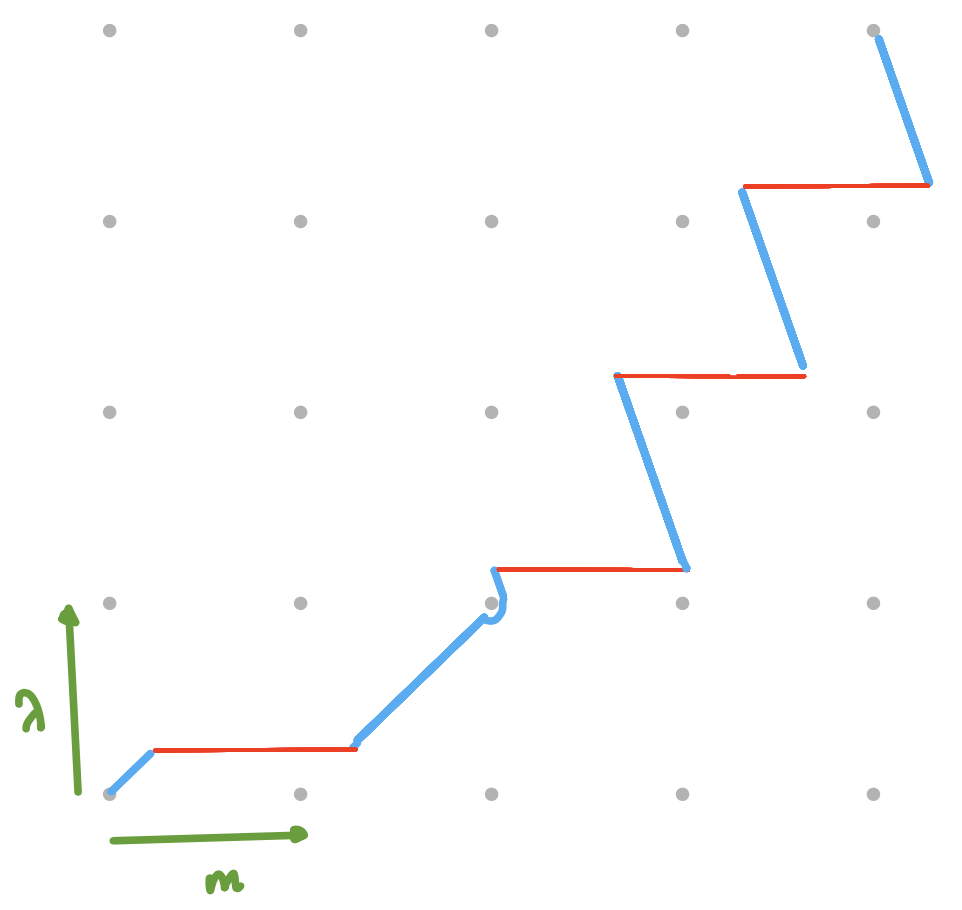}
  \captionof{figure}{Meridian shifting construction}
  \label{Figure:hfkex3}
\end{minipage}%
\begin{minipage}{0.5\textwidth}
  \centering
  \includegraphics[width=.9\linewidth]{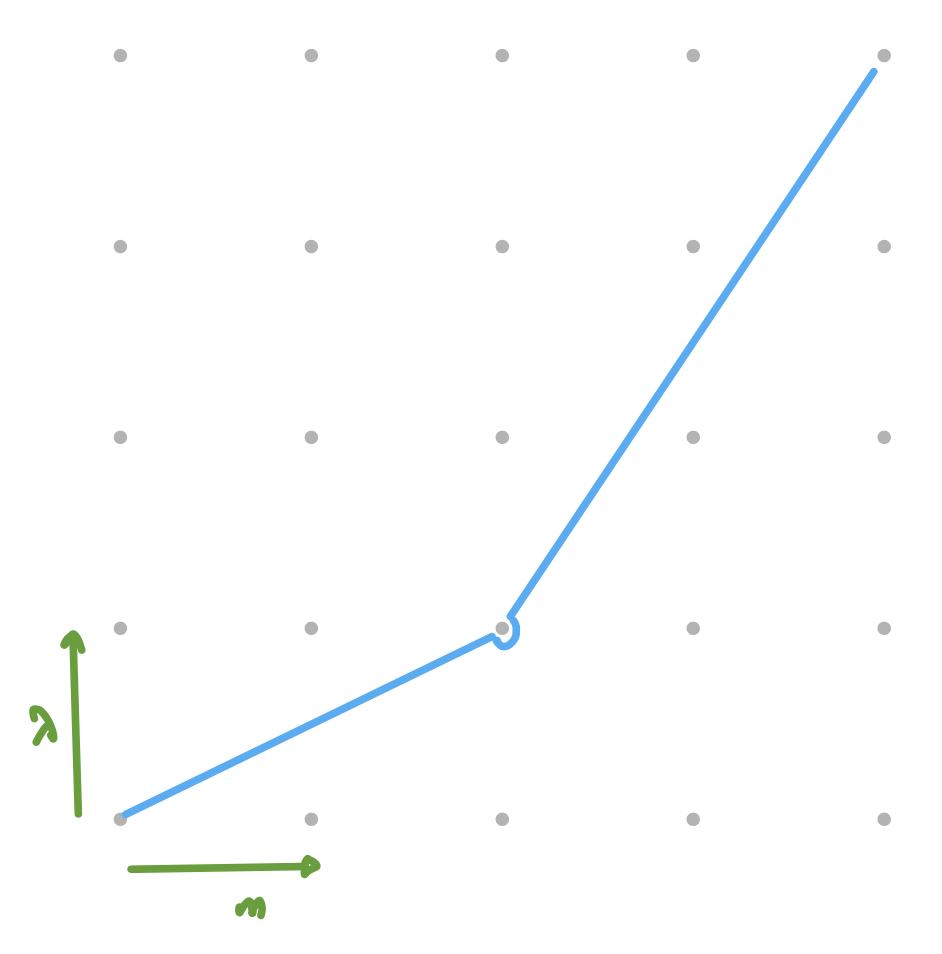}
  \captionof{figure}{Pegboard diagram of $\tau_{\partial D^{2}}(\gamma)$}
  \label{Figure:hfkex4}
\end{minipage}
\end{figure}

    We can see two more complicated examples of meridian shifting in the Figures \ref{Figure:meridianshitingex} and \ref{Figure:meridianshitingex2}. In these examples we perturbed the horizontal segments a bit to keep the diagrams immersed.\\

We can first show that that if we apply the meridian shifting on a linear segment $L_i=q_i \cdot \overrightarrow{m} + p_i \cdot \overrightarrow{\lambda}$, the result, which we denote by $\tau_{\partial D^2}(L_i)$,  will be homotopic to a linear segment $L'_i=(q_i+p_i) \cdot \overrightarrow{m} + p_i \cdot \overrightarrow{\lambda}$.\\

\begin{figure}[h]
\centering
\begin{center}
\includegraphics[scale=0.5]{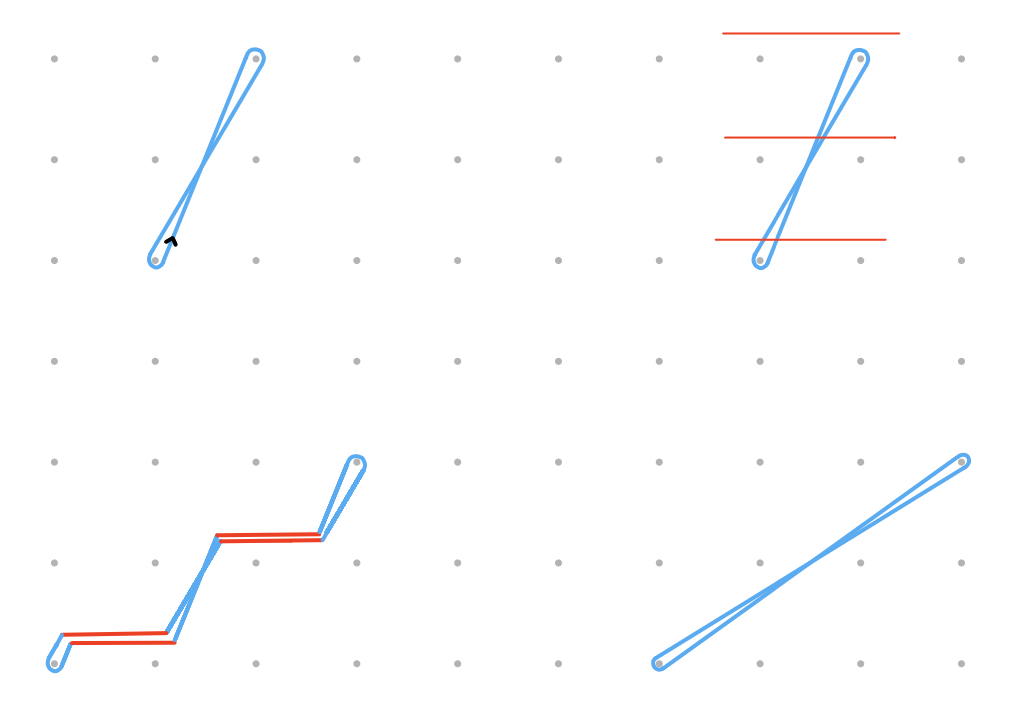}
\end{center}
\caption{Another example of meridian shifting (left to right)}\label{Figure:meridianshitingex}
\end{figure}

\begin{figure}[h]
\centering
\begin{center}
\includegraphics[scale=0.5]{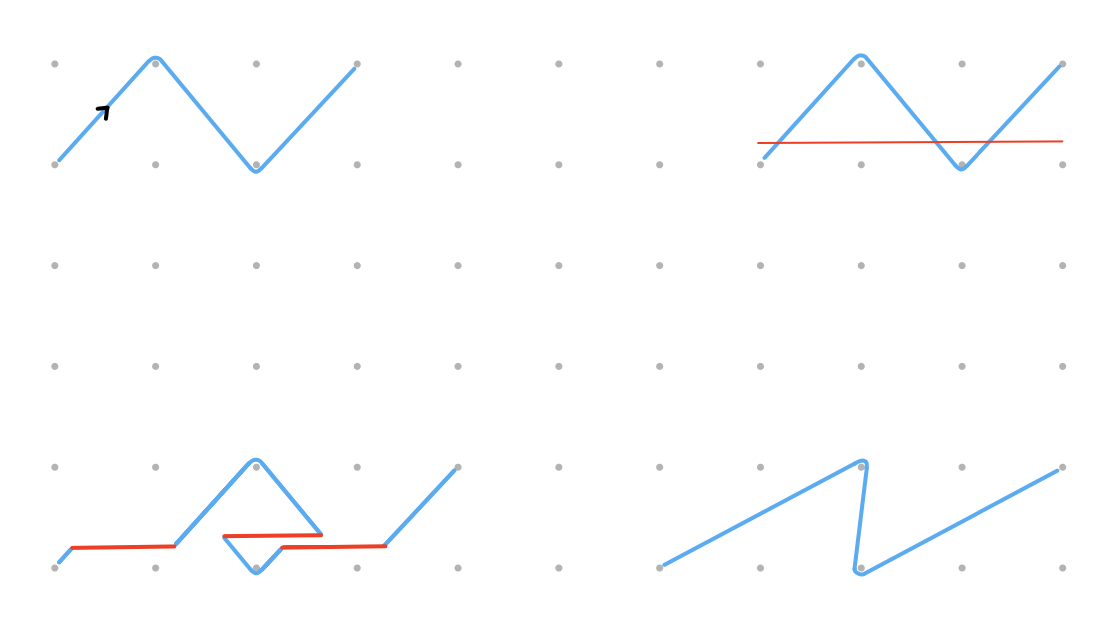}
\end{center}
\caption{Another example of meridian shifting (left to right)}\label{Figure:meridianshitingex2}
\end{figure}

The homotopy is through the bigons in plane bound by $L'_i$ and $\tau_{\partial D^2}(L_i)$.  We only need to show there is no pegs inside these bigons. This follows from a simple geometric argument sketched in Figures \ref{Figure:bigons} and \ref{Figure:geomarg}.\\

 \begin{figure}[h]
\centering
\begin{minipage}{.5\textwidth}
  \centering
  \includegraphics[width=.9\linewidth]{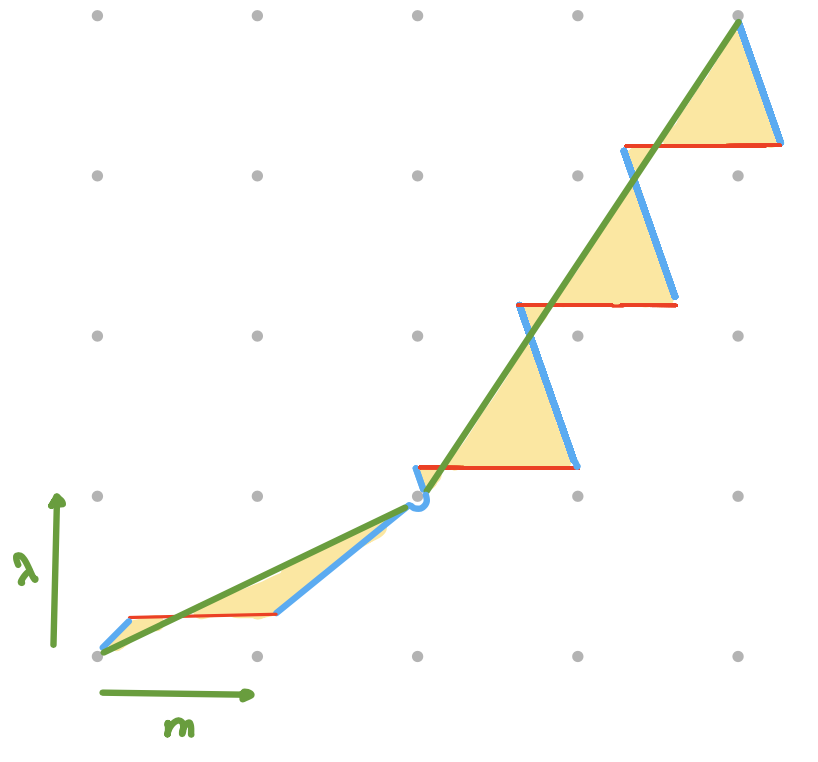}
  \captionof{figure}{Homotopy bigons in example of Figure \ref{Figure:hfkex3}}
  \label{Figure:bigons}
\end{minipage}%
\begin{minipage}{.5\textwidth}
  \centering
  \includegraphics[width=.9\linewidth]{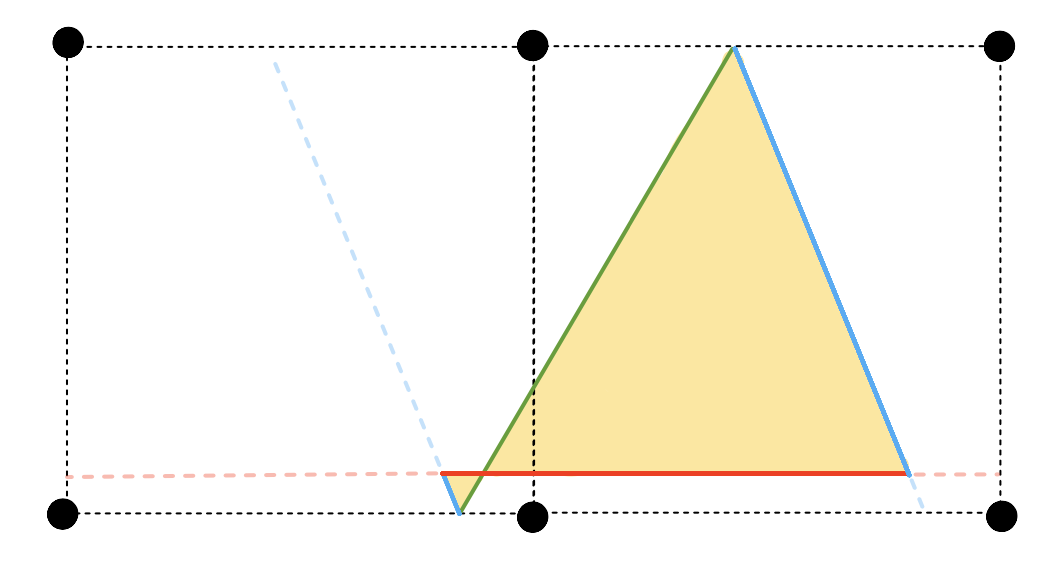}
  \captionof{figure}{Bigons don't contain the punctures}
  \label{Figure:geomarg}
\end{minipage}
\end{figure}

Now we only need to show that the corners doesn't get unwrapped. For a corner $c_i$ to get unwrapped, the curve has to change direction by at most an angle of $\pi$. Assume that the corner $c_i$ wraps around a peg with coordinates $(x_i,y_i)$. Consider the horizontal line $\lambda = y_i$ oriented in the positive direction. We call this line $H_{y_{i}}$.\\

If $L_i$ and $L_{i+1}$ are one the same side of $H_{y_{i}}$, the corner can't be unwrapped. Without loss of generality assume they are both above $H_{y_{i}}$. Note that $L'_{i}$ and $L'_{i+1}$ remain above $H_{y_{i}}$, while for an unwrapping to happen, $L'_{i+1}$ needs to go to the below $H_{y_{i}}$ as seen in Figure \ref{Figure:onlyunwrap}.\\

 \begin{figure}[h]
\centering
  \includegraphics[width=.9\linewidth]{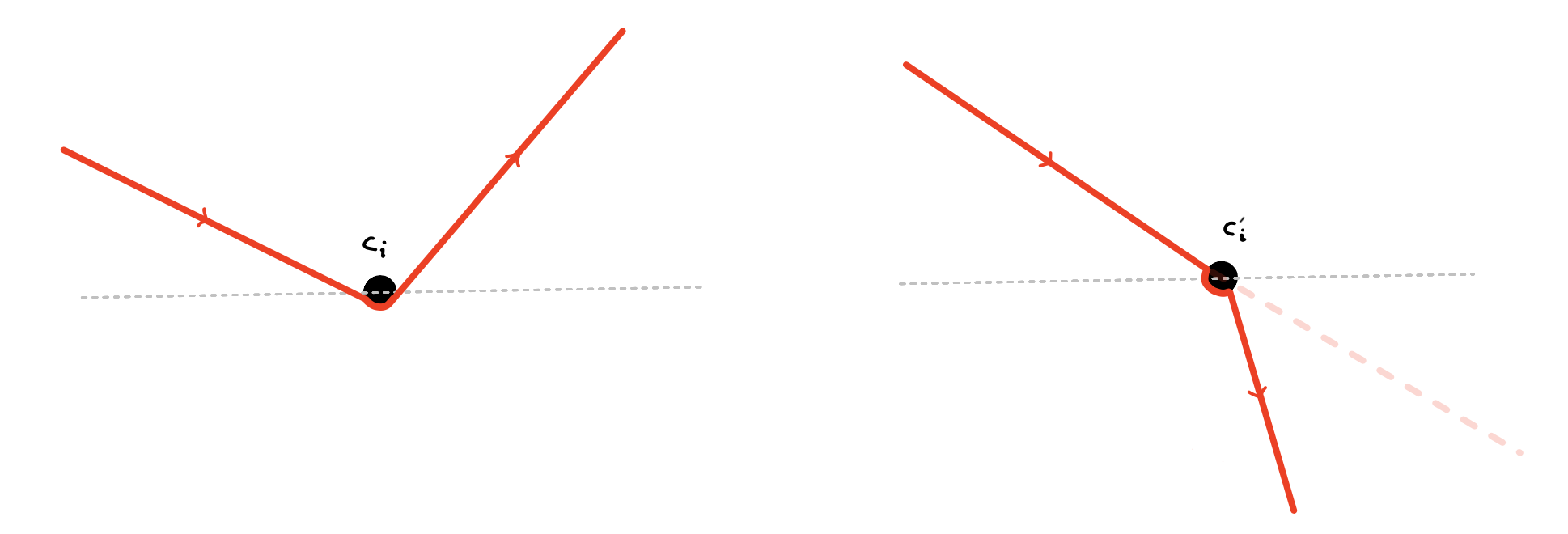}
  \captionof{figure}{Only configuration for an unwrapping when $L_i$ and $L_{i+1}$ are both above $H_{y_{i}}$. This can't happen in meridian shifting.}
  \label{Figure:onlyunwrap}
  \end{figure}

If $L_i$ and $L_{i+1}$ sit on different sides of $H_{y_{i}}$, we have two cases. Without loss of generality, assume that $L_i=q_i \cdot \overrightarrow{m} + p_i \cdot \overrightarrow{\lambda}$ where $q_i,p_i \geq 0$.  Let $\theta_i \in [0,\pi]$ be the angle between $H_{y_{i}}$ and $L_i$. Note that $\text{cot}(\theta)=\frac{q_i}{p_i}$. Similarly define $\theta_{i+1}$ for $L_{i+1}$, and also $\theta'_{i}, \theta'_{i+1}$ for $L'_{i}, L'_{i+1}$. Now, we can define the two cases.\\

\textbf{Case 1:} When $\theta_{i+1} > \theta_i$:\\

Note that unwrapping can only happen when $\theta'_{i+1} < \theta'_{i}$ as seen  in Figure \ref{Figure:onlyunwrap2}. Recall that 
$$L'_i = (q_i+p_i) \cdot \overrightarrow{m} + p_i \cdot \overrightarrow{\lambda} \ , \ L'_{i+1} = (q_{i+1}+p_{i+1}) \cdot \overrightarrow{m} + p_{i+1} \cdot \overrightarrow{\lambda}$$
Note that $p_i,p_{i+1} \geq 0$. Now we have that 
$$\theta'_{i+1} \leq \theta'_{i} \Leftrightarrow \text{cot}(\theta'_{i+1}) \geq \text{cot}(\theta'_{i}) \Leftrightarrow \frac{q_{i+1}+p_{i+1}}{p_{i+1}} \geq \frac{q_i+p_i}{p_i} \Leftrightarrow \frac{q_{i+1}}{p_{i+1}} \geq \frac{q_i}{p_i} $$
$$\Leftrightarrow \text{cot}(\theta_{i+1}) \geq \text{cot}(\theta_{i})  \Leftrightarrow \theta_{i+1} \leq \theta_{i}.$$
This means that an unwrapping can't happen in this case.\\ 

 \begin{figure}[H]
\centering
  \includegraphics[width=.9\linewidth]{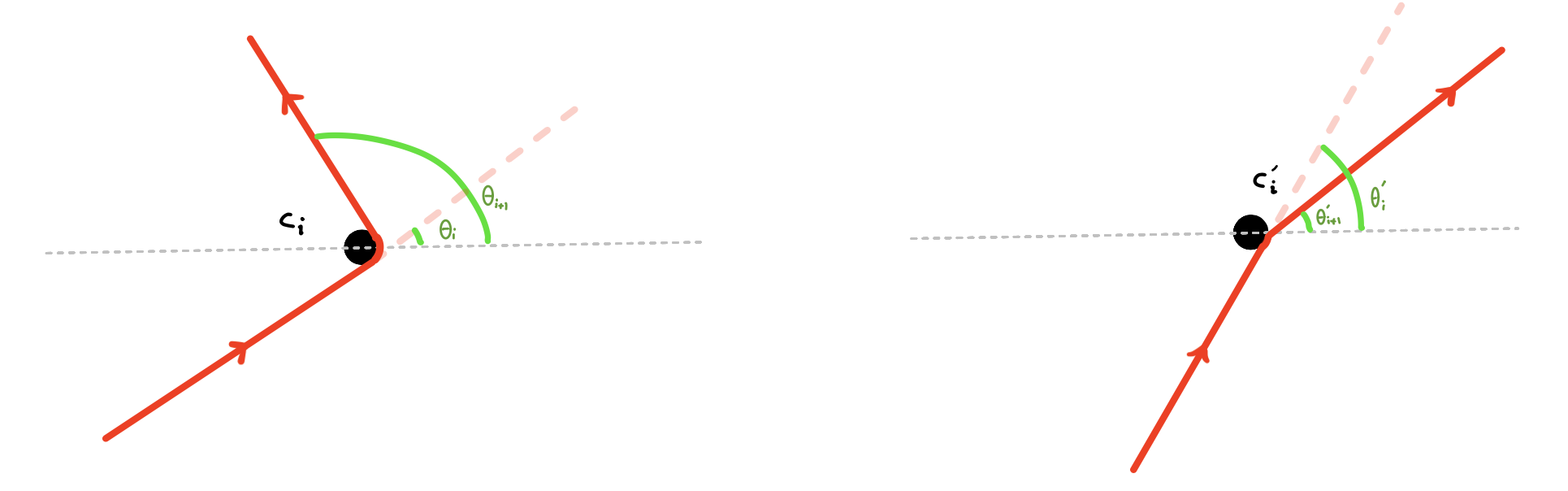}
  \captionof{figure}{Only configuration for an unwrapping in Case 1. As discussed above, this can't happen in meridian shifting. }
  \label{Figure:onlyunwrap2}
  \end{figure}

\textbf{Case 2:} When $\theta_{i+1} < \theta_i$:\\

This case can only happen when the curve wraps around the peg and self-intersects near $c_i$. Otherwise the corner $c_i$ can be discarded with a homotopy of $\gamma$ (i.e. the wrapping isn't tight) which won't happen in a pegvoard diagram. For an unwrapping to happen $L'_i$ needs to rotate with an angle more than $\pi$ under the meridian shifting construction which can't happen. The effect of meridian shifting in this case can be seen in Figures \ref{Figure:Case2-1} and \ref{Figure:Case2-2}.\\

 \begin{figure}[H]
\centering
\begin{minipage}{.5\textwidth}
  \centering
  \includegraphics[width=.6\linewidth]{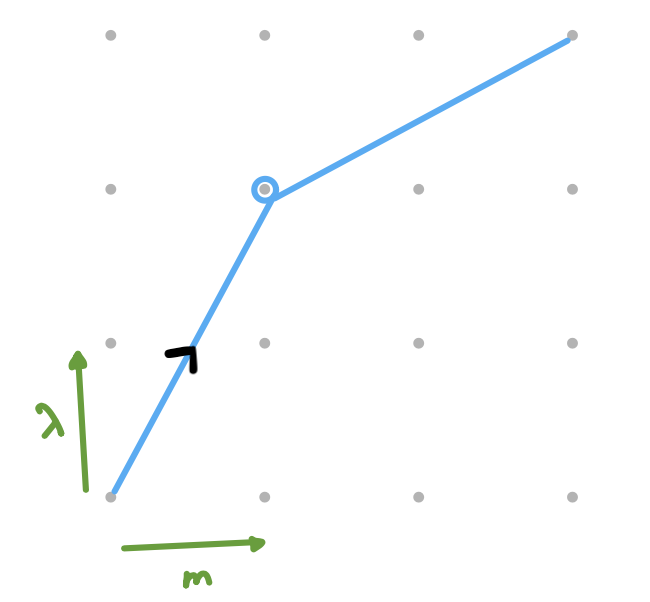}
  \captionof{figure}{$L_i,L_{i+1}$ in Case 2}
  \label{Figure:Case2-1}
\end{minipage}%
\begin{minipage}{.5\textwidth}
  \centering
  \includegraphics[width=.9\linewidth]{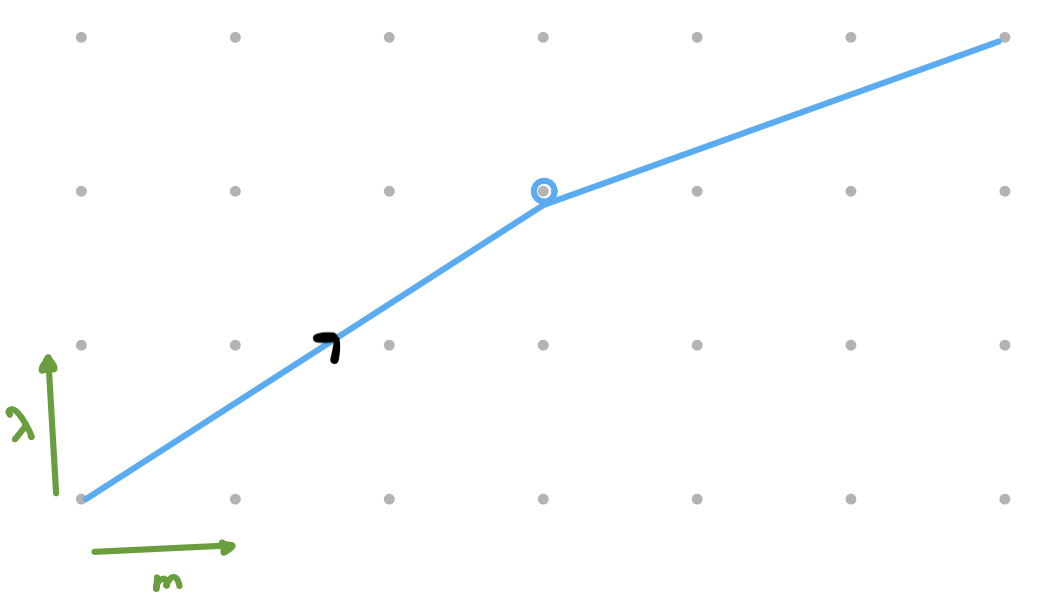}
  \captionof{figure}{$L'_i,L'_{i+1}$ in Case 2}
  \label{Figure:Case2-2}
\end{minipage}
\end{figure}
\end{proof}

Iterated application of Proposition \ref{Proposition:pegboardDehntwist} gives us the following corollary about the behaviour of the immersed curve invariant under twisting.

\begin{coro}\label{Corollary:hfkundertwisting}
There exist $N \in \mathbb{N}$ such that for $m\geq N$, the slopes of (linear segments in a singular pegboard diagram of) $\hfk(K_m)$ are all positive. Furthermore the slopes of $\hfk(K_m)$ converge to zero as $m \rightarrow \infty$. 
\end{coro}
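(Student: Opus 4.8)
The plan is to push everything through Rasmussen's Dehn twist formula. By Theorem~\ref{Dehntwistformula} we have $\hfk(K_m)=\tau^{m}_{\partial D^{2}}(\hfk(K))$, so it suffices to follow a fixed pegboard diagram of $\hfk(K)$ under iterated meridional twists. Write its linear segments as $L_1,\dots,L_t$ with $L_i=q_i\cdot\overrightarrow{m}+p_i\cdot\overrightarrow{\lambda}$ (slope $p_i/q_i$), and its corners as $c_1,\dots,c_{t-1}$. Proposition~\ref{Proposition:pegboardDehntwist} says that $\tau_{\partial D^{2}}$ turns this into a pegboard diagram with the same corners — none unwrapped and none created — and with each $L_i$ replaced by $(q_i+p_i)\cdot\overrightarrow{m}+p_i\cdot\overrightarrow{\lambda}$. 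Iterating this $m$ times, a straightforward induction shows that $\tau^{m}_{\partial D^{2}}(\hfk(K))$ has a (singular) pegboard diagram whose corners are still in bijection with $c_1,\dots,c_{t-1}$ and whose linear segments are
\[
L_i^{(m)}=(q_i+m\,p_i)\cdot\overrightarrow{m}+p_i\cdot\overrightarrow{\lambda},
\]
of slope $\frac{p_i}{q_i+m\,p_i}$. Thus the combinatorial type of the diagram does not depend on $m$; only the slopes of the linear segments vary.

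Next comes the (elementary) slope analysis. If $p_i\neq 0$, then for every $m>-q_i/p_i$ the integers $p_i$ and $q_i+m\,p_i$ have the same sign, so the slope $p_i/(q_i+m\,p_i)$ is strictly positive; moreover it tends to $0$ as $m\to\infty$. Hence, taking
\[
N:=1+\max\bigl\{\,\lceil -q_i/p_i\rceil \ :\ 1\le i\le t,\ p_i\neq 0\,\bigr\},
\]
we get that for all $m\ge N$ every linear segment $L_i^{(m)}$ with $p_i\neq 0$ has positive slope, and all of these slopes converge to $0$. This already yields both statements of Corollary~\ref{Corollary:hfkundertwisting}, \emph{provided} no linear segment of a singular pegboard diagram of $\hfk(K)$ has $p_i=0$.

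Ruling out such meridional linear segments is the one genuinely non-formal step, and the one I expect to be the main obstacle. A segment with $p_i=0$ is parallel to the meridian $\partial D^{2}$; it is fixed by $\tau_{\partial D^{2}}$ and would therefore keep slope $0$ for every $m$, so the claim would simply be false if it occurred. This is exactly where the hypothesis $\wind_{c}(K)\neq 0$ has to be used: the homology carried by $\hfk(K)$ in $H_1(T)$ — equivalently, the homological behaviour of its non-compact arcs in the longitudinal direction — is controlled by $\wind_{c}(K)$, so a purely meridional linear segment in a singular pegboard diagram is incompatible with $\wind_{c}(K)\neq 0$. (Alternatively, one can try to argue geometrically that a component all of whose linear parts are meridional is \emph{loose}, and hence never contributes a linear segment to a singular pegboard diagram, while in a \emph{tight} component a meridional linear arm would allow one of the adjacent corners to be unwrapped, contradicting tightness.) Once this is in place, the induction and the slope computation above finish the proof.
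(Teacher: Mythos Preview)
Your approach is exactly the paper's: the paper's entire proof is the single sentence ``Iterated application of Proposition~\ref{Proposition:pegboardDehntwist} gives us the following corollary\ldots''. You carry this out explicitly, obtaining the formula $L_i^{(m)}=(q_i+m\,p_i)\cdot\overrightarrow{m}+p_i\cdot\overrightarrow{\lambda}$ and analysing the slopes, which is precisely the intended argument.

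Your discussion of the $p_i=0$ case is more careful than the paper, which does not address it at all. Note, however, that the corollary as stated carries no hypothesis $\wind_c(K)\neq 0$, so invoking it is not available to you here; and your tightness sketch (that a meridional segment would allow an adjacent corner to unwrap) does not quite go through---one can draw tight pegboard configurations with a horizontal segment between two pegs at the same height. Since Corollary~\ref{Corollary:hfkundertwisting} is not used elsewhere in the paper, it seems the authors are content with an informal reading (``positive'' in the sense of ``non-negative'', or simply ignoring degenerate horizontal segments). In any case your argument is at least as complete as the paper's, and the edge case you flag is a genuine lacuna in the statement rather than a gap in your reasoning.
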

\section{Properties of the box tensor product}\label{Section:Boxtensor}
\subsection{General structure of $\widehat{\text{CFA}}(\mathcal{H}_{K}, z, w) \boxtimes \widehat{\text{CFD}}(\mathcal{H}'_{\frac{1}{m}} , z')$}\label{Subsection:GenstructureofBoxtensor}\hfill\\

This section applies the box tensor product and Theorems \ref{Theorem:pairing} and \ref{Theorem:pairingknots} to examine the twisting problem. As discussed in Section \ref{Section:Immeresedcurve}, the construction of a twist family $\{K_m\}$ is constructed by gluing two bordered $3$-manifolds, $H$ and $H'_{\frac{1}{m}}$. Here $H$ is the solid torus $S^3 \setminus N(c)$ containing $K$. The boundary $\partial H$ is parameterized by two curves: $\mu_{c}$ (the meridian of $c$) and $\lambda_{c}$ (the Seifert longitude of $c$). The second $3$-manifold, $H'_{\frac{1}{m}}$, is similarly a solid torus, but its meridian is homologous to $[\mu_{c}] - m \cdot [\lambda_{c}]$.\\

Consider a doubly-pointed bordered Heegaard diagram $(\mathcal{H}_{K}, z, w)$ representing $(H, K)$. Also note that we computed the type $D$ invariant of $H'_{\frac{1}{m}}$ in Example \ref{Example:twistedmeridian} using the pointed bordered Heegaard diagram $(\mathcal{H}'_{\frac{1}{m}} , z')$. Using Theorems \ref{Theorem:pairing} and \ref{Theorem:pairingknots} we have : 
$$\widehat{CFK}(S^3,K_m) \simeq \widehat{\text{CFA}}(\mathcal{H}_{K}, z, w) \boxtimes \widehat{\text{CFD}}(\mathcal{H}'_{\frac{1}{m}} , z'),$$
$$\widehat{CF}(S^3) \simeq \widehat{\text{CFA}}(\mathcal{H}_{K}, z) \boxtimes \widehat{\text{CFD}}(\mathcal{H}'_{\frac{1}{m}} , z').$$
Let $B_K$ be a basis of $\widehat{\text{CFA}}(\mathcal{H}_{K}, z)$ (and $\widehat{\text{CFA}}(\mathcal{H}_{K}, z, w)$) formed from the disjoint union of $B^{0}_K$ and $B^{1}_K$, where $B^{i}_K$ is a basis for $\widehat{\text{CFA}}(\mathcal{H}_{K}, z) \cdot \iota_{i}$ over $\mathbb{F}_2$. As before, we use $B'_m=\{\eta, \xi_1, \cdots, \xi_m\}$ as the basis of $\widehat{\text{CFD}}(\mathcal{H}'_{\frac{1}{m}} , z')$.\\

We can construct a basis for the box tensor products as the disjoint union $\bm{C^{m}}= C^{\bu} \sqcup C^{\circ}_{1} \sqcup \cdots \sqcup C^{\circ}_{m}$ where: 
$$C^{\bu}= \{x^{0} \otimes \eta \ | \ x^{0} \in B^{0}_K\}, $$
$$C^{\circ}_{i}= \{x^{1} \otimes \xi_i \ | \ x^{1} \in B^{1}_K\} \ \text{for} \ i=1, \cdots, m.$$
In our figures, we arrange the elements of $\bm{C^{m}}$ around a circle as follows. Considering the vertices of the decorated graph in Figure \ref{Figure:1:mDgraph}, we turn each vertex to a box. The box on top, associated to vertex $\eta$, contains all of the elements of $C^{\bu}$, labeled by elements of $B^{0}_K$. The box associated to vertex $\xi_i$, for $i=1,\cdots,m$, contains the elements of $C^{\circ}_{i}$, labeled by elements of $B^{1}_K$. Abusing the notation, we use $C^{\bu}, C^{\circ}_{1}, \cdots C^{\circ}_{m}$ to refer to the associated boxes as well. We refer to $C^{\bu}$ as the \emph{black box}, and to $C^{\circ}_{1}, \cdots C^{\circ}_{m}$ as the \emph{white boxes}. A general schematic of this arrangement can be seen in Figure \ref{Figure:Boxtensorscheme}.\\

\begin{figure}[h]
\centering
\begin{center}
\includegraphics[scale=0.5]{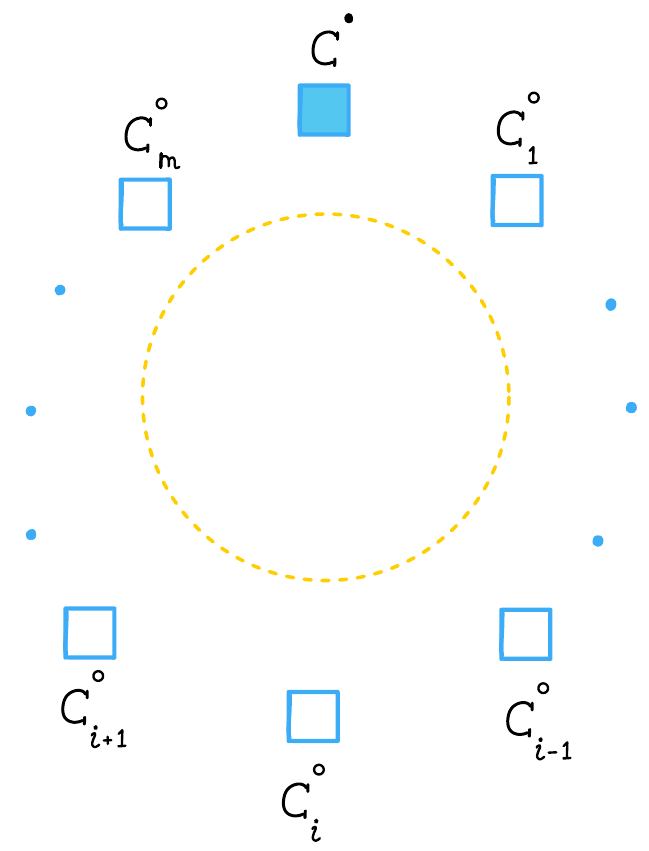}
\end{center}
\caption{The circular arrangement of the boxes in $\bm{C^{m}}$}\label{Figure:Boxtensorscheme}
\end{figure}

Note that both of the chain complexes have the same base vector space and only differ in their differential. Since we are working over $\mathbb{F}_2$, the base set is in one-to-one correspondence with $2^{\bm{C^{m}}}$ as follows:
$$I \subseteq \bm{C^{m}} \longleftrightarrow S_I=\sum_{\nu \in I} \nu \in \widehat{\text{CFA}}(\mathcal{H}_{K}, z, w) \boxtimes \widehat{\text{CFD}}(\mathcal{H}'_{\frac{1}{m}} , z')$$

We give the black and white boxes structure of a cyclic graph by letting the boxes to be the vertices and connecting neighbouring boxes in the arrangement in Figure \ref{Figure:Boxtensorscheme}. This is essentially the underlying undirected graph in Figure \ref{Figure:1:mDgraph}. We denote this graph by $\bm{GC^m}$.\\

As described before the differential of the box tensor product in general is defined by Equation \ref{Equation:Boxtensordifferential}. Similar to our graph-theoretic descriptions of type A and type D structures (but much simpler), we can use directed graphs to describe chain complexes. In our examples, for any elements $\nu , \nu' \in \bm{C^{m}}$ we draw a directed from $\nu$ to $\nu'$, if and only if, (a summand of) $\nu'$ appears in $\partial \nu$.\\

An explicit example is illustrated in Figure \ref{Figure:BoxtensorMazur} and \ref{Figure:BoxtensorMazurU} for $m=3$ and the Mazur pattern $Q$ (using computations of $\widehat{\text{CFA}}(\mathcal{H}_{Q}, z, w)$ and $\widehat{\text{CFA}}(\mathcal{H}_{Q}, z)$ in Example \ref{Example:Mazur}). The differentials can be seen as green arrows in Figures \ref{Figure:BoxtensorMazur} and \ref{Figure:BoxtensorMazurU}.\\

We can also go on and compute the homology. The homology of $$\widehat{\text{CFA}}(\mathcal{H}_{K}, z) \boxtimes \widehat{\text{CFD}}(\mathcal{H}'_{\frac{1}{m}} , z')$$ is just $\widehat{HF}(S^3) = \mathbb{F}_2$ and is generated by $$x_0 \otimes \eta + x_1 \otimes \xi_1 + x_1 \otimes \xi_2 + x_1 \otimes \xi_3.$$

We can also observe that $m \rightarrow \infty$, the differentials of $$\widehat{\text{CFA}}(\mathcal{H}_{K}, z, w) \boxtimes \widehat{\text{CFD}}(\mathcal{H}'_{\frac{1}{m}} , z')$$
eventually become constant; in other words, they \emph{stabilize}. To make such observations precise, we introduce the notions of \emph{inclusion maps} and \emph{nested sequence structures}, which will allow us to formalize the idea of stabilization.\\ 

We define two natural inclusions of the basis sets $\bm{C^{m}} \hookrightarrow \bm{C^{m+1}}$ which will endow the family $\{\bm{C^{m}}\}_{m \in \mathbb{N}}$ with two different nested sequence structures. While this may be a slight abuse of notation, we adopt this convention throughout the paper to streamline the presentation and simplify subsequent statements.\\

The first inclusion is the natural inclusion of ${\Phi'_{m} : \bm{C^{m}} \hookrightarrow \bm{C^{m+1}}}$ for all $m \in \mathbb{N}$, defined as follows: 
\begin{equation}\label{Equation:naturalinclusion}
\begin{cases}
\Phi'_{m} (x^0 \otimes \eta) := (x^0 \otimes \eta) \ \text{for all} \ x^0 \otimes \eta \in C^{\bu}\\
\Phi'_{m} (x^1 \otimes \xi_i) := (x^1 \otimes \xi_i) \ \text{for all} \ x^1 \otimes \xi_i \in C^{\circ}_{i}\ , \ i \in \{1,\cdots,m\}
\end{cases}
\end{equation}

For some of our purposes,, we need another inclusion $\Phi_{m} : \bm{C^{m}} \hookrightarrow \bm{C^{m+1}}$ which is defined as follows:
\begin{equation}\label{Equation:inclusion}
    \begin{cases}
      \Phi_{m} (x^0 \otimes \eta) := (x^0 \otimes \eta) \ \text{for all} \ x^0 \otimes \eta \in C^{\bu}\\
      \Phi_{m} (x^1 \otimes \xi_i) := (x^1 \otimes \xi_i) \ \text{for all} \ x^1 \otimes \xi_i \in C^{\circ}_{i}\ , \ i \leq \frac{m}{2}\\
      \Phi_{m} (x^1 \otimes \xi_i) := (x^1 \otimes \xi_{i+1}) \ \text{for all} \ x^1 \otimes \xi_i \in C^{\circ}_{i}\ , \ i > \frac{m}{2}
    \end{cases}       
\end{equation}
Intuitively, the map $\Phi'_{m}$ comes constructing $\bm{C^{m+1}}$ by adding a new white box $C^{\bu}_{m+1}$ to $\bm{C^{m}}$ after all the other white boxes $C^{\bu}_{1}, \cdots, C^{\bu}_{m+1}$. On the other hand, the map $\Phi_{m}$ comes constructing $\bm{C^{m+1}}$ by adding a new white box to $\bm{C^{m}}$ (exactly) in the middle of its white boxes. We can see a pictorial description of these two maps in Figures \ref{Figure:Inclusion1} and \ref{Figure:Inclusion2} (Also see Figure \ref{Figure:Nested}).\\

\begin{figure}[H]
\centering
\begin{center}
\includegraphics[width=\textwidth]{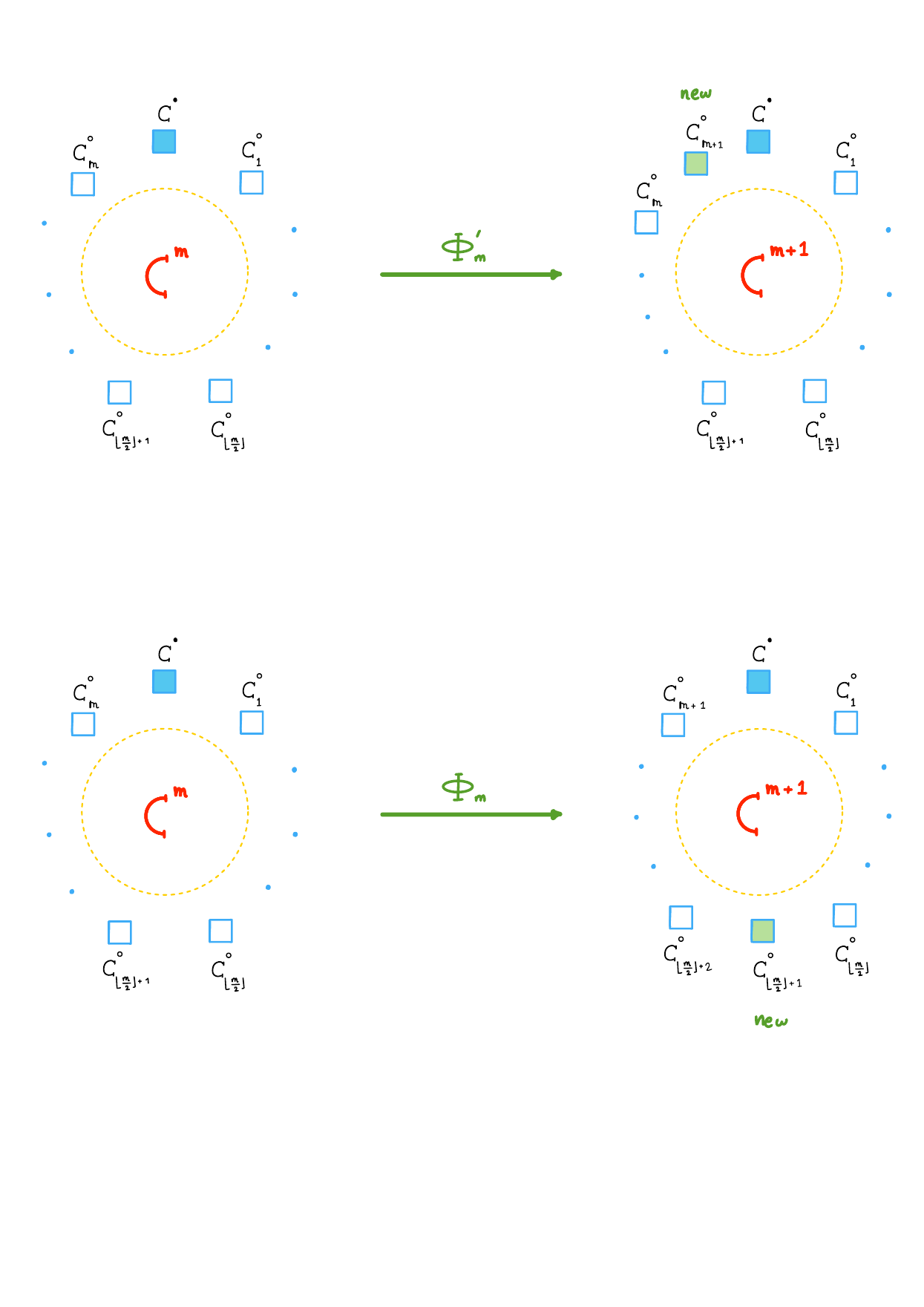}
\end{center}
\caption{The natural inclusion $\Phi'_{m}$ }\label{Figure:Inclusion1}
\end{figure}

\begin{figure}[H]
\centering
\begin{center}
\includegraphics[width=\textwidth]{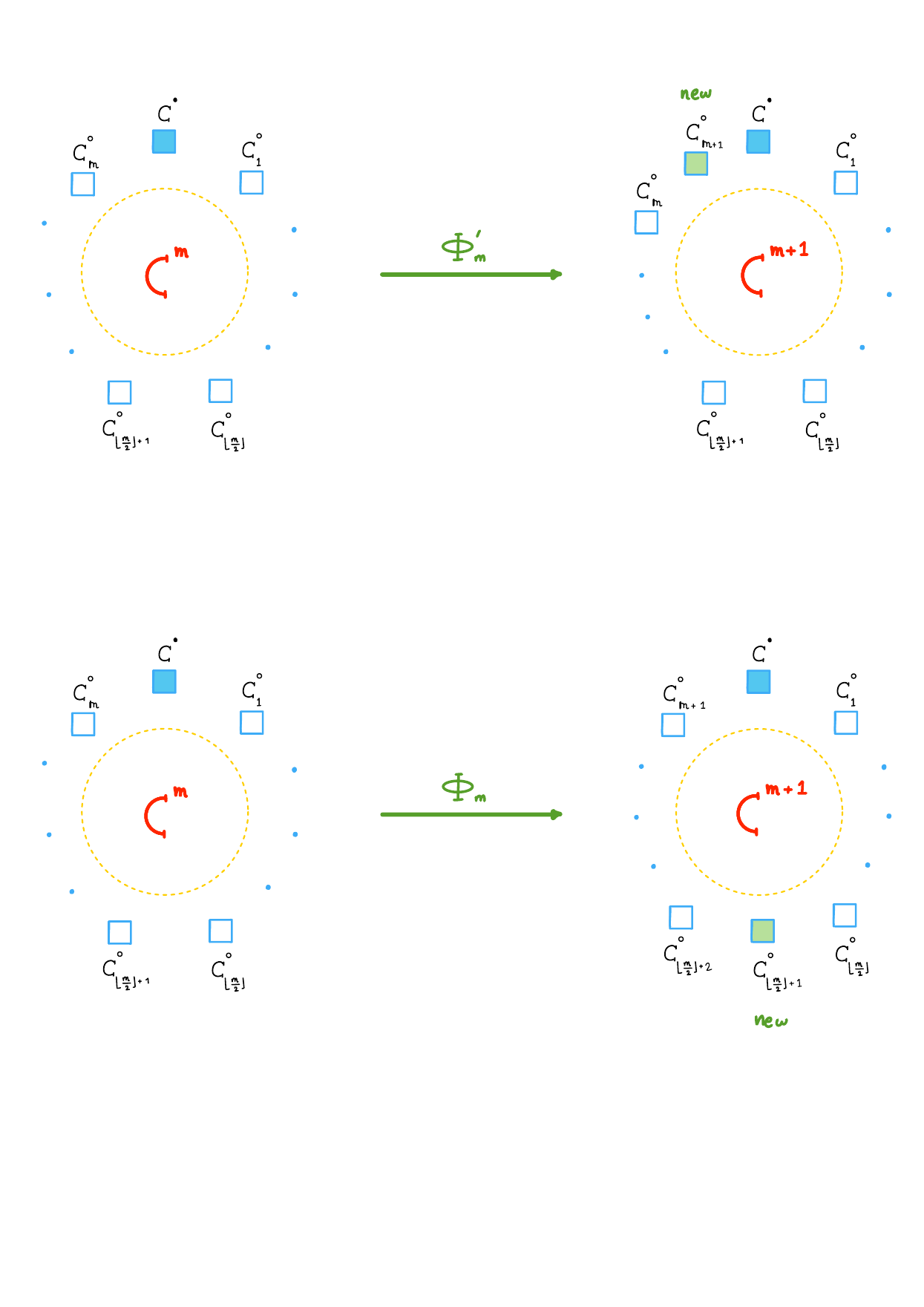}
\end{center}
\caption{The inclusion $\Phi_{m}$}\label{Figure:Inclusion2}
\end{figure}

We now start analyzing the properties of these complexes. We start by examining the differentials in Proposition \ref{Proposition:boxtensordifferential}. We use the graph-theoretic description described earlier.

\begin{prop}\label{Proposition:boxtensordifferential}
There are four types of directed edges in the graph-theoretic description of the chain complexes  $$\widehat{\text{CFA}}(\mathcal{H}_{K}, z) \boxtimes \widehat{\text{CFD}}(\mathcal{H}'_{\frac{1}{m}} , z') \ \text{and} \ \widehat{\text{CFA}}(\mathcal{H}_{K}, z, w) \boxtimes \widehat{\text{CFD}}(\mathcal{H}'_{\frac{1}{m}} , z')$$ as described with respect to the basis $\bm{C^{m}}$:
\begin{enumerate}
    \item The directed edges inside $C^{\bu}$.  For any $x^0 , y^0 \in B^{0}_{K}$ such that $y^0$ appears in $m_1(x^0)$, we will have a directed edge 
    $$x^0 \otimes \eta \longrightarrow y^0 \otimes \eta$$
    \item The directed edges originating from $C^{\bu}$ moving in clockwise direction. For any $x^0 \in B^{0}_{K}$ and $x^1 \in B^{1}_{K}$ such that $x^1$ appears in $$m_{n+2}(x^0 \otimes \rho_{3} \otimes \underbrace{\rho_{23} \otimes \cdots \otimes \rho_{23}}_{n})$$ for some $n \geq 0$, we will have a directed edge 
    $$ x^0 \otimes \eta \longrightarrow x^1 \otimes \xi_{n+1}$$
    \item The directed edges originating from $C^{\bu}$ moving in counterclockwise direction. For any $x^0 \in B^{0}_{K}$ and $x^1 \in B^{1}_{K}$ such that $x^1$ appears in $m_2(x^0 \otimes \rho_1)$, we will have a directed edge 
    $$x^0 \otimes \eta \longrightarrow x^1 \otimes \xi_m$$
    \item The directed edges between white boxes (possibly) moving in clockwise direction. For any $x^1 , y^1 \in B^{1}_{K}$such that $y^1$ appears in $$m_{n+1}(x^1 \otimes \underbrace{\rho_{23} \otimes \cdots \otimes \rho_{23}}_{n})$$ for some $n \geq 0$, we will have a family of directed edges 
    $$ x^1 \otimes \xi_i \longrightarrow y^1 \otimes \xi_{i+n}$$
    for $1 \leq i \leq m-n$. 
\end{enumerate}
\end{prop}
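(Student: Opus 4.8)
The plan is to compute the iterated coproducts $\delta_k$ of the type $D$ module $\widehat{\text{CFD}}(\mathcal{H}'_{\frac{1}{m}}, z')$ explicitly, substitute them into the defining formula for the box tensor differential (Equation \ref{Equation:Boxtensordifferential}), and read off the contributions term by term. Every step is a direct unwinding of definitions, so the only genuinely substantive point is the $\delta_k$ computation, and in particular keeping track of which index ranges occur.

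First I would record, by a straightforward induction on $k$ using the description of $\widehat{\text{CFD}}(\mathcal{H}'_{\frac{1}{m}}, z')$ from Example \ref{Example:twistedmeridian}, that
\begin{align*}
\delta_k(\eta) &= \rho_3 \otimes \underbrace{\rho_{23} \otimes \cdots \otimes \rho_{23}}_{k-1} \otimes \xi_k \ \ (2 \le k \le m), \qquad \delta_1(\eta) = \rho_3 \otimes \xi_1 + \rho_1 \otimes \xi_m,\\
\delta_k(\xi_i) &= \underbrace{\rho_{23} \otimes \cdots \otimes \rho_{23}}_{k} \otimes \xi_{i+k} \ \ (1 \le i,\ i+k \le m),
\end{align*}
with $\delta_0 = \id$ and all other $\delta_k$ equal to zero. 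The one place to be careful is the branching of $\delta$ at $\eta$: the summand $\rho_1 \otimes \xi_m$ contributes nothing to $\delta_k$ for $k \ge 2$ because $\delta(\xi_m) = 0$, so the $\rho_1$-branch terminates immediately, whereas the $\rho_3$-branch propagates along the chain $\xi_1 \to \xi_2 \to \cdots$. In particular all the sums in Equation \ref{Equation:Boxtensordifferential} are finite (consistent with boundedness of the type $D$ module), so there is no convergence issue, and this holds for either box tensor product since $\widehat{\text{CFA}}(\mathcal{H}_{K}, z)$ and $\widehat{\text{CFA}}(\mathcal{H}_{K}, z, w)$ are assumed bounded.

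Next I would substitute into $\partial^{\boxtimes}(x \otimes y) = \sum_{k \ge 0}(m_{k+1} \otimes \id)(x \otimes \delta_k(y))$, splitting on whether $y = \eta$ or $y = \xi_i$; since $\eta \in \iota_0 \widehat{\text{CFD}}(\mathcal{H}'_{\frac{1}{m}}, z')$ and $\xi_i \in \iota_1 \widehat{\text{CFD}}(\mathcal{H}'_{\frac{1}{m}}, z')$, idempotent compatibility forces the only generators of the box tensor product to be $x^0 \otimes \eta$ with $x^0 \in B^0_K$ and $x^1 \otimes \xi_i$ with $x^1 \in B^1_K$, and it forces the outputs of the relevant $m_{k+1}$ into the correct idempotent component. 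For $y = \eta$: the $k=0$ term yields $m_1(x^0) \otimes \eta$, landing in $C^{\bu}$, giving the edges of type (1); the $k=1$ term yields $m_2(x^0 \otimes \rho_3) \otimes \xi_1$ (the $n = 0$ case of type (2)) together with $m_2(x^0 \otimes \rho_1) \otimes \xi_m$ (type (3)); and for $k = n+1 \ge 2$ the term yields $m_{n+2}(x^0 \otimes \rho_3 \otimes \underbrace{\rho_{23} \otimes \cdots \otimes \rho_{23}}_{n}) \otimes \xi_{n+1}$, giving the type (2) edges with $n \ge 1$ (and since $\delta_k(\eta) = 0$ for $k > m$, these clockwise edges reach only $\xi_1, \dots, \xi_m$). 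For $y = \xi_i$: the $k = 0$ term gives $m_1(x^1) \otimes \xi_i$, the $n=0$ case of type (4), and the $k = n \ge 1$ term gives $m_{n+1}(x^1 \otimes \underbrace{\rho_{23} \otimes \cdots \otimes \rho_{23}}_{n}) \otimes \xi_{i+n}$ whenever $i+n \le m$, i.e.\ the type (4) edges with $1 \le i \le m-n$. This exhausts all the summands in Equation \ref{Equation:Boxtensordifferential}, establishing the claim for $\widehat{\text{CFA}}(\mathcal{H}_{K}, z) \boxtimes \widehat{\text{CFD}}(\mathcal{H}'_{\frac{1}{m}}, z')$.

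Finally I would observe that the identical computation applies to $\widehat{\text{CFA}}(\mathcal{H}_{K}, z, w) \boxtimes \widehat{\text{CFD}}(\mathcal{H}'_{\frac{1}{m}}, z')$: the $\A_{\infty}$ operations of $\widehat{\text{CFA}}(\mathcal{H}_{K}, z, w)$ are obtained from those of $\widehat{\text{CFA}}(\mathcal{H}_{K}, z)$ by discarding the ones coming from disks that cross $w$, so the box tensor differential in the doubly-pointed case is computed by exactly the same four types of edges, with a (possibly) smaller collection of operations $m_1$, $m_2(-\otimes \rho_1)$, $m_{n+2}(-\otimes\rho_3\otimes\rho_{23}^{\otimes n})$, $m_{n+1}(-\otimes\rho_{23}^{\otimes n})$ entering. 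The main obstacle is essentially bookkeeping rather than any deep point: getting the induction for $\delta_k$ right (especially the immediate death of the $\rho_1$-branch) and carefully reconciling the index ranges $n+1 \le m$ and $1 \le i \le m - n$ with the finitely many nonzero $\A_{\infty}$ operations.
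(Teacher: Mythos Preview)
Your proposal is correct and is precisely the unwinding of definitions that the paper's one-line proof (``This directly follows from the definition of the differential on the box tensor product and the description of the type $D$ structure $\widehat{\text{CFD}}(\mathcal{H}'_{\frac{1}{m}} , z')$'') alludes to. You have simply made explicit the computation of $\delta_k$ and the substitution into Equation \ref{Equation:Boxtensordifferential}, together with the idempotent bookkeeping, which is exactly what the paper leaves to the reader.
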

\begin{proof}
    This directly follows from the definition of the differential on the box tensor product and the description of the type $D$ structure $\widehat{\text{CFD}}(\mathcal{H}'_{\frac{1}{m}} , z')$. 
\end{proof}
Figure \ref{Figure:Edgedecomposition} depicts Proposition \ref{Proposition:boxtensordifferential}. Note that the type $(4)$ edges are invariant under circular shift maps (see Subsection \ref{Subsection:ballsandshiftmaps} for a more detailed and more precise explanation).\\

Using Proposition \ref{Proposition:boxtensordifferential}, we can define some notations that will be useful in the future. Let $\nu, \nu'$ be elements of the basis $\bm{C^{m}}$ viewed as vertices in the graph-theoretic description. We use the notation $\nu \xrightarrow{[i]} \nu'$ for an edge of the $i$-th type going from $\nu$ to $\nu'$.\\

For $i\in \{1,2,3,4\}$, let $E_{i}(\nu,\nu')$ be the set of the edges of type $(i)$ which start in $\nu$ and end in $\nu'$. Furthermore, extending this notation, let $E_{i}(\nu,\cdot)$ (resp.~$E_{i}(\cdot ,\nu)$) be the set of the edges of type $(i)$ which start (resp.~end) in $\nu$.\\

Now we can write 
$$\partial (\nu) = \sum_{1\leq i \leq 4} \ \sum_{\nu' \in \bm{C^{m}}}|E_{i}(\nu,\nu')|\cdot \nu' \mod 2  $$
We can also define 
$$\partial_{i}(\nu) : = \sum_{\nu' \in \bm{C^{m}}}|E_{i}(\nu,\nu')|\cdot \nu' \mod 2,$$ and hence we will have 
$\partial = \partial_1+\partial_2+\partial_3+\partial_4$. However, note that the maps $\partial_{i}$ are not necessarily differentials. 

\begin{figure}[ht]
\centering
\begin{center}
\includegraphics[width=0.8\textwidth]{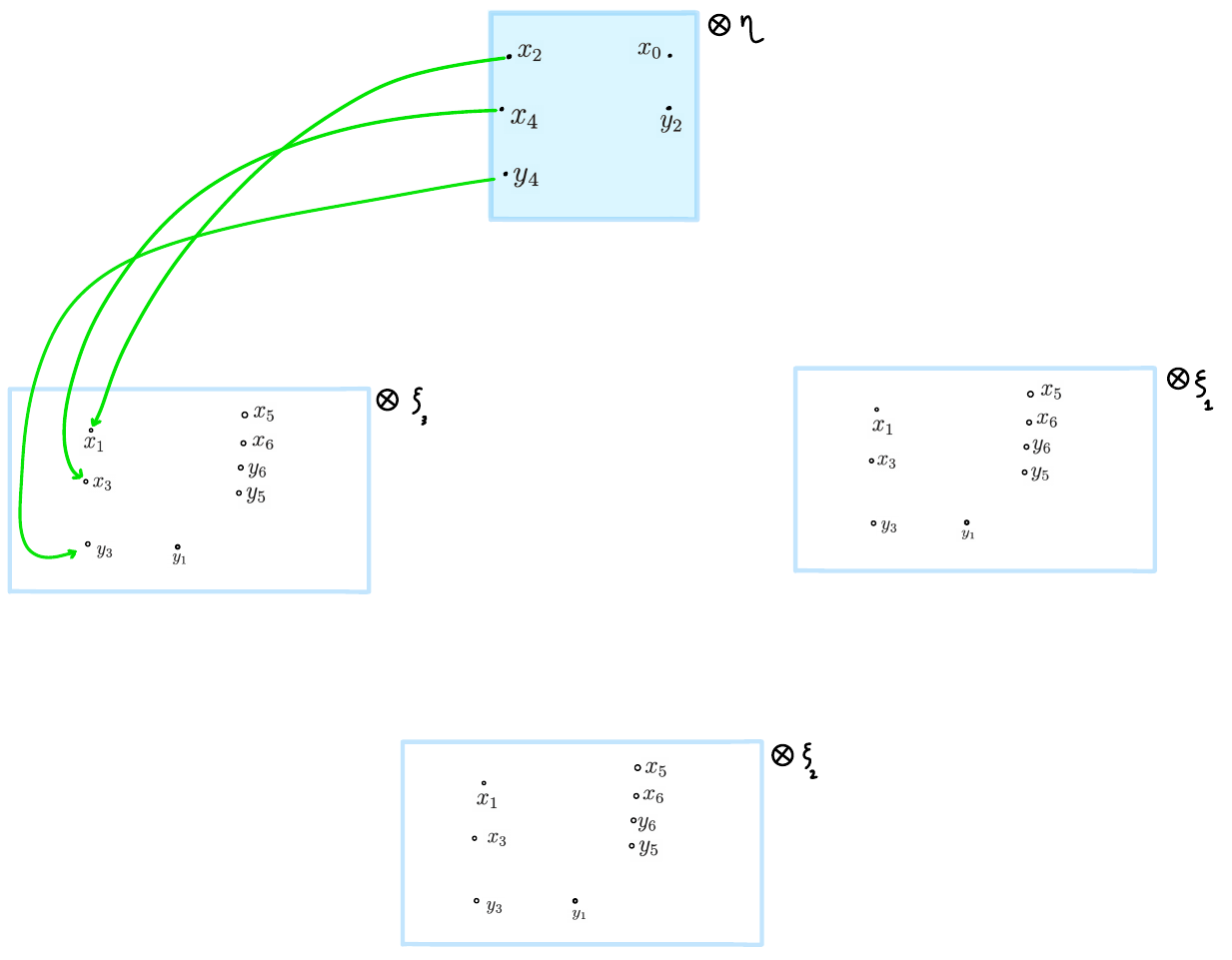}
\end{center}
\caption{The complex $\widehat{\text{CFA}}(\mathcal{H}_{Q}, z, w) \boxtimes \widehat{\text{CFD}}(\mathcal{H}'_{\frac{1}{3}} , z')$}\label{Figure:BoxtensorMazur}
\end{figure}

\begin{figure}[ht]
\centering
\begin{center}
\includegraphics[width=0.8\textwidth]{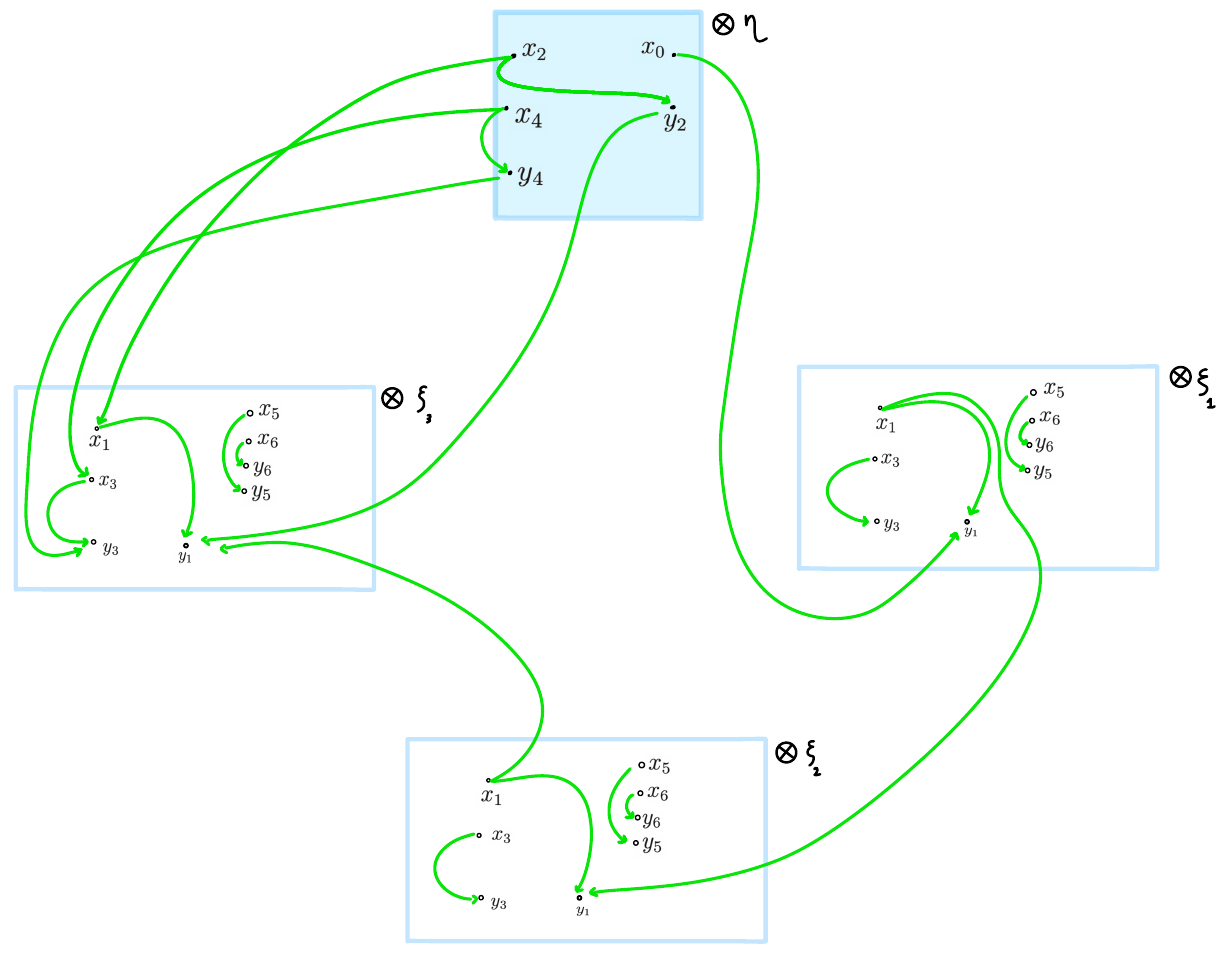}
\end{center}
\caption{The complex $\widehat{\text{CFA}}(\mathcal{H}_{Q}, z) \boxtimes \widehat{\text{CFD}}(\mathcal{H}'_{\frac{1}{3}} , z')$}\label{Figure:BoxtensorMazurU}
\end{figure}

\begin{figure}[ht]
\centering
\begin{center}
\includegraphics[width=0.6\textwidth]{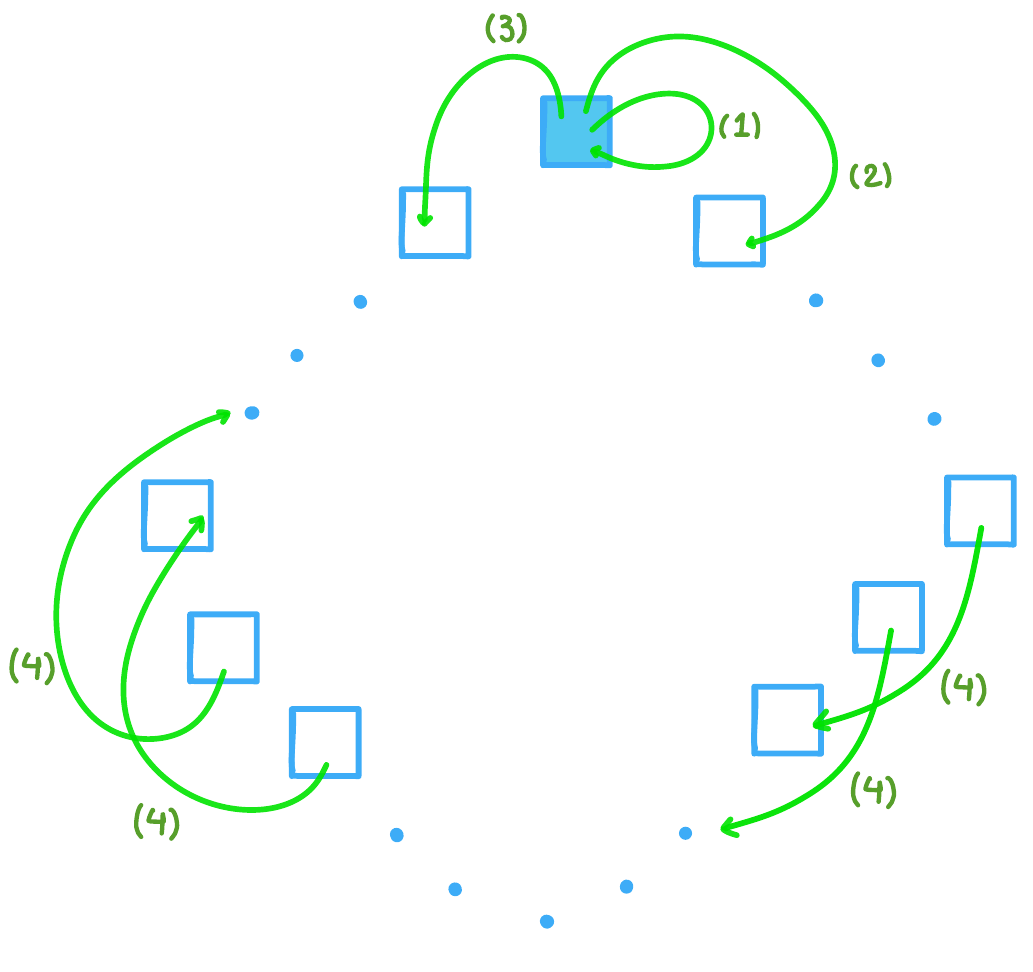}
\end{center}
\caption{The decomposition of edges to four types described in Proposition \ref{Proposition:boxtensordifferential}}\label{Figure:Edgedecomposition}
\end{figure}

\subsection{Gradings in $\widehat{\text{CFA}}(\mathcal{H}_{K}, z, w) \boxtimes \widehat{\text{CFD}}(\mathcal{H}'_{\frac{1}{m}} , z')$}\label{Subsection:Gradingsinboxtensor}\hfill\\

We need to start by examining the gradings. We start by Lemma \ref{Lemma:Gradingsubgroup} about the grading of $\widehat{\text{CFA}}(\mathcal{H}_{K}, z, w)$.\\

Recall that the solid torus $H$ is defined as $S^3 \setminus N(c)$ for unknot $c$. We parameterized $\partial H$ with  the pair of curves $(\mu_{c} , \lambda_{c})$ where $\lambda_{c}$ is a Seifert longitude of $c$ and a meridian of $H$. 

\begin{lemm}\label{Lemma:Gradingsubgroup}
For any choice of reference generator $\bm{z_0} \in \widehat{\text{CFA}}(\mathcal{H}_{K}, z, w)$, the associated subgroup $P(\bm{z_0})$ is generated by an element $(M; 0,1 ; \omega) \in \widetilde{G}$, where $M \in \frac{\mathbb{Z}}{2}$ and $\omega = - \lk(K,c)$.
\end{lemm}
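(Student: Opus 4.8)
The plan is to identify $P(\bm{z_0})$ with the group of gradings of periodic domains in the doubly-pointed bordered Heegaard diagram $\mathcal{H}_K$, and then pin down which element of $\widetilde{G}$ generates it using the topology of $H = S^3 \setminus N(c)$. Since $H$ is a solid torus, hence a rational (in fact integral) homology solid torus, the subgroup $P(\bm{z_0})$ is cyclic, as recalled in the background section; so it suffices to determine a generator. The key point is that a periodic domain in a bordered Heegaard diagram for $(H,K)$ corresponds, on the boundary, to a curve representing the class $[\partial H]$ of the periodic domain in $H_1(\partial H)$, and its Maslov, $\mathrm{Spin}^{\mathbb C}$, and Alexander components can be read off from how that boundary class sits relative to the parametrizing curves $(\mu_c,\lambda_c)$.

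First I would recall that, with the chosen parametrization $(\alpha_1,\alpha_2) = (\mu_c,\lambda_c)$, the group $G$ (resp.\ $\widetilde G$) carries the central elements $\lambda = (1;0,0;0)$ and $\mu = (0;0,0;1)$, and that a periodic domain whose boundary is homologous to $a[\alpha_1] + b[\alpha_2]$ contributes a grading of $\mathrm{Spin}^{\mathbb C}$-component governed by $(a,b)$. Since $\lambda_c = \alpha_2$ is a meridian of the solid torus $H$, it bounds a disk in $H$; the corresponding periodic domain is the one that "fills in" a meridian disk, and its boundary on $\partial H$ is a single copy of $\lambda_c = \alpha_2$. This forces the $\mathrm{Spin}^{\mathbb C}$-component of the generator to be $(0,1)$ — the coefficient of $\alpha_1 = \mu_c$ is $0$ and the coefficient of $\alpha_2 = \lambda_c$ is $1$. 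The Maslov component $M \in \frac{1}{2}\mathbb Z$ is then whatever value the grading formula assigns to that periodic domain; we do not need its precise value, only that it is a half-integer, which is automatic from the definition of $\widetilde G$.

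The one genuinely substantive computation is the Alexander factor. The Alexander component of the grading of a periodic domain $\mathcal{P}$ equals (up to sign conventions) the algebraic intersection number of $\partial \mathcal{P} \subset \partial H$ with the knot $K$ pushed to the boundary, equivalently $-$ the multiplicity of the domain at the basepoint $w$ relative to $z$. For the meridional periodic domain, the boundary is a meridian $\lambda_c$ of $H$, i.e.\ a curve isotopic to $\{*\} \times \partial D^2$ in the solid torus picture; its algebraic intersection with $K$ is exactly $\pm \lk(K,c)$. Tracking the orientation conventions fixed in the paper — that $(\mu_c,\lambda_c)$ parametrizes $\partial H$ compatibly with orientation, and the convention $\omega := -\lk(K,c)$ with $\lk(K,c)>0$ — gives the Alexander factor $\omega = -\lk(K,c)$. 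I expect \emph{this orientation bookkeeping to be the main obstacle}: one must carefully match the sign conventions of the bordered grading group (the formula $gr(\delta(x)) = \lambda^{-1} gr(x)$ and the Alexander-factor definition via domains in \cite{Chen2019KnotFH}) with the geometric linking number, and verify that the meridian disk of $H$ indeed produces the periodic domain whose boundary is $[\lambda_c]$ rather than, say, $[\mu_c] - \lk(K,c)\cdot[\lambda_c]$ or some other primitive class. Once the boundary class is confirmed to be $[\lambda_c] = [\alpha_2]$ and the $w$-multiplicity is confirmed to be $\pm\lk(K,c)$, the statement $P(\bm{z_0}) = \langle (M;0,1;\omega)\rangle$ follows immediately, with independence from the choice of $\bm{z_0}$ being the standard fact that changing the reference generator conjugates $P(\bm{z_0})$ by an element of $\widetilde G$ but, since $(M;0,1;\omega)$ lies in the relevant centralizer (the $\mathrm{Spin}^{\mathbb C}$-component being the "long" direction of the cyclic group), leaves $P$ unchanged.
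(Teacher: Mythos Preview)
Your proposal is correct and follows essentially the same approach as the paper: identify $P(\bm{z_0})$ with gradings of periodic domains, use that $H_2(H,\partial H)\cong\mathbb{Z}$ is generated by the meridian disk to get $\mathrm{Spin}^{\mathbb{C}}$-component $(0,1)$, and compute the Alexander factor as $n_w(B)-n_z(B) = -(K\cdot T_B) = -\lk(K,c)$. The paper is slightly terser and does not spell out the independence from $\bm{z_0}$, but the argument is the same.
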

\begin{proof}
As mentioned in \cite{Hanselman2018HeegaardFH}, the $\text{\emph{Spin}}^{\mathbb{C}}$ component of the generator of $P(\bm{z_0})$ is determined by the topology of the bordered $3$-manifold. This means that the $\text{\emph{Spin}}^{\mathbb{C}}$ component will be equal to the one computed for the special case of Mazur pattern in Example $\ref{Example:Mazur}$, and hence is $(0, \pm 1)$.\\

We need to elaborate this a bit further. As we mentioned before $P(\bm{z_0})$ comes from the grading of periodic domains in $\mathcal{H}_{K}$. As proved in \cite[Lemma~4.20]{Lipshitz2008BorderedHF}, the periodic domains are in one-to-one correspondence with $H_2(H, \partial H)$. In fact, for any periodic domain $B$, one can construct a surface $T_B \subset H$ with $\partial T_B \subset \partial H$. The $\text{\emph{Spin}}^{\mathbb{C}}$ of elements of $P(\bm{z_0})$ come from the expansion of $[\partial T_B] \in H_1(\partial H)$ in terms of the parametrizing curves. In our case, $H_2(H, \partial H) \cong \mathbb{Z}$, and it is generated by the meridian disk. The boundary of the meridian disk in the first homology is clearly $0 \cdot [\mu_{c}] + 1 \cdot [\lambda_{c}]$, and as a result the $\text{\emph{Spin}}^{\mathbb{C}}$ component of the generator will be $(0,1)$.\\

We now need to show that the Alexander component of the generator is equal to $-\lk(K,c)$. The Alexander component of grading of a periodic domain $B$ is defined as $n_w(B)-n_z(B)$. Based on the construction of $T_B$, we have 
$$n_w(B)-n_z(B) = -(K \cdot T_B) = - \lk(K,c).$$
\end{proof}

As we discussed before the grading functions 
$$gr_{K} : \widehat{\text{CFA}}(\mathcal{H}_{K}, z, w) \rightarrow P(\bm{z_0}) \backslash \widetilde{G}$$ 
$$gr_{m}: \widehat{\text{CFD}}(\mathcal{H}'_{\frac{1}{m}}, z') \rightarrow G / P(\eta) $$
induce a grading function on the box tensor product: 
$$gr_{K_m} : \widehat{\text{CFA}}(\mathcal{H}_{K}, z, w) \boxtimes \widehat{\text{CFD}}(\mathcal{H}'_{\frac{1}{m}}, z') \rightarrow P(\bm{z_0}) \backslash \widetilde{G} / (P(\eta);0).$$
We examine the behaviour of $gr_{K_m}$ and the relative Masolv and Alexander gradings as $m \rightarrow \infty$ in Lemmas \ref{Lemma:Gradingchangebym} and \ref{Lemma:Gradingchangebyshift}. Let $h_m$ and $a_m$ be the Maslov and Alexander gradings of  $\widehat{\text{CFA}}(\mathcal{H}_{K}, z, w) \boxtimes \widehat{\text{CFD}}(\mathcal{H}'_{\frac{1}{m}} , z')$.

\begin{lemm}\label{Lemma:Gradingchangebym}
    Let $n \in \mathbb{N}$ be an integer. Consider two elements ${\nu_1, \nu_2 \in \bm{C^{n}}}$ such that for each $m \geq n$, the double cosets $gr_{K_m}(\nu_1)$ and $gr_{K_m}(\nu_2)$ has representatives which doesn't depend on $m$. In other words, there are elements 
    $$g_{i}=(\alpha_i; \beta_i, \gamma_i ; \delta_i) \in \widetilde{G} \ \  \text{for} \ \  i \in \{1,2\},$$ such that for all $m \geq n$, the double coset $gr_{K_m}(\nu_i)$ has representative $g_{i}$. Then we have: 
    \begin{gather*}
     h_m(\nu_1) - h_m(\nu_2) = ((\beta_2-\beta_1)(M-\frac{1}{2}-\beta_1-\beta_2)) \cdot m + \\
     (\gamma_2-\gamma_1)M + (\alpha_1 - \alpha_2) + \frac{1}{2}(\beta_2-\beta_1) - (\beta_2 - \beta_1) \gamma_1 - (\gamma_2 - \gamma_1)\beta_2,
    \end{gather*}
    and 
    \begin{gather*}
        a_m(\nu_1) - a_m(\nu_2) = ((\beta_2-\beta_1)\omega) \cdot m + (\delta_1 - \delta_2) - (\gamma_1 - \gamma_2)\omega. 
    \end{gather*}
\end{lemm}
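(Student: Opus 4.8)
The plan is to turn the double-coset identity that defines the relative gradings into an explicit equation in $\widetilde{G}$ and solve it slot by slot. By the correspondence between the double-coset grading on a box tensor product and the Maslov and Alexander gradings recalled in Section~\ref{Section:background}, the numbers $h := h_m(\nu_1)-h_m(\nu_2)$ and $a := a_m(\nu_1)-a_m(\nu_2)$ are the unique integers for which $g_1$ and $g_2\lambda^{h}\mu^{a}$ represent the same class in $P(\bm{z_0})\backslash\widetilde{G}/(P(\eta);0)$; equivalently, there are $p\in P(\bm{z_0})$ and $q\in(P(\eta);0)$ with
\[
g_1 \;=\; p\cdot g_2\cdot\lambda^{h}\mu^{a}\cdot q .
\]
So the whole proof consists of finding $p,q,h,a$ realizing this identity and reading off $h$ and $a$.

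First I would make the two ambiguity groups explicit. By Lemma~\ref{Lemma:Gradingsubgroup}, $P(\bm{z_0})=\langle(M;0,1;\omega)\rangle$, and by the computation in Example~\ref{Example:twistedmeridian}, $P(\eta)=\langle(-\tfrac{m-1}{2};1,-m)\rangle$, so I may write $p=(M;0,1;\omega)^{k}$ and $q=(-\tfrac{m-1}{2};1,-m;0)^{l}$ for integers $k,l$. A one-line induction with the group law of $\widetilde{G}$ shows that at every step the $2\times 2$ determinant appearing in the multiplication rule vanishes, because the $\mathrm{Spin}^{\mathbb{C}}$ components of successive factors are collinear; hence
\[
(M;0,1;\omega)^{k}=(kM;0,k;k\omega),\qquad (-\tfrac{m-1}{2};1,-m;0)^{l}=\big(-\tfrac{l(m-1)}{2};\,l,\,-lm;\,0\big).
\]
Since $\lambda^{h}\mu^{a}=(h;0,0;a)$ is central, the right-hand side of the displayed identity is now a product of four fully explicit elements of $\widetilde{G}$.

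Next I would expand that product by the group law and compare all four slots with $g_1=(\alpha_1;\beta_1,\gamma_1;\delta_1)$. The two $\mathrm{Spin}^{\mathbb{C}}$ equations pin down $p$ and $q$ at once: they force $l=\beta_1-\beta_2$ and $k=(\gamma_1-\gamma_2)+(\beta_1-\beta_2)\,m$, which are integers precisely because $\nu_1$ and $\nu_2$ lie in the same $\mathrm{Spin}^{\mathbb{C}}$ structure — the standing hypothesis that makes the relative gradings meaningful. Plugging this $k$ into the Alexander-slot equation $\delta_1=k\omega+\delta_2+a$ immediately gives $a=(\beta_2-\beta_1)\omega\cdot m+(\delta_1-\delta_2)-(\gamma_1-\gamma_2)\omega$. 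Substituting $k$ and $l$ into the Maslov-slot equation and separating the terms linear in $m$ from the rest then produces $h$: the $m$-coefficient simplifies through $2\beta_2+(\beta_1-\beta_2)=\beta_1+\beta_2$ to $(\beta_2-\beta_1)(M-\tfrac12-\beta_1-\beta_2)$, and the $m$-free part collapses, using $(\beta_1-\beta_2)\gamma_2+(\beta_1-\beta_2)(\gamma_1-\gamma_2)=(\beta_1-\beta_2)\gamma_1$, to exactly the stated expression.

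The only genuine labor is the bookkeeping in the Maslov slot: two determinant contributions arise, one from multiplying $g_2$ on the left by $p$ and one from multiplying it on the right by $q$, and they must be tracked separately while everything is split into its linear-in-$m$ and constant parts. I would organize this by writing $k=k_0+k_1 m$ with $k_0=\gamma_1-\gamma_2$ and $k_1=l=\beta_1-\beta_2$, expanding the four-fold product once, and reading off the coefficient of $m$ and the constant term summand by summand; no step is subtle, but it is easy to lose a sign or a factor of $\beta_2$.
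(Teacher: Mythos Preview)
Your approach is correct and is essentially identical to the paper's: both proofs solve the double-coset equation by choosing powers of the generators $p_K=(M;0,1;\omega)$ and $p_m=(-\tfrac{m-1}{2};1,-m)$ so as to match the $\mathrm{Spin}^{\mathbb{C}}$ components, then read off the Maslov and Alexander slots. The paper simply writes down the adjusted representative $p_{K}^{\gamma_2-\gamma_1+(\beta_2-\beta_1)m}\cdot g_1\cdot p_{m}^{\beta_2-\beta_1}$ and says ``computing the Maslov and Alexander component verifies the stated formulas''; your exponents $k,l$ are exactly the negatives of these (since you put the ambiguity factors around $g_2$ rather than $g_1$), and you spell out the bookkeeping the paper omits.
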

\begin{proof}
    Let $p_K$ and $p_m$ respectively deonte the generators of $P(\bm{z_0})$ and $P(\eta)$. To compute the relative Maslov and Alexander grading we need to find a representative of the double coset $[g_1]$ with $\text{\emph{Spin}}^{\mathbb{C}}$ component equal to $(\beta_2, \gamma_2)$. This representative is clearly 
    $$p_{K}^{\gamma_2-\gamma_1+(\beta_2-\beta_1)m} \cdot g_1 \cdot p_{m}^{\beta_2-\beta_1}$$
Computing the Maslov and Alexander component of this representative verifies the stated formulas.
\end{proof}
\begin{lemm}\label{Lemma:Gradingchangebyshift}
Let $x^1$ be an element of $B^{1}_{K}$ with $(a;b,c;d) \in \widetilde{G}$ as a representative of the grading coset $gr_K(x^1)$. Let $i,j$ be positive integers. Then 
$$x^1 \otimes \xi_i , x^1 \otimes \xi_j  \in \bm{C^{m}},$$
for any $m \geq \max \{i,j\}$, and their relative gradings are as follows :
$$h_m(x^1 \otimes \xi_j) - h_m(x^1 \otimes \xi_i) = (j-i)(M-2b-\frac{1}{2}),$$
$$a_m(x^1 \otimes \xi_j) - a_m(x^1 \otimes \xi_i) = (j-i)\omega.$$
\end{lemm}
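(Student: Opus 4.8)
The plan is a direct computation in the grading group $\widetilde G$, built from the explicit gradings of $\widehat{\text{CFD}}(\mathcal{H}'_{\frac{1}{m}},z')$ recorded in Example \ref{Example:twistedmeridian} and the description of $P(\bm{z_0})$ from Lemma \ref{Lemma:Gradingsubgroup}. Fix $m\ge\max\{i,j\}$, take $\eta$ as the reference generator on the type $D$ side, and use the clockwise representatives $gr_m(\xi_k)=(-\tfrac12;\tfrac12,-\tfrac{2k-1}{2})$; write $p_K=(M;0,1;\omega)$ for the generator of $P(\bm{z_0})$ and $p_m=(-\tfrac{m-1}{2};1,-m;0)$ for the generator of $(P(\eta);0)$. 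Since the induced grading on the box tensor product is $gr_{K_m}(x^1\otimes\xi_k)=gr_K(x^1)\cdot(gr_m(\xi_k);0)$, the first step is to expand, for $k\in\{i,j\}$,
$$gr_{K_m}(x^1\otimes\xi_k)=(a;b,c;d)\cdot\bigl(-\tfrac12;\tfrac12,-\tfrac{2k-1}{2};0\bigr)=\Bigl(a-\tfrac12-b\tfrac{2k-1}{2}-\tfrac c2;\ b+\tfrac12,\ c-\tfrac{2k-1}{2};\ d\Bigr)$$
using the group law on $\widetilde G$.

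Next, by the criterion recalled in Section \ref{Section:background}, the relative Maslov grading $h$ and relative Alexander grading $a'$ of $x^1\otimes\xi_j$ over $x^1\otimes\xi_i$ (here $a'$ is not the first coordinate of $gr_K(x^1)$) are characterized by $gr_{K_m}(x^1\otimes\xi_j)=gr_{K_m}(x^1\otimes\xi_i)\,\lambda^h\mu^{a'}$ in the double coset space $P(\bm{z_0})\backslash\widetilde G/(P(\eta);0)$; equivalently, there are integers $s,t$ with
$$gr_{K_m}(x^1\otimes\xi_j)=p_K^{\,s}\cdot gr_{K_m}(x^1\otimes\xi_i)\cdot\lambda^h\mu^{a'}\cdot p_m^{\,t}$$
as an honest equality in $\widetilde G$. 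I would compute $p_K^{\,s}=(sM;0,s;s\omega)$ and $p_m^{\,t}=(-\tfrac{t(m-1)}{2};t,-tm;0)$ (the off-diagonal determinant terms vanish because the relevant rows are proportional), substitute, and compare the four coordinates in turn: the first $\mathrm{Spin}^{\mathbb C}$ coordinate forces $t=0$; the second forces $s=i-j$; the Alexander coordinate then forces $a'=-s\omega=(j-i)\omega$; and the Maslov coordinate forces $h=s(-M+2b+\tfrac12)=(j-i)(M-2b-\tfrac12)$. These are precisely the two asserted formulas, and the fact that such $s,t$ exist simultaneously shows that $x^1\otimes\xi_i$ and $x^1\otimes\xi_j$ lie in the same $\mathrm{Spin}^{\mathbb C}$ structure, so the relative gradings are indeed defined.

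The argument is purely computational, so I do not anticipate a genuine obstacle; the only points requiring attention are the non-commutativity of $\widetilde G$, so that the left action of $P(\bm{z_0})$ and the right action of $(P(\eta);0)$ are applied on their correct sides and in the correct order, and the sign conventions in the off-diagonal determinant entering the group law. As an independent check one can specialize to the Mazur pattern and compare with the gradings computed in Example \ref{Example:Mazur}.
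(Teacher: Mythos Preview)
Your proposal is correct and follows essentially the same approach as the paper: both compute $gr_{K_m}(x^1\otimes\xi_k)$ explicitly from the clockwise representatives of $gr_m(\xi_k)$, observe that the first $\mathrm{Spin}^{\mathbb C}$ coordinate already agrees, and then multiply on the left by $p_K^{\,i-j}$ to match the second coordinate before reading off the Maslov and Alexander components. The only cosmetic difference is that the paper skips your $p_m^{\,t}$ step by noting directly that no right multiplication is needed, whereas you include it and deduce $t=0$.
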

\begin{proof}
Based on the definition of the induced grading on box tensor product we have:
\begin{gather*}
gr_{K_m}(x^1 \otimes \xi_i) = [gr_K(x^1) \cdot gr_m(\xi_i)]=[(a;b,c;d) \cdot (-\frac{1}{2} ; \frac{1}{2} , - \frac{(2i-1)}{2}; 0)]\\
 = [(a-\frac{1}{2} - \frac{(2i-1)}{2} b - \frac{c}{2} ; b + \frac{1}{2}, c - \frac{(2i-1)}{2} ;d )].
\end{gather*}
Similarly we have: 
$$gr_{K_m}(x^1 \otimes \xi_j) = [(a-\frac{1}{2} - \frac{(2j-1)}{2} b - \frac{c}{2} ; b + \frac{1}{2}, c - \frac{(2j-1)}{2} ;d )]$$
Here the $\text{\emph{Spin}}^{\mathbb{C}}$ component of the two representatives have the same first element, any we can find representatives with equal $\text{\emph{Spin}}^{\mathbb{C}}$ components as follows: 
\begin{gather*}
gr_{K_m}(x^1 \otimes \xi_i)  = [p_K^{(i-j)} \cdot (a-\frac{1}{2} - \frac{(2i-1)}{2} b - \frac{c}{2} ; b + \frac{1}{2}, c - \frac{(2i-1)}{2} ;d )]
\end{gather*}
Computing the Maslov and Alexander component of this representative verifies the stated formulas.
\end{proof}

As it can be seen in Lemma \ref{Lemma:Gradingchangebyshift}, the second component of a representative of the grading coset $gr_K(x^1)$ plays a role in the computation of relative Maslov gradings. Lemma \ref{Lemma:secondcomponentmod2} presents a useful fact about this component.\\

\begin{lemm}\label{Lemma:secondcomponentmod2}
   For $j\in \{1,2\}$, let $x_j$ be an element of $B^{1}_{K}$. Also let ${(a_j;b_j,c_j;d_j) \in \widetilde{G}}$ be a representative of the grading coset $gr_K(x_j)$. Then we have $b_1 -b_2 \in \mathbb{Z}$.
\end{lemm}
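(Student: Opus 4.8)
The claim is that for any two elements $x_1, x_2 \in B^1_K$, the $i$-components (the first entry of the $\text{Spin}^{\mathbb{C}}$ part) of representatives of $gr_K(x_1)$ and $gr_K(x_2)$ differ by an integer. The key observation is that this difference, while a priori defined only modulo the action of the periodic subgroup $P(\bm{z_0})$, is actually forced to be an integer by the structure of $P(\bm{z_0})$ established in Lemma \ref{Lemma:Gradingsubgroup} together with the fact that $x_1$ and $x_2$ sit in the same idempotent class.

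\textbf{First step: reduce to a statement about a single relative grading modulo $P(\bm{z_0})$.} Since both $x_1, x_2$ lie in $\widehat{\text{CFA}}(\mathcal{H}_K,z,w) \cdot \iota_1$, and $\widehat{\text{CFA}}(\mathcal{H}_K,z,w)$ has one-dimensional homology when paired appropriately (or more directly, since the underlying type $A$ module is connected), there is a sequence of $m_{n+1}$-operations (equivalently, a directed path in the decorated graph $\Gamma_A$) connecting $x_1$ to $x_2$ through a chain of basis elements. By the grading rule \eqref{Equation:typeAgrading}, $gr_K(x_2) = \lambda^{-1+n} gr_K(x_1) \cdot gr(a_1)\cdots gr(a_n)$ for a suitable sequence of algebra elements $a_1, \dots, a_n \in \A$. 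So up to right-multiplication by an element of $P(\bm{z_0})$, we have $gr_K(x_2) gr_K(x_1)^{-1} = \lambda^{-1+n} gr(a_1)\cdots gr(a_n)$.

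\textbf{Second step: analyze the $i$-component of the right-hand side.} Each $gr(\rho_I)$ has $\text{Spin}^{\mathbb{C}}$ component a half-integer vector, and $\lambda = (1;0,0;0)$ contributes nothing to the $\text{Spin}^{\mathbb{C}}$ part. The crucial point is an \emph{idempotent/parity constraint}: because $x_1$ and $x_2$ carry the \emph{same} idempotent $\iota_1$, any path in $\Gamma_A$ from $x_1$ to $x_2$ must pass through an \emph{even} total count of the letters $1$ and $3$ (each such letter toggles the idempotent between $\bu$ and $\ci$, i.e.\ between $\iota_0$ and $\iota_1$). Now $gr(\rho_1)$ and $gr(\rho_3)$ each have first $\text{Spin}^{\mathbb{C}}$ coordinate $\pm\frac{1}{2}$, while $gr(\rho_2)$ has first coordinate $\frac{1}{2}$ and $\rho_2$ appears in a way that — together with the alternation structure of paths in $\Gamma_A$, where $\rho_2$-edges go $\ci \to \bu$ and must be balanced against $\rho_1, \rho_3$ edges to return to $\iota_1$ — forces the total contribution to the first $\text{Spin}^{\mathbb{C}}$ coordinate from the $\rho$'s to be an integer. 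Concretely: in any path $\bu \xrightarrow{\ } \cdots \xrightarrow{\ } \bu$ or $\ci \xrightarrow{\ } \cdots \xrightarrow{\ } \ci$, the edge labels are drawn from $\{1,2,3,12,23,123\}$ and the number of $\iota$-toggling letters is even, so the sum of the first $\text{Spin}^{\mathbb{C}}$ coordinates of the corresponding $gr(\rho_I)$'s lies in $\mathbb{Z}$. Hence the $i$-component of $gr_K(x_2)gr_K(x_1)^{-1}$ is an integer \emph{before} accounting for $P(\bm{z_0})$.

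\textbf{Third step: account for $P(\bm{z_0})$.} By Lemma \ref{Lemma:Gradingsubgroup}, the generator of $P(\bm{z_0})$ has $\text{Spin}^{\mathbb{C}}$ component $(0,1)$, so its first coordinate is $0 \in \mathbb{Z}$; multiplying a representative by any power of this generator changes the $i$-component by an integer (indeed by $0$). Therefore the integrality of $b_1 - b_2$ is independent of the choice of coset representatives, and we conclude $b_1 - b_2 \in \mathbb{Z}$.

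\textbf{Main obstacle.} The delicate point is the parity/idempotent bookkeeping in the second step: making rigorous the claim that the sum of the first $\text{Spin}^{\mathbb{C}}$ coordinates of the $gr(\rho_I)$'s along any closed-in-idempotent path is an integer. One clean way to phrase it: define a homomorphism $\epsilon: \A \to \mathbb{Z}/2$ (or rather a function on the generating $\rho_I$'s) recording the first $\text{Spin}^{\mathbb{C}}$ coordinate mod $\mathbb{Z}$, check $\epsilon(\rho_1) = \epsilon(\rho_3) = \frac{1}{2}$, $\epsilon(\rho_2)=\frac{1}{2}$, $\epsilon(\rho_{12}) = \epsilon(\rho_{23}) = 0$, $\epsilon(\rho_{123}) = \frac{1}{2}$, and observe that these half-integer values occur exactly for the $\iota$-toggling algebra elements (those sending $\iota_0 \leftrightarrow \iota_1$), so that along a path returning to the same idempotent the half-integer contributions cancel in pairs. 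I would carry this out by a direct inspection of the eight basis elements of $\A$ against the quiver in Figure \ref{Figure:quiver}, which is a short finite check rather than a computation.
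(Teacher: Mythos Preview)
Your approach is essentially the same as the paper's: both arguments run a parity count along a path in the decorated graph $\Gamma_A$, observing that the algebra elements contributing a half-integer to the first $\text{Spin}^{\mathbb{C}}$ coordinate are exactly the idempotent-toggling ones, so that a path beginning and ending at $\iota_1$-vertices picks up an even number of half-integer contributions. Your $\epsilon$-function check is precisely the paper's count $\frac{|P_1|+|P_2|+|P_3|+|P_{321}|}{2} = |P_2| \in \mathbb{Z}$, repackaged.

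One small correction: in Step 1 you assert there is a \emph{directed} path from $x_1$ to $x_2$ via $m_{n+1}$-operations, but $\Gamma_A$ need not be strongly connected. The paper takes an arbitrary (not necessarily directed) path, which always exists since the grading is only defined when the graph is connected. Your parity argument extends immediately to undirected paths---traversing an edge backward still toggles or preserves the idempotent, and the inverse of an element with half-integer first $\text{Spin}^{\mathbb{C}}$ coordinate still has half-integer first coordinate---so this is a cosmetic fix rather than a gap in the idea.
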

\begin{proof}
    We use the graph-theoretic description of type A invariants to prove this fact. Let $\Gamma_A$ be the decorated graph representing $\widehat{\text{CFA}}(\mathcal{H}_{K}, z, w)$.\\
     
     For any (not necessarily directed) path $P$ in $\Gamma_A$ connecting $x_1$ and $x_2$, let $P_{l}$ be the subset of edges of $P$ with label $l$ for $l \in \{\varnothing, 1,2,3,32,21,321\}$. Based on the grading rules described in Figure \ref{Figure:typeAgrading}, we only need to show that 
     $$\frac{|P_{1}|+|P_{2}|+|P_{3}|+|P_{321}|}{2} \in \mathbb{Z}.$$
     Now note that $P_{2}$ is the subset of edges in $P$ which go from 
 a $\circ$-labeled vertex to a $\bu$-labeled one. On the other hand, $P_{1} \cup P_{2} \cup P_{3}$ is the subset of edges in $P$ which go from 
 a $\bu$-labeled vertex to a $\circ$-labeled one. As a result, we have:
    $$|P_{2}| = |P_{1}|+|P_{3}|+|P_{321}|.$$
This comes from the fact that $x_1$ and $x_2$ are in $B^{1}_{K}$, and hence both are $\circ$-labeled. Finally, we can deduce that 
$$\frac{|P_{1}|+|P_{2}|+|P_{3}|+|P_{321}|}{2} = \frac{2|P_{2}|}{2} = |P_{2}| \in \mathbb{Z},$$
which finishes our proof.
\end{proof}
A similar argument gives us Corollary \ref{Corollary:secondcomponentmod2-two}
\begin{coro}\label{Corollary:secondcomponentmod2-two}
Let $x_1 \in B^{1}_{K}$ and $x_0 \in B^{0}_{K}$ be elements of $B_{K}$. Assume that 
$$gr_K(x_1) = [(a_1;b_1,c_1;d_1)] \ \  \text{and} \ \ gr_K(x_0) = [(a_0;b_0,c_0;d_0)].$$
Then we have $b_1 -b_0 + \frac{1}{2} \in \mathbb{Z}$.
\end{coro}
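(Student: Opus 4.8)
The plan is to mimic the proof of Lemma \ref{Lemma:secondcomponentmod2}, adapting the parity bookkeeping to the case where one endpoint is $\bu$-labeled rather than $\circ$-labeled. As before, let $\Gamma_A$ be the decorated graph representing $\widehat{\text{CFA}}(\mathcal{H}_{K}, z, w)$, with vertices labeled by $\{\bu,\ci\}$ and edges labeled by $\{\varnothing,1,2,3,32,21,321\}$. Since $\widehat{\text{CFA}}(\mathcal{H}_{K}, z, w)$ is connected (as the box tensor product computes $\widehat{HF}(S^3)$, which is connected, the underlying type A module cannot split off a contractible summand in a way that disconnects the relevant generators — more simply, one just picks a path in $\Gamma_A$ between $x_1$ and $x_0$), there is a not-necessarily-directed path $P$ connecting $x_1$ to $x_0$. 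Writing $P_\ell$ for the set of edges of $P$ with label $\ell$, the grading rules in Figure \ref{Figure:typeAgrading} show that the $\text{Spin}^{\mathbb{C}}$ components accumulate so that the difference $b_1 - b_0$ equals $\pm\frac{1}{2}(|P_1| + |P_2| + |P_3| + |P_{321}|)$ plus an integer contribution from the $|P_{32}|$ and $|P_{21}|$ edges (these contribute integer amounts to the relevant component, as one reads off from the table). So it suffices to show
$$|P_1| + |P_2| + |P_3| + |P_{321}| \equiv 1 \pmod 2.$$

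The key combinatorial step is the parity count of edge types along $P$. The sets $P_1, P_3, P_{321}$ consist of edges from a $\bu$-vertex to a $\ci$-vertex, $P_2$ consists of edges from a $\ci$-vertex to a $\bu$-vertex, and $P_\varnothing, P_{32}, P_{21}$ are edges between vertices of the same label (by the edge/vertex compatibility of the quiver, after the $1\leftrightarrow 3$ relabeling: $\varnothing$-edges join like-labeled vertices, $32$ joins $\ci$ to $\ci$, $21$ joins $\bu$ to $\bu$). Now traverse $P$ from $x_1$ (which is $\ci$-labeled, since $x_1 \in B^1_K$) to $x_0$ (which is $\bu$-labeled, since $x_0 \in B^0_K$): each edge in $P_1 \cup P_3 \cup P_{321} \cup P_2$ flips the vertex label, and each edge in $P_\varnothing \cup P_{32} \cup P_{21}$ preserves it. Since $x_1$ and $x_0$ have different labels, the number of label-flipping edges must be odd:
$$|P_1| + |P_2| + |P_3| + |P_{321}| \equiv 1 \pmod 2,$$
which is exactly what we need. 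Hence $b_1 - b_0 = \pm\frac{1}{2}(\text{odd}) + (\text{integer})$, so $b_1 - b_0 + \frac{1}{2} \in \mathbb{Z}$.

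I anticipate the main obstacle to be purely bookkeeping: confirming from the grading table in Figure \ref{Figure:typeAgrading} that the only half-integer contributions to the second ($\text{Spin}^{\mathbb{C}}$) component come from edges labeled $1,2,3,321$ (each contributing $\pm\frac{1}{2}$), while edges labeled $32$ and $21$ contribute integers and $\varnothing$ contributes nothing, and that the signs combine consistently along a path. This is entirely analogous to what is already implicitly used in the proof of Lemma \ref{Lemma:secondcomponentmod2}, so it is a matter of careful reading rather than a genuine difficulty; indeed the whole point is that the proof of the corollary differs from that of the lemma only in replacing the identity $|P_2| = |P_1| + |P_3| + |P_{321}|$ (valid when both endpoints are $\ci$) by the parity statement $|P_2| + |P_1| + |P_3| + |P_{321}| \equiv 1 \pmod 2$ (valid when the endpoints have opposite labels). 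One should also note that this argument is independent of the choice of path $P$, since any two such paths differ by cycles, and a cycle has an even number of label-flipping edges, so the parity is well-defined — matching the fact that $b_1 - b_0$ is well-defined modulo the $\text{Spin}^{\mathbb{C}}$ component of the periodic-domain subgroup $P(\bm{z_0})$, whose second coordinate is an integer by Lemma \ref{Lemma:Gradingsubgroup}.
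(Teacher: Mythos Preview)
Your proof is correct and follows exactly the approach the paper indicates (the paper merely says ``a similar argument gives us'' this corollary, and your parity count is precisely that similar argument). One inconsequential slip: in the type~A graph it is the $32$-edges that join $\bu$ to $\bu$ and the $21$-edges that join $\ci$ to $\ci$, not the other way round --- but since both preserve the vertex label, your parity count is unaffected.
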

\subsection{Minimality and homogeneity of cycles }\label{Subsection:Minimalandhomogenous}\hfill\\

Now we examine the homology of the chain complex 
$$ \widehat{\text{CFA}}(\mathcal{H}_{K}, z, w) \boxtimes \widehat{\text{CFD}}(\mathcal{H}'_{\frac{1}{m}} , z')).$$ 
In the rest of this chapter we assume $\lk(K,c) = -\omega \neq 0$, which is a condition repeated in all of the results we are trying to prove.\\

Recall that the elements of the chain complex are of the form $S_I$ for $I \subseteq \bm{C^m}$. We call $S_I$ \emph{homogeneous} if all of the elements $\nu \in I$ have the same Maslov and Alexander grading (or equivalently same $gr_{K_m}$).\\

We also define a minimality property for cycles in the chain complex as follows. The inclusion partial order on $2^{\bm{C^m}}$ induces a partial order on elements of the chain complex. We call a cycle element $S_I$ \emph{minimal}, if it is a minimal element among cycles with respect to this partial order i.e. if for all non-empty $I' \subset I$, we have $\partial S_{I'} \neq 0$. It is easy to see that a minimal cycle needs to be homogeneous.\\

Similar to the cycles, we can define the homogeneity and minimality properties for the homology classes. We call a homology class \emph{minimal} (resp.~\emph{homogeneous}) if it has a minimal (resp.~homogeneous) representative.\\

We extend these properties to a (linear) basis of the homology as well. Abusing notation slightly, we refer to a collection of representative cycles
$$\bm{S^m}=\{S_{I_1}, \cdots, S_{I_{d_m}}\}$$ 
as a basis for the homology if the corresponding homology classes form a basis of the homology vector space.\\

Let $\bm{S^m}=\{S_{I_1}, \cdots, S_{I_{d_m}}\}$ be a basis for the homology. We call $\bm{S^m}$ \emph{homogeneous}, if $S_{I_j}$ is homogeneous for $ j = 1, \cdots, d_m$.\\

We call $\bm{S^m}$ \emph{minimal} if it is minimal among all of the bases of the homology i.e. if for all $j \in \{1, \cdots, d_m\}$ and all subsets $I'_j \subset I_j$, the set of chain complex elements 
$\bm{S^m} \setminus \{S_{I_j}\} \cup \{S_{I'_j}\}$ is not a basis for homology.\\

We can always assume our basis to be homogenous and minimal. Lemma \ref{Lemma:minimalbases} states a useful fact about minimal bases.\\

\begin{lemm}\label{Lemma:minimalbases}
Let $\bm{S^m}=\{S_{I_1}, \cdots, S_{I_{d_m}}\}$ be a minimal basis for homology. Then for all  $j \in \{1, \cdots, d_m\}$, the chain complex element $S_{I_j}$ is minimal. As a result, a minimal basis is also homogeneous. 
\end{lemm}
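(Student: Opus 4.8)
The plan is to prove the contrapositive-style statement directly: assume $\bm{S^m}=\{S_{I_1},\dots,S_{I_{d_m}}\}$ is minimal as a basis, and show each $S_{I_j}$ is a minimal cycle. Suppose not; then for some index $j$ there is a nonempty proper subset $I'_j \subsetneq I_j$ with $\partial S_{I'_j} = 0$, i.e.\ $S_{I'_j}$ is itself a cycle. The goal is to produce from this a new basis of homology of the form $\bm{S^m}\setminus\{S_{I_j}\}\cup\{S_{I''_j}\}$ for some subset $I''_j \subsetneq I_j$, contradicting minimality of the basis.

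The key case split is on the homology class $[S_{I'_j}]$. \textbf{Case 1:} $[S_{I'_j}]$ is linearly independent from $\{[S_{I_i}] : i \neq j\}$ in homology. Then, since $\bm{S^m}$ is a basis, $[S_{I'_j}] = \sum_{i \neq j} c_i [S_{I_i}] + c_j[S_{I_j}]$ with $c_j \neq 0$; working over $\mathbb{F}_2$ this says $[S_{I_j}] = [S_{I'_j}] + \sum_{i\neq j}c_i[S_{I_i}]$, so replacing $S_{I_j}$ by $S_{I'_j}$ still gives a spanning set of the right cardinality $d_m$, hence a basis. Since $I'_j \subsetneq I_j$, this directly contradicts minimality of $\bm{S^m}$, and we take $I''_j = I'_j$. \textbf{Case 2:} $[S_{I'_j}] \in \mathrm{span}\{[S_{I_i}] : i\neq j\}$, i.e.\ $[S_{I'_j}]$ is already a combination of the other basis classes, so $S_{I'_j} = \sum_{i \in A} S_{I_i} + \partial(\text{something})$ for some $A \subseteq \{1,\dots,d_m\}\setminus\{j\}$. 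Then consider $S_{I''_j} := S_{I_j} + S_{I'_j}$, which as a set is $I''_j = I_j \setminus I'_j$ (symmetric difference over $\mathbb{F}_2$, and since $I'_j \subseteq I_j$ this is $I_j\setminus I'_j$, a proper nonempty subset because $I'_j$ is nonempty and proper). It is a cycle ($\partial S_{I''_j} = \partial S_{I_j}+\partial S_{I'_j}=0$), and $[S_{I''_j}] = [S_{I_j}] + [S_{I'_j}] = [S_{I_j}] + \sum_{i\in A}[S_{I_i}]$, so $\bm{S^m}\setminus\{S_{I_j}\}\cup\{S_{I''_j}\}$ is again a basis — contradicting minimality, now with $I''_j = I_j\setminus I'_j \subsetneq I_j$.

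In both cases we have produced a proper subset of $I_j$ whose associated chain is a cycle and whose swap into $\bm{S^m}$ keeps it a basis, contradicting the hypothesis that $\bm{S^m}$ is minimal. Hence every $S_{I_j}$ is a minimal cycle. The final clause — that a minimal basis is homogeneous — then follows immediately: by the remark already recorded in the text that a minimal cycle is necessarily homogeneous, each $S_{I_j}$ is homogeneous, which is precisely the definition of $\bm{S^m}$ being a homogeneous basis.

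The only mildly delicate point, and the one I would be most careful about, is the bookkeeping in Case 2 with $\mathbb{F}_2$ coefficients: one must make sure that "$[S_{I'_j}]$ lies in the span of the other classes" genuinely lets us rewrite $[S_{I_j}+S_{I'_j}]$ as an honest $\mathbb{F}_2$-combination of the classes $[S_{I_i}]$, $i\neq j$, and hence that the swapped family still spans a space of dimension $d_m$ and is therefore a basis. Everything else is routine linear algebra over $\mathbb{F}_2$ together with the observation that $I'_j \subseteq I_j$ makes the symmetric difference $I_j \triangle I'_j$ equal to the genuine set difference $I_j \setminus I'_j$, which is a nonempty proper subset exactly when $I'_j$ is.
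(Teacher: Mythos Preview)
Your proof is correct and follows essentially the same approach as the paper: both argue by contradiction, split into two cases according to whether the homology class $[S_{I'_j}]$ lies in the span of the other basis classes, and in each case produce a replacement cycle supported on a proper subset of $I_j$ (namely $S_{I'_j}$ or $S_{I_j\setminus I'_j}$) that still yields a basis. The only cosmetic difference is that the paper phrases the case split as whether $j$ appears in the index set $J$ of the expansion $[S_{I'_j}]=\sum_{i\in J}[S_{I_i}]$, which is equivalent to your dichotomy.
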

\begin{proof}
We prove this statement by contradiction. Assume there exist $k$, and subset $I'_k \subset I_k$ such that $S_{I'_k}$ is a cycle. Let $I''_k$ be $I_k \setminus I'_k$. Note that 
$$S_{I_k} = S_{I'_{k}} + S_{I''_{k}} \Rightarrow \partial S_{I_k} = \partial S_{I''_{k}} = 0.$$
The cycle $S_{I'_k}$ defines a homology class $[S_{I'_k}]$ which can be written as a sum of elements of $\bm{S^m}$ i.e. there exists $J \subseteq \{1, \cdots, d_m\}$ such that: 
$$S_{I'_k} = \sum_{j \in J} S_{I_j}.$$
We have two cases now: 
\begin{enumerate}
  \item If $k \notin J$, then we can claim that $\bm{S^m} \setminus \{S_{I_k}\} \cup \{S_{I''_k}\}$ will be a basis for homology. This is due to the fact that 
  $$[S_{I_k}] - [S_{I''_k}] =[S_{I'_k}] = \sum_{j \in J} [S_{I_j}].$$
  This contradicts minimality of $\bm{S^m}$. 
  \item If $k \in J$, then we have 
  $$[S_{I'_k}] - [S_{I_k}] = \sum_{j \in J \setminus \{j\}} [S_{I_j}].$$
  As a result, $\bm{S^m} \setminus \{S_{I_k}\} \cup \{S_{I'_k}\}$ will be a basis for homology which contradicts minimality of $\bm{S^m}$. 
\end{enumerate}
\end{proof}

\subsection{Length and size of minimal cycles}\label{Subsection:lengthsizecycles}\hfill\\

Now we turn our attention to properties of minimal cycles. Let $N_A := |B_K|$ be the dimension of $\widehat{\text{CFA}}(\mathcal{H}_{K}, z, w)$ over $\mathbb{F}_2$. Also for now assume that $\widehat{\text{CFA}}(\mathcal{H}_{K}, z, w)$ is bounded. As mentioned before, this is not a restrictive condition. The boundedness assumption means that there exist an integer $L_A$ such that the maps
$$m_{n+1} : \widehat{\text{CFA}}(\mathcal{H}_{K}, z, w) \otimes \A^{n} \rightarrow \widehat{\text{CFA}}(\mathcal{H}_{K}, z, w)$$
are zero for $n \geq L_A$.\\

Now we define a \emph{length} property for elements $S_{I} \in \widehat{\text{CFA}}(\mathcal{H}_{K}, z, w)$. The length of an elements $S_{I}$ is defined as the length of the smallest path of boxes in the graph $\bm{GC^m}$ which contains $I$. We give an equivalent definition in the following paragraphs.\\

First, we can define a distance function on the boxes using the metric of the graph $\bm{GC^m}$. This induces a distance function, denoted by $\dist$, on elements of $\bm{C^m}$ where the distance between two elements of $\bm{C^m}$ is defined to be the distance of the boxes they belong to. We can also define a Hausdorff distance, denoted by $\dist_{H}$, for subsets $I \subseteq \bm{C^m}$ in the standard way. \\

Second, we can order the black and white boxes $C^{\bu}, C^{\circ}_{1}, \cdots C^{\circ}_{m}$ in the clockwise order starting from the black box. This induces a partial order on the elements of $\bm{C^{m}}$. We call this partial order, the \emph{clockwise order} and denote it by $\prec$.\\

Let $\nu_0,\cdots,\nu_{n-1}$ be the elements of $I$ ordered with respect to this partial order. In other words, for each $0 \leq i \leq j \leq n-1$, the element $\nu_j$ doesn't belong to a box coming before the box containing element $\nu_i$. In the rest of this discussion, the indices of elements $\nu_i$ are taken modulo $n$.\\

Finally, we can express the length as follows: 
\begin{equation}\label{Equation:length}
 \len(S_I) = \min \Bigl\{ \sum_{i=0}^{n-2} \dist(\nu_{t+i}, \nu_{t+i+1}) \ | \ t \in \{0,\cdots,n-1\} \Bigr\}.   
\end{equation}
Note that the diameter of $I$ is clearly a lower bound for $\len(S_I)$.\\


In a series of lemmas, we are going to explore properties of chain complex and its minimal cycles. We frequently use the graph-theoretic description of the complex.\\

Lemma \ref{Lemma:sizehomogenous} restricts the size of a homogeneous element. Lemma \ref{Lemma:differentialorder} talks about the relationship of the differentials and the clockwise order and distance function.  Lemma \ref{Lemma:gapsinminimal} gives a restriction on the gaps between the elements of a minimal cycle. Lemma \ref{Lemma:lengthminimal} gives an upper bound on the length of minimal cycles.

\begin{lemm}\label{Lemma:sizehomogenous}
 Assume $S_{I}$ is a homogeneous element. Then we have $|I| \leq N_A$.
\end{lemm}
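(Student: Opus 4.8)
We need to show that a homogeneous element $S_I$ (all basis vectors $\nu \in I$ share the same $gr_{K_m}$, hence the same Maslov grading $h_m$ and Alexander grading $a_m$) has $|I| \le N_A = |B_K|$. The intuition is that the box tensor product splits its basis $\bm{C^m}$ into one copy of $B^0_K$ (the black box) and $m$ copies of $B^1_K$ (the white boxes), and within each box the gradings of the elements are forced apart by the clockwise-grading computation, so a single box can contribute at most a few elements of a fixed grading, and combined across boxes the total is at most $N_A$.

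**The plan.** First I would recall the grading formulas from Subsection~\ref{Subsection:Gradingsinboxtensor}. For a fixed $x^1 \in B^1_K$ with grading coset representative $(a;b,c;d)$, Lemma~\ref{Lemma:Gradingchangebyshift} says that
$$h_m(x^1 \otimes \xi_j) - h_m(x^1 \otimes \xi_i) = (j-i)\left(M - 2b - \tfrac{1}{2}\right),$$
$$a_m(x^1 \otimes \xi_j) - a_m(x^1 \otimes \xi_i) = (j-i)\,\omega.$$
Since we are assuming $\omega = -\lk(K,c) \neq 0$, the Alexander grading alone already forces $i = j$: two elements $x^1 \otimes \xi_i$ and $x^1 \otimes \xi_j$ coming from the \emph{same} $x^1$ but different white boxes can never be homogeneous. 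Therefore, for each fixed $x^1 \in B^1_K$, at most one element of the form $x^1 \otimes \xi_i$ (over all $i \in \{1,\dots,m\}$) can lie in $I$. The black box contributes elements $x^0 \otimes \eta$ with $x^0 \in B^0_K$, and these are distinct elements of $\bm{C^m}$ for distinct $x^0$, so the black box contributes at most $|B^0_K|$ elements to $I$. Combining: $|I| \le |B^0_K| + |B^1_K| = |B_K| = N_A$, since the map $x^1 \mapsto$ (the unique $\xi$-box it can appear in, if any) shows the white-box contribution is at most $|B^1_K|$.

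**Writing it up.** Concretely, I would argue: write $I = I^\bu \sqcup I^\ci$ where $I^\bu = I \cap C^\bu$ and $I^\ci = I \cap (C^\circ_1 \sqcup \cdots \sqcup C^\circ_m)$. Clearly $|I^\bu| \le |C^\bu| = |B^0_K|$. For $I^\ci$, define a map $\pi : I^\ci \to B^1_K$ by $\pi(x^1 \otimes \xi_i) = x^1$; I claim $\pi$ is injective. Indeed, if $x^1 \otimes \xi_i$ and $x^1 \otimes \xi_j$ both lie in $I$ with the same $x^1$, then by Lemma~\ref{Lemma:Gradingchangebyshift}, $a_m(x^1 \otimes \xi_j) - a_m(x^1 \otimes \xi_i) = (j-i)\omega$; homogeneity of $S_I$ forces this difference to be zero, and since $\omega \neq 0$ we get $i = j$. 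Hence $|I^\ci| \le |B^1_K|$, and $|I| = |I^\bu| + |I^\ci| \le |B^0_K| + |B^1_K| = N_A$.

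**Where the work is.** There is essentially no obstacle here — the whole statement is a direct consequence of the $m \neq 0$ Alexander-grading rigidity already encoded in Lemma~\ref{Lemma:Gradingchangebyshift}, together with the block decomposition of $\bm{C^m}$. The only point requiring a word of care is making sure the grading comparison in Lemma~\ref{Lemma:Gradingchangebyshift} is genuinely applicable: it compares $x^1 \otimes \xi_i$ and $x^1 \otimes \xi_j$ with the \emph{same} $x^1$, which is exactly the situation of two elements sharing a white-box label, so the hypothesis "$\lk(K,c) = -\omega \neq 0$" (in force throughout this subsection) is precisely what is needed. No subtle combinatorics or estimates are involved; this lemma is really just the bookkeeping that sets up the length and size bounds proved in the subsequent lemmas.
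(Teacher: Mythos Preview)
Your proof is correct and follows essentially the same approach as the paper: both arguments use Lemma~\ref{Lemma:Gradingchangebyshift} together with $\omega \neq 0$ to show that each $x^1 \in B^1_K$ can appear in at most one white box of $I$, then combine with the trivial bound $|I \cap C^{\bu}| \le |B^0_K|$. The paper phrases this as a pigeonhole contradiction while you phrase it as an explicit injection, but the content is identical.
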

\begin{proof}
    We prove this by contradiction. Assume that $|I| > N_A$. As a result
    $$|I \cap \ (\underset{1 \leq i \leq m}{\bigcup}C^{\circ}_{i})| > N_A - |C^{\bu}| = |B^{1}_{K}|.$$
    In other words, $I$ has more than $N_A$ elements in the white boxes, which in turn means there exist $x^1 \in B^{1}_{K}$ and $1 \leq j_1 < j_2 \leq m$ such that 
    $$x^1 \otimes \xi_{j_1}, x^1 \otimes \xi_{j_2} \in I. $$
    Now using Lemma \ref{Lemma:Gradingchangebyshift}, we have $$a_m(x^1 \otimes \xi_{j_2})-a_m(x^1 \otimes \xi_{j_1}) = (j_2-j_1)\omega \neq 0$$
    This constradicts the homogeniety condition. 
\end{proof}

\begin{lemm}\label{Lemma:differentialorder}
Assume that there is a directed edge $e=(\nu, \nu')$ in the graph-theoretic description of the box tensor product. Then we have $\nu \preceq \nu'$ and 
$$\dist(\nu,\nu') < L_A.$$
We call the distance $\dist(\nu,\nu')$, the length of the edge $e$.
\end{lemm}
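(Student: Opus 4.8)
The plan is to go through the four edge types listed in Proposition \ref{Proposition:boxtensordifferential} one at a time and check both assertions ($\nu \preceq \nu'$ and $\dist(\nu,\nu') < L_A$) directly from the combinatorial description there. First I would recall the clockwise order: $C^{\bu} \prec C^{\circ}_1 \prec \cdots \prec C^{\circ}_m$, with distances measured in the cyclic graph $\bm{GC^m}$, so that $\dist(C^{\bu}, C^{\circ}_i) = \min\{i, m+1-i\}$ and $\dist(C^{\circ}_i, C^{\circ}_j) = \min\{|i-j|, m+1-|i-j|\}$. I would also recall that boundedness of $\widehat{\text{CFA}}(\mathcal{H}_K, z, w)$ is exactly the statement that $m_{n+1}$ vanishes for $n \geq L_A$; this is the only place the boundedness hypothesis is used.

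Now I would dispatch the cases. For a type $(1)$ edge $x^0\otimes\eta \to y^0\otimes\eta$ both endpoints lie in $C^{\bu}$, so $\nu = \nu'$ in the clockwise order and $\dist = 0 < L_A$ (note $L_A \geq 1$ since $m_1$ alone being nonzero would force $L_A \geq 1$; in any case $L_A$ is a positive integer). For a type $(2)$ edge $x^0\otimes\eta \to x^1\otimes\xi_{n+1}$, arising from a nonzero term in $m_{n+2}(x^0\otimes\rho_3\otimes\rho_{23}^{\otimes n})$, the target lies in $C^{\circ}_{n+1}$, which comes after $C^{\bu}$ in the clockwise order, so $\nu \prec \nu'$. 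Boundedness forces $n+2 < L_A + 1$, i.e. $n+1 \leq L_A - 1$; hence $\dist(C^{\bu}, C^{\circ}_{n+1}) \leq n+1 \leq L_A - 1 < L_A$. For a type $(3)$ edge $x^0\otimes\eta \to x^1\otimes\xi_m$, coming from $m_2(x^0\otimes\rho_1)$, the target is in $C^{\circ}_m$, so going \emph{clockwise} this is the last white box, certainly $\succeq C^{\bu}$; and $\dist(C^{\bu}, C^{\circ}_m) = \min\{m, 1\} = 1 < L_A$ (here one uses that $m \geq 1$ and $L_A \geq 1$, but in the stabilization regime $m$ is large so $\dist = 1$). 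For a type $(4)$ edge $x^1\otimes\xi_i \to y^1\otimes\xi_{i+n}$, arising from a nonzero term in $m_{n+1}(x^1\otimes\rho_{23}^{\otimes n})$, the index strictly increases (or stays equal when $n=0$), so $\nu \preceq \nu'$ in the clockwise order, and boundedness gives $n+1 < L_A+1$, i.e. $n \leq L_A - 1$, so $\dist(C^{\circ}_i, C^{\circ}_{i+n}) \leq n \leq L_A - 1 < L_A$.

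The only subtlety, and the one point worth stating carefully, is the type $(3)$ edge: its target sits in $C^{\circ}_m$, so in the \emph{linear} reading it looks like it jumps almost all the way around, but in the cyclic graph $\bm{GC^m}$ it is adjacent to $C^{\bu}$, so its edge-length is $1$; and it is the clockwise order $\prec$ (which orders $C^{\circ}_m$ after everything), not the metric, that records $\nu \preceq \nu'$ there — these two facts are consistent because $\prec$ is defined by the clockwise traversal while $\dist$ is the shortest-path metric. I expect this case to be the only place a reader might worry, so I would flag it explicitly; everything else is a one-line check per case. Assembling the four cases completes the proof.
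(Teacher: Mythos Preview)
Your proof is correct and follows the same approach as the paper's: both derive the result from the four-type edge classification of Proposition~\ref{Proposition:boxtensordifferential} together with the boundedness of the type~$A$ module. The paper's version is a brief one-paragraph sketch (noting that non-$C^{\bu}$ edges move clockwise without crossing $C^{\bu}$, and that boundedness forces any edge to come from some $m_n$ with $n \leq L_A$), whereas you carry out the four cases explicitly; your added remark distinguishing the linear order $\prec$ from the cyclic metric $\dist$ in the type~$(3)$ case is a helpful clarification the paper leaves implicit. One tiny wording nit: in the type~$(3)$ parenthetical you write ``$L_A \geq 1$'' where the inequality $1 < L_A$ actually needs $L_A \geq 2$, but this is automatic since the existence of a type~$(3)$ edge means $m_2 \neq 0$.
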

\begin{proof}
    This follows from Proposition \ref{Proposition:boxtensordifferential}. All of the edges that doesn't originate from $C^{\bu}$ move in the clockwise direction and they don't lead to or pass over the $C^{\bu}$. This gives us that $\nu \preceq \nu'$. Furthermore due to the boundedness condition, the directed edge $(\nu, \nu')$ comes from a map $m_n$ for $n \leq L_A$ which means that $\dist(\nu,\nu') \leq L_A-1$.  
\end{proof}

\begin{lemm}\label{Lemma:gapsinminimal}
     Assume $S_{I}$ is a minimal cycle. Order the elements of $I$ with respect to the  clockwise order as we described before and let the result be $\nu_0, \cdots, \nu_{n-1}$. Then there exist at most one $i\in \{0, \cdots, n-1\}$  s.t. 
$$\dist(\nu_i, \nu_{i+1}) > L_A.$$

\end{lemm}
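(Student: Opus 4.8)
The plan is to argue by contradiction: suppose there are two distinct indices $i_1 < i_2$ in $\{0,\dots,n-1\}$ with $\dist(\nu_{i_1},\nu_{i_1+1}) > L_A$ and $\dist(\nu_{i_2},\nu_{i_2+1}) > L_A$. The key structural input is Lemma \ref{Lemma:differentialorder}: every directed edge $e=(\nu,\nu')$ in the box-tensor complex satisfies $\nu \preceq \nu'$ and $\dist(\nu,\nu') < L_A$. So no single edge can ``jump over'' a gap of length $> L_A$. The two large gaps cut the circular arrangement into two arcs, say $A_1$ (running clockwise from $\nu_{i_1+1}$ to $\nu_{i_2}$) and $A_2$ (running clockwise from $\nu_{i_2+1}$ back around through the black box to $\nu_{i_1}$). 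I would let $I_1 = I \cap A_1$ and $I_2 = I \cap A_2$, a genuine partition of $I$ into two nonempty pieces.

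**The core dichotomy.** The aim is to show $\partial S_{I_1} = 0$ and $\partial S_{I_2} = 0$ separately, which contradicts minimality of $S_I$ (since a minimal cycle has $\partial S_{I'} \neq 0$ for every nonempty proper $I' \subset I$). The reason this should work: any directed edge starting at an element of $I_1$ ends within distance $L_A$ clockwise of its source (Lemma \ref{Lemma:differentialorder}), so because the gap separating $A_1$ from $A_2$ on its clockwise end has length $> L_A$, such an edge cannot land in $I_2$; hence $\partial S_{I_1}$, restricted to its values on $\bm{C^m}$, is ``supported near $A_1$''. More precisely, $\partial S_I = \partial S_{I_1} + \partial S_{I_2} = 0$, and I want to see that the two summands have disjoint supports, so each must vanish. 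The support of $\partial S_{I_1}$ lies within distance $L_A - 1$ clockwise of $A_1$; since the clockwise gap out of $A_1$ (the one at $i_2$) exceeds $L_A$, this support stays off $A_2$, and symmetrically for $\partial S_{I_2}$. One subtlety: edges of type $(2)$ and $(3)$ emanate from the black box $C^{\bu}$, and edges of type $(3)$ go counterclockwise to $\xi_m$; I must make sure the black box sits entirely inside one arc ($A_2$, since $A_2$ was defined to contain $C^{\bu}$) so that all the ``wrap-around'' edges are internal to $A_2$ and the distance bound of Lemma \ref{Lemma:differentialorder} genuinely applies to them as well — which it does, since that lemma already accounts for all four edge types.

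**The main obstacle.** The delicate point is confirming that the supports of $\partial S_{I_1}$ and $\partial S_{I_2}$ really are disjoint, which hinges on both gaps (at $i_1$ and at $i_2$) being large simultaneously. Each gap blocks edges crossing it in the clockwise direction by the distance bound; and since Lemma \ref{Lemma:differentialorder} also guarantees $\nu \preceq \nu'$, no edge crosses any gap counterclockwise either (this is exactly where the partial-order half of Lemma \ref{Lemma:differentialorder} is used, and where the fact that $C^{\bu}$ lies in a single arc matters, so that no edge ``originating from $C^{\bu}$'' behaves exceptionally). Thus an edge with source in $A_1$ has target in $A_1 \cup (\text{the clockwise buffer of length} < L_A)$, which is disjoint from $A_2$; and an edge with source in $A_2$ similarly has target disjoint from $A_1$. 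Consequently the equation $\partial S_{I_1} + \partial S_{I_2} = 0$ splits as $\partial S_{I_1} = 0$ and $\partial S_{I_2} = 0$. Since $I_1, I_2$ are both nonempty and proper in $I$, this contradicts the minimality of $S_I$, completing the proof. The only genuine care needed is bookkeeping with the circular indexing and making the ``buffer region'' argument precise; there is no hard estimate, just the interplay of the two parts of Lemma \ref{Lemma:differentialorder}.
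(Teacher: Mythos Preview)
Your proof is correct and follows the same approach as the paper's: split $I$ at the two large gaps into $I_1$ and $I_2$ (the paper's $I'$ and $I''$), show via Lemma~\ref{Lemma:differentialorder} that $\partial S_{I_1}$ and $\partial S_{I_2}$ have disjoint supports, and conclude each vanishes, contradicting minimality. The paper treats the type-$(3)$ edge from $C^{\bu}$ to $C^{\circ}_m$ a bit more explicitly (since $C^{\circ}_m$ need not literally lie in your arc $A_2$ when $i_2 = n-1$, though it still cannot be reached by any edge from $I_1$), but your observation that $C^{\bu}$ sits in $A_2$ is exactly the key point and the argument goes through.
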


\begin{proof}
 We prove this by contradiction. Assume that there exist ${0 \leq j_1 < j_2 \leq n-1}$ such that $\dist(\nu_{j_{l}}, \nu_{j_{l+1}}) > L_A$ for $l=1,2$.\\

 Now consider the set $I' = \{\nu_{j_{1}+1},\cdots, \nu_{j_2}\}$. This is clearly an interval in the ordered set $I$. Note that based on the definition of the clockwise order and assumption on $\dist(\nu_{j_1}, \nu_{j_1+1})$, we have that $\nu_{j_1+1} \in C^{\circ}_{l}$ for some $l > L_A$. As a result $I' \cap C^{\bu} = \varnothing$. Let $I''$ denote the complement of $I'$ in $I$.\\
 
Consider elements $S_{I'}$ and $S_{I''}$. We have that
$$I = I' \sqcup I'' \Rightarrow S_{I} = S_{I'} + S_{I''} \Rightarrow \partial S_{I} = \partial S_{I'} + \partial S_{I''}$$
We know that there exist subsets $J' , J'' \subseteq \bm{C^m}$ such that 
$$\partial S_{I'} = S_{J'} \ \text{and} \ \partial S_{I''} = S_{J''},$$
and as a result $$S_{J'} + S_{J''} = \partial S_{I} = 0.$$
We are going to prove that $J' \cap J'' = \varnothing$, which means that there won't be any cancellations in the sum $S_{J'} + S_{J''}$ i.e. 
$$S_{J'} + S_{J'' } = S_{J' \sqcup J''}.$$
But this can only happen when $J' = J'' = \varnothing$, meaning that both $S_{I'}$ and $S_{I''}$ are cycles. This contradicts the minimality of $S_I$.\\

To prove that $J \cap J' = \varnothing$, we will prove that the edges originating from $I'$ and $I''$ can not have the same ends. We start by decomposing $I''$ as $I''_1 \sqcup I''_2$. Let $I''_1$ consist of all of the elements of $I''$ which come before $I'$ in the clockwise order. Let $I''_2 := I'' \setminus I$ which are the elements of $I'' $ coming after $I'$. Note that $I''_1$ and $I''_2$ are both intervals.\\

We start by considering $I''_2$. Assume that we have edges $e'$ and $e''$ originating from $I'$ and $I''_2$, which have the same end. As discussed in Lemma \ref{Lemma:differentialorder}, both of the edges move in the clockwise direction. As a result, the edge $e'$ must pass over the gap between the last element of $I'$, i.e. $\nu_{j_2}$, and the first element of $I''$, i.e. $\nu_{j_2+1}$. This contradicts Lemma \ref{Lemma:differentialorder}, as in this case length of $e'$ will be greater than $\dist(\nu_{j_2},\nu_{j_2+1})$.\\

The same argument works for clockwise edges originating from $I''_1$. We only need to consider the possible counterclockwise edges originating from $I''_1 \cap C^{\bu}$. Let $e''$ be such as edge. As we discussed in Proposition \ref{Proposition:boxtensordifferential}, the edge $e''$ ends in $C^{\circ}_{m}$ and has length one. As a result, the size of the gap between the last element of $I'$ and the end of $e''$ is at least $\dist(\nu_{j_2}, \nu_{j_2+1})-1$ which is still bigger than the bound on the length of edges presented in Lemma \ref{Lemma:differentialorder}. This completes our proof.\\

This proof is illustrated in Figure \ref{Figure:gapslemma}. 
\end{proof}

\begin{lemm}\label{Lemma:lengthminimal}
 Assume $S_{I}$ is a minimal cycle. Then we have $\len(S_{I}) \leq N_{A}L_{A}$.
\end{lemm}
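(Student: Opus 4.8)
The plan is to read the length directly off its definition in Equation~\ref{Equation:length}, controlling the $n-1$ summands that appear there with the gap estimate of Lemma~\ref{Lemma:gapsinminimal} and the number $n$ of summands with the size estimate of Lemma~\ref{Lemma:sizehomogenous}.

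First I would recall that a minimal cycle is homogeneous (as observed just before Lemma~\ref{Lemma:minimalbases}), so Lemma~\ref{Lemma:sizehomogenous} applies and yields $n := |I| \leq N_A$. Write the elements of $I$ in the clockwise order as $\nu_0, \dots, \nu_{n-1}$, with indices read mod $n$, exactly as in the statement of Lemma~\ref{Lemma:gapsinminimal}. That lemma says there is \emph{at most one} index $i_0 \in \{0, \dots, n-1\}$ with $\dist(\nu_{i_0}, \nu_{i_0+1}) > L_A$, while every other cyclic gap satisfies $\dist(\nu_i, \nu_{i+1}) \leq L_A$.

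Next I would unwind Equation~\ref{Equation:length}. For a fixed shift parameter $t$, the sum $\sum_{i=0}^{n-2} \dist(\nu_{t+i}, \nu_{t+i+1})$ ranges over the $n-1$ consecutive cyclic gaps from $(\nu_t, \nu_{t+1})$ through $(\nu_{t+n-2}, \nu_{t+n-1})$; equivalently, it is the sum of all $n$ cyclic gaps with the single gap $(\nu_{t-1}, \nu_t)$ removed. Choosing $t = i_0 + 1$ (or any $t$ at all, if no large gap exists) removes precisely the one gap that could exceed $L_A$, leaving $n-1$ gaps each bounded by $L_A$. Therefore
\begin{equation*}
\len(S_I) \;\leq\; \sum_{i=0}^{n-2} \dist(\nu_{i_0+1+i}, \nu_{i_0+2+i}) \;\leq\; (n-1) L_A \;\leq\; (N_A - 1) L_A \;\leq\; N_A L_A,
\end{equation*}
which is the assertion.

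I do not anticipate a real obstacle; the only point needing care is the index bookkeeping that shows some admissible value of $t$ in Equation~\ref{Equation:length} excises exactly the potentially long gap supplied by Lemma~\ref{Lemma:gapsinminimal}. The degenerate cases $n \leq 1$ are immediate, since then $\len(S_I) = 0$.
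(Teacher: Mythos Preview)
Your proof is correct and is essentially the same as the paper's: both order the elements of $I$, invoke Lemma~\ref{Lemma:sizehomogenous} for $n\le N_A$ and Lemma~\ref{Lemma:gapsinminimal} to ensure all but one cyclic gap is at most $L_A$, and then pick $t$ in Equation~\ref{Equation:length} so that the single potentially long gap is the one omitted from the sum. The paper phrases the choice of $t$ as removing the \emph{maximal} gap rather than the one large gap from Lemma~\ref{Lemma:gapsinminimal}, but this is only a cosmetic difference, and your bound $(N_A-1)L_A$ is even marginally sharper.
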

\begin{proof}
As before we order the elements of $I$ and assume that the resulting ordered set is $\nu_0, \cdots, \nu_{n-1}$. Choose $t \in \{0, \cdots, n-1\}$ such that $\dist(\nu_{t},\nu_{t+1})$ is equal to $$\max \{\dist(\nu_j,\nu_{j+1}) \ | \ j \in \{0, \cdots, n-1\}\}. $$ 
Now we have that 
$$\len(S_I) \leq \sum_{i=0}^{n-2} \dist(\nu_{t+1+i},\nu_{t+2+i}) \leq N_{A}L_{A},$$
where the final inequality follows from Lemma \ref{Lemma:sizehomogenous} and Lemma \ref{Lemma:gapsinminimal}.
\end{proof}

We explored some of essential properties of cycles in chain complex $$\widehat{\text{CFA}}(\mathcal{H}_{K}, z, w) \boxtimes \widehat{\text{CFD}}(\mathcal{H}'_{\frac{1}{m}} , z').$$ 

\begin{figure}[h]
    \centering
    \includegraphics[width=0.7\linewidth]{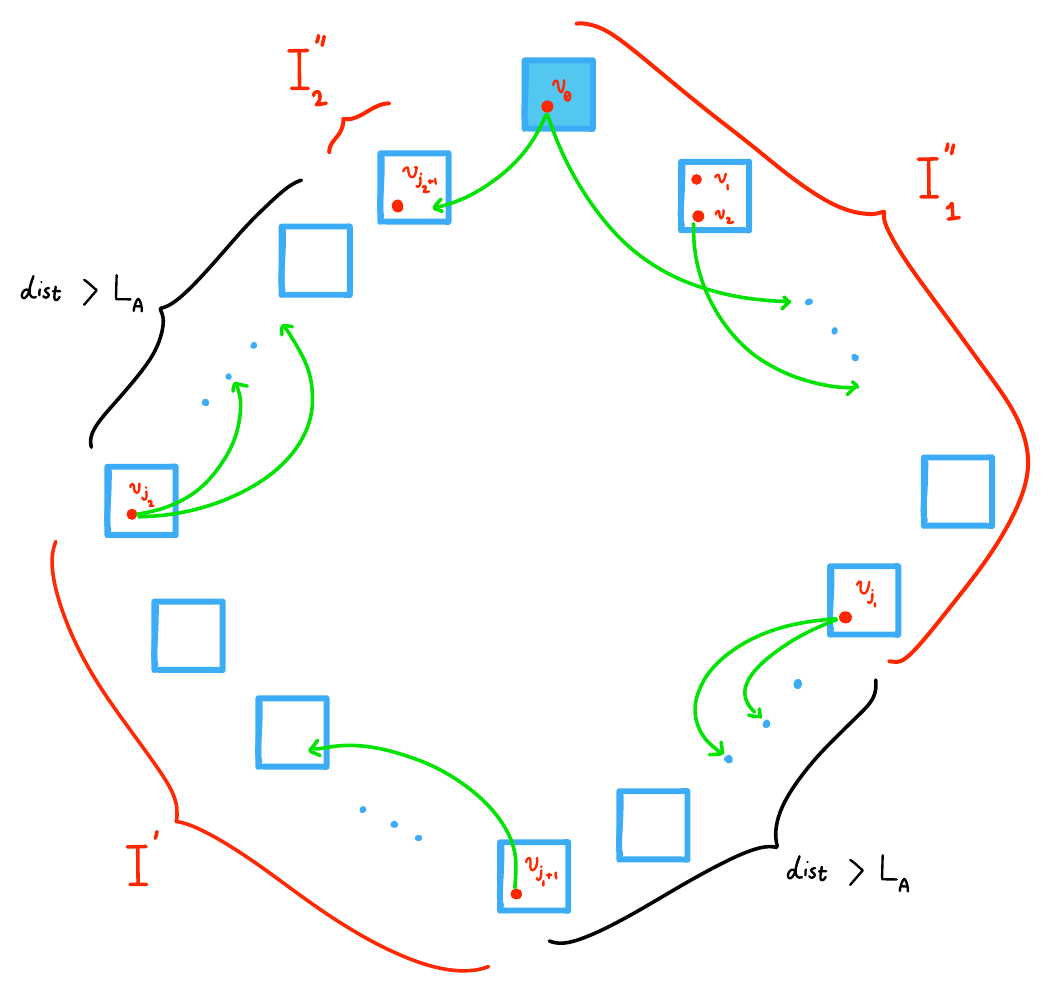}
    \caption{Illustration of the proof of Lemma \ref{Lemma:gapsinminimal}}
    \label{Figure:gapslemma}
\end{figure}

\subsection{Length and size of minimal relations}\label{Subsection:lengthsizerelations}\hfill\\

To understand the homology, we also need to examine some of the properties of boundaries. To this end, we define the space of \emph{relations}.

\begin{defi}
Let $I_1, \cdots, I_r \subseteq \bm{C^m}$, and assume that elements $S_{I_1}, \cdots, S_{I_r}$ are minimal cycles. A \emph{relation} between these cycles is a subset $R \subseteq \bm{C^m}$ and the equality 
$$\partial S_{R} = S_{I_1} + \cdots + S_{I_r}.$$
The set of all elements $S_{R}$ arising from relations forms a linear space referred to as the \emph{space of relations}.
\end{defi}

We call such a relation \emph{homogeneous} if all elements in ${R \cup I_1 \cup \cdots \cup I_r}$ have the same grading. We call a relation \emph{minimal} if there are no non-trivial subsets $R' \subset R$ and $T \subseteq \{1, \cdots, r\}$ such that 
$$\partial S_{R'} = \sum_{t \in T} S_{I_t}.$$
It is easy to see that a minimal relation needs to be homogeneous.\\

We are going to prove that length of a minimal relation is also bounded. We start with a bound on the gaps between elements of $R$. 

\begin{lemm}\label{Lemma:gapsinminimalrelation}
     Assume we have a minimal relation
     $$\partial S_{R} = S_{I_1} + \cdots + S_{I_r}$$
     Order the elements of $R$ with respect to the  clockwise order as we described before and let the result be $\nu_0, \cdots, \nu_{n-1}$. Then there exist at most one $i\in \{0, \cdots, n-1\}$  s.t. 
$$\dist(\nu_i, \nu_{i+1}) > (N_A+1)L_A.$$
\end{lemm}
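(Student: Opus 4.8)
The plan is to adapt the proof of Lemma~\ref{Lemma:gapsinminimal}: assuming two large gaps, cut $R$ at them into two cyclic intervals, and show that each piece already bounds a sub-sum of the cycles $S_{I_t}$, contradicting minimality. The one new ingredient is the uniform bound $\len(S_{I_t})\le N_A L_A$ supplied by Lemma~\ref{Lemma:lengthminimal}; combined with the edge-length bound of Lemma~\ref{Lemma:differentialorder}, this is exactly what pushes the threshold from $L_A$ up to $(N_A+1)L_A$.

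Suppose, toward a contradiction, that there are two distinct positions $i_1\ne i_2$ with $\dist(\nu_{i_l},\nu_{i_l+1})>(N_A+1)L_A$ for $l=1,2$. All elements of $R$ lying in $C^{\bu}$ occupy a single box, hence are consecutive at the beginning of the clockwise order with zero gap between them; in particular at most one of the two large gaps is the wrap-around gap $\dist(\nu_{n-1},\nu_0)$. Relabel so that the large gap at position $j_1\in\{0,\dots,n-2\}$ is not the wrap-around one and the other lies at position $j_2$ with $j_1<j_2\le n-1$. Since all $C^{\bu}$-elements occur at positions $\le j_1$, the interval $R':=\{\nu_{j_1+1},\dots,\nu_{j_2}\}$ misses $C^{\bu}$, whereas $R'':=R\setminus R'$ contains $C^{\bu}\cap R$.

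Exactly as in Lemma~\ref{Lemma:gapsinminimal}, since $R'\cap C^{\bu}=\varnothing$ every edge out of $R'$ is of type $(1)$, $(2)$ or $(4)$ in the sense of Proposition~\ref{Proposition:boxtensordifferential}, hence moves clockwise with length $<L_A$ by Lemma~\ref{Lemma:differentialorder}; the edges out of $R''$ behave the same, except for the type $(3)$ edges out of $C^{\bu}\subseteq R''$, which have length one and land in the neighbouring box $C^{\circ}_m$. Writing $\partial S_{R'}=S_{J'}$ and $\partial S_{R''}=S_{J''}$ with $J',J''\subseteq\bm{C^m}$, this shows that $J'$ lies within the clockwise $L_A$-neighbourhood of $R'$ and $J''$ within that of $R''$. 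As $R'$ and $R''$ are separated around $\bm{GC^m}$ --- on both sides --- by distance greater than $(N_A+1)L_A$, it follows that $J'\cap J''=\varnothing$, and in fact $J'$ and $J''$ are separated on both sides by distance greater than $N_A L_A$. Now $S_{J'}+S_{J''}=\partial S_R=S_{I_1}+\cdots+S_{I_r}$, with no cancellation on the left because $J'\cap J''=\varnothing$. Each $S_{I_t}$ is a minimal cycle, so by Lemma~\ref{Lemma:lengthminimal} its support lies in a path of boxes of length at most $N_A L_A$, which therefore cannot meet both $J'$ and $J''$; hence each $I_t$ is contained in $J'$ or in $J''$. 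Setting $T:=\{\,t\mid I_t\subseteq J'\,\}$ and comparing supports yields $\partial S_{R'}=\sum_{t\in T}S_{I_t}$ with $R'$ a non-empty proper subset of $R$, contradicting the minimality of the relation. Thus at most one gap can exceed $(N_A+1)L_A$.

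The main obstacle is the same delicate bookkeeping that appears in Lemma~\ref{Lemma:gapsinminimal}: one must ensure the cut interval $R'$ can always be taken to avoid the black box (so that no type $(3)$ edge escapes from $R'$) and that the length-one type $(3)$ edges issuing from $C^{\bu}\subseteq R''$ cannot reach into $J'$. Both points are handled by the observations that $C^{\bu}\cap R$ sits at the very start of the clockwise order and that a type $(3)$ edge terminates in the adjacent box $C^{\circ}_m$; with these in place, the estimate is just the edge-length bound of Lemma~\ref{Lemma:differentialorder} fed into the cycle-length bound of Lemma~\ref{Lemma:lengthminimal}.
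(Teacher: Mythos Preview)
Your overall strategy matches the paper's: cut $R$ at the two large gaps into $R'$ (avoiding $C^{\bu}$) and $R''$, show that the supports $J',J''$ of $\partial S_{R'},\partial S_{R''}$ are disjoint and separated by more than $N_A L_A$ on both sides, then split the $I_t$'s accordingly to contradict minimality. Your handling of the type~$(3)$ edges is also fine.

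The gap is in the sentence ``hence each $I_t$ is contained in $J'$ or in $J''$.'' From $\len(S_{I_t})\le N_A L_A$ and the separation you correctly deduce that no $I_t$ can meet \emph{both} $J'$ and $J''$; but that does not force $I_t\subseteq J'\cup J''$. The equation $\sum_t S_{I_t}=S_{J'\sqcup J''}$ only says that any element outside $J'\cup J''$ lies in an \emph{even} number of the $I_t$'s, so some $I_t$ may well spill into a gap provided another $I_{t'}$ cancels it there. In that situation your set $T$ gives $\sum_{t\in T}S_{I_t}=S_{J'}+S_X$ with $X\neq\varnothing$ supported in the gaps, so $\partial S_{R'}=S_{J'}\ne\sum_{t\in T}S_{I_t}$ and no contradiction to minimality follows.

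The paper repairs exactly this point by locating a clean cut inside each gap. By homogeneity of the minimal relation and Lemma~\ref{Lemma:sizehomogenous}, the set $\bigcup_t I_t$ has at most $N_A$ elements; hence in each gap between $J'$ and $J''$ (of width $>N_A L_A$) a pigeonhole argument produces a sub-gap of width $>L_A$ containing no element of $\bigcup_t I_t$. Then Lemma~\ref{Lemma:gapsinminimal}, applied to each minimal cycle $I_t$, forces every $I_t$ to lie entirely on one side of each such sub-gap, yielding an honest partition $T'\sqcup T''$ with $\partial S_{R'}=\sum_{t\in T'}S_{I_t}$. So the missing ingredient is this pigeonhole step via Lemma~\ref{Lemma:sizehomogenous}; Lemma~\ref{Lemma:lengthminimal} alone is not strong enough to separate the $I_t$'s.
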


\begin{proof}
The proof is similar to Lemma \ref{Lemma:gapsinminimal}. We use similar notations and refer to the facts stated in that proof.\\

Assume there exist ${0 \leq j_1 < j_2 \leq n-1}$ such that ${\dist(\nu_{j_{l}}, \nu_{j_{l+1}}) > (N_A+1)L_A}$ for $l=1,2$. Consider the subsets 
$$R'=\{\nu_{j_{1}+1},\cdots, \nu_{j_2}\} \ \text{and} \ R''= R \setminus R'.$$
There exist subsets $J', J'' \subset \bm{C^m}$ such that 
$$\partial S_{R'} = S_{J'} \ \text{and} \ \partial S_{R''} = S_{J''}.$$
As we discussed in the proof of Lemma $\ref{Lemma:gapsinminimal}$, we have $J' \cap J'' = \varnothing$. In fact based on the ideas presented in that proof we can see that : 
$$\dist_{H}(J', J'') \geq N_AL_A.$$
Now we consider the gaps between $J'$ and $J''$. There are two such gaps, the first one comes before all elements of $J'$, and the second one comes after. We prove the lemma for the case when none of the gaps include $C^{\bu}$. The same argument works for the other case, and in fact since none of the directed edges (i.e. differentials) lead to or pass over $C^{\bu}$, the bound can be made stronger.\\

Let $E=\{\kappa_0, \kappa_1 , \cdots, \kappa_l\}$ (resp.~$E'=\{\kappa'_0, \kappa'_1, \cdots, \kappa'_{l'} \}$) be the ordered set of elements of $I_1 \cup \cdots \cup I_r$ which are inside or in one of the ends of the first (resp.~second) gap. Note that $J'$ and $J''$ are subsets of $I_1 \cup \cdots \cup I_r$, as a result $E$ and $E'$ are not empty.\\ 

We are going to prove that at least one of $l$ and $l'$ is strictly greater than $N_A$. This will be in contradiction with our homogeneity assumption, due to Lemma \ref{Lemma:sizehomogenous}.\\

If $l$ and $l'$ are both less than $N_A$, then there exist $0 \leq o \leq l-1$ and $0 \leq o' \leq l'-1$ such that 
$$\dist(\kappa_o,\kappa_{o+1}) > L_A \ \text{and} \ \dist(\kappa'_{o'},\kappa'_{o'+1}) > L_A.$$
Now for all $1 \leq t \leq r$, all of the elements of subset $I_t$ either come before the gap $(\kappa_o,\kappa_{o+1})$ or come after it (in the clockwise order). This is a result of Lemma \ref{Lemma:gapsinminimal}. The same fact holds for the gap $(\kappa'_o,\kappa'_{o+1})$. We can define the subset $T' \subseteq \{1, \cdots, n\}$ as follows 
$$T' = \{ t' \ | \ \text{All elements of $I_{t'}$ come after $(\kappa_o,\kappa_{o+1})$ and before $(\kappa'_o,\kappa'_{o+1})$} \}.$$
Also let $T''$ be the complement of $T'$. Now we can easily see the following: 
$$(\bigcup_{t' \in T'} I_{t'}) \cap J'' = \varnothing \ \text{and}\  (\bigcup_{t'' \in T''} I_{t''}) \cap J' = \varnothing.$$
As a result we have 
$$\partial S_{R'} = S_{J'} = \sum_{t' \in T'} S_{I_{t'}} \ \text{and} \ \partial S_{R''} = S_{J''} = \sum_{t'' \in T''} S_{I_{t''}}.$$
This contradicts the minimality of the relation and completes our proof.
\end{proof}
Same as before, we can use Lemma \ref{Lemma:gapsinminimalrelation} to get a bound on the length of a relation.

\begin{coro}\label{Corollary:lengthminimalrelation}
    There exist a fixed integer $L_B$ such that for any minimal relation such as 
    $$\partial S_{R} = S_{I_1} + \cdots + S_{I_r},$$
    we have $\len(S_{R}) \leq L_B.$
\end{coro}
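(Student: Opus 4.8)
The plan is to mimic exactly the chain of reasoning that took us from Lemma \ref{Lemma:gapsinminimal} to Lemma \ref{Lemma:lengthminimal}, but now using Lemma \ref{Lemma:gapsinminimalrelation} in place of Lemma \ref{Lemma:gapsinminimal} and the size bound on homogeneous elements (Lemma \ref{Lemma:sizehomogenous}) to control $|R|$. First I would fix a minimal relation $\partial S_R = S_{I_1} + \cdots + S_{I_r}$ and order the elements of $R$ in the clockwise order as $\nu_0, \dots, \nu_{n-1}$, with indices taken modulo $n$. By Lemma \ref{Lemma:gapsinminimalrelation}, there is at most one index $i$ with $\dist(\nu_i,\nu_{i+1}) > (N_A+1)L_A$; choose $t$ so that $\dist(\nu_t,\nu_{t+1})$ realizes the maximum of all the gaps $\dist(\nu_j,\nu_{j+1})$, so that every other consecutive gap is at most $(N_A+1)L_A$.

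Next I would bound $n = |R|$. Since a minimal relation is homogeneous (all elements of $R \cup I_1 \cup \cdots \cup I_r$ share the same grading), in particular $S_R$ is a homogeneous element, so Lemma \ref{Lemma:sizehomogenous} gives $|R| = n \leq N_A$. Then, exactly as in the proof of Lemma \ref{Lemma:lengthminimal}, using the defining formula \eqref{Equation:length} for the length with the basepoint chosen to be $t$, we get
\begin{equation*}
\len(S_R) \;\leq\; \sum_{i=0}^{n-2} \dist(\nu_{t+1+i}, \nu_{t+2+i}) \;\leq\; (n-1)\,(N_A+1)L_A \;\leq\; N_A(N_A+1)L_A,
\end{equation*}
since each of the at most $n-1 \leq N_A - 1$ gaps appearing in this sum avoids the single possibly-large gap $\dist(\nu_t,\nu_{t+1})$ and hence is bounded by $(N_A+1)L_A$. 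Setting $L_B := N_A(N_A+1)L_A$ (a fixed integer depending only on $\widehat{\text{CFA}}(\mathcal{H}_K,z,w)$, not on $m$ or on the particular relation) completes the argument. One should also note the edge case $n \le 1$, where $\len(S_R)=0$ trivially; and the degenerate case where $R = \varnothing$, for which the statement is vacuous.

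I do not expect any real obstacle here: all the substantive work has already been done in Lemma \ref{Lemma:gapsinminimalrelation} and Lemma \ref{Lemma:sizehomogenous}. The only point requiring mild care is the bookkeeping of which gaps enter the sum in \eqref{Equation:length} — one must be sure that the single exceptional (possibly large) gap from Lemma \ref{Lemma:gapsinminimalrelation} is precisely the one excluded by choosing $t$ to maximize $\dist(\nu_t,\nu_{t+1})$, so that all $n-1$ terms actually summed are genuinely bounded by $(N_A+1)L_A$. This is the same trick used in Lemma \ref{Lemma:lengthminimal}, so the corollary follows with essentially no new ideas.
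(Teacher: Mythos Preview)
Your proposal is correct and follows essentially the same approach as the paper: mimic the proof of Lemma \ref{Lemma:lengthminimal} using Lemma \ref{Lemma:gapsinminimalrelation} in place of Lemma \ref{Lemma:gapsinminimal}, together with the size bound $|R|\le N_A$ from Lemma \ref{Lemma:sizehomogenous}, and set $L_B = N_A(N_A+1)L_A$. The paper's proof is in fact just a two-line reference to this argument, so your write-up is, if anything, more detailed than the original.
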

\begin{proof}
    The proof is completly similar to Lemma \ref{Lemma:lengthminimal}. Setting $$L_B = N_A(N_A+1)L_A,$$ and using Lemma \ref{Lemma:gapsinminimalrelation} gives us the result. 
\end{proof}
\begin{rema}\label{Remark:notsharpbound}
    We can find a much better upper bound $L_B$. Examining the proof of Lemma \ref{Lemma:gapsinminimalrelation}, shows that if there exist ${0 \leq j_1 < j_2 \leq n-1}$ such that ${\dist(\nu_{j_{l}}, \nu_{j_{l+1}}) > L_A}$ for $l=0,1$, then for at least one $i \in \{0,1\}$ we have 
    $$|(\nu_{j_{i}}, \nu_{j_{i+1}}) \ \cap \ (\bigcup_{1 \leq t \leq r} I_{t})| \geq \lfloor \frac{\dist(\nu_{j_{l}}, \nu_{j_{l+1}})}{L_A} \rfloor -1. $$
    The homogenity condition gives us that $|\bigcup_{1 \leq t \leq r} I_{t})| \leq N_A$ and as a result we can see that we can set 
    $$L_B = 3N_AL_A$$
    in Corollary \ref{Corollary:lengthminimalrelation}.\\
    
    Although we have this significantly sharper upper bound,  we continue to assume that $L_{B}=N_A(N_A+1)L_A$ as a safe choice. This assumption simplifies later arguments where the larger bound may be needed, although the sharper bound might still suffice in those cases.\\
\end{rema}

We have already established a bound for $\len(S_{R})$, from which the bound on $\diam(R)$ follows immediately. However, the relation also involves the subsets $I_1,\dots,I_{r}$. Using similar arguments, we can derive an analogous result for the union $\underset{1 \leq t \leq r}{\bigcup} I_{t}$, as stated in Corollary \ref{Corollary:diamofminimalrelation}. 

\begin{coro}\label{Corollary:diamofminimalrelation}
Consider a minimal relation $\partial S_{R} = S_{I_1} + \cdots + S_{I_r}.$ Then we have 
$$\diam(\underset{1 \leq t \leq r}{\bigcup} I_{t}) \leq N_{A}(N_{A}+1)L_{A} = L_{B}.$$
As a consequence, we can also write
$$\diam(R \ \cup \ \underset{1 \leq t \leq r}{\bigcup} I_{t}) \leq 2L_{B}+L_{A}.$$
\end{coro}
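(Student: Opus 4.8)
The statement has two parts: a bound on $\diam\big(\bigcup_t I_t\big)$ and, as a consequence, a bound on $\diam\big(R \cup \bigcup_t I_t\big)$. The second part is genuinely routine once the first is established, so I will focus the strategy on the first.

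\textbf{Approach to the bound on $\diam(\bigcup_t I_t)$.} The idea is to run an argument parallel to the proof of Lemma \ref{Lemma:lengthminimal}, but applied not to a single minimal cycle but to the combined set $U := \bigcup_{1 \leq t \leq r} I_t$. First, I would observe that, by minimality of the relation, $S_R$ is homogeneous, and in fact every element of $R \cup I_1 \cup \cdots \cup I_r$ shares a common grading $gr_{K_m}$; this is exactly the homogeneity remark recorded just before Lemma \ref{Lemma:gapsinminimalrelation}. Consequently $U$ is a subset of $\bm{C^m}$ all of whose elements are homogeneous, so by Lemma \ref{Lemma:sizehomogenous} we have $|U| \leq N_A$. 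Next I would order the elements of $U$ in the clockwise order as $\kappa_0, \ldots, \kappa_{s-1}$ with $s = |U| \leq N_A$. The key input is that there is at most one large gap among consecutive elements of $U$: this is precisely the content extracted inside the proof of Lemma \ref{Lemma:gapsinminimalrelation}, where it is shown (via the two-gap-contradiction argument on $E$ and $E'$) that if $U$ had two consecutive gaps each exceeding $L_A$, one could split the relation into two sub-relations, contradicting minimality. So all gaps but possibly one are $\leq L_A$, and summing the $\leq N_A - 1$ small gaps (dropping the single possibly-large one, exactly as in the proof of Lemma \ref{Lemma:lengthminimal}) gives $\diam(U) \leq (N_A - 1)L_A \leq N_A(N_A+1)L_A = L_B$. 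I would phrase this so that the "at most one large gap" fact is cited directly from the internal mechanics of the proof of Lemma \ref{Lemma:gapsinminimalrelation} rather than re-deriving it.

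\textbf{The consequence.} For the second displayed inequality I would use the triangle inequality on the circular metric together with the already-established bounds. We have $\diam(R) \leq \len(S_R) \leq L_B$ by Corollary \ref{Corollary:lengthminimalrelation} (using the noted fact that diameter is bounded above by length), and $\diam(U) \leq L_B$ from the first part. Any two points of $R \cup U$ lie within distance $\diam(R) + \diam(U)$ of each other once we add the distance between the two subsets; but $R$ and $U$ are not disjoint in general — indeed they interleave, since $\partial S_R = S_U$ forces each element of $U$ to be reachable by an edge of length $< L_A$ from some element of $R$ by Lemma \ref{Lemma:differentialorder}. So $\dist(R, U) < L_A$, and hence $\diam(R \cup U) \leq \diam(R) + \dist(R,U) + \diam(U) \leq L_B + L_A + L_B = 2L_B + L_A$, as claimed.

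\textbf{Main obstacle.} The only subtle point is the "at most one large gap in $U$" claim: it must be harvested cleanly from the proof of Lemma \ref{Lemma:gapsinminimalrelation}, and one needs to be slightly careful about the case where a gap of $U$ contains the black box $C^{\bu}$ (where, as the earlier proofs note, the counterclockwise edge from $C^{\bu}$ of length one forces only a harmless off-by-one adjustment, and in fact the bound only improves). I would state this case exactly as in the parenthetical remarks of the earlier proofs and not belabor it. Everything else is bookkeeping with the circular metric and the size bound from Lemma \ref{Lemma:sizehomogenous}.
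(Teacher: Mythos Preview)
Your approach is correct and in fact somewhat cleaner than the paper's own sketch, but the attribution is slightly off and you should be aware of one step that is not quite ``already in'' the proof of Lemma~\ref{Lemma:gapsinminimalrelation}.

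\textbf{Comparison with the paper.} The paper's outline for the first inequality proceeds in the opposite direction from yours: it looks at large gaps (size $>L_A$) in $U=\bigcup_t I_t$, argues that each such gap must contain elements of $R$ (else the relation splits), and then uses Lemma~\ref{Lemma:gapsinminimalrelation} and $|R|\le N_A$ to bound the total span of $U$; this gives $N_A L_B$ initially and is then refined to $L_B$. Your route bypasses $R$ entirely: you use $|U|\le N_A$ (homogeneity, Lemma~\ref{Lemma:sizehomogenous}) and argue directly that $U$ has at most one gap exceeding $L_A$, yielding $\diam(U)\le (N_A-1)L_A \le L_B$ in one step. This is more economical and gives a sharper intermediate bound.

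\textbf{The loose citation.} The two-gap contradiction inside the proof of Lemma~\ref{Lemma:gapsinminimalrelation} starts from two large gaps in $R$, produces the decomposition $R=R'\sqcup R''$, and only \emph{then} locates gaps $>L_A$ in $U$ (via the sets $E,E'$) in order to split the index set $\{1,\dots,r\}$. Your argument needs the reverse: given two large gaps in $U$, you must produce a splitting of $R$. This is not literally extractable from that proof. What you need to add is the observation that, since edges have length $<L_A$ and (away from $C^{\bu}$) move clockwise, the two $U$-gaps of size $>L_A$ partition $R$ into two pieces $R_1,R_2$ whose boundaries land in disjoint arcs; combined with the fact (via Lemma~\ref{Lemma:gapsinminimal}) that each $I_t$ lies entirely in one arc, this gives $\partial S_{R_1}=\sum_{t\in T_1}S_{I_t}$ and contradicts minimality. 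This is the same mechanism as Lemma~\ref{Lemma:gapsinminimal}, adapted from cycles to relations, with the usual caveat about type~(3) edges when one gap contains $C^{\bu}$. Once you spell this out, your argument is complete.

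Your treatment of the second inequality (via $\diam(R)\le L_B$ from Corollary~\ref{Corollary:lengthminimalrelation}, the first part, and an edge of length $<L_A$ connecting $R$ to $U$) matches the paper exactly.
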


We do not provide a full proof of Corollary \ref{Corollary:diamofminimalrelation}. Note that the second statement follows from the first using Lemma \ref{Lemma:differentialorder} and Corollary \ref{Corollary:lengthminimalrelation}, since there is at least one edge connecting $R$ to $\underset{1 \leq t \leq r}{\bigcup} I_{t}$.\\

Moreover, we emphasize that can replicate the exact same method used in the proof of Corollary \ref{Corollary:lengthminimalrelation}. The argument proceeds by:
\begin{itemize}
    \item Analyzing the large (i.e. bigger than $L_{A}$) gaps between consecutive elements of $\underset{1 \leq t \leq r}{\bigcup} I_{t}$,
    \item Identifying elements of $R$ that fall into those gaps—which must exist due to the minimality assumption,
    \item Observing (via Lemma \ref{Lemma:gapsinminimalrelation}) that the gaps between such elements cannot be too large, together with the fact that $|R| \leq N_{A}$
\end{itemize}
Using this approach, we obtain the bound
$$\diam(\underset{1 \leq t \leq r}{\bigcup} I_{t}) \leq N_{A}^{2}(N_{A}+1)L_{A} = N_{A}L_{B}.$$
However, similar to what we saw in Remark \ref{Remark:notsharpbound}, this bound can be further refined.

\begin{rema}
It is clear that the homology is fully determined by the space of cycles and the space of relations. Moreover, it suffices to identify a basis for each of these spaces. By focusing on minimal cycles and minimal relations, we construct these spaces from well-controlled and structured elements. 
\end{rema}

\subsection{Homology of $\widehat{\text{CFA}}(\mathcal{H}_{K}, z) \boxtimes \widehat{\text{CFD}}(\mathcal{H}'_{\frac{1}{m}} , z')$ : Algebra}\label{Subsection:HomologyofboxtensorS^3:Algebra}\hfill\\

In the final part of this section, we focus on the properties of 
$$\widehat{\text{CFA}}(\mathcal{H}_{K}, z) \boxtimes \widehat{\text{CFD}}(\mathcal{H}'_{\frac{1}{m}} , z').$$
Note that some of our previous proofs rely on the Alexander grading, and as a result, can not be extended to $\widehat{\text{CFA}}(\mathcal{H}_{K}, z) \boxtimes \widehat{\text{CFD}}(\mathcal{H}'_{\frac{1}{m}} , z')$. We use a different technique to gather information about the homology of this latter complex. First, recall that there is a chain homotopy 
$$\CFh(S^3) \simeq \widehat{\text{CFA}}(\mathcal{H}_{K}, z) \boxtimes \widehat{\text{CFD}}(\mathcal{H}'_{\frac{1}{m}} , z').$$
This means that we only need to find the generator of the homology 
$$H_{*}(\widehat{\text{CFA}}(\mathcal{H}_{K}, z) \boxtimes \widehat{\text{CFD}}(\mathcal{H}'_{\frac{1}{m}} , z')).$$
In particular, we want to expand this generator in the basis $\bm{C^m}$.\\

Our main tool here is Theorem \ref{Theorem:typeAinvariance} on invariance of type $A$ invariants. Recall the definition of $\widehat{\text{CFA}}(\mathcal{H}_{\infty} , z)$, which was type $A$ invariant associated to the decorated graph of Figure \ref{Figure:HinfinitytypeA}. We also need to recall the grading of this type $A$ invariant. The subgroup $P(x_0)$ is generated by $(-\frac{1}{2}; 0,1)$, and the coset $gr(x_0)$ has representative $(0;0,0)$.\\

Let $\bm{z_0}$ be the reference element for grading of $\widehat{\text{CFA}}(\mathcal{H}_{K},z)$. Let 
$$\phi : P(x_0)\backslash G \rightarrow P(\bm{z_0}) \backslash G, \ \text{and}$$
$$\widehat{f} : \widehat{\text{CFA}}(\mathcal{H}_{\infty} , z) \rightarrow \widehat{\text{CFA}}(\mathcal{H}_{K},z)$$
respectively be the isomorphism of $G$-sets and the $\A_{\infty}$ homotopy equivalence given by Theorem \ref{Theorem:typeAinvariance}. As mentioned, this $\A_{\infty}$ homotopy equivalence induces a chain homotopy equivalence on the box tensor products denoted by 
$$\widehat{f} \boxtimes \id_{m} : \widehat{\text{CFA}}(\mathcal{H}_{\infty}, z) \boxtimes \widehat{\text{CFD}}(\mathcal{H}'_{\frac{1}{m}} , z') \longrightarrow \widehat{\text{CFA}}(\mathcal{H}_{K}, z) \boxtimes \widehat{\text{CFD}}(\mathcal{H}'_{\frac{1}{m}} , z'),$$
where $\id_{m}$ denotes the identiny map of the type $D$ invariant $\widehat{\text{CFD}}(\mathcal{H}'_{\frac{1}{m}} , z')$.\\

Note that the chain complex 
$$\widehat{\text{CFA}}(\mathcal{H}_{\infty}, z) \boxtimes \widehat{\text{CFD}}(\mathcal{H}'_{\frac{1}{m}} , z')$$
is generated by a single generator $x_0 \otimes \eta$ and has trivial differentials. As a result $\widehat{f} \boxtimes \id_m (x_0 \otimes \eta)$, is a generator for the homology $$H_{*}(\widehat{\text{CFA}}(\mathcal{H}_{K}, z) \boxtimes \widehat{\text{CFD}}(\mathcal{H}'_{\frac{1}{m}} , z')).$$ 
Lemma \ref{Lemma:homotopyequivgenerator} examines the properties of this generator which will be useful later. We use $gr_S$ to denote the grading of the type $A$ invariant $\widehat{\text{CFA}}(\mathcal{H}_{K}, z)$, and $gr_{S_m}$ to denote the induced grading on 
$$\widehat{\text{CFA}}(\mathcal{H}_{\infty}, z) \boxtimes \widehat{\text{CFD}}(\mathcal{H}'_{\frac{1}{m}} , z').$$
Note that dropping the Alexander component of $gr_K$ (resp.~$gr_{K_m}$), gives us $gr_S$ (resp.~$gr_{S_m}$).\\

Recall that $P(\bm{z_0})$ is generated by the coset $[(M;0,1)]$. This comes from Lemma \ref{Lemma:Gradingsubgroup}, after dropping the Alexander component. 


\begin{figure}[h]
    \centering
    \includegraphics[width=0.3\linewidth]{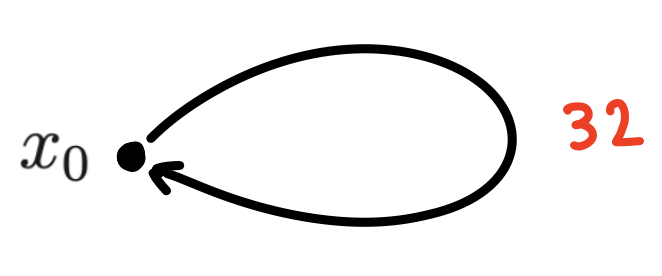}
    \caption{Decorated graph representing $\widehat{\text{CFA}}(\mathcal{H}_{\infty} , z)$ }
    \label{Figure:HinfinitytypeA}
\end{figure}

\begin{lemm}\label{Lemma:homotopyequivgenerator}
    Let $\widehat{f}$ be the special $\A_{\infty}$ homotopy equivalence defined above. 
    Consider a subset $\widehat{I}_m \subseteq \bm{C^m}$ such that 
    $$\widehat{f} \boxtimes \id_m (x_0 \otimes \eta) = S_{\widehat{I}_m}.$$ Then the cycle $S_{\widehat{I}_m}$ is homogeneous with respect to $gr_{S_m}$. Furthermore, the grading double coset of elements of $\widehat{I}_m$ has a representative $(\alpha; \beta, \gamma)$ such that $$2\beta= M + \frac{1}{2}.$$
    Finally there exist fixed integer $L_H$ and a fixed element $\widehat{\nu}_0$ satisfying 
    $$\dist_{H}(\{\widehat{\nu}_0\} , C^{\bu}) \leq L_H,$$
    such that for all $m > L_H$ we have $\widehat{\nu}_0 \in \widehat{I}_m \subseteq \bm{C^m}$. 
\end{lemm}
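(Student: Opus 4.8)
The plan is to work directly with the cycle $S_{\widehat I_m}=(\widehat f\boxtimes\id_m)(x_0\otimes\eta)$, unraveling the box‑tensor formula against the explicit type $D$ structure $\widehat{\text{CFD}}(\mathcal H'_{\frac1m},z')$ of Example \ref{Example:twistedmeridian}. First I would record the iterated operations on the reference generator: reading off the decorated graph of Figure \ref{Figure:1:mDgraph} one gets $\delta_0(\eta)=\eta$, $\delta_1(\eta)=\rho_3\otimes\xi_1+\rho_1\otimes\xi_m$, $\delta_k(\eta)=\rho_3\otimes\rho_{23}^{\otimes(k-1)}\otimes\xi_k$ for $2\le k\le m$, and $\delta_k(\eta)=0$ for $k>m$, so that $S_{\widehat I_m}=\widehat f_1(x_0)\otimes\eta+\sum_{k\ge1}(\widehat f_{k+1}\otimes\id)(x_0\otimes\delta_k(\eta))$. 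For homogeneity, to a summand coming from a term $\rho_{I_1}\otimes\cdots\otimes\rho_{I_k}\otimes x_j$ of $\delta_k(\eta)$ I would apply the grading rule for $\widehat f$ from Theorem \ref{Theorem:typeAinvariance}: its $gr_{S_m}$ equals $\phi(gr(x_0))\,gr(\rho_{I_1})\cdots gr(\rho_{I_k})\,\lambda^{k}\,gr_m(x_j)$ in $P(\bm z_0)\backslash G/P(\eta)$. Since $\lambda$ is central and the type $D$ grading convention forces $gr(\rho_{I_1})\cdots gr(\rho_{I_k})\,gr_m(x_j)=\lambda^{-k}gr_m(\eta)$ in $G/P(\eta)$, every summand has the same grading $\phi(gr(x_0))\,gr_m(\eta)$; this proves $S_{\widehat I_m}$ is $gr_{S_m}$‑homogeneous and identifies its double coset.

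For the identity $2\beta=M+\tfrac12$, I would use that $\phi\colon P(x_0)\backslash G\to P(\bm z_0)\backslash G$ is an isomorphism of transitive right $G$‑sets; its existence forces $P(\bm z_0)=g_0\,P(x_0)\,g_0^{-1}$ for some $g_0=(n_0;p_0,q_0)\in G$, and then $\phi(gr(x_0))=\phi(P(x_0))=P(\bm z_0)\,g_0$. A one‑line computation in $G$ gives $g_0\cdot(-\tfrac12;0,1)\cdot g_0^{-1}=(-\tfrac12+2p_0;0,1)$, and since this must be the unique element of $P(\bm z_0)=\langle(M;0,1)\rangle$ with $\text{Spin}^{\mathbb{C}}$‑component $(0,1)$, namely $(M;0,1)$ itself, we obtain $p_0=\tfrac M2+\tfrac14$. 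As $gr_m(\eta)$ is the identity coset of $G/P(\eta)$, the previous step shows the common grading double coset of the elements of $\widehat I_m$ is $P(\bm z_0)\,g_0\,P(\eta)$, represented by $g_0=(n_0;\tfrac M2+\tfrac14,q_0)$; taking $(\alpha;\beta,\gamma)=g_0$ gives $2\beta=M+\tfrac12$.

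For the final assertion I would single out the $k=0$ summand $\widehat f_1(x_0)\otimes\eta$, which lies in $C^{\bu}$ and, crucially, does not depend on $m$ (the map $\widehat f_1$ is intrinsic to the type $A$ modules). For $k\ge1$ an idempotent bookkeeping shows each summand $(\widehat f_{k+1}\otimes\id)(x_0\otimes\delta_k(\eta))$ lies entirely in the white boxes: $\rho_1$ and $\rho_3$ send $\iota_0$ to $\iota_1$ and $\rho_{23}$ fixes $\iota_1$, so the $\widehat f$‑output is supported in $B^1_K$, and tensoring with $\xi_i$ lands in $C^{\circ}_i$. Hence $\widehat I_m\cap C^{\bu}=\{x^0\otimes\eta:\ x^0\text{ has nonzero coefficient in }\widehat f_1(x_0)\}$, which is independent of $m$. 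Since $\widehat f$ is an $\A_{\infty}$ homotopy equivalence, $\widehat f_1$ is a quasi‑isomorphism of the underlying $m_1$‑complexes, and $\widehat{\text{CFA}}(\mathcal H_\infty,z)$ has $m_1=0$ with one‑dimensional homology generated by $x_0$; therefore $\widehat f_1(x_0)\ne0$, this set is nonempty, and any $\widehat\nu_0$ in it lies in $C^{\bu}\cap\widehat I_m$ for all $m$. One then takes $L_H$ to be any fixed integer at least as large as the constants appearing above, so that $\dist_H(\{\widehat\nu_0\},C^{\bu})=0\le L_H$ and all the needed inequalities on $m$ hold.

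The homogeneity statement and the stabilization statement reduce to bookkeeping once the formula for $\delta_k(\eta)$ and the fact that $\widehat f_1$ is a quasi‑isomorphism are available. The delicate point is $2\beta=M+\tfrac12$: one must combine the $\lambda^{k}$‑shift in the grading rule for $\widehat f$, the type $D$ grading convention, and the conjugacy structure imposed on $\phi$, controlling the coset and double‑coset ambiguities precisely enough to land on the exact representative rather than merely a congruence.
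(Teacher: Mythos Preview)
Your proof is correct. For the homogeneity and the computation of $\beta$, your argument is essentially identical to the paper's: both use the grading rule for $\widehat f$ together with the type $D$ grading identity to reduce every summand to $\phi(gr(x_0))\,gr_m(\eta)$, and both extract $2\beta=M+\tfrac12$ from the conjugacy relation $g_0\,P(x_0)\,g_0^{-1}=P(\bm z_0)$ (the paper phrases this as a stabilizer condition on the coset $\phi(gr(x_0))$, but the computation is the same).

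For the final assertion you go a step further than the paper's proof of this lemma. The paper only argues that \emph{some} $\widehat J_k$ or $\widehat J'$ is nonempty (since $S_{\widehat I_m}$ is a nontrivial cycle), and takes $L_H$ to be the first nonempty index, so $\widehat\nu_0$ is allowed to land in a white box. You instead use that $\widehat f_1$ is a quasi-isomorphism of the underlying $m_1$-complexes to conclude $\widehat f_1(x_0)\ne0$, forcing $\widehat\nu_0\in C^{\bu}$ and $L_H=0$. This stronger conclusion is exactly the content of the paper's next lemma (Lemma~\ref{Lemma:homotopyequivgenerator2}), proved there via an explicit null-homotopy computation rather than the abstract quasi-isomorphism fact. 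Your route collapses the two lemmas into one; the paper's separation is deliberate (see the remark after the proof), keeping Lemma~\ref{Lemma:homotopyequivgenerator} valid for $\A_\infty$ morphisms that are not necessarily homotopy equivalences.
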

\begin{proof} 
Note that
\begin{equation}\label{Equation:boxtensorhomotopyequivformula}
\widehat{f} \boxtimes \id_m (x_0 \otimes \eta) = \sum_{k=0}^{\infty} (f_{k+1} \otimes \id_m) \circ (x_0 \otimes \delta_{k}(\eta)).    
\end{equation}

If an element $y \otimes \sigma \in \bm{C^m}$ appears in $\widehat{f} \boxtimes \id_m $ then there exist $k \geq 0$ such that $y$ is a summand in $\widehat{f}_{k+1}(x_0 \otimes a_1 \otimes \cdots \otimes a_k)$, and $a_1 \otimes \cdots \otimes a_k \otimes \sigma$ is a summand in $\delta_{k}(\eta)$. Based on Theorem \ref{Theorem:typeAinvariance}, we have:
$$gr_S(y)= gr_S(\widehat{f}_{k+1}(x_0 \otimes a_1 \otimes \cdots \otimes a_k)) = \phi(gr(x_0))gr(a_1)\cdots gr(a_k) \lambda^{k}.$$
Using the properties of gradings of type $D$ invariants we have:
$$gr(a_1) \cdots gr(a_k)gr_m(\sigma) = gr_m(a_1 \otimes \cdots \otimes a_k \otimes \sigma) = gr(\delta_{k}(\eta)) = \lambda^{-k} gr_m(\eta)$$
$$ \Rightarrow gr_m(\sigma) = gr(a_k)^{-1} \cdots gr(a_1)^{-1} \lambda^{-k}gr_m(\eta).$$
Now we can compute the grading of the element $y \otimes \sigma$ as follows
$$gr_{S_m}(y \otimes \sigma) = gr_S(y) gr_m(\sigma) = $$
$$\phi(gr(x_0))gr(a_1)\cdots gr(a_k) \lambda^{k}gr(a_k)^{-1} \cdots gr(a_1)^{-1} \lambda^{-k}gr_m(\eta) \Rightarrow$$
$$gr_{S_m}(y \otimes \sigma)=\phi(gr(x_0))gr_m(\eta).$$
This gives us the homogeneity of $S_{\widehat{I}_m}$.\\

To get the second result, we need to examine $\phi(gr(x_0))$. As we mentioned, $\phi$ is an isomorphism of $G$-sets. As a result we have 
$$\phi(gr(x_0) \cdot (-\frac{1}{2} ; 0,1)) = \phi(gr(x_0)) \cdot (-\frac{1}{2} ; 0,1)$$
But we also know that 
$$gr(x_0) \cdot (-\frac{1}{2} ; 0,1) = [(0 ; 0,0)] \cdot (-\frac{1}{2} ; 0,1) = [(-\frac{1}{2} ; 0,1)] = [(0 ; 0,0)],$$
where the last equality follows from the fact that $(-\frac{1}{2} ; 0,1) \in P(x_0)$. This means that the following equality holds in $P(y_0) \backslash G$:
$$\phi(gr(x_0)) \cdot (-\frac{1}{2} ; 0,1) = \phi(gr(x_0)) \Rightarrow$$
$$\phi(gr(x_0)) \cdot (-\frac{1}{2} ; 0,1) \cdot \phi(gr(x_0))^{-1} = [(0;0,0)]$$
Assume that $\phi(gr(x_0)) =[(\alpha; \beta, \gamma)]$, then we can rewrite the top equation: 
$$(\alpha; \beta, \gamma) \cdot (-\frac{1}{2} ; 0,1) \cdot (-\alpha; -\beta, -\gamma) \in P(y_0) \Rightarrow (2\beta-\frac{1}{2};0,1) \in P(y_0)$$
As we mentioned $P(y_0)$ is a cyclic subgroup generated by $(M;0,1)$, and this gives us that $$2\beta-\frac{1}{2}=M.$$ 
Note that 
$$gr_{m}(\eta)=(0;0,0)$$
which means that the the grading double coset of elements of $\widehat{I}_m$ is equal to 
$$\phi(gr(x_0)) gr_m(\eta) = [(\alpha; \beta, \gamma) \cdot (0;0,0)] =[(\alpha; \beta, \gamma)].$$
This finishes the proof of the second part of the lemma.\\

For the final part of the lemma, define $\widehat{J}_k \subseteq B_K$ for all integer $k \geq 0$ as follows:
$$\widehat{f}_{k+1}(x_0 \otimes \underbrace{\rho_3 \otimes \rho_{23} \otimes \cdots \otimes \rho_{23}}_{k}) = \sum_{y \in \widehat{J}_k} y.$$
Also consider $\widehat{J}' \subseteq B_K$ such that
$$\widehat{f}_{2}(x_0 \otimes \rho_1) = \sum_{y \in \widehat{J}'} y$$
Then for any $m \in \mathbb{N}$ we have: 
$$\widehat{f} \boxtimes \id_m (x_0 \otimes \eta) = \sum_{y \in \widehat{J}_0} y \otimes \eta + \sum_{k=1}^{m} \ \sum_{y \in \widehat{J}_k} y \otimes \xi_{k} + \sum_{y \in \widehat{J}'} y \otimes \xi_{m}.$$
From this expansion, the third result of the lemma is clear. We can take $L_H$ to be the smallest number such that $\widehat{J}_{L_H} \neq \varnothing$. Picking an arbitrary $y_0 \in \widehat{J}_{L_H}$ we can set 
\begin{equation}
  \widehat{\nu}_0 =
    \begin{cases}
      y_0 \otimes \eta \in C^{\bu} & \text{if $L_H=0$}\\
      y_0 \otimes \xi_{L_H} \in C^{\circ}_{L_H} & \text{if $0<L_H<+\infty$}\\
    \end{cases}       
\end{equation}

If $\widehat{J}_k = \varnothing$ for all $k \geq 0$, then $\widehat{J}'$ must be non-empty. We define $L_H$ to be one, and pick an arbitrary $y_0 \in \widehat{J}'$. The element $\widehat{\nu}_0 = y_0 \otimes \xi_{m} \in C^{\circ}_{m}$ has our desired properties.
\end{proof}

\begin{rema}
    The homogeneity statement in Lemma \ref{Lemma:homotopyequivgenerator} is a more general statement (as apparent by the proof) and is part of Theorem 10.43. in \cite{Lipshitz2008BorderedHF}, but since the relation of the statements might not be apparent in the first sight, we decided to rewrite the proof. 
\end{rema}

Note that we didn't use all the properties of an $\A_{\infty}$ homotopy equivalence in the proof of Lemma \ref{Lemma:homotopyequivgenerator}. Lemma \ref{Lemma:homotopyequivgenerator} was intentionally phrased in a more general setting. In fact, in our case we can prove $L_{H}=0$, as we can see in Lemma \ref{Lemma:homotopyequivgenerator2}. 

\begin{lemm}\label{Lemma:homotopyequivgenerator2}
Consider an arbitrary $\A_{\infty}$ homotopy equivalence
$$f : \widehat{\text{CFA}}(\mathcal{H}_{\infty} , z) \rightarrow \widehat{\text{CFA}}(\mathcal{H}_{K},z).$$
Consider a subset $I_m \subseteq \bm{C^m}$ such that 
    $$f \boxtimes \id_m (x_0 \otimes \eta) = S_{I_m}.$$
Then there is a fixed element $\nu_0 \in C^{\bu} \cap I_m$ for all $m \in \mathbb{Z}_{\geq 0}$.\\
\end{lemm}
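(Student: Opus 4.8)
The plan is to analyze the expansion of $f \boxtimes \id_m(x_0 \otimes \eta)$ in the basis $\bm{C^m}$ exactly as in the proof of Lemma~\ref{Lemma:homotopyequivgenerator}, but now using the full strength of the fact that $f$ is an $\A_\infty$ homotopy equivalence (not merely an $\A_\infty$ homomorphism). Applying Equation~\ref{Equation:boxtensorhomotopyequivformula} together with the description of $\widehat{\text{CFD}}(\mathcal{H}'_{\frac{1}{m}},z')$ from Example~\ref{Example:twistedmeridian}, one obtains, exactly as in the last part of the proof of Lemma~\ref{Lemma:homotopyequivgenerator}, the expansion
$$f \boxtimes \id_m(x_0 \otimes \eta) = \sum_{y \in J_0} y \otimes \eta + \sum_{k=1}^{m-1} \sum_{y \in J_k} y \otimes \xi_k + \sum_{y \in J'} y \otimes \xi_m,$$
where $J_k \subseteq B_K$ is defined by $f_{k+1}(x_0 \otimes \rho_3 \otimes \rho_{23} \otimes \cdots \otimes \rho_{23}) = \sum_{y \in J_k} y$ (with $k-1$ copies of $\rho_{23}$), and $J' \subseteq B_K$ by $f_2(x_0 \otimes \rho_1) = \sum_{y \in J'} y$. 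In particular $I_m \cap C^{\bu}$ is precisely $\{y \otimes \eta \mid y \in J_0\}$, and $J_0$ does not depend on $m$. So the content of the lemma is equivalent to the assertion that $J_0 \neq \varnothing$, i.e., that $f_1(x_0) \neq 0$ in $\widehat{\text{CFA}}(\mathcal{H}_K,z)$.

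First I would establish that $f_1$ is a quasi-isomorphism of the underlying chain complexes (with differentials $m_1$). This is the standard fact that the first component of an $\A_\infty$ homotopy equivalence is a chain homotopy equivalence: since $f$ has a homotopy inverse $g$ with $g \circ f \simeq \id$ and $f \circ g \simeq \id$ as $\A_\infty$ homomorphisms, extracting the $n=1$ component of the homotopy identities (using the defining formulas for composition and null-homotopy with all $a_i$ absent) yields $g_1 \circ f_1 - \id = m_1 H_1 + H_1 m_1$ on $\widehat{\text{CFA}}(\mathcal{H}_\infty,z)$ and similarly on $\widehat{\text{CFA}}(\mathcal{H}_K,z)$, so $f_1$ induces an isomorphism on homology. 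Now the homology of $(\widehat{\text{CFA}}(\mathcal{H}_\infty,z), m_1)$ is one-dimensional — indeed $\widehat{\text{CFA}}(\mathcal{H}_\infty,z) = \langle x_0 \rangle$ is one-dimensional with $m_1 = 0$ (it is a reduced type $A$ structure by the computation in the Remark following Example~\ref{Example:Mazur}) — so $f_1(x_0)$ must be a nonzero homology class in $H_*(\widehat{\text{CFA}}(\mathcal{H}_K,z), m_1)$; in particular $f_1(x_0) \neq 0$ as an element, so $J_0 \neq \varnothing$.

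Finally I would fix $\nu_0$ to be $y_0 \otimes \eta$ for any chosen $y_0 \in J_0$; since $J_0$ is independent of $m$ and $C^{\bu} = \{x^0 \otimes \eta \mid x^0 \in B^0_K\}$ is a subset of $\bm{C^m}$ for every $m \in \mathbb{Z}_{\geq 0}$, this $\nu_0$ lies in $C^{\bu} \cap I_m$ for all $m$, as required. (Strictly, one also wants $f_1(x_0)$ to be a single basis element rather than a sum; but the statement only requires $\nu_0 \in C^\bu \cap I_m$, i.e., that $I_m$ contains \emph{some} element of $C^\bu$, which follows from $J_0 \neq \varnothing$, and any fixed $y_0 \in J_0$ works since $J_0$ is $m$-independent.)

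\textbf{Main obstacle.} The only real point is the passage from "$f$ is an $\A_\infty$ homotopy equivalence" to "$f_1$ is a quasi-isomorphism of $(\cdot, m_1)$-complexes"; this is routine homological algebra for $\A_\infty$-modules, but one must be slightly careful because $\widehat{\text{CFA}}(\mathcal{H}_K,z)$ is a priori not reduced ($m_1$ may be nonzero), so the argument genuinely uses the $n=1$ components of the null-homotopy identities rather than just counting dimensions. Everything else — the expansion of $f \boxtimes \id_m(x_0 \otimes \eta)$ and the identification of $I_m \cap C^\bu$ — is a verbatim repetition of the end of the proof of Lemma~\ref{Lemma:homotopyequivgenerator}.
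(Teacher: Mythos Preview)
Your proposal is correct and follows essentially the same approach as the paper: both reduce to showing $f_1(x_0)\neq 0$ via the expansion from Lemma~\ref{Lemma:homotopyequivgenerator}, then extract the $n=1$ component of the null-homotopy identity $g_1\circ f_1 - \id = m_1 H_1 + H_1 m_1$ on $\widehat{\text{CFA}}(\mathcal{H}_\infty,z)$ and use that $m_1=0$ there. The only cosmetic difference is that the paper phrases this as a proof by contradiction (assume $f_1(x_0)=0$ and derive $0=-x_0$) rather than invoking the general statement that $f_1$ is a quasi-isomorphism.
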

\begin{proof}
    Based on proof of Lemma \ref{Lemma:homotopyequivgenerator2}, we only need to prove that $J_0 \neq \varnothing$ i.e. $f_{1}(x_{0}) \neq 0$. We use proof by contradiction, so assume that $f_{1}(x_{0}) = 0$.\\
    
    Based on the definition of $\A_{\infty}$ homotopy equivalence, there exists an $\A_{\infty}$ homomorphism 
    $$g: \widehat{\text{CFA}}(\mathcal{H}_{K},z) \rightarrow \widehat{\text{CFA}}(\mathcal{H}_{\infty} , z),$$
    such that $f \circ g$ and $g \circ f$ are homotopic to identity. This means that 
    $$g \circ f - \id : \widehat{\text{CFA}}(\mathcal{H}_{\infty} , z) \rightarrow \widehat{\text{CFA}}(\mathcal{H}_{\infty} , z)$$
    is a null homotopic $\A_{\infty}$ homomorphism. Based on part of the definition of a null homotopic $\A_{\infty}$ homomorphism, there exists a map 
    $$H_{1} : \widehat{\text{CFA}}(\mathcal{H}_{\infty} , z) \rightarrow \widehat{\text{CFA}}(\mathcal{H}_{\infty} , z),$$
    such that we have  
    $$(g \circ f)_{1}(x_0) - \id_{1}(x_0) = m_{1}(H_{1}(x_0)) + H_{1}(m_1(x_0)).$$
    Note that the map $m_1$ vanishes on $(\mathcal{H}_{\infty} , z)$, hence the right hand side of the equality is zero. However the left hand side will be
    $$(g \circ f)_{1}(x_0) - \id_{1}(x_0) = g_{1}(f_{1}(x_0)) - x_0 = - x_0.$$
    This is a clear contradiction, and hence the proof is complete.\\
\end{proof}

Up to this point in this subsection we used $\A_{\infty}$ homotopy equivalences of $\widehat{\text{CFA}}(\mathcal{H}_{\infty} , z)$ and $\widehat{\text{CFA}}(\mathcal{H}_{K},z)$ to find representatives for the unique non-trivial class in 
    $$H_{*}(\widehat{\text{CFA}}(\mathcal{H}_{K}, z) \boxtimes \widehat{\text{CFD}}(\mathcal{H}'_{\frac{1}{m}} , z')).$$
One might ask if all these representatives can be generated using this method. The answer is positive. We suspect that this can be concluded from a more general theorem in the bordered Floer theory, however since we haven't found an specific reference yet, we sketch a proof of this fact in our special setting. This is done in Proposition \ref{Proposition:generatorsofHFS^3}.

\begin{prop}\label{Proposition:generatorsofHFS^3}
    Let $m_0$ be a fixed positive integer. Let $S_I$ be a cycle in 
    $$\widehat{\text{CFA}}(\mathcal{H}_{K}, z) \boxtimes \widehat{\text{CFD}}(\mathcal{H}'_{\frac{1}{m_0}} , z'),$$ such that $[S_I]$ is non-trivial in the homology. Then there exists an $\A_{\infty}$ homotopy equivalence
$$f_{S_I} : \widehat{\text{CFA}}(\mathcal{H}_{\infty} , z) \rightarrow \widehat{\text{CFA}}(\mathcal{H}_{K},z),$$
such that 
$$f_{S_I} \boxtimes \id_{m_0} (x_0 \otimes \eta) = S_{I}.$$
\end{prop}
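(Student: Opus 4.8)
The plan is to build the desired $\A_\infty$ homotopy equivalence $f_{S_I}$ by modifying a fixed known one. Start with the special homotopy equivalence $\widehat{f}:\widehat{\text{CFA}}(\mathcal{H}_\infty,z)\to\widehat{\text{CFA}}(\mathcal{H}_K,z)$ from Theorem \ref{Theorem:typeAinvariance}, which by Lemma \ref{Lemma:homotopyequivgenerator2} satisfies $\widehat{f}\boxtimes\id_{m_0}(x_0\otimes\eta)=S_{\widehat I_{m_0}}$ for some subset $\widehat I_{m_0}$ containing the fixed element $\nu_0\in C^\bu$. Since both $S_I$ and $S_{\widehat I_{m_0}}$ represent the unique nontrivial class in the one-dimensional homology $H_*(\widehat{\text{CFA}}(\mathcal{H}_K,z)\boxtimes\widehat{\text{CFD}}(\mathcal{H}'_{1/m_0},z'))$, they differ by a boundary: there is some $S_R$ with $\partial^\boxtimes S_R=S_I+S_{\widehat I_{m_0}}$. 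The idea is to realize this correction by composing $\widehat{f}$ with an $\A_\infty$ automorphism of $\widehat{\text{CFA}}(\mathcal{H}_\infty,z)$ (equivalently, by adding a null-homotopic term to $\widehat{f}$), chosen so that after boxing with $\id_{m_0}$ it adds exactly $\partial^\boxtimes S_R$.

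First I would set up the correction term at the level of type $A$ morphisms. The complex $\widehat{\text{CFA}}(\mathcal{H}_\infty,z)\boxtimes\widehat{\text{CFD}}(\mathcal{H}'_{1/m_0},z')$ is generated by the single element $x_0\otimes\eta$ with zero differential, so any chain map to $\widehat{\text{CFA}}(\mathcal{H}_K,z)\boxtimes\widehat{\text{CFD}}(\mathcal{H}'_{1/m_0},z')$ sends $x_0\otimes\eta$ to a cycle, and two such chain maps are chain homotopic iff they land in the same homology class. So I want an $\A_\infty$ homomorphism $g:\widehat{\text{CFA}}(\mathcal{H}_\infty,z)\to\widehat{\text{CFA}}(\mathcal{H}_\infty,z)$, homotopic to the identity, with $(g\boxtimes\id_{m_0})(x_0\otimes\eta)=x_0\otimes\eta$ but whose \emph{homotopy} $H$ is engineered so that $\widehat f\circ g$, boxed with $\id_{m_0}$, shifts the representative by $\partial^\boxtimes S_R$. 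More directly: I would define a candidate $f_{S_I}:=\widehat f + (\text{null-homotopic correction})$, where the correction is the $\A_\infty$ homomorphism whose components $(f_{S_I})_{k+1}-\widehat f_{k+1}$ are read off from $S_R$ via the box-tensor formula \eqref{Equation:boxtensorhomotopyequivformula}, using the explicit description of $\delta_k(\eta)$ and of $\widehat{\text{CFD}}(\mathcal{H}'_{1/m_0},z')$ from Example \ref{Example:twistedmeridian}. Concretely, writing $S_R=\sum y\otimes(\text{generators }\eta,\xi_j)$, each basis generator of $\widehat{\text{CFD}}(\mathcal{H}'_{1/m_0},z')$ is reached from $\eta$ by a unique directed path in Figure \ref{Figure:1:mDgraph} with a determined algebra word, so I can prescribe the value of a collection of maps $P\otimes\A^{\otimes k}\to P'$ on $x_0$ tensored with those words, and extend by zero elsewhere; boxing this with $\id_{m_0}$ then produces exactly $S_R$ plus lower-order contributions, and taking its $\partial^\boxtimes$-image (via the homotopy relation) produces $S_I+S_{\widehat I_{m_0}}$.

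Then I would check the two things that make this work: (i) that the corrected collection $f_{S_I}=\{(f_{S_I})_{k+1}\}$ is genuinely an $\A_\infty$ homomorphism, and (ii) that it is still a homotopy equivalence. For (i), the correction is a sum of elementary null-homotopic pieces — each of the form "insert a homotopy $H$ concentrated on one generator" — and null-homotopic $\A_\infty$ homomorphisms form a subspace closed under the $\A_\infty$ relations, so $f_{S_I}$ satisfies the same compatibility conditions as $\widehat f$; the unital conditions hold because the inserted words never contain $\mathds 1$. For (ii), $f_{S_I}$ differs from $\widehat f$ by a null-homotopic homomorphism, hence $f_{S_I}$ is homotopic to $\widehat f$, and a homomorphism homotopic to a homotopy equivalence is itself a homotopy equivalence; in particular $f_{S_I}\boxtimes\id_{m_0}$ is a chain homotopy equivalence and sends $x_0\otimes\eta$ to a cycle homologous to $S_{\widehat I_{m_0}}$, namely to $S_{\widehat I_{m_0}}+\partial^\boxtimes S_R=S_I$.

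The main obstacle I expect is step (i)–(ii) done \emph{honestly with the gradings and boundedness in play}: one must verify that the correction term can be chosen supported on finitely many $\A^{\otimes k}$ (so that it remains bounded and the box tensor product is well-defined), and — if one wants $f_{S_I}$ to respect gradings as in Theorem \ref{Theorem:typeAinvariance} — that the homotopy $S_R$ can be taken homogeneous, which follows because $S_I$ and $S_{\widehat I_{m_0}}$ lie in the same $\text{Spin}^{\mathbb C}$ structure and the homology is one-dimensional, so every homology between them can be taken within a single grading. A secondary subtlety is bookkeeping: translating a given subset $R\subseteq\bm{C^{m_0}}$ into a consistent prescription of $\A_\infty$-homotopy components (well-defined independently of which path in Figure \ref{Figure:1:mDgraph} one uses) requires the observation — already implicit in the structure of $\widehat{\text{CFD}}(\mathcal{H}'_{1/m_0},z')$ — that each generator $\xi_j$ is hit by a \emph{unique} sequence of algebra elements from $\eta$, so the assignment is unambiguous. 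This is why the author flags the proof as only a sketch in the present special setting rather than invoking a general bordered-Floer statement.
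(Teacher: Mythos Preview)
Your proposal is correct and follows essentially the same approach as the paper: start from a fixed $\A_\infty$ homotopy equivalence $\widehat f$, use that $[S_I]=[S_{\widehat I_{m_0}}]$ to find $R$ with $\partial S_R = S_I + S_{\widehat I_{m_0}}$, and add to $\widehat f$ a null-homotopic correction built from $R$. The paper is simply more explicit about the construction of the null homotopy, setting $H_1(x_0)$ equal to (the $B_K$-part of) $S_{R\cap C^\bu}$ and $H_{n+1}(x_0\otimes\rho_3\otimes\rho_{23}^{\otimes(n-1)})$ equal to (the $B_K$-part of) $S_{R\cap C^\circ_n}$, zero elsewhere --- exactly your observation that each $\xi_j$ is reached from $\eta$ by a unique algebra word, making the assignment unambiguous.
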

\begin{proof}
    Consider an arbitrary fixed $\A_{\infty}$ homotopy equivalence
$$f' : \widehat{\text{CFA}}(\mathcal{H}_{\infty} , z) \rightarrow \widehat{\text{CFA}}(\mathcal{H}_{K},z).$$
Also consider a subset $I' \subseteq \bm{C^m}$ such that 
    $$f' \boxtimes \id_{m_0} (x_0 \otimes \eta) = S_{I'}.$$
 
We are going to construct a null homotopic $\A_{\infty}$ homomorphism 
$$g: \widehat{\text{CFA}}(\mathcal{H}_{\infty} , z) \rightarrow \widehat{\text{CFA}}(\mathcal{H}_{K},z),$$
such that 
$$g \boxtimes \id_{m_0} (x_0 \otimes \eta) = S_{I} - S_{I'}.$$
Then by setting $f_{S_I} = f'+g$ we will have the desired result.\\

To construct $g$, we first note that $[S_{I'}]$ is a generator of the homology and hence $[S_{I'}] = [S_{I}]$. This means there exist a relation 
$$\partial S_{R} = S_{I} - S_{I'}.$$
Based on the definition of an $\A_{\infty}$ null homotopy, we need to define the maps 
$$H_{i} : \widehat{\text{CFA}}(\mathcal{H}_{\infty} , z) \otimes \A^{\otimes(i-1)} \rightarrow \widehat{\text{CFA}}(\mathcal{H}_{K},z).$$
These collection of maps is defined as follows. We set the maps to be zero apart from two following cases: 
$$H_{1}(x_0) = S_{R \ \cap \  C^{\bu}} \ \text{and} \ H_{n+1}(x_0 \otimes \underbrace{\rho_{3} \otimes \rho_{23} \otimes \cdots \otimes \rho_{23}}_{n}) = S_{R \  \cap \  C^{\circ}_{n}}.$$
One can see that if we define an $\A_{\infty}$ null homotopy $g$ using this collection of maps, it will satisfy the mentioned relations.
\end{proof}

\begin{rema}\label{Remark:familygrowth}
Proposition \ref{Proposition:generatorsofHFS^3} has a very interesting corollary. Note that once we construct the $\A_{\infty}$ homotopy equivalence $f_{S_I}$, then we can consider a cycle $S_{I_m}$ in chain complex 
$$\widehat{\text{CFA}}(\mathcal{H}_{K}, z) \boxtimes \widehat{\text{CFD}}(\mathcal{H}'_{\frac{1}{m}} , z') \ \text{for all} \ m \in \mathbb{Z}_{>0},$$
such that it satisfies 
$$f_{S_I}\boxtimes \id_{m} (x_0 \otimes \eta) = S_{I_{m}}.$$\\
In particular, we have $S_{I_{m_0}} = S_{I}$. Intuitively, this means any cycle which generates the homology of 
$$\widehat{\text{CFA}}(\mathcal{H}_{K}, z) \boxtimes \widehat{\text{CFD}}(\mathcal{H}'_{\frac{1}{m}} , z') \ \text{for}\  m=m_0,$$
can grow to a family of generating cycles for all $m \geq m_0$. We can relate the cycles in this family as follows.\\ 

In the rest of this discussion, let $f=f_{S_{I}}$. Similar to the proof of Lemma \ref{Lemma:homotopyequivgenerator}, we define subsets $J_{k} \subseteq B_K$ and $J' \subseteq B_K$ as follows: 
$$f_{k+1}(x_0 \otimes \underbrace{\rho_3 \otimes \rho_{23} \otimes \cdots \otimes \rho_{23}}_{k}) = \sum_{y \in J_k} y \ \  \text{for} \ k \in \mathbb{Z}_{\geq 0} \ ,$$
$$\text{and} \ \ f_{2}(x_0 \otimes \rho_{1}) = \sum_{y \in J'} y.$$
Using the notations, we can write:
\begin{equation}\label{Equation:generatorexpand}
 f \boxtimes \id_m (x_0 \otimes \eta) = S_{I_m} = \sum_{y \in J_0} y \otimes \eta + \sum_{k=1}^{m} \sum_{y \in J_k} y \otimes \xi_{k} + \sum_{y \in J'} y \otimes \xi_{m}.   
\end{equation}
We can use this expansion of $S_{I_{m}}$ to compare it with $S_{I_{m+1}}$ as follows. Define the following elements of the chain complex:
\begin{equation}\label{Equation:generatordecompose}
S_{I''_m} : = \sum_{y \in J_0} y \otimes \eta + \sum_{k=1}^{m} \sum_{y \in J_k} y \otimes \xi_{k} \ \text{and} \ S_{I'_m} = \sum_{y \in J'} y \otimes \xi_{m}.
\end{equation}
It is clear that $S_{I_m} = S_{I''_m} + S_{I'_m}$. Now we can write
$$S_{I_{m+1}} = \sum_{y \in J_0} y \otimes \eta + \sum_{k=1}^{m+1} \sum_{y \in J_k} y \otimes \xi_{k} + \sum_{y \in J'} y \otimes \xi_{m}$$
$$ = S_{\Phi'_{m}(I''_{m})} + \sum_{y \in J_{m+1}} y \otimes \xi_{m+1} + S_{\Phi_{m}(I'_{m})}.$$
Furthermore, we can write
\begin{equation}\label{Equation:Twostabilizationformula}
S_{I''_{m+1}} = S_{\Phi'_{m}(I''_{m})} + \sum_{y \in J_{m+1}} y \otimes \xi_{m+1} \  \ \text{and}  \ \ S_{I'_{m+1}} = S_{\Phi_{m}(I'_{m})}.
\end{equation}
Recall that $I_{m+1}$ is a subset of $\bm{C^{m+1}}$, and here we have used both of the inclusions 
$$\Phi_{m} \ , \ \Phi'_{m} : \bm{C^{m}} \longhookrightarrow \bm{C^{m+1}},$$
defined in Subsection \ref{Subsection:GenstructureofBoxtensor}, to relate $I_{m+1} \subseteq \bm{C^{m+1}}$ to $I_{m} \subseteq \bm{C^{m}}$. \\
\end{rema}

Now, we prove an algebraic fact examining the properties of $S_{I'_m}$ in Proposition \ref{Proposition:=type(1)and(3)edges}. The relevance of this fact will become apparent in the next section. 

\begin{prop}\label{Proposition:=type(1)and(3)edges}
  Consider the element $S_{I^{\bu}}$, defined as follows
    $$S_{I^{\bu}} := S_{I_{m} \cap C^{\bu}} = \sum_{y \in J_{0}} y \otimes \eta.$$
 Then we will have 
 $$\partial_{3} (S_{I^{\bu}}) + \partial(S_{I'_{m}}) = 0.$$
\end{prop}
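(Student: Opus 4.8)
The plan is to recognize the asserted identity as the degree-two instance of the $\A_{\infty}$ homomorphism relation for $f = f_{S_I}$, after translating the graph-theoretic operators $\partial_i$ into the $\A_{\infty}$ operations $m_{\bullet}$. Throughout I use the expansions of Remark \ref{Remark:familygrowth}: $\sum_{y \in J_0} y = f_1(x_0)$ (a sum over $J_0 \subseteq B^0_K$) and $\sum_{y \in J'} y = f_2(x_0 \otimes \rho_1)$ (a sum over $J' \subseteq B^1_K$), so that $S_{I^{\bu}} = \sum_{y \in J_0} y \otimes \eta$ and $S_{I'_m} = \sum_{y \in J'} y \otimes \xi_m$.

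First I would compute $\partial_3(S_{I^{\bu}})$. By part $(3)$ of Proposition \ref{Proposition:boxtensordifferential}, the type-$(3)$ edges out of $y \otimes \eta$ are precisely the edges $y \otimes \eta \to z \otimes \xi_m$ indexed by the basis elements $z$ appearing in $m_2(y \otimes \rho_1)$. Summing over $y \in J_0$ and using $\mathbb{F}_2$-bilinearity of $m_2$,
$$\partial_3(S_{I^{\bu}}) = m_2\big(f_1(x_0) \otimes \rho_1\big) \otimes \xi_m.$$
Next I would compute the full differential of $S_{I'_m}$. Since $\xi_m$ is the last white box, the type-$D$ module computed in Example \ref{Example:twistedmeridian} satisfies $\delta(\xi_m) = 0$, hence $\delta_i(\xi_m) = 0$ for all $i \geq 1$, and the box-tensor differential of $y \otimes \xi_m$ collapses to its $\delta_0$-term $m_1(y) \otimes \xi_m$ (the $\iota_0$-part of $m_1(y)$ dies against $\xi_m \in \iota_1 \widehat{\text{CFD}}(\mathcal{H}'_{\frac{1}{m}}, z')$). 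Hence
$$\partial(S_{I'_m}) = m_1\big(f_2(x_0 \otimes \rho_1)\big) \otimes \xi_m.$$

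It then remains to invoke the $\A_{\infty}$ homomorphism relation for $f$ on the input $x_0 \otimes \rho_1$, namely its $n = 2$ case. Writing $m$, $m'$ for the operations on $\widehat{\text{CFA}}(\mathcal{H}_{\infty}, z)$ and $\widehat{\text{CFA}}(\mathcal{H}_{K}, z)$ respectively, this relation reads
$$m'_1\big(f_2(x_0 \otimes \rho_1)\big) + m'_2\big(f_1(x_0) \otimes \rho_1\big) + f_1\big(m_2(x_0 \otimes \rho_1)\big) + f_2\big(m_1(x_0) \otimes \rho_1\big) = 0.$$
Because $\widehat{\text{CFA}}(\mathcal{H}_{\infty}, z)$ is concentrated in the $\iota_0$ idempotent and its only nonzero operations are the $m_{n+3}(x_0 \otimes \rho_3 \otimes \rho_{23}^{\otimes n} \otimes \rho_2)$, we have $m_1(x_0) = 0$ and $m_2(x_0 \otimes \rho_1) = 0$; so the last two terms vanish, leaving $m'_1(f_2(x_0 \otimes \rho_1)) = m'_2(f_1(x_0) \otimes \rho_1)$ over $\mathbb{F}_2$. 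Tensoring with $\xi_m$ turns this into $\partial(S_{I'_m}) = \partial_3(S_{I^{\bu}})$, i.e. $\partial_3(S_{I^{\bu}}) + \partial(S_{I'_m}) = 0$.

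I do not anticipate a real obstacle: the argument is essentially bookkeeping, and the only two points requiring care are (i) matching the graph-theoretic $\partial_3$ on $C^{\bu}$ with the $\A_{\infty}$ operation $m_2(-\otimes\rho_1)$ via Proposition \ref{Proposition:boxtensordifferential}, and (ii) checking that $\partial S_{I'_m}$ contains no terms beyond the $m_1$-term, which is exactly where the vanishing $\delta(\xi_m)=0$ of $\widehat{\text{CFD}}(\mathcal{H}'_{\frac{1}{m}}, z')$ is used. One could alternatively project the cycle relation $\partial S_{I_m} = 0$ onto $C^{\circ}_m$, but for small $m$ this also picks up the type-$(2)$ edges $C^{\bu} \to C^{\circ}_m$, so the $\A_{\infty}$-relation route is the cleaner one.
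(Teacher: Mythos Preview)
Your proof is correct and follows essentially the same approach as the paper: both reduce the identity to the $n=2$ compatibility relation for the $\A_{\infty}$ homomorphism $f$ applied to $x_0 \otimes \rho_1$, and then kill the extra terms using $m_1(x_0)=0$ and $m_2(x_0\otimes\rho_1)=0$ in $\widehat{\text{CFA}}(\mathcal{H}_{\infty},z)$. Your explicit justification that $\partial(S_{I'_m})$ reduces to the $m_1$-term via $\delta(\xi_m)=0$ is a slight expansion of what the paper leaves implicit, but the argument is otherwise identical.
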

\begin{proof}
   Based on Proposition \ref{Proposition:boxtensordifferential}, we can rephrase the desired result as follows 
   \begin{equation}\label{Equation:Proposition:=type(1)and(3)edgesI}
    \sum_{y \in J_0} m_{2}(y \otimes \rho_1)\otimes \xi_{m} + \sum_{y \in J'} m_1(y)\otimes \xi_{m} = 0.
   \end{equation}
   Now recall that $J_0$ and $J'$ were defined as subsets of $B_{K}$ given by
   \begin{equation}\label{Equation:Proposition:=type(1)and(3)edgesII}
    f_{1}(x_0)=\sum_{y \in J_0} y \  \ \text{and} \ \  f_{2}(x_0 \otimes \rho_1)= \sum_{y \in J'} y.
   \end{equation}
  Using the compatibility condition in the definition of an $\A_{\infty}$ homomorphism, we can write: 
  $$m_2(f_1(x_0)\otimes\rho_1) +m_{1}(f_2(x_{0} \otimes \rho_1))+ f_{2}(m_1(x_0) \otimes \rho_1) + f_{1}(m_2(x_0 \otimes \rho_1)) = 0.$$
  Based on the description of $\widehat{\text{CFA}}(\mathcal{H}_{\infty},z)$ (appearing in Figure \ref{Figure:HinfinitytypeA}), we have $m_1(x_0)=0$ and $m_2(x_0 \otimes \rho_1) = 0$. Hence, we can conclude 
  $$m_2(f_1(x_0)\otimes\rho_1) +m_{1}(f_2(x_{0} \otimes \rho_1)) = 0.$$
  Combining this with Equation \ref{Equation:Proposition:=type(1)and(3)edgesII} gives us Equation \ref{Equation:Proposition:=type(1)and(3)edgesI} and completes the proof. 
\end{proof}

In the last result of this subsection, we combine Lemma \ref{Lemma:homotopyequivgenerator2} with Lemma \ref{Lemma:homotopyequivgenerator} to also deduce a result about the grading of element $\nu_{0}$. This is stated in Corollary \ref{Corollary:gradingofhomotopyequivgenerator}. \\

\begin{coro}\label{Corollary:gradingofhomotopyequivgenerator}
Using notations of Lemma \ref{Lemma:homotopyequivgenerator2}, we have that
$$gr_{S_m}(\nu_{0}) = [(\alpha_0;\beta_0,\gamma_0)] \ \text{for} \ m \in \mathbb{Z}_{\geq 0},$$
such that
$$2\beta_0 = M + \frac{1}{2}.$$
\end{coro}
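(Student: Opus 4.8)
The plan is to combine the grading information from Lemma \ref{Lemma:homotopyequivgenerator} with the structural information about $\nu_0$ from Lemma \ref{Lemma:homotopyequivgenerator2}. Recall that Lemma \ref{Lemma:homotopyequivgenerator} establishes that, for any $\A_{\infty}$ homotopy equivalence $\widehat f$, the representative $S_{\widehat I_m} = \widehat f \boxtimes \id_m(x_0 \otimes \eta)$ is homogeneous with respect to $gr_{S_m}$, and the common grading double coset of its elements has a representative $(\alpha; \beta, \gamma)$ with $2\beta = M + \tfrac{1}{2}$. The key observation is that this homogeneity statement holds for the arbitrary $\A_{\infty}$ homotopy equivalence $f$ appearing in Lemma \ref{Lemma:homotopyequivgenerator2}, since the proof of Lemma \ref{Lemma:homotopyequivgenerator} only used $f$ being an $\A_{\infty}$ homomorphism compatible with the gradings, not the full strength of being an equivalence.

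First I would invoke Lemma \ref{Lemma:homotopyequivgenerator2} to obtain the distinguished element $\nu_0 \in C^{\bu} \cap I_m$ which is the same for all $m \in \mathbb{Z}_{\geq 0}$. Next I would apply the homogeneity conclusion of Lemma \ref{Lemma:homotopyequivgenerator} (for the given $f$) to conclude that every element of $I_m$, in particular $\nu_0$, shares the same grading double coset $gr_{S_m}(\nu_0) = [(\alpha_0; \beta_0, \gamma_0)]$, and that this double coset has a representative whose second Spin$^{\mathbb{C}}$-entry $\beta_0$ satisfies $2\beta_0 = M + \tfrac{1}{2}$. Since $\nu_0$ is fixed, its grading double coset $gr_{S_m}(\nu_0)$ is the same for all $m \geq 0$ — this is already implicit in the expansion \eqref{Equation:generatorexpand}, where $\nu_0 \in J_0$ contributes $\nu_0 = y_0 \otimes \eta$ with $gr_{S_m}(\nu_0) = gr_S(y_0) \cdot gr_m(\eta) = gr_S(y_0) \cdot (0;0,0)$, manifestly independent of $m$.

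I expect the only subtlety — hardly an obstacle — is the bookkeeping needed to make sure the representative $(\alpha_0; \beta_0, \gamma_0)$ chosen for $\nu_0$ can simultaneously be taken to have $\beta_0$ with $2\beta_0 = M + \tfrac{1}{2}$ and to be the \emph{same} for all $m$. This follows because $gr_m(\eta) = (0;0,0)$ is $m$-independent and $P(\bm{z_0}) = \langle [(M;0,1)] \rangle$ has trivial second Spin$^{\mathbb{C}}$-component shift (equal to $0$), so translating a representative by elements of $P(\bm{z_0})$ does not change $\beta_0$ modulo anything — hence the value $\beta_0 = \tfrac{1}{2}(M + \tfrac{1}{2})$ is well-defined on the double coset. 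Assembling these observations gives the statement directly, so the proof is short: cite Lemma \ref{Lemma:homotopyequivgenerator2} for the existence of the fixed $\nu_0 \in C^{\bu}$, cite the (suitably general) grading conclusion of Lemma \ref{Lemma:homotopyequivgenerator} applied to $f$ for the constraint $2\beta_0 = M + \tfrac{1}{2}$, and note $m$-independence from the explicit formula $gr_{S_m}(\nu_0) = gr_S(y_0)$.
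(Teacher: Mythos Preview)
Your approach assumes that the grading conclusion of Lemma~\ref{Lemma:homotopyequivgenerator} (the constraint $2\beta = M + \tfrac{1}{2}$) transfers to \emph{any} $\A_\infty$ homotopy equivalence $f$, not just the specific $\widehat{f}$ supplied by Theorem~\ref{Theorem:typeAinvariance}. But the proof of that lemma uses the $G$-set isomorphism $\phi$ in an essential way: the identity $gr_{S_m}(y\otimes\sigma)=\phi(gr(x_0))\,gr_m(\eta)$ and the subsequent analysis of $\phi(gr(x_0))$ both require $\phi$. Theorem~\ref{Theorem:typeAinvariance} packages $\widehat{f}$ and $\phi$ together; an arbitrary $f$ --- for instance the $f_{S_I}$ built by hand in Proposition~\ref{Proposition:generatorsofHFS^3} from an arbitrary null-homotopy --- carries no such $\phi$ a priori, and nothing in the paper asserts that it is graded. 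Your phrase ``compatible with the gradings'' is exactly the hypothesis that needs justification.

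The paper takes a different route that sidesteps this problem. It keeps both $\widehat{f}$ and the arbitrary $f$ in play, uses that $[S_{I_m}]=[S_{\widehat{I}_m}]$ to write $\partial S_{R_m} = S_{I_m} - S_{\widehat{I}_m}$, and then argues combinatorially inside $C^{\bu}$: there must be an element $\nu_{R_m}\in R_m\cap C^{\bu}$ with type-(1) edges to some $\nu_0\in I_m\cap C^{\bu}$ and some $\widehat{\nu}_0\in \widehat{I}_m\cap C^{\bu}$ (otherwise one could peel off $I_m\cap C^{\bu}$ via a boundary and obtain a representative of the generator disjoint from $C^{\bu}$, contradicting Lemma~\ref{Lemma:homotopyequivgenerator2} plus Proposition~\ref{Proposition:generatorsofHFS^3}). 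A direct check using Equation~\ref{Equation:typeAgrading} shows that type-(1) edges (arising from $m_1$) preserve the $\spin^{\mathbb{C}}$ component, so $\nu_0$ inherits the $\beta$-value from $\widehat{\nu}_0$. Your argument could be salvaged by first proving that $S_{I_m}$ is homogeneous for arbitrary $f$ --- then $[S_{I_m}]=[S_{\widehat{I}_m}]$ forces equal grading double cosets --- but that homogeneity itself is what needs an independent proof, and the Remark after Lemma~\ref{Lemma:homotopyequivgenerator} only gestures at it via a reference to \cite{Lipshitz2008BorderedHF}.
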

\begin{proof}
    We also use the notations of Lemma \ref{Lemma:homotopyequivgenerator}. If
    $$I_m \ \cap \  \widehat{I}_{m} \ \cap \ C^{\bu} \neq \varnothing,$$
    then we are done. So assume that the mentioned intersection is empty. As a result the cycle $S_{I_{m}} - S_{\widehat{I}_m}$ has a non-trivial intersection with $C^{\bu}$.\\
    
    We know that
    $$[S_{\widehat{I}_m}] = [S_{I_m}] \in H_{*}(\widehat{\text{CFA}}(\mathcal{H}_{K}, z) \boxtimes \widehat{\text{CFD}}(\mathcal{H}'_{\frac{1}{m}} , z')).$$
    This means there exist a relation 
    $$\partial S_{R_m} = S_{I_{m}} - S_{\widehat{I}_{m}}.$$
Using graph-theoretic description of $$\widehat{\text{CFA}}(\mathcal{H}_{K}, z) \boxtimes \widehat{\text{CFD}}(\mathcal{H}'_{\frac{1}{m}} , z'),$$ 
we can show that there exist elements 
    $$\nu_{R_m} \in R_m \ \cap\  C^{\bu}, \ \nu_{0} \in I_m \ \cap \ C^{\bu} \ \text{and} \ \widehat{\nu}_{0} \in \widehat{I}_m \ \cap \ C^{\bu} $$
    and edges 
    $$\nu_{R_m} \longrightarrow \nu_{0} \ \text{and} \ \nu_{R_m} \longrightarrow \widehat{\nu}_{0}.$$
    This must be true since otherwise we can consider a set $R'_m$ as follows: 
    $$R'_m : = \{ \nu \ | \ \nu \in R_{m} \ \cap \ C^{\bu}, \ \exists \ \nu' \in I_{m} \ \cap \ C^{\bu} \ \text{and an edge} \ \nu \rightarrow \nu'\}.$$
    Then we have 
    $$\partial S_{R'_m} = S_{I_m \ \cap \ C^{\bu}} + S_{R''_m} \ \text{for} \ R''_m \subseteq \bm{C^m} \setminus C^{\bu}.$$
    And as a result, we have the cycle 
    $$S_{I_m} - \partial S_{R'_m} = S_{I_m \setminus C^{\bu}} -  S_{R''_m},$$ 
    which sits outside the black box $C^{\bu}$. However, this cycle is a representative of the generator of the homology as 
    $$[S_{I_m} - \partial S_{R'_m}] = [S_{I_m}].$$
    This contradicts the combination of Lemma
    \ref{Lemma:homotopyequivgenerator2} and Proposition \ref{Proposition:generatorsofHFS^3}.\\

    Now that we have the edges 
    $$\nu_{R_m} \longrightarrow \nu_{0} \ \text{and} \ \nu_{R_m} \longrightarrow \widehat{\nu}_{0},$$
    we are going to prove that the edges between elements of $C^{\bu}$ preserve the the $\spin^{\mathbb{C}}$ component of the grading i.e. for any edge 
    $$\nu_1 \longrightarrow v_2 \ \ \text{for} \ \nu_1,\nu_2 \in C^{\bu},$$
    the grading double cosets $gr_{S_{m}}(\nu_1)$ and $gr_{S_{m}}(\nu_2)$ has representatives with equal $\spin^{\mathbb{C}}$ components. This will finish our proof.\\

    Based on Proposition \ref{Proposition:boxtensordifferential}, we know that any such edge must be of the first type. This means that there exists $x_1,x_2 \in B^{0}_K$ such that
    $$\nu_1 = x_1 \otimes \eta, \ \nu_2= x_2 \otimes \eta, \ \text{and} \ x_{2} \ \text{appears in} \ m_1(x_{1}).$$
    Now using Equation \ref{Equation:typeAgrading}, we have 
    $gr_{S_m}(x_2) = \lambda^{-1} gr_{S_m}(x_1),$
which gives us the equality of the $\spin^{\mathbb{C}}$ components
\end{proof}
\subsection{Homology of $\widehat{\text{CFA}}(\mathcal{H}_{K}, z) \boxtimes \widehat{\text{CFD}}(\mathcal{H}'_{\frac{1}{m}} , z')$ : Combinatorics}\label{Subsection:HomologyofboxtensorS^3:Combinatorics}\hfill\\

In Subsection \ref{Subsection:HomologyofboxtensorS^3:Algebra}, we studied a lot of properties of the homology 
$$H_{*}(\widehat{\text{CFA}}(\mathcal{H}_{K}, z) \boxtimes \widehat{\text{CFD}}(\mathcal{H}'_{\frac{1}{m}} , z')),$$ 
using the $\A_{\infty}$ homotopy equivalences. In theory, all of the information about this homology can be retrieved from this algebraic perspective, but working with $\A_{\infty}$ homotopy equivalences is not always easy. In this subsection, we use the combinatorial structure of the chain complex to study some of the other properties of the homology which are going to be useful later.\\

In the rest of this subsection we use the graph-theoretic description of the chain complex 
$$\widehat{\text{CFA}}(\mathcal{H}_{K}, z) \boxtimes \widehat{\text{CFD}}(\mathcal{H}'_{\frac{1}{m}} , z').$$
We start by defining an \emph{alternating path} between elements of a subset ${I \subseteq \bm{C^{m}}.}$
\begin{defi}
    An alternating path between elements $\nu, \nu' \in I$ consists of a sequence $\nu_0,\cdots,\nu_{k}$ of elements of $I$ which satisfies the following conditions:
    \begin{itemize}[leftmargin=*]
   \item We have $\nu_0 = \nu$ and $\nu_k = \nu'$.
   \item For any $i \in \{1, \cdots, k\}$, there exists an element $\nu'_{i} \in \bm{C^{m}}$ and two edges 
   $$\nu_{i} \longrightarrow \nu'_{i} \ \text{and} \ \nu_{i-1} \longrightarrow \nu'_{i}.$$
   \end{itemize}
   An alternating path is called \emph{good} if none of the edges mentioned above are of the third type (based on terminology introduced in Proposition \ref{Proposition:boxtensordifferential}).
\end{defi}
A schematic picture of an alternating path can be seen in Figure \ref{Figure:alternatingpath}.

\begin{figure}[h]
    \centering
    \includegraphics[width=0.7\linewidth]{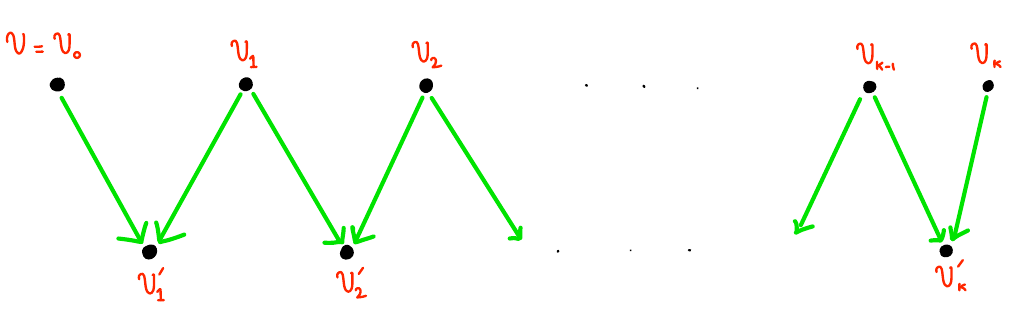}
    \caption{A schematic picture of an alternating path}
    \label{Figure:alternatingpath}
\end{figure}

We can use alternating paths to define an equivalence relation on the elements of $I$, where 
$$\nu \sim \nu' \Longleftrightarrow \ \text{there exist a good alternating path between} \ \nu \ \text{and} \ \nu'.$$
The importance of this equivalence relation is due to the following simple property.
\begin{lemm}\label{Lemma:alternatingpathgrading}
   For any two elements $\nu, \nu' \in I$, we have: 
   $$\nu \sim \nu' \Longrightarrow h_{m}(\nu) = h_{m}(\nu').$$
\end{lemm}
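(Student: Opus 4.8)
The plan is to reduce the statement to a single elementary observation: each ``step'' of a good alternating path preserves the Maslov grading $h_m$. Since $\nu \sim \nu'$ means there is a good alternating path $\nu_0 = \nu, \nu_1, \dots, \nu_k = \nu'$, once we know $h_m(\nu_{i-1}) = h_m(\nu_i)$ for every $i$, the conclusion follows immediately by transitivity. So the heart of the argument is the single-step case: given $\nu_{i-1}, \nu_i \in I$ and an element $\nu'_i \in \bm{C^m}$ together with two edges $\nu_{i-1} \to \nu'_i$ and $\nu_i \to \nu'_i$, neither of which is of type $(3)$, we must show $h_m(\nu_{i-1}) = h_m(\nu_i)$.

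First I would recall the fundamental grading property of the box tensor product: if there is a directed edge $\mu \to \mu'$ (i.e.\ $\mu'$ appears in $\partial \mu$), then $\mu$ and $\mu'$ lie in the same $\spin^{\mathbb{C}}$ structure and their relative Maslov grading satisfies $h_m(\mu') = h_m(\mu) - 1$. This is the chain-complex analogue of the usual fact that the differential drops Maslov grading by one, and it is exactly the content of the grading discussion in Subsection~\ref{Subsection:Gradingsinboxtensor} (the double-coset relation $gr(x_1 \otimes y_1) = gr(x_2 \otimes y_2)\lambda^h \mu^a$ together with the fact that an edge comes from applying the differential). Applying this to both edges $\nu_{i-1} \to \nu'_i$ and $\nu_i \to \nu'_i$ gives
$$h_m(\nu'_i) = h_m(\nu_{i-1}) - 1 \quad \text{and} \quad h_m(\nu'_i) = h_m(\nu_i) - 1,$$
hence $h_m(\nu_{i-1}) = h_m(\nu_i)$. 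Chaining this equality along the path yields $h_m(\nu) = h_m(\nu')$.

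The one subtlety — and the reason the definition insists the path be \emph{good}, i.e.\ avoids type $(3)$ edges — is that the naive argument above implicitly assumes the two endpoints of each edge lie in a common $\spin^{\mathbb{C}}$ structure on the \emph{same} complex $\widehat{\text{CFA}}(\mathcal{H}_K,z,w) \boxtimes \widehat{\text{CFD}}(\mathcal{H}'_{1/m},z')$, so that ``relative Maslov grading'' is literally a number. For edges of types $(1)$, $(2)$, $(4)$ this is automatic: the edge is genuinely part of the differential $\partial$, so its two ends sit in the same $\spin^{\mathbb{C}}$ structure and the difference of their $gr$-cosets is exactly $\lambda^{-1}$, giving $h$-drop $1$ with no ambiguity. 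A type $(3)$ edge, however, originates from $C^{\bu}$ and ``wraps around'' to $C^{\circ}_m$; while it is still a summand of $\partial$, when one compares gradings via the double-coset formula one may need a nontrivial power of the periodic-domain generator $p_m = (-\tfrac{m-1}{2};1,-m)$ of $P(\eta)$ to align the $\spin^{\mathbb{C}}$ components, and that correction contributes to the Maslov component in an $m$-dependent way (cf.\ Lemma~\ref{Lemma:Gradingchangebym}). Excluding type $(3)$ edges is precisely what guarantees that no such wrap-around correction is ever invoked, so that each step really does change $h_m$ by exactly $-1$. I expect writing out this point cleanly — i.e.\ verifying that for type $(1)$, $(2)$, $(4)$ edges the relevant double cosets differ by exactly $\lambda^{-1}$ with no periodic-domain ambiguity — to be the only nonroutine part; everything else is a two-line chase.
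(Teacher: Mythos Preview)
Your core argument is correct and matches the paper's proof exactly: the differential drops the Maslov grading by one, so two elements with edges to a common target have equal Maslov grading, and induction along the path finishes it.

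However, your final paragraph about type $(3)$ edges is incorrect and should be dropped. A type $(3)$ edge is still a genuine summand of the differential $\partial$ on the box tensor product, and since the result is $\widehat{CF}(S^3)$ (which has a unique $\spin^{\mathbb{C}}$ structure), the relative Maslov grading between any two basis elements is a well-defined integer and the differential drops it by exactly one for \emph{every} edge, type $(3)$ included. Whatever powers of $p_m$ are needed to align $\spin^{\mathbb{C}}$ components in the double-coset computation, the answer for any edge is always $-1$; there is no $m$-dependent ambiguity. So the lemma actually holds for arbitrary alternating paths, not just good ones, and the paper's proof reflects this by never mentioning the edge types. The ``good'' restriction in the definition of $\sim$ is there for other reasons---it is what makes Propositions \ref{Proposition:equivalenceclassblack} and \ref{Proposition:betaperservation} work---not for this grading lemma.
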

\begin{proof}
    Note that the differential in the chain complex 
    $$\widehat{\text{CFA}}(\mathcal{H}_{K}, z) \boxtimes \widehat{\text{CFD}}(\mathcal{H}'_{\frac{1}{m}} , z')$$
    decreases the Masolv grading by one. The statement then easily follows by a induction on the length of the alternating path between $\nu$ and $\nu'$.
\end{proof}

Proposition \ref{Proposition:equivalenceclassblack} states an important fact about the equivalence classes of this relation which will be useful later. First we remind some of the facts from the previous subsection.\\

Let $m_0$ be a positive integer. Let $S_I$ be a cycle such that $[S_I]$ is the generator of the homology 
    $$H_{*}(\widehat{\text{CFA}}(\mathcal{H}_{K}, z) \boxtimes \widehat{\text{CFD}}(\mathcal{H}'_{\frac{1}{m_0}} , z')).$$
As mentioned in Remark \ref{Remark:familygrowth}, we can use Proposition \ref{Proposition:generatorsofHFS^3}, to grow $S_I$, to a family of generators $S_{I_m}$ defined as:
$$S_{I_m}  = f_{S_{I}} \boxtimes \id_{m}(x_{0} \otimes \eta) \in \widehat{\text{CFA}}(\mathcal{H}_{K}, z) \boxtimes \widehat{\text{CFD}}(\mathcal{H}'_{\frac{1}{m}} , z') \ \text{for} \ m \in \mathbb{Z}_{>0}.$$
Note that $S_{I} = S_{I_{m_0}}$. Now we can use Equations \ref{Equation:generatordecompose} and \ref{Equation:Twostabilizationformula}. We can consider elements 
$$S_{I''_{m}} , S_{I'_{m}} \in \widehat{\text{CFA}}(\mathcal{H}_{K}, z) \boxtimes \widehat{\text{CFD}}(\mathcal{H}'_{\frac{1}{m}} , z') \ \text{for} \ m \in \mathbb{Z}_{>0}.$$
Now we are ready to state and prove Proposition \ref{Proposition:equivalenceclassblack}.
\begin{prop}\label{Proposition:equivalenceclassblack}
    Consider the equivalence relation $\sim$ on the subset $I''_{m} \subseteq \bm{C^{m}}$. Take an element $\nu \in I''_{m}$ and let $T=[\nu]_{\sim} \subseteq I''_{m}$. Then at least one of the following is true: 
    \begin{itemize}
        \item The subsets $T$ and $C^{\bu}$ have non-empty intersection
        \item The chain complex element $S_T$ is a nullhomologous cycle
    \end{itemize}
   Furthermore, if $S_{I_{m}}$ is a minimal cycle, then the latter can only happen when $T \cap I'_{m} \neq \varnothing$.  
\end{prop}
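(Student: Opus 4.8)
The plan is to argue by a boundary-cancellation analysis organized around the clockwise order and the edge-type classification of Proposition \ref{Proposition:boxtensordifferential}. First I would fix $\nu \in I''_m$, set $T = [\nu]_\sim$, and consider the chain element $S_T$. The key observation is that for $S_T$ \emph{not} to be a cycle, some edge $\mu \to \kappa$ with $\mu \in T$ must fail to be cancelled inside $\partial S_T$; but by the definition of the equivalence relation, any two elements of $I''_m$ that share a common target of edges that are \emph{not} of the third type already lie in the same class $T$. So the only way $\partial S_T \neq 0$ is through edges whose target is \emph{not} reached by any second edge from within $T$, or through third-type edges. I would first dispose of the generic case: if $T \cap C^{\bu} = \varnothing$, I want to show $S_T$ is a cycle and is null-homologous. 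Here I would use that $S_{I''_m}$ differs from $S_{I_m}$ only by $S_{I'_m}$ (Equation \ref{Equation:generatordecompose}), that $S_{I_m}$ is a cycle (it is $f_{S_I}\boxtimes\id_m(x_0\otimes\eta)$, the image of a cycle under a chain map), and hence $\partial S_{I''_m} = \partial S_{I'_m}$, a quantity supported near $C^\circ_m$ by type $(1)$ and type $(3)$ edges (cf. Proposition \ref{Proposition:=type(1)and(3)edges}, which identifies $\partial S_{I'_m}$ with $\partial_3 S_{I^\bu}$).

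Next I would decompose $I''_m$ into the $\sim$-equivalence classes $T = T_1, \dots, T_s$ and analyze how $\partial S_{I''_m} = \sum_j \partial S_{T_j}$ can be non-zero only through the third-type edges emanating from $C^{\bu}$. Concretely: by Proposition \ref{Proposition:boxtensordifferential}, the only edges not accounted for in forming good alternating paths are the type $(3)$ edges $x^0\otimes\eta \to x^1\otimes\xi_m$. A class $T$ that misses $C^{\bu}$ contains no source of a type $(3)$ edge, so every edge out of $T$ is of type $(1),(2)$ or $(4)$, and the definition of $\sim$ via good alternating paths forces all targets of such edges from $T$ to be hit an even number of times within $\partial S_T$ — unless that target also receives an edge from outside $T$, in which case it would instead contribute to $\partial S_{I''_m}$ near $C^\circ_m$, forcing (via the support statement) that $T$ actually meets a neighborhood of $C^\circ_m$; but then minimality considerations (Lemma \ref{Lemma:gapsinminimal}, Lemma \ref{Lemma:lengthminimal}) come into play. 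I would make this precise by showing: either the targets all cancel internally, so $S_T$ is a cycle, and since $[S_{I_m}]$ generates the one-dimensional homology while $S_T$ is a proper sub-sum of $S_{I_m}$ (up to $S_{I'_m}$) it must be null-homologous; or $T$ meets $C^{\bu}$.

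For the final sentence, assume in addition that $S_{I_m}$ is minimal and suppose the second alternative holds, i.e. $S_T$ is a null-homologous cycle with $T \subseteq I''_m$. If $T \cap I'_m = \varnothing$, then $T$ is a subset of $I_m$ (since $I_m = I''_m \cup I'_m$ and $T$ avoids $I'_m$), and $S_T$ is a non-empty cycle properly contained in $S_{I_m}$, directly contradicting minimality of $S_{I_m}$ as a cycle (the condition that $\partial S_{I'} \neq 0$ for all non-empty proper $I' \subset I_m$). Hence $T \cap I'_m \neq \varnothing$. The main obstacle I anticipate is the bookkeeping in the middle step: carefully verifying that the failure of $S_T$ to be a cycle is \emph{exactly} captured by type $(3)$ edges out of $C^{\bu}$ and by boundary contributions localized near $C^\circ_m$ — i.e. that good alternating paths, by excluding type $(3)$ edges, partition $I''_m$ into pieces each of whose internal boundary either closes up or forces intersection with $C^{\bu}$. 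This requires a clean statement that two elements hit by a common target via non-type-$(3)$ edges are $\sim$-equivalent, and an argument that any uncancelled target of a type $(2)$ or $(4)$ edge from $T$ would have to be cancelled by an edge originating outside $I''_m$, which is impossible since $\partial S_{I''_m}$ is supported on the image of $\partial S_{I'_m}$.
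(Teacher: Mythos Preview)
Your overall plan matches the paper's: use Proposition \ref{Proposition:=type(1)and(3)edges} to control $\partial S_{I''_m}$, use the definition of $\sim$ to separate equivalence classes, and handle the minimality clause by observing $T\subseteq I_m$ when $T\cap I'_m=\varnothing$. Two points, however, need correction.

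First, your nullhomology step is wrongly justified. You write that ``$[S_{I_m}]$ generates the one-dimensional homology while $S_T$ is a proper sub-sum of $S_{I_m}$ (up to $S_{I'_m}$), so it must be null-homologous.'' Being a proper sub-sum of a generator does not force null-homology; a priori $[S_T]$ could equal $[S_{I_m}]$. The paper's argument is different and is the one that actually works: once you know $S_T$ is a cycle with $T\cap C^\bu=\varnothing$, you invoke Lemma \ref{Lemma:homotopyequivgenerator2} together with Proposition \ref{Proposition:generatorsofHFS^3}, which say that \emph{every} representative of the nontrivial class meets $C^\bu$. Hence $[S_T]=0$.

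Second, your argument that $S_T$ is a cycle is tangled, and the appeals to Lemmas \ref{Lemma:gapsinminimal} and \ref{Lemma:lengthminimal} are red herrings. The paper's route is cleaner and avoids any ``near $C^\circ_m$'' case analysis. From Proposition \ref{Proposition:=type(1)and(3)edges} and $\partial S_{I_m}=0$ one gets directly
\[
(\partial_1+\partial_2+\partial_4)(S_{I''_m})=0.
\]
Now observe that if $\nu,\nu'\in I''_m$ lie in different $\sim$-classes, then their outgoing type $(1),(2),(4)$ edges have disjoint targets (this is the contrapositive of the definition of $\sim$). Hence the sum $(\partial_1+\partial_2+\partial_4)(S_{I''_m})=\sum_j(\partial_1+\partial_2+\partial_4)(S_{T_j})$ has no cross-cancellation, and each summand vanishes separately. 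If $T\cap C^\bu=\varnothing$ then $\partial_1 S_T=\partial_2 S_T=\partial_3 S_T=0$ automatically (these edge types originate in $C^\bu$), so $\partial S_T=(\partial_1+\partial_2+\partial_4)(S_T)=0$. Your sentence ``the definition of $\sim$ \ldots\ forces all targets \ldots\ to be hit an even number of times'' is not a consequence of the definition alone; it needs the vanishing of $(\partial_1+\partial_2+\partial_4)(S_{I''_m})$ as input.

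Your argument for the final clause is correct and is exactly what the paper does.
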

\begin{proof}
First, from Proposition \ref{Proposition:=type(1)and(3)edges} we can conclude that
$$(\partial_1 + \partial_2 + \partial_4) (S_{I''_{m}})= 0.$$
Now, note that if $\nu,\nu' \in I''_{m}$ are two elements such that $[\nu]_{\sim} \neq [\nu']_{\sim}$ then 
$$(\bigcup_{i=1,2,4} E_{i}(\nu,\cdot)) \  \bigcap \ (\bigcup_{i=1,2,4} E_{i}(\nu',\cdot)) = \varnothing.$$
This follows from the definition of the relation $\sim$ and the good alternating paths. As a result, any element of $\bm{C^{m}}$ appearing in $(\partial_1 + \partial_2 + \partial_4)(S_{T})$ can not be cancelled by the differentials of other elements in $I''_{m} \setminus T$ and hence $(\partial_1 + \partial_2 + \partial_4)(S_{T}) = 0$. Now, if we have that $T \cap C^{\bu} = \varnothing$, we can conclude 
$\partial_{3}(S_T) = 0 \Rightarrow \partial(S_T)= (\partial_1 + \partial_2 + \partial_4)(S_{T}) = 0.$
This follows from the definition of type $(3)$ edges in Proposition \ref{Proposition:boxtensordifferential} (in fact, by the same reasoning, it is clear that in this case we also have $\partial_1(S_T) = \partial_2(S_T) =0$). Finally, since $S_T$ is a cycle which is disjoint from the black box $C^{\bu}$, we can use Lemma \ref{Lemma:homotopyequivgenerator2} and Proposition \ref{Proposition:generatorsofHFS^3} to deduce that $S_T$ is nullhomologous.\\

To prove the second statement of the Proposition \ref{Proposition:equivalenceclassblack}, note that if ${T \cap I'_m = \varnothing}$, then none of the elements of the $S_T$ will be cancelled in the summation $S_{I''_m}+S_{I'_m} = S_{I_m}$. Hence, we will have $T \subseteq I$. This contradicts the minimality assumption. 
\end{proof}

\section{Stabilization of extremal knot Floer homologies}\label{Section:ExtermalknotFloerstabilization}
\subsection{Closed balls $B^{\bu}_{t}$ and shift maps $R_{\pm}$}\label{Subsection:ballsandshiftmaps}\hfill\\

In this section we use the properties of the box tensor product studied in Section \ref{Section:Boxtensor}, to prove Theorems \ref{Theorem:extremalhfk}, \ref{Theorem:extremalAlexander}, and \ref{Theorem:jumpsAlexander}. We start by defining some notation and stating some basic facts.\\

We start by considering the closed ball of radius $t$ centered at $C^{\bu}$. We denote this ball by $B^{\bu}_{t}$. It is clear that 
$$B^{\bu}_{t} = \{C^{\circ}_{m-t+1}, \cdots, C^{\circ}_{m} , C^{\bu}, C^{\circ}_{1}, \cdots,C^{\circ}_{t}\}.$$
Abusing the notation, we also use $B^{\bu}_{t}$ to denote all of the elements in this closed ball. Figure \ref{Figure:Blackboxball} depicts the ball $B^{\bu}_{t}$.\\

\begin{figure}[h]
    \centering
    \includegraphics[width=0.5\linewidth]{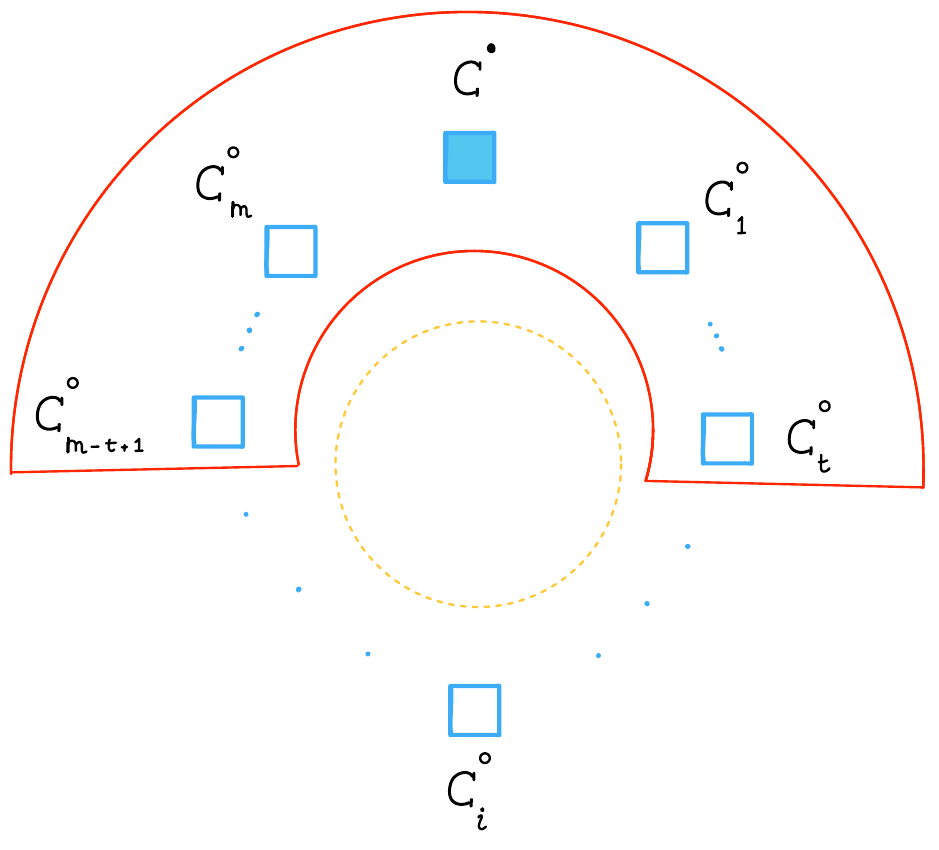}
    \caption{The closed metric ball $B^{\bu}_{t}$}
    \label{Figure:Blackboxball}
\end{figure}

It is also useful to define \emph{positive and negative half-balls} as follows: 
$$B^{\bu}_{[0,t]} := C^{\bu} \sqcup C^{\circ}_{1} \sqcup \cdots \sqcup  C^{\circ}_{t} \subseteq B^{\bu}_{t},$$
$$B^{\bu}_{[-t,0]} := C^{\circ}_{m-t+1} \sqcup \cdots \sqcup C^{\circ}_{m} \sqcup C^{\bu} \subseteq B^{\bu}_{t}.$$

Next we define two \emph{shift maps} $R_{\pm}$ as follows: 
$$R_{+} : \bm{C^m} \setminus \{C^{\bu}, C^{\circ}_{m}\} \rightarrow \bm{C^m} \ \text{and} \ R_{-} : \bm{C^m} \setminus \{C^{\bu}, C^{\circ}_{1}\} \rightarrow \bm{C^m},$$
$$R_{+}(x^1 \otimes \xi_i) : = x^1 \otimes \xi_{i + 1} \ \text{and} \ R_{-}(x^1 \otimes \xi_i) : = x^1 \otimes \xi_{i - 1}.$$
Note that the shift maps are injective and hence we can extend them to maps on power sets. Proposition \ref{Proposition:shiftbounddary} explains how shift maps preserve the differentials away from the black box. 

\begin{prop}\label{Proposition:shiftbounddary}
    For any subsets $I \subseteq \bm{C^m} \setminus B^{\bu}_{L_A+1}$, we have 
    $$\partial S_{I} = S_{J} \Rightarrow \partial S_{R_{\pm}(I)} = S_{R_{\pm}(J)}.$$
\end{prop}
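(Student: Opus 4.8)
The plan is to show that the shift maps commute with the differential on the part of the complex supported away from the black box, using the explicit classification of edges in Proposition \ref{Proposition:boxtensordifferential}. The key observation is that $R_{\pm}$ relabels a white-box generator $x^1 \otimes \xi_i$ to $x^1 \otimes \xi_{i\pm 1}$ while keeping the $\widehat{\text{CFA}}$-factor unchanged, and that type $(4)$ edges $x^1 \otimes \xi_i \to y^1 \otimes \xi_{i+n}$ are precisely the edges whose existence and label depend only on the pair $(x^1, y^1)$ (via the map $m_{n+1}(x^1 \otimes \rho_{23}^{\otimes n})$) and not on the absolute index $i$. So the first step is to check that, for $I \subseteq \bm{C^m} \setminus B^{\bu}_{L_A+1}$, every edge emanating from an element of $I$ is of type $(4)$: edges of types $(1), (2), (3)$ all originate in $C^{\bu}$ by Proposition \ref{Proposition:boxtensordifferential}, and $C^{\bu} \cap I = \varnothing$ since $I$ is disjoint from $B^{\bu}_{L_A+1} \supseteq C^{\bu}$.

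Next I would verify that $R_{\pm}(I)$ is still in the domain of $R_{\pm}$ and remains far from the black box in the relevant sense. By Lemma \ref{Lemma:differentialorder}, a type $(4)$ edge from $x^1 \otimes \xi_i$ lands in $C^{\circ}_{i+n}$ with $n \leq L_A - 1$; since $I \subseteq \bm{C^m} \setminus B^{\bu}_{L_A+1}$, every element of $I$ sits in some $C^{\circ}_i$ with $L_A+1 < i < m - L_A$ (using the clockwise ordering and Lemma \ref{Lemma:differentialorder} to see that both $I$ and $\partial S_I$ stay clear of the boxes $C^{\circ}_{m-L_A}, \dots, C^{\circ}_m$ that are within distance $L_A$ of $C^{\bu}$ going the other way). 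This ensures $R_{\pm}(I)$ avoids $C^{\bu}$ and $C^{\circ}_{m}$ (resp.\ $C^{\circ}_1$), so $R_{\pm}$ is defined on $I$, on $J$, and on the shifted images, and no edge out of $R_{\pm}(I)$ can "wrap around" past the black box.

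With this setup, the core computation is short: for each $\nu = x^1 \otimes \xi_i \in I$, Proposition \ref{Proposition:boxtensordifferential}(4) gives $\partial \nu = \sum_{n, y^1} c_{n,y^1}\, y^1 \otimes \xi_{i+n}$ where $c_{n,y^1}$ counts (mod $2$) the appearances of $y^1$ in $m_{n+1}(x^1 \otimes \rho_{23}^{\otimes n})$; applying $R_{+}$ replaces $\nu$ by $x^1 \otimes \xi_{i+1}$ and $\partial(R_+\nu) = \sum_{n,y^1} c_{n,y^1}\, y^1 \otimes \xi_{i+1+n} = R_+(\partial \nu)$, since the coefficients $c_{n,y^1}$ are manifestly independent of the index shift. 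Because $R_{\pm}$ is injective, it induces a bijection on the relevant subsets of $\bm{C^m}$, so cancellations of terms in $\partial S_I = \sum_{\nu \in I}\partial\nu$ match bijectively with cancellations in $\partial S_{R_{\pm}(I)}$; hence $\partial S_I = S_J$ forces $\partial S_{R_\pm(I)} = S_{R_\pm(J)}$. The same argument runs verbatim for $R_-$.

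I expect the only genuine subtlety — hardly an obstacle — to be the bookkeeping in the second step: one must be careful that $J$ itself lies in the domain where $R_{\pm}$ and the shift-equivariance of edges make sense, i.e.\ that $\partial S_I = S_J$ with $I$ far from the black box forces $J$ to be far enough as well (at least disjoint from $C^{\bu}$ and from $C^{\circ}_m$, resp.\ $C^{\circ}_1$). This is exactly why the hypothesis uses the buffer radius $L_A + 1$ rather than just excluding $C^{\bu}$: the extra $L_A$ of slack absorbs the maximal length of a type $(4)$ edge, guaranteeing that no differential out of $I$ reaches the forbidden boxes and that the picture genuinely looks shift-invariant, as anticipated in Figure \ref{Figure:Edgedecomposition}.
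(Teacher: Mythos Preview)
Your proposal is correct and follows precisely the same approach as the paper's proof: use Proposition \ref{Proposition:boxtensordifferential} to see that only type $(4)$ edges emanate from $I$, observe that these are manifestly shift-invariant, and invoke Lemma \ref{Lemma:differentialorder} to check that $J$ (and the shifted sets) stay in the domain of $R_{\pm}$. The paper's own argument is considerably terser --- it states only that $J \subseteq \bm{C^m} \setminus B^{\bu}_{1}$ so $R_{\pm}(J)$ is defined, and that type $(4)$ edges are ``clearly invariant under shift maps by definition'' --- whereas you have spelled out the index bookkeeping and the role of the $L_A+1$ buffer explicitly.
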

\begin{proof}
    First, note that due to Lemma \ref{Lemma:differentialorder}, we have that 
    $$J \subseteq \bm{C^m} \setminus B^{\bu}_{1}.$$
    As a result, $R_{\pm}(J)$ is well-defined.\\
    
    The proof follows from Proposition \ref{Proposition:boxtensordifferential}. Since we are away from the black box, all of the differentials are of the fourth type. These subset of differentials is clearly invariant under shift maps by definition, and this gives us our result.
\end{proof}

We can use this proposition to see that shift maps preserve the non-trivial homology classes. This is explained in Proposition \ref{Proposition:shifttrivialhomology}.

\begin{prop}\label{Proposition:shifttrivialhomology}
Consider a subsets $I \subseteq \bm{C^m} \setminus B^{\bu}_{L_B+2L_A+1}$ such that $S_I$ is a minimal cycle and $[S_I] \neq 0$. Then we have $$[S_{R_{\pm}(I)}] \neq 0$$ 
\end{prop}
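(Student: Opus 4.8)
The plan is to argue by contradiction: suppose $[S_{R_+(I)}] = 0$ (the case $R_-$ is identical), so there is a subset $R \subseteq \bm{C^m}$ with $\partial S_R = S_{R_+(I)}$. I would like to ``un-shift'' this null-homology, i.e.\ produce a set $R'$ with $\partial S_{R'} = S_I$, contradicting $[S_I]\neq 0$. The obvious candidate is $R' = R_-(R)$, but this is only legitimate once we know $R$ is supported away from the black box so that $R_-$ is defined on it and Proposition~\ref{Proposition:shiftbounddary} applies to reverse the computation. So the real content is a localization statement: any null-homology of a cycle living far enough from $C^{\bu}$ can itself be chosen to live far from $C^{\bu}$.

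First I would reduce to a \emph{minimal} relation. Since $S_{R_+(I)}$ is a cycle (shift of the minimal cycle $S_I$, using Proposition~\ref{Proposition:shiftbounddary} with $J=\varnothing$) that is null-homologous, and since $R_+(I) \subseteq \bm{C^m}\setminus B^{\bu}_{L_B+2L_A}$ by our hypothesis on $I$, I can extract a minimal relation $\partial S_{R_0} = S_{R_+(I)}$ with $R_0 \subseteq R$. Actually I should be a little careful here: $S_{R_+(I)}$ itself need not be a minimal cycle a priori, but minimality of $S_I$ is preserved under the shift (again by Proposition~\ref{Proposition:boxtensordifferential}/\ref{Proposition:shiftbounddary}: a proper subset $I' \subsetneq R_+(I)$ with $\partial S_{I'}=0$ would pull back under $R_-$ to a proper sub-cycle of $S_I$). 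So $\partial S_{R_0} = S_{R_+(I)}$ is a minimal relation with a single cycle on the right. By Corollary~\ref{Corollary:diamofminimalrelation}, $\diam(R_0 \cup R_+(I)) \leq 2L_B + L_A$. Since $R_+(I)$ meets only boxes at distance $> L_B + 2L_A$ from $C^{\bu}$, this diameter bound forces $R_0$ to also avoid $B^{\bu}_{L_A+1}$ — here I use that $R_+(I)$ is nonempty (which holds because $S_I \neq 0$), so the diameter constraint pins $R_0$ near $R_+(I)$, hence far from the black box. This is where the specific radius $L_B + 2L_A + 1$ in the statement is consumed.

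Now both $R_0$ and $R_+(I)$ lie in $\bm{C^m}\setminus B^{\bu}_{L_A+1}$, so $R_-$ is defined on $R_0$, and Proposition~\ref{Proposition:shiftbounddary} (applied with the roles of $R_+$ and $R_-$ reversed — they are mutually inverse where both defined) gives $\partial S_{R_-(R_0)} = S_{R_-(R_+(I))} = S_I$. This shows $[S_I]=0$, the desired contradiction.

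The step I expect to be the main obstacle is making the diameter argument fully rigorous: Corollary~\ref{Corollary:diamofminimalrelation} controls $\diam$ of the union of the cycle supports on the right-hand side together with $R$, but I need to convert this ``internal'' diameter bound into the statement that $R_0$ lies \emph{at distance $>L_A+1$ from $C^{\bu}$}, which requires knowing that $R_+(I)$ is located in a region whose $(2L_B+L_A)$-neighborhood still misses $B^{\bu}_{L_A+1}$. This is exactly the content of choosing the radius in the hypothesis to be $L_B + 2L_A + 1$ rather than something smaller, and one must check the circular geometry of $\bm{GC^m}$ does not wrap around (valid for $m$ large, which is the regime we care about). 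I would also need to double-check that the minimal relation extracted from $\partial S_R = S_{R_+(I)}$ genuinely has a \emph{single} minimal cycle on its right-hand side, or else phrase Corollary~\ref{Corollary:diamofminimalrelation} for the general case $\partial S_{R_0} = S_{I_1} + \cdots + S_{I_r}$ and note the union of the $I_j$'s still must equal $R_+(I)$ as a set (since $S_{R_+(I)}$ is already minimal, it cannot be split as a sum of two nonzero cycles, forcing $r=1$).
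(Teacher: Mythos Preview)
Your overall strategy is exactly the paper's: argue by contradiction, take a minimal relation $\partial S_{R_0} = S_{R_+(I)}$, localize $R_0$ away from $C^{\bu}$, then un-shift via $R_-$ and Proposition~\ref{Proposition:shiftbounddary}. The observation that minimality of $S_{R_+(I)}$ is inherited from $S_I$ (forcing $r=1$) is correct and matches the paper.

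However, your localization step has a genuine arithmetic gap. You invoke Corollary~\ref{Corollary:diamofminimalrelation}, which gives $\diam(R_0 \cup R_+(I)) \leq 2L_B + L_A$. But combining this with $\dist(R_+(I), C^{\bu}) > L_B + 2L_A$ only yields $\dist(R_0, C^{\bu}) > L_A - L_B$, which is negative and says nothing. The radius $L_B + 2L_A + 1$ in the hypothesis is calibrated to a \emph{tighter} pair of bounds, and $2L_B + L_A$ is simply too loose.

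The paper instead uses Corollary~\ref{Corollary:lengthminimalrelation} directly to get $\diam(R_0) \leq \len(S_{R_0}) \leq L_B$, and Lemma~\ref{Lemma:differentialorder} to get $\dist_H(R_0, R_+(I)) < L_A$ (some edge from $R_0$ lands in $R_+(I)$, and edges have length $< L_A$). Then any point of $R_0$ is within $L_B$ of a point within $L_A$ of a point of $R_+(I)$, giving $\dist(R_0, C^{\bu}) > (L_B + 2L_A + 1) - L_A - L_B = L_A + 1$, hence $R_0 \subseteq \bm{C^m} \setminus B^{\bu}_{L_A+1}$ as needed. Replace your appeal to Corollary~\ref{Corollary:diamofminimalrelation} with this two-step estimate and your proof goes through.
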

\begin{proof}
    We prove the proposition by contradiction. We also prove it for $S_{R_{+}(I)}$ and the argument for the $S_{R_{-}(I)}$ is similar.\\ 
    
    First, note that based on Proposition \ref{Proposition:shiftbounddary}, $S_{R_{\pm}(I)}$ is clearly a cycle and hence $[S_{R_{+}(I)}]$ is well-defined. Using Proposition \ref{Proposition:shiftbounddary} and minimality of $S_{I}$, we can easily prove that $S_{R_{+}(I)}$ is also minimal.\\

    Assume there exist a minimal relation such that  
$$\partial S_T = S_{R_{+}(I)},$$
then we use Corollary \ref{Corollary:lengthminimalrelation} to conclude that 
$$\diam(T) \leq \len(S_T) \leq L_B. $$
Using Lemma \ref{Lemma:differentialorder}, we also know that 
$$\dist_{H}(T,R_{+}(I))<L_A.$$
Combining these inequalities we have
$$\dist_{H}(T,C^{\bu}) \geq \dist_{H}(R_{+}(I),C^{\bu}) -L_A - L_B \Rightarrow T \subseteq \bm{C^m} \setminus B^{\bu}_{L_A+1}.$$
This means we can consider the subset $R_{-}(T)$ and use Proposition \ref{Proposition:shiftbounddary} to conclude
$$\partial S_{R_{-}(T)} = S_{R_{-}(R_{+}(I))} =S_{I}.$$
This means that $[S_{I}]$ is trivial in homology which contradicts our assumption.\\
\end{proof}

\subsection{Extremal knot Floer homologies fall in a closed ball}\label{Subsection:extremahhfkfallinball}\hfill\\

We start studying the extremal knot Floer homologies in order to prove Theorem \ref{Theorem:extremalhfk}. Without loss of generality, we prove the theorem for the case when $\lk(K,c) > 0$.\\

As we have said before, as a result of the pairing theorem, we have: 
$$\widehat{CFK}(S^3,K_m) \simeq \widehat{\text{CFA}}(\mathcal{H}_{K}, z, w) \boxtimes \widehat{\text{CFD}}(\mathcal{H}'_{\frac{1}{m}} , z').$$
Lemma \ref{Lemma:extremalnearblackbox} states an important property of representatives of extremal knot Floer homologies in this chain complex. Before stating the lemma, we define an invariant $L_S$ as follows
$$L_S := L_B + N_AL_A +2L_A+1.$$
Note that $L_S$ doesn't depend on $m$.
\begin{lemm}\label{Lemma:extremalnearblackbox}
Fix an integer $k \in \mathbb{N}$. Let $S_I$ be a minimal cycle such that
\begin{equation}\label{Equation:extremalassumption}
[S_I] \in \bigcup_{0 \leq j \leq k-1} \widehat{HFK}(K_m,-g(K_m)+j) \ \text{and} \ [S_I] \neq 0.
\end{equation}
Then $S_I$ can't be very far from the black box $C^{\bu}$. To be precise, we have 
$$I \subseteq B^{\bu}_{L_S+k-1}.$$
\end{lemm}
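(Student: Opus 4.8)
The plan is to argue by contradiction: suppose $S_I$ is a minimal cycle representing a nonzero class in one of the bottom $k$ Alexander-graded pieces $\widehat{HFK}(K_m,-g(K_m)+j)$, and suppose $I$ is \emph{not} contained in $B^{\bu}_{L_S+k-1}$. The overall strategy is to use the shift map $R_{-}$ to push $I$ away from the black box, producing a new minimal cycle whose class is still nonzero (by Proposition \ref{Proposition:shifttrivialhomology}) but whose Alexander grading has dropped — contradicting the fact that $-g(K_m)+j$ lies within $k$ of the absolute minimum Alexander grading. So first I would make precise the relationship between ``far from the black box'' and ``low Alexander grading''. By Lemma \ref{Lemma:lengthminimal} any minimal cycle has $\len(S_I)\le N_AL_A$, so $I$ is contained in an arc of boxes of that length; if additionally $I\cap C^{\bu}=\varnothing$, then $I$ is supported in a contiguous block of white boxes $C^{\circ}_{a},\dots,C^{\circ}_{b}$ with $b-a\le N_AL_A$, and the hypothesis $I\not\subseteq B^{\bu}_{L_S+k-1}$ forces this block to sit with $a$ sufficiently large (and $b$ sufficiently far from $m$) that $R_{-}^{k}(I)$ is still well-defined and stays outside $B^{\bu}_{L_A+1}$, indeed outside $B^{\bu}_{L_B+2L_A+1}$ — here the definition $L_S=L_B+N_AL_A+2L_A+1$ is exactly calibrated so that $L_S+k-1$ minus the length bound $N_AL_A$ still leaves room $L_B+2L_A+1$ plus the $k$ shifts.

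Next I would deal with the possibility $I\cap C^{\bu}\ne\varnothing$. If $S_I$ meets the black box but also has elements far from it, then by $\len(S_I)\le N_AL_A$ (Lemma \ref{Lemma:lengthminimal}) \emph{all} of $I$ lies within distance $N_AL_A$ of $C^{\bu}$, hence $I\subseteq B^{\bu}_{N_AL_A}\subseteq B^{\bu}_{L_S+k-1}$, contradicting our standing assumption. So in the contradiction hypothesis we may assume $I\cap C^{\bu}=\varnothing$, and the argument of the previous paragraph applies: $R_{-}(I)$ is defined, $S_{R_{-}(I)}$ is a minimal cycle by Proposition \ref{Proposition:shiftbounddary} (plus the usual minimality transfer argument), and $[S_{R_{-}(I)}]\ne 0$ by Proposition \ref{Proposition:shifttrivialhomology}, since $I$ (and hence also all intermediate shifts) lies outside $B^{\bu}_{L_B+2L_A+1}$.

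The key quantitative step is to track the Alexander grading under $R_{-}$. By Lemma \ref{Lemma:Gradingchangebyshift}, for each basis element $x^1\otimes\xi_i$ away from the black box we have $a_m(x^1\otimes\xi_{i-1})-a_m(x^1\otimes\xi_i)=-\omega=\lk(K,c)>0$, so $R_{-}$ shifts the (homogeneous) Alexander grading of the whole cycle up by $\lk(K,c)\ge 1$; equivalently $R_{+}$ lowers it. Consequently, starting from $S_I$ with $A(S_I)=-g(K_m)+j$ for some $0\le j\le k-1$, the cycles $S_I, S_{R_{+}(I)}, S_{R_{+}^2(I)},\dots$ realize strictly decreasing Alexander gradings while still representing nonzero classes — but this is impossible once we drop below $-g(K_m)$, the minimal Alexander grading of $\widehat{HFK}(K_m)$. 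To make this rigorous I would instead run the argument with $R_{-}$: the hypothesis that $I$ escapes $B^{\bu}_{L_S+k-1}$ gives enough room to apply $R_{-}$ repeatedly, each application producing a nonzero homology class whose Alexander grading \emph{increases} by $\lk(K,c)$, eventually exceeding $g(K_m)$ (the top Alexander grading), which is the contradiction. Either direction works; the point is that ``too far from $C^{\bu}$'' is incompatible with ``within $k$ of an extreme Alexander grading.''

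The main obstacle I anticipate is the bookkeeping around the \emph{sign/direction} and around cycles that intersect $C^{\bu}$: one must verify that a minimal cycle far from the black box really is disjoint from it (handled by the length bound), that the relevant shift $R_{-}$ or $R_{+}$ of $I$ does not run off the end of the circular arrangement (handled by choosing the direction that moves $I$ \emph{toward} the gap opposite $C^{\bu}$, and by noting $\len\le N_AL_A\ll m$ for $m$ large), and — most delicately — that minimality is preserved under the shift so that Proposition \ref{Proposition:shifttrivialhomology} applies with its hypothesis $I\subseteq\bm{C^m}\setminus B^{\bu}_{L_B+2L_A+1}$. Getting the constant $L_S$ to absorb all of $L_B$ (for minimal relations), $N_AL_A$ (for the length of minimal cycles), the extra $2L_A$ (slack for edge lengths when transferring relations across a shift), the $+1$, and the $k-1$ room for the $k$ extremal layers is the crux of the estimate, and I would present that arithmetic explicitly at the end.
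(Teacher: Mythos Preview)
Your proposal is correct and follows essentially the same argument as the paper: contradiction via the length bound $\len(S_I)\le N_AL_A$ (Lemma \ref{Lemma:lengthminimal}) to place $I$ entirely outside $B^{\bu}_{L_B+2L_A+k+1}$, then iterated shifts together with Propositions \ref{Proposition:shiftbounddary} and \ref{Proposition:shifttrivialhomology} to produce nonzero classes whose Alexander grading (via Lemma \ref{Lemma:Gradingchangebyshift}) falls outside $[-g(K_m),g(K_m)]$.

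Two minor points of comparison. First, your separate treatment of the case $I\cap C^{\bu}\ne\varnothing$ is unnecessary: once some $\nu\in I$ has $\dist(\nu,C^{\bu})\ge L_S+k$, the diameter bound $\diam(I)\le N_AL_A$ already forces $\dist_H(I,C^{\bu})\ge L_S+k-N_AL_A>0$, so $I$ is automatically disjoint from $C^{\bu}$; the paper proceeds this way without a case split. Second, the paper commits to $R_{+}$ throughout (recall $\omega=-\lk(K,c)<0$, so $R_{+}$ \emph{lowers} the Alexander grading by $|\omega|$), reaching a class below $-g(K_m)$ after at most $k$ shifts; your oscillation between $R_{-}$ and $R_{+}$ is harmless but should be cleaned up in a final write-up.
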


\begin{proof}
We prove this lemma by contradiction. Assume that there is an element $\nu \in I$ such that $\dist_{H}(\{\nu\}, C^{\bu}) \geq L_S+k$. Based on Lemma \ref{Lemma:lengthminimal}, we have:
$$\diam(I) \leq \len(S_I) \leq N_AL_A \Rightarrow \dist_{H}(I, C^{\bu}) \geq L_S+k -N_AL_A$$
$$\Rightarrow I \subseteq \bm{C^m} \setminus B^{\bu}_{L_B+ 2L_A+k+1}$$
As a result, we can define the following family of $k$ subsets by iterated application of the shift map $R_+$:
$$R_{+}(I), R^2_{+}(I), \cdots, R^{k}_{+}(I) \subseteq \bm{C^m} \setminus B^{\bu}_{L_B+2L_A+1}.$$
Using  Proposition \ref{Proposition:shiftbounddary} and \ref{Proposition:shifttrivialhomology} we know that homology classes 
$$[S_{R_{+}(I)}], [S_{R^2_{+}(I)}], \cdots, [S_{R^{k}_{+}(I)}]$$
are all non-trivial.\\

Using Lemma \ref{Lemma:Gradingchangebyshift}, we can compute the Alexander grading of these classes as follows : 
$$a_m(S_{R^{i}_{+}(I)}) = a_m(S_{I}) + i \omega \ \text{for} \ 1 \leq i \leq k.$$
Combining this with Equation \ref{Equation:extremalassumption} we have: 
$$a_m(S_{R^{k}_{+}(I)}) \leq -g(K_m) + k-1 + k \omega < -g(K_m),$$
where the final inequality comes from the fact that $\omega = - \lk(K,c) \leq -1$. This is a clear contradiction as we know that knot Floer homology vanishes in the Alexander gradings below $-g(K_m)$.
\end{proof}

Now we start analyzing stabilization phenomena. We need to look at how the complex
$$\widehat{\text{CFA}}(\mathcal{H}_{K}, z, w) \boxtimes \widehat{\text{CFD}}(\mathcal{H}'_{\frac{1}{m}} , z')$$
changes as $m \rightarrow \infty$.\\

Recall Equation \ref{Equation:inclusion}, where we defined inclusion maps: 
$$\Phi_{m} : \bm{C^m} \hookrightarrow \bm{C^{m+1}}$$
As we mentioned there, it is easier to imagine $\bm{C^m}$ as a nested sequence, where $\bm{C^{m+1}}$ can be constructed from $\bm{C^m}$ by adding a white box. This is also depicted in Figure \ref{Figure:Nested}.\\

As it can be seen in Figure \ref{Figure:Nested}, it is clear that for any integer $L$, the elements of the closed ball 
$$B^{\bu}_L \subseteq \bm{C^m}$$ 
stabilize for $m > 2L$ i.e. $\Phi_m$ restricts to a bijection on the closed metric balls. Imagining the family $\bm{C^m}$ as a nested sequence, we can see the close ball as a subset of both $\bm{C^m}$ and $\bm{C^{m+1}}$ for high enough $m$.\\ 

\begin{figure}[h]
    \centering
    \includegraphics[width=\linewidth]{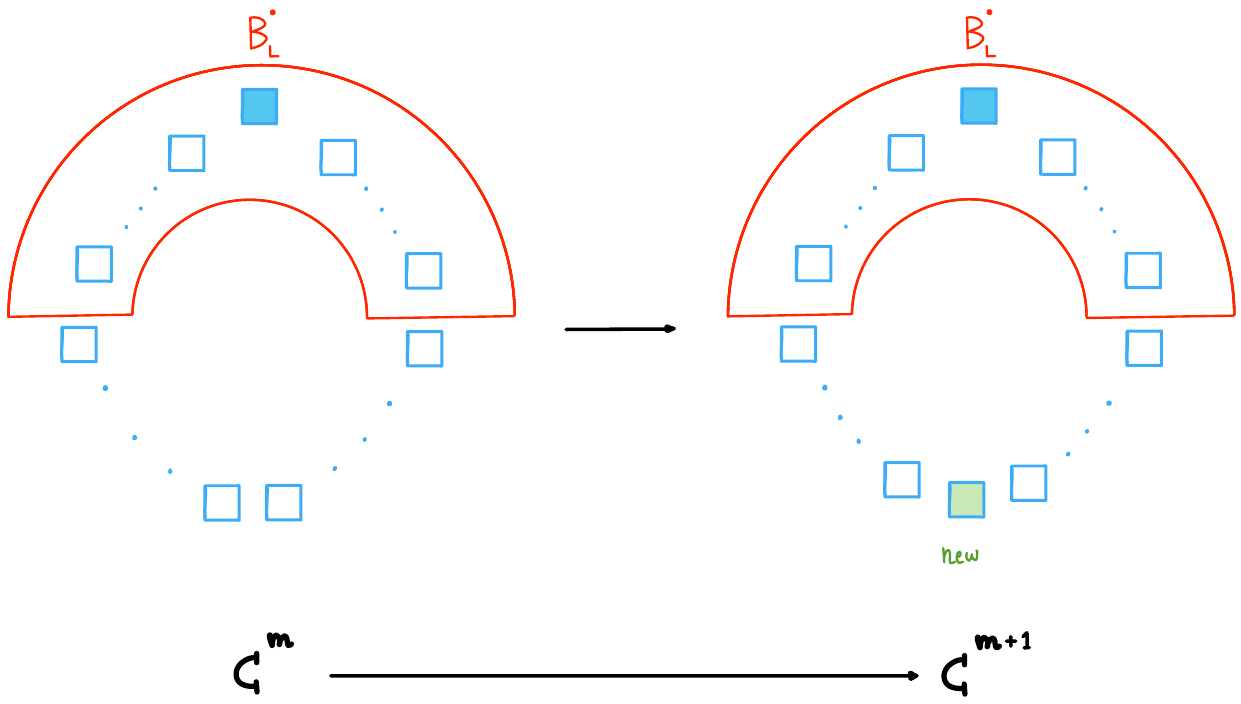}
    \caption{Constructing $\bm{C^{m+1}}$ from $\bm{C^m}$ by adding a new white box. For $m>2L$, this process doesn't affect $B^{\bu}_{L}$}
    \label{Figure:Nested}
\end{figure}

\begin{figure}[h]
    \centering
    \includegraphics[width=\linewidth]{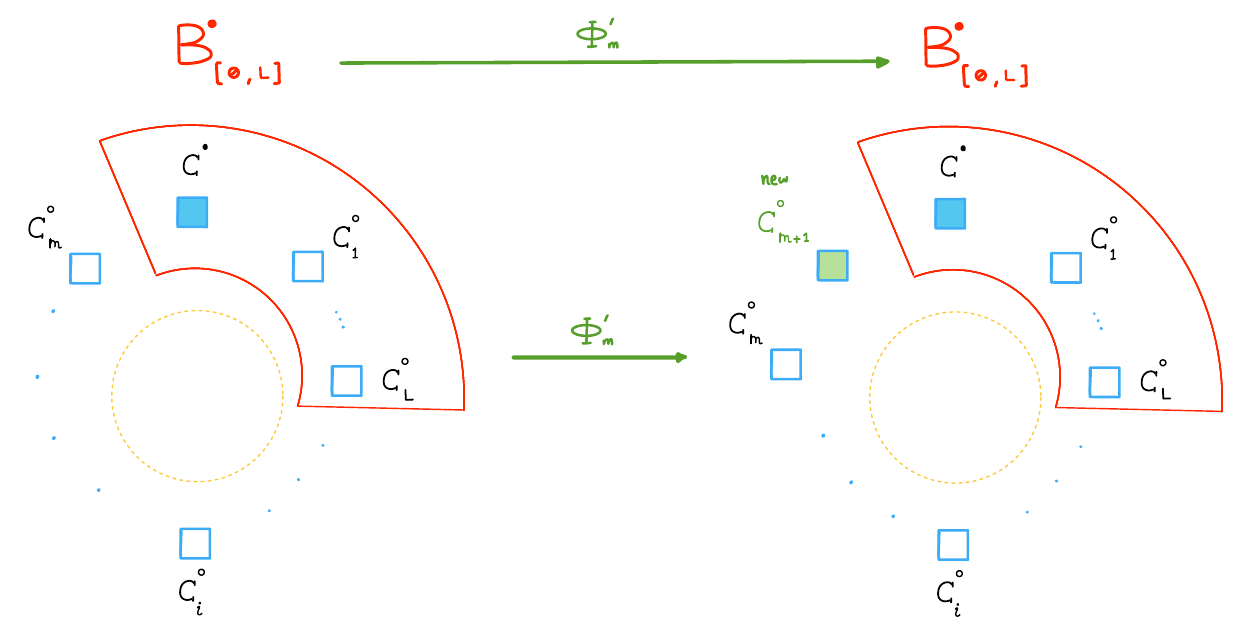}
    \caption{The half-ball $B^{\bu}_{[0,L]}$ and its stabilization under natural inclusion $\Phi'_{m}$}
    \label{Figure:Halfball}
\end{figure}

Clearly, this statement applies to the half-balls $B^{\bu}_{[0,L]}$ and $B^{\bu}_{[-L,0]}$ as well. Furthermore, the elements of the half-ball $B^{\bu}_{[0,L]}$ stabilize under the other nested sequence structure induced by inclusions $\Phi'_m$ (for $m > t$). This is illustrated in Figure \ref{Figure:Halfball}.\\

\subsection{Proof of Theorem \ref{Theorem:extremalhfk}}\label{Subsection:ProofofTheoremextremalhfk}\hfill\\

Now we are ready to prove Theorem \ref{Theorem:extremalhfk}.\\

\begin{proof}[Proof of Theorem \ref{Theorem:extremalhfk}]
 Assume that $m$ is high enough. To be precise assume that $m >  2L$ where 
 $$L:=(L_S+k-1) + L_B+2L_A.$$
 As we mentioned we can consider the stabilized closed metric balls 
 $$B^{\bu}_{L_S+k-1} \subset B^{\bu}_{L} \subset \bm{C^m} \subset \bm{C^{m+1}}$$
 Using Lemma \ref{Lemma:differentialorder}, we know that the differentials of elements of $B^{\bu}_{L_S+k-1}$ remain in the closed ball 
 $$B^{\bu}_{L_S+k-1+L_A} \subseteq B^{\bu}_{L}.$$ Furthermore, based on Proposition \ref{Proposition:boxtensordifferential}, these differentials doesn't depend on $m$. As a result, the differentials of the elements of $B^{\bu}_{L_S+k-1}$ also stabilize. In fact, the same argument shows that the differentials of elements in 
 $$B^{\bu}_{(L_S+k-1)+L_B+L_A} \subseteq B^{\bu}_{L-L_A}$$
 also remain in the ball $B^{\bu}_{L}$, and hence will be stabilized for $m>2L$.\\
 
 We will prove that this stabilized data, i.e elements of $B^{\bu}_{L}$ and differentials of the elements of $B^{\bu}_{L-L_A}$, includes the extremal knot Floer homologies 
 $$\bigcup_{0 \leq j \leq k-1} \widehat{HFK}(K_m,-g(K_m)+j).$$
 
 First, note that the space of cycles $S_I$ such that $I \subseteq B^{\bu}_{L_S+k-1}$ will be stabilized. We denote this set of cycles by 
$$\mathcal{C}_{L_S+k-1} \subseteq \widehat{\text{CFA}}(\mathcal{H}_{K}, z, w) \boxtimes \widehat{\text{CFD}}(\mathcal{H}'_{\frac{1}{m}} , z') \ \text{for all} \ m>2L.$$
Furthermore, for any family of minimal cycles 
$$S_{I_1}, \cdots, S_{I_r} \in \mathcal{C}_{L_S+k-1}$$
and a minimal relation 
$$\partial S_{R} =S_{I_1}+ \cdots + S_{I_r},$$
we can use Corollary \ref{Corollary:lengthminimalrelation} and Lemma \ref{Lemma:differentialorder} (similar to the proof of Proposition \ref{Proposition:shifttrivialhomology}) to deduce that 
$$R \subseteq B^{\bu}_{(L_S+k-1)+L_B+L_A} \subseteq B^{\bu}_{L-L_A}.$$
This means that the set of all such relations stabilize for $m>2L$. As a result we will have a stabilized subspace
$$\mathcal{H}_{L_S+k-1} \subseteq H_{*}(\widehat{\text{CFA}}(\mathcal{H}_{K}, z, w) \boxtimes \widehat{\text{CFD}}(\mathcal{H}'_{\frac{1}{m}} , z')) \ \text{for all} \ m>2L,$$
where $\mathcal{H}_{L_S+k-1}$ is the set of all homology classes of cycles $\mathcal{C}_{L_S+k-1}$.\\

Now we consider the relative Alexander grading of homogeneous classes in $\mathcal{H}_{L_S+k-1}$. As a result of Corollary \ref{Corollary:doublesidedgrading}, the grading double cosets of any element in $B^{\bu}_{L}$ has representatives which doesn't depend on $m$. We can then use Lemma \ref{Lemma:Gradingchangebym} to compute the relative Alexander gradings. It is clear that the exact amount of relative Alexander gradings might not stabilize, but since all the  relative Alexander gradings are linear functions of $m$, their signs will. As a result for $m \gg0$, we can pick a fixed homogeneous  class 
$$H_{\min} \in \mathcal{H}_{L_S+k-1}$$
such that for any other homogeneous class $H \in \mathcal{H}_{L_S+k-1} $ we have 
$$a_m(H) - a_m(H_{\min}) \geq 0.$$
Again, using Lemma \ref{Lemma:Gradingchangebym}, we can consider the following stabilized subspace:
$$\mathcal{H}^{k}_{L_S+k-1} \subseteq H_{*}(\widehat{\text{CFA}}(\mathcal{H}_{K}, z, w) \boxtimes \widehat{\text{CFD}}(\mathcal{H}'_{\frac{1}{m}} , z')) \ \text{for} \ m\gg0,$$
$$\mathcal{H}^{k}_{L_S+k-1} := \langle \{ H \in \mathcal{H}_{L_S+k-1} \ | \ a_m(H) - a_m(H_{\min}) \leq k-1 \} \rangle$$
Finally, based on Lemma \ref{Lemma:extremalnearblackbox} we have that:
$$a_m(H_{\min}) = -g(K_{m}) \ \text{and} \ $$
$$\bigcup_{0 \leq j \leq k-1} \widehat{HFK}(K_m,-g(K_m)+j) \simeq \mathcal{H}^{k}_{L_S+k-1}.$$
This means that we are almost done. We only need to examine how the absolute Maslov grading changes as $m \rightarrow +\infty$. First, assume that $$gr_{K_m}(H_{\min}) = [(\alpha_{\min}; \beta_{\min}, \gamma_{\min}; \delta_{\min})] \ \text{for} \ m\gg0.$$
Note that for any other homology class $H \in \mathcal{H}^{k}_{L_S+k-1}$ with 
$$gr_{K_m}(H) = [(\alpha_{H}; \beta_{H}, \gamma_{H}; \delta_{H})] \ \text{for} \ m\gg0,$$
we have  
\begin{equation}\label{Equation:betaofextremal}
   \beta_{H} = \beta_{\min}. 
\end{equation}
This follows from Lemma \ref{Lemma:Gradingchangebym}, and the fact that 
$a_m(H) - a_m(H_{\min})$ has bounded absolute value (due to definition of $\mathcal{H}^{k}_{L_S+k-1}$), and hence must be a linear function with zero slope.\\ 

To examine the absolute (and not relative) Maslov grading, we need a reference element. We can use Lemma \ref{Lemma:homotopyequivgenerator} and deduce that there is a fixed element 
$$\nu_0 \in B^{\bu}_{L} \subset \bm{C^m} \ \text{for} \ m\gg0,$$
with absolute Maslov grading zero. Furthermore, combining Lemma \ref{Lemma:homotopyequivgenerator} and Corollary \ref{Corollary:doublesidedgrading}, we have a fixed element $(\alpha_{0}; \beta_{0}, \gamma_{0}; \delta_{0}) \in \widetilde{G}$ such that
$$2 \beta_0 = M+\frac{1}{2} \ \text{and}$$
$$gr_{K_m}(\nu_0) = [(\alpha_{0}; \beta_{0}, \gamma_{0}; \delta_{0})] \ \text{for} \ m \gg0.$$
Now using Lemma \ref{Lemma:Gradingchangebym}, we can compute $h_{m}(H_{\min})$ as a function of $m$:
$$h_{m}(H_{\min}) = h_{m}(H_{\min}) - h_{m}(\nu_0) = ((\beta_{0}-\beta_{\min})(M-\frac{1}{2}-\beta_{\min}-\beta_{0})) \cdot m + \text{cte} $$
We are going to show that 
$$((\beta_{0}-\beta_{\min})(M-\frac{1}{2}-\beta_{\min}-\beta_{0})) = F_K.$$
Based on Equation \ref{Equation:betaofextremal}, this will finish our proof.\\

First, let's define the function $f(x)$ as follows:
$$f(x) : = (\beta_{0}-x)(M-\frac{1}{2}-x-\beta_{0}).$$
Second, note that all of our arguments up to here can be repeated for $k$ extremal knot Floer homologies with highest Alexander gradings i.e. 
$$\bigcup_{0 \leq j \leq k-1} \widehat{HFK}(K_m,g(K_m)-j).$$
This means we can also consider a class with the highest relative Alexander grading in $\mathcal{H}_{L_S+k-1}$, denoted by $H_{\max}$, and also assume that
$$gr_{K_m}(H_{\max}) = [(\alpha_{\max}; \beta_{\max}, \gamma_{\max}; \delta_{\max})] \ \text{for} \ m\gg0.$$
We will similarly have that 
$$a_{m}(H_{\max}) = g(K_m) \ \text{and} \ h_{m}(H_{\max}) = f(\beta_{\max}) \cdot m + \text{cte}.$$
This means that we have 
$$\widehat{HFK}(K_m,-g(K_m))\cong \widehat{HFK}(K_{m+1},-g(K_{m+1})) \ [f(\beta_{\min})], \ \text{and},$$
$$\widehat{HFK}(K_m,g(K_m))\cong \widehat{HFK}(K_{m+1},g(K_{m+1})) \ [f(\beta_{\max})].$$
However, we also can use the symmetry of the knot Floer homology:
$$\widehat{HFK}(K_m,g(K_m))[2g(K_m)]\cong \widehat{HFK}(K_m,-g(K_m)), \text{and}$$
$$\widehat{HFK}(K_{m+1},g(K_{m+1}))[2g(K_{m+1})]\cong \widehat{HFK}(K_m,-g(K_{m+1})).$$
We can summarize these four isomorphisms in the following square: 
\[
\begin{tikzcd}
\widehat{HFK}(K_{m+1},g(K_{m+1})) \arrow{r}{[f(\beta_{\max})]} \arrow[swap]{d}{[2g(K_{m+1})]} & \widehat{HFK}(K_{m},g(K_{m})) \arrow{d}{[2g(K_m)]} \\
\widehat{HFK}(K_{m+1},-g(K_{m+1})) \arrow{r}{[f(\beta{\min})]} & \widehat{HFK}(K_{m},-g(K_{m}))
\end{tikzcd}
\]
Combining all these we have:
$$-f(\beta_{\max})-2g(K_m)+f(\beta_{\min})=-2g(K_{m+1}) \Rightarrow $$
$$f(\beta_{\max})-f(\beta_{\min}) = 2(g(K_{m+1})-g(K_{m}))$$
We can rewrite this final equation as:
\begin{equation}\label{Equation:F1}
  (\beta_{\min}-\beta_{\max})(M-\frac{1}{2}-\beta_{\max}-\beta_{\min}) = 2(g(K_{m+1})-g(K_{m}))  
\end{equation}
Also note that we have 
$$a_m(H_{\max})-a_m(H_{\min}) = 2g(K_m)$$
Using Lemma \ref{Lemma:Gradingchangebym}, we have that 
\begin{equation}\label{Equation:F2}
 2g(K_m) = ((\beta_{\min}-\beta_{\max})\omega)\cdot m + \text{cte}.    
\end{equation}
Now combining Equations \ref{Equation:F1}, \ref{Equation:F2} and Theorem \ref{Theroem:BakerTaylor}, we can conclude:
\begin{equation*}
    \begin{cases}
      \beta_{\max}+ \beta_{\min} = M -\omega -\frac{1}{2}\\
       \beta_{\min}-\beta_{\max}=-x([\hat{D}])
    \end{cases}       
\end{equation*}
$$\Rightarrow 2\beta_{\min} = M - \omega - \frac{1}{2}-x([\hat{D}])$$
This finishes our proof. 
\end{proof}

\section{Stabilization of Alexander polynomial}\label{Section:Alexanderpolynomial}
\subsection{Defining the Alexander jump sequence $d_{m,i}$}\hfill\\

We now can continue with the proof of Theorem \ref{Theorem:extremalAlexander} and Theorem \ref{Theorem:jumpsAlexander}. As mentioned before, we use $\{\alpha_{m,i}\}_{i \in \mathbb{Z}}$ represent the series of coefficients of the Alexander polynomial of $K_m$, i.e.,
     $$\Delta_{K_m}(t)= \sum_{i \in \mathbb{Z}} \alpha_{m,i} \cdot t^i.$$
Clearly, we have $\alpha_{m,i} = 0$ when $|i|> \text{deg}(\Delta_{K_m})$.\\

We define the \emph{Alexander jump sequence} $\{d_{m,i}\}_{i \in \mathbb{Z}}$ as follows
$$d_{m,i} := \alpha_{m,i} - \alpha_{m,i+\omega} \ \text{for all} \ i \in \mathbb{Z} \ , \ r \in \{0,1\}.$$
We are going to prove an stabilization result about the sequence $\{d_{m,i}\}$ which will give us Theorem \ref{Theorem:extremalAlexander} and Theorem \ref{Theorem:jumpsAlexander} as corollaries. This stabilization result is phrased in Proposition \ref{Proposition:Alexanderjumpsequence}. 
\begin{prop}\label{Proposition:Alexanderjumpsequence}
There exist a sequence $\{d_{\infty,i}\}_{i \in \mathbb{Z}}$ such that for any $k \in \mathbb{N}$ and sufficiently large $m$, we have  
$$d_{m, -\text{deg}(\Delta_{K_m}) + i} = d_{\infty, i} \ \text{for all} \  0\leq i \leq k-1.$$ 
In other words, the first $k$ non-trivial terms of the sequence $\{d_{m,i}\}$ stabilize as $m \rightarrow \infty$. Furthermore, the total number of non-zero elements in the sequence $\{d_{m,i}\}$ stabilizes as $m \rightarrow \infty$. 
\end{prop}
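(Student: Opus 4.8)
The plan is to reinterpret the Alexander jump sequence through knot Floer homology, using the identity $d_{m,i} = \chi(\HFKh(K_m,i)) - \chi(\HFKh(K_m,i+\omega))$, and then to feed this into the ``far decomposition'' machinery promised in the introduction. First I would decompose $\HFKh(K_m,i) \oplus \HFKh(K_m,i+\omega)$, realized inside $\widehat{\text{CFA}}(\mathcal{H}_{K}, z, w) \boxtimes \widehat{\text{CFD}}(\mathcal{H}'_{\frac{1}{m}} , z')$, into the subspace of \emph{far} homology classes (those with minimal representatives supported outside a fixed closed ball $B^{\bu}_{L}$, with $L$ chosen large enough — roughly $L = L_B + N_A L_A + 2L_A + 1$, as in Lemma \ref{Lemma:extremalnearblackbox}) and its complement of \emph{non-far} classes. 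The shift maps $R_\pm$, by Propositions \ref{Proposition:shiftbounddary} and \ref{Proposition:shifttrivialhomology} (and the forthcoming results of Subsection \ref{Subsection:Shiftsonfar}), induce isomorphisms on the far subspace that shift the Alexander grading by $\omega$ while preserving the Maslov grading modulo two. Hence $R_+$ carries the far part of $\HFKh(K_m,i+\omega)$ isomorphically onto a summand of the far part of $\HFKh(K_m,i)$, with matching Euler-characteristic contributions; the far contributions to the difference $\chi(\HFKh(K_m,i)) - \chi(\HFKh(K_m,i+\omega))$ therefore telescope away, except near the ``ends'' of the shift orbits close to the black box. This reduces the computation of $d_{m,i}$, for all but finitely many $i$, to the non-far classes.

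Next I would invoke the stabilization of the non-far subspace. By the argument of Subsection \ref{Subsection:Stabilizationofcomplement} (which parallels the proof of Theorem \ref{Theorem:extremalhfk}), the non-far classes are all supported in a fixed ball $B^{\bu}_{L'}$, and for $m$ large the chain-level data — basis elements, differentials, grading double-coset representatives — restricted to $B^{\bu}_{L'}$ stabilizes under the nested-sequence structure $\Phi_m$, using Corollary \ref{Corollary:doublesidedgrading} to make the fixed grading representatives available. Therefore the non-far subspace, together with its Maslov grading mod two, stabilizes as an ungraded-mod-two vector space, and its relative Alexander grading is a linear function of $m$ with controlled (bounded-slope, hence eventually constant sign) behavior by Lemma \ref{Lemma:Gradingchangebym}. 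The Alexander gradings of these non-far classes are all concentrated within a bounded window around a fixed reference — e.g. the $\nu_0$ of Lemma \ref{Lemma:homotopyequivgenerator2} — and, by the extremal-grading analysis (Lemma \ref{Lemma:extremalnearblackbox}), the bottom of this window sits at $-g(K_m)$, which by Theorem \ref{Theroem:BakerTaylor} grows linearly. Writing everything in terms of the shifted index $j = i + g(K_m)$ (equivalently $i + \deg(\Delta_{K_m})$, since $\deg\Delta_{K_m} \le g(K_m)$ and the extremal homologies near the bottom are what matter), the non-far contribution to $d_{m, -\deg(\Delta_{K_m}) + j}$ becomes, for each fixed $j$ and $m \gg 0$, a fixed integer $d_{\infty,j}$ read off purely from the stabilized ball data.

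Finally I would assemble the two pieces: for each fixed $k$, choose $m$ large enough that (i) the ball $B^{\bu}_{L'}$ has stabilized along with its differentials and gradings, (ii) the first $k$ windows of non-far Alexander gradings have stabilized in sign, and (iii) the far telescoping contributes nothing to $d_{m,-\deg(\Delta_{K_m})+j}$ for $0 \le j \le k-1$ (which holds once the window of interest is far enough from the upper end of the orbit, guaranteed for $m$ large because the orbit length grows linearly in $m$). This gives $d_{m,-\deg(\Delta_{K_m})+j} = d_{\infty,j}$ for $0 \le j \le k-1$. For the last sentence — stabilization of the total number of nonzero $d_{m,i}$ — I would argue that the far part contributes a \emph{fixed} total count of nonzero jumps (the telescoping leaves exactly the same ``defect'' terms at the two ends of each shift orbit independent of $m$, since those ends are within a fixed ball whose data has stabilized), and the non-far part contributes another fixed count; by symmetry of $\Delta_{K_m}$ the picture at the top end mirrors the bottom, so the grand total is eventually constant, equal to some $j_\infty(K)$. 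The main obstacle I anticipate is making the telescoping argument on the far subspace genuinely rigorous: one must show that the shift-equivariant basis (Subsection \ref{Subsection:Shiftequivariantbasis}) organizes the far homology into full shift orbits whose endpoints are pinned inside a fixed ball, so that \emph{only} boundedly many ``boundary'' Alexander gradings fail to cancel in pairs, and that the cancellation is exact at the level of Euler characteristics mod the Maslov-parity bookkeeping — this is where the compatibility of $R_\pm$ with both the Maslov grading mod two and the Alexander shift by $\omega$ (Subsection \ref{Subsection:Shiftsonfar}) must be used carefully, and where the bound on minimal relations (Corollary \ref{Corollary:lengthminimalrelation}) is needed to control which relations can reach across into the stabilized region.
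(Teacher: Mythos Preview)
Your proposal is on the right track and assembles the correct ingredients: the far decomposition, the shift isomorphisms preserving Maslov parity, and the stabilization of the non-far complement. But you are making the argument harder than necessary, and the ``main obstacle'' you flag at the end is sidestepped entirely in the paper by a simple device you have not spotted.

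The paper does not work with a single far subset and then track telescoping defects at orbit endpoints. Instead it introduces \emph{two} far subsets, $D'_m = \bm{C^m} \setminus B^{\bu}_{L_\delta}$ and its one-step shift $D''_m = R_{-}(D'_m)$, and correspondingly two far decompositions of $\HFKh(K_m)$. The point is that $R_{-}$ is then an \emph{exact} isomorphism $\mathcal{H}^{i+\omega}_{D'_m} \to \mathcal{H}^{i}_{D''_m}$ for \emph{every} $i$ (Lemma \ref{Lemma:Alexanderjumpsdefects}), preserving Maslov grading mod $2$ (Lemma \ref{Lemma:Alexanderjumpsdefects-Maslovchange}). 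Splitting $\chi(\HFKh(K_m,i))$ via the $D''_m$-decomposition and $\chi(\HFKh(K_m,i+\omega))$ via the $D'_m$-decomposition, the far contributions cancel on the nose, leaving
\[
d_{m,i} \;=\; \chi\bigl(\overline{\mathcal{H}^{i}_{D''_m}}\bigr) - \chi\bigl(\overline{\mathcal{H}^{i+\omega}_{D'_m}}\bigr).
\]
No telescoping, no boundary defects, no shift-equivariant basis. From here the argument runs as you sketch: both complement subspaces stabilize within a fixed ball (Lemma \ref{Lemma:Alexanderjumpsdefects-stabilization}), and the pairs $(H',H'')$ with $a_m(H') - a_m(H'') = \omega$ form a stabilized finite set $\mathcal{PH}_\infty$ whose ordered decomposition (Lemma \ref{Lemma:Alexanderjumpsdefects-relativeset} and Corollaries \ref{Corollary:Alexanderjumpsdefects-relativeset-ordered}, \ref{Corollary:Alexanderjumpsdefects-relativeset-maslov}) directly yields both the extremal values $d_{\infty,i}$ and the total count of nonzero jumps.

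The shift-equivariant basis you invoke does appear in the paper, but only later (Subsection \ref{Subsection:Shiftequivariantbasis}) and for a different purpose, the thickness argument of Section \ref{Section:thickness}; for the Alexander jump sequence it is unnecessary. Your route via orbit endpoints could in principle be completed using that machinery, but the two-subset trick is what makes the paper's proof short.
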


\subsection{Far decomposition of $\HFKh(K_m)$}\label{Subsection:Fardecomposition}\hfill\\

To prove Proposition \ref{Proposition:Alexanderjumpsequence}, we first introduce some notations. First we define an invariant $L_{\delta}$ as follows:
$$L_{\delta} := L_{B}+2L_{A}+1.$$ We have seen the importance of the invariant $L_{\delta}$ in Proposition \ref{Proposition:shifttrivialhomology}.\\

We consider the set of elements in $\bm{C^{m}}$ which are \emph{far} from the black box $C^{\bu}$. To be more specific, we define $D'_m$ and $D''_m$ as follows:
$$D'_{m} : = \bm{C^m} \setminus B^{\bu}_{L_{\delta}} \ \text{and} \ D''_{m}:=R_{-}(D'_{m}).$$
We can see these subsets in Figure \ref{Figure:Alexanderjumpsdefects}.\\
\begin{figure}[h]
    \centering
    \includegraphics[width=\linewidth]{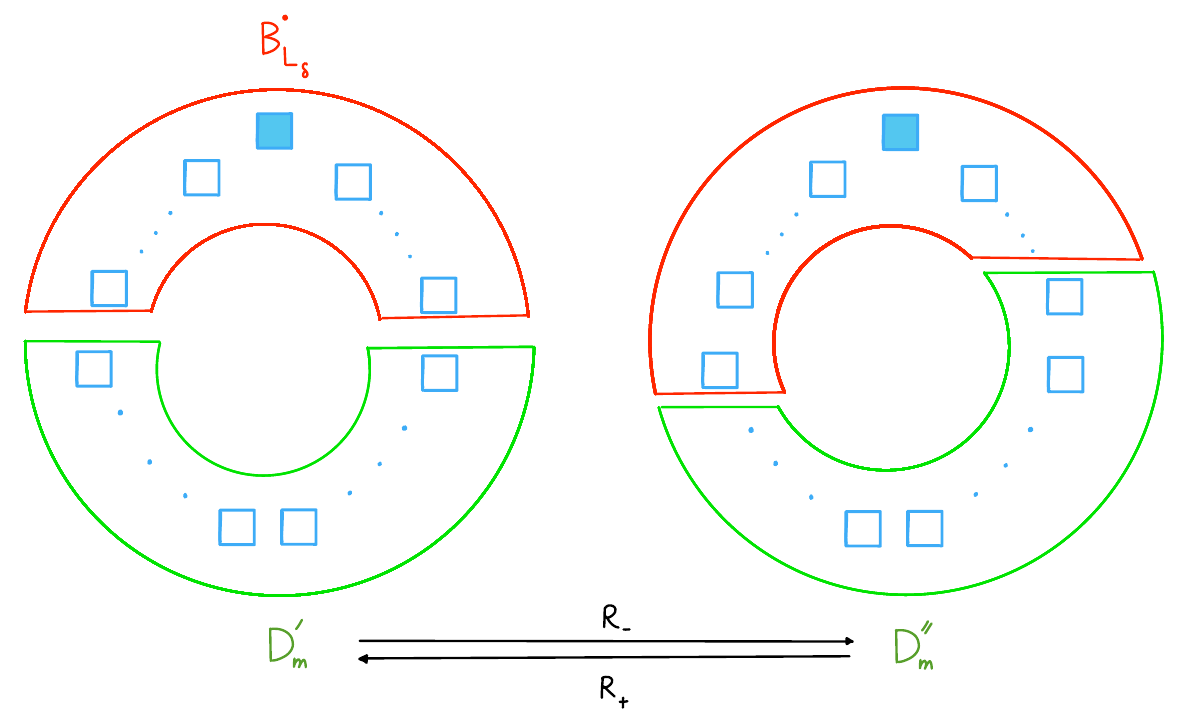}
    \caption{The subsets $D'_m$ and $D''_m$}
    \label{Figure:Alexanderjumpsdefects}
\end{figure}

Now recall that 
$$ \HFKh(K_m) = H_{*}(\widehat{\text{CFA}}(\mathcal{H}_{K}, z, w) \boxtimes \widehat{\text{CFD}}(\mathcal{H}'_{\frac{1}{m}} , z')).$$\
For any integer $i \in \mathbb{Z}$, we consider the subspace consisting of all homology classes in Alexander grading $i$ with a representative sitting in $D'_m$ i.e.
$$\mathcal{H}^{i}_{D'_m} := \{ H \in \HFKh(K_m , i) \ | \ H = [S_I]  \ \text{for some} \ I \subseteq D'_m  \}.$$
It is clear from the definition that $\mathcal{H}^{i}_{D'_m}$ is a vector space over $\mathbb{F}_2$. We consider the direct sum of all of these subspaces as follows: 
$$\mathcal{H}_{D'_m} := \bigoplus_{i \in \mathbb{Z}}\mathcal{H}^{i}_{D'_m}.$$
This is the subspace of homology classes that are \emph{far} from the the black box.\\

Now we can formally introduce the \emph{far decomposition} of $\HFKh(K_m)$ in Definition \ref{Defintion:Fardecomposition}. 

\begin{defi}\label{Defintion:Fardecomposition}
    Let $\overline{\mathcal{H}^{i}_{D'_m}}$ be the complement of $\mathcal{H}^{i}_{D'_m}$ as a subspace of $\HFKh(K_m,i)$. This means that we have 
    $$\HFKh(K_m , i) = \mathcal{H}^{i}_{D'_m} \oplus \overline{\mathcal{H}^{i}_{D'_m}}.$$
   We call this the \emph{far decomposition} of $\HFKh(K_m , i)$. We can also write 
   $$ \HFKh(K_m) = \mathcal{H}_{D'_m} \oplus \overline{\mathcal{H}_{D'_m}} \  \ \text{where} \ \ \overline{\mathcal{H}_{D'_m}} := \bigoplus_{i \in \mathbb{Z}}\overline{\mathcal{H}^{i}_{D'_m}}.$$
   We call this the \emph{far decomposition} of $\HFKh(K_m)$.
\end{defi}

Similarly we can define subspace $\mathcal{H}^{i}_{D''_m} \subseteq \HFKh(K_m , i)$ and its complement subspace $\overline{\mathcal{H}^{i}_{D''_m}}$, and then by taking direct sum over all $i \in \mathbb{Z}$, we have the subspaces $\mathcal{H}_{D''_m}$ and $\overline{\mathcal{H}_{D''_m}}$. It is clear that we also have the following direct sum decompositions of the knot Floer homology: 

$$ \HFKh(K_m,i) = \mathcal{H}^{i}_{D''_m} \oplus \overline{\mathcal{H}^{i}_{D''_m}} \ \  \text{and } \ \HFKh(K_m) = \mathcal{H}_{D''_m} \oplus \overline{\mathcal{H}_{D''_m}}$$

\subsection{Shift isomorphisms on far homologies}\label{Subsection:Shiftsonfar}\hfill\\

As we mentioned before the map $R_{-}: D'_{m} \rightarrow D''_{m}$ induces a map on the power sets. We claim that this map induces an isomorphism on the defined subspaces. This fact is phrased in Lemma \ref{Lemma:Alexanderjumpsdefects}. 
\begin{lemm}\label{Lemma:Alexanderjumpsdefects}
The map $R_{-} : \mathcal{H}^{i+\omega}_{D'_m} \rightarrow \mathcal{H}^{i}_{D''_m}$ defined by $$R_{-}([S_{I}]) := [S_{R_{-}(I)}],$$
is a well defined isomorphism of vector spaces. 
\end{lemm}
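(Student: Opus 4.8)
The plan is to verify separately that the map is (i) well-defined, (ii) injective, and (iii) surjective, using the shift-invariance results from Subsection \ref{Subsection:ballsandshiftmaps} together with the grading computation in Lemma \ref{Lemma:Gradingchangebyshift}. First I would check that the formula makes sense at the chain level. Suppose $S_I$ is a cycle with $I \subseteq D'_m = \bm{C^m}\setminus B^{\bu}_{L_\delta}$ representing a class in $\mathcal{H}^{i+\omega}_{D'_m}$; without loss of generality we may take $S_I$ to be a minimal cycle, since minimal cycles span all cycles (and we can decompose any representative into homogeneous, hence minimal, pieces). Since $L_\delta = L_B + 2L_A + 1 \geq L_A + 1$, Proposition \ref{Proposition:shiftbounddary} applies, so $\partial S_I = 0 \Rightarrow \partial S_{R_-(I)} = 0$, i.e. $S_{R_-(I)}$ is a cycle, and $R_-(I) \subseteq D''_m$ by definition. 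To see the class $[S_{R_-(I)}]$ is independent of the choice of representative $S_I$ of a given far homology class: if $S_I$ and $S_{I'}$ are two far cycles (both $\subseteq D'_m$) with $[S_I] = [S_{I'}]$, then $S_I + S_{I'} = \partial S_R$ for some $R$; taking $S_I+S_{I'}$ minimal and arguing exactly as in Proposition \ref{Proposition:shifttrivialhomology} — Corollary \ref{Corollary:lengthminimalrelation} bounds $\diam(R) \leq L_B$ and Lemma \ref{Lemma:differentialorder} bounds $\dist_H(R, I\cup I') < L_A$ — we get $R \subseteq \bm{C^m}\setminus B^{\bu}_{L_A+1}$, so $R_-(R)$ is defined and Proposition \ref{Proposition:shiftbounddary} gives $\partial S_{R_-(R)} = S_{R_-(I)} + S_{R_-(I')}$, hence $[S_{R_-(I)}] = [S_{R_-(I')}]$. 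The Alexander grading lands in the right place: by Lemma \ref{Lemma:Gradingchangebyshift}, $R_-$ shifts $a_m$ by $-\omega$ (as $R_-$ decreases the $\xi$-index by one and $a_m(x^1\otimes\xi_{i-1}) - a_m(x^1\otimes\xi_i) = -\omega$), so a class in Alexander grading $i+\omega$ maps to one in grading $i$; also $R_-$ is $\mathbb{F}_2$-linear on power sets, so it is a linear map $\mathcal{H}^{i+\omega}_{D'_m}\to \mathcal{H}^{i}_{D''_m}$.

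For injectivity and surjectivity, the natural move is to produce a two-sided inverse. The candidate is $R_+$, which on $D''_m = R_-(D'_m)$ is genuinely inverse to $R_-$ set-theoretically (both are defined away from the black box, and $R_+ \circ R_- = \id$, $R_- \circ R_+ = \id$ where defined). One must check $R_+$ descends to a well-defined map $\mathcal{H}^i_{D''_m} \to \mathcal{H}^{i+\omega}_{D'_m}$: this requires first that $R_+(D''_m) = D'_m$ (immediate from the definition $D''_m = R_-(D'_m)$), then that $R_+$ sends far cycles to cycles (Proposition \ref{Proposition:shiftbounddary}, noting $D''_m \subseteq \bm{C^m}\setminus B^{\bu}_{L_A+1}$ since $L_\delta - 1 \geq L_A+1$... actually $D''_m = R_-(D'_m)$ shifts $D'_m$ inward by one, so its minimal distance to $C^{\bu}$ is $L_\delta - 1 = L_B + 2L_A \geq L_A+1$, fine), and that it respects homology (same relation-diameter argument as above). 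Then $R_+ \circ R_- = \id$ on $\mathcal{H}^{i+\omega}_{D'_m}$ and $R_- \circ R_+ = \id$ on $\mathcal{H}^i_{D''_m}$ follow from the chain-level identities, giving that $R_-$ is an isomorphism.

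The main obstacle — and the only place requiring genuine care rather than bookkeeping — is the well-definedness on homology classes, i.e. that $R_-$ does not depend on the chosen far representative. The subtlety is that a far homology class might in principle have two far representatives that differ by the boundary of something that is \emph{not} far (i.e. $R$ could meet $B^{\bu}_{L_A+1}$), in which case $R_-(R)$ would be undefined and the argument would collapse. This is precisely what the choice $L_\delta = L_B + 2L_A + 1$ is engineered to prevent: the length bound on minimal relations (Corollary \ref{Corollary:lengthminimalrelation}) forces any relation between two far minimal cycles to itself stay far, so $R_-$ can be applied to it. I would also need to be slightly careful that when passing to minimal representatives the far condition is preserved — a homogeneous summand of a far cycle is again far since it is a subset of $D'_m$ — and that the decomposition of a general far representative into minimal far cycles is compatible with linearity, which is routine over $\mathbb{F}_2$. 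Everything else (linearity, grading shift, the inverse identities) is formal once well-definedness is in hand.
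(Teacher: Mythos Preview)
Your proposal is correct and follows essentially the same approach as the paper: both arguments use Proposition \ref{Proposition:shiftbounddary} to show $R_-$ carries far cycles to cycles, Lemma \ref{Lemma:Gradingchangebyshift} for the grading shift, and the length bound on minimal relations (Corollary \ref{Corollary:lengthminimalrelation}) together with Lemma \ref{Lemma:differentialorder} to force the relating chain $R$ to stay outside $B^{\bu}_{L_A+1}$ so that $R_-$ can be applied to it. The paper packages this as ``$R_-$ is a bijection on cycles and a bijection on minimal relations'' rather than ``well-defined plus two-sided inverse,'' but the content is the same and the key inequality (Equation \ref{Eqation:sidesofrelation} in the paper) is exactly the estimate you identified as the only non-formal step.
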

\begin{proof}
    We first need to show that the map is well defined. First consider the spaces of homogeneous cycles defined as follows: 
    $$\mathcal{C}^{i+\omega}_{D'_m}= \{ S_{I} \ | \ I \subseteq D'_m \ , \  \partial S_{I} = 0 \ , \ a_{m}(S_{I}) = i + \omega \} \ \text{and} \ $$
    $$\mathcal{C}^{i}_{D''_m}= \{ S_{I} \ | \ I \subseteq D''_m \ , \  \partial S_{I} = 0 \ , \ a_{m}(S_{I}) = i\}.$$
    Let $S_I \in \mathcal{C}^{i+\omega}_{D'_m}$ be a cycle. Since $L_{\delta} > L_{A}+1$, we have  
    $$ I \subseteq D'_m = \bm{C^m} \setminus B^{\bu}_{L_{\delta}} \subset \bm{C^m} \setminus B^{\bu}_{L_{A}+1}.$$
    As a result, we can use Proposition \ref{Proposition:shiftbounddary} to deduce that $S_{R_{-}(I)}$ is a cycle. Furthermore, we clearly have that 
    $$I \subseteq D'_m \Rightarrow R_{-}(I) \subseteq R_{-}(D'_m) = D''_m.$$
    Based on Lemma \ref{Lemma:Gradingchangebyshift}, the cycle $S_{R_{-}(I)}$ is homogenous and its Alexander grading can be computed as follows 
    $$a_{m}(S_{R_{-}(I)}) = a_m(S_{I}) - \omega = i.$$ Combination of these facts gives us that $S_{R_{-}(I)} \in \mathcal{C}^{i}_{D''_m}$. Hence the induced map 
    $$R_{-} : \mathcal{C}^{i+\omega}_{D'_m} \longrightarrow \mathcal{C}^{i}_{D''_m}$$ is a well-defined linear map. We can easily see that it is also an isomorphism as it has an inverse induced by $R_{+}$. \\

    We proved that $R_{-}$ induces a bijection on the cycles. It suffices to show that it also induces a bijection on the space of minimal relations. Consider a minimal relation 
    $$\partial S_{T} = S_{I_1} + \cdots + S_{I_r}$$ 
    where $S_{I_1}, \cdots, S_{I_r}$ are minimal cycles in $\mathcal{C}^{i+\omega}_{D'_m}$.\\
    
    Similar to the proof of Proposition \ref{Proposition:shifttrivialhomology}, we can use Corollary \ref{Corollary:lengthminimalrelation} and Lemma \ref{Lemma:differentialorder} to deduce that
    \begin{align}\label{Eqation:sidesofrelation}
    \dist_{H}(T, C^{\bu}) \geq \dist_{H}(\bigcup_{1 \leq j \leq r} I_j, C^{\bu}) - \dist_{H}(T,\bigcup_{1 \leq j \leq r} I_j) - \diam(T)\\
    \Rightarrow \dist_{H}(T, C^{\bu}) > L_{\delta} - L_A - L_B \geq L_A+1 \ \Rightarrow \  T \subseteq \bm{C^m} \setminus B^{\bu}_{L_A+1}.
    \end{align}
    Now we can again use use Proposition \ref{Proposition:shiftbounddary} to deduce that the following 
    $$\partial S_{R_{-}(T)} = S_{R_{-}(I_1)} + \cdots + S_{R_{-}(I_r)}$$ 
    is a minimal relation between cycles in $\mathcal{C}^{i}_{D''_m}$. This means that $R_{-}$ induces a map between the spaces of minimal relations, which is also bijective since it has an inverse induced by $R_{+}$. This compeletes our proof.
\end{proof}
Next, we will examine how the isomorphism 
$$R_{-} : \mathcal{H}^{i+\omega}_{D'_m} \rightarrow \mathcal{H}^{i}_{D''_m}$$ changes the Maslov grading modulo 2. This is stated in Lemma \ref{Lemma:Alexanderjumpsdefects-Maslovchange}.   
\begin{lemm}\label{Lemma:Alexanderjumpsdefects-Maslovchange} For any minimal homology class $H \in \mathcal{H}^{i+\omega}_{D'_m}$, the homology class $R_{-}(H)$ is also minimal and hence homogeneous. Furthermore,
$$h_{m}(H) \equiv  h_m(R_{-}(H))  \mod 2.$$
\end{lemm}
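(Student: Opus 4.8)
The plan is to compute the Maslov grading shift under $R_-$ by combining Lemma \ref{Lemma:Gradingchangebyshift} with the structural control on minimal cycles established in Subsection \ref{Subsection:lengthsizecycles}. First I would note that minimality of $R_-(H)$ follows from Proposition \ref{Proposition:shiftbounddary} exactly as in Proposition \ref{Proposition:shifttrivialhomology}: pick a minimal representative $S_I$ with $I \subseteq D'_m$; since $L_\delta > L_A+1$, $R_-(I)$ is defined and $S_{R_-(I)}$ is a cycle; if $S_{R_-(I)}$ were non-minimal, a proper cycle subset would pull back under $R_+$ (again using $\dist_H(\cdot, C^\bu)$ being large enough by Lemma \ref{Lemma:differentialorder} and Lemma \ref{Lemma:lengthminimal}) to contradict minimality of $S_I$. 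Minimality forces homogeneity, so $h_m(R_-(H))$ is well-defined as the common Maslov grading of all elements of $R_-(I)$.

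Next, the actual grading computation. Since $S_I$ is homogeneous, all $\nu \in I$ have the same Maslov grading $h_m(H)$, and all $\nu' \in R_-(I)$ have the same Maslov grading $h_m(R_-(H))$. Pick any $\nu = x^1 \otimes \xi_j \in I$ (every element of $I \subseteq D'_m$ lies in a white box, so is of this form). Then $R_-(\nu) = x^1 \otimes \xi_{j-1}$, and Lemma \ref{Lemma:Gradingchangebyshift} gives
$$h_m(x^1 \otimes \xi_{j-1}) - h_m(x^1 \otimes \xi_j) = (-1)\bigl(M - 2b - \tfrac{1}{2}\bigr) = -M + 2b + \tfrac{1}{2},$$
where $(a;b,c;d)$ is a representative of $gr_K(x^1)$. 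Hence $h_m(R_-(H)) - h_m(H) = 2b - M + \tfrac12$. Now I invoke Lemma \ref{Lemma:secondcomponentmod2}: for any two elements $x_1, x_2 \in B^1_K$, the second grading components satisfy $b_1 - b_2 \in \mathbb{Z}$, so $2b \bmod 2$ is independent of the choice of $x^1 \in B^1_K$. Combined with the relation $2\beta_0 = M + \tfrac12$ from Lemma \ref{Lemma:homotopyequivgenerator}/Corollary \ref{Corollary:gradingofhomotopyequivgenerator}, where $\beta_0$ is the second component of a representative of $gr_S(\nu_0)$ for the fixed generator $\nu_0 \in C^\bu \cap I_m$ — so $2b - (M + \tfrac12) = 2b - 2\beta_0 \in 2\mathbb{Z}$ by Corollary \ref{Corollary:secondcomponentmod2-two} (which relates second components of $B^1_K$ and $B^0_K$ elements up to $\tfrac12$, applied twice, or directly since $2b$ and $M + \tfrac12$ are both odd integers once we know $2\beta_0 = M+\tfrac12$ and $2\beta_0 \in \mathbb{Z}$). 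Therefore $h_m(R_-(H)) - h_m(H) = 2b - M + \tfrac12 \equiv 0 \pmod 2$, as claimed.

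The main obstacle I anticipate is the bookkeeping around \emph{which} representative of the grading coset to use and making sure the quantity $2b$ is genuinely well-defined modulo $2$ across the whole homogeneous set $I$ — a priori different elements $x^1 \otimes \xi_j \in I$ come from different $x^1 \in B^1_K$ with different $b$-values, and one must check the answer does not depend on that choice. This is precisely what Lemma \ref{Lemma:secondcomponentmod2} is designed to handle, so the resolution is to cite it; but one should be careful that the representatives chosen are consistent (i.e. that the coset-to-representative choices used in Lemma \ref{Lemma:Gradingchangebyshift} are compatible with those in Lemma \ref{Lemma:secondcomponentmod2}). A secondary subtlety is verifying that $M + \tfrac12$ is an odd integer — equivalently that $M \in \tfrac12 + \mathbb{Z}$, which follows from Lemma \ref{Lemma:Gradingsubgroup} together with the parity statements, so that $2b - (M+\tfrac12)$ lands in $2\mathbb{Z}$ rather than merely $\mathbb{Z}$. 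Once these parity facts are pinned down, the congruence is immediate and the proof closes.
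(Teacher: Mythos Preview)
Your approach is essentially identical to the paper's: both argue minimality via Proposition \ref{Proposition:shiftbounddary} as in Proposition \ref{Proposition:shifttrivialhomology}, then compute the Maslov shift via Lemma \ref{Lemma:Gradingchangebyshift} and close the parity argument using $2\beta_0 = M + \tfrac12$ (Corollary \ref{Corollary:gradingofhomotopyequivgenerator}) together with Corollary \ref{Corollary:secondcomponentmod2-two}.

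One arithmetic slip to fix: you write $2b - (M + \tfrac12) = 2b - 2\beta_0 \in 2\mathbb{Z}$, but the quantity you actually need is $2b - M + \tfrac12 = 2b - 2\beta_0 + 1$ (since $M = 2\beta_0 - \tfrac12$), and Corollary \ref{Corollary:secondcomponentmod2-two} says $b - \beta_0 + \tfrac12 \in \mathbb{Z}$, i.e.\ $2b - 2\beta_0$ is \emph{odd}, hence $2b - 2\beta_0 + 1$ is even. This is exactly how the paper concludes. Your invocation of Lemma \ref{Lemma:secondcomponentmod2} (the $B^1_K$--$B^1_K$ statement) is unnecessary, since you only ever compare one white-box element against the black-box element $\nu_0$; the paper goes straight to Corollary \ref{Corollary:secondcomponentmod2-two}.
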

\begin{proof}
    Similar to the proof of Proposition \ref{Proposition:shifttrivialhomology}, we can use the minimality of $H$ and Proposition \ref{Proposition:shiftbounddary}, to prove that $R_{-}(H)$ is also minimal.\\
    
    To prove the second part of the statement, let $H \in \mathcal{H}^{i+\omega}_{D'_m}$ be any minimal homology class. Based on the definition, there exists subsets $I$ such that $H = [S_{I}]$. Consider the element $x_1 \in B^{1}_{K}$ and $i\in \mathbb{Z}$ such that $x \otimes \xi_{i} \in I$. Also assume that we have $gr_{K}(x_1) = [(a_1 ; b_1,c_1;d_1)]$. We can then use Lemma \ref{Lemma:Gradingchangebyshift} to write:
    $$h_{m}(H) - h_m(R_{-}(H)) = M-2b_1-\frac{1}{2}.$$
    Hence we only need to prove  
    $$M-2b_1-\frac{1}{2} \equiv 0  \mod 2.$$
    Consider the element $\nu_0$ coming from Lemma \ref{Lemma:homotopyequivgenerator2}. There exist an element $x_0 \in B^{0}_{K}$ such that $\nu_0=x_0 \otimes \eta$. Assume that $gr_{K}=[(a_0;b_0,c_0;d_0)]$. Then Corollary \ref{Corollary:gradingofhomotopyequivgenerator} gives us that $2b_0 = 2\beta_0 = M + \frac{1}{2}.$
    On the other hand, we can use Corollary \ref{Corollary:secondcomponentmod2-two} to write 
    $$2b_0 - 2b_1 \equiv 1 \mod 2.$$
    As a result, we have 
    $$M- 2b_1 - \frac{1}{2} \equiv M -2b_0 - \frac{1}{2} - 1\equiv 0 \mod 2.$$
\end{proof}

\subsection{Stabilization of the complement subspaces}\label{Subsection:Stabilizationofcomplement}\hfill\\

We prove some stabilization results about the complement subspaces $\overline{\mathcal{H}^{i}_{D'_m}}$ and $\overline{\mathcal{H}^{i}_{D''_m}}$, stated in Lemma \ref{Lemma:Alexanderjumpsdefects-stabilization} and Lemma \ref{Lemma:Alexanderjumpsdefects-relativeset}.\\

We state and prove the stabilization result about $\overline{\mathcal{H}_{D'_m}}$, and a similar result holds for $\overline{\mathcal{H}_{D''_m}}$.

\begin{lemm}\label{Lemma:Alexanderjumpsdefects-stabilization}
As $m \rightarrow \infty$, the subspace $\overline{\mathcal{H}_{D'_m}}$ will be stabilized. In other words, there exist a (ungraded) vector space $\overline{\mathcal{H}'_{\infty}}$ such that:
$$\overline{\mathcal{H}_{D'_m}} \cong \overline{\mathcal{H}'_{\infty}}.$$
\end{lemm}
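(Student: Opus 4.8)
The plan is to show that $\overline{\mathcal{H}_{D'_m}}$ is, for $m \gg 0$, naturally isomorphic to a homology subspace built from a fixed closed ball, and that this latter subspace is independent of $m$ once $m$ is large enough. The overall strategy parallels the proof of Theorem \ref{Theorem:extremalhfk}: first confine everything relevant to a fixed closed ball $B^{\bu}_{L}$ for a suitable constant $L$ depending only on $K$ and $c$ (not on $m$), then invoke the stabilization of elements, differentials, cycles, and minimal relations inside such a ball for $m > 2L$.

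First I would set up the confinement step. The complement subspace $\overline{\mathcal{H}^{i}_{D'_m}}$ is spanned by homology classes that admit no representative inside $D'_m = \bm{C^m}\setminus B^{\bu}_{L_\delta}$. I claim any minimal cycle $S_I$ whose class lies in $\overline{\mathcal{H}_{D'_m}}$ (i.e.\ is not far) must satisfy $I \subseteq B^{\bu}_{L_\delta + N_AL_A}$, hence $I$ lies in a fixed closed ball. Indeed, by Lemma \ref{Lemma:lengthminimal} we have $\diam(I) \leq \len(S_I) \leq N_AL_A$, so if $I$ were disjoint from $B^{\bu}_{L_\delta}$ entirely we would already be done (the class would be far); and if $I$ meets both $B^{\bu}_{L_\delta}$ and its far complement, the diameter bound forces $I \subseteq B^{\bu}_{L_\delta+N_AL_A}$. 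The only remaining case is $I$ meeting $C^{\bu}$-side in a controlled way, which the same diameter bound handles. So set $L := L_\delta + N_AL_A + L_B + 2L_A$ (leaving room for relations), and all minimal cycles contributing to $\overline{\mathcal{H}_{D'_m}}$, together with all minimal relations among them (bounded by $L_B$ via Corollary \ref{Corollary:lengthminimalrelation} and Lemma \ref{Lemma:differentialorder}), live inside $B^{\bu}_L$.

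Next I would invoke stabilization: for $m > 2L$, the inclusion $\Phi_m$ restricts to a bijection $B^{\bu}_L \subseteq \bm{C^m} \to B^{\bu}_L \subseteq \bm{C^{m+1}}$, and by Proposition \ref{Proposition:boxtensordifferential} together with Lemma \ref{Lemma:differentialorder} the differentials of elements of $B^{\bu}_{L-L_A}$ remain in $B^{\bu}_L$ and do not depend on $m$. Therefore the space of cycles supported in $B^{\bu}_{L_\delta+N_AL_A}$, and the space of minimal relations among them, both stabilize; denote the resulting stabilized homology subspace by $\overline{\mathcal{H}'_\infty}$ (as an ungraded $\mathbb{F}_2$-vector space, forgetting Alexander and Maslov gradings since these shift with $m$ by Lemma \ref{Lemma:Gradingchangebym}). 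It then remains to check that this stabilized subspace is exactly $\overline{\mathcal{H}_{D'_m}}$: a class is \emph{not} far precisely when it has no representative outside $B^{\bu}_{L_\delta}$, and one shows that if a class has a representative in $B^{\bu}_{L_\delta+N_AL_A}$ but also one far from the black box, then — using Proposition \ref{Proposition:shiftbounddary} and Proposition \ref{Proposition:shifttrivialhomology} in the manner of Lemma \ref{Lemma:extremalnearblackbox} — it can be pushed entirely into $D'_m$, contradiction; hence "has a representative in the fixed ball and is not expressible far" is a stable condition cutting out exactly $\overline{\mathcal{H}_{D'_m}}$.

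The main obstacle I anticipate is the last point: showing that the decomposition $\HFKh(K_m) = \mathcal{H}_{D'_m}\oplus\overline{\mathcal{H}_{D'_m}}$ is compatible with the confinement, i.e.\ that a class counted in $\overline{\mathcal{H}_{D'_m}}$ genuinely has \emph{all} its minimal representatives (up to relations) inside the fixed ball, rather than merely \emph{some} representative there. The subtlety is that $\overline{\mathcal{H}_{D'_m}}$ is defined as a complement, not intrinsically, so one must argue that the quotient $\HFKh(K_m)/\mathcal{H}_{D'_m}$ is computed by the subcomplex supported in $B^{\bu}_{L_\delta + N_AL_A}$ modulo boundaries landing there. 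This requires carefully combining the length bounds on minimal cycles (Lemma \ref{Lemma:lengthminimal}) and minimal relations (Corollary \ref{Corollary:lengthminimalrelation}) with the shift-equivariance away from the black box (Proposition \ref{Proposition:shiftbounddary}) to rule out "mixed" classes, much as in the proof of Lemma \ref{Lemma:Alexanderjumpsdefects}. Once this bookkeeping is in place, the stabilization is immediate from the fixed-ball argument.
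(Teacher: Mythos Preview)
Your proposal is correct and follows essentially the same route as the paper. You confine minimal representatives of non-far classes to $B^{\bu}_{L_\delta+N_AL_A}$ via the diameter bound, then confine the relevant relations via Corollary~\ref{Corollary:lengthminimalrelation}, and conclude stabilization once $m$ exceeds twice the resulting radius; the paper does exactly this, with $L_\Delta = L_\delta + N_AL_A + L_A + L_B$, which matches your $L$ up to an inessential extra $L_A$.

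One small sharpening: the ``obstacle'' you anticipate in your last paragraph is handled in the paper not by a shift-and-contradict argument, but more directly. The paper takes a minimal basis $\mathcal{B}$ for the space $\mathcal{C}_{L_\delta+N_AL_A}$ of cycles supported in the ball, and then determines which combinations of basis elements actually lie in $\mathcal{H}_{D'_m}$ by examining \emph{mixed} minimal relations
\[
\partial S_T = (S_{I_1}+\cdots+S_{I_r}) + (S_{I'_1}+\cdots+S_{I'_{r'}}),
\]
where the $I_j$ meet $B^{\bu}_{L_\delta}$ (with $r\geq 1$) and the $I'_{j'}$ lie in $D'_m$. Since at least one $I_j$ is near the black box, Lemma~\ref{Lemma:differentialorder} and Corollary~\ref{Corollary:lengthminimalrelation} force $T\subseteq B^{\bu}_{L_\Delta}$, so the full collection of such relations stabilizes. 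This directly computes the quotient $\mathcal{C}_{L_\delta+N_AL_A}\big/(\mathcal{C}_{L_\delta+N_AL_A}\cap\mathcal{H}_{D'_m})\cong\overline{\mathcal{H}_{D'_m}}$ from stable data, sidestepping any need to invoke Propositions~\ref{Proposition:shiftbounddary}--\ref{Proposition:shifttrivialhomology} at this stage.
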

\begin{proof}
     Proof of this stabilization result follows the same strategy as the previous proofs. We will show that there exist an integer $L_{\Delta}$, such that $\overline{\mathcal{H}_{D'_m}}$ can be computed from the \emph{data} of $B^{\bu}_{L_{\Delta}} \subset \bm{C^m}$ for $m\gg0$. The term \emph{data} here means the elements of $B^{\bu}_{L_{\Delta}}$ and their differentials. Based on Lemma \ref{Lemma:differentialorder}, the differentials of elements of $B^{\bu}_{L_{\Delta}}$ is included in $B^{\bu}_{L_{\Delta}+L_A}$. As a result, this data will be stabilized for ${m > 2(L_{\Delta}+L_A)}$.\\

    First, similar to before we work with a minimal basis for $\overline{\mathcal{H}_{D'_m}}$. Let's assume $H$ is a homology class in this basis, and pick a minimal cycle $S_I$ such that $H=[S_I]$. Based on the definition of $\overline{\mathcal{H}_{D'_m}}$, we have that $I\not\subset D'_{m}$. This means that the intersection $ I \cap B^{\bu}_{L_{\delta}}$ must be non-empty. On the other hand, using Lemma \ref{Lemma:lengthminimal}, we have:
    $$\diam(I) \leq \len(S_I) \leq N_AL_A \Rightarrow I \subseteq B^{\bu}_{L_{\delta}+N_AL_A}.$$
    Similar to the proof of Theorem \ref{Theorem:extremalhfk}, we can consider the space of cycles with representatives lying in $B^{\bu}_{L_{\delta}+N_AL_A}$, denoted by $\mathcal{C}_{L_{\delta}+N_AL_A}$. Based on what we have proven so far, we know that this subspace contains representatives for minimal classes in $\overline{\mathcal{H}_{D'_m}}$.\\

    Now we consider a minimal basis $\mathcal{B}$ for $\mathcal{C}_{L_{\delta}+N_AL_A}$. The next step would be to see how we can determine if a minimal cycle $S_I \in \mathcal{B}$ represents a class in $\overline{\mathcal{H}_{D'_m}}$. More generally, we can analyze the minimal relations 
    $$\partial S_{T} = (S_{I_1} + \cdots + S_{I_r}) + (S_{I'_1} + \cdots + S_{I'_{r'}}),$$
    where we have divided the cycles on the right hand side of the relation into two groups such that:
    \begin{itemize}
    \setlength\itemsep{1em}
        \item $S_{I_j} \in \mathcal{B} \ \text{and} \ I_{j} \not\subset D'_m  \  \text{for} \ j\in \{1,\cdots,r\}$ where $r \in \mathbb{Z}_{>0}$.
        
        \item $I'_{j'} \subseteq D'_m \ \text{for} \ j' \in \{1,\cdots, r'\}$ where $r' \in \mathbb{Z}_{\geq 0}$.
    \end{itemize}
     Note that the assumption $r>0$ is necessary, as we only care about relations which include at least one cycle in $\mathcal{C}_{L_{\delta}+N_AL_A}$ which can potentially represent a class in $\overline{\mathcal{H}_{D'_m}}$. These minimal relations would be enough to determine the subspace $\overline{\mathcal{H}_{D'_m}}$, as it is the complement of $\mathcal{H}_{D'_m}$.\\
    
    Now we can use Lemma \ref{Lemma:differentialorder} to deduce that
    $$\dist_{H}(T, C^{\bu}) \leq \dist_{H}(T,\bigcup_{1\leq j \leq r} I_{j} ) + \dist_{H}(\bigcup_{1\leq j \leq r} I_{j},C^{\bu}) \leq L_{A} + L_{\delta}+N_AL_A$$
    Combining this with Corollary \ref{Corollary:lengthminimalrelation}, we have that
    $$T \subseteq B^{\bu}_{L_{\delta}+N_AL_A+L_{A}+L_{B}}.$$
    This means we can set 
    $$L_{\Delta} = L_{\delta}+N_AL_A+L_{A}+L_{B}$$
    and the proof will be complete, as $B^{\bu}_{L_{\Delta}}$ includes both a basis for the space of all cycles representing classes in $\overline{\mathcal{H}_{D'_m}}$, and a basis for the space of all relations between them.   
\end{proof}
As we mentioned, a similar argument works for the subspace $\overline{\mathcal{H}_{D''_m}}$. Let the vector space $\overline{\mathcal{H}''_{\infty}}$ denote the result of stabilization for these subspaces.\\

Based on the proof of Lemma \ref{Lemma:Alexanderjumpsdefects-stabilization}, we can also state Corollary \ref{Corollary:Alexanderjumpsdefects-stabilization}.

\begin{coro}\label{Corollary:Alexanderjumpsdefects-stabilization}
    There exist an integer $L_{\Delta}$, such that any homology class in $\overline{\mathcal{H}'_{\infty}}$  or $\overline{\mathcal{H}''_{\infty}}$ has a fixed representative lying in $B^{\bu}_{L_{\Delta}}$. 
\end{coro}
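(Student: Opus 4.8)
The plan is to observe that Corollary \ref{Corollary:Alexanderjumpsdefects-stabilization} is already essentially contained in the proof of Lemma \ref{Lemma:Alexanderjumpsdefects-stabilization}; what remains is to make the choice of representatives explicit and to check that it can be made independently of $m$. Recall from that proof that every minimal cycle $S_I$ representing a class in $\overline{\mathcal{H}_{D'_m}}$ satisfies $I \subseteq B^{\bu}_{L_{\delta}+N_AL_A}$ (using $\diam(I)\le \len(S_I)\le N_AL_A$ from Lemma \ref{Lemma:lengthminimal} together with $I\cap B^{\bu}_{L_{\delta}}\ne\varnothing$), and that the minimal relations needed to cut out $\overline{\mathcal{H}_{D'_m}}$ inside the cycle space $\mathcal{C}_{L_{\delta}+N_AL_A}$ have support in $B^{\bu}_{L_{\Delta}}$, where $L_{\Delta}=L_{\delta}+N_AL_A+L_A+L_B$ by Corollary \ref{Corollary:lengthminimalrelation} and Lemma \ref{Lemma:differentialorder}.

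First I would fix the ball $B^{\bu}_{L_{\Delta}}\subseteq\bm{C^m}$ and recall, exactly as in the proof of Theorem \ref{Theorem:extremalhfk} and as illustrated in Figure \ref{Figure:Nested}, that for $m>2(L_{\Delta}+L_A)$ the inclusion $\Phi_m$ restricts to a bijection between the copies of $B^{\bu}_{L_{\Delta}+L_A}$ inside $\bm{C^m}$ and $\bm{C^{m+1}}$, and that by Proposition \ref{Proposition:boxtensordifferential} and Lemma \ref{Lemma:differentialorder} the differentials of elements of $B^{\bu}_{L_{\Delta}}$ land in $B^{\bu}_{L_{\Delta}+L_A}$ and do not depend on $m$. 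Consequently the space of cycles supported in $B^{\bu}_{L_{\delta}+N_AL_A}$ and the space of relations supported in $B^{\bu}_{L_{\Delta}}$ are literally the same $\mathbb{F}_2$-vector spaces for all $m>2(L_{\Delta}+L_A)$. Since (by the proof of Lemma \ref{Lemma:Alexanderjumpsdefects-stabilization}) $\overline{\mathcal{H}_{D'_m}}$ is computed from precisely this data, I would then choose once and for all a minimal basis of $\overline{\mathcal{H}'_{\infty}}$ consisting of classes $[S_I]$ with $I\subseteq B^{\bu}_{L_{\delta}+N_AL_A}\subseteq B^{\bu}_{L_{\Delta}}$; under the stabilization identification of Lemma \ref{Lemma:Alexanderjumpsdefects-stabilization} these are the desired fixed representatives. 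The argument for $\overline{\mathcal{H}''_{m}}$ is identical, since $D''_m=R_{-}(D'_m)$ and ``far'' is again governed by a fixed radius; at worst one enlarges $L_{\Delta}$ by one to absorb the single shift, which I would fold silently into the constant $L_{\Delta}$.

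I do not expect a serious obstacle here: the statement is a bookkeeping corollary of the stabilization already established. The only point deserving a sentence of care is that the identifications implicit in the phrase ``fixed representative'' are the nested-sequence inclusions $\Phi_m:\bm{C^m}\hookrightarrow\bm{C^{m+1}}$, and that these are compatible with the definitions of $D'_m$, $D''_m$ and of the far decomposition: a subset of $B^{\bu}_{L_{\Delta}}$ is ``far'' or not independently of whether we view it inside $\bm{C^m}$ or $\bm{C^{m+1}}$, because $\Phi_m$ preserves the distance to $C^{\bu}$ on the stabilized metric balls. With this remark in place, choosing the basis from $\mathcal{C}_{L_{\delta}+N_AL_A}$ completes the proof.
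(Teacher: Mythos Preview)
Your proposal is correct and follows essentially the same approach as the paper. The paper does not give a separate proof of this corollary at all; it simply states that the result follows ``based on the proof of Lemma \ref{Lemma:Alexanderjumpsdefects-stabilization}'', and your write-up spells out exactly the details that the paper leaves implicit, including the specific choice $L_{\Delta}=L_{\delta}+N_AL_A+L_A+L_B$ already introduced there.
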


For $m\gg0$, we can use the isomorphisms 
$$\overline{\mathcal{H}_{D'_m}} \cong \overline{\mathcal{H}'_{\infty}} \ \text{and} \ \overline{\mathcal{H}_{D''_m}} \cong \overline{\mathcal{H}''_{\infty}},$$ 
to consider the Alexander grading map $a_m$ as  grading maps 
$$a_m : \overline{\mathcal{H}'_{\infty}} \rightarrow \mathbb{Z} \ \text{and} \ a_m : \overline{\mathcal{H}''_{\infty}} \rightarrow \mathbb{Z}.$$
We can say the same thing about the Maslov grading map $h_m$ as well.\\

Similar to the proof of Theorem \ref{Theorem:extremalhfk}, the main missing element from Lemma \ref{Lemma:Alexanderjumpsdefects-stabilization} is that our stabilization technique can't directly address these gradings. Again similar to proof of Theorem \ref{Theorem:extremalhfk}, we can use the relative gradings and some other tricks to bypass this issue. Lemma \ref{Lemma:Alexanderjumpsdefects-relativeset} will be useful in this process.

  \begin{lemm}\label{Lemma:Alexanderjumpsdefects-relativeset}
      Consider a minimal basis $\mathcal{B}'_{\infty}$ for $\overline{\mathcal{H}'_{\infty}}$, and a minimal basis $\mathcal{B}''_{\infty}$ for $\overline{\mathcal{H}''_{\infty}}$. For any sufficiently large integer $m$, we define the set $\mathcal{PH}_{m}$ as follows: 
    $$\mathcal{PH}_{m} := \{(H',H'') \ | \ a_{m}(H') - a_{m}(H'') = \omega\} \subset \mathcal{B}'_{\infty} \times \mathcal{B}''_{\infty}.$$
    Then the sets $\mathcal{PH}_{m}$ will be stabilized to a set $\mathcal{PH}_{\infty}$, as $m \rightarrow \infty$.
  \end{lemm}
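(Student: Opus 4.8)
The plan is to reduce the stabilization of $\mathcal{PH}_m$ to the stabilization of three pieces of data that have already been (or can easily be) shown to stabilize: the finite sets $\mathcal{B}'_\infty$ and $\mathcal{B}''_\infty$ themselves, together with fixed representatives of their elements inside a ball $B^{\bu}_{L_\Delta}$ (Corollary \ref{Corollary:Alexanderjumpsdefects-stabilization}), and the \emph{relative} Alexander gradings $a_m(H') - a_m(H'')$ for $H' \in \mathcal{B}'_\infty$, $H'' \in \mathcal{B}''_\infty$. Since $\mathcal{B}'_\infty$ and $\mathcal{B}''_\infty$ are fixed finite index sets for all sufficiently large $m$, the product $\mathcal{B}'_\infty \times \mathcal{B}''_\infty$ is a fixed finite set, and $\mathcal{PH}_m$ is cut out of it by the condition $a_m(H') - a_m(H'') = \omega$. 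So it suffices to show that for each fixed pair $(H', H'')$ the truth value of this equation is eventually constant in $m$; equivalently, that $a_m(H') - a_m(H'')$ is eventually constant in $m$ for each such pair.

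First I would fix representatives: by Corollary \ref{Corollary:Alexanderjumpsdefects-stabilization}, for $m > 2L_\Delta$ every $H' \in \mathcal{B}'_\infty$ has a representative $S_{I'}$ with $I' \subseteq B^{\bu}_{L_\Delta}$, and similarly every $H'' \in \mathcal{B}''_\infty$ has a representative $S_{I''}$ with $I'' \subseteq B^{\bu}_{L_\Delta}$, where these subsets $I', I''$ themselves do not depend on $m$ (they live in the stabilized ball). Next I would invoke Corollary \ref{Corollary:doublesidedgrading}: for $m > 2L_\Delta$ every element $\nu \in B^{\bu}_{L_\Delta}$ has a grading double coset $gr_{K_m}(\nu)$ admitting a representative in $\widetilde{G}$ that is independent of $m$. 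Then Lemma \ref{Lemma:Gradingchangebym}, applied to any $\nu_1 \in I'$ and $\nu_2 \in I''$, expresses $a_m(\nu_1) - a_m(\nu_2)$ as an affine function of $m$, namely $((\beta_2 - \beta_1)\omega)\cdot m + (\delta_1 - \delta_2) - (\gamma_1 - \gamma_2)\omega$. Since $S_{I'}$ and $S_{I''}$ are homogeneous, $a_m(H') - a_m(H'') = a_m(\nu_1) - a_m(\nu_2)$ is this same affine function of $m$. The remaining point is that its slope $(\beta_2 - \beta_1)\omega$ vanishes, so that the difference is genuinely constant.

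The main obstacle, then, is showing that the slope $(\beta_2 - \beta_1)\omega$ is zero — i.e. that the second ($\text{Spin}^{\mathbb{C}}$) grading components of the fixed representatives of $\nu_1 \in I' \subseteq D'_m$-side cycles and $\nu_2 \in I'' \subseteq D''_m$-side cycles agree. Here is the intended argument. Recall $D''_m = R_{-}(D'_m)$, and by Lemma \ref{Lemma:Alexanderjumpsdefects}, $R_{-} : \mathcal{H}^{i+\omega}_{D'_m} \to \mathcal{H}^{i}_{D''_m}$ is an isomorphism. Although $\overline{\mathcal{H}'_\infty}$ and $\overline{\mathcal{H}''_\infty}$ are the \emph{complementary} subspaces rather than the far subspaces themselves, the key is that, just as in the proof of Lemma \ref{Lemma:Alexanderjumpsdefects-stabilization}, the minimal cycles representing classes of $\overline{\mathcal{H}'_\infty}$ and those representing classes of $\overline{\mathcal{H}''_\infty}$ differ by the shift $R_{\pm}$ away from the black box: one shows that a minimal basis $\mathcal{B}'_\infty$ supported in $B^{\bu}_{L_\Delta}$ and a minimal basis $\mathcal{B}''_\infty$ supported in $B^{\bu}_{L_\Delta}$ can be chosen so that their supports and differentials coincide on a common sub-ball, hence the relevant grading components $\beta$ agree for the corresponding generators (using that the type $(4)$ differentials, and hence the grading bookkeeping of Lemma \ref{Lemma:Gradingchangebyshift}, are shift-equivariant). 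More concretely, Lemma \ref{Lemma:Gradingchangebyshift} tells us $a_m(x^1 \otimes \xi_j) - a_m(x^1 \otimes \xi_i) = (j-i)\omega$, so the Alexander grading is affine in the box-index with slope exactly $\omega$; combining this with the fact that the stabilized representatives of $H'$ and $H''$ involve the \emph{same} elements $x^1 \in B^1_K$ (only at box-indices shifted by $1$), one gets $a_m(H') - a_m(H'') = \omega \cdot (\text{shift}) + (\text{constant})$ with the shift equal to a fixed integer independent of $m$. This forces the $m$-dependence to cancel. I would carry this step out by first proving a clean statement: for each $H' \in \mathcal{B}'_\infty$ there is a canonically associated $H'' \in \mathcal{B}''_\infty$ (obtained by the shift construction) with $a_m(H') - a_m(H'') = \omega$ for all large $m$, and for any pair not arising this way the relative grading is an affine non-constant function of $m$ whose slope sign is eventually fixed; hence the defining condition stabilizes. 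This is where the real work lies, and it leans on Proposition \ref{Proposition:boxtensordifferential} (shift-invariance of type $(4)$ edges), Proposition \ref{Proposition:shiftbounddary}, and the grading computations of Subsection \ref{Subsection:Gradingsinboxtensor}.
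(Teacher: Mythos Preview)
Your reduction to checking whether, for each fixed pair $(H',H'')$, the truth value of $a_m(H')-a_m(H'')=\omega$ eventually stabilizes is correct, and so is your observation that $a_m(H')-a_m(H'')$ is an affine function of $m$ via Corollary \ref{Corollary:doublesidedgrading}, Corollary \ref{Corollary:Alexanderjumpsdefects-stabilization} and Lemma \ref{Lemma:Gradingchangebym}. But the claimed equivalence ``equivalently, that $a_m(H')-a_m(H'')$ is eventually constant in $m$ for each such pair'' is false, and this sends the rest of your argument down the wrong path. An affine function $Am+B$ with $A\neq 0$ equals the fixed value $\omega$ for at most one $m$; hence for $m$ beyond that single value the pair is permanently excluded from $\mathcal{PH}_m$, and the truth value stabilizes without the difference being constant. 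The paper's proof does exactly this: it bounds how long a non-constant affine function can linger near $\omega$ (via the finite quantity $\mathcal{P}_{\max}$) and concludes that for $m$ large enough only the pairs with slope zero and constant value $\omega$ survive.

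Consequently, your ``main obstacle'' --- showing $(\beta_2-\beta_1)\omega=0$ for \emph{all} pairs --- is neither needed nor true in general, and the shift-based argument you sketch to establish it is aimed at the wrong objects. The isomorphism $R_-$ of Lemma \ref{Lemma:Alexanderjumpsdefects} relates the \emph{far} subspaces $\mathcal{H}^{i+\omega}_{D'_m}$ and $\mathcal{H}^{i}_{D''_m}$, not their complements $\overline{\mathcal{H}_{D'_m}}$ and $\overline{\mathcal{H}_{D''_m}}$; there is no reason a minimal basis element $H'\in\mathcal{B}'_\infty$ should be a shift of some $H''\in\mathcal{B}''_\infty$, nor that their representatives ``involve the same elements $x^1\in B^1_K$''. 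Drop that paragraph entirely: once you know each relative grading is affine in $m$, the stabilization of $\mathcal{PH}_m$ follows immediately from the elementary fact that an affine function takes any prescribed value at most once unless it is constant.
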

  \begin{proof}
    As a result of Corollary \ref{Corollary:doublesidedgrading}, the grading double cosets of any element in $B^{\bu}_{L_{\Delta}}$ has representatives which doesn't depend on $m$. We then use Lemma \ref{Lemma:Gradingchangebym} and Corollary \ref{Corollary:Alexanderjumpsdefects-stabilization} to deduce that for any pair of homology classes 
    $$H' \in \mathcal{B}'_{\infty}  \ \text{and} \ H'' \in  \mathcal{B}''_{\infty},$$
    the relative Alexander grading $a_{m}(H') - a_{m}(H'')$ is a linear function of $m$.\\

    Let $m_0$ be a sufficiently large integer and consider    
    $$ \mathcal{P}_{\max}= \max_{\substack{H' \in \overline{\mathcal{B}'_{\infty}} \\ H'' \in \overline{\mathcal{B}''_{\infty}}}}  \big| \big(a_{m_0}(H') - a_{m_0}(H'')\big)-\omega \big|.$$
    This is well-defined since both $\mathcal{B}'_{\infty}$ and $\overline{\mathcal{B}''_{\infty}}$ are finite sets.\\
    
     Now since $|a_{m}(H') - a_{m}(H'')|$ linearly increases for $m \gg0$, we can deduce that for $m > m_0+\mathcal{P}_{\max}$, the elements  $ \mathcal{PH}_{m}$ are exactly the pairs $(H',H'')$ satisfying  
    $$a_{m}(H') - a_{m}(H'') = 0 \cdot m + \omega,$$ 
    i.e. $a_{m}(H') - a_{m}(H'')$ is a constant function (linear with slope zero) with value $\omega$. This means that for $m > m_0+ \mathcal{P}_{\max}$, the set $\mathcal{PH}_{m}$ will be stabilized. 
  \end{proof}
We can go one step further and show that not only $\mathcal{PH}_{m}$ stabilize as sets, but they also stabilize as ordered sets. To make this precise, we define the subset $\mathcal{PH}^{i}_{m} \subseteq \mathcal{PH}_{m}$ as:
$$\mathcal{PH}^{i}_{m} := \{(H',H'') \in \mathcal{PH}_{m} \ | \ a_{m}(H'')=i \}.$$
Similarly we can define the subset $\mathcal{PH}^{i}_{\infty} \subseteq \mathcal{PH}_{\infty}$.\\

Now note that the set $\mathcal{PH}_{m}$ can be decomposed as 
$$\mathcal{PH}_{m} = \mathcal{PH}^{g_{m,1}}_{m} \sqcup \cdots \sqcup \mathcal{PH}^{g_{m,s_m}}_{m},$$
for a sequence of ordered integers $g_{m,1} < g_{m,2} < \cdots < g_{m,s_m}$.
Corollary \ref{Corollary:Alexanderjumpsdefects-relativeset-ordered} states that this decomposition also stabilizes as $m \rightarrow \infty$. 

\begin{coro}\label{Corollary:Alexanderjumpsdefects-relativeset-ordered}
    There exist an integer $s_{\infty}$ such that for $m\gg0$ we have $$s_m = s_{\infty}.$$ Furthermore there exist a fixed decomposition 
    $$\mathcal{PH}_{\infty} = \mathcal{PH}_{\infty,1} \sqcup \mathcal{PH}_{\infty,2} \sqcup \cdots \sqcup \mathcal{PH}_{\infty,s_{\infty}},$$
    such that 
    $$\mathcal{PH}_{\infty,i} = \mathcal{PH}^{g_{m,i}}_{\infty} = \mathcal{PH}^{g_{m,i}}_{m} \ \text{for all} \  1 \leq i \leq s_{\infty} \ \text{and} \ m \gg0.$$
    Also for any pair of integers $1 \leq i_1 \leq i_2 \leq s_{\infty}$, we have
    $$g_{m,i_1} - g_{m,i_2}$$
    stabilizes to a linear function of $m$. 
\end{coro}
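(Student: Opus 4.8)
The plan is to deduce the statement entirely from the set-level stabilization of $\mathcal{PH}_m$ already proven in Lemma \ref{Lemma:Alexanderjumpsdefects-relativeset}, together with the linearity of relative Alexander gradings from Lemma \ref{Lemma:Gradingchangebym}. First I would fix $m$ large enough that $\mathcal{PH}_m$ has stabilized to the fixed finite set $\mathcal{PH}_{\infty} \subseteq \mathcal{B}'_{\infty} \times \mathcal{B}''_{\infty}$ (both factors being finite and $m$-independent for $m \gg 0$). Then, using Corollary \ref{Corollary:Alexanderjumpsdefects-stabilization}, I would fix for every class $H'' \in \mathcal{B}''_{\infty}$ a representative cycle $S_{I''}$ with $I'' \subseteq B^{\bu}_{L_{\Delta}}$; by Corollary \ref{Corollary:doublesidedgrading} the grading double cosets of all elements of $B^{\bu}_{L_{\Delta}}$ admit representatives that do not depend on $m$ once $m > 2L_{\Delta}$, so Lemma \ref{Lemma:Gradingchangebym} applies and shows that for every pair $H'', \widetilde{H}'' \in \mathcal{B}''_{\infty}$ the difference $a_m(H'') - a_m(\widetilde{H}'')$ is an affine function of $m$ with integer coefficients, say $A_{H'',\widetilde{H}''} \cdot m + B_{H'',\widetilde{H}''}$.

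Next I would observe that the decomposition $\mathcal{PH}_m = \bigsqcup_i \mathcal{PH}^{g_{m,i}}_m$ is precisely the partition of $\mathcal{PH}_{\infty}$ under which $(H_1',H_1'')$ and $(H_2',H_2'')$ lie in a common block exactly when $a_m(H_1'') = a_m(H_2'')$. Whether a given such equality holds is governed by the vanishing of one of the affine functions above, and an affine function either vanishes identically or has at most one zero, so its vanishing is eventually constant in $m$; since $\mathcal{PH}_{\infty}$ is finite, only finitely many such tests occur, hence there is a threshold beyond which the partition is constant, yielding the integer $s_{\infty}$ and the fixed pieces $\mathcal{PH}_{\infty,i}$. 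To obtain the ordering, for two distinct pieces with representative second coordinates $H''_{(1)}$ and $H''_{(2)}$ I would note that $a_m(H''_{(1)}) - a_m(H''_{(2)})$ is a \emph{non}-vanishing affine function of $m$, so its sign is eventually constant; therefore the strict order $g_{m,1} < \cdots < g_{m,s_{\infty}}$ does not change for $m$ large, and the labelling $\mathcal{PH}_{\infty,i} = \mathcal{PH}^{g_{m,i}}_m$ is well defined. Finally, $g_{m,i_1} - g_{m,i_2}$ equals $a_m(H''_{(1)}) - a_m(H''_{(2)})$ for arbitrary second coordinates $H''_{(1)}$ from piece $i_1$ and $H''_{(2)}$ from piece $i_2$, which by Lemma \ref{Lemma:Gradingchangebym} is given, for all $m \gg 0$, by a fixed affine function of $m$; this is the last assertion, and it does indeed allow nonzero slope, since the $H''$ in a block track the corresponding $H'$ via $a_m(H'') = a_m(H') - \omega$ and distinct $H'$ may separate linearly.

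There is no serious obstacle here; the proof is essentially bookkeeping. The only point requiring care is to make all these stabilizations valid over a common range of $m$: the set-stabilization of $\mathcal{PH}_m$, the existence of fixed $B^{\bu}_{L_{\Delta}}$-representatives, the $m$-independence of grading cosets in $B^{\bu}_{L_{\Delta}}$, and the eventual zero/sign behaviour of the finitely many affine functions each take effect beyond a fixed threshold, so one simply takes the maximum of these thresholds as the final ``$m \gg 0$''. I would also double-check that the linear functions produced by Lemma \ref{Lemma:Gradingchangebym} have exactly the asserted shape, but this is immediate from the closed formula in that lemma.
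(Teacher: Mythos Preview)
Your proposal is correct and follows essentially the same approach as the paper's proof: both argue that the partition and ordering on $\mathcal{PH}_{\infty}$ induced by $a_m(\mathrm{pr}_2(\cdot))$ stabilize because, by Lemma~\ref{Lemma:Gradingchangebym}, the relevant differences $a_m(H''_1)-a_m(H''_2)$ are affine in $m$, hence their signs (and vanishing) are eventually constant over the finite set $\mathcal{PH}_{\infty}$. Your version is simply more explicit about separating the partition-stabilization from the order-stabilization and about the common threshold, but the underlying argument is the same.
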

\begin{proof}
    Consider the order induced on $\mathcal{PH}_{\infty}$ by the map 
    $$a_{m}(\text{pr}_2(\cdot)) : \mathcal{PH}_{\infty} \rightarrow \mathbb{Z}.$$
    To prove the statement of Corollary \ref{Corollary:Alexanderjumpsdefects-relativeset-ordered}, we only need to show that this order will be stabilized as $m \rightarrow \infty$. Similar to the proof of Lemma \ref{Lemma:Alexanderjumpsdefects-relativeset}, for any pairs of elements 
    $$(H'_1,H''_1) \ \text{and} \ (H'_2,H''_2) \  \text{in} \  \mathcal{PH}_{\infty},$$ 
    we have that $a_{m}(H''_1) - a_{m}(H''_2)$ is a linear function of $m$. As a result, for $m\gg0$, the sign of $a_{m}(H''_1) - a_{m}(H''_2)$ stabilizes. This means that the relative order of the elements 
    $$(H'_1,H''_1) \ \text{and} \ (H'_2,H''_2)$$ stabilizes as $m \rightarrow \infty$. Since $\mathcal{PH}_{\infty}$ is a finite set, we have that for sufficiently large $m$, all of the relative orders stabilize.\\
    
    The final statement about is obvious based on the aforementioned linearity of $a_{m}(H''_1) - a_{m}(H''_2)$. This finishes our proof.  
\end{proof}
Finally, we can also derive Corollary \ref{Corollary:Alexanderjumpsdefects-relativeset-maslov} about the stabilization of Maslov gradings modulo 2.
\begin{coro}\label{Corollary:Alexanderjumpsdefects-relativeset-maslov}
     For any element $(H',H'') \in \mathcal{PH}_{\infty}$, the gradins $h_{m}(H')$ and $h_{m}(H'')$ stabilize modulo 2, as $m \rightarrow \infty$. In other words, for any two integers $m_1,m_2\gg0$, we have 
     $$h_{m_1}(H') \equiv h_{m_2}(H') \ \  (\text{mod} \ 2) \ \text{and} \ h_{m_1}(H'') \equiv h_{m_2}(H'') \ \  (\text{mod} \ 2).$$
\end{coro}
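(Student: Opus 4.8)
Fix $(H',H'') \in \mathcal{PH}_{\infty}$. For $m \gg 0$, Corollary \ref{Corollary:Alexanderjumpsdefects-stabilization} gives fixed minimal, hence homogeneous, cycles $S_{I'}$ and $S_{I''}$ representing $H'$ and $H''$ with $I',I'' \subseteq B^{\bu}_{L_{\Delta}}$. Homogeneity means $h_m(H') = h_m(\nu)$ for every $\nu \in I'$, and by Corollary \ref{Corollary:doublesidedgrading} the coset $gr_{K_m}(\nu)$ has a representative $(\alpha';\beta',\gamma';\delta') \in \widetilde{G}$ independent of $m$; likewise for $H''$, with data $(\alpha'';\beta'';\gamma'';\delta'')$. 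The plan is to compare these against the reference element $\nu_0 \in C^{\bu}$ furnished by Lemma \ref{Lemma:homotopyequivgenerator2}: since $\nu_0$ sits inside the homogeneous generator of $\widehat{HF}(S^3) = \mathbb{F}_2$ we have $h_m(\nu_0) = 0$ for all $m$, and by Corollary \ref{Corollary:gradingofhomotopyequivgenerator} its grading coset has a fixed representative $(\alpha_0;\beta_0,\gamma_0;\delta_0)$ obeying $2\beta_0 = M + \tfrac12$.

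Because all of the relevant cosets have $m$-independent representatives, Lemma \ref{Lemma:Gradingchangebym} applies to the pairs $(\nu,\nu_0)$ and yields
$$h_m(H') = h_m(\nu) - h_m(\nu_0) = f(\beta')\cdot m + c', \qquad h_m(H'') = f(\beta'')\cdot m + c'',$$
where $c',c''$ do not depend on $m$ and $f(x) := (\beta_0 - x)\bigl(M - \tfrac12 - x - \beta_0\bigr)$ is exactly the polynomial already encountered in the proof of Theorem \ref{Theorem:extremalhfk}. Substituting $M - \tfrac12 = 2\beta_0 - 1$ rewrites it as $f(x) = (\beta_0 - x)(\beta_0 - x - 1)$.

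It remains to check that $\beta_0 - \beta'$ and $\beta_0 - \beta''$ are integers: then $f(\beta') = n(n-1)$ with $n = \beta_0 - \beta' \in \mathbb{Z}$, which is always even, so the slopes of $h_m(H')$ and $h_m(H'')$ are even and both gradings are eventually constant modulo $2$. For the integrality I would show that the second grading-coset component of every element of $\bm{C^m}$ is congruent to $\beta_0$ modulo $1$: for an element $x^0 \otimes \eta \in C^{\bu}$ this follows from the path-counting argument of Lemma \ref{Lemma:secondcomponentmod2} applied to $\bu$-vertices of the type $A$ graph (with the $B^0_K$-label of $\nu_0$ as the other endpoint), while for $x^1 \otimes \xi_i \in C^{\circ}_{i}$ it follows by combining Lemma \ref{Lemma:Gradingchangebyshift} (the second component of $gr_{K_m}(x^1\otimes\xi_i)$ exceeds that of $gr_K(x^1)$ by $\tfrac12$) with Corollary \ref{Corollary:secondcomponentmod2-two}. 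Applying this to any $\nu \in I'$ gives $\beta' \equiv \beta_0 \pmod 1$, and symmetrically for $\beta''$.

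The routine part is verifying the hypotheses of Lemma \ref{Lemma:Gradingchangebym} and assembling the parity facts above; the one idea that matters is that after the substitution $2\beta_0 = M + \tfrac12$ the slope of $h_m$ becomes a product of two consecutive integers, hence automatically even. I expect the only mild obstacle to be the $C^{\bu}$ case of the integrality claim, where one adapts the argument of Lemma \ref{Lemma:secondcomponentmod2} from $B^1_K$-vertices to $B^0_K$-vertices (the degenerate case $B^1_K = \varnothing$ being immediate, since then $\bm{C^m} = C^{\bu}$).
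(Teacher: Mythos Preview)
Your proof is correct and follows essentially the same route as the paper: compare against the reference $\nu_0$ with $h_m(\nu_0)=0$, apply Lemma \ref{Lemma:Gradingchangebym} to get $h_m(H')$ as a linear function of $m$, use $2\beta_0 = M+\tfrac12$ to factor the slope as $(\beta_0-\beta')(\beta_0-\beta'-1)$, and conclude it is a product of consecutive integers once $\beta_0-\beta'\in\mathbb{Z}$.

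The one place you diverge is the integrality of $\beta_0-\beta'$: you argue it structurally via Lemma \ref{Lemma:secondcomponentmod2} and Corollary \ref{Corollary:secondcomponentmod2-two}, splitting into the $C^{\bu}$ and $C^{\circ}_i$ cases. The paper instead observes that $h_m(H')$ is integer-valued for every $m$, so its slope $h_{m+1}(H')-h_m(H')$ is an integer; since $\beta_0-\beta'\in\tfrac12\mathbb{Z}$ and a half-integer value would make $(\beta_0-\beta')(\beta_0-\beta'-1)$ non-integral, integrality follows for free. This is slicker and avoids the case analysis (and the $B^0_K$-adaptation you flagged), but your argument is also fine and has the virtue of being independent of knowing that $h_m$ lands in $\mathbb{Z}$.
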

\begin{proof}
To compute the absolute Maslov gradings, we use an argument similar to one in Proof of Theorem \ref{Theorem:extremalhfk}.\\

We can use Lemma \ref{Lemma:homotopyequivgenerator}, and deduce that there is a fixed element 
$$\nu_0 \in B^{\bu}_{L_{\Delta}} \subset \bm{C^m} \ \text{for} \ m\gg0,$$
with absolute Maslov grading zero. Then we can use Lemma \ref{Lemma:homotopyequivgenerator}, Corollary \ref{Corollary:doublesidedgrading} and Lemma \ref{Lemma:Gradingchangebym} to show that $$h_{m}(H') = h_{m}(H')-h_{m}(\nu_0)$$
is a linear function of $m$, with slope 
$$(\beta_{H'}-\beta_{0})(M - \frac{1}{2} - \beta_{H'} - \beta_{0}),$$
for $\beta_{H'},\beta_{0}$ elements of $\frac{1}{2}\mathbb{Z}$. We are going to show that this slope vanishes modulo $2$. Based on Lemma \ref{Lemma:homotopyequivgenerator}, we know that $$2\beta_{0} = M + \frac{1}{2}.$$
Using this we have 
$$(\beta_{H'}-\beta_{0})(M - \frac{1}{2} - \beta_{H'} - \beta_{0}) = (\beta_{H'}-\beta_{0})(\beta_{0} - \beta_{H'} -1). $$
As a result $\beta_{H'}-\beta_{0}$ must be an integer (in $\frac{1}{2}\mathbb{Z}$), as otherwise the slope won't be an integer. Using this fact we can easily see that the slope is an even integer and our proof is complete.The same proof works for stabilization of $h_{m}(H'')$ modulo 2. 
\end{proof}

\subsection{Stabilization of the Alexander jump sequence}\label{Subsection:stabilizationofAlexanderjump}\hfill\\

We are now finally ready to prove Proposition \ref{Proposition:Alexanderjumpsequence}.

\begin{proof}
    Based on basic properties of knot Floer homology, we have:
    $$\alpha_{m,i} = \chi(\HFKh(K_m,i)) = \chi(\mathcal{H}^{i}_{D'_m}) + \chi(\overline{\mathcal{H}^{i}_{D'_m}}).$$
    Now we can write that
    $$d_{m,i} = \alpha_{m,i} - \alpha_{m,i+\omega} = \chi(\HFKh(K_m,i)) - \chi(\HFKh(K_m,i+\omega))$$
    $$ = \Big( \chi(\mathcal{H}^{i}_{D''_m}) + \chi(\overline{\mathcal{H}^{i}_{D''_m}}) \Big) - \Big( \chi(\mathcal{H}^{i+\omega}_{D'_m}) + \chi(\overline{\mathcal{H}^{i+\omega}_{D'_m}})\Big)$$
    $$= \Big( \chi(\mathcal{H}^{i}_{D''_m}) - \chi(\mathcal{H}^{i+\omega}_{D'_m})  \Big) + \Big( \chi(\overline{\mathcal{H}^{i}_{D''_m}}) -\chi(\overline{\mathcal{H}^{i+\omega}_{D'_m}}) \Big).$$
    Using Lemmas \ref{Lemma:Alexanderjumpsdefects} and \ref{Lemma:Alexanderjumpsdefects-Maslovchange}, we can write that :
    $$\chi(\mathcal{H}^{i}_{D''_m}) -\chi(\mathcal{H}^{i+\omega}_{D'_m}) = 0.$$
    As a result, we have :
    \begin{equation*}\label{Equation:Alexanderjumps-chi}
        d_{m,i} = \chi(\overline{\mathcal{H}^{i}_{D''_m}}) - \chi(\overline{\mathcal{H}^{i+\omega}_{D'_m}}). 
    \end{equation*}
    Now we can use Lemma \ref{Lemma:Alexanderjumpsdefects-stabilization} and Corollary \ref{Corollary:Alexanderjumpsdefects-stabilization}. As a result, for $m \gg 0$ we can write:
    \begin{equation*}
        d_{m,i} = \begin{cases}
            \chi(p_2(\mathcal{PH}_{\infty,j})) - \chi(p_1(\mathcal{PH}_{\infty,j})), & \text{if $i=g_{m,j}$ for $1 \leq j \leq s_{\infty}$.}\\
            0, & \text{otherwise}.
        \end{cases}
    \end{equation*}
    Note that based on Corollary \ref{Corollary:Alexanderjumpsdefects-relativeset-maslov}, the terms $\chi(p_2(\mathcal{PH}_{\infty,j}))$ and $\chi(p_1(\mathcal{PH}_{\infty,j}))$ are well-defined.\\ 

   The above formula for values of $d_{m,i}$ is enough to prove the second statement of Proposition \ref{Proposition:Alexanderjumpsequence}. For sufficiently large $m$, we can write  
   $$\Big| \{ i \in \mathbb{Z} \ | \ d_{m,i}\neq 0 \} \Big| = $$
   $$\Big| \{ j \in \{1,\cdots,s_{\infty}\} \ | \  \chi(p_2(\mathcal{PH}_{\infty,j})) - \chi(p_1(\mathcal{PH}_{\infty,j})) \neq 0 \}\Big|.$$
   The right-hand side of this equation clearly doesn't depent on $m$, and hence is stabilized.\\

   Now we turn our focus to the first statement of Proposition \ref{Proposition:Alexanderjumpsequence} i.e. stabilization of extremal values of the sequence $(d_{m,i})_{i \in \mathbb{Z}}$. First let $j_0$ be the smallest element in $\{1, \cdots, s_{\infty}\}$ such that 
   $$d_{m,g_{m,j_0}} = \chi(p_2(\mathcal{PH}_{\infty,j_0})) - \chi(p_1(\mathcal{PH}_{\infty,j_0})) \neq 0. $$
   This means that $d_{m,g_{m,j_0}}$ will be the first non-zero term in the sequence $(d_{m,i})_{i \in \mathbb{Z}}$. As a result, we have : 
   $$g_{m,j_0} = -\deg(\Delta_{K_m}) \ \text{and}$$
   $$ \chi(p_2(\mathcal{PH}_{\infty,j_0})) - \chi(p_1(\mathcal{PH}_{\infty,j_0})) = d_{m,-\deg(\Delta_{K_m})}$$
   From the above equation, it is clear that $d_{m,-\deg(\Delta_{K_m})}$ stabilizes as ${m \rightarrow \infty}$.\\ 
   
   Now we need to extend this to the rest of extremal values of the sequence. Consider the intersection 
   $$NT_{m,k} := \{0, \cdots, k-1\} \ \cap \ \{g_{m,j} - g_{m,j_0}  \ | \ j_0 \leq j \leq s_{\infty}\}.$$
   Note that we can write 
  \begin{equation*}
        d_{m,-\deg(\Delta_{K_m})+i} = \begin{cases}
            \chi(p_2(\mathcal{PH}_{\infty,j})) - \chi(p_1(\mathcal{PH}_{\infty,j})), & \text{if $i=g_{m,j} - g_{m,j_0} \in NT_{m,k}$.}\\
            0, & \text{otherwise}.
        \end{cases}
    \end{equation*}
   
 Now we just need to prove that the sets $NT_{m,k}$ stabilize to a set $NT_{\infty,k}$, as $m \rightarrow \infty$. Based on the last statement of Corollary \ref{Corollary:Alexanderjumpsdefects-relativeset-ordered}, we can write :
   $$g_{m,j} - g_{m,j_0} = sg_j \cdot m + cg_j \ \text{for} \ m \gg0.$$
   Hence for sufficiently large $m$, the set $NT_{m,k}$ can only consist of elements $g_{m,j} - g_{m,j_0}$ which have zero slope (as a linear function of $m$) i.e.
   $$NT_{m,k}= NT_{\infty,k} = \{cg_{j} \ | \ j_0\leq j \leq s_{\infty}\ , \  sg_{i} = 0 \ , \ 0 \leq cg_{i_0} \leq k-1\} \ \text{for} \ m \gg0.$$
   Our proof is complete.  
\end{proof}

\subsection{Proofs of Theorems \ref{Theorem:extremalAlexander} and \ref{Theorem:jumpsAlexander}}\label{Subsection:proofsofalexanderstabilization}\hfill\\

We now derive Theorems \ref{Theorem:extremalAlexander} and \ref{Theorem:jumpsAlexander} from Proposition \ref{Proposition:Alexanderjumpsequence}. We start with Theorem \ref{Theorem:jumpsAlexander}.

\begin{proof}
Based on definitions, it is clear that $j_{m}$ is equal to the total number of non-zero elements in the sequence $\{d_{m,i}\}_{i \in \mathbb{Z}}$. As a result, the statement of Theorem \ref{Theorem:jumpsAlexander} is part of Proposition \ref{Proposition:Alexanderjumpsequence}. 
\end{proof}

Now we address Theorem \ref{Theorem:extremalAlexander}.

\begin{proof}
The first $k$ extremal coefficients of the Alexander polynomial i.e. 
$$\alpha_{m,-\deg(\Delta_{K_m}) + i} \ \text{for} \ i \in \{0,\cdots,k-1\},$$
can be computed from the first $k$ non-trivial terms of the sequence $\{d_{\infty,i}\}$. As a result the stabilization of these coefficients follow from Proposition \ref{Proposition:Alexanderjumpsequence}.\\

We only need to prove that $\deg(K_m)$ is a linear function of $m$ for sufficiently large $m$, and that the slope of this linear function is equal to 
$$\frac{l}{2}\wind_{c}(K) \ \text{for an integer} \ l \in \{0, \cdots, x([\hat{D}])\} . $$
Based on proofs of Theorem \ref{Theorem:extremalhfk} and Proposition \ref{Proposition:Alexanderjumpsequence}, and using Lemma \ref{Lemma:Gradingchangebym} we have: 
$$-g(K_m) - (-\deg(\Delta_{K_m})) = a_{m}(H_{\min}) - g_{m,j_0} $$
$$ = ((\beta - \beta_{\min})\omega) \cdot m + \text{cte} \ \text{for} \ m\gg0.$$
for some fixed $\beta \in \frac{1}{2}\mathbb{Z}$. Note that $\omega = - \wind_{c}(K)$.\\ 

Using Theorem \ref{Theroem:BakerTaylor}, we have: 
$$\deg(\Delta_{K_m}) = \Big( \frac{1}{2}(-2\beta + 2\beta_{\min} + x([\hat{D}]))\wind_{c}(K) \Big) \cdot m + \text{cte} \ \text{for} \ m\gg0.$$
Let $l=-2\beta + 2\beta_{\min} + x([\hat{D}])$. Finally we have 
$$0 \leq \deg(\Delta_{K_m}) \leq g(K_m) \ \Rightarrow \ 0 \leq l \leq x([\hat{D}]).$$
This finishes our proof.
\end{proof}
\section{Stabilization of the tau invariant}\label{Section:tau}
\subsection{Definition of $\tau(K_{m})$}\hfill\\

There are different equivalent definitions of the $\tau(K)$ in the Heegaard Floer literature. We use a definition given by Juhasz \cite{Juhász_2023} in the following.\\

As we mentioned, we work with the following two chain complexes:  
$$\widehat{CFK}(S^3,K_m) \simeq \widehat{\text{CFA}}(\mathcal{H}_{K}, z, w) \boxtimes \widehat{\text{CFD}}(\mathcal{H}'_{\frac{1}{m}} , z'),$$
$$\widehat{CF}(S^3) \simeq \widehat{\text{CFA}}(\mathcal{H}_{K}, z) \boxtimes \widehat{\text{CFD}}(\mathcal{H}'_{\frac{1}{m}} , z').$$
As we discussed, the two chain complexes have the same base vector space, i.e. a vector space generated by $\bm{C^m}$, and they only differ in the differential. In terms of the graph-theoretic descriptions, one can show that the second chain complex includes all of the edges of the first chain complex, but it can include additional edges.\\

Since the chain complexes share the same base space, we can consider the Alxander grading map, also as a map 
$$a_m : \widehat{\text{CFA}}(\mathcal{H}_{K}, z) \boxtimes \widehat{\text{CFD}}(\mathcal{H}'_{\frac{1}{m}} , z') \longrightarrow \mathbb{Z}.$$
The differential in $\widehat{CF}(S^3)$ might not preserve the Alexander grading, and hence this is not a grading map. However, one can show that this differential always decreases the Alexander grading and hence we can define a filtration as follows. The subspace 
$$\mathcal{F}(K,i) \subseteq \widehat{\text{CFA}}(\mathcal{H}_{K}, z) \boxtimes \widehat{\text{CFD}}(\mathcal{H}'_{\frac{1}{m}} , z')$$ 
is defined as the subspace generated by $\{\nu \in \bm{C^m} \ | \ a_{m}(\nu) \leq i \}.$\\

As we mentioned, the differential in $\widehat{CF}(S^3)$ can be restricted to a differential on $\mathcal{F}(K,i)$ and we can define the homology $H_{*}(\mathcal{F}(K,i))$. Furthermore, the inclusion induces a map 
$$ H_{*}(\mathcal{F}(K,i)) \longrightarrow H_{*}(\widehat{\text{CFA}}(\mathcal{H}_{K}, z) \boxtimes \widehat{\text{CFD}}(\mathcal{H}'_{\frac{1}{m}} , z')) = \HF(S^3).$$
This map then can be used to define tau invariant, as seen in Definition \ref{Definition:tau}.

\begin{defi}\label{Definition:tau}
    Let $\{K_m\}$ be a twist family of knots. The tau invariant $\tau(K_m)$ is defined as
    $$\tau(K_m) = \min \{i \in \mathbb{Z} \ : \ H_{*}(\mathcal{F}(K,i)) \rightarrow \HF(S^3) \ \text{is nontrivial}\}.$$
\end{defi}

For our purposes, we need to rephrase this definition. As mentioned before $\HF(S^3)$ has a single generator. Consider the set of cycles 
$$ \mathcal{HS}_{m} \ \subset \ \widehat{\text{CFA}}(\mathcal{H}_{K}, z) \boxtimes \widehat{\text{CFD}}(\mathcal{H}'_{\frac{1}{m}} , z'),$$
defined as 
$$\mathcal{HS}_{m} := \{ S_I \ | \ I \subseteq \bm{C^{m}}\ , \  \partial S_I = 0\ ,\  [S_I] \ \text{is a generator of} \  \HF(S^3) \}.$$
Now we can rephrase Definition \ref{Definition:tau}, in Proposition \ref{Proposition:taudef}.
\begin{prop}\label{Proposition:taudef}
   Let $\{K_m\}$ be a twist family of knots. Then we have 
   $$\tau(K_m) = \underset{S_{I}\in \mathcal{HS}_{m}}{\min} \Big( \underset{\nu \in I}{\max} \ a_{m}(\nu) \Big).$$
\end{prop}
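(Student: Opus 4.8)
\textbf{Proof proposal for Proposition \ref{Proposition:taudef}.}

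The plan is to unfold Definition \ref{Definition:tau} and translate the condition that the induced map $H_{*}(\mathcal{F}(K,i)) \to \HF(S^3)$ is nontrivial into an explicit statement about cycles supported in a sublevel set of the Alexander filtration. First I would fix $m$ and write $C := \widehat{\text{CFA}}(\mathcal{H}_{K}, z) \boxtimes \widehat{\text{CFD}}(\mathcal{H}'_{\frac{1}{m}} , z')$, recalling that the differential on $C$ only \emph{decreases} the Alexander grading $a_m$, so that $\mathcal{F}(K,i)$, the span of $\{\nu \in \bm{C^m} \mid a_m(\nu) \le i\}$, is genuinely a subcomplex. Since $H_{*}(C) = \HF(S^3) \cong \mathbb{F}_2$ is one-dimensional, the map $\iota_{i*}\colon H_{*}(\mathcal{F}(K,i)) \to H_{*}(C)$ is nontrivial if and only if its image is all of $H_{*}(C)$, i.e.\ if and only if there exists a cycle $z \in \mathcal{F}(K,i)$ whose class $[z] \in H_{*}(C)$ is the generator. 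Unwinding "$z \in \mathcal{F}(K,i)$ and $z$ is a cycle": writing $z = S_I$ for some $I \subseteq \bm{C^m}$ (using the $\mathbb{F}_2$ bijection $I \leftrightarrow S_I$), the condition $S_I \in \mathcal{F}(K,i)$ is precisely $\max_{\nu \in I} a_m(\nu) \le i$, and $\partial S_I = 0$ with $[S_I]$ a generator says exactly $S_I \in \mathcal{HS}_m$.

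Combining these observations: the map $\iota_{i*}$ is nontrivial $\iff$ there exists $S_I \in \mathcal{HS}_m$ with $\max_{\nu \in I} a_m(\nu) \le i$ $\iff$ $i \ge \min_{S_I \in \mathcal{HS}_m}\bigl(\max_{\nu \in I} a_m(\nu)\bigr)$. Taking the minimum over all such $i$ in Definition \ref{Definition:tau} then yields
$$\tau(K_m) = \min_{S_I \in \mathcal{HS}_m}\Bigl(\max_{\nu \in I} a_m(\nu)\Bigr),$$
as claimed. One should note $\mathcal{HS}_m$ is nonempty (e.g.\ the explicit generator $x_0 \otimes \eta + x_1 \otimes \xi_1 + \cdots$ from Subsection \ref{Subsection:GenstructureofBoxtensor}, or any $\widehat{f} \boxtimes \id_m(x_0 \otimes \eta)$ from Lemma \ref{Lemma:homotopyequivgenerator}), so the right-hand side is a well-defined integer.

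The only genuinely substantive point — and the one I would be most careful about — is the direction "$\iota_{i*}$ nontrivial $\Rightarrow$ some \emph{filtered} cycle represents the generator." A class $[z] \in H_{*}(\mathcal{F}(K,i))$ mapping to the generator is represented by a cycle $z$ that lies in $\mathcal{F}(K,i)$ \emph{by definition of $\mathcal{F}$}, but one must check that $z$, viewed in $C$, is still a cycle for the \emph{full} differential of $C$ — this is immediate since the differential on $\mathcal{F}(K,i)$ is the restriction of that on $C$, so no subtlety arises there; the filtration-level bound $\max_{\nu\in I} a_m(\nu)\le i$ is automatic from $z \in \mathcal{F}(K,i)$. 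The converse direction is equally direct: any $S_I \in \mathcal{HS}_m$ with $\max_{\nu \in I}a_m(\nu) \le i$ lies in $\mathcal{F}(K,i)$, is a cycle there, and maps to the generator. So there is no real obstacle beyond bookkeeping; the "hard part" is simply making sure the equivalences are stated as iff's so that the two nested minima match up exactly. I would present this as a short lemma-style argument of half a page, citing Definition \ref{Definition:tau} and the $\mathbb{F}_2$ correspondence $I \leftrightarrow S_I$ from Subsection \ref{Subsection:GenstructureofBoxtensor}.
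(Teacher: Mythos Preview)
Your proposal is correct and follows essentially the same approach as the paper's proof: both translate the nontriviality of $\iota_{i*}$ into the condition $\mathcal{F}(K,i)\cap\mathcal{HS}_m\neq\varnothing$, then observe that the minimal filtration level containing a given $S_I$ is $\max_{\nu\in I}a_m(\nu)$, and combine these to obtain the min--max formula. Your version is more detailed (checking that cycles in $\mathcal{F}(K,i)$ remain cycles in $C$, and that $\mathcal{HS}_m$ is nonempty), but the argument is the same.
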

\begin{proof}
    Using Definition \ref{Definition:tau} and definition of $\mathcal{HS}$, we have 
    $$\tau(K_m) = \min \{i \in \mathbb{Z} \ : \ \mathcal{F}(K,i) \cap \mathcal{HS}_{m} \neq \varnothing \}.$$
    Furthermore, it is clear that for any element $S_I \in \mathcal{HS}_{m}$, we have 
    $$\min \{i \in \mathbb{Z} \ : \ S_I \in \mathcal{F}(K,i) \} = \underset{\nu \in I}{\max} \ a_{m}(\nu).$$
    The proposition follows from these two facts.
\end{proof}
We can further restrict the domain of the minimisation as follows. For any $S_{I} \in \mathcal{HS}_{m}$, there exist a subset $T \subseteq I$ such that $S_{T}$ is a homogeneous cycle with $[S_{T}] = [S_{I}]$.  We denote the subset $T$ with the notation $MH(I)$. Note that both $S_I$ and $S_{MH(I)}$ are elements of $\mathcal{HS}_{m}$, however since $MH(I) \subseteq I$, we have 
$$\underset{\nu \in MH(I)}{\max} \ a_{m}(\nu) \leq \underset{\nu \in I}{\max} \ a_{m}(\nu).$$
Hence, we can discard all the non-homogeneous cycles from $\mathcal{HS}_{m}$ and still get the same result in the min-max function in Proposition \ref{Proposition:taudef}. Equivalently, if we define 
$$\mathcal{HZ}_{m} := \{ S_I \ | \ S_{I} \in \mathcal{HS}_{m} \  , \  S_{I} \ \text{homogeneous}\},$$
we can write:   
\begin{equation}\label{Equation:taudef}
\tau(K_m) = \underset{S_{I}\in \mathcal{HZ}_{m}}{\min} \Big( \underset{\nu \in I}{\max} \ a_{m}(\nu) \Big).\end{equation}
We will use this approach to the tau invariant in the next subsection.\\

For any subset $I \subseteq \bm{C^{m}}$, let $V_{\max}(I)$ be the smallest element (in the clockwise order) such that $$a_{m}(V_{\max}(I)) = \underset{\nu \in I}{\max} \ a_{m}(\nu).$$
For any cycle $S_I \in \mathcal{HS}_{m}$, we define $V_{\max}(S_I)$ to be $V_{\max}(I)$.\\

In other words, $V_{\max}(S_I)$ is the maximizer of the Alexander grading on the subset $I$. The main strategy for the proof of Theorem \ref{Theorem:tau} is to analyze the stabilization of these maximizers. To be more precise, define the set $\mathcal{P}_m \subset \bm{C^{m}}$ as follows:
$$\mathcal{P}_m := \{ V_{\max}(S_{I}) \ | \ S_{I} \in \mathcal{HZ}_{m} \}.$$
We can rewrite Equation \ref{Equation:taudef}, using the set $\mathcal{P}_m$ as follows: 
\begin{equation}\label{Equation:tauandPm}
\tau(K_{m}) = \underset{\nu \in \mathcal{P}_{m}}{\min}  \ a_{m}(\nu).  
\end{equation}
Later in this section, we prove Lemma \ref{Lemma:taupotentials}, which is the main gateway into proving Theorem \ref{Theorem:tau}.
This result states that for $m \gg0$, the set $\mathcal{P}_m$ stabilises under the nested sequence structure induced by the inclusions $\Phi_{m}$.
\begin{lemm}\label{Lemma:taupotentials}
    For $m\gg0$ we will have that 
    $$\Phi_{m}(\mathcal{P}_{m}) = \mathcal{P}_{m+1} \subset \bm{C^{m+1}}.$$
    Furthermore, there exist an integer $L_{\tau}$ such that $\mathcal{P}_m$ is a fixed subset of the ball $B^{\bu}_{L_{\tau}}$ for high enough $m$. 
\end{lemm}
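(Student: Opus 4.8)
\textbf{Proof proposal for Lemma \ref{Lemma:taupotentials}.}

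The plan is to combine the structural decomposition of the homology generators from Subsection \ref{Subsection:HomologyofboxtensorS^3:Algebra} (especially Remark \ref{Remark:familygrowth} and Equation \ref{Equation:Twostabilizationformula}) with the grading machinery of Subsection \ref{Subsection:Gradingsinboxtensor} and the combinatorial tools of Subsection \ref{Subsection:HomologyofboxtensorS^3:Combinatorics}. The first step is to show that every potential maximizer $V_{\max}(S_I)$ for $S_I \in \mathcal{HZ}_m$ must lie in a fixed ball $B^{\bu}_{L_\tau}$. This is the content of Proposition \ref{Proposition:taupotentials-bounded} and Corollary \ref{Corollary:placeofmaximizer}, and it will rely on the analysis of Maslov-grading-zero elements in Subsection \ref{Subsection:Maslovgradingzeroelements}: using Lemma \ref{Lemma:homotopyequivgenerator2} we can fix a reference generator $\nu_0 \in C^{\bu} \cap I$ of Maslov grading zero, and then Lemma \ref{Lemma:Gradingchangebyshift} together with Corollary \ref{Corollary:doublesidedgrading} forces any element too far from the black box to have Alexander grading that is strictly dominated, hence cannot be the maximizer (the linear dependence on $m$ with a definite sign of the slope is what makes this work, exactly as in the proof of Lemma \ref{Lemma:extremalnearblackbox}). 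This confines $\mathcal{P}_m$ to $B^{\bu}_{L_\tau}$ for all large $m$.

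Having confined $\mathcal{P}_m$ to a fixed ball, the second step is to understand the collection of subsets $\{I \cap B^{\bu}_{L_\tau}\}$ arising from cycles $S_I \in \mathcal{HZ}_m$. Here I would invoke Proposition \ref{Proposition:generatorsofHFS^3} and Remark \ref{Remark:familygrowth}: any generating cycle $S_{I_{m_0}}$ extends to a family $\{S_{I_m}\}$ via an $\A_\infty$ homotopy equivalence $f_{S_I}$, and Equation \ref{Equation:Twostabilizationformula} tells us precisely how $I_m$ evolves — the ``positive half'' $I''_m$ grows by the natural inclusion $\Phi'_m$ (adding the new contribution $\sum_{y \in J_{m+1}} y \otimes \xi_{m+1}$ near the black box on the clockwise side) while the ``negative half'' $I'_m$ is transported rigidly by $\Phi_m$. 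Restricting attention to the fixed ball $B^{\bu}_{L_\tau}$, the positive-half contributions inside the ball stabilize directly from the $J_k$ being independent of $m$ (this is Lemma \ref{Lemma:generatorinpositivehalfballstablizes}). The negative-half contributions inside the ball require more care because $\Phi_m$ shifts indices, but since $I'_m = S_{\Phi_m(I'_{m-1})}$ and $\diam(I'_m)$ is bounded (by Lemma \ref{Lemma:lengthminimal}, minimal cycles have bounded length), for $m$ large the part of $I'_m$ landing in $B^{\bu}_{[-L_\tau,0]}$ eventually freezes — this is where the combinatorial tools of Subsection \ref{Subsection:HomologyofboxtensorS^3:Combinatorics}, in particular the alternating-path equivalence relation and Proposition \ref{Proposition:equivalenceclassblack}, are needed to control which elements near the black box genuinely contribute to a homogeneous homology generator (Lemma \ref{Lemma:generatorinnegativehalfballstablizes}).

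The third step is to pass from stabilization of the relevant subsets to stabilization of the maximizers themselves. Since the Alexander grading of each element $\nu \in B^{\bu}_{L_\tau}$ relative to the reference $\nu_0$ stabilizes modulo the known linear-in-$m$ correction (Lemma \ref{Lemma:Gradingchangebym}, Corollary \ref{Corollary:doublesidedgrading}), and since all candidate maximizers lie in this ball, the comparison of Alexander gradings among candidates stabilizes once $m$ is large enough that no two distinct linear functions of $m$ have crossed. Then $V_{\max}(S_I)$ depends only on the stabilized combinatorial data, giving $\Phi_m(\mathcal{P}_m) = \mathcal{P}_{m+1}$. Finally, Equation \ref{Equation:tauandPm} reduces Theorem \ref{Theorem:tau} to computing $\min_{\nu \in \mathcal{P}_m} a_m(\nu)$, and since $\mathcal{P}_m$ is a fixed subset of $B^{\bu}_{L_\tau}$ with each $a_m(\nu)$ linear in $m$ (with slope determined by the second $\spin^{\mathbb C}$-component of $\nu$ via Lemma \ref{Lemma:Gradingchangebym}), this minimum is eventually linear in $m$; this last deduction is carried out in Subsection \ref{Subsection:Proofoftau}.

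The main obstacle I expect is the stabilization over the negative half-ball: the inclusion $\Phi_m$ relabels the white boxes, so an element $x^1 \otimes \xi_{m-k}$ for fixed small $k$ corresponds to different ``absolute'' generators as $m$ varies, and its membership in a homogeneous generating cycle is governed by the type-$(3)$ edge $x^0 \otimes \eta \to x^1 \otimes \xi_m$ emanating from the black box in the counterclockwise direction (Proposition \ref{Proposition:boxtensordifferential}). Controlling this requires the delicate interplay between Proposition \ref{Proposition:=type(1)and(3)edges} (relating $\partial_3$ on $C^{\bu}$ to the boundary of $S_{I'_m}$) and the alternating-path analysis — one must show that the equivalence classes of $\sim$ near the black box, and hence the ``essential'' part of $I'_m$, do not change with $m$. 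Everything else (the confinement to a ball, the positive-half stabilization, the final linear-algebra bookkeeping) is a routine adaptation of arguments already deployed in Sections \ref{Section:ExtermalknotFloerstabilization} and \ref{Section:Alexanderpolynomial}.
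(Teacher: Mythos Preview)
Your proposal follows essentially the same three-step architecture as the paper's proof: confine $\mathcal{P}_m$ to a fixed ball via the Maslov-grading-zero analysis (Proposition \ref{Proposition:taupotentials-bounded}, Corollary \ref{Corollary:placeofmaximizer}), stabilize the relevant intersections $I \cap B^{\bu}_{L_\tau}$ via Lemmas \ref{Lemma:generatorinpositivehalfballstablizes} and \ref{Lemma:generatorinnegativehalfballstablizes}, then deduce stabilization of the maximizers from stabilization of the signs of relative Alexander gradings. You correctly identify all the key ingredients, including the alternating-path machinery and Proposition \ref{Proposition:equivalenceclassblack} for the negative half-ball.

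There is, however, a genuine confusion in your treatment of the negative half-ball that would need to be corrected. You identify $I'_m$ with ``the negative half'' and claim the difficulty is that $\Phi_m$ shifts its indices, invoking Lemma \ref{Lemma:lengthminimal} to bound $\diam(I'_m)$. But $I'_m \subseteq C^{\circ}_m$ always has diameter zero and is not a cycle, so Lemma \ref{Lemma:lengthminimal} does not apply; its stabilization under $\Phi_m$ is trivial (it depends only on $J' = f_2(x_0 \otimes \rho_1)$). The actual obstacle in Lemma \ref{Lemma:generatorinnegativehalfballstablizes} is controlling $I''_m \cap GZ^0_m$: the contributions $J_k \otimes \xi_k$ for $k$ close to $m$ land in the negative half-ball, and these do \emph{not} stabilize directly since they depend on arbitrarily high $f_{k+1}$. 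The paper handles this by showing (via Proposition \ref{Proposition:betaperservation} and Proposition \ref{Proposition:equivalenceclassblack}) that $(I''_m \cap GZ^0_m) \setminus C^{\bu}$ is a union of $\sim$-equivalence classes, none of which can reach $C^{\bu}$, hence each corresponds to a nullhomologous cycle; the collection $\mathcal{NI}^-_m$ of such nullhomologous cycles in $GZ^0_m \setminus C^{\bu}$ then stabilizes because it is determined by the fixed data in $B^{\bu}_{L_{GZ}}$. Your final paragraph gestures at the right tools but attributes the difficulty to the wrong piece of the decomposition.
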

Prior to this, we need a thorough analysis of elements with Maslov grading zero in $\bm{C^{m}}$. This is the main topic of Subsection \ref{Subsection:Maslovgradingzeroelements}.

\subsection{The set of elements with Maslov grading zero}\label{Subsection:Maslovgradingzeroelements}\hfill\\

We are going to analyze the set of elements with Maslov grading zero in this subsection. We use the notation $GZ_{m}$ to denote this set. In other words, $GZ_{m} \subseteq \mathbf{C^{m}}$ is defined as follows: 
$$GZ_{m} = \{ \nu \in \bm{C^{m}} \ | \ h_m(\nu) =0 \}. $$

We start by some simple facts about stabilization of relative gradings, stated in Corollaries \ref{Corollary:Gradingsinaball} and \ref{Corollary:Masolvgradingzeroinball}. Some of these facts were used in the arguments of Section \ref{Section:ExtermalknotFloerstabilization}, but wasn't explicitly stated before. 

\begin{coro}\label{Corollary:Gradingsinaball}
 Let $t$ be a positive integer and consider the closed ball $B^{\bu}_{t} \subseteq \bm{C^{m}}$ for $m \gg 0$. Consider two elements $\nu_1,\nu_2 \in B^{\bu}_{t}$, and consider fixed representatives for the grading double coset of elements as follows: 
 $$gr_{K_m}(\nu_i) = [(\alpha_i; \beta_i, \gamma_i ; \delta_i)] \ \text{for} \ i \in \{1,2\}.$$
 Then all the following statements are true for all $m \gg0:$
 \begin{equation}\label{Equation:Alexandergradingslope}
  a_{m}(\nu_1) - a_{m}(\nu_{2}) = \text{cte.} \Leftrightarrow  \beta_{2} - \beta_{1} = 0\ ,
\end{equation}
 \begin{equation}\label{Equation:Alexandergradingslope2}
 \beta_{2} > \beta_{1}  \Longrightarrow a_{m}(\nu_2) > a_{m}(\nu_1),  \ \text{and}\end{equation}
\begin{equation}\label{Equation:Gradingsinaball-Maslov}
((\beta_2-\beta_1)(M-\frac{1}{2}-\beta_1-\beta_2))>0 \Rightarrow h_m(\nu_1) >  h_m(\nu_2)
\end{equation}
\end{coro}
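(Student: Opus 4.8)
\textbf{Proof proposal for Corollary \ref{Corollary:Gradingsinaball}.}

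The plan is to deduce all three statements directly from Lemma \ref{Lemma:Gradingchangebym}, after first arranging that the hypotheses of that lemma are satisfied. The key preliminary observation is that for $m \gg 0$, Corollary \ref{Corollary:doublesidedgrading} guarantees we may choose representatives of the grading double cosets $gr_{K_m}(\nu_1)$ and $gr_{K_m}(\nu_2)$ that do not depend on $m$ — this is precisely because every element of the fixed ball $B^{\bu}_{t}$ lies (for $m>2t$) in the stabilized collection $\xi_{m-k},\dots,\xi_m,\eta,\xi_1,\dots,\xi_{k+1}$ whose gradings Corollary \ref{Corollary:doublesidedgrading} pins down, together with the black box $C^{\bu}$ whose gradings come from $gr_K$ and $gr_m(\eta)=(0;0,0)$. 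So first I would invoke Corollary \ref{Corollary:doublesidedgrading} to fix the representatives $(\alpha_i;\beta_i,\gamma_i;\delta_i)$ as in the statement, valid for all large $m$.

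Next I would apply Lemma \ref{Lemma:Gradingchangebym} with $\nu_1,\nu_2$ playing the roles of its $\nu_1,\nu_2$. That lemma gives
\begin{gather*}
 a_m(\nu_1) - a_m(\nu_2) = ((\beta_2-\beta_1)\omega)\cdot m + \big((\delta_1-\delta_2) - (\gamma_1-\gamma_2)\omega\big),\\
 h_m(\nu_1) - h_m(\nu_2) = \big((\beta_2-\beta_1)(M-\tfrac12-\beta_1-\beta_2)\big)\cdot m + \text{cte}.
\end{gather*}
For Equation \ref{Equation:Alexandergradingslope}: the relative Alexander grading $a_m(\nu_1)-a_m(\nu_2)$ is an affine function of $m$ with slope $(\beta_2-\beta_1)\omega$; since $\omega = -\lk(K,c) \neq 0$ by our standing assumption, this slope vanishes if and only if $\beta_2=\beta_1$, which is exactly the claimed equivalence. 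For Equation \ref{Equation:Alexandergradingslope2}: we are assuming $\lk(K,c)>0$, so $\omega<0$; if $\beta_2>\beta_1$ then by Lemma \ref{Lemma:secondcomponentmod2} the difference $\beta_2-\beta_1$ is a positive integer, so the slope $(\beta_2-\beta_1)\omega$ of $a_m(\nu_2)-a_m(\nu_1)$ is strictly positive, hence $a_m(\nu_2)>a_m(\nu_1)$ for $m\gg0$. For Equation \ref{Equation:Gradingsinaball-Maslov}: if $(\beta_2-\beta_1)(M-\tfrac12-\beta_1-\beta_2)>0$, this is the slope of $h_m(\nu_1)-h_m(\nu_2)$ as a function of $m$, so that difference is eventually strictly positive, i.e. $h_m(\nu_1)>h_m(\nu_2)$ for $m\gg0$.

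The only subtlety — and the place where one must be slightly careful rather than merely quoting Lemma \ref{Lemma:Gradingchangebym} — is the forward direction of \ref{Equation:Alexandergradingslope} and the strictness in \ref{Equation:Alexandergradingslope2}: one needs to know that $\beta_2-\beta_1$ being nonzero forces the slope to be nonzero, which uses $\omega\neq 0$, and that the sign of the slope is controlled, which uses integrality of $\beta_2-\beta_1$ from Lemma \ref{Lemma:secondcomponentmod2} (applicable when $\nu_1,\nu_2$ lie in white boxes; when one of them is in $C^{\bu}$ one uses Corollary \ref{Corollary:secondcomponentmod2-two} instead, together with the fact that both shifting maps $gr_m(\xi_i)$ contribute a half-integer to the $\beta$-component so the parities still match up). I do not expect any genuine obstacle here; it is a routine unwinding of the affine-in-$m$ formulas from Lemma \ref{Lemma:Gradingchangebym} combined with $\omega\neq 0$, and I would present it in two or three lines per item.
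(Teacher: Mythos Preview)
Your proposal is correct and follows essentially the same approach as the paper: invoke Lemma \ref{Lemma:Gradingchangebym} to see that the relative gradings are affine in $m$, then read off the conclusions from the slopes, using $\omega<0$. Two minor remarks: in your treatment of \eqref{Equation:Alexandergradingslope2} you wrote that $(\beta_2-\beta_1)\omega$ is the slope of $a_m(\nu_2)-a_m(\nu_1)$, but it is the slope of $a_m(\nu_1)-a_m(\nu_2)$ (the conclusion is unaffected); and the detour through Lemma \ref{Lemma:secondcomponentmod2} for integrality is unnecessary, since $\beta_2>\beta_1$ already forces the slope to have the right sign without any parity argument.
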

\begin{proof}
    This follows from Lemma \ref{Lemma:Gradingchangebym}. We know that relative Alexander and Maslov grading of elements in $B^{\bu}_{t}$ grow linearly. This means that the sign of these relative gradings will stabilize to the sign of their slope as $m \rightarrow \infty$, if the slope is non-zero. When the slope is zero, and only in this case, the relative gradings will be constant (as a function of $m$).\\
    
    For high enough $m$, this stabilization of signs will happen for all pairs of elements in $B^{\bu}_{t}$. The statements follow from the formulas in Lemma \ref{Lemma:Gradingchangebym}. Note that we are working under the assumption that $\omega < 0$. 
\end{proof}

With a similar argument, we get a result regarding the elements with Maslov grading zero in $B^{\bu}_{t}$. This result is stated in Corollary \ref{Corollary:Masolvgradingzeroinball}. 

\begin{coro}\label{Corollary:Masolvgradingzeroinball}
    Consider an element $\nu \in B^{\bu}_{t}$ with a fixed representative $(\alpha; \beta, \gamma ; \delta)$ for the grading double coset $gr_{K_{m}}(\nu)$ for $m \gg 0$ (coming from Corollary \ref{Corollary:doublesidedgrading}). Let $\beta_{0}$ be $\frac{M}{2}+\frac{1}{4}$. Then we have 
    $$h_{m}(\nu) = 0 \Longrightarrow \beta = \beta_0 \ \text{or} \ \beta=\beta_0-1.$$
    Furthermore, the set $GZ_{m} \cap B^{\bu}_{t}$ (i.e elements with absolute Maslov grading zero inside the ball $B^{\bu}_{t}$) will be stabilized for high enough $m$.
\end{coro}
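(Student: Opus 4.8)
The plan is to measure every element of $B^{\bu}_{t}$ against the distinguished Maslov-grading-zero element $\nu_{0}$ furnished by Lemma \ref{Lemma:homotopyequivgenerator2} and Corollary \ref{Corollary:gradingofhomotopyequivgenerator}. Recall that $\nu_{0} \in C^{\bu}$, that $h_{m}(\nu_{0}) = 0$ for every $m$ (it appears in a representative of the generator of $\HF(S^3)$, which lives in Maslov grading zero), and that for $m \gg 0$ the spin-c part of $gr_{K_{m}}(\nu_{0})$ has a fixed representative $(\alpha_{0};\beta_{0},\gamma_{0})$ with $2\beta_{0} = M + \frac{1}{2}$; this is precisely the number $\beta_{0} = \frac{M}{2}+\frac{1}{4}$ in the statement. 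Both $\nu$ and $\nu_{0}$ thus lie in $B^{\bu}_{t} \subseteq \bm{C^{n}}$ for $n$ large, with $m$-independent grading representatives (Corollary \ref{Corollary:doublesidedgrading}), so Lemma \ref{Lemma:Gradingchangebym} is applicable to the pair.

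The first step is to control the second spin-c component modulo $\mathbb{Z}$. Since the generators of $P(\bm{z_0})$ and $P(\eta)$ have integral first spin-c coordinate, the residue $\beta \bmod \mathbb{Z}$ is an intrinsic invariant of any element of $\bm{C^{m}}$. Combining Lemma \ref{Lemma:secondcomponentmod2} (two elements of $B^{1}_{K}$ differ by an integer), Corollary \ref{Corollary:secondcomponentmod2-two} (a $B^{1}_{K}$ element and a $B^{0}_{K}$ element differ by an element of $\mathbb{Z}+\frac{1}{2}$), and the shift formula in the proof of Lemma \ref{Lemma:Gradingchangebyshift} (passing from $x^{1}$ to $x^{1}\otimes\xi_{i}$ adds $\frac{1}{2}$ to the second component), one checks that every element of $\bm{C^{m}}$ — whether in $C^{\bu}$ or in a white box — shares the same residue $\beta \bmod \mathbb{Z}$, equal to $\beta_{0}\bmod\mathbb{Z}$. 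In particular $\beta_{0}-\beta \in \mathbb{Z}$ for all $\nu \in B^{\bu}_{t}$.

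Next I would feed this into Lemma \ref{Lemma:Gradingchangebym} for the pair $\nu,\nu_{0}$. Using $M - \frac{1}{2} = 2\beta_{0}-1$ and $h_{m}(\nu_{0}) = 0$, the Maslov formula collapses to
$$h_{m}(\nu) = (\beta_{0}-\beta)(\beta_{0}-\beta-1)\cdot m + c_{\nu},$$
with $c_{\nu}$ a constant independent of $m$. Setting $n = \beta_{0}-\beta \in \mathbb{Z}$, the slope $n(n-1)$ is a product of consecutive integers, hence nonnegative and equal to zero exactly when $n \in \{0,1\}$, i.e. $\beta \in \{\beta_{0},\beta_{0}-1\}$. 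If $n \notin \{0,1\}$ then $h_{m}(\nu)$ is strictly increasing in $m$, so it can vanish for at most one value of $m$; as $B^{\bu}_{t}$ is finite, for $m \gg 0$ we conclude $h_{m}(\nu) = 0 \Rightarrow \beta \in \{\beta_{0},\beta_{0}-1\}$, which is the first assertion.

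Finally, for the stabilization of $GZ_{m}\cap B^{\bu}_{t}$: the set $B^{\bu}_{t}$ is itself a fixed subset of $\bm{C^{m}}$ once $m > 2t$ (via $\Phi_{m}$, as in Subsection \ref{Subsection:ballsandshiftmaps}), and for each of its elements the displayed formula shows $h_{m}(\nu) = n(n-1)m + c_{\nu}$ is linear in $m$ and is identically the constant $c_{\nu}$ precisely when $n(n-1) = 0$. Hence for $m \gg 0$, the condition $\nu \in GZ_{m}$ is either impossible (when $n(n-1) \neq 0$) or equivalent to the $m$-independent condition ``$n(n-1) = 0$ and $c_{\nu} = 0$''; either way $GZ_{m}\cap B^{\bu}_{t}$ is the same subset for all large $m$. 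The one point that will require genuine care — and the likeliest source of error — is the coset arithmetic of the second step: keeping straight which periodic subgroup acts on which side and verifying that the various $\frac{1}{2}$-shifts (from the $\rho$-gradings and from the $\xi_{i}$-index) cancel so that $\beta_{0}-\beta$ is an \emph{integer} and not merely a half-integer. Once that congruence is in hand, the rest is a direct application of Lemma \ref{Lemma:Gradingchangebym} together with the already-established stabilization of metric balls.
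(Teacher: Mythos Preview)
Your proposal is correct and follows essentially the same approach as the paper: measure $\nu$ against the reference element $\nu_{0}$ from Lemma~\ref{Lemma:homotopyequivgenerator2} and Corollary~\ref{Corollary:gradingofhomotopyequivgenerator}, apply Lemma~\ref{Lemma:Gradingchangebym} to get $h_{m}(\nu)$ as a linear function of $m$, and observe that for $m\gg 0$ it vanishes only when the slope $(\beta_{0}-\beta)(\beta_{0}-1-\beta)$ is zero. The one difference is that you insert an extra step verifying $\beta_{0}-\beta\in\mathbb{Z}$; this is correct but not actually needed here, since a linear function with nonzero real slope still vanishes for at most one $m$, and the equation $(\beta_{0}-\beta)(\beta_{0}-1-\beta)=0$ already forces $\beta\in\{\beta_{0},\beta_{0}-1\}$ without any integrality hypothesis.
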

\begin{proof}
    We use Lemma \ref{Lemma:homotopyequivgenerator2} and Corollary \ref{Corollary:gradingofhomotopyequivgenerator}. We know that there is an element $\nu_0 \in C^{\bu} \subset B^{\bu}_{t}$ such that $h_{m}(\nu_0) = 0$ for all positive integers $m$. Furthermore, we know that 
    $$gr_{K_m}(\nu_{0}) = [(\alpha_0; \beta_0, \gamma_0; \delta_0)].$$
    We use the same argument of Corollary \ref{Corollary:Gradingsinaball}. Based on Lemma \ref{Lemma:Gradingchangebym}, for any element $\nu \in B^{\bu}_{t}$, the absolute Maslov grading $h_{m}(\nu) = h_m(\nu) - h_{m}(\nu_0)$ is a fixed linear function of $m$. This means that the whole set of of Maslov gradings of elements of $B^{\bu}_{t}$ is given by a fixed set of linear functions. As a result, for high enough $m$, the element $\nu$ will have Maslove grading if and only if $h_{m}(\nu)$ is the constant zero function. Hence, we have:
    $$h_m(\nu) = 0 \Leftrightarrow (\beta_0-\beta)(M-\frac{1}{2}-\beta-\beta_0) =0 \ \ \text{and}$$
     $$(\gamma_0-\gamma)M + (\alpha - \alpha_0) + \frac{1}{2}(\beta_0-\beta) - (\beta_0 - \beta) \gamma - (\gamma_0 - \gamma)\beta_0 = 0.$$
     This gives us the stabilization statement as the conditions mentioned above doesn't depend on $m$. Now to prove the statement about $\beta$ we can write 
     $$(\beta_0-\beta)(M-\frac{1}{2}-\beta-\beta_0) =0 \Rightarrow \beta = \beta_0 \ \text{or} \ \beta = M-\frac{1}{2}-\beta_0  = \beta_0-1.$$ 
\end{proof}

We are going to extend the result of Corollary \ref{Corollary:Masolvgradingzeroinball} from $GZ_{m} \cap B^{\bu}_{t}$ to the whole set $GZ_{m}$, as it can be seen in Proposition \ref{Proposition:MasolvgradingzeroAll}.\\

First, recall that we have two grading $gr_{K_m}$ comes from two gradings $gr_K$ and $gr_{m}$ on $\widehat{\text{CFA}}(\mathcal{H}_{K}, z, w)$ and $\widehat{\text{CFD}}(\mathcal{H}'_{\frac{1}{m}} , z')$ respectively. We can always choose fixed representatives for $gr_{K}$. We introduced two different methods for choosing the representatives for $gr_m$ on $\widehat{\text{CFD}}(\mathcal{H}'_{\frac{1}{m}} , z')$, as described in Remark \ref{Remark:Twogradings}.\\

Now we can state and proof Proposition \ref{Proposition:MasolvgradingzeroAll}. 

\begin{prop}\label{Proposition:MasolvgradingzeroAll}
    Assume that $m \gg0$. Then for any element $\nu \in GZ_{m}$, Let $(\alpha,\beta,\gamma,\delta)$ be the representative for grading double coset $gr_{K_m}(\nu)$ coming from the counterclockwise representatives of $gr_m$. Then we have:
    $$\beta \in \{\beta_0,\beta_0 -1,\beta_0-2\}$$ 
\end{prop}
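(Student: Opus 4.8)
The plan is to leverage the two grading results already established, namely Corollary \ref{Corollary:Masolvgradingzeroinball} (which handles elements of $GZ_m$ inside a fixed ball $B^{\bu}_t$) and the structural control over the complex away from the black box coming from Proposition \ref{Proposition:boxtensordifferential} and the shift maps $R_{\pm}$. First I would fix a homogeneous element $\nu \in GZ_m$ and consider the minimal cycle (or, more precisely, a minimal homology class or minimal relation) that detects it: since $h_m(\nu)=0$, the element $\nu$ appears in some cycle representing a nonzero homology class, or in a minimal relation. By Lemma \ref{Lemma:lengthminimal} and Corollary \ref{Corollary:lengthminimalrelation}, the relevant subset has diameter bounded by a fixed constant independent of $m$. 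So $\nu$ either already lies in a fixed ball $B^{\bu}_{L}$ for an appropriate constant $L$, in which case Corollary \ref{Corollary:Masolvgradingzeroinball} gives $\beta \in \{\beta_0, \beta_0 - 1\}$ (even stronger than the claimed $\{\beta_0, \beta_0-1, \beta_0-2\}$), or $\nu$ is far from the black box.

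In the ``far'' case, the idea is to shift $\nu$ back toward the black box using $R_-$ (iterated), and control how the grading changes under this operation. By Lemma \ref{Lemma:Gradingchangebyshift}, applying $R_-$ once changes the Maslov grading of $x^1 \otimes \xi_i$ by $-(M - 2b_1 - \tfrac{1}{2})$ where $(a_1; b_1, c_1; d_1)$ represents $gr_K(x^1)$; here $b_1$ is precisely the second component $\beta$ of $gr_{K_m}(\nu)$ minus a fixed shift (coming from the $\tfrac12$ in $gr_m(\xi_i)$), so this per-step change is a fixed quantity once $x^1$ is fixed. The key point: if $\nu$ is far from $C^{\bu}$, then after shifting it into a fixed ball it becomes an element $\nu'$ whose Maslov grading $h_m(\nu')$ is still realized by a homogeneous cycle (using Propositions \ref{Proposition:shiftbounddary} and \ref{Proposition:shifttrivialhomology} to preserve the cycle/nontriviality structure, or just tracking the relation), but now $h_m(\nu') = h_m(\nu) + (\text{integer multiple of the per-step change}) = (\text{integer multiple})$. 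Since we want to bound $\beta$ itself, the cleanest route is: the second grading component $\beta$ is invariant under $R_-$ by Lemma \ref{Lemma:Gradingchangebyshift} (the $\text{Spin}^{\mathbb{C}}$ first component $b + \tfrac12$ doesn't change under $\xi_i \mapsto \xi_{i-1}$), so $\beta(\nu) = \beta(R_-^k \nu)$ for all valid $k$. Hence I can shift $\nu$ into a fixed ball without changing $\beta$, and then I must understand which $\beta$-values occur among elements of that fixed ball that, after \emph{some} forward shift $R_+^k$, have Maslov grading zero.

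This is where the bound $\{\beta_0, \beta_0-1, \beta_0-2\}$ rather than $\{\beta_0,\beta_0-1\}$ comes in: a far element $\nu$ with $h_m(\nu)=0$ corresponds, under $R_-$, to an element $\nu'$ in a fixed ball with $\beta(\nu') = \beta(\nu)$ but with $h_m(\nu')$ generally nonzero. I would argue that $\nu'$ must itself lie in a minimal cycle or minimal relation (since shift maps carry minimal cycles/relations to minimal cycles/relations away from the black box, by the arguments in Propositions \ref{Proposition:shiftbounddary}--\ref{Proposition:shifttrivialhomology}), and in particular $\nu'$ is $\sim$-equivalent (via a good alternating path, Lemma \ref{Lemma:alternatingpathgrading}) or connected by differentials to an element of $C^{\bu}$ or to an element of $GZ_m \cap B^{\bu}_t$. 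Tracking the Maslov grading along such a path (each edge changes $h_m$ by $-1$, and by Lemma \ref{Lemma:differentialorder} the clockwise displacement per edge is at most $L_A$) together with the per-step Maslov change $M - 2\beta - \tfrac12 \cdot(\text{something})$ under $R_+$, one sees that $\beta$ can differ from $\beta_0$ by at most $2$: the element $\nu$ is at most $L_A$ clockwise-steps of forward differential away from the black box region after being positioned, and combined with Corollary \ref{Corollary:secondcomponentmod2-two} and Lemma \ref{Lemma:secondcomponentmod2} (which force $\beta - \beta_0 \in \tfrac12 + \mathbb{Z}$ or $\mathbb{Z}$ depending on the idempotent, restricting $\beta$ to a discrete set), the list $\{\beta_0, \beta_0-1, \beta_0-2\}$ exhausts the possibilities.

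The main obstacle I anticipate is making the ``far'' case rigorous: precisely, showing that the Maslov-zero condition on a far element forces, after the grading-preserving shift $R_-$, an element whose $\beta$-value is constrained by the fixed-ball analysis. The subtlety is that $h_m$ of the shifted element is \emph{not} zero, so I cannot directly invoke Corollary \ref{Corollary:Masolvgradingzeroinball}; instead I need to quantify how much $h_m$ changes under forward shifting as a function of $\beta$, note that this change is $k \cdot (M - 2\beta - \tfrac12 - (\text{the }\xi\text{-shift correction}))$ for $k$ shift steps, and then combine with the fact that $\nu$ being far means $k$ is large, so for $h_m(\nu)=0$ to hold we need $M - 2\beta - \tfrac12$-type expression to be close to zero, forcing $\beta$ near $\beta_0$. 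Handling the boundary corrections (the discrepancy between the clockwise and counterclockwise representatives of $gr_m$, cf.\ Remark \ref{Remark:Twogradings} and Corollary \ref{Corollary:doublesidedgrading}, which is exactly why the proposition is phrased using the counterclockwise representatives) is what produces the extra $-2$ value and is the delicate bookkeeping step.
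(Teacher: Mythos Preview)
Your high-level strategy matches the paper's: shift $\nu = x^1 \otimes \xi_i$ to the reference element $\nu' = x^1 \otimes \xi_m \in B^{\bu}_1$, observe that the second component $\beta$ is unchanged under this shift (both use the counterclockwise representative of $gr_m$), and then determine which values of $\beta$ are compatible with $h_m(x^1 \otimes \xi_i) = 0$ for some $i \in \{1,\dots,m\}$. However, two of the steps you propose do not work.

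First, the detour through minimal cycles, minimal relations, and alternating paths is unfounded. The set $GZ_m$ consists of \emph{all} basis elements with $h_m = 0$; there is no hypothesis that $\nu$ appears in any cycle, and in general it need not. The paper's argument is a pure grading computation and never invokes cycles or relations. Second, and more seriously, your quantitative conclusion is wrong. You claim that for $h_m(\nu)=0$ with $k$ large, the per-step Maslov change must be close to zero. But $h_m(\nu')$ is \emph{not} bounded as $m \to \infty$: by Lemma~\ref{Lemma:Gradingchangebym} (using $\nu_0$ from Corollary~\ref{Corollary:gradingofhomotopyequivgenerator} as reference), $h_m(\nu')$ is linear in $m$ with slope $(\beta_0 - \beta)(\beta_0 - 1 - \beta)$, while by Lemma~\ref{Lemma:Gradingchangebyshift} the values $\{h_m(x^1 \otimes \xi_j) - h_m(\nu') : 1 \le j \le m\}$ form an arithmetic progression of length $m$ with common difference $2(\beta_0 - 1 - \beta)$. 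The correct argument compares these two linear-in-$m$ quantities: for the progression to hit $-h_m(\nu')$ one needs $|2(\beta_0 - 1 - \beta)| \ge |(\beta_0 - \beta)(\beta_0 - 1 - \beta)|$, which forces $|\beta_0 - \beta| \le 2$; a sign check then rules out $\beta \in \{\beta_0+1,\beta_0+2\}$. Your version (per-step change $\approx 0$) would at best give $\beta = \beta_0 - 1$ and miss the other two values.
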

\begin{proof}
It is useful to focus on the nested sequence structure induced from the inclusion $\Phi_m$ in this proof.\\

The statement follows from Corollary \ref{Corollary:Masolvgradingzeroinball} if $\nu \in C^{\bu}$, hence we can assume there exists $x \in B_{K}$ and $i \in \{1,\dots, m\}$ such that $\nu = x \otimes \xi_i$.\\

Consider the element $\nu' = x \otimes \xi_{m} $ in $C^{\circ}_{m}$. You can think of this element as a fixed element of $B^{\bu}_{1}$, stabilized under the inclusion $\Phi_m$. Let $(\alpha';\beta',\gamma';\delta')$ be the representative for $gr_{K_m}(\nu')$ coming from the counterclockwise representatives of $gr_m$. In particular, this means that
$$\text{if} \  \ gr_{K}(x) = [(a;b,c;d)] \ \  \text{then} \ \  \beta' = b - \frac{1}{2}.$$ 
Note that this representative is fixed and does not depend on $m$. Furthermore, note that we have $\beta = \beta'$, coming from the description of the counterclockwise representatives of $gr_m$. \\

Similar to the proof of Corollary \ref{Corollary:Masolvgradingzeroinball}, let $\nu_0 \in C^{\bu}$ be the element coming from Lemma \ref{Lemma:homotopyequivgenerator2}. For all $m \gg0$, as a result of Lemma \ref{Lemma:Gradingchangebym}, we have: 
\begin{equation}\label{Equation:Maslovgrading-firstline}
h_{m}(\nu') = h_{m}(\nu') - h_{m}(\nu_0) = (\beta_{0}- \beta')((\beta_0 -1) - \beta') \cdot m + F,
\end{equation}
where $F$ is a fixed integer.\\

Now we can use Lemma \ref{Lemma:Gradingchangebyshift} and write:
\begin{equation*}
h_{m}(x \otimes \xi_{j}) - h_{m}(\nu') = h_{m}(x \otimes \xi_{j}) - h_{m}(x \otimes \xi_{m}) = (j-m)(M-2b-\frac{1}{2})    
\end{equation*}
\begin{equation}\label{Equation:Maslovgrading-secondline}
 = (j-m)(2\beta_{0}-2\beta'-2) \ \ \text{for} \ j\in \{1,\dots,m\}.   
\end{equation}
Based on Equations \ref{Equation:Maslovgrading-firstline} and \ref{Equation:Maslovgrading-secondline}, we can see that $h_{m}(\nu')$ is a linear function of $m$ with slope $(\beta_{0}- \beta')((\beta_0 -1) - \beta')$, while the set of numbers 
$$\{h_{m}(x \otimes \xi_j) - h_{m}(\nu') \ | \ 1\leq j \leq m\} $$ forms an arithmetic progression with size $m$ and slope $(2\beta_0-2\beta'-2)$ starting from zero.\\

Remember that we are working with the assumption that $m \gg0$. Hence, to have an integer $i \in \{1,\dots,m\}$ such that 
$$h_{m}(x \otimes \xi_i) = 0 \  \  \text{or equivalently} \  \ h_{m}(x \otimes \xi_i) - h_{m}(\nu') = - h_{m}(\nu'),$$
we must have: 
$$|(2\beta_0-2\beta'-2)| \geq |(\beta_{0}- \beta')((\beta_0 -1) - \beta')|.$$
This can only happen when $|\beta_0-\beta'| \leq 2$.\\

If $\beta' \in \{\beta_0+1, \beta_0+2\}$, we can use Corollary \ref{Corollary:Gradingsinaball} (especially Equation \ref{Equation:Gradingsinaball-Maslov}) to deduce that $h_{m}(\nu') > 0$. Furthermore, we have 
$$h_{m}(x \otimes \xi_j) - h_{m}(\nu') = -4(j-m) \ \text{or}  \ -6(j-m) \Longrightarrow h_{m}(x \otimes \xi_j) > h_{m}(\nu') > 0.$$
This gives us the desired result. 
\end{proof}

As a result of Proposition \ref{Proposition:MasolvgradingzeroAll}, we can consider the the decomposition 
$$GZ_m = GZ_{m}^{0}  \ \sqcup \  GZ_{m}^{1} \  \sqcup \  GZ_{m}^{2},$$
where $GZ_{m}^{i}$ is defined to be the set of elements $\nu \in GZ_{m}$, such that $(\alpha;\beta_0-i,\gamma;\delta)$ is the representative for grading double coset $gr_{K_m}(\nu)$ coming from the counterclockwise representatives of $gr_m$.\\

Extending the argument in the proof of Proposition \ref{Proposition:MasolvgradingzeroAll}, we can further analyze the set $GZ_{m}$ and its decomposition, as done in Corollary \ref{Corollary:GZdecomposition}.

\begin{coro}\label{Corollary:GZdecomposition}
Assume that $m \gg0$. There exist a positive integer $L_{GZ}$ such that 
$GZ_{m}^{0}$ and $GZ_{m}^{2}$ are stabilized subsets of $B^{\bu}_{L_{GZ}}$. To be more precise we will have: 
$$GZ_{m}^{0} \subseteq B^{\bu}_{[-L_{GZ}, 0]} \ \  \text{and} \ \ \Phi_{m}(GZ_{m}^{0}) = GZ_{m+1}^{0}, $$
$$GZ_{m}^{2} \subseteq B^{\bu}_{[0,L_{GZ}]} \ \  \text{and} \ \ \Phi_{m}(GZ_{m}^{2}) = GZ_{m+1}^{2}.$$
Furthermore, there exist a set $ZB \subseteq B_{K}$ such that 
$$GZ_{m}^{1} \ \setminus \ C^{\bu} = \{x \otimes \xi_i \ | \ x \in ZB \ , \  i \in \{1,\dots,m\}\}.$$
\end{coro}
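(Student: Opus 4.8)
\textbf{Proof strategy for Corollary \ref{Corollary:GZdecomposition}.}

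The plan is to refine the analysis of Proposition \ref{Proposition:MasolvgradingzeroAll} by keeping track of \emph{where} in the circular arrangement the Maslov-grading-zero elements in each piece $GZ_m^i$ can sit. The key input is the two-sided computation of $h_m(x \otimes \xi_j)$ obtained by combining Equations \ref{Equation:Maslovgrading-firstline} and \ref{Equation:Maslovgrading-secondline} with the analogous \emph{clockwise} version (i.e. expressing $h_m(x \otimes \xi_j)$ relative to $x\otimes\xi_1 \in C^\circ_1$ using the clockwise representatives of $gr_m$ and Corollary \ref{Corollary:doublesidedgrading}). For a fixed $x \in B_K$, the values $\{h_m(x\otimes\xi_j)\}_{j=1}^m$ form an arithmetic progression with common difference $(2\beta_0 - 2\beta' - 2)$, where $\beta'$ is the second coordinate of the fixed counterclockwise representative of $gr_K(x)$ (shifted by $-\tfrac12$ as in the proof of Proposition \ref{Proposition:MasolvgradingzeroAll}). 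The sign of this common difference is governed by whether $\beta' < \beta_0 - 1$, $\beta' = \beta_0-1$, or $\beta' > \beta_0-1$, which is exactly the trichotomy defining $GZ_m^0$, $GZ_m^1$, $GZ_m^2$.

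First I would treat $GZ_m^2$, i.e. the case $\beta = \beta_0 - 2$, so $\beta' = \beta_0$ (after accounting for the $-\tfrac12$ shift between $\nu = x\otimes\xi_i$ and $\nu' = x\otimes\xi_m$). Here the common difference of the progression $\{h_m(x\otimes\xi_j) - h_m(\nu')\}$ is $(2\beta_0 - 2\beta' - 2) = -2 < 0$, while from Equation \ref{Equation:Maslovgrading-firstline} the slope of $h_m(\nu')$ in $m$ is $(\beta_0 - \beta')((\beta_0-1)-\beta') = 0$, so $h_m(\nu')$ is eventually a fixed constant $F$. A zero in the progression forces $j - m = F/(-2)$, a bounded quantity independent of $m$; hence the unique admissible index $i$ lies within a fixed distance $L_{GZ}$ of $m$, i.e. $x\otimes\xi_i \in B^\bu_{[0,L_{GZ}]}$. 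Since the position $i = m - F/2$ (and the value $F$) does not depend on $m$ once $m \gg 0$, the set $GZ_m^2$ is literally the fixed subset of $B^\bu_{[0,L_{GZ}]}$, and the inclusion $\Phi_m$ (which fixes $B^\bu_{[0,L]}$ by the discussion around Figure \ref{Figure:Nested}) carries $GZ_m^2$ to $GZ_{m+1}^2$. The case $GZ_m^0$ ($\beta = \beta_0$, so $\beta' = \beta_0 + 1$) is entirely symmetric, using instead the clockwise representatives and the element $x\otimes\xi_1 \in C^\circ_1$ as the stable reference: the common difference becomes $+2$, $h_m$ of the reference is eventually constant, and the zero is pinned to within $L_{GZ}$ of the black box on the negative side, i.e. in $B^\bu_{[-L_{GZ},0]}$ and stable under $\Phi_m$.

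Finally, for $GZ_m^1$ ($\beta = \beta_0 - 1$): here the common difference $(2\beta_0 - 2\beta' - 2)$ is zero (since $\beta' = \beta_0 - 1$ as well, modulo the bookkeeping shift), so $h_m(x\otimes\xi_j)$ is \emph{independent of $j$}. Thus for a given $x \in B_K$, either every $x\otimes\xi_j$ has Maslov grading zero or none does; the first alternative is equivalent to a single linear-in-$m$ quantity (coming from Equation \ref{Equation:Maslovgrading-firstline} with the $\beta' = \beta_0-1$ substitution, whose leading coefficient vanishes) being identically zero, a condition on $x$ alone. Defining $ZB \subseteq B_K$ to be the set of such $x$, and separately checking the black-box piece via Corollary \ref{Corollary:Masolvgradingzeroinball}, gives the stated description $GZ_m^1 \setminus C^\bu = \{x\otimes\xi_i \mid x \in ZB,\ i \in \{1,\dots,m\}\}$. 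The main obstacle I anticipate is bookkeeping the half-integer shifts correctly — the relationship between $\beta$ (second coordinate of $gr_{K_m}(\nu)$ via counterclockwise $gr_m$), $\beta'$ (same for $\nu' = x\otimes\xi_m$), and $b$ (second coordinate of $gr_K(x)$) must be tracked through Lemmas \ref{Lemma:Gradingchangebyshift} and \ref{Lemma:Gradingchangebym} so that the trichotomy $\beta \in \{\beta_0, \beta_0-1, \beta_0-2\}$ aligns exactly with the sign trichotomy of the arithmetic progression's common difference; everything else is a direct linear-growth argument of the type already used repeatedly.
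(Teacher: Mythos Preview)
Your strategy matches the paper's — analyze the arithmetic progression $\{h_m(x\otimes\xi_j)\}_j$ from Equations \ref{Equation:Maslovgrading-firstline}--\ref{Equation:Maslovgrading-secondline} case by case on $\beta'$ — but you have indeed made the bookkeeping errors you anticipated. There is \emph{no} shift between $\beta$ and $\beta'$: for any white-box element $\nu = x\otimes\xi_i$ and $\nu' = x\otimes\xi_m$ (both via the counterclockwise representative of $gr_m$), the second components satisfy $\beta = \beta' = b - \tfrac12$. Hence $GZ_m^i$ corresponds to $\beta' = \beta_0 - i$ for $i=0,1,2$. In your $GZ_m^2$ paragraph you have instead carried out the computation for $\beta' = \beta_0$, which is the paper's $GZ_m^0$ case; it yields $j$ within bounded distance of $m$, and an index near $m$ places $x\otimes\xi_j$ in the \emph{negative} half-ball $B^\bu_{[-L_{GZ},0]}$ (which contains $C^\circ_m$), not $B^\bu_{[0,L_{GZ}]}$. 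Your two errors happen to cancel, giving the right containment by accident. For the actual $GZ_m^2$ case ($\beta' = \beta_0-2$) one finds $h_m(x\otimes\xi_j) = 2j + F$, whose zero sits at a fixed small index $j = -F/2$, landing in the positive half-ball directly. Likewise, your proposed $\beta' = \beta_0+1$ for $GZ_m^0$ would be excluded outright by the last paragraph of the proof of Proposition \ref{Proposition:MasolvgradingzeroAll}; the paper handles all three cases uniformly with the single reference $\nu' = x\otimes\xi_m$, so the switch to clockwise representatives is unnecessary. Your $GZ_m^1$ argument is correct as written once the (nonexistent) shift is dropped.
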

\begin{proof}
    Note that based on Corollary \ref{Corollary:Masolvgradingzeroinball}, the statement holds for $GZ_m \cap C^{\bu}$.\\ 
    
    To prove the first statement, similar to the proof of Proposition \ref{Proposition:MasolvgradingzeroAll}, Consider an element $\nu \in GZ^{0}_m \setminus C^{\bu}$. Assume that $\nu = x \otimes \xi_i$. Consider $\nu' = x \otimes \xi_m$ with $(\alpha';\beta',\gamma';\delta')$ being the representative for $gr_{K_m}(\nu')$ coming from the counterclockwise representatives of $gr_m$.
    It folllows from the assumption $\nu \in GZ^{0}_m$ that $\beta' = \beta_0$. Rewriting Equations \ref{Equation:Maslovgrading-firstline} and \ref{Equation:Maslovgrading-secondline}, we have
    $$h_{m}(\nu') = F \  \ \text{and}  \ \ h_{m}(x \otimes \xi_j)-h_{m}(\nu') = -2(j-m) \ \ \text{for} \ j\in \{1,\dots,m\}.$$
    As a result, we have 
    $$h_{m}(\nu) = h_{m}(x \otimes \xi_i) = 0 \Longleftrightarrow m-\frac{F}{2} \in \mathbb{Z_{+}} \ \  \text{and} \ \  i = m-\frac{F}{2} \Rightarrow \nu \in B^{\bu}_{[-\frac{F}{2},0]} $$
   Running this argument over all the choices for $\nu' \in C^{\circ}_{m}$ gives us the first desired statement. A similar argument gives us the statement about $GZ^{2}_m$.\\

To prove the final statement about $GZ_{m}^{1}$, we can proceed the same way, but this time $\beta' = \beta_0-1$. Then we have 
$$h_{m}(x \otimes \xi_j)-h_{m}(\nu') = 0 \ \ \text{for} \ j\in \{1,\dots,m\}.$$
Hence, if $h_{m}(x \otimes \xi_i) = 0$ then we have $h_{m}(x \otimes \xi_j) = 0$ for all $j\in \{1,\dots,m\}$. This completes the proof.
\end{proof}

\begin{rema}
The results in Corollary \ref{Corollary:GZdecomposition} and  Corollary \ref{Corollary:Masolvgradingzeroinball} might seem contradictory at first. However, one can notice that in Corollary \ref{Corollary:Masolvgradingzeroinball} we use the fixed grading representatives for elements of the closed ball, coming from Corollary \ref{Corollary:doublesidedgrading}. This coincides with the counterclockwise representatives used in Corollary \ref{Corollary:GZdecomposition} on the negative half-ball, but they are different for the elements of positive half-ball. In particular, when we change from the counterclockwise representatives to the clockwise representatives on the positive half-ball, the second component changes by $+ 1$ (apart from the elements of $C^{\bu}$). This can also be seen as all the elements of $GZ_{m}^{2}$ sit in the positive half-ball, and hence they all have fixed representatives for their grading double coset where the second component is equal to $$(\beta_0-2)+1=\beta_0-1.$$
\end{rema}

\subsection{Stabilization of maximizer of the Alexander grading}\label{Subsection:Stabilizationofmaximizers}\hfill\\

In this subsection, we aim to prove Lemma \ref{Lemma:taupotentials}. We start by showing that the distance of $\mathcal{P}_{m}$ and the black box $C^{\bu}$ is bounded from above. This is the content of Proposition \ref{Proposition:taupotentials-bounded}.

\begin{prop}\label{Proposition:taupotentials-bounded}
There exist a fixed integer $L_{\tau}$ such that 
 $$\mathcal{P}_{m} \subseteq B^{\bu}_{L_{\tau}} \ \ \text{for} \ \ m \gg0.$$
\end{prop}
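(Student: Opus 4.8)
The plan is to combine the classification of Maslov‑grading‑zero elements from Subsection \ref{Subsection:Maslovgradingzeroelements} with the presence of a fixed ``anchor'' inside the black box of every homogeneous representative of the $\widehat{HF}(S^3)$‑generator. First I would observe that if $S_I \in \mathcal{HZ}_m$, then $S_I$ is homogeneous and $[S_I]$ generates $\widehat{HF}(S^3)$, which is supported in Maslov grading $0$; hence $h_m(\nu) = 0$ for every $\nu \in I$, i.e. $I \subseteq GZ_m = GZ_m^{0} \sqcup GZ_m^{1} \sqcup GZ_m^{2}$. By Corollary \ref{Corollary:GZdecomposition}, for $m \gg 0$ one has $GZ_m^{0} \cup GZ_m^{2} \subseteq B^{\bu}_{L_{GZ}}$ and $GZ_m^{1}\cap C^{\bu} \subseteq B^{\bu}_{0}$, so if $V_{\max}(S_I)$ falls into any of these three sets there is nothing to prove. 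Thus it remains to treat the case $V_{\max}(S_I) = x^{*} \otimes \xi_{i^{*}}$ with $x^{*} \in ZB$ and $1 \le i^{*} \le m$, and to bound $\dist(C^{\circ}_{i^{*}}, C^{\bu})$ independently of $m$.

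For this, recall that by Proposition \ref{Proposition:generatorsofHFS^3} the cycle $S_I$ equals $f \boxtimes \id_{m}(x_0 \otimes \eta)$ for some $\A_{\infty}$ homotopy equivalence $f$, so Lemma \ref{Lemma:homotopyequivgenerator2} furnishes an element $\nu_0 \in I \cap C^{\bu}$; by Corollary \ref{Corollary:gradingofhomotopyequivgenerator} a representative of $gr_{K_m}(\nu_0)$ has second coordinate $\beta_0$ with $2\beta_0 = M + \tfrac12$. Since $V_{\max}(S_I)$ maximizes $a_m$ over $I$ and $\nu_0 \in I$, we get $a_m(x^{*}\otimes\xi_{i^{*}}) \ge a_m(\nu_0)$. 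Now compare $\nu_0$ with $x^{*}\otimes\xi_1$: both lie in $B^{\bu}_1$ and have Maslov grading $0$, so by Corollary \ref{Corollary:Masolvgradingzeroinball} a fixed representative of $x^{*}\otimes\xi_1$ has second coordinate $\beta_0$ or $\beta_0 - 1$. If it is $\beta_0 - 1$, then Lemma \ref{Lemma:Gradingchangebym} gives $a_m(x^{*}\otimes\xi_1) - a_m(\nu_0) = \omega\cdot m + \text{cte}$, which tends to $-\infty$ because $\omega = -\lk(K,c) < 0$; together with $a_m(x^{*}\otimes\xi_{i^{*}}) \le a_m(x^{*}\otimes\xi_1)$ (Lemma \ref{Lemma:Gradingchangebyshift}, since $i^{*} \ge 1$) this contradicts $a_m(x^{*}\otimes\xi_{i^{*}}) \ge a_m(\nu_0)$ for $m \gg 0$. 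Hence the second coordinate of $x^{*}\otimes\xi_1$ is $\beta_0$, and Corollary \ref{Corollary:Gradingsinaball} (Equation \ref{Equation:Alexandergradingslope}) shows $c_{x^{*}} := a_m(x^{*}\otimes\xi_1) - a_m(\nu_0)$ is a constant independent of $m$. Writing $C_0$ for an $m$‑independent bound on $|c_x|$ over the finitely many $x \in ZB$ for which this difference is constant, Lemma \ref{Lemma:Gradingchangebyshift} gives
$$a_m(x^{*}\otimes\xi_{i^{*}}) = a_m(\nu_0) + c_{x^{*}} + (i^{*}-1)\omega = a_m(\nu_0) + c_{x^{*}} - (i^{*}-1)\lk(K,c),$$
so $a_m(x^{*}\otimes\xi_{i^{*}}) \ge a_m(\nu_0)$ forces $(i^{*}-1)\lk(K,c) \le C_0$, hence $i^{*} \le C_0 + 1$ and $V_{\max}(S_I) \in B^{\bu}_{C_0+1}$.

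Putting the cases together, the proposition follows with $L_{\tau} := \max\{L_{GZ},\, C_0+1\}$, which depends only on $K$ and $c$. The one genuinely delicate step is the case $V_{\max}(S_I) \in GZ_m^{1}\setminus C^{\bu}$: unlike $GZ_m^{0}$ and $GZ_m^{2}$, the set $GZ_m^{1}$ winds all the way around the circular arrangement of boxes, so excluding a far‑away maximizer really requires both the algebraic input that every generating cycle contains the anchor $\nu_0 \in C^{\bu}$ (Lemma \ref{Lemma:homotopyequivgenerator2}, Proposition \ref{Proposition:generatorsofHFS^3}) and the careful tracking of the $\text{Spin}^{\mathbb{C}}$ (second) components of the gradings from Subsection \ref{Subsection:Gradingsinboxtensor}; the linear decay of the Alexander grading along a shift orbit (Lemma \ref{Lemma:Gradingchangebyshift}) then closes the estimate.
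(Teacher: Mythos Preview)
Your proof is correct and follows essentially the same approach as the paper: reduce to the case $V_{\max}(S_I)\in GZ^1_m\setminus C^{\bu}$ using Corollary \ref{Corollary:GZdecomposition}, then compare the Alexander grading of $x^*\otimes\xi_{i^*}$ with that of the anchor $\nu_0\in I\cap C^{\bu}$ from Lemma \ref{Lemma:homotopyequivgenerator2} and exploit the linear decay along shifts (Lemma \ref{Lemma:Gradingchangebyshift}) to bound $i^*$. The only cosmetic difference is that the paper routes the comparison through $x\otimes\xi_m$ and the counterclockwise grading representatives (avoiding your case split on the second $\text{Spin}^{\mathbb{C}}$ component, which is in fact vacuous since $x^*\in ZB$ forces the fixed representative of $x^*\otimes\xi_1$ to have second coordinate $\beta_0$), while you route it through $x^*\otimes\xi_1$; the resulting bounds and logic are the same.
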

\begin{proof}
Let $S_{I} \in \mathcal{HZ}_{m}$ be a generator for the homology. We need to look at the maximizer of the Alexander grading in $I$. Note that since $S_{I}$ is homogeneous, we have $I \subseteq  GZ_{m}$. In Corollary \ref{Corollary:GZdecomposition}, we proved that $GZ^{0}_{m}$ and $GZ^{2}_{m}$ have bounded distance from $C^{\bu}$. This means we can pick $L_{\tau}$ to be bigger than $L_{GZ}$, and focus on $I \cap GZ^{1}_{m}$.\\

The last part of argument is very similar to the proofs of Proposition \ref{Proposition:MasolvgradingzeroAll} and Corollary \ref{Corollary:GZdecomposition}. Using notations of Corollary \ref{Corollary:GZdecomposition},  
$$\text{consider an element} \ \  x \in ZB \ \  \text{and} \ \   \nu' = x \otimes \xi_{m} \in GZ_{m}^{1}.$$
Also, consider element $\nu_0 \in I$ coming from Lemma \ref{Lemma:homotopyequivgenerator2} and Corollary \ref{Corollary:gradingofhomotopyequivgenerator}. We can use Lemmas \ref{Lemma:Gradingchangebym} and \ref{Lemma:Gradingchangebyshift} to write the following equations:
\begin{equation}\label{Equation:taupotentials-1}
a_{m}(\nu_0) - a_{m}(\nu') = -\omega \cdot m + F'_{x}
\end{equation}
\begin{equation}\label{Equation:taupotentials-2}
a_{m}(x\otimes\xi_{j}) -a_{m}(x\otimes \xi_{m}) = (j-m)\omega  \ \ \text{for} \ j\in\{1,\dots,m\}
\end{equation}
where $F'_{x}$ is an integer which only depends on choice of $x \in ZB$. Combining Equations \ref{Equation:taupotentials-1} and \ref{Equation:taupotentials-2}, we can write 
$$a_{m}(x \otimes \xi_{j}) - a_{m}(\nu_0) = j \cdot \omega - F'_{x}  \ \ \text{for} \ j\in\{1,\dots,m\}.$$
Recall that $\omega < 0$. As a result, we have 
\begin{equation}\label{Equation:GZ1isinpositivehalfball}
j \geq |\frac{F'_{x}}{\omega}| \Longrightarrow a_{m}(x \otimes \xi_{j}) \leq a_{m}(\nu_0) \Longrightarrow x \otimes \xi_{j} \neq V_{\max}(S_{I}).  
\end{equation}
We can run this argument for all the choices of $x \in ZB$ and $\nu_0$. Hence, we restrict the elements in $GZ_{m}^{1}$ that can appear as $V_{\max}(S_{I})$. As the conclusion we have that if we asuume 
$$L_{\tau} \geq  \underset{x \in ZB}{\max} \  |\frac{F'_{x}}{\omega}| \ \  \text{and} \ \ L_{\tau} \geq L_{GZ},$$
then we can be sure that $V_{\max}(S_{I}) \in B^{\bu}_{L_{\tau}}$ for high enough $m$. This gives us the desired result. 
\end{proof}

Corollary \ref{Corollary:placeofmaximizer} follows from the proof of Proposition \ref{Proposition:taupotentials-bounded} and Corollary \ref{Corollary:GZdecomposition}.

\begin{coro}\label{Corollary:placeofmaximizer}
 Assume $m \gg0$. Let $S_{I} \in \mathcal{HZ}_{m}$ be a homogeneous generator of homology, then we have 
 $$V_{\max}(S_{I}) \in (I \cap B^{\bu}_{[0,L_{\tau}]}) \ \cup \ (I \cap GZ_{m}^{0}).$$
\end{coro}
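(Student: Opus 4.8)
\textbf{Proof proposal for Corollary \ref{Corollary:placeofmaximizer}.}

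The plan is to trace through the proof of Proposition \ref{Proposition:taupotentials-bounded} and extract the stronger conclusion about the exact location of the maximizer. Let $S_I \in \mathcal{HZ}_m$ be a homogeneous generator. Since $S_I$ is homogeneous with Maslov grading zero (as it represents the generator of $\widehat{HF}(S^3) = \mathbb{F}_2$, which sits in Maslov grading zero after our normalization via Lemma \ref{Lemma:homotopyequivgenerator}), we have $I \subseteq GZ_m$. Using the decomposition $GZ_m = GZ_m^0 \sqcup GZ_m^1 \sqcup GZ_m^2$ from Corollary \ref{Corollary:GZdecomposition}, we write $I = (I \cap GZ_m^0) \sqcup (I \cap GZ_m^1) \sqcup (I \cap GZ_m^2)$, and we analyze where $V_{\max}(S_I)$ can lie in each piece.

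First I would handle $GZ_m^2$: by Corollary \ref{Corollary:GZdecomposition}, every element of $GZ_m^2$ lies in the positive half-ball $B^{\bu}_{[0,L_{GZ}]}$, and (as recorded in the Remark following that corollary) such elements have grading double coset with second component equal to $\beta_0 - 1$ in the clockwise representatives — the same as the elements $\nu_0 \in C^{\bu}$ from Corollary \ref{Corollary:gradingofhomotopyequivgenerator}, for which $2\beta_0 = M + \frac{1}{2}$. Wait — more carefully, I would compare the second component of an element $\nu \in I \cap GZ_m^2$ with that of $\nu_0 \in I \cap C^{\bu}$ (which exists by Lemma \ref{Lemma:homotopyequivgenerator2}). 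Using Corollary \ref{Corollary:Gradingsinaball}, specifically Equation \ref{Equation:Alexandergradingslope2}, a difference in the $\beta$-components forces a strict inequality in Alexander gradings for $m \gg 0$; one checks that the relevant comparison makes $a_m(\nu) \le a_m(\nu_0)$, so $\nu$ cannot be the (strictly chosen, via the clockwise-order tie-break) maximizer unless it lies elsewhere. This shows $V_{\max}(S_I) \notin I \cap GZ_m^2$ for $m \gg 0$.

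Next, for $I \cap GZ_m^1$: this is exactly the situation handled in the last part of the proof of Proposition \ref{Proposition:taupotentials-bounded}. Equation \ref{Equation:GZ1isinpositivehalfball} there shows that for $x \in ZB$ and $j \ge |F'_x/\omega|$ we have $a_m(x \otimes \xi_j) \le a_m(\nu_0)$, hence $x \otimes \xi_j \ne V_{\max}(S_I)$; combined with the fact that $GZ_m^1$-elements of the white boxes are of the form $x \otimes \xi_j$ with $x \in ZB$, this confines any possible maximizer within $GZ_m^1$ to the positive half-ball, i.e. to $I \cap B^{\bu}_{[0,L_\tau]}$ once $L_\tau \ge \max_{x \in ZB}|F'_x/\omega|$. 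Finally, $I \cap GZ_m^0 \subseteq B^{\bu}_{[-L_{GZ},0]}$ by Corollary \ref{Corollary:GZdecomposition}, so those elements are already accounted for by the second term $I \cap GZ_m^0$ in the claimed union. Assembling the three cases: $V_{\max}(S_I)$ lies either in $I \cap B^{\bu}_{[0,L_\tau]}$ (when it comes from $GZ_m^1$ or $GZ_m^2 \subseteq B^{\bu}_{[0,L_{GZ}]} \subseteq B^{\bu}_{[0,L_\tau]}$) or in $I \cap GZ_m^0$, which is exactly the assertion.

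The main obstacle — and the step requiring the most care — is the $GZ_m^2$ case: one must verify that the $\beta$-component comparison genuinely rules out $GZ_m^2$-elements as maximizers rather than merely bounding their distance to $C^{\bu}$. This hinges on correctly converting between clockwise and counterclockwise grading representatives on the positive half-ball (the $+1$ shift noted in the Remark after Corollary \ref{Corollary:GZdecomposition}) and then applying Equation \ref{Equation:Alexandergradingslope2} with the right sign of $\omega < 0$; a sign error here would collapse the argument. Everything else is a bookkeeping assembly of results already established, so once the $GZ_m^2$ comparison is pinned down the corollary follows immediately.
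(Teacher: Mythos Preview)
Your approach is correct and coincides with the paper's, which simply records that the corollary follows from the proof of Proposition \ref{Proposition:taupotentials-bounded} together with Corollary \ref{Corollary:GZdecomposition}. One simplification: the $GZ_m^2$ case you flag as the ``main obstacle'' is in fact immediate---Corollary \ref{Corollary:GZdecomposition} already gives $GZ_m^2 \subseteq B^{\bu}_{[0,L_{GZ}]} \subseteq B^{\bu}_{[0,L_\tau]}$, so any element of $I \cap GZ_m^2$ automatically lies in $I \cap B^{\bu}_{[0,L_\tau]}$ and there is no need for the $\beta$-component comparison or to exclude such elements as maximizers (you yourself note this in your assembly step).
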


Now we have to prove that the set $\mathcal{P}_{m}$ stabilizes. We will prove something stronger in the form of Lemmas \ref{Lemma:generatorinpositivehalfballstablizes} and \ref{Lemma:generatorinnegativehalfballstablizes}. These lemmas are stabilization result for the collection of subsets of a closed ball $B^{\bu}_{L}$ that appear in a homogeneous generator of homology (i.e. an element of $\mathcal{HZ}_{m}$). Although getting a general result for the closed ball might not be possible, we can prove weaker stabilization results on each of the two half-balls which will be sufficient for the proof of Lemma \ref{Lemma:taupotentials}. 

\begin{lemm}\label{Lemma:generatorinpositivehalfballstablizes}
Let $L$ be a positive integer. Define $\mathcal{HI}^{+}_{m,L}$ as follows:
$$\mathcal{HI}^{+}_{m,L} := \{ I \cap B^{\bu}_{[0,L]} \ | \ S_{I} \in \mathcal{HZ}_{m} \}.$$
For $m \gg0$, the family $\mathcal{HI}^{+}_{m,L}$ is stabilized under inclusions $\Phi_{m}$. 
\end{lemm}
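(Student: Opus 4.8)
The strategy mirrors the stabilization arguments already developed for the homology of $\widehat{\text{CFA}}(\mathcal{H}_{K}, z) \boxtimes \widehat{\text{CFD}}(\mathcal{H}'_{\frac{1}{m}}, z')$, now applied to the positive half-ball. The key reference point is the family of cycles $\{S_{I_m}\}$ produced by Proposition \ref{Proposition:generatorsofHFS^3} and Remark \ref{Remark:familygrowth}. Recall from Equation \ref{Equation:Twostabilizationformula} that for such a family the ``inner part'' $S_{I''_m}$ evolves by $S_{I''_{m+1}} = S_{\Phi'_{m}(I''_{m})} + \sum_{y \in J_{m+1}} y \otimes \xi_{m+1}$, and the newly added term $\sum_{y \in J_{m+1}}y\otimes\xi_{m+1}$ lives in the white box $C^{\circ}_{m+1}$, which is far from the black box once $m > 2L$. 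Meanwhile $S_{I'_{m}} = \sum_{y\in J'} y \otimes \xi_m$ lives in $C^{\circ}_m$, also far from $C^{\bu}$. So under the inclusion $\Phi'_m$ (and hence, up to this far-from-the-black-box discrepancy, under $\Phi_m$), the intersection $I_m \cap B^{\bu}_{[0,L]}$ is eventually independent of $m$ for any fixed generating cycle that arose this way.

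\textbf{Key steps.}
First I would reduce from general homogeneous generators to minimal ones: given $S_I \in \mathcal{HZ}_m$, pass to $S_{MH(I)}$, which changes nothing about the set $\mathcal{HI}^{+}_{m,L}$ except possibly shrinking sets, but more importantly, decompose $S_I$ into a minimal generating cycle plus a sum of minimal nullhomologous cycles (using that the homology is one-dimensional together with Lemma \ref{Lemma:minimalbases}). Second, using Lemma \ref{Lemma:lengthminimal}, any minimal generating cycle has $\diam \leq N_A L_A$, and by Lemma \ref{Lemma:extremalnearblackbox}-type reasoning (or directly Proposition \ref{Proposition:generatorsofHFS^3} combined with Lemma \ref{Lemma:homotopyequivgenerator2}) a minimal generating cycle must meet $C^{\bu}$, so it lies in a fixed ball $B^{\bu}_{N_A L_A}$. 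Third, the nullhomologous minimal cycles $S_{I_j}$: those whose support meets $B^{\bu}_{[0,L]}$ either lie near the black box (their diameter is $\leq N_A L_A$ by Lemma \ref{Lemma:lengthminimal}, so if they touch $B^{\bu}_{[0,L]}$ they lie in $B^{\bu}_{L+N_AL_A}$) and then by Proposition \ref{Proposition:shiftbounddary}/\ref{Proposition:shifttrivialhomology} their occurrence is governed by shift-invariant data, hence stabilizes; or they are shifts of a bounded list of ``far'' nullhomologous cycles whose restriction to the positive half-ball is periodic in $m$ and, crucially, $\Phi_m$ is a bijection on $B^{\bu}_{[0,L]}$ for $m > 2L$ (as noted around Figure \ref{Figure:Nested}), so their contribution to $\mathcal{HI}^{+}_{m,L}$ stabilizes. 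Fourth, assemble: since the space of cycles with support in a fixed ball $B^{\bu}_{L + N_A L_A}$ stabilizes, and the subspace of those representing the generator stabilizes (by the relation bound Corollary \ref{Corollary:lengthminimalrelation}, exactly as in the proof of Theorem \ref{Theorem:extremalhfk}), the collection of possible sets $I \cap B^{\bu}_{[0,L]}$ arising from $S_I \in \mathcal{HZ}_m$ is eventually constant under $\Phi_m$.

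\textbf{Main obstacle.}
The delicate point is that $\Phi_m$ is \emph{not} the natural inclusion $\Phi'_m$ on the positive half: $\Phi_m$ reindexes the boxes $C^{\circ}_i$ with $i > m/2$ by shifting them up by one. On the positive half-ball $B^{\bu}_{[0,L]}$ with $m > 2L$ this reindexing does nothing, so $\Phi_m$ and $\Phi'_m$ agree there; but I must verify that the \emph{generating cycles themselves} produced for $\bm{C^{m+1}}$ via Proposition \ref{Proposition:generatorsofHFS^3} restrict correctly, i.e. that the passage from $I_m$ to $I_{m+1}$ respects $B^{\bu}_{[0,L]}$. This is exactly what Equation \ref{Equation:Twostabilizationformula} delivers for the $S_{I''}$ part (the discrepancy is $\sum_{y\in J_{m+1}} y\otimes\xi_{m+1}$, supported in $C^{\circ}_{m+1}$, disjoint from $B^{\bu}_{[0,L]}$) and for the $S_{I'}$ part ($S_{I'_{m+1}} = S_{\Phi_m(I'_m)}$, supported in $C^{\circ}_{m+1}$, again disjoint from the positive half-ball for $m$ large). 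The remaining care is needed to argue that \emph{every} element of $\mathcal{HZ}_{m+1}$ restricts (on the positive half-ball) to the $\Phi_m$-image of an element of $\mathcal{HZ}_m$ and vice versa; this follows from the two-way stabilization of cycles and relations in the fixed ball, but spelling out that the far nullhomologous cycles do not introduce new restrictions on the positive half is the part requiring the most attention.
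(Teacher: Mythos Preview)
Your ``Plan'' and ``Main obstacle'' sections correctly identify the essential mechanism—Equation \ref{Equation:Twostabilizationformula} from Remark \ref{Remark:familygrowth}—but the ``Key steps'' section then abandons it in favor of a decomposition argument that does not work in this complex.

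The gap is this: you invoke Lemma \ref{Lemma:lengthminimal} (and later Corollary \ref{Corollary:lengthminimalrelation}) to bound the diameter of minimal cycles in $\widehat{\text{CFA}}(\mathcal{H}_{K}, z) \boxtimes \widehat{\text{CFD}}(\mathcal{H}'_{\frac{1}{m}}, z')$. But those bounds go through Lemma \ref{Lemma:sizehomogenous}, which uses the \emph{Alexander} grading; the paper explicitly warns (opening of Subsection \ref{Subsection:HomologyofboxtensorS^3:Algebra}) that these length/size results do not transfer to the singly-pointed complex. In fact a minimal generating cycle here can span essentially all of $\bm{C^m}$ (already visible in the Mazur example, where the generator is $x_0\otimes\eta + x_1\otimes\xi_1+\cdots+x_1\otimes\xi_m$). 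So your claim that a minimal generating cycle lies in $B^{\bu}_{N_A L_A}$ is false, and the whole ``cycles-and-relations-in-a-fixed-ball'' assembly in Step~4 collapses.

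The paper's proof never decomposes and never needs a size bound. It uses Proposition \ref{Proposition:generatorsofHFS^3} to parametrize \emph{all} of $\mathcal{HS}_{m}$ by the $m$-independent set of $\A_{\infty}$ homotopy equivalences $f : \widehat{\text{CFA}}(\mathcal{H}_{\infty},z) \to \widehat{\text{CFA}}(\mathcal{H}_{K},z)$. For a fixed $f$, Equation \ref{Equation:generatorexpand} shows that $I_m \cap B^{\bu}_{[0,L]}$ is determined by $f_1,\dots,f_{L+1}$ alone once $m \geq L+1$ (the pieces $I'_m \subseteq C^{\circ}_{m}$ and $NI''_{m+1} \subseteq C^{\circ}_{m+1}$ are disjoint from the positive half-ball). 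Hence $\{I \cap B^{\bu}_{[0,L]} : S_I \in \mathcal{HS}_{m}\}$ stabilizes; intersecting with the stabilized set $GZ_m \cap B^{\bu}_{[0,L]}$ (Corollary \ref{Corollary:Masolvgradingzeroinball}) then gives $\mathcal{HI}^{+}_{m,L}$. This is precisely the content of your ``Main obstacle'' paragraph, and it \emph{is} the entire proof—there is nothing further to assemble.
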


\begin{lemm}\label{Lemma:generatorinnegativehalfballstablizes}
Define $\mathcal{HI}^{-}_{m}$ as follows: 
$$\mathcal{HI}^{-}_{m} := \{ I \cap GZ_{m}^{0} \ | \ S_{I} \in \mathcal{HZ}_{m} \}.$$
For $m \gg0$, the family $\mathcal{HI}^{-}_{m}$ is stabilized under inclusions $\Phi_{m}$. 
\end{lemm}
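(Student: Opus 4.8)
Let me lay out how I would prove Lemma \ref{Lemma:generatorinnegativehalfballstablizes}.

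\emph{Reduction to generators coming from $\A_\infty$ homotopy equivalences.} The plan is to combine the description of homogeneous generators of $\HF(S^3)$ from Subsection \ref{Subsection:HomologyofboxtensorS^3:Algebra} with the analysis of the Maslov-grading-zero locus in Subsection \ref{Subsection:Maslovgradingzeroelements}. By Proposition \ref{Proposition:generatorsofHFS^3}, every $S_I\in\mathcal{HZ}_m$ equals $f\boxtimes\id_m(x_0\otimes\eta)$ for some $\A_\infty$ homotopy equivalence $f:\widehat{\text{CFA}}(\mathcal{H}_{\infty},z)\to\widehat{\text{CFA}}(\mathcal{H}_K,z)$, and by Remark \ref{Remark:familygrowth} such an $f$ yields a compatible family $S_{I_m^{(f)}}$ of homogeneous generators for all large $m$, with the explicit expansion \eqref{Equation:generatorexpand}. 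Thus $I_m^{(f)}\cap C^{\bu}=\{y\otimes\eta\ :\ y\in J_0\}$, $I_m^{(f)}\cap C^{\circ}_{m-j}=\{y\otimes\xi_{m-j}\ :\ y\in J_{m-j}\}$ for $1\le j<m$, and $I_m^{(f)}\cap C^{\circ}_{m}=\{y\otimes\xi_{m}\ :\ y\in J_m\triangle J'\}$, where $J_0,J_k,J'\subseteq B_K$ depend only on $f$. Since $GZ_m^{0}\subseteq B^{\bu}_{[-L_{GZ},0]}$ and stabilizes under $\Phi_m$ (Corollary \ref{Corollary:GZdecomposition}), $I_m^{(f)}\cap GZ_m^{0}$ only involves $C^{\bu}$ and the finitely many boxes $C^{\circ}_{m-L_{GZ}+1},\dots,C^{\circ}_{m}$; its $C^{\bu}$-part is $\{y\otimes\eta\ :\ y\in J_0\}\cap GZ_m^{0}$, which is $m$-independent for $m$ large because both $J_0$ and $GZ_m^{0}\cap C^{\bu}$ are, and $\Phi_m$ fixes $C^{\bu}$ pointwise.

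\emph{The counterclockwise white-box part.} Here I would use the grading machinery. By Corollary \ref{Corollary:GZdecomposition}, an element $y\otimes\xi_{k}$ belongs to $GZ_m^{0}$ only if the type-A factor $y$ has second grading component (in the fixed representative of $gr_K(y)$) equal to $\beta_0+\tfrac12$, i.e. equivalently its counterclockwise representative in the box tensor product has $\text{Spin}^{\mathbb C}$-component $\beta_0$. For such a $y$, Lemma \ref{Lemma:Gradingchangebyshift} gives $h_m(y\otimes\xi_{k})-h_m(y\otimes\xi_{k'})=(k-k')(M-2\beta_0-\tfrac32)=-2(k-k')$, so $y$ can appear at most once among the white boxes of any homogeneous (hence Maslov-constant) generator. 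Since $GZ_m^{0}$ is confined within distance $L_{GZ}$ of $C^{\bu}$ on the counterclockwise side while the clockwise representatives near $\xi_1$ are stabilized (Corollary \ref{Corollary:doublesidedgrading}, Lemma \ref{Lemma:Gradingchangebym}), this forces $J_k\cap\{y:b_y=\beta_0+\tfrac12\}=\varnothing$ for every $k\ge 1$ once $m$ is large. Consequently, for $m\gg0$,
$$I_m^{(f)}\cap GZ_m^{0}=\big(\{y\otimes\eta\ :\ y\in J_0\}\cap GZ_m^{0}\big)\ \sqcup\ \{y\otimes\xi_{m}\ :\ y\in J',\ b_y=\beta_0+\tfrac12\},$$
which depends only on $f$ and lies in a region stabilized by $\Phi_m$ (note $\Phi_m$ sends $\xi_m$ to $\xi_{m+1}$, matching $S_{I'_{m+1}}=S_{\Phi_m(I'_m)}$ from \eqref{Equation:Twostabilizationformula}). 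Hence $\Phi_m\big(I_m^{(f)}\cap GZ_m^{0}\big)=I_{m+1}^{(f)}\cap GZ_{m+1}^{0}$, and as every element of $\mathcal{HZ}_{m+1}$ arises from some $f$ that also realizes an element of $\mathcal{HZ}_m$, we obtain $\Phi_m(\mathcal{HI}^{-}_m)=\mathcal{HI}^{-}_{m+1}$ for all large $m$.

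\emph{Where the difficulty lies.} I expect the main obstacle to be the step isolating the counterclockwise tail. The sequence $(J_k)_k$ attached to $f$ need not be eventually constant — a generator genuinely spreads over all white boxes, as already visible in the Mazur example — so one cannot merely assert that $J_{m-j}$ stabilizes; the argument must extract precisely the $b_y=\beta_0+\tfrac12$ part of each $J_k$ and prove that part empties out for $k$ large, which is exactly what the confinement of $GZ_m^{0}$ in Corollary \ref{Corollary:GZdecomposition} and the shift estimate of Lemma \ref{Lemma:Gradingchangebyshift} buy us. Should the purely grading-theoretic argument leave a gap — for instance in getting uniform control over all homotopy equivalences at once, or in handling non-minimal homogeneous generators — I would instead route through the combinatorial tools of Subsection \ref{Subsection:HomologyofboxtensorS^3:Combinatorics}: apply the good-alternating-path equivalence relation $\sim$ to $I''_m$ and invoke Proposition \ref{Proposition:equivalenceclassblack} to conclude that any $\sim$-class meeting the counterclockwise boundary of a fixed ball around $C^{\bu}$ either meets $C^{\bu}$ or is a nullhomologous cycle meeting $I'_m$ — both governed by the fixed data $(J_0,J')$ — while the type-$(4)$ edges are shift-equivariant by Proposition \ref{Proposition:boxtensordifferential}; this packages the same conclusion without case analysis on the tail of $(J_k)$.
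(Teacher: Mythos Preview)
Your first approach is correct and in fact takes a cleaner route than the paper's own proof. The key claim---that $J_k\cap\{y:b_y=\beta_0+\tfrac12\}=\varnothing$ for every $k\ge 1$---holds because $J_k$ is $m$-independent while the containment $y\otimes\xi_k\in I_m^{(f)}\subseteq GZ_m$ must hold for \emph{all} $m>k$; since such a $y$ would force $y\otimes\xi_k\in GZ_m^0\subseteq B^{\bu}_{[-L_{GZ},0]}$ (Corollary~\ref{Corollary:GZdecomposition}), and $C^{\circ}_k$ leaves the negative half-ball once $m>k+L_{GZ}$, this is impossible. (One could also read this off directly from the grading compatibility in Theorem~\ref{Theorem:typeAinvariance}: every $y\in J_k$ for $k\ge 1$ has $b_y=\beta_0-\tfrac12$.) Your phrase ``once $m$ is large'' is slightly misleading here: the conclusion about $J_k$ holds unconditionally, and the large-$m$ hypothesis is used only inside the contradiction.

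The paper instead routes through the combinatorial machinery you mention as a fallback: it uses Proposition~\ref{Proposition:betaperservation} to show that $\sim$-classes in $(I''_m\cap GZ^0_m)\setminus C^{\bu}$ stay within $GZ^0_m$, hence cannot reach $C^{\bu}$, and then invokes Proposition~\ref{Proposition:equivalenceclassblack} to conclude that $(I''_m\cap GZ^0_m)\setminus C^{\bu}$ is a \emph{nullhomologous cycle}; it then parametrizes $\mathcal{HI}^{-}_m$ by the stabilized family $\mathcal{NI}^{-}_m$ of all such cycles together with the stabilized family $\mathcal{I}'_m$. Your observation shows this set is in fact empty, so the nullhomologous-cycle description and Proposition~\ref{Proposition:betaperservation} are not needed for this lemma. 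What the paper's approach buys is a uniform description of $\mathcal{HI}^{-}_m$ in terms of intrinsic data of $B^{\bu}_{L_{GZ}}$ (the family $\mathcal{NI}^{-}_m$), without having to invoke the full range of $\A_\infty$ homotopy equivalences; your approach buys directness, reducing everything to the pair $(J_0,J')$ determined by $f_1,f_2$.
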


\begin{rema}\label{Remark:generatorinnegativehalfballstablizes}
    Note that for any integer $L \geq L_{GZ}$ and $S_{I} \in \mathcal{HZ}_{m}$ we have 
    $$( I \cap B^{\bu}_{[-L,0]} ) \setminus GZ^{1}_{m} = I \cap GZ_{m}^{0} \ \ \text{for} \ m \gg0. $$
    This follows from Corollary \ref{Corollary:GZdecomposition}. Based on this, the choice of notation in Lemma \ref{Lemma:generatorinnegativehalfballstablizes} is justified.\\ 
\end{rema}
We start with the proof of Lemma \ref{Lemma:generatorinpositivehalfballstablizes}, using the algebraic properties of the complex studied in Subsection \ref{Subsection:HomologyofboxtensorS^3:Algebra}.

\begin{proof}
   First of all, note that we can write:
   \begin{equation}\label{Equation:HI+rewritten}
    \mathcal{HI}^{+}_{m,L} = \{ MH(I) \cap B^{\bu}_{[0,L]} = I \cap B^{\bu}_{[0,L]} \cap GZ_{m} \ | \ S_{I} \in \mathcal{HS}_{m} \}.   
   \end{equation}
   
   Based on Proposition \ref{Proposition:generatorsofHFS^3}, all of the elements of $\mathcal{HS}_{m}$ come from $\A_{\infty}$ homotopy equivalences between $\widehat{\text{CFA}}(\mathcal{H}_{\infty} , z)$ and $\widehat{\text{CFA}}(\mathcal{H}_{K},z)$. Consider an arbitrary $\A_{\infty}$ homotopy equivalence $f$ and the subset $I_m \subseteq \bm{C^m}$ which satisfies the following:
    $$f \boxtimes \id_m (x_0 \otimes \eta) = S_{I_m}.$$
As discussed in Equation \ref{Equation:generatordecompose}, we have elements $S_{I''_m}$ and $S_{I'_m}$ such that 
$$S_{I_m} = S_{I''_m} + S_{I'_m} \ \  \text{,} \ \ I'_{m} \subseteq C^{\circ}_{m} \ \ \text{and}$$ 
\begin{equation}\label{Equation:Twostabilizationformula-repeated}
I''_{m+1} = \Phi'_{m}(I''_{m}) \ \sqcup \  NI''_{m+1} \  \ \text{and}  \ \ I'_{m+1} = \Phi_{m}(I'_{m}). 
\end{equation}
Equation \ref{Equation:Twostabilizationformula-repeated} is a rephrasing of Equation \ref{Equation:Twostabilizationformula}. The subset $NI''_{m+1} \subseteq C^{\circ}_{m+1}$ is defined as follows:  
$$S_{NI''_{m+1}} := f_{m+2}(x_0 \otimes \underbrace{\rho_3 \otimes \rho_{23} \otimes \cdots \otimes \rho_{23}}_{m+1}) \otimes \xi_{m+1}.$$
We can conclude that for $m \gg0$, the subset $I_m  \cap B^{\bu}_{[0,L]}$ will be stabilized, as we can write:
$$ \Phi_{m}(I_m  \cap B^{\bu}_{[0,L]}) = \Phi_{m}(I''_{m}  \cap B^{\bu}_{[0,L]}) = \Phi'_{m}(I''_{m}) \cap B^{\bu}_{[0,L]}$$
$$= I''_{m+1} \cap B^{\bu}_{[0,L]} = I_{m+1} \cap B^{\bu}_{[0,L]}.$$
In the first line we use the fact the for $m \gg0$, two inclusions $\Phi_{m}$ and $\Phi_{m'}$ restrict to the same map on $B^{\bu}_{[0,L]}$. The first and last equality use the fact that for $m \gg0$, all the subsets $I'_{m}$, $I'_{m+1}$ and $NI''_{m+1}$ are outside the positive half-ball $B^{\bu}_{[0,L]}$.\\

In other words, for $m \geq L+1$ we have that $I_{m} \cap B^{\bu}_{[0,L]}$ is determined by the set of maps $\{f_1,\dots,f_{L+1}\}$, and hence stabilizes.\\ 

Running this argument over the set of $\A_{\infty}$ homotopy equivalences of $\widehat{\text{CFA}}(\mathcal{H}_{\infty} , z)$ and $\widehat{\text{CFA}}(\mathcal{H}_{K},z)$, we can conclude that the family 
$${\{ I \cap B^{\bu}_{[0,L]} \ | \ S_{I} \in \mathcal{HS}_{m} \}}$$
will be stabilized for $m \gg0$. Since $GZ_{m} \cap B^{\bu}_{[0,L]}$ is also a stabilized subset for $m \gg0$, we can combine this fact with Equation \ref{Equation:HI+rewritten} to get the desired result. 
\end{proof}

Before we prove Lemma \ref{Lemma:generatorinnegativehalfballstablizes}, we need to prove a fact relating the materials of Subsection \ref{Subsection:HomologyofboxtensorS^3:Combinatorics} and Proposition \ref{Proposition:MasolvgradingzeroAll}. This is stated in Proposition \ref{Proposition:betaperservation}.

\begin{prop}\label{Proposition:betaperservation}
Consider a homogeneous generator of homology ${S_{I} \in \mathcal{HZ}_{m}}$, and the equivalence relation $\sim$ on the subset $I \subseteq \bm{C^{m}}$ (defined in Subsection \ref{Subsection:HomologyofboxtensorS^3:Combinatorics}). Let $\nu,\nu'$ be two elements of $I \setminus C^{\bu}$ with 
$$[\nu]_{{\sim}} = [\nu']_{{\sim}}.$$
Assume that $(\alpha;\beta,\gamma;\delta)$ and $(\alpha';\beta',\gamma';\delta')$ be the representative for grading double cosets $gr_{K_m}(\nu)$ and $gr_{K_m}(\nu')$, coming from the counterclockwise representatives of $gr_m$. Then we have 
$$\beta = \beta'$$
\end{prop}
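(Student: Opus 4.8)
The plan is to use the combinatorial machinery of Subsection~\ref{Subsection:HomologyofboxtensorS^3:Combinatorics} together with the grading analysis of Proposition~\ref{Proposition:MasolvgradingzeroAll}. Recall that a good alternating path between $\nu$ and $\nu'$ produces a sequence $\nu_0 = \nu, \nu_1, \dots, \nu_k = \nu'$ in $I$ where for each $i$ there is an auxiliary element $\nu'_i$ and two edges $\nu_i \to \nu'_i$ and $\nu_{i-1} \to \nu'_i$, none of which is a type~$(3)$ edge. By induction on $k$, it suffices to treat the case $k=1$: so I would assume there is an element $\mu \in \bm{C^m}$ and two edges $\nu \to \mu$ and $\nu' \to \mu$, neither of type~$(3)$, with $\nu, \nu' \notin C^{\bu}$, and I would show that $\beta = \beta'$, where $\beta,\beta'$ are the second $\spin^{\mathbb{C}}$ components of the counterclockwise representatives of $gr_{K_m}(\nu)$, $gr_{K_m}(\nu')$.

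The key observation is that each non-type-$(3)$ edge out of a white-box element is, by Proposition~\ref{Proposition:boxtensordifferential}, necessarily of type~$(4)$ (type~$(1)$ edges live inside $C^{\bu}$ and type~$(2),(3)$ edges originate from $C^{\bu}$). So both edges $\nu \to \mu$ and $\nu' \to \mu$ are of type~$(4)$, and in particular $\mu \notin C^{\bu}$. Writing $\nu = x^1 \otimes \xi_i$, $\nu' = y^1 \otimes \xi_j$, $\mu = w^1 \otimes \xi_l$, a type~$(4)$ edge $\nu \to \mu$ comes from $w^1$ appearing in $m_{n+1}(x^1 \otimes \rho_{23}^{\otimes n})$ with $l = i+n$; similarly $\nu' \to \mu$ comes from $w^1$ appearing in $m_{n'+1}(y^1 \otimes \rho_{23}^{\otimes n'})$ with $l = j + n'$. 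Now I would compute using the type~A grading rule \eqref{Equation:typeAgrading}: if $gr_K(x^1) = [(a;b,c;d)]$ and $gr_K(w^1) = [(\tilde a; \tilde b, \tilde c; \tilde d)]$, then $gr_K(w^1) = \lambda^{n} gr_K(x^1)\, gr(\rho_{23})^{n}$, and since $gr(\rho_{23}) = gr(\rho_2)gr(\rho_3) = (-1; 0, 1; 0)$ has zero first $\spin^{\mathbb{C}}$ component, the second $\spin^{\mathbb{C}}$ component of $gr_K(w^1)$ equals that of $gr_K(x^1)$, i.e. $\tilde b = b$. The same argument applied to $\nu' \to \mu$ gives that the second component of $gr_K(y^1)$ also equals $\tilde b = b$. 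Finally, the counterclockwise representative of $gr_m(\xi_i)$ (from Example~\ref{Example:twistedmeridian}) has $\spin^{\mathbb{C}}$ component whose first entry depends only on $i$ in a fixed way and whose second entry is $-\tfrac12$ for all $i$ up to the standard shift; multiplying $gr_K(x^1)$ by $gr_m(\xi_i)$, the second $\spin^{\mathbb{C}}$ component of the product becomes $b + \tfrac12$ (matching the formula in the proof of Proposition~\ref{Proposition:MasolvgradingzeroAll}), hence $\beta = b + \tfrac12$ and $\beta' = b + \tfrac12$, so $\beta = \beta'$ as claimed.

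The main subtlety — and the step I expect to require the most care — is bookkeeping the precise form of the counterclockwise grading representatives and verifying that the passage from $gr_K(x^1)$ to $\beta$ via box-tensoring with $\xi_i$ really does only shift the second $\spin^{\mathbb{C}}$ component by a quantity independent of which white box $\xi_i$ lies in; this is exactly the content of the identity $\beta' = b - \tfrac12$ used in the proof of Proposition~\ref{Proposition:MasolvgradingzeroAll} combined with the clockwise/counterclockwise reconciliation discussed in the remark after Corollary~\ref{Corollary:GZdecomposition}. Once that normalization is pinned down, the argument is a short computation: type~$(4)$ edges preserve the second $\spin^{\mathbb{C}}$ component of the underlying type~A generator (because $gr(\rho_{23})$ contributes nothing to it), and an induction on the length of the good alternating path propagates the equality $\beta = \beta'$ along the whole path.
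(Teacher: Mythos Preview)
Your reduction to $k=1$ by induction has a genuine gap. The equivalence relation $\sim$ is defined on all of $I$, not on $I\setminus C^{\bu}$, so the good alternating path $\nu_0,\dots,\nu_k$ connecting $\nu$ and $\nu'$ may pass through intermediate vertices $\nu_i\in I\cap C^{\bu}$ (indeed, by Lemma~\ref{Lemma:homotopyequivgenerator2} the set $I\cap C^{\bu}$ is nonempty). Your $k=1$ case explicitly assumes both endpoints lie outside $C^{\bu}$, so the inductive step cannot be applied to a consecutive pair $(\nu_{i-1},\nu_i)$ with $\nu_i\in C^{\bu}$. In that situation the edge $\nu_i\to\nu'_i$ may be of type $(1)$ or $(2)$, and your ``both edges are type $(4)$'' argument collapses.

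The paper's proof confronts exactly this issue. It computes, for each single step of the path, how $\beta$ changes: a pair of type-$(4)$ edges (or type-$(1)$ edges) leaves $\beta$ fixed, but a mixed type-$(2)$/type-$(4)$ pair changes $\beta$ by $\pm 1$, and such a pair occurs precisely when the path enters or leaves $C^{\bu}$. Since $\nu,\nu'\notin C^{\bu}$, the entries and exits along the path come in cancelling pairs, so the net change in $\beta$ is zero. Your argument is correct on the stretches of the path that stay outside $C^{\bu}$, but you are missing the bookkeeping for the black-box crossings. (A minor point: you write $\beta=b+\tfrac12$ at one place and cite $\beta'=b-\tfrac12$ at another; the counterclockwise representative of $gr_m(\xi_i)$ has first $\spin^{\mathbb{C}}$ entry $-\tfrac12$, so the correct shift is $\beta=b-\tfrac12$, matching the formula you quote from Proposition~\ref{Proposition:MasolvgradingzeroAll}.)
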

\begin{proof}
 We know that there exist a good alternating path $\nu_0, \dots, \nu_{k}$, with $\nu=\nu_0$ and $\nu'=\nu_{k}$. Consider two consecutive elements $\nu_i$ and $\nu_{i-1}$ in this path. Based on the definition, there exists $\nu'_{i}$ and edges 
   $$\nu_{i} \longrightarrow \nu'_{i} \ \text{and} \ \nu_{i-1} \longrightarrow \nu'_{i} \ ,$$
   which are not of the third type. Also let 
   $$(\alpha_i;\beta_i,\gamma_i;\delta_i) \  \ \text{and} \  \  (\alpha'_{i};\beta'_{i},\gamma'_{i};\delta'_{i})$$ be the representatives for grading double cosets $gr_{K_m}(\nu_{i})$ and $gr_{K_m}(\nu'_{i})$ coming from the counterclockwise representatives of $gr_m$.\\
   
   First, we assume that the edge $\nu_{i} \longrightarrow \nu'_{i}$ is of the fourth type. This means there exist elements $x_i ,x'_i \in B_{K}$ and integer $j, j' \in \{1,\dots,m\}$ such that: 
   $$\nu_{i} = x_{i} \otimes \xi_j \ , \ \nu'_{i} = x'_{i} \otimes \xi_{j'}  \ \  \text{and}$$
   $$x'_{i} \  \ \text{appears in } \ m_{j'-j+1}(x_{i} \otimes \underbrace{\rho_{23} \otimes \cdots \otimes \rho_{23}}_{j'-j}).$$
   Using Equation \ref{Equation:typeAgrading}, we can write 
   $$gr_{K}(x'_i)=\lambda^{j'-j+1} \ gr_{K}(x_i)\cdot \underbrace{gr(\rho_{23}) \cdots gr(\rho_{23})}_{j'-j}.$$
   This means that if 
   $$gr_{K}(x_i) = [(a_i;,b_i,c_i;d_i)] \ \ \text{and} \ \  gr_{K}(x'_i) = [(a'_{i};,b'_{i},c'_{i};d'_{i})]$$
   then we have $b_{i}=b'_{i}$.\\
   
   Now, using the description of the counterclockwise representatives of $gr_{m}$, we can write 
   $$\beta_{i} = b_{i} -\frac{1}{2} \ \ \text{and} \ \ \beta'_{i} = b'_{i} -\frac{1}{2} \Longrightarrow \beta_{i} = \beta'_{i}.$$

 The exact argument works for edges of the first type. If the edge is of the second type (in this case we must have $\nu_i = x_i \otimes \eta$), the same argument gives us $b_i = b'_{i} + \frac{1}{2}$. Then we can write 
 $$\beta_i = b_{i} \ \ \text{and} \ \ \beta'_{i} = b'_{i} -\frac{1}{2} \Longrightarrow \beta_{i} = \beta'_{i} + 1.$$
Now we can deduce the desired result. Note that the only moment when the second component of the grading representative changes along the path is when 
$$\nu_{i} \xrightarrow{[2]} \nu'_{i} \ \  \text{and} \ \  \nu_{i-1} \xrightarrow{[4]} \nu'_{i}, \  \ \text{or} \ \ \nu_{i} \xrightarrow{[4]} \nu'_{i} \ \  \text{and} \ \  \nu_{i-1} \xrightarrow{[2]} \nu'_{i}.$$
These two combinations have canceling effects on the second component of the grading representative. Furthermore, note that in the first case we have $\nu_{i-1} \notin C^{\bu}$ and $\nu_{i} \in C^{\bu}$ (i.e. the alternating path enters $C^{\bu}$), while in the second case we have the reverse (i.e. the alternating path exits $C^{\bu}$). Since the alternating path starts and ends in $\nu$ and $\nu'$, which are outside of $C^{\bu}$, the appearances of these two combinations in the alternating path can be paired together. This completes the proof. 
\end{proof}

 Now we are ready to prove Lemma \ref{Lemma:generatorinnegativehalfballstablizes}.

 \begin{proof}
     Note that in the following we can write:
        \begin{equation}\label{Equation:HI-rewritten}
    \mathcal{HI}^{-}_{m} = \{ MH(I) \cap GZ^{0}_{m} = I \cap GZ_{m}^{0} \ | \ S_{I} \in \mathcal{HS}_{m} \}.   
   \end{equation}
     Similar to the proof of Lemma \ref{Lemma:generatorinpositivehalfballstablizes}, we study the elements of $\mathcal{HS}_{m}$ using Proposition \ref{Proposition:generatorsofHFS^3}. We also use the same notations as before.
     Note that we have 
     $$S_{I_{m}} = S_{I''
     _{m}} + S_{I'_{m}} \Longrightarrow I_m = I''_m \ominus I'_m, $$
     where $\ominus$ represents the symmetric difference operation. Hence, we can write
     \begin{equation}\label{Equation:Ominussplit}
      I_{m} \cap GZ^{0}_{m} = (I''_m \cap GZ^{0}_{m}) \ominus (I'_{m} \cap GZ^{0}_{m}).   
     \end{equation}
     Based on Equation \ref{Equation:Twostabilizationformula-repeated}, the subset $$I'_{m} = f_2(x \otimes \rho_1) \otimes \xi_{m} \subseteq B^{\bu}_{[-1,0]}$$
     will be stabilized, so we need to focus on $I''_m  \cap GZ^{0}_{m}$. Furthermore, we know that $I_{m} \cap C^{\bu} = I''_{m} \cap C^{\bu}$ stabilizes (as it only depends on $f_{1}(x_0)$), and we can just focus on $(I''_m  \cap GZ^{0}_{m}) \setminus C^{\bu}$.\\

     It will be useful to consider the notation $\mathcal{I}'_{m}$ as the family of all possible subsets $I'_m$ i.e. 
     $$\mathcal{I}'_{m} := \{f_2(x \otimes \rho_1) \otimes \xi_{m} \ | \ \forall \ f : \widehat{\text{CFA}}(\mathcal{H}_{\infty} , z)  \rightarrow \widehat{\text{CFA}}(\mathcal{H}_{K},z) \ \ \A_{\infty} \ \text{homotopy equivalence}\}$$

Now we use Proposition \ref{Proposition:equivalenceclassblack}, and consider the equivalence classes of the relation $\sim$ on $I''_{m}$. Based on Proposition \ref{Proposition:betaperservation}, we know that
$$ \nu \in (I''_m  \cap GZ^{0}_{m}) \setminus C^{\bu} \Longrightarrow [\nu]_{\sim} \subseteq I''_{m} \cap GZ^{0}_{m}.$$
This means $I''_{m} \cap GZ^{0}_{m}$ consists of the disjoint union of a family of the equivalence classes of $\sim$. We can see that:
\begin{equation}\label{Equation:Notreachingblackbox}
\forall \  \nu \in (I''_m  \cap GZ^{0}_{m}) \setminus C^{\bu} \ \  \text{we have} \ \  [\nu]_{\sim} \ \cap C^{\bu} \neq \varnothing \ \ \text{for} \ m \gg0.    
\end{equation}
This follows from the fact that $GZ^{0}_{m} \subseteq B^{\bu}_{[-L_{GZ},0]}$ (presented in Corollary \ref{Corollary:GZdecomposition}), and that the length of edges are bounded above (i.e. Lemma \ref{Lemma:differentialorder}).\\

Let $T = (I''_m  \cap GZ^{0}_{m}) \setminus C^{\bu}$. Combining Equation \ref{Equation:Notreachingblackbox} and Proposition \ref{Proposition:equivalenceclassblack} gives us the following:
\begin{equation}\label{Equation:negativehalfballblocks1}
\partial S_{T} = 0 \Longrightarrow S_T \ \text{is a nullhomologous cycle lying in} \ GZ^{0}_{m}.  
\end{equation}
This means we have 
\begin{equation}\label{Equation:negativehalfballblocks2}
S_{J}  \in \mathcal{HZ}_{m} \Longleftrightarrow  S_{J}+S_{T} \in \mathcal{HZ}_{m}.
\end{equation}
Consider the family $\mathcal{NI}^{-}_{m}$ defined as follows: 
$$\mathcal{NI}^{-}_{m} : = \{T \subseteq GZ^{0}_m \setminus C^{\bu}  \  |  \ \partial S_{T} = 0\}. $$
In other words, $\mathcal{NI}^{-}_{m}$ is the family of subsets corresponding to nullhomologous cycles lying in $GZ^{0}_{m} \setminus C^{\bu}$.\\

The family $\mathcal{NI}^{-}_{m}$ stabilizes for $m \gg0$, as $GZ^{0}_m \subseteq B^{\bu}_{[-L_{GZ},0]}$ stabilizes (as seen in Corollary \ref{Corollary:GZdecomposition}) and the elements and differentials in $B^{\bu}_{[-L_{GZ},0]}$ also stabilize (as a result of Proposition \ref{Proposition:boxtensordifferential}).\\

Based on Equations \ref{Equation:Ominussplit},  \ref{Equation:negativehalfballblocks1} and \ref{Equation:negativehalfballblocks2}, we can write: 
$$\mathcal{HI}^{-}_{m} = \{ T \ominus I'_{m} \ | \ \forall \ T \in \mathcal{NI}^{-}_{m} \ , \ \forall \ I'_{m} \in \mathcal{I}'_{m} \}. $$
We showed that the families $\mathcal{NI}^{-}_{m}$ and $\mathcal{I}'_{m}$ both stabilize for $m \gg0$ as families of the subsets of $B^{\bu}_{[-L_{GZ},0]}$. Hence, we can conclude that $\mathcal{HI}^{-}_{m}$ also stabilizes under inclusions $\Phi_m$ for $m \gg0$.
 \end{proof}

 Finally, we can prove the main result of this subsection, Lemma \ref{Lemma:taupotentials}. 

 \begin{proof}
  Consider the following family of subsets 
  $$\mathcal{PI}_{m} : = \{ (I \cap B^{\bu}_{[0,L_{\tau}]}) \  \cup \ (I \cap GZ^{0}_{m}) \ | \ S_{I} \in \mathcal{HZ}_{m}\}.$$
  We can rewrite this family as follows
  $$\mathcal{PI}_{m} = \{ (MH(I) \cap B^{\bu}_{[0,L_{\tau}]}) \  \cup \ (MH(I) \cap GZ^{0}_{m}) \ | \ S_{I} \in \mathcal{HS}_{m}\}$$
  $$ = \{( I \cap (GZ^1_{m} \cup GZ_{m}^{2}) \cap B^{\bu}_{[0,L_{\tau}]})) \ \cup \ (I \cap GZ_{m}^{0}) \ | \ S_{I}\in \mathcal{HS}_{m}  \}.$$
  
  Similar to the proofs of Lemmas \ref{Lemma:generatorinpositivehalfballstablizes} and \ref{Lemma:generatorinnegativehalfballstablizes}, we analyze $\mathcal{HS}_{m}$ through the $\A_{\infty}$ homotopy equivalences. We can again rewrite the family $\mathcal{PI}_{m}$ as follows 
\begin{align}\label{Equation:Rewrittenfamily}
\mathcal{PI}_{m}&=
 \begin{aligned}[t]
  \Big\{
  &\big(I''_m \cap (GZ^1_{m} \cup GZ_{m}^{2} \big) \cap B^{\bu}_{[0,L_{\tau}]})) 
 \ \cup \  \big((I'_{m} \cap GZ^{0}_{m}) \ominus T \big ) \ \Big| \\
 & \forall \ T \in \mathcal{NI}^{-}_{m} \ , \ S_{I_m} = S_{I''_{m}} + S_{I'_m} \in \mathcal{HS}_{m}
  \Big\} \ \ \text{for} \ \ m \gg0.  
 \end{aligned}
\end{align}
It follows from Lemmas \ref{Lemma:generatorinpositivehalfballstablizes} and \ref{Lemma:generatorinnegativehalfballstablizes}, that the family $\mathcal{PI}_{m}$ stabilizes for $m \gg0$.\\

Furthermore, as a result of Corollary \ref{Corollary:placeofmaximizer}, we can write that 
$$\mathcal{P}_{m} = \{ V_{\max}(J)\ \ | \ J \in \mathcal{PI}_{m} \}  $$
We know that the family $\mathcal{PI}_{m}$ stabilizes for $m \gg0$, as a family of subsets of $B^{\bu}_{L_{\tau}}$. Moreover, the sign of relative Alexander gradings of all pairs of elements of $B^{\bu}_{L_{\tau}}$ will also stabilize for high enough $m$. As a result, the (smallest) maximizer of the Alexander grading $V_{\max}(J)$ also stabilizes. This gives us the desired result.
 \end{proof}

\subsection{Proof of Theorem \ref{Theorem:tau}}\label{Subsection:Proofoftau}\hfill\\

In this final subsection of Section \ref{Section:tau}, we finally prove Theorem \ref{Theorem:tau}.
\begin{proof}
   Recall that we have Equation \ref{Equation:tauandPm} which tells us that $\tau(K_{m})$ is the minimum of the absolute Alexander grading function $a_{m}(\cdot)$ on the stabilized set $\mathcal{P}_{m} \subseteq B^{\bu}_{L_{\tau}}$. Since both $\mathcal{P}_{m}$ and the signs of relative Alexander gradings of all pairs of elements of $B^{\bu}_{L_{\tau}}$ will stabilize for $m \gg0$, we can consider a fixed element $\nu_{\min} \in \mathcal{P}_{m}$ such that : 
   $$a_{m}(\nu_{\min}) = \underset{\nu \in \mathcal{P}_{m}}{\min}  \ a_{m}(\nu) \ \text{for} \ m \gg 0.$$
   Now we just need to show that for $m \gg0$, the absolute Alexander grading $a_{m}(\nu_{min})$ is a linear function of $m$.\\

   This fact is true for any fixed element in any fixed ball, and the argument is the following. Define the positive integer $L$ as follows 
   $$L = \max\{ L_{S} + L_{B}+2L_{A} , L_{\tau}\}.$$
   Based on the argument of the proof of Theorem \ref{Theorem:extremalhfk} (Subsection \ref{Subsection:ProofofTheoremextremalhfk}), we can deduce that there is a fixed (i.e. stabilized) subset $I \subseteq B^{\bu}_{L}$ such that $S_{I}$ is a cycle in the (knot Floer) chain complex 
   $$\widehat{\text{CFA}}(\mathcal{H}_{K}, z, w) \boxtimes \widehat{\text{CFD}}(\mathcal{H}'_{\frac{1}{m}} , z'),$$
   and it represents a non-trivial homology cycle in the absolute Alexander grading $-g(K_{m})$.\\
   
   This means we can consider a fixed element $\nu \in I \subseteq B^{\bu}_{L}$ such that 
   $$a_{m}(\nu)  = -g(K_{m}) \ \text{for} \ m \gg0.$$
   Now based on Lemma \ref{Lemma:Gradingchangebym}, we know that for $m \gg0$, the relative Alexander grading $a_{m}(\nu_{\min}) - a_{m}(\nu)$ is a linear function of $m$. From the works of Baker and Taylor \cite{Baker2016DehnFA}, we know that $g(K_{m})$ is a linear function of $m$ as well. Hence we have that 
   $$\tau(K_{m}) = a_{m}(\nu_{min}) = (a_{m}(\nu_{\min}) - a_{m}(\nu)) - g(K_{m})$$
   is also a linear function of $m$, when $m$ is sufficently large. This is the desired result. 
 \end{proof}

\section{Stabilization of thickness}\label{Section:thickness}
\subsection{Definition of $\th(K_{m})$}\hfill\\

We briefly recall the definition of thickness of a knot in this subsection. As we described the knot Floer homology $\HFKh(K_{m})$ comes with two gradings, the Maslov (or homological) grading $h_{m}$ and the Alexander grading $a_{m}$. We can combine these two gradings and define the \emph{$\delta$-grading} as $\delta_{m} = h_{m} -a_{m}$. We can define $\HFKh^{i}(K_m)$ as follows:
$$\HFKh^{i}(K_m):= \{H \in \HFKh (K_{m}) \ | \ \delta_{m}(H) = i\}$$
The thickness $\th(K_{m})$ is defined as follows: 
$$\th(K_m) = \max \{ i \in \mathbb{Z} \ | \ \HFKh^{i}(K_m) \neq 0 \}  -\min \{ i \in \mathbb{Z} \ | \ \HFKh^{i}(K_m) \neq 0 \}.$$
The proof strategy for Theorem \ref{Theorem:thickness} closely parallels that of Theorem \ref{Theorem:tau}: we show that the homology classes in the maximum and minimum $\delta_{m}$-gradings stabilize under the inclusions $\Phi_{m}$. However, the techniques employed are also closely aligned with those used in the proofs of Theorems \ref{Theorem:extremalhfk} and \ref{Theorem:extremalAlexander}. Since the arguments in Sections \ref{Section:ExtermalknotFloerstabilization} and \ref{Section:Alexanderpolynomial} are presented in detail, we omit repeated details in this section when the reasoning is analogous.\\

We now state Lemma \ref{Lemma:deltagradingmaximizer}, which encapsulates the main strategy outlined above.

\begin{lemm}\label{Lemma:deltagradingmaximizer}
There exists a positive integer $L_{\th}$ along with fixed subsets $I^{\max}$ and $I^{\min}$ of the closed ball $B^{\bu}_{L_{\th}}$, such that for all sufficently large $m$, we have: 
$$\delta_{m}([S_{I^{\max}}]) = \max \{ i \in \mathbb{Z} \ | \ \HFKh^{i}(K_m) \neq 0 \}, \ \text{and} $$ 
$$\delta_{m}([S_{I^{\min}}]) = \min \{ i \in \mathbb{Z} \ | \ \HFKh^{i}(K_m) \neq 0 \}.$$
\end{lemm}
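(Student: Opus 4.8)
The plan is to mimic the structure of the proofs of Theorems \ref{Theorem:tau} and \ref{Theorem:extremalAlexander}, leveraging the far decomposition $\HFKh(K_m) = \mathcal{H}_{D'_m} \oplus \overline{\mathcal{H}_{D'_m}}$ introduced in Subsection \ref{Subsection:Fardecomposition}, together with the shift-equivariant basis promised in Subsection \ref{Subsection:Shiftequivariantbasis} and the linear behaviour of the $\delta$-grading along shift orbits from Subsection \ref{Subsection:deltagradingstab}. The key point is that $\delta_m = h_m - a_m$, and by Lemma \ref{Lemma:Gradingchangebyshift} both $h_m$ and $a_m$ change linearly along a shift orbit $\{x^1 \otimes \xi_i\}$; in fact $a_m(x^1\otimes\xi_{i+1}) - a_m(x^1\otimes\xi_i) = \omega$ and $h_m(x^1\otimes\xi_{i+1}) - h_m(x^1\otimes\xi_i) = M - 2b - \frac12$, so $\delta_m$ changes by the fixed nonzero (generically) slope $(M - 2b - \frac12) - \omega$. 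Hence, within the far subspace, a shift orbit of a minimal far class attains its extremal $\delta$-values at the two ends of the orbit, which by Lemma \ref{Lemma:differentialorder} and the length bounds of Subsection \ref{Subsection:lengthsizecycles} sit within a fixed-radius ball of $C^{\bu}$.

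First I would make precise the claim that the maximizer and minimizer of $\delta_m$ over $\HFKh(K_m)$ are realized by classes lying in a fixed closed ball $B^{\bu}_{L_{\th}}$. For the far part $\mathcal{H}_{D'_m}$: pick a minimal homogeneous representative $S_I$ of a far class; by Lemma \ref{Lemma:lengthminimal} the diameter of $I$ is at most $N_A L_A$, so $I$ is contained in a single shift orbit's worth of consecutive white boxes. Applying $R_{\pm}$ (Propositions \ref{Proposition:shiftbounddary}, \ref{Proposition:shifttrivialhomology}) moves the class along its shift orbit while changing $\delta_m$ monotonically; since a far class can be shifted until it abuts the ball $B^{\bu}_{L_\delta}$ in either direction and $\HFKh$ is finite-dimensional, the extremal-$\delta$ far classes must be those pushed as close to $C^{\bu}$ as the boundary of $D'_m$ allows — i.e. they lie in a fixed ball. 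For the complement part $\overline{\mathcal{H}_{D'_m}}$, Lemma \ref{Lemma:Alexanderjumpsdefects-stabilization} and Corollary \ref{Corollary:Alexanderjumpsdefects-stabilization} already give a fixed representative in $B^{\bu}_{L_\Delta}$. Taking $L_{\th}$ larger than both radii handles all classes.

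Next I would address the stabilization of the actual $\delta$-values. Having localized everything to $B^{\bu}_{L_{\th}}$, I would invoke Corollary \ref{Corollary:doublesidedgrading} so that the grading double cosets of all elements of $B^{\bu}_{L_{\th}}$ have $m$-independent representatives, and then Lemma \ref{Lemma:Gradingchangebym} to conclude that all relative Maslov and Alexander gradings — hence relative $\delta$-gradings — among the finitely many candidate classes are linear functions of $m$. The signs of these linear functions stabilize, so for $m \gg 0$ there is a well-defined class realizing the maximal $\delta$ and one realizing the minimal $\delta$ within this fixed finite collection; pinning down the absolute $\delta$-values then uses the fixed Maslov-zero reference element $\nu_0$ from Lemmas \ref{Lemma:homotopyequivgenerator}, \ref{Lemma:homotopyequivgenerator2} exactly as in the proof of Theorem \ref{Theorem:extremalhfk}. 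One must also check that $\mathcal{H}_{D'_m}$ genuinely contributes extremal $\delta$-classes consistently (rather than oscillating between the far and complement parts as $m$ varies): this follows because, along a shift orbit, a far class with near-extremal $\delta$ can be shifted to within the fixed ball where its $\delta$-value is among the stabilized set, so the global extremum is always computed from data in $B^{\bu}_{L_{\th}}$.

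I expect the main obstacle to be the degenerate case where the shift-orbit slope $(M - 2b - \frac12) - \omega$ vanishes for some $b$, so that $\delta_m$ is constant along an entire (arbitrarily long) shift orbit; then the extremal-$\delta$ far class need not sit near $C^{\bu}$. The resolution is to observe that even in this case one representative of the class lies within $B^{\bu}_{L_{\th}}$ (shift it there), and since $\delta_m$ is constant along the orbit that representative carries the same $\delta$-value — so the extremal $\delta$-value is still read off from the fixed ball, even though the extremizing class itself is not unique. Making this bookkeeping airtight — distinguishing "the extremal $\delta$-value stabilizes" (which is what Lemma \ref{Lemma:deltagradingmaximizer} really needs, via the existence of fixed subsets $I^{\max}, I^{\min}$ with the stated $\delta$-values) from "a unique extremizing class stabilizes" — is the one delicate point, and I would phrase the argument so that $I^{\max}$ and $I^{\min}$ are chosen as the in-ball representatives of whichever classes achieve the stabilized extremal $\delta$-values. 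A secondary, purely technical point is confirming the shift-equivariant basis of Subsection \ref{Subsection:Shiftequivariantbasis} interacts correctly with the far decomposition so that "far class" and "far from $C^{\bu}$ after minimization" coincide up to the fixed radius; this is routine given Proposition \ref{Proposition:boxtensordifferential} and Lemma \ref{Lemma:lengthminimal}.
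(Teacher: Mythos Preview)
Your proposal is correct and follows essentially the same route as the paper's proof: use the far decomposition, handle $\overline{\mathcal{H}_{D'_m}}$ via Lemma \ref{Lemma:Alexanderjumpsdefects-stabilization} and Corollary \ref{Corollary:Alexanderjumpsdefects-stabilization}, and handle $\mathcal{H}_{D'_m}$ by observing (via Lemma \ref{Lemma:Gradingchangebyshift}, packaged in the paper as Corollary \ref{Corollary:deltachangebyshift}) that $\delta_m$ is linear along shift orbits so the extrema occur at the orbit endpoints $\mathcal{HB}^{\pm}_{\infty}$, which lie in a fixed ball; then the union of these finite stabilized collections is compared via Lemma \ref{Lemma:Gradingchangebym}. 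Your discussion of the zero-slope degeneracy and the distinction between ``the extremal value stabilizes'' versus ``the extremizing class is unique'' is more careful than the paper's presentation, but the underlying argument is the same.
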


The first step in the proof of Lemma \ref{Lemma:deltagradingmaximizer} is the construction of a \emph{shift-equivariant} basis for the space of far homology classes $\mathcal{H}_{D'_{m}}$, as defined in Subsection \ref{Subsection:Fardecomposition}. This construction is carried out in Subsection \ref{Subsection:Shiftequivariantbasis}. 

\subsection{Shift-equivariant basis of the far homology}\label{Subsection:Shiftequivariantbasis}\hfill\\

In Subsection \ref{Subsection:Fardecomposition}, we introduced the far decomposition of $\HFKh(K_m)$: 
$$\HFKh(K_m) = \mathcal{H}_{D'_m} \oplus \overline{\mathcal{H}_{D'_m}} $$
This decomposition can also be viewed as splitting $\HFKh(K_m)$ to \emph{stable} and \emph{unstable} subspaces. As shown in Lemma \ref{Lemma:Alexanderjumpsdefects-stabilization}, the complement subspaces $\overline{\mathcal{H}_{D'_m}}$ stabilize for sufficently large $m$. For the purpose of proving Lemma \ref{Lemma:deltagradingmaximizer}, we focus only on the unstable component of the decomposition—namely, the space of far homology classes $\mathcal{H}_{D'_{m}}$.\\

The main idea is that, although the subspace $\mathcal{H}_{D'_{m}}$ does not stabilize in the usual sense (for example, its dimension grows linearly with $m$, as shown in Theorem \ref{Theorem:totaldim}), it can nevertheless be computed from finite, stabilized data, up to the action of the shift maps $R_{\pm}$. More precisely, this is the content of Proposition \ref{Proposition:shiftequivarientbasis}.  

\begin{prop}\label{Proposition:shiftequivarientbasis}
  Assume that $m$ is sufficently large. There exist a minimal basis $\mathcal{B}_{m}$ for the space of far homology classes $\mathcal{H}_{D'_{m}}$, constructed from a collection of subsets $\mathcal{SB}_{m} \subseteq 2^{D'_{m}}$, satisfying the following properties: 
  \begin{itemize}
      \item For every subset $I \in \mathcal{SB}_{m}$, the corresponding element $S_{I}$ is a minimal cycle.
      \item For any $i \in \mathbb{Z}_{\geq 0}$, if $I \in \mathcal{SB}_{m}$ and $R^{i}_{\pm}(I) \subseteq D'_{m}$ then $R^{i}_{\pm}(I) \in \mathcal{SB}_{m}$ as well.
  \end{itemize}
The basis $\mathcal{B}_{m}$ is then defined as:
$$\mathcal{B}_{m} = \{[S_{I}] \ | \ I \in \mathcal{SB}_{m}\}.$$
  In other words, $\mathcal{B}_{m}$ is equivariant under the shift maps $R_{\pm}$ restricted to the subset $D'_{m} \subset \bm{C^{m}}$.
\end{prop}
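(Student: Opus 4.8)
The strategy is to build the shift-equivariant basis $\mathcal{SB}_m$ by a "fundamental domain" construction: identify a bounded region near the black box from which all minimal far cycles (up to shift) arise, then close this finite collection under the shift maps $R_\pm$ as long as the shifted subsets stay inside $D'_m$. First I would recall that, by Lemma \ref{Lemma:lengthminimal}, any minimal cycle $S_I$ satisfies $\diam(I)\le \len(S_I)\le N_A L_A$, so each minimal far cycle is "localized" within a window of at most $N_A L_A$ consecutive white boxes. Using the circular ordering $\prec$ from Subsection \ref{Subsection:lengthsizecycles}, I would say a minimal far cycle is \emph{based at box $C^{\circ}_j$} if $C^{\circ}_j$ is the first (clockwise-most, after the black box) box meeting $I$; every minimal far cycle is based at some $j$ with $L_\delta+1 \le j \le m-L_\delta$. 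Proposition \ref{Proposition:shiftbounddary} shows that $R_\pm$ carries minimal far cycles to minimal far cycles whenever the shift stays in $D'_m$ (one needs $I \subseteq \bm{C^m}\setminus B^{\bu}_{L_A+1}$, which holds since $L_\delta > L_A+1$), and moreover $R_+$ (resp. $R_-$) shifts the base by $+1$ (resp. $-1$). So the orbit of any minimal far cycle under the shift maps contains exactly one representative based at the minimal admissible box, call it $C^{\circ}_{j_0}$ where $j_0 := L_\delta+1$.

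The second step is to pin down the finite data. Let $\mathcal{SB}_m^{0}$ denote the collection of subsets $I$ with $S_I$ a minimal far cycle based at $C^{\circ}_{j_0}$; by the diameter bound these all lie in the fixed window $C^{\circ}_{j_0}\sqcup\cdots\sqcup C^{\circ}_{j_0+N_AL_A}$, and by Proposition \ref{Proposition:boxtensordifferential} (the type-$(4)$ edges being shift-invariant) together with the stabilization of elements and differentials in balls (Subsection \ref{Subsection:extremahhfkfallinball}), this collection $\mathcal{SB}_m^{0}$ stabilizes for $m\gg 0$. Now I would \emph{define}
$$\mathcal{SB}_m := \{ R_{+}^{i}(I) \ | \ I \in \mathcal{SB}_m^{0},\ i \ge 0,\ R_+^{i}(I)\subseteq D'_m \},$$
which is automatically closed under $R_\pm$ restricted to $D'_m$: applying $R_+$ adds one to $i$, and applying $R_-$ either lands back inside the orbit (decreasing $i$, staying in $\mathcal{SB}_m^0$ when $i=0$ since those are already "leftmost") or would push the cycle past the forbidden region, in which case it is not in $D'_m$ and the closure condition is vacuous. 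The first bullet of the Proposition is then immediate from Proposition \ref{Proposition:shiftbounddary}. Setting $\mathcal{B}_m := \{[S_I] \mid I \in \mathcal{SB}_m\}$, what remains is to verify $\mathcal{B}_m$ is a \emph{minimal basis} for $\mathcal{H}_{D'_m}$.

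For the basis property, I would argue as follows. Every class in $\mathcal{H}_{D'_m}$ is, by definition, represented by some $S_I$ with $I \subseteq D'_m$, and after passing to a homogeneous minimal subrepresentative (Lemma \ref{Lemma:minimalbases} and the surrounding discussion) we may assume $S_I$ is a minimal far cycle; such a cycle, being based at some box $C^{\circ}_j$ with $j_0 \le j \le m - L_\delta - N_A L_A$, equals $R_+^{\,j-j_0}$ of a minimal cycle based at $C^{\circ}_{j_0}$, hence its class lies in the span of $\mathcal{B}_m$ — so $\mathcal{B}_m$ spans. For linear independence and minimality, I would invoke the far-side analysis of Subsection \ref{Subsection:lengthsizerelations}: any relation among the $[S_I]$ for $I\in\mathcal{SB}_m$ decomposes, via Corollary \ref{Corollary:diamofminimalrelation} and Proposition \ref{Proposition:shifttrivialhomology}, into shift-translates of relations among the finitely many classes based in a bounded window, and one chooses $\mathcal{SB}_m^0$ from the outset to be a minimal basis of the "windowed" far homology so that no such relation is nontrivial; the shift maps then preserve nontriviality (Proposition \ref{Proposition:shifttrivialhomology}), propagating minimality to all of $\mathcal{B}_m$. \textbf{The main obstacle} is precisely this last point: ensuring that the naive union over the shift orbit does not introduce \emph{new} linear dependencies that were invisible in the window — i.e. that a sum of far cycles based at widely separated boxes cannot bound. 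This is exactly where the length bound on minimal relations (Corollary \ref{Corollary:lengthminimalrelation}) does the work: a relation $\partial S_R = \sum S_{I_t}$ forces $\diam\big(\bigcup I_t \cup R\big)\le 2L_B+L_A$, so the $I_t$'s appearing in any single minimal relation are all confined to one bounded window, and the relation is a shift-translate of one already accounted for in $\mathcal{SB}_m^0$. Making this reduction precise — carefully handling relations whose window straddles the boundary of the forbidden region $B^{\bu}_{L_\delta}$, where the shift-equivariance degenerates — is the delicate part, and mirrors the boundary bookkeeping already carried out in the proof of Lemma \ref{Lemma:Alexanderjumpsdefects}.
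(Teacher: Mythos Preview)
Your plan follows the same skeleton as the paper --- anchor a generating set of minimal far cycles at one reference box and close under shifts --- but the linear-independence step has a real gap, and the paper's execution differs in exactly the two places needed to close it.

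First, the paper anchors at $C^{\circ}_{2L_\delta}$, not $C^{\circ}_{L_\delta+1}$, and builds the collection \emph{iteratively}: at each stage it selects a minimal cycle $S_I$ with $I\cap C^{\circ}_{2L_\delta}\neq\varnothing$ whose class is independent of the \emph{entire} current collection (previously chosen anchors together with all their shifts), then adds $I$ and all its admissible shifts. Your one-shot choice ``$\mathcal{SB}_m^0$ a minimal basis of the windowed far homology'' does not accomplish this: two cycles $I_1,I_2$ both based at $j_0$, with independent classes among $j_0$-based cycles, can still satisfy $[S_{I_1}]=[S_{R_+^k(I_2)}]$ for some $k>0$ (they only need to share Alexander grading after the shift), and this cross-orbit dependence is invisible if you test only relations among classes based exactly at $j_0$. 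With your anchor at $j_0=L_\delta+1$ you also cannot shift such a relation back to the window, since $R_-(I_1)$ immediately leaves $D'_m$; so Corollary~\ref{Corollary:diamofminimalrelation} does not let you reduce to the anchor. The paper's depth $2L_\delta$ is exactly what makes the shift-back work: once one participant is brought to $C^{\circ}_{2L_\delta}$, the diameter bound keeps all others (and the relation $T$) at distance $>L_\delta$ from $C^{\bu}$, hence still in $D'_m$ and still in $\mathcal{SB}_m$; then the iterative selection criterion gives the contradiction.

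You correctly name this boundary issue as the delicate point, but the analogy with Lemma~\ref{Lemma:Alexanderjumpsdefects} is not enough on its own --- that lemma shifts by a single step and only needs to stay outside $B^{\bu}_{L_A+1}$, whereas here the relation must be shifted so that one participant returns to the anchor and \emph{all} others remain in $D'_m$. The two fixes are exactly what the paper does: push the anchor inward to $2L_\delta$, and replace the one-shot basis choice by an iterative independence check against the full shifted collection.
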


Proposition \ref{Proposition:shiftequivarientbasis} should not be surprising in light of Lemma \ref{Lemma:Alexanderjumpsdefects}, and it relies on many of the same techniques. We include a sketch of the proof below. We note that our initial proof was significantly more involved. We are grateful to Professor Claudius Zibrowius for their helpful suggestions, which led to the streamlined version presented here.

\begin{proof}
    Similar to Subsection \ref{Subsection:Fardecomposition}, we define $L_{\delta}$ to be $L_{B}+2L_{A}+1$.\\
    
    We construct the basis $\mathcal{B}_{m}$ using the following procedure:
    \begin{enumerate}
        \item \textbf{Initialization}:\\
        Begin with an empty collection of far homology classes and an empty collection of associated subsets of $D'_{m}$.
        \item \textbf{Selection}:\\
        Choose a minimal cycle $S_{I}$ such that 
        $$I \cap C^{\circ}_{2L_{\delta}} \neq \varnothing,$$ and such that $[S_{I}]$ does not lie in the span of the current collection.
        \item \textbf{Equivariance Closure}:\\
        For all integers $i \in \mathbb{Z}_{\geq 0}$, such that $${R_{\pm}^{i}(I) \subseteq D'_{m}},$$ include the class $[S_{R_{\pm}^{i}(I)}]$ in the collection and record the subsets $R_{\pm}^{i}(I)$ accordingly.  
        \item \textbf{Iteration}:\\
        Repeat from Step 2 until no further choices are possible.
    \end{enumerate}
    
Let $\mathcal{B}_{m}$ denote the collection of far homology classes constructed by the procedure above. We claim that this collection is the desired shift-equivariant basis. Let $\mathcal{SB}_{m}$ be the collection of subsets.\\
  
  First, observe that the shift-equivariance of $\mathcal{B}_{m}$ is built directly into Step (3) of the construction. This step is well-defined because the boundary map is shift-equivariant outside the ball $B^{\bu}_{L_{A}+1}$, as guaranteed by Proposition \ref{Proposition:shiftbounddary}. Thus, $S_{R^{i}_{\pm}(I)}$ is a cycle and represents a homology class in $\mathcal{H}_{D'_{m}}$.\\

 To prove linear independence, assume for contradiction that there exists a minimal relation:
  $$\partial S_{T} = S_{I_1}+ \cdots +S_{I_{r}}.$$
  with $I_1,\dots,I_r \in \mathcal{SB}_{m}$. Since this is a minimal relation, it is homogeneous with respect to the Alexander grading. By Lemma \ref{Lemma:Gradingchangebyshift} no two shifts of a cycle can lie in the same Alexander grading, so the index subsets $I_1,\dots,I_r$ must have been added at distinct stages of the procedure.\\   
  
Without loss of generality, assume that $S_{I_{1}}$ s the most recently added element in this relation. Then, by construction, there exists a minimal cycle $S_{I}$ with $I \cap C^{\circ}_{2L_{\delta}} \neq \varnothing$ such that 
 $$ I_1 = R^{k}_{+}(I) \ \text{or} \ I_1 = R^{k}_{-}(I) \ \text{for some} \ k \in \mathbb{Z}_{\geq 0}.$$
Without loss of generality, assume $I_1 = R^{k}_{+}(I)$, the same argument works for the other case.\\

By shifting the full relation by $R^{k}_{-}$ we obtain:
 $$\partial S_{R^{i}_{-}(T)} = S_{R^{i}_{-}(I_1)}+ \cdots +S_{R^{i}_{-}(I_{r})} = S_{I}+ S_{R^{i}_{-}(I_2)} \cdots +S_{R^{i}_{-}(I_{r})}.$$
To justify this equality, we must verify that both sides lie entirely outside the ball $B^{\bu}_{L_{A}+1}$, so that can apply Proposition \ref{Proposition:shiftbounddary}(i.e., shift-equivariance of the boundary map). Specifically, we need:
 $$T \ \cup \ \underset{1 \leq t \leq r}{\bigcup} I_{t} \subseteq \bm{C^{m}} \setminus B^{\bu}_{L_{A}+1} \ \text{and} \ R^{i}_{-}(T) \ \cup \ \underset{1 \leq t \leq r}{\bigcup} R^{i}_{-}(I_{t}) \subseteq \bm{C^{m}} \setminus B^{\bu}_{L_{A}+1}.$$
The first inclusion follows from the assumptions and is justified using the same argument as in Equation \ref{Eqation:sidesofrelation}.\\

To obtain the second inclusion, apply  Corollary \ref{Corollary:diamofminimalrelation} to conclude:
 \begin{flalign}
 & \diam \Big(\underset{1 \leq t \leq r}{\bigcup} R^{i}_{-}(I_{t}) \Big ) = \diam \Big(\underset{1 \leq t \leq r}{\bigcup} I_{t} \Big )  \leq L_{B} \\ 
 & \Rightarrow \dist_{H}(\underset{1 \leq t \leq r}{\bigcup} R^{i}_{-}(I_{t}), C^{\bu}) \geq \dist_{H}(C^{\circ}_{2L_{\delta}},C^{\bu}) - L_{B}\\
 & \Rightarrow \dist_{H}(\underset{1 \leq t \leq r}{\bigcup} R^{i}_{-}(I_{t}), C^{\bu}) > L_{\delta} \Rightarrow \underset{1 \leq t \leq r}{\bigcup} R^{i}_{-}(I_{t}) \subseteq \bm{C^{m}} \setminus B^{\bu}_{L_{\delta}}.
 \end{flalign} 
The same argument as in Equation \ref{Eqation:sidesofrelation} then shows: 
$$R^{i}_{-}(T) \subseteq \bm{C^{m}} \setminus B^{\bu}_{L_{A}+1}.$$
Now that we have the relation 
$$\partial S_{R^{i}_{-}(T)} = S_{I}+ S_{R^{i}_{-}(I_2)} \cdots +S_{R^{i}_{-}(I_{r})},$$
we can see that this contradicts the fact that $[S_{I}]$ was not in the span of the previous basis elements at the time it was added.\\

Finally, we need to show that the collection  $\mathcal{B}_{m}$ spans the space $\mathcal{H}_{D'_{m}}$. Suppose, for contradiction, that there exists a subset $I \subseteq D'_{m}$ and a minimal cycle $S_{I}$ such that the far homology class $[S_{I}]$ is not contained in the span of the classes in $\mathcal{B}_{m}$. There exists $k \in \mathbb{Z}_{\geq 0}$ such that 
$$R^{k}_{+}(I) \cap C^{\circ}_{2L_{\delta}}  \neq \varnothing \ \  \text{or}  \ \ R^{k}_{-}(I)\cap C^{\circ}_{2L_{\delta}} \neq \varnothing.$$
Without loss of generality assume that $R^{k}_{+}(I) \cap C^{\circ}_{2L_{\delta}} \neq \varnothing$. Then by Proposition \ref{Proposition:shiftbounddary}, $S_{R^{k}_{+}(I)}$ is a minimal cycle.\\

Now consider Step (2) and (3) of the construction procedure. If $[S_{R^{k}_{+}(I)}]$ was added to $\mathcal{B}_{m}$, then by shift-equivariance, $S_{I} \in \mathcal{B}_{m}$ as well, contradicting our assumption. Otherwise, $[S_{R^{k}_{+}(I)}]$ must have been expressible as a linear combination of existing classes in the collection, i.e., there exists a minimal relation of the form
$$\partial S_{T} = S_{R^{k}_{+}(I)} + S_{I_1}+\cdots+S_{I_r}$$
for subsets $I_1,\dots,I_r$ in $\mathcal{SB}_{m}$.\\

Using the same argument as in the proof of linear independence, we may apply the shift map $R^{k}_{-}$ to both sides of the relation. This yields another valid minimal relation:
$$\partial S_{R^{k}_{-}(T)} = S_{I} + S_{R^{k}_{-}(I_1)}+\cdots+S_{R^{k}_{-}(I_r)}.$$
Since the subsets $I_1,\dots,I_r$ lie in $\mathcal{SB}_{m}$, and the collection is shift-equivariant by construction, it follows that $R^{k}_{-}(I_1),\dots,R^{k}_{-}(I_r) \in \mathcal{SB}_{m}$. Therefore, the homology class $[S_{I}]$ lies in the span of $\mathcal{B}_{m}$ contradicting our assumption.\\

This contradiction shows that no such subset $I \subseteq D'_{m}$. Hence, $\mathcal{B}_{m}$ indeed generates the space $\mathcal{H}_{D'_{m}}$, completing the proof.
\end{proof}

The procedure  used to prove Proposition \ref{Proposition:shiftequivarientbasis} yields further structural insights. Let $\mathcal{GB}_{m}$ denote the collection of all subsets $I \subseteq D'_{m}$ that were selected in Step (2) of the construction. This collection \emph{generates} $\mathcal{SB}_{m}$ in the following sense:
$$\forall \ J \in \mathcal{SB}_{m} \ \ \exists \ k \in \mathbb{Z}_{\geq 0} \ \text{such that} \ J= R^{k}_{+}(I) \ \text{or} \ J= R^{k}_{-}(I).$$
We further show that the collection $\mathcal{GB}_{m}$ stabilizes as $m \rightarrow \infty$, as stated in Corollary \ref{Corollary:generatingsetstabilizes}.

\begin{coro}\label{Corollary:generatingsetstabilizes}
    Assume $m$ is sufficiently large. Then we have 
    $$\Phi_{m}(\mathcal{GB}_{m}) = \mathcal{GB}_{m+1}.$$
    In other words, there exists a fixed collection of the subsets of $B^{\bu}_{2L_{\delta}+N_{A}L_{A}}$, denoted by $\mathcal{GB}_{\infty}$, such that 
    $$\mathcal{GB}_{m} = \mathcal{GB}_{\infty}$$
     for all sufficiently large $m$.
\end{coro}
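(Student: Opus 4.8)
\textbf{Proof proposal for Corollary \ref{Corollary:generatingsetstabilizes}.}

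The plan is to argue that the entire construction procedure from the proof of Proposition \ref{Proposition:shiftequivarientbasis} is determined by the ``data'' of a fixed closed ball, which is itself stabilized for large $m$ by Proposition \ref{Proposition:boxtensordifferential} and Lemma \ref{Lemma:differentialorder}, in the same spirit as the proof of Theorem \ref{Theorem:extremalhfk} and Lemma \ref{Lemma:Alexanderjumpsdefects-stabilization}. The key observation is that every subset $I \in \mathcal{GB}_{m}$ selected in Step (2) is a minimal cycle with $I \cap C^{\circ}_{2L_{\delta}} \neq \varnothing$, so by Lemma \ref{Lemma:lengthminimal} we have $\diam(I) \leq \len(S_I) \leq N_A L_A$, and therefore
$$I \subseteq B^{\bu}_{2L_{\delta}+N_AL_A} \setminus B^{\bu}_{2L_{\delta} - N_A L_A}.$$
Thus every generating subset lives inside the fixed ball $B^{\bu}_{2L_{\delta}+N_AL_A}$, which stabilizes under $\Phi_m$ for $m > 2(2L_{\delta}+N_AL_A)$ together with its differentials (inside $B^{\bu}_{2L_{\delta}+N_AL_A+L_A}$) by Lemma \ref{Lemma:differentialorder}.

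First I would show that the notion of a far homology class lying ``in the span of the current collection'' can be detected within a fixed ball. Whenever a minimal relation $\partial S_T = S_{I_1}+\cdots+S_{I_r}$ expresses a candidate cycle (with representative in $B^{\bu}_{2L_{\delta}+N_AL_A}$) in terms of previously chosen subsets, Corollary \ref{Corollary:lengthminimalrelation} and Lemma \ref{Lemma:differentialorder} force $R$ (hence $T$) into $B^{\bu}_{2L_{\delta}+N_AL_A+L_A+L_B} =: B^{\bu}_{L'}$, exactly as in the proof of Lemma \ref{Lemma:Alexanderjumpsdefects-stabilization}. So whether a given minimal cycle supported in $B^{\bu}_{2L_{\delta}+N_AL_A}$ is independent of, or a combination of, earlier choices is decided entirely by the elements of $B^{\bu}_{L'}$ and their differentials — data that is stabilized for $m > 2L'$ by Proposition \ref{Proposition:boxtensordifferential}. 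One subtlety: the previously chosen subsets in $\mathcal{SB}_m$ may themselves be shifts $R^k_{\pm}(I)$ of generators that stray far from the black box; but since all relevant relations are homogeneous (hence land in a single Alexander grading) and have bounded diameter, only the shifts that actually intersect $B^{\bu}_{L'}$ can participate, and by shift-equivariance (Proposition \ref{Proposition:shiftbounddary}) those are controlled, again mirroring the argument for $\mathcal{NI}^-_m$ and $\mathcal{I}'_m$ in the proof of Lemma \ref{Lemma:generatorinnegativehalfballstablizes}.

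With these reductions in place, I would then run the construction procedure of Proposition \ref{Proposition:shiftequivarientbasis} abstractly on the stabilized data $B^{\bu}_{L'}$, making the (finitely many) choices in Step (2) in a fixed canonical way (say, ordered by the clockwise order and then by some fixed linear order on subsets), so that the output $\mathcal{GB}_m$ does not depend on arbitrary choices. Since the set of candidate minimal cycles supported in $B^{\bu}_{2L_{\delta}+N_AL_A}$, the span-detection relations supported in $B^{\bu}_{L'}$, and the canonical choice rule are all identified under $\Phi_m$ for $m$ large, the procedure produces the same collection of subsets at stage $m$ and stage $m+1$, i.e. $\Phi_m(\mathcal{GB}_m) = \mathcal{GB}_{m+1}$. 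Setting $\mathcal{GB}_\infty$ to be this common value gives the result.

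\textbf{Main obstacle.} The delicate point is not the diameter bound — that is immediate from Lemma \ref{Lemma:lengthminimal} — but rather ensuring that the \emph{order-dependent} nature of the greedy selection in Steps (2)--(4) does not secretly depend on $m$. One must check that the equivalence classes of ``being in the span of the current collection'' at each stage are governed only by the stabilized ball $B^{\bu}_{L'}$, including the contributions of already-recorded shifted subsets. This requires arguing carefully (as in the proof of Lemma \ref{Lemma:generatorinnegativehalfballstablizes}, via homogeneity and Lemma \ref{Lemma:Gradingchangebyshift}) that a shift $R^k_{\pm}(I)$ can only enter a minimal relation with a cycle supported near the black box when $R^k_{\pm}(I)$ itself is near the black box, so that the ``state'' of the construction after each step is a stabilized object. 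Once this bookkeeping is set up, stabilization of $\mathcal{GB}_m$ follows exactly as in the earlier stabilization arguments, and the containment $\mathcal{GB}_\infty \subseteq 2^{B^{\bu}_{2L_{\delta}+N_AL_A}}$ is automatic.
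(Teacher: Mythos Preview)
Your proposal is correct and follows essentially the same approach as the paper's proof: confine every $I \in \mathcal{GB}_m$ to $B^{\bu}_{2L_{\delta}+N_AL_A}$ via Lemma \ref{Lemma:lengthminimal}, then argue that the span-detection step in the procedure is decided by minimal relations living in a slightly larger fixed ball, and invoke the stabilization of all data in that ball as in the proof of Theorem \ref{Theorem:extremalhfk}. The only minor differences are cosmetic: the paper cites Corollary \ref{Corollary:diamofminimalrelation} to bound the full relation inside $B^{\bu}_{2L_{\delta}+2L_B+L_A}$, whereas you reach an equivalent bound via Corollary \ref{Corollary:lengthminimalrelation} and Lemma \ref{Lemma:differentialorder}; and you are more explicit than the paper about fixing a canonical selection order to make the greedy procedure deterministic, a point the paper leaves implicit.
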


\begin{proof}
First, observe that every subset $I \in \mathcal{GB}_{m}$ lies within the closed ball $B^{\bu}_{2L_{\delta}+N_{A}L_{A}}$. This follows from the selection criteria in Step (2), which require $I \cap C^{\circ}_{2L_{\delta}} \neq \varnothing$, and from Lemma \ref{Lemma:lengthminimal}, which ensures that $\diam(I) \leq N_{A}L_{A}$.\\

Now we only need to show that the selection criterion in the procedure also stabilizes. From the proof of Theorem \ref{Theorem:extremalhfk}, we know that the collection of minimal cycles $S_{I}$ with $I \cap C^{\circ}_{2L_{\delta}} \neq \varnothing$ stabilizes  for sufficiently large $m$. Therefore, in order to determine whether such a cycle $S_{I}$ is selected at a given stage, it remains to check whether its homology class $[S_{I}]$ lies in the span of the previously selected classes. This, in turn, reduces to analyzing whether $S_{I}$ appears in any minimal relation with other cycles already present in the collection, which are of the form $S_{R^{i}_{\pm}(J)}$, where $J \in \mathcal{GB}_{m}$ was selected in an ealier stage.\\

By Corollary \ref{Corollary:diamofminimalrelation}, and again using the fact that $I \cap C^{\circ}_{2L_{\delta}} \neq \varnothing$), any such minimal relation must be contained in $B^{\bu}_{2L_{\delta}+2L_{B}+L_{A}}$. As in the proof of Theorem \ref{Theorem:extremalhfk}, we conclude that all such relations stabilize for large $m$ and hence so does the selection process.
\end{proof}

We can also define alternative generating sets for $\mathcal{SB}_{m}$ by selecting extremal representatives of each shift orbit. In particular, we define two extremal generating sets as follows:
$$\mathcal{GB}^{-}_{\infty} = \{ R^{k}_{-}(I) \ | \ I \in \mathcal{GB}_{\infty}\ ,\  R^{k}_{-}(I)\subseteq D'_{m}  \ , \ R^{k+1}_{-}(I)\subseteq D'_{m} \}$$
$$\mathcal{GB}^{+}_{\infty} = \{ R^{k}_{+}(I) \ | \ I \in \mathcal{GB}_{\infty}\ ,\  R^{k}_{+}(I)\subseteq D'_{m}  \ , \ R^{k+1}_{+}(I)\subseteq D'_{m} \}$$
That is, $\mathcal{GB}^{-}_{\infty}$ consists of the leftmost shifts of elements of $\mathcal{GB}_{m}$ that still lie in $D'_{m}$, while $\mathcal{GB}^{+}_{\infty}$ consists of the rightmost such shifts. Both sets generate $\mathcal{SB}_{m}$ under the action of the shift maps.\\

Also let $\mathcal{HB}^{-}_{\infty}$ and $\mathcal{HB}^{+}_{\infty}$ be the collection of the corresponding homology classes i.e. 
$$\mathcal{HB}^{\pm}_{\infty} = \{ [S_{J}] \ | \ J \in \mathcal{GB}^{\pm}_{\infty} \} .$$

In Subsection \ref{Subsection:deltagradingstab}, we apply the results established here to prove Lemma \ref{Lemma:deltagradingmaximizer}. 

\subsection{Stabilization of the maximizer and minimizer of $\delta$-grading}\label{Subsection:deltagradingstab}\hfill\\

Before proving Lemma \ref{Lemma:deltagradingmaximizer}, we need to state a simple corollary of Lemma \ref{Lemma:Gradingchangebyshift}, which helps us understand how $\delta$-grading changes under shifts. 

\begin{coro}\label{Corollary:deltachangebyshift}
Let $x^1$ be an element of $B^{1}_{K}$ with $(a;b,c;d) \in \widetilde{G}$ as a representative of the grading coset $gr_K(x^1)$. Let $i,j$ be positive integers. Then 
$$x^1 \otimes \xi_i , x^1 \otimes \xi_j  \in \bm{C^{m}},$$
for any $m \geq \max \{i,j\}$, and their relative gradings are as follows :
$$\delta_m(x^1 \otimes \xi_j) - \delta_m(x^1 \otimes \xi_i) = (j-i)(M-2b-\frac{1}{2} - \omega).$$
\end{coro}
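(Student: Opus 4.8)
The plan is to deduce this directly from Lemma \ref{Lemma:Gradingchangebyshift} together with the defining relation $\delta_{m} = h_{m} - a_{m}$ for the $\delta$-grading. First I would note that the hypothesis $m \geq \max\{i,j\}$ is exactly what is needed to guarantee $x^{1} \otimes \xi_{i}, x^{1} \otimes \xi_{j} \in \bm{C^{m}}$, since the white box $C^{\circ}_{k}$ only appears in $\bm{C^{m}}$ when $k \leq m$; this is the same observation already made in the statement of Lemma \ref{Lemma:Gradingchangebyshift}, so nothing new is required here.

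The substantive content is then a one-line computation. By Lemma \ref{Lemma:Gradingchangebyshift} we have
$$h_{m}(x^{1} \otimes \xi_{j}) - h_{m}(x^{1} \otimes \xi_{i}) = (j-i)\left(M - 2b - \tfrac{1}{2}\right) \quad \text{and} \quad a_{m}(x^{1} \otimes \xi_{j}) - a_{m}(x^{1} \otimes \xi_{i}) = (j-i)\omega.$$
Subtracting the second identity from the first, and using $\delta_{m} = h_{m} - a_{m}$ together with linearity, yields
$$\delta_{m}(x^{1} \otimes \xi_{j}) - \delta_{m}(x^{1} \otimes \xi_{i}) = (j-i)\left(M - 2b - \tfrac{1}{2}\right) - (j-i)\omega = (j-i)\left(M - 2b - \tfrac{1}{2} - \omega\right),$$
which is the desired formula. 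There is no real obstacle in this argument: the only thing to be careful about is keeping the sign conventions for $\omega$ and the Maslov component $M$ consistent with those fixed in Lemma \ref{Lemma:Gradingsubgroup} and Lemma \ref{Lemma:Gradingchangebyshift}, but since we are merely reassembling two identities that are already proved in the excerpt, this is automatic.
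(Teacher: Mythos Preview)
Your proof is correct and matches the paper's approach exactly: the paper introduces this corollary as ``a simple corollary of Lemma~\ref{Lemma:Gradingchangebyshift}'' and does not even spell out the subtraction, since $\delta_m = h_m - a_m$ makes it immediate.
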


Now we are ready to prove Lemma \ref{Lemma:deltagradingmaximizer}. 

\begin{proof}
    As previously noted, the main strategy is to use the far decomposition
    $$\HFKh(K_m) = \mathcal{H}_{D'_m} \oplus \overline{\mathcal{H}_{D'_m}}.$$
    By Lemma \ref{Lemma:Alexanderjumpsdefects-stabilization}, there exists a constant $L_{\Delta}$ such that for sufficiently large $m$, the complement subspace $\overline{\mathcal{H}_{D'_m}}$ stabilizes to $\overline{\mathcal{H}'_{\infty}}$ which is supported entirely within the closed ball $B^{\bu}_{L_{\Delta}}$. That is, all homology classes in $\overline{\mathcal{H}_{D'_m}}$ admit fixed representatives contained in this ball, independent of $m$, once $m$ is large enough.\\

 To identify the maximizer and minimizer of the $\delta_{m}$-grading on the space of far homology classes $\mathcal{H}_{D'_m}$, it suffices to examine this grading on a minimal (or even homogeneous) basis. We consider the basis $\mathcal{B}_{m}$, constructed in Proposition \ref{Proposition:shiftequivarientbasis}. By Corollary \ref{Corollary:deltachangebyshift}, the $\delta_m$-grading varies linearly across each shift orbit. Therefore, it achieves its maximum and minimum at the leftmost and rightmost ends of each orbit. As a result, the extremal $\delta_{m}$-gradings over $\mathcal{H}_{D'_m}$ are realized by classes corresponding to subsets in the extremal generating sets i.e. $\mathcal{HB}^{-}_{\infty}$ and $\mathcal{HB}^{+}_{\infty}$. By Lemma \ref{Lemma:lengthminimal}, both of these collections are supported within the closed ball $B^{\bu}_{L_{\delta}+N_{A}L_{A}}$. Let $L_{\th}$ be equal to $\max \{L_{\Delta} , L_{\delta}+N_{A}L_{A}\}$.\\
 
Putting the two components together, it suffices to consider the  $\delta_{m}$-grading on the elements of
 $$\overline{\mathcal{H}'_{\infty}} \cup \mathcal{HB}^{-}_{\infty} \cup  \mathcal{HB}^{+}_{\infty}$$
since the maximizer and minimizer of $\delta_{m}$-grading over $\HFKh(K_{m})$ must lie in this collection for sufficiently large $m$. This collection is fixed and supported within the closed ball $B^{\bu}_{\th}$.\\

By Lemma \ref{Lemma:Gradingchangebym}, and using similar reasoning to earlier arguments, the sign of the relative $\delta_{m}$-grading between all the pairs of elements in this collection stabilize as $m \rightarrow \infty$. Consequently, the maximizer and minimizer also stabilize, which proves the desired result.
\end{proof}

\subsection{Proof of Theorem \ref{Theorem:thickness}}\label{Subsection:proofofthickness}\hfill\\

We can finally prove Theorem \ref{Theorem:thickness}.\\

\begin{proof}
    Assume that $m$ is sufficiently large. Based on the definition of thickness and Lemma \ref{Lemma:deltagradingmaximizer}, we have 
    $$\th(K_{m}) = \delta_m([S_{I^{\max}}]) - \delta_{m}([S_{I^{\min}}])$$
    $$= (h_m([S_{I^{\max}}]) - h_{m}([S_{I^{\min}}])) - (a_m([S_{I^{\max}}]) - a_{m}([S_{I^{\min}}])).$$
    This will be a linear function of $m$, as a result of Lemma \ref{Lemma:Gradingchangebym}.
\end{proof}

\bibliography{bibtemplate}
\bibliographystyle{smfalpha}

\end{document}